\newcommand\lessflat{{{\mbox{$\flat  \mkern-12mu {}^{\_}$}}}}
\newenvironment{customTheorem}[1]
  {\innercustomTheorem}
  {\endinnercustomTheorem}
\newenvironment{customCorollary}[1]
  {\innercustomCorollary}
  {\endinnercustomCorollary}
\newenvironment{customLemma}[1]
{\innercustomLemma}
{\endinnercustomLemma}
\DeclareMathOperator\supp{supp}
\newcommand{\nabb}{\mbox{$\nabla \mkern-13mu /$\,}}
\numberwithin{equation}{section}
\newtheorem{definition}{Definition}[section]
\newtheorem{remark}{Remark}[section]
\newtheorem{lemma}{Lemma}[section]
\newtheorem{theorem}{Theorem}[section]
\newtheorem{proposition}{Proposition}[section]
\newtheorem*{rough version}{Rough Version}
\newtheorem*{theorem*}{Theorem}
\newtheorem*{corollary*}{Corollary}
\newenvironment{sketch proof}{\proof}{\endproof}
\title[Boundedness and Morawetz estimates on subextremal Kerr--de~Sitter]{Boundedness and Morawetz estimates\\ on subextremal Kerr--de~Sitter}
\author{Georgios Mavrogiannis}
\address{Department of Mathematics, Rutgers University, New Brunswick, NJ 08903 USA.}
\email{gm758@math.rutgers.edu}
\date\today
\begin{document}

\begin{abstract}
We study the Klein--Gordon equation~$\Box_{g_{a,M,l}}\psi-\mu^2_{\textit{KG}}\psi=0$ on subextremal Kerr--de~Sitter black hole backgrounds with parameters~$(a,M,l)$, where~$l^2=\frac{3}{\Lambda}$. We prove boundedness and Morawetz estimates assuming an appropriate mode stability statement for real frequency solutions of Carter's radial ode. Our results in particular apply in the very slowly rotating case~$|a|\ll M,l$, and in the case where the solution~$\psi$ is axisymmetric. This generalizes the work of Dafermos--Rodnianski~\cite{DR3} on Schwarzschild--de~Sitter.

The boundedness and Morawetz results of the present paper will be used in our companion~\cite{mavrogiannis2} to prove a `relatively non-degenerate integrated estimate' for subextremal Kerr--de~Sitter black holes~(and as a consequence exponential decay). In a forthcoming paper~\cite{mavrogiannis3}, this will immediately yield nonlinear stability results for quasilinear wave equations on subextremal Kerr--de~Sitter backgrounds. 
\end{abstract}

\maketitle

{
  \hypersetup{linkcolor=black}
  \tableofcontents
}

\section{Introduction}\label{sec: intro}

A fundamental class of solutions of Einstein's equation with a positive cosmological constant~$\Lambda>0$, in the absence of matter
\begin{equation}\label{eq: Einstein equation}
	Ric[g] -\Lambda g=0,\qquad \Lambda>0
\end{equation}
is the Kerr--de~Sitter family of black hole solutions
\begin{equation}\label{eq: sec: intro, eq 0}
(\mathcal{M},g_{a,M,l}),
\end{equation}
see~\cite{Carter1,Carter2}. These are the analogues of the Kerr black holes, which solve~\eqref{eq: Einstein equation} with~$\Lambda=0$. The parameters~$(a,M,l)$ represent respectively the angular momentum per unit mass~$a$, the mass~$M$ of the black hole and a quantity related to the cosmological constant $\Lambda>0$ via
\begin{equation}\label{eq: the definition of l}
    l\:\dot{=}\:\sqrt{\frac{3}{\Lambda}}.
\end{equation}
The spacetime metric~\eqref{eq: sec: intro, eq 0}, in Boyer--Lindquist coordinates $(t,r,\theta,\varphi)$, takes the form
\begin{equation}\label{eq: prototype metric in BL coordinates}
    \begin{aligned}
        g_{a,M,l} &	= \frac{\rho^2}{\Delta}dr^{2}+\frac{\rho^2}{\Delta_{\theta}}d \theta^{2}+\frac{\Delta_{\theta}(r^{2}+a^{2})^{2}-\Delta a^{2}\sin^{2}\theta }{\Xi ^{2}\rho^2}\sin^{2}\theta d \varphi^{2} -2\frac{\Delta_{\theta}(r^{2}+a^{2})-\Delta}{\Xi\rho^2} a \sin^{2}\theta d\varphi dt\\
        &	\qquad\qquad-\frac{\Delta -\Delta_{\theta}a^{2}\sin^{2}\theta}{\rho^2}dt^{2},
    \end{aligned}
\end{equation}
see already Section~\ref{subsec: boyer Lindquist coordinates}, with~$\Delta_\theta=1+\frac{a^2}{l^2}\cos^2\theta,~\rho^2=r^2+a^2\cos^2\theta,~\Xi=1+\frac{a^2}{l^2}$, where
\begin{equation}\label{eq: prototype Delta}
    \Delta(r)=(r^2+a^2)\left(1-\frac{r^2}{l^2}\right)-2Mr.
\end{equation}
Note that in the non-rotating case~$a=0$ the metric~\eqref{eq: prototype metric in BL coordinates} reduces to the Schwarzschild--de~Sitter metric
\begin{equation}\label{eq: prototype metric of SdS}
	g_{M,\Lambda}=- \left(1-\frac{2M}{r}-\frac{\Lambda}{3}r^2\right)dt^2 +\frac{1}{1-\frac{2M}{r}-\frac{\Lambda}{3}r^2}dr^2 + r^2 d\sigma_{\mathbb{S}^2},
\end{equation}
where~$d\sigma_{\mathbb{S}^2}$ is the standard metric of the unit sphere.

In this paper we revisit the study of the Klein--Gordon equation 
\begin{equation}\label{eq: kleingordon}
    \Box_{g_{a,M,l}} \psi -\mu_{\textit{KG}}^2\psi=0,
\end{equation}
with mass~$\mu_{\textit{KG}}^2\geq 0$ (the case~$\mu_{\textit{KG}}=0$ is called the `wave equation' and the case~$\mu^2_{KG}=\frac{2\Lambda}{3}$ the conformally coupled case) on a subextremal Kerr--de~Sitter spacetime. We call the Kerr--de~Sitter black hole triad~$(a,M,l)$ \texttt{subextremal}, when the polynomial~$\Delta$, see~\eqref{eq: prototype Delta}, attains four distinct real roots, see already Definition~\ref{def: subextremality, and roots of Delta}. The Klein--Gordon equation~\eqref{eq: kleingordon} has been extensively studied in the case~$\Lambda\geq 0$ in recent years~\cite{Shlapentokh_Rothman_2014,DR2,Dyatlov1,Dyatlov2,Dyatlov4,melr-barr-vasy,Volker,barreto,mavrogiannis,vasy1,vasy2,zworski}. For the study of the Klein--Gordon equation~\eqref{eq: kleingordon} on~$\Lambda<0$ spacetimes, see~\cite{holzegel1,holzegel2,holzegel3,holzegel4,vasy6}.

We prove boundedness and degenerate Morawetz estimates~(Theorem~\ref{rough: theorems 1}) for solutions of the Klein--Gordon equation~\eqref{eq: kleingordon} with mass~$\mu^2_{KG}\geq 0$ on a subextremal Kerr--de~Sitter spacetime, with parameters~$(a,M,l)$, under the following condition:
\begin{equation*}
	\begin{aligned}
		&\text{(MS):~mode stability on the real axis for Carter's radial ode holds on a curve in subextremal}\\
		&	\qquad\quad \text{parameter space connecting}~(a,M)~\text{to the subextremal Schwarzschild--de~Sitter family.}
	\end{aligned}
\end{equation*}
We will show moreover that~(MS) holds in the very slowly rotating case~$|a|\ll M,l$. This generalizes the work of Dafermos--Rodnianski~\cite{DR3} on Schwarzschild--de~Sitter~$(a=0)$ and is based on the framework of~\cite{DR2} which proved boundedness and Morawetz estimates for Kerr~$(\Lambda=0)$. In the special case where~$\psi$ is axisymmetric our results hold for~$(a,M,l)$ in the full subextremal range.

To prove Theorem~\ref{rough: theorems 1} we follow the strategy of~\cite{DR2}, namely we reduce the problem to constructing suitable fixed frequency energy multipliers for Carter's ode, see already~\eqref{eq: subsec: sec: intro, subsec 2, eq 1}. Two major difficulties are superradiance and trapping. We note that in the Kerr--de~Sitter case, for certain black hole parameters, the spacetime admits trapping that corresponds to~$\omega=0$,~$|m|\gg 1$ frequencies, unlike the asymptotically flat Kerr case where trapping occurs only for frequencies~$|\omega|\gg 1$. To reduce the problem to an analysis of real frequencies, we also appeal to a continuity argument in the parameters~$a,M$ starting from Schwarzschild--de~Sitter~$a=0$, see already Section~\ref{sec: continuity argument}~(this explains the precise form of the (MS) assumption).
 In contrast to the~$\Lambda=0$ case, it is not known if~(MS) holds for all subextremal black hole parameters~$(a,M,l)$, say for the important special cases~$\mu_{KG}^2=0$ or~$\mu_{KG}^2=\frac{2\Lambda}{3}$. For general~$\mu^2_{KG}>0$, then~(MS) is not expected to hold for all subextremal~$(a,M,l)$, see~\cite{Shlapentokh_Rothman_2014} for a mode instability result in the~$\Lambda=0$ case.

In our companion~\cite{mavrogiannis2}, we prove a `relatively non-degenerate' integrated energy estimate for solutions of~\eqref{eq: kleingordon} under the condition~(MS). For this, we introduce a novel operator~$\mathcal{G}$, for the entire subextremal range of Kerr--de~Sitter black holes, thus generalizing our previous work~\cite{mavrogiannis} on Schwarzschild--de~Sitter, and we use this commutation in conjunction with the result of Theorem~\ref{main theorem 1}. Note that the commutation with~$\mathcal{G}$ had been previously studied by Holzegel--Kauffman in the~$\Lambda=0$ Schwarzschild case, see~\cite{gustav}, to treat the wave equation with small first order error terms. Moreover, see the recent work~\cite{gustav2} where the authors define a related~$\mathcal{G}$ commutation for the asymptotically flat~$\Lambda=0$ Kerr black hole. An immediate Corollary of the~`relatively non-degenerate' integrated estimate of our~\cite{mavrogiannis2} is exponential decay for solutions of equation~\eqref{eq: kleingordon}.

In our forthcoming~\cite{mavrogiannis3}, we use~\cite{mavrogiannis2} to give an elementary proof of stability results for quasilinear wave equations on Kerr--de~Sitter black hole backgrounds, assuming only~(MS).

\subsection{Previous results on Kerr(--de~Sitter) backgrounds}\label{subsec: sec: intro: subsec 0}

We briefly mention some previous results for linear and non-linear waves on black hole backgrounds. For a more complete discussion, see our previous~\cite{mavrogiannis,mavrogiannis1}. 

Concerning linear results, in addition to~\cite{DR3} to be discussed extensively in Section~\ref{subsec: sec: intro: subsec 1}, note the proof of exponential decay for solutions of the wave equation on Schwarzschild--de~Sitter away from the event horizon~$\mathcal{H}^+$ and the cosmological horizon~$\bar{\mathcal{H}}^+$, by Bony--H\"afner~\cite{bony}, and the proof of exponential decay for the solutions of the wave equation on Kerr--de~Sitter up to and including the event horizon~$\mathcal{H}^+$ and the cosmological horizon~$\bar{\mathcal{H}}^+$, by Dyatlov~\cite{Dyatlov1,Dyatlov2}. Moreover, we refer the reader to the following additional linear results~\cite{bony,zworski,Dyatlov1,Dyatlov2,vasy1,vasy2,DR2,whiting,Shlapentokh_Rothman_2014_mode_stability,tatarutohaneanuKerr,LindbladTohaneanuKerr,AllenFangLinear} and references therein. See especially the more recent work of Petersen--Vasy~\cite{petersen2021wave}, as well as the partial mode stability result on Kerr--de~Sitter of Casals--Teixeira~da~Costa~\cite{casals2021hidden} and the recent work of Hintz~\cite{hintz2021mode}, which will be mentioned later in the paper. For a numerical study of modes on Kerr--de~Sitter spacetime see the work~\cite{yoshida}, where in fact the authors did not find any growing mode solutions in the conformal case~$\mu^2_{KG}=\frac{2\Lambda}{3}$ for the full subextremal range.

Concerning non-linear results, note first the remarkable proof by Hintz--Vasy of non-linear stability of the slowly rotating Kerr--de~Sitter black hole~\cite{hintz2} and the more recent result of Fang~\cite{AllenFang}. Moreover, for the study of non-linear wave equations on~$\Lambda>0$ spacetimes we refer the reader to~\cite{hintz2,hintz3,hintz5,hintz6,mavrogiannis1}, and for non-linear wave equations on~$\Lambda=0$ spacetimes we refer the reader to~\cite{Chr-Klain,lindbladrodnianski,lindbladquasilinear,dafermos2021nonlinear,GiorgiKlainermanSzeftel} and references therein.

Finally, for the study of the wave equation on extremal black holes see the works of Aretakis and Gajic~\cite{aretakis,aretakis2,gajic}. Moreover, for mode stability on extremal Kerr spacetimes see work of Teixeira~da Costa~\cite{rita2}.

\subsection{Review of~\cite{DR3} and~\cite{DR2}}\label{subsec: sec: intro: subsec 1}

The present paper is a generalization of the results of Dafermos--Rodnianski~\cite{DR3} on Schwarzschild--de~Sitter and also uses the strategy of proof of Dafermos--Rodnianski--Shlapentokh-Rothman on Kerr~\cite{DR2}. We review these results here.

\subsubsection{The Morawetz~(degenerate integrated decay) estimate of~\cite{DR3} on Schwarzschild--de~Sitter}

We define the following system of `special' hyperboloidal coordinates
\begin{equation}\label{eq: sec: intro: eq 3, coordinates}
(\bar{t},r,\theta,\phi)
\end{equation}
that relate to the usual Schwarzschild--de~Sitter coordinates~$(t,r,\theta,\phi)$ as follows:
\begin{equation}
\bar{t}=t-\int_{3M}^r\frac{\xi(r)}{1-\frac{2M}{r}-\frac{\Lambda}{3}r^2},\qquad \xi(r)=\frac{1-\frac{3M}{r}}{\sqrt{1-9M^2\Lambda}}\sqrt{1+\frac{6M}{r}}.
\end{equation}
For the definitions of the coordinates~\eqref{eq: sec: intro: eq 3, coordinates} also see our previous~\cite{mavrogiannis}. Moreover, note that~$\{\bar{t}=\tau\}$, for~$\tau\geq 0$, are spacelike hypersurfaces that connect the event horizon~$\mathcal{H}^+$ with the cosmological horizon~$\bar{\mathcal{H}}^+$ and foliate the causal past of the horizons~$\mathcal{H}^+,\bar{\mathcal{H}}^+$. The specific choice of the coordinates~\eqref{eq: sec: intro: eq 3, coordinates} is also intimately connected to the nature of trapping at~$r=3M$. Note that at~$r=3M$ we have~$g(\partial_{\bar{t}},\partial_r)=0$.

In~\cite{DR3}, Dafermos--Rodnianski studied the solutions~$\psi$ of the wave equation~(\eqref{eq: kleingordon} with~$\mu^2_{\textit{KG}}=0$) and proved the following Morawetz type~(degenerate integrated decay) energy estimate
\begin{equation}\label{eq: prototype morawetz schwarzschild de Sitter}
\int_{\tau_1}^{\tau_2} d\tau^\prime \int_{\{\bar{t}=\tau^\prime\}} (\partial_{r}\psi)^2+ \left(1-\frac{3M}{r}\right)^2\left((\partial_{\bar{t}}\psi)^2+|\slashed{\nabla}\psi|^2\right)\lesssim \int_{\{\bar{t}=\tau_1\}} (\partial_{\bar{t}} \psi)^2+(\partial_{r}\psi)^2+|\slashed{\nabla}\psi|^2,
\end{equation}
for all~$0\leq \tau_1\leq \tau_2$, where $\slashed{\nabla}$ is the covariant derivative of the fixed $r$-spheres. The degeneration of \eqref{eq: prototype morawetz schwarzschild de Sitter} at the photon sphere $r=3M$ is necessitated by the obstructions of~\cite{ralston,sbierski}. Remarkably, the Morawetz estimate~\eqref{eq: prototype morawetz schwarzschild de Sitter} does not degenerate at the horizon, due to the redshift effect, which can be captured by a multiplier current, see~\cite{DR4}.

Faster than any polynomial decay for solutions of the wave equation on a Schwarzschild--de~Sitter background is an immediate corollary of the Morawetz estimate~\eqref{eq: prototype morawetz schwarzschild de Sitter} suitable commutations and an iterative argument, see~\cite{DR2}.~(We note, however, that exponential decay for solutions in Sobolev spaces does not follow immediately from~\eqref{eq: prototype morawetz schwarzschild de Sitter}.)

\subsubsection{The Morawetz estimate of~\cite{DR2} on \texorpdfstring{$\Lambda=0$}{g} Kerr}

We here review the proof of the Morawetz estimate for the wave equation on subextremal Kerr exteriors of~\cite{DR2}.

In~\cite{DR2} the authors used Carter's separation of variables for the wave operator on Kerr, see~\cite{Carter}. Specifically, let~$r^\star$ be the tortoise coordinate, see~\cite{DR2,DR5}, and~$^\prime=\frac{d}{dr^\star}$. Carter's radial ode reads
\begin{equation}\label{eq: prototype ode}
u^{\prime\prime}+\left(\omega^2-V(r^\star,\omega,m,\lambda^{(a\omega)}_{m\ell})\right)u=H,
\end{equation}
where~$\omega$ denotes the time frequency,~$m$ denotes the azimuthal frequency and~$\lambda^{(a\omega)}_{m\ell}$ denotes a frequency associated with the eigenvalues of the angular part of Carter's separation, where~$\ell\geq |m|$. Note that in~\cite{DR2} the error term~$H$ is associated with a time cut-off.

The paper~\cite{DR2} divided the frequency space~$\{(\omega,m,\lambda^{(a\omega)}_{m\ell})\}$ into frequency regimes and constructed energy currents for the ode~\eqref{eq: prototype ode} in each of these regimes in order to prove frequency localised energy estimates. To treat the non-stationary bounded frequencies~($|\omega|,\lambda^{(a\omega)}_{m\ell}\sim 1$), the authors utilized a flux bound on the horizon, which holds for the entire subextremal family of Kerr black holes. They inferred this bound from a quantitative version of mode stability on the real axis~$\Im\omega=0$, proved by Shlapentokh-Rothman~\cite{Shlapentokh_Rothman_2014_mode_stability}. (See already Proposition~\ref{prop: subsec: summing in the redshift estimate, prop 1} for such an estimate in the setting of the present paper, given the assumption~(MS).) For the original proof of mode stability on the upper half plane~$\Im\omega>0$ see the seminal work of Whiting~\cite{whiting}.

To justify the frequency analysis of~\cite{DR2}, based entirely on real frequencies~$\omega\in\mathbb{R}$, the authors appealed to a novel continuity argument in the rotation parameter~$a$. Namely, the authors first proved their Morawetz estimate for `\texttt{future integrable}' solutions of the wave equation, for which the Fourier inversion formula holds. Then, by utilizing a continuity argument in the rotation parameter~$a$~(starting from~$a=0$, for which the result indeed holds) they proved that all solutions of the wave equation that arise from compactly supported initial data are indeed future integrable and therefore the Morawetz estimate of~\cite{DR2} holds for all solutions, in the full subextremal range~$|a|<M$.

Similarly to the estimate~\eqref{eq: prototype morawetz schwarzschild de Sitter}, the Morawetz estimate of~\cite{DR2} degenerates appropriately at an interval around~$r=3M$, but does not degenerate on the event horizon~$\mathcal{H}^+$, exploiting the redshift effect~\cite{DR4}. However, in contrast to estimate~\eqref{eq: prototype morawetz schwarzschild de Sitter}, the Morawetz estimate of~\cite{DR2} exhibits appropriate weights in~$r$ in the fluxes and spacetime integrals to capture the correct behavior at null infinity~$\mathcal{I}^+$.

\subsection{Preliminaries for the statement and proof of Theorem~\ref{main theorem 1}}

Before presenting the rough version of our main Theorem~\ref{main theorem 1}, and discussing the proof, see already Section~\ref{subsec: sec: intro: subsec 2}, we first need several preliminaries.

\subsubsection{Carter's radial ode in Kerr--de~Sitter}\label{subsubsec: subsec: sec: intro, subsec 2, subsubsec 1}

Following~\cite{Carter,holzegel3}, we apply a cut-off to the solution~$\psi$ of the Klein--Gordon equation~\eqref{eq: kleingordon} with mass~$\mu^2_{KG}\geq 0$ and then decompose into real Fourier modes and then into spheroidal harmonics. As in the~$\Lambda=0$ case, Carter's separation of variables for the wave operator implies that the radial part~$u$ of the separated solution, for the Klein--Gordon equation~\eqref{eq: kleingordon}, satisfies the ode
\begin{equation}\label{eq: subsec: sec: intro, subsec 2, eq 1}
u^{\prime\prime}+(\omega^2-V)u=H
\end{equation}
where~$^\prime=\frac{d}{dr^\star}$, and~$r^\star$ is the tortoise coordinate, see Section~\ref{subsec: tortoise coordinate}. Here
\begin{equation}
\omega\in\mathbb{R},\qquad m\in\mathbb{Z}
\end{equation}
are the Fourier frequencies with respect to the Boyer--Lindquist coordinates~$t,\varphi$ respectively, see already Section~\ref{subsec: sec: carter separation, radial}, and the potential~$V$ is real and takes the form
\begin{equation}\label{eq: subsec: sec: intro, subsec 2, eq 2}
V=V_0+V_{\textit{SL}}+V_{\mu_{\textit{KG}}},\qquad V_0-\omega^2=\frac{\Delta}{(r^2+a^2)^2}\left(\lambda^{(a\omega)}_{m\ell}+(a\omega)^2-2m\omega a\Xi\right)-\left(\omega-\frac{am\Xi}{r^2+a^2}\right)^2,
\end{equation}
where~$V_{\textit{SL}},V_{\mu_{\textit{KG}}}$ are frequency independent terms (the latter is associated with the Klein--Gordon mass) see Section~\ref{subsec: sec: carter separation, radial}, and~$\lambda^{(a\omega)}_{m\ell}\in\mathbb{R}$ are the eigenvalues of the spheroidal harmonics, which for convenience are indexed by~$\ell$, where~$\ell\geq |m|$. The inhomogeneous term~$H$ on the RHS of~\eqref{eq: subsec: sec: intro, subsec 2, eq 1} comes from the cut-off.

As we shall see below the Fourier space decomposition allows us to capture the phenomena of trapping and superadiance at the level of modes and moreover allows us to rigorously formulate our mode stability condition~(MS).

\subsubsection{An energy identity for the solution~\texorpdfstring{$u$}{u}}

Let
\begin{equation}
	r_+,\qquad \bar{r}_+
\end{equation}
be the two largest roots of the polynomial~$\Delta$, see~\eqref{eq: prototype Delta}, which correspond to the event horizon~$\mathcal{H}^+$ and cosmological horizon~$\bar{\mathcal{H}}^+$ of Kerr--de~Sitter respectively, see already the Penrose--Carter diagram of Figure~\ref{fig: penrose diagram}. We have that~$\lim_{r\rightarrow r_+} r^\star (r)=-\infty$ and~$\lim_{r\rightarrow\bar{r}_+} r^\star(r)=+\infty$, see already Section~\ref{subsec: tortoise coordinate}.

We multiply the radial ode~\eqref{eq: subsec: sec: intro, subsec 2, eq 1} with~$\bar{u}$ and after integration by parts we obtain
\begin{equation}\label{eq: subsec: sec: intro, subsec 2, eq 3}
\left(\omega-\frac{am\Xi}{\bar{r}_+^2+a^2}\right)|u|^2(r^\star=+\infty)+\left(\omega-\frac{am\Xi}{r_+^2+a^2}\right)|u|^2(r^\star=-\infty)=\Im (\bar{u}H).
\end{equation}
To derive the above we are using that the smoothness of the solution~$\psi$ and the geometry of the cut-off implies the following
\begin{equation}\label{eq: subsec: sec: intro, subsec 2, eq 3.1}
u^\prime=-i\left(\omega-\frac{am\Xi}{r_+^2+a^2}\right) u,\qquad u^\prime=i\left(\omega-\frac{am\Xi}{\bar{r}_+^2+a^2}\right) u,
\end{equation}
at~$r^\star=-\infty,+\infty$ respectively, see already~\eqref{eq: lem: sec carters separation, subsec boundary behaviour of u, boundary beh. of u, eq 3}.

\subsubsection{Superradiant frequencies and mode stability}\label{subsec: sec: intro: subsec 1.1}

Let
\begin{equation}
	K_+=\partial_t+ \frac{a\Xi}{r_+^2+a^2}\partial_{\varphi},\qquad \bar{K}_+ = \partial_t + \frac{a\Xi}{\bar{r}_+^2+a^2}\partial_{\varphi}
\end{equation}
be the Hawking--Reall vector fields associated with the event horizon~$\mathcal{H}^+$ and the cosmological horizon~$\bar{\mathcal{H}}^+$ respectively, see already Section~\ref{subsec: Hawking-Reall v.f.}. (Note that in the~$\Lambda=0$ Kerr limit we have that~$\bar{K}_+~=~\partial_t$ since in the Kerr limit~$\bar{r}_+=+\infty$, see already Lemma~\ref{lem: sec: properties of Delta, lem 2, a,M,l}.)

An explicit computation shows that the~$\bar{K}_+$ energy flux along the event horizon~$\mathcal{H}^+$ and the~$\bar{K}^+$ flux along the cosmological horizon~$\bar{H}^+$ are respectively given by
\begin{equation}\label{eq: subsec: sec: intro: subsec 4, eq 0}
\int_{\mathcal{H}^+} K_+\psi \overline{\bar{K}_+\psi} =\int_{\mathcal{H}^+} \left(\partial_t\psi+\omega_+\partial_{\varphi}\psi\right) \cdot  \overline{\left(\partial_t\psi+\bar{\omega}_+\partial_{\varphi}\psi\right)},\qquad \int_{\bar{\mathcal{H}}^+} |\bar{K}_+\psi|^2  =\int_{\bar{\mathcal{H}}^+} \left|\partial_t\psi+\bar{\omega}_+\partial_{\varphi}\psi\right|^2.
\end{equation}
In particular, if we consider a solution of the form~$\psi=e^{-i\omega t}e^{im\varphi} \psi_0(r,\theta)$ then the expressions~\eqref{eq: subsec: sec: intro: subsec 4, eq 0} have different signs when the following expression is negative
\begin{equation}
\left(\omega-\frac{am\Xi}{r_+^2+a^2}\right) \left(\omega-\frac{am\Xi}{\bar{r}_+^2+a^2}\right).
\end{equation}

Therefore, we define the superradiant frequencies on Kerr--de~Sitter to be 
\begin{equation}\label{eq: subsec: sec: intro: subsec 4, eq 1}
	\mathcal{SF}= \{(\omega,m):~am\omega\in \left(\frac{a^2m^2\Xi}{\bar{r}_+^2+a^2},\frac{a^2m^2\Xi}{r_+^2+a^2}\right)\}
\end{equation}
or equivalently~$\mathcal{SF}= \{(\omega,m):~ \left(\omega-\frac{am\Xi}{r_+^2+a^2}\right)\left(\omega-\frac{am\Xi}{\bar{r}_+^2+a^2}\right)< 0\}$. We note that if~$a=0$ then~$\mathcal{SF}=\emptyset$. Moreover, we note that for any~$\omega\in\mathbb{R}$ we have~$(\omega,0)\notin \mathcal{SF}$; i.e.~axisymmetric frequencies are not superradiant.

For the superradiant frequencies~\eqref{eq: subsec: sec: intro: subsec 4, eq 1}, the left hand side of the energy identity~\eqref{eq: subsec: sec: intro, subsec 2, eq 3} is not coercive. As a result, for such frequencies, we cannot conclude that ``mode stability holds on the real axis", i.e.~the absence of non trivial real frequency solutions of~\eqref{eq: subsec: sec: intro, subsec 2, eq 1} with~$H=0$ and satisfying the boundary conditions~\eqref{eq: subsec: sec: intro, subsec 2, eq 3.1}. To make this more precise, we denote as~$\mathcal{W}(\omega,m,\ell,\mu^2_{KG})$ the Wronskian associated with ingoing and outgoing solutions, at the event~$\mathcal{H^+}$ and cosmological horizon~$\bar{\mathcal{H}}^+$ respectively, of the homogeneous~$(H=0)$ radial ode~\eqref{eq: subsec: sec: intro, subsec 2, eq 1} of Kerr--de~Sitter, see already Section~\ref{subsec: sec: Carter's separation, wronskian}. Note that
\begin{equation}\label{eq: subsec: sec: intro: subsec 4, eq 1.1}
	\mathcal{W}(a,M,l,\mu_{KG},\omega,m,\ell)=0
\end{equation}
where~$\omega\neq \frac{a\Xi}{r_+^2+a^2}m,~\omega\neq \frac{a\Xi}{\bar{r}_+^2+a^2}m,~m\in\mathbb{Z},\ell\geq |m|$, precisely when there exists a nontrivial mode solution with freqencies~$\omega,m,\ell$. Thus, mode stability on the real axis is equivalent to the statement that~\eqref{eq: subsec: sec: intro: subsec 4, eq 1.1} does not hold for any~$\omega\neq \frac{a\Xi}{r_+^2+a^2}m,~\omega\neq \frac{a\Xi}{\bar{r}_+^2+a^2}m,~m\in\mathbb{Z},\ell\geq |m|$. Note however that for the case~$\omega=m=0$ there do exist trivial mode solutions that correspond to constant solutions of the wave equation~\eqref{eq: kleingordon}~(with~$\mu_{KG}=0$). (Furthermore, we treat the near borderline superradiant frequencies~$0\leq|\omega-\frac{a\Xi}{r_+^2+a^2}m|\ll 1$ and~$0\leq|\omega-\frac{a\Xi}{\bar{r}_+^2+a^2}m|\ll 1$ with the estimate of Proposition~\ref{prop: energy estimate in the bounded non stationary frequency regime}, where we immediately obtain that there are no real mode solutions.)

It is an open problem to prove mode stability for the wave equation, $\mu^2_{KG}=0$, or the conformal wave equation,~$\mu^2_{KG}=\frac{2\Lambda}{3}$, on Kerr--de~Sitter in the spirit of~\cite{whiting,Shlapentokh_Rothman_2014_mode_stability}. One does not expect mode stability to hold for general mass~$\mu^2_{KG}>0$, see the work on the Klein--Gordon equation on asymptotically flat Kerr~\cite{Shlapentokh_Rothman_2014}. However, note the partial mode stability result of Casals--Teixeira~da~Costa~\cite{casals2021hidden} and the recent result of Hintz~\cite{hintz2021mode}, which we will refer to later.

\subsection{The rough version of Theorem~\ref{main theorem 1}}\label{subsec: sec: intro: subsec 2}

The reader familiar with the Carter--Penrose diagrammatic representation may wish to refer to Figure~\ref{fig: penrose diagram}. 
\begin{figure}[htbp]
	\centering
	\includegraphics[scale=0.8]{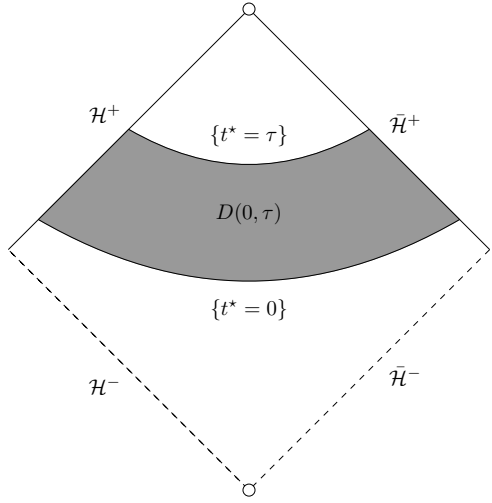}
	\caption{Carter--Penrose diagram of Kerr--de~Sitter}
	\label{fig: penrose diagram}
\end{figure}

In the present paper we are concerned with the region
\begin{equation}\label{eq: subsec: sec: intro: subsec 2, eq 0}
	\{r_+\leq r\leq \bar{r}_+\}.
\end{equation}
We will fix a system of coordinates
\begin{equation}
(t^\star,r,\theta^\star,\varphi^\star)\in \mathbb{R}_{t^\star}\times [r_+,\bar{r}_+]_r\times \mathbb{S}^2_{(\theta^\star,\varphi^\star)}
\end{equation}
which we term modified Kerr--de~Sitter star coordinates~(see already Section \ref{sec: the kerr de sitter spacetime} for their definition) covering~\eqref{eq: subsec: sec: intro: subsec 2, eq 0}. Moreover, for any $\tau\geq 0$ the set
\begin{equation}
\{t^\star=\tau\}
\end{equation}
will be a Cauchy hypersurface for~$\{t^\star\geq\tau\}$ that connects the event horizon~$\mathcal{H}^+$ and the cosmological horizon~$\bar{\mathcal{H}}^+$. Also, let
\begin{equation}
    n
\end{equation}
denote the future unit normal to the foliation $\{t^\star=\tau\}$.

We present the rough version of our boundedness and Morawetz estimate, where for the detailed version of Theorem~\ref{main theorem 1} see already Section~\ref{sec: main theorems}.

\begin{customTheorem}{\ref{main theorem 1}}[rough version]\label{rough: theorems 1}

Let~$l>0$,~$\mu^2_{KG}\geq 0$ and let~$(a,M,l)$ be subextremal Kerr--de~Sitter black hole parameters. Moreover, assume that the  following condition holds
\begin{equation*}
	\begin{aligned}
		&\text{(MS):~mode stability on the real axis for Carter's radial ode~\eqref{eq: subsec: sec: intro, subsec 2, eq 1} holds on a curve in subextremal}\\
		&	\qquad\quad \text{parameter space connecting}~(a,M)~\text{to the subextremal Schwarzschild--de~Sitter family}.
	\end{aligned}
\end{equation*}
Let $\psi$ be a solution of the Klein--Gordon equation~\eqref{eq: kleingordon}. Then, we have the following energy estimates
\begin{equation}\label{eq: rough: theorems 1, eq 1}
    \begin{aligned}
        \int_{\tau_1}^{\tau_2} d\tau^\prime\int_{\{t^\star=\tau^\prime\}} \mu^2_{KG}|\psi|^2+  |Z^\star\psi|^2+\zeta_{\textit{trap}}(r)\big( |\slashed{\nabla}\psi|^2 + (\partial_{t^\star}\psi)^2 \big) 
        &   \lesssim \int_{\{t^\star=\tau_1\}}  |Z^\star\psi|^2+|\partial_t\psi|^2+|\slashed{\nabla}\psi|^2+\mu^2_{KG}|\psi|^2, 
    \end{aligned}
\end{equation}
\begin{equation}\label{eq: rough: theorems 1, eq 2}
    \begin{aligned}
        \int_{\{t^\star=\tau_2\}} |Z^\star\psi|^2+|\partial_{t^\star}\psi|^2+|\slashed{\nabla}\psi|^2 +\mu^2_{KG}|\psi|^2&	\lesssim \int_{\{t^\star=\tau_1\}}  |Z^\star\psi|^2+|\partial_t\psi|^2+|\slashed{\nabla}\psi|^2+\mu^2_{KG}|\psi|^2,\\
        \int_{\{t^\star=\tau_2\}} |Z^\star\psi|^2+|\partial_t\psi|^2+|\slashed{\nabla}\psi|^2+|\psi|^2	&	\lesssim \int_{\{t^\star=\tau_1\}} |Z^\star\psi|^2+|\partial_t\psi|^2+|\slashed{\nabla}\psi|^2 +|\psi|^2,
    \end{aligned}
\end{equation}
for all $0\leq\tau_1\leq \tau_2$, where~$Z^\star$ is a regular spacelike vector field on~$\{t^\star\geq 0\}$ that extends smoothly on the horizons where it is linearly independent from the generators, and~$\zeta_{\textit{trap}}(r)=\left(1-1_{[R^-,R^+]}\right)\left(1-\frac{\frac{R^-+R^+}{2}}{r}\right)^2$, where~$[R^-,R^+]\subset (r_+,\bar{r}_+)$ and~$R^-=R^+=3M$ for~$a=0$.

Moreover, if the solution of the Klein--Gordon equation~\eqref{eq: kleingordon} is axisymmetric~$\partial_{\varphi^\star}\psi=0$, then we also have the estimates~\eqref{eq: rough: theorems 1, eq 1},~\eqref{eq: rough: theorems 1, eq 2}, with~$R^-=R^+$, without assuming the condition~(MS). 
\end{customTheorem}

Note the following remarks

\begin{remark}
	We emphasize that the condition~(MS) concerns only Carter's homogeneous fixed frequency radial ode~\eqref{eq: subsec: sec: intro, subsec 2, eq 1} for real frequencies~$(\omega,m,\ell)\in\mathbb{R}\times\bigcup_{m\in\mathbb{Z}}\{m\}\times\mathbb{Z}_{\geq |m|}$. It does not depend thus on the completeness properties of the angular eigenvalues for complex frequencies. 
\end{remark}

\begin{remark}
By Proposition~\ref{prop: subsec: energy identity, prop 2}, for any fixed~$\mu^2_{KG}\geq 0$, the set of black hole parameters for which~(MS) holds includes~$a=0$. Moreover, by Proposition~\ref{prop: subsec: sec: continuity argument, subsec 1, prop 1}, this set is open. Therefore,~(MS) in particular holds in the slowly rotating case~$|a|\ll M,l$, where the constant implicit in~$\ll$ depends only on~$\mu_{KG}$. Furthermore, in view of the recent work of Hintz~\cite{hintz2021mode} for~$\mu^2_{KG}=0$, we have that~(MS) also holds in the case~$0\leq |a|<M\ll l$, where the constant depends on~$M-|a|$. 
\end{remark}

\begin{remark}
We do not expect that the condition~(MS) holds for all Klein--Gordon masses~$\mu^2_{KG}>0$, see for example the construction~\cite{Shlapentokh_Rothman_2014} of real mode solutions for the Klein--Gordon equation in subextremal Kerr. 
\end{remark}

\subsection{Discussion of the proof}

We give some comments about the proof, highlighting especially some differences from~\cite{DR2}.

\subsubsection{The de Sitter frequencies and trapping}

If the ergoregion is connected and non-empty, namely when
\begin{equation}
	\max_{r\in[r_+,\bar{r}_+]}\frac{\Delta}{a^2}\leq 1,\quad |a|\not{=} 0, 
\end{equation}
where for~$\Delta$ see~\eqref{eq: prototype Delta}, then there exists a complete~$\partial_t$ orthogonal trapped null geodesic, and it never intersects the event horizon~$\mathcal{H}^+$ or the cosmological horizon~$\bar{\mathcal{H}}^+$~(see Appendix~\ref{sec: geodesics}). This geometric phenomenon is not present in the~$\Lambda=0$ case, where note that in~\cite{DR2} the authors prove that in fact there can be no $\partial_t$ orthogonal trapped null geodesic, for any subextremal black hole parameters.

This phenomenon can be seen at the fixed frequency analysis of~\eqref{eq: subsec: sec: intro, subsec 2, eq 1} near~$\omega=0$, for high azimuthal frequencies~$|m|\gg 1$, where trapping again corresponds to a maximum of the potential~$V$, see~\eqref{eq: subsec: sec: intro, subsec 2, eq 2}, such that~$\max V \approx \omega^2$. This is in sharp contrast to the Kerr case, see~\cite{DR2}, where trapping takes place only for large time frequencies~$|\omega|\gg 1$. Specifically, the existence of a~$\partial_t$ orthogonal geodesic for appropriate Kerr--de~Sitter black hole parameters, see Section~\ref{sec: geodesics}, necessitates that our fixed frequency estimates in the `de~Sitter frequencies' :
\begin{equation}\label{eq: subsec: sec: intro: subsec 4, eq 2}
	\mathcal{DSF}=\Biggl\{(\omega,m):~	|\omega|\in \left[0,\frac{|am|\Xi}{\bar{r}_+^2+a^2}\right]\Biggr\}
\end{equation}
will degenerate appropriately. See Section~\ref{subsec: de Sitter frequency regime} for estimates in this frequency range.

The interior of the frequency set~\eqref{eq: subsec: sec: intro: subsec 4, eq 2} is empty in the~$\Lambda=0$ Kerr case. Moreover, the interior of the frequency set~\eqref{eq: subsec: sec: intro: subsec 4, eq 2} is empty in the Schwarzschild de~Sitter case if~$a=0$. In the very slowly rotating case~$|a|\ll M,l$, we note that there is no trapping in this range.

\subsubsection{The continuity argument}

As mentioned earlier, since we rely entirely on real frequency analysis~$\omega\in\mathbb{R}$, the proof of Theorem~\ref{main theorem 1} relies on a continuity argument in the black hole parameters, see Section~\ref{sec: continuity argument}, which is inspired by the relevant continuity argument of~\cite{DR2}.

Note that the set of subextremal black hole parameters for which mode stability on the real axis holds may fail to be connected for given~$\mu^2_{KG}\geq 0$, since mode stability for the full subextremal family has not been proven in the Kerr-de~Sitter case. Therefore, in the condition~(MS) we must restrict to black hole parameters for which mode stability on the real axis holds on an entire curve in parameter space connecting~$(a,M)$ to the Schwarzschild de~Sitter family. See Definition~\ref{def: sec: main theorems, def 1}.

\subsubsection{Constant solutions when~\texorpdfstring{$\mu^2_{KG}=0$}{PDFstring}}

In the case~$\mu^2_{KG}=0$ constant functions are solutions of the wave equation~\eqref{eq: kleingordon}. Thus, in this case one does not have integrated decay for zeroth order terms, see~\eqref{eq: rough: theorems 1, eq 1}, and fixed frequency estimates near~$\omega=0$~(when~$m=0$) take on a different form, see Section~\ref{sec: proof: prop: sec: proofs of the main theorems}, from the relevant ode estimate of~\cite{DR2}. 

To generate zeroth order boundary terms on future hypersurfaces, see the second estimate of~\eqref{eq: rough: theorems 1, eq 2}, we use currents for twisted derivatives, see Section~\ref{subsec: sec: preliminaries, subsec 2}.

\subsection{Outline}

We here present an outline of the structure of our paper.

In Section~\ref{sec: the kerr de sitter spacetime} we give all the necessary background for the Kerr--de~Sitter spacetime, including the coordinates we use, the ergoregion of Kerr--de~Sitter, volume forms, spacetime domains and hypersurfaces, and a useful global causal vector field.

In Section~\ref{sec: delta polynomial} we gather properties of the~$\Delta$ polynomial, which we use throughout the paper. 

In Section~\ref{sec: preliminaries} we give all the necessary background on the Klein--Gordon equation~\eqref{eq: kleingordon} on a Kerr--de~Sitter black hole. 

In Section~\ref{sec: redshift and superradiance} we construct the redshift vector fields and prove redshift estimates.

In Section~\ref{sec: carter separation} we present Carter's separation of variables in Kerr--de~Sitter. Specifically, we discuss in detail the angular ode, the eigenvalues of the angular ode, the radial ode, the necessary Parseval identities, and the Wronskian of the radial ode. This allows us to formulate~(MS).

In Section~\ref{sec: main theorems} we present the detailed version of our main Theorem~\ref{main theorem 1}.

In Section~\ref{sec: the main theorem, no cases} we prove Theorem~\ref{thm: subsec: summing in the redshift estimate, thm -1}, which concerns an inhomogeneous Morawetz estimate for sufficiently integrable functions in the full subextremal parameter range, without the restriction~(MS), but with an extra horizon flux term on the right hand side. Essential in the proof of Theorem~\ref{thm: subsec: summing in the redshift estimate, thm -1} is the fixed frequency Theorem~\ref{thm: sec: proofs of the main theorems, thm 3}. The proof of the fixed frequency Theorem~\ref{thm: sec: proofs of the main theorems, thm 3} is deferred, howerer, to Section~\ref{sec: proof: prop: sec: proofs of the main theorems}.

In Section~\ref{sec: continuity argument} we prove Theorem~\ref{prop: sec: continuity argument, prop 1} with the use of a continuity argument. Specifically, Theorem~\ref{prop: sec: continuity argument, prop 1} states that, under the restriction~(MS), all solutions of the Klein--Gordon equation~\eqref{eq: kleingordon} are in fact `future integrable'.

In Section~\ref{sec: proof of Theorem 1} we prove the Morawetz estimate of our main Theorem~\ref{main theorem 1}, by combining Corollary~\ref{cor: subsec: summing in the redshift estimate, cor 1} and Theorem~\ref{prop: sec: continuity argument, prop 1}. 

In Section~\ref{sec: proof of boundedness} we prove the boundedness estimates of our main Theorem~\ref{main theorem 1}. 

In Section~\ref{sec: proof of main theorem in axisymmetry} we prove the Morawetz and boundedness estimates of our main Theorem~\ref{main theorem 1}, for axisymmetric solutions of the Klein--Gordon equation~\eqref{eq: kleingordon}, for the full subextremal range without the restriction~(MS).

The Sections~\ref{sec: trapping},~\ref{sec: frequency localized multiplier estimates},~\ref{sec: frequencies} are auxilliary in the proof of the fixed frequency Theorem~\ref{thm: sec: proofs of the main theorems, thm 3}, which is proved in Section~\ref{sec: proof: prop: sec: proofs of the main theorems}. Specifically, in Section~\ref{sec: trapping} we study the critical points of the potential~$V_0$. In Section~\ref{sec: frequency localized multiplier estimates} we present fixed frequency currents. In Section~\ref{sec: frequencies} we define the frequency regimes. In Section~\ref{sec: proof: prop: sec: proofs of the main theorems} we finally prove the fixed frequency Theorem~\ref{thm: sec: proofs of the main theorems, thm 3}, as a Corollary of a more general statement, see Theorem~\ref{thm: sec: proofs of the main theorems}. Specifically, to prove Theorem~\ref{thm: sec: proofs of the main theorems}, we integrate appropriately chosen currents as in Section~\ref{sec: frequency localized multiplier estimates} in the frequency regimes of Section~\ref{sec: frequencies}. The estimates of Section~\ref{sec: proof: prop: sec: proofs of the main theorems} should be thought of as the heart of our argument.

In Appendix~\ref{sec: appendix, sec 1} we prove that the set of subextremal Kerr--de~Sitter black hole parameters~$\mathcal{B}_l$, for any~$l>0$, is connected.

In Appendix~\ref{sec: geodesics} we prove that for appropriate subextremal Kerr--de~Sitter black hole parameters~$(a,M,l)$ there exist trapped null and~$\partial_t$-orthogogal geodesics.

\subsection{Acknowledgments} The author would like to thank his PhD supervisor M. Dafermos for introducing him to this problem and for his invaluable suggestions and comments throughout the author's PhD studies.

A preliminary version of the results of the current paper were outlined in my thesis~\cite{mavrogiannisthesis}, while the author was a PhD student at DPMMS Cambridge and a resident of Fitzwilliam College Cambridge.

\section{The Kerr--de~Sitter spacetime}\label{sec: the kerr de sitter spacetime}

We define the background manifold and the black hole horizons. 

\begin{definition}\label{def: sec: the kerr de sitter spacetime, def 1}
We define the `fixed' coordinates 
\begin{equation}
(y^\star,t^\star,\theta^\star,\varphi^\star)
\end{equation}
on the manifold $\mathbb{R}\times\mathbb{R}\times\mathbb{S}^2$. We define the event horizon and the cosmological horizon respectively as  
\begin{equation}
	\begin{aligned}
		&   \mathcal{H}^{+}\:\dot{=}\:\{ y^{\star}=0\},\qquad \bar{\mathcal{H}}^{+}\:\dot{=}\:\{y^{\star}=1\}.\\
	\end{aligned}
\end{equation}
We define the following Kerr--de~Sitter background manifold with boundary
\begin{equation}
    \mathcal{M}\:\dot{=}\:[0,1]_{y^{\star}}\times\mathbb{R}_{t^\star}\times\mathbb{S}^{2}_{(\theta^\star,\varphi^\star)}.
\end{equation}
\end{definition}

\subsection{The $\Delta$ polynomial and the subextremal parameters}\label{subsec: delta polynomial}

Let $(a,M,l)\in\mathbb{R}\times\mathbb{R}_+\times\mathbb{R}_+$. We define the function
\begin{equation}\label{eq: delta polynomial for the horizons}
    \Delta(r)\:\dot{=}\:\left(r^2+a^{2}\right)\left(1-\frac{r^{2}}{l^{2}}\right)-2Mr,
\end{equation}
viewed as a polynomial in $r\in\mathbb{R}$, also see Figure~\ref{fig: Delta}.

\begin{figure}[htbp]
    \centering
    \includegraphics[scale=0.9]{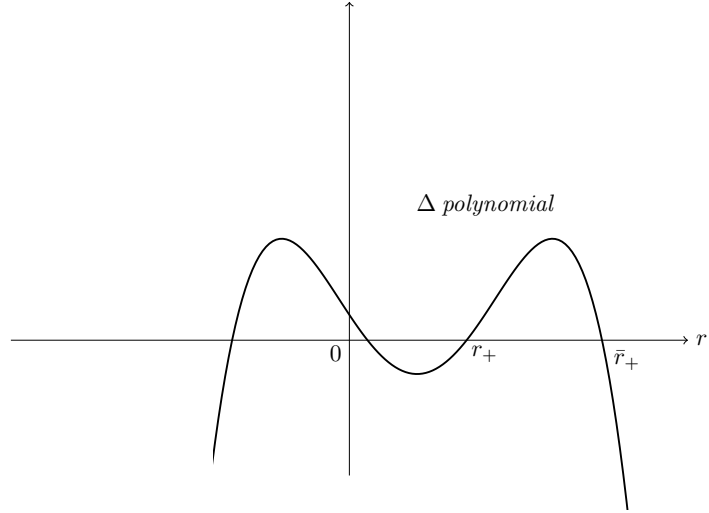}
    \caption{The $\Delta$ polynomial with 4 real roots}
    \label{fig: Delta}
\end{figure}

We next define the subextremal set of Kerr--de~Sitter black hole parameters.

\begin{definition}\label{def: subextremality, and roots of Delta}
Let~$l>0$. We define the open set
\begin{equation}\label{eq: subextremal set}
    \mathcal{B}_{l}\:\dot{=}\:\{ (a,M)\in \mathbb{R}\times(0,\infty): ~ \Delta ~\textrm{attains four distinct real roots} \}.
\end{equation}

If $l>0$ and $(a,M)\in\mathcal{B}_l$ we say that the black hole parameters $(a,M,l)$ correspond to a subextremal Kerr--de~Sitter black hole and denote as 
\begin{equation}
    \bar{r}_-(a,M,l)\quad <\quad 0 \quad \leq\quad r_-(a,M,l)\quad<\quad r_+(a,M,l)\quad<\quad \bar{r}_+(a,M,l)
\end{equation}
the four distinct real roots of the $\Delta$ polynomial. Note that in the Schwarzschild--de~Sitter case~$(a=0)$ we have~$r_-=0$. 
\end{definition}

Note the following lemma

\begin{lemma}\label{lem: subsec: delta polynomial, lem 1}
Let~$l>0$. Then, the set~$\mathcal{B}_l$, see Definition~\ref{def: subextremality, and roots of Delta}, is connected. 
\end{lemma}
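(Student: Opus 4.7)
The plan is to parametrize the half-space $\mathcal{B}_l \cap \{a \geq 0\}$ by the ordered roots of $\Delta$, and then glue across $a=0$ using the symmetry $a \mapsto -a$. First I would observe that $\Delta(r) = -r^4/l^2 + (1 - a^2/l^2)r^2 - 2Mr + a^2$ has vanishing $r^3$-coefficient, so by Vieta's formulas the four real roots must satisfy $\bar{r}_- + r_- + r_+ + \bar{r}_+ = 0$. Thus, given any triple $(r_-, r_+, \bar{r}_+)$ with $0 \leq r_- < r_+ < \bar{r}_+$, the fourth root is forced to be $\bar{r}_- = -(r_- + r_+ + \bar{r}_+)$, which is automatically negative (since $\bar{r}_+ > 0$) and smaller than $r_-$ (since $r_- \geq 0 > \bar{r}_-$). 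The remaining Vieta relations then yield
\[
a^2 = \frac{(r_- + r_+ + \bar{r}_+)\, r_- r_+ \bar{r}_+}{l^2} \geq 0,
\]
together with an explicit polynomial expression for $M$ in $(r_-, r_+, \bar{r}_+)$.

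Second, I would expand this expression for $M$ and verify it equals a sum of nonnegative monomials in $r_-, r_+, \bar{r}_+$ containing the strictly positive term $(r_+^2 \bar{r}_+ + r_+ \bar{r}_+^2)/(2l^2)$; hence $M > 0$ throughout. This defines a map
\[
\Phi : \{(r_-, r_+, \bar{r}_+) \in \mathbb{R}^3 : 0 \leq r_- < r_+ < \bar{r}_+\} \longrightarrow \mathcal{B}_l \cap \{a \geq 0\}, \qquad (r_-, r_+, \bar{r}_+) \mapsto (a, M),
\]
which I would then show is a homeomorphism: surjectivity is immediate (every $(a,M) \in \mathcal{B}_l$ with $a \geq 0$ furnishes such a triple); injectivity holds because the ordered roots determine $(a^2, M)$ uniquely and $a \geq 0$ then fixes $a$; and bicontinuity follows from the classical continuous dependence of simple roots of a polynomial on its coefficients. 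Since the strict order $0 \leq r_- < r_+ < \bar{r}_+$ is preserved by convex combinations, the domain of $\Phi$ is convex in $\mathbb{R}^3$ and hence connected; therefore so is $\mathcal{B}_l \cap \{a \geq 0\}$.

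Finally, because $\Delta$ depends on $a$ only through $a^2$, the reflection $a \mapsto -a$ is a homeomorphism of $\mathcal{B}_l$ interchanging $\{a \geq 0\}$ and $\{a \leq 0\}$; hence $\mathcal{B}_l \cap \{a \leq 0\}$ is connected as well. The two halves share the nonempty subextremal Schwarzschild--de~Sitter slice $\mathcal{B}_l \cap \{a = 0\} = \{0\} \times (0, l/(3\sqrt{3}))$, so their union $\mathcal{B}_l$ is connected.

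I expect the only real point of care to be the positivity of $M$, which is purely a bookkeeping check: expanding the Vieta expression for $M$ in terms of $(r_-, r_+, \bar{r}_+)$ and recognizing it as a sum of nonnegative monomials with at least one strictly positive summand. No analytic input beyond continuity of simple polynomial roots is required.
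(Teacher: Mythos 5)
Your construction has a genuine gap: the map $\Phi$ is not a homeomorphism onto $\mathcal{B}_l \cap \{a \geq 0\}$, and in fact its image is not even contained in $\mathcal{B}_l$. The cleanest way to see that something must fail is a dimension count: your domain $\{(r_-, r_+, \bar{r}_+) : 0 \leq r_- < r_+ < \bar{r}_+\}$ has nonempty interior in $\mathbb{R}^3$, whereas $\mathcal{B}_l \cap \{a \geq 0\}$ is an open subset of the $(a,M)$-plane, so no homeomorphism between the two can exist.

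The mechanism behind the mismatch is that you only use two of the three remaining Vieta relations. Once $\bar{r}_- = -(r_- + r_+ + \bar{r}_+)$ is imposed, the monic-normalized quartic $-l^{-2}\prod(r-r_i)$ has three free coefficients (in degrees $2,1,0$), but $\Delta_{a,M,l}$ has only two free parameters, because $a^2$ appears simultaneously in the $r^2$-coefficient, $1 - a^2/l^2$, and in the constant term, $a^2$. Concretely, the roots of $\Delta_{a,M,l}$ must satisfy the compatibility constraint
\[
\sum_{i<j} r_i r_j + l^2 \;=\; -\frac{1}{l^2}\prod_i r_i,
\]
obtained by eliminating $a^2$ between the $r^2$- and $r^0$-Vieta relations. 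Your $\Phi$ extracts $a^2$ from the product relation and $M$ from the triple-product relation, but never imposes this constraint; for a generic triple in your convex cone the constraint fails, the polynomial $\Delta_{a,M,l}$ built from the resulting $(a,M)$ then has the wrong $r^2$-coefficient and has no reason to admit four distinct real roots. One can check numerically, e.g.\ with $l=1$ and $(r_-,r_+,\bar{r}_+)=(0.1,\,0.2,\,0.3)$, that the resulting $(a,M)=(0.06,\,0.03)$ yields a $\Delta$ with only two real roots, so $(a,M)\notin\mathcal{B}_l$. Thus the set you would actually need to prove connected is the codimension-one constraint surface inside your cone, and convexity of the ambient cone gives no information about it. The paper sidesteps this entirely by never parametrizing by roots: it characterizes ``four distinct real roots'' as a discriminant-type inequality $P(a^2/l^2, M^2/l^2) > 0$ in the coefficients and then shows directly that the resulting planar semi-algebraic region is connected via radial slices and a tubular neighborhood of its boundary curve.
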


\begin{proof}
	See Appendix~\ref{sec: appendix, sec 1}.
\end{proof}

\subsection{Modified Kerr--de~Sitter-star coordinates}\label{subsec: Kerr dS star coordinates}

We define the modified Kerr--de~Sitter star coordinates, which cover the entire background manifold~$\mathcal{M}$ of Definition~\ref{def: sec: the kerr de sitter spacetime, def 1}.

\begin{definition}\label{def: kerr star coordinates, manifold}
Let $l>0$ and~$(a,M)\in\mathcal{B}_l$. We choose a smooth map
\begin{equation}
	\begin{aligned}
		r		: \mathbb{R}_{>0}\times \mathcal{B}_l \times [0,1] \:\: &\rightarrow [r_{+}(a,M,l),\bar{r}_{+}(a,M,l)] \\
		 (l,a,M,y^\star) &\mapsto r (a,M,l,y^\star) \in [r_{+}(a,M,l),\bar{r}_{+}(a,M,l)],
	\end{aligned}
\end{equation}
such that for fixed~$l,a,M$ the map is a diffeomorphism~$[0,1]\rightarrow [r_+,\bar{r}_+]$. Then, we define the modified Kerr--de~Sitter star coordinates to be
\begin{equation}
(r,t^\star,\theta^\star,\varphi^\star).
\end{equation} 
\end{definition}

The Kerr--de~Sitter manifold~$\mathcal{M}$, of Definition~\ref{def: sec: the kerr de sitter spacetime, def 1}, can now be written as 
\begin{equation}
    \mathcal{M}=[r_+,\bar{r}_+]_r \times \mathbb{R}_{t^\star}\times \mathbb{S}^2_{(\theta^\star,\varphi^\star)},
\end{equation}
where
\begin{equation}
\begin{aligned}
&   \mathcal{H}^{+}\:\dot{=}\:\{ r=r_+\},\qquad \bar{\mathcal{H}}^{+}\:\dot{=}\:\{r=\bar{r}_+\}.\\
\end{aligned}
\end{equation}

We have the following remarks

\begin{remark}\label{rem: sec: the kerr de sitter spacetime, rem 1}
	Note from Definition~\ref{def: sec: the kerr de sitter spacetime, def 1} that the Kerr--de~Sitter star vector fields~$\partial_{t^\star},\partial_{\varphi^\star}$ are independent of the black hole parameters~$(a,M,l)$.
\end{remark}

\begin{remark}\label{rem: sec: the kerr de sitter spacetime, rem 2}
	Note that the leaves
	\begin{equation}
		\{t^\star=c\}
	\end{equation}
	of the modified Kerr--de~Sitter star coordinates of Definition~\ref{def: kerr star coordinates, manifold} connect the event horizon~$\mathcal{H}^+$ with the cosmological horizon~$\bar{\mathcal{H}}^+$. Moreover, note that the leaves~$\{t^\star=c\}$ are independent of the black hole parameters~$(a,M,l)$. This will be useful later in the continuity argument, see already Section~\ref{sec: continuity argument}.
\end{remark}

\subsection{Boyer--Lindquist coordinates}\label{subsec: boyer Lindquist coordinates}
We need the following definition

\begin{definition}\label{def: delta+, and other polynomials}
Let~$l>0$ and~$(a,M)\in \mathcal{B}_{l}$. Then, we define
\begin{equation}
    \begin{aligned}
        \rho^2 \:\dot{=}&\:r^{2}+a^{2}\cos^{2}\theta,\qquad \Delta_{\theta}\:\dot{=}\:1+\frac{a^{2}}{l^{2}} \cos^{2}\theta,\qquad \Xi\:\dot{=}\:1+\frac{a^{2}}{l^{2}}. 
    \end{aligned}
\end{equation}
\end{definition}

We define the Boyer--Lindquist coordinates

\begin{definition}\label{def: boyer lindquist}
Let~$l>0$ and~$(a,M)\in\mathcal{B}_{l}$. Moreover, let~$\epsilon_{BL}(a,M,l)>0$ be sufficiently small. We define the Boyer--Lindquist coordinates 
\begin{equation}
(t,r,\theta,\varphi)
\end{equation}
on $\mathcal{M}^o$ as follows
\begin{equation}
    t=t^\star+A(r;l,a,M),\qquad \varphi=\varphi^\star+B(r;l,a,M)\mod{2\pi}, \qquad \theta=\theta^\star.
\end{equation}
Here,~$A(r),B(r)$ satisfy
\begin{equation}\label{eq: def: boyer lindquist, eq 2}
    \begin{aligned}
     \frac{dA}{dr}    =-\frac{r^2+a^2}{\Delta},\qquad\frac{dB}{dr} &=-\frac{a\Xi}{\Delta},\qquad r\in (r_+,r_++\epsilon_{BL}),\\
     \frac{dA}{dr} =\frac{r^2+a^2}{\Delta},\qquad\frac{d B}{dr} &= \frac{a\Xi}{\Delta},\qquad r \in (\bar{r}_+-\epsilon_{BL},\bar{r}_+),
     \end{aligned}
\end{equation}
and we extend~$A(r(y^\star);l,a,M),B(r(y^\star);l,a,M)$ to~$(0,1)\times \bigcup_{l>0}\left(\{l\}\times\mathcal{B}_l\right)$ in a smooth manner such that the hypersurfaces~$\{t^\star=c\}$ are spacelike with respect to the metric~$g_{a,M,l}$, see already Definition~\ref{def: subsec: boyer Lindquist coordinates, def 1}. 
\end{definition}

We now define the Kerr--de~Sitter metric in Boyer--Lindquist coordinates to be 

\begin{definition}\label{def: subsec: boyer Lindquist coordinates, def 1}
	Let~$l>0$ and~$(a,M)\in\mathcal{B}_l$. We define the Kerr--de~Sitter metric as 
	\begin{equation}\label{eq: metric in boyer lindquist coordinates}
	\begin{aligned}
	g_{a,M,l} &	= \frac{\rho^2}{\Delta}dr^{2}+\frac{\rho^2}{\Delta_{\theta}}d \theta^{2}+\frac{\Delta_{\theta}(r^{2}+a^{2})^{2}-\Delta a^{2}\sin^{2}\theta }{\Xi ^{2}\rho^2}\sin^{2}\theta d \varphi^{2}   -2\frac{\Delta_{\theta}(r^{2}+a^{2})-\Delta}{\Xi\rho^2} a \sin^{2}\theta d\varphi dt\\
	&\qquad\qquad -\frac{\Delta -\Delta_{\theta}a^{2}\sin^{2}\theta}{\rho^2}dt^{2},
	\end{aligned}
	\end{equation}
	where $\Delta$ has been defined in~\eqref{eq: prototype Delta}, and  also see Definition~\ref{def: delta+, and other polynomials} for~$\Xi,\rho^2,\Delta_\theta$. The metric~\eqref{eq: metric in boyer lindquist coordinates} extends smoothly to~$\mathcal{M}$ and moreover the metric depends smoothly on~$a,M$ for fixed~$l$.

	Note that the vector field~$W=\partial_t+\frac{a\Xi}{r^2+a^2}\partial_{\varphi}$ is timelike away from the horizons, see already Lemma~\ref{lem: causal vf E,1}. We time orient~$\mathcal{M}$ by the vector~$W$.  
\end{definition}

In view of the form of the metric~$g_{a,M,l}$, we may deduce that the event horizon~$\mathcal{H}^+$ and the cosmological horizon~$\bar{\mathcal{H}}^+$ are null hypersurfaces and moreover are the future boundaries of the manifold~$\mathcal{M}$.

\begin{remark}
Note the identities $\partial_t=\partial_{t^\star},\partial_{\varphi}=\partial_{\varphi^\star}$.
\end{remark}

\subsection{The ergoregion in Kerr--de~Sitter}\label{subsec: ergoregion}

We define the ergoregion of the Kerr--de~Sitter spacetime to be 
\begin{equation}\label{eq: subsec: ergoregion, eq 1}
	\textit{Ergo}\:\dot{=}\: \{g(\partial_t,\partial_t)>0\}= \{\Delta-\Delta_\theta a^2\sin^2\theta< 0\}.
\end{equation}

\begin{remark}
In Section~\ref{sec: geodesics} we will prove that~$\partial_t$ orthogonal trapping takes place in Kerr--de~Sitter if and only if the ergoregion is connected and non empty, namely when
	\begin{equation}
		\max_{r\in[r_+,\bar{r}_+]}\frac{\Delta}{a^2}\leq 1,\quad |a|\not{=}0.
	\end{equation}
\end{remark}

\subsection{Tortoise coordinate}\label{subsec: tortoise coordinate}

Let~$l>0$ and~$(a,M)\in \mathcal{B}_l$. We define the tortoise coordinate~$r^\star:(r_+,\bar{r}_+)\rightarrow (-\infty,+\infty)$ as follows 
\begin{equation}\label{eq: tortoise coordinate}
    \frac{dr^\star}{dr}=\frac{r^{2}+a^{2}}{\Delta(r)},
\end{equation}
where for~$\Delta(r)$ see~\eqref{eq: delta polynomial for the horizons}, with~$r^\star(r_{\Delta,\textit{frac}})=0$, where 
\begin{equation}
r_{\Delta,\textit{frac}}\in (r_+,\bar{r}_+)
\end{equation}
is the unique local maximum of 
\begin{equation}
\frac{\Delta}{(r^2+a^2)^2},
\end{equation}
see already Lemma~\ref{lem: sec: general properties of Delta, lem 3}. We note that~$\lim_{r\rightarrow r_+} r^\star (r)=-\infty$ and~$\lim_{r\rightarrow\bar{r}_+} r^\star(r)=+\infty$, and moreover note that~$r^\star(r)$ is a diffeomorphism mapping~$(r_+,\bar{r}_+)\rightarrow (-\infty,+\infty)$.

We will often use the primed notation
\begin{equation}
	^\prime=\frac{d}{dr^\star}
\end{equation}
for the~$r^\star$-derivative. Finally, for a value~$\alpha\in (r_+,\bar{r}_+)$, we will often use the notation 
\begin{equation}
	\alpha^\star=r^\star (\alpha).
\end{equation}

\subsection{The regular vector field \texorpdfstring{$Z^\star$}{Z}}\label{subsec: boldsymbol partial r}

Let~$l>0$ and~$(a,M)\in\mathcal{B}_l$. Let~$\partial_r|_{star}$ be the coordinate vector field of the modified Kerr--de~Sitter star coordinates~$(t^\star,r,\theta^\star,\varphi^\star)$, see Definition~\ref{def: kerr star coordinates, manifold}, and~$\partial_r|_{\textit{BL}}$ be the coordinate vector field of the Boyer--Lindquist coordinates~$(t,r,\theta,\varphi)$, see Definition~\ref{def: boyer lindquist}. We will define here a hybrid vector field.

For a sufficiently small
\begin{equation}
\epsilon_{\textit{hyb}}(a,M,l)>0,
\end{equation}
we choose a smooth cut-off that satisfies 
\begin{equation}
\chi_{\textit{hyb}}(r)=
\begin{cases}
0,\:& r\in (r_+,r_++\epsilon_{\textit{hyb}})\cup (\bar{r}_+-\epsilon_{\textit{hyb}},\bar{r}_+),\\
1,\: & r\in(r_++2\epsilon_{\textit{hyb}},\bar{r}_+-2\epsilon_{\textit{hyb}}). 
\end{cases}    
\end{equation}

Then, we define the hybrid vector field 
\begin{equation}\label{eq: subsec: boldsymbol partial r, eq 1}
    Z^\star= (1-\chi_{\textit{hyb}}(r))\partial_r|_{\textit{star}}+\chi_{\textit{hyb}}(r) \partial_{r}|_{\textit{BL}}.
\end{equation}
It is easy to see from the Definitions~\ref{def: kerr star coordinates, manifold},~\ref{def: boyer lindquist} of the Kerr--de~Sitter star coordinates and the Boyer--Lindquist coordinates respectively and from the definition of the Kerr--de~Sitter metric, see Definition~\ref{def: subsec: boyer Lindquist coordinates, def 1}, that for a sufficiently small~$\epsilon_{hyb}(a,M,l)>0$, the vector field~$Z^\star$ is spacelike. Also see the relevant computation of~\cite{Dyatlov1}.

The vector field $Z^\star$ coincides with the tortoise coordinate vector field $\partial_{r}|_{BL}$, of Boyer--Lindquist coordinates, in the region~$r\in[R^-,R^+]$, where for~$R^\pm$ see Theorem~\ref{main theorem 1}.

The hybrid vector field~\eqref{eq: subsec: boldsymbol partial r, eq 1} is translation invariant and regular on the event horizon~$\mathcal{H}^+$ and on the cosmological horizon~$\bar{\mathcal{H}}^+$. The significance of~$Z^\star$ will be that the derivatives~$(Z^\star\psi)^2$ do \texttt{not} degenerate in the integrand of the Morawetz estimate~\eqref{eq: rough: theorems 1, eq 1} of Theorem~\ref{main theorem 1}.

\subsection{Spacelike hypersurfaces and their normal vector fields}\label{subsec: admissible hypersurfaces}

A prototype spacelike hypersurface that connects the event horizon~$\mathcal{H}^+$ with the cosmological horizon~$\bar{\mathcal{H}}^+$ is
\begin{equation}
\{t^\star=0\}.
\end{equation}

For any~$\tau\geq 0$ the hypersurfaces 
\begin{equation}
    \{t^\star=\tau\}
\end{equation}
are Cauchy for their future and connect the event horizon~$\mathcal{H}^+$ with the cosmological horizon~$\bar{\mathcal{H}}^+$. We denote as 
\begin{equation}
    n_{\{t^\star=\tau\}} 
\end{equation}
the future directed unit normal of the hypersurfaces~$\{t^\star=\tau\}$, which we often simply denote as 
\begin{equation}\label{eq: Hawking vector field on the event horizon, -1}
	n.
\end{equation}

\subsection{Hawking--Reall vector fields}\label{subsec: Hawking-Reall v.f.}

We define the null normals of the event horizon~$\mathcal{H}^+$ and the cosmological horizon~$\bar{\mathcal{H}}^+$ respectively as the Hawking--Reall vector fields
\begin{equation}\label{eq: Hawking vector field on the event horizon}
K=\partial_{t^\star}+\frac{a\Xi}{r_{+}^{2}+a^{2}}\partial_{\varphi^\star},\qquad \bar{K}=\partial_{t^\star}+\frac{a\Xi}{\bar{r}_{+}^{2}+a^{2}}\partial_{\varphi^\star}.
\end{equation}

When there is no risk of confusion with~\eqref{eq: Hawking vector field on the event horizon, -1} we will denote the normal vector fields~\eqref{eq: Hawking vector field on the event horizon} simply as 
\begin{equation}
	n.
\end{equation}

\subsection{Causal domains}\label{subsec: causal domains}

We define certain causal spacetime domains.

\begin{definition}\label{def: causal domain, def 1}
We define
\begin{equation}
    D(\tau_1,\tau_2)\:\dot{=}\: \{\tau_1\leq t^\star\leq \tau_2\},\qquad     D(\tau_1,\infty)\:\dot{=}\: \{t^\star\geq \tau_1\},
\end{equation}
where for the hypersurfaces~$\{t^\star=\tau\}$ see Section~\ref{subsec: admissible hypersurfaces}.
\end{definition}

We here extend the domain of Definition~\ref{def: causal domain, def 1}. 

\begin{definition}\label{def: causal domain, def 2}
	Let~$l>0$ and~$(a,M)\in\mathcal{B}_l$. Let~$\delta>0$ be sufficiently small. We define the extended manifold 
	\begin{equation}
		\mathcal{M}_{\delta}=[r_+-\delta,\bar{r}_++\delta]_r\times \mathbb{R}_{t^\star}\times\mathbb{S}^2_{(\theta^\star,\varphi^\star)},
	\end{equation}
	with respect to the Kerr--de~Sitter star coordinates~$(t^\star,r,\theta^\star,\varphi^\star)$ see Section~\ref{subsec: sec: preliminaries, subsec 2}. The Kerr--de~Sitter metric~\eqref{eq: metric in boyer lindquist coordinates} extends smoothly to~$\mathcal{M}_\delta$ by its analytic expression near~$r_+,\bar{r}_+$ in Kerr--de~Sitter star coordinates. Note that the hypersurfaces~$\{t^\star=c\}$ in~$\mathcal{M}_\delta$ are spacelike for sufficiently small~$\delta>0$. We define the spacelike boundaries of the manifold~$\mathcal{M}_\delta$ to be 
	\begin{equation}
		\mathcal{H}^+_\delta=\{r=r_+-\delta,\:t^\star> -\infty\},\qquad \bar{\mathcal{H}}^+_\delta=\{r=\bar{r}_++\delta,\: t^\star> -\infty\}.
	\end{equation}
	Now, we define the extended causal domains
	\begin{equation}
		D_\delta(\tau_1,\tau_2)\:\dot{=}\: \{\tau_1 \leq t^\star\leq \tau_2\}\subset \mathcal{M}_\delta,\qquad  D_\delta(\tau_1,+\infty)\:\dot{=}\: \{t^\star\geq \tau_1\}\subset\mathcal{M}_\delta. 
	\end{equation}
\end{definition}

\subsection{The induced metric on the $r$-sphere and its covariant derivative}\label{subsec: unit sphere}
We use
\begin{equation}
\slashed{g},\qquad \slashed{\nabla},\qquad d\sigma_{\mathbb{S}^2}
\end{equation}
to denote respectively the induced~$g_{a,M,l}$ metric on the~$\mathbb{S}^2$ factors of~$\mathcal{M}$, the covariant derivative with respect to~$\slashed{g}$ and the standard metric of the unit sphere.

\subsection{Volume forms and notation}\label{subsec: volume forms of spacelike hypersurfaces}

The spacetime volume form is
\begin{equation}\label{eq: subsec: volume forms of spacelike hypersurfaces, eq 0}
dg= v(r,\theta) dt^\star drd\slashed{g}=\frac{\Delta}{r^{2}+a^{2}} v(r,\theta)dt^\star dr^\star d\slashed{g},
\end{equation}
and~$d\slashed{g}= v(r,\theta)\frac{\rho^2}{\Xi}  \sin\theta d\theta d\varphi$ with~$v(r,\theta)\sim 1$, where for~$\Delta,\:\rho^2=r^2+a^2\cos^2\theta$ see~\eqref{eq: delta polynomial for the horizons} and~Definition~\ref{def: delta+, and other polynomials} respectively.

We denote the induced volume form of the spacelike hypersurface~$\{t^\star=\tau\}$ as  
\begin{equation}\label{eq: subsec: volume forms of spacelike hypersurfaces, eq 1}
	dg_{\{t^\star=\tau\}}.
\end{equation}
It can be characterised as the unique 3-form such that 
\begin{equation}
    dg= n_{\{t^\star=\tau\}} \wedge dg_{\{t^\star=\tau\}},
\end{equation}
where~$n_{\{t^\star=\tau\}}$ is the unit normal of the leaf~$\{t^\star=\tau\}$ see Section~\ref{subsec: admissible hypersurfaces}. Note that the volume form~\eqref{eq: subsec: volume forms of spacelike hypersurfaces, eq 1}, can be written as 
\begin{equation}
	dg_{\{t^\star=\tau\}}= v(r,\theta) dr d\theta^\star d\varphi^\star,
\end{equation}
with~$v(r,\theta)\sim r\sin\theta$, where the constant in the similarity depends only on the black hole parameters~$(a,M,l)$ and does not degenerate in the Kerr limit~$l\rightarrow+\infty$.

We denote the volume forms of the null hypersurfaces~$\mathcal{H}^+,\bar{\mathcal{H}}^+$ respectively as 
\begin{equation}\label{eq: subsec: volume forms of spacelike hypersurfaces, eq 2}
	dg_{\mathcal{H}^+},\qquad dg_{\bar{\mathcal{H}}^+}
\end{equation}
where these are defined to be the unique 3-forms such that
\begin{equation}
	dg=(K^+)^\flat\wedge dg_{\mathcal{H}^+},\qquad dg=(\bar{K}^+)^\flat \wedge dg_{\bar{\mathcal{H}}^+},
\end{equation}
where~$\flat$ here is the flat-musical isomorphism with respect to the Kerr--de~Sitter metric, see~\eqref{eq: metric in boyer lindquist coordinates}, and~$K,\bar{K}$ are the Hawking--Reall vector fields of Section~\ref{subsec: Hawking-Reall v.f.}.

From this point onward, when we integrate over the hypersurfaces 
\begin{equation}
	\{t^\star=\tau\},\qquad \mathcal{H}^+,\qquad \bar{\mathcal{H}}^+
\end{equation}
the volume forms~\eqref{eq: subsec: volume forms of spacelike hypersurfaces, eq 1},~\eqref{eq: subsec: volume forms of spacelike hypersurfaces, eq 2} respectively are to be understood, if no volume form is explicitly denoted. Similarly, when we integrate over spacetime the volume form~\eqref{eq: subsec: volume forms of spacelike hypersurfaces, eq 0} is to be understood, if no volume form is explicitly denoted.

\subsection{Coarea formula}\label{subsec: coarea formula}

Let~$f$ be a continuous non-negative function. Then, note that the following holds 
\begin{equation}\label{eq: subsec: coarea formula, eq 1}
\int\int_{D(\tau_1,\tau_2)} \frac{1}{r}\: f dg \sim  \int _{\tau_1}^{\tau_2}d\tau \int_{\{t^\star=\tau\}} f dg_{\{t^\star=\tau\}},
\end{equation}
where for the volume forms of the integrals of~\eqref{eq: subsec: coarea formula, eq 1} see Section~\ref{subsec: volume forms of spacelike hypersurfaces}. (Note that the constants in the similarity of inequality~\eqref{eq: subsec: coarea formula, eq 1}, with the inclusion of the~$\frac{1}{r}$ factor, do not degenerate in the Kerr limit~$l=\infty$.)

\subsection{The future oriented vector field~\texorpdfstring{$W$}{g}}

Note the following Lemma 

\begin{lemma}\label{lem: causal vf E,1}
	Let~$l>0$ and~$(a,M)\in\mathcal{B}_l$. The vector field
	\begin{equation}
	W\:\dot{=}\:\partial_{t^\star}+\frac{a\Xi}{r^2+a^2} \partial_{\varphi^\star},  
	\end{equation}
	is timelike in~$\{r_+<r<\bar{r}_+\}$ and null on the event horizon~$\mathcal{H}^+$ and on the cosmological horizon~$\bar{\mathcal{H}}^+$.
\end{lemma}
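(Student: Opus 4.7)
The plan is a direct computation of $g(W,W)$ using the Boyer--Lindquist form of the metric from Definition~\ref{def: subsec: boyer Lindquist coordinates, def 1}, together with the identification $\partial_{t^\star}=\partial_t,\ \partial_{\varphi^\star}=\partial_\varphi$ noted above. Although these coordinates strictly cover only the interior region $\{r_+<r<\bar r_+\}$, all relevant expressions are polynomial in $r$ (after multiplying through by appropriate positive factors) and extend smoothly to the horizons; so it suffices to carry out the computation in the interior and then read off the values at $r=r_+$ and $r=\bar r_+$ by continuity. (Alternatively, one computes in Kerr--de~Sitter star coordinates, where the metric is manifestly regular up to and including the horizons.)

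The concrete steps are as follows. First, I would write
\begin{equation*}
g(W,W) = g_{tt} + 2\,\frac{a\Xi}{r^2+a^2}\,g_{t\varphi} + \Bigl(\frac{a\Xi}{r^2+a^2}\Bigr)^2 g_{\varphi\varphi},
\end{equation*}
and substitute the coefficients
\begin{equation*}
g_{tt}=-\frac{\Delta-\Delta_\theta a^2\sin^2\theta}{\rho^2},\quad g_{t\varphi}=-\frac{\Delta_\theta(r^2+a^2)-\Delta}{\Xi\rho^2}\,a\sin^2\theta,\quad g_{\varphi\varphi}=\frac{\Delta_\theta(r^2+a^2)^2-\Delta a^2\sin^2\theta}{\Xi^2\rho^2}\sin^2\theta,
\end{equation*}
read off from \eqref{eq: metric in boyer lindquist coordinates}. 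Clearing the common factor $1/\rho^2$ and setting $s=r^2+a^2$, $u=a^2\sin^2\theta$ for convenience, the terms involving $\Delta_\theta$ conveniently collect into $\Delta_\theta u(1-2+1)=0$, leaving only
\begin{equation*}
\rho^2\,g(W,W) \;=\; -\Delta\Bigl(1-\frac{2u}{s}+\frac{u^2}{s^2}\Bigr) \;=\; -\Delta\Bigl(\frac{s-u}{s}\Bigr)^2 \;=\; -\Delta\Bigl(\frac{r^2+a^2\cos^2\theta}{r^2+a^2}\Bigr)^2 \;=\; -\Delta\,\frac{\rho^4}{(r^2+a^2)^2}.
\end{equation*}
Dividing by $\rho^2>0$ yields the clean identity
\begin{equation*}
g(W,W) \;=\; -\frac{\Delta\,\rho^2}{(r^2+a^2)^2}.
\end{equation*}

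The conclusion is then immediate: by Definition~\ref{def: subextremality, and roots of Delta} we have $\Delta>0$ on the open interval $(r_+,\bar r_+)$ and $\Delta(r_+)=\Delta(\bar r_+)=0$, while $\rho^2>0$ and $(r^2+a^2)^2>0$ everywhere. Hence $g(W,W)<0$ on $\{r_+<r<\bar r_+\}$ and $g(W,W)=0$ on $\mathcal H^+\cup\bar{\mathcal H}^+$, as claimed. There is no real obstacle here; the only thing to be a bit careful about is that the identity was derived in Boyer--Lindquist coordinates which break down on the horizons, but the endpoint statement follows either by continuity of $g(W,W)$ (as a scalar) across $\mathcal H^+,\bar{\mathcal H}^+$ using the smooth extension of $g_{a,M,l}$ to $\mathcal M$, or by repeating the calculation in the regular coordinates of Section~\ref{subsec: Kerr dS star coordinates}.
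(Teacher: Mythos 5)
Your proof is correct, and it follows the same basic strategy as the paper's (a direct computation of $g(W,W)$ from the Boyer--Lindquist form of the metric, using $\partial_{t^\star}=\partial_t$, $\partial_{\varphi^\star}=\partial_\varphi$). The one genuine improvement is algebraic: you observe that the $\Delta_\theta$-terms cancel identically once $f(r)=\frac{a\Xi}{r^2+a^2}$ is inserted, and that the remaining $\Delta$-terms assemble into the perfect square $-\Delta\bigl(1-\tfrac{a^2\sin^2\theta}{r^2+a^2}\bigr)^2$, giving the clean closed form
\begin{equation*}
g(W,W)=-\frac{\Delta\,\rho^2}{(r^2+a^2)^2}.
\end{equation*}
The paper instead stops at the expression
\begin{equation*}
g(W,W)=\frac{\Delta}{\rho^2}\left(-1+\frac{2a^2\sin^2\theta}{r^2+a^2}-\frac{a^4\sin^4\theta}{(r^2+a^2)^2}\right)
\end{equation*}
and argues the sign of the bracket by invoking $r_+>|a|$ (Lemma~\ref{lem: sec: properties of Delta, lem 2, a,M,l}) to see that $-1+\tfrac{2a^2\sin^2\theta}{r^2+a^2}<0$, with the last term also nonpositive. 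Your version makes the sign manifest from $\Delta$ alone (since $\rho^2>0$ everywhere), so it dispenses with the auxiliary input $r_+>|a|$, and it makes the vanishing at the two horizons transparent. Your remark about extending the scalar $g(W,W)$ to the horizons by continuity (or verifying in the regular star coordinates) is the right way to handle the boundary cases.
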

\begin{proof}
	Consider a vector field of the form
	\begin{equation}
	\partial_{t^\star}+f(r)\partial_{\varphi^\star}, 
	\end{equation}
	for a smooth~$f(r)$. We compute the following 
	\begin{equation}\label{eq: proof lem: causal vf E, eq 1}
	\begin{aligned}
	&   g(\partial_t+f(r)\partial_{\varphi},\partial_t+f(r)\partial_{\varphi})\\
	&   \: =\frac{1}{\rho^2}\left(-(\Delta-\Delta_\theta a^2\sin^2\theta)-\frac{2}{\Xi}f(r)(\Delta_\theta(r^2+a^2)-\Delta)a\sin^2\theta +\frac{1}{\Xi^2}f^2(r)\left(\Delta_\theta (r^2+a^2)^2-\Delta a^2\sin^2\theta\right)\sin^2\theta\right)\\
	&   \: =\frac{1}{\rho^2}\left(\Delta +\sin^2\theta \left(\Delta_\theta \left(a-\frac{r^2+a^2}{\Xi}f(r)\right)^2+\frac{2}{\Xi}f(r)\Delta a -\frac{1}{\Xi^2}f^2(r)\Delta a^2\sin^2\theta\right)\right)\\
	&   \: =\frac{1}{\rho^2}\left(-\Delta +\sin^2\theta \frac{2}{\Xi}f(r)\Delta a +\sin^2\theta \left(\Delta_\theta\left(a-\frac{r^2+a^2}{\Xi}f(r)\right)^2-\frac{1}{\Xi^2}f^2(r)\Delta a^2\sin^2\theta\right) \right),\\
	\end{aligned}
	\end{equation}
	where~$\rho^2=r^2+a^2\cos^2\theta$,~$\Delta_{\theta}\:\dot{=}\:1+\frac{a^{2}}{l^{2}} \cos^{2}\theta$, see Definition~\ref{def: delta+, and other polynomials}.
	Therefore, for 
	\begin{equation}
	f(r)=\frac{a\Xi}{r^2+a^2},
	\end{equation}
	equation \eqref{eq: proof lem: causal vf E, eq 1} implies 
	\begin{equation}\label{eq: proof lem: causal vf E, eq 2}
	\begin{aligned}
	g(\partial_t+\frac{a\Xi}{r^2+a^2} \partial_{\varphi},\partial_t+\frac{a\Xi}{r^2+a^2} \partial_{\varphi})	=\frac{\Delta}{\rho^2}\left(-1 +\sin^2\theta  \frac{2a^2}{r^2+a^2} -\sin^2\theta \left(\frac{1}{\Xi^2}\left(\frac{a\Xi}{r^2+a^2}\right)^2 a^2\sin^2\theta\right) \right).
	\end{aligned}
	\end{equation}
	We note
	\begin{equation}
		r_+>|a|,
	\end{equation}
	see already Lemma~\ref{lem: sec: properties of Delta, lem 2, a,M,l}, and therefore, for~$r\in[r_+,\bar{r}_+]$ the following holds
	\begin{equation}
		-1+\sin^2\theta \frac{2}{\Xi}\frac{a^2\Xi}{r^2+a^2}=-1+\frac{2a^2\sin^2\theta}{r^2+a^2}<0.
	\end{equation}	
	 Therefore, indeed the right hand side of equation~\eqref{eq: proof lem: causal vf E, eq 2} is negative in~$(r_+,\bar{r}_+)$ and zero where~$r=r_+,\bar{r}_+$.
\end{proof}

\begin{remark}
	Note that our proof of Lemma~\ref{lem: causal vf E,1} also holds for any~$\Lambda=0$ Kerr black hole, simply by substituting~$\Xi=1$~$(l=+\infty)$ in the proof.  
\end{remark}

Recall that in Definition~\ref{def: subsec: boyer Lindquist coordinates, def 1} we chose the time orientation so that the vector field~$W$, of the above Lemma~\ref{lem: causal vf E,1}, is future oriented.

\section{Properties of the \texorpdfstring{$\Delta$}{Delta} polynomial}\label{sec: delta polynomial}

We need the following Lemma.

\begin{lemma}\label{lem: sec: properties of Delta, lem 1, derivatives of Delta}
Let~$l>0$ and~$(a,M)\in\mathcal{B}_l$. Then, the following hold:
    \begin{equation}\label{eq: lem: sec: properties of Delta, lem 1, derivatives of Delta, eq 1}
        \begin{aligned}
            &   \frac{d\Delta}{dr}=2r-\frac{4}{l^{2}}r^{3}-a^{2}\frac{2}{l^{2}}r-2M,\qquad \frac{d^2\Delta}{dr^2}=2-a^{2}\frac{2}{l^{2}}-\frac{12}{l^{2}}r^{2},\qquad \frac{d^3\Delta}{dr^3}=-\frac{24}{l^2}r, \\  
        \end{aligned}
    \end{equation}
    \begin{equation}\label{eq: lem: sec: properties of Delta, lem 1, derivatives of Delta, eq 3}
    	\frac{d\Delta}{dr}(r_+)>0,\quad \frac{d\Delta}{dr}(\bar{r}_+)<0,\quad \frac{d}{dr}\left(\frac{\Delta}{(r^2+a^2)^2}\right)(r_+)>0,\quad \frac{d}{dr}\left(\frac{\Delta}{(r^2+a^2)^2}\right)(\bar{r}_+)<0,
    \end{equation}
where for~$\Delta$ see~\eqref{eq: delta polynomial for the horizons}. 
\end{lemma}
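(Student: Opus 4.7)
The plan is to prove \eqref{eq: lem: sec: properties of Delta, lem 1, derivatives of Delta, eq 1} by brute differentiation and \eqref{eq: lem: sec: properties of Delta, lem 1, derivatives of Delta, eq 3} by a sign analysis of the quartic $\Delta$ together with the quotient rule.

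First, I would expand $\Delta(r)=(r^2+a^2)(1-r^2/l^2)-2Mr$ as a polynomial in $r$, namely
\begin{equation*}
\Delta(r)=-\frac{1}{l^2}r^4+\left(1-\frac{a^2}{l^2}\right)r^2-2Mr+a^2,
\end{equation*}
and differentiate term by term one, two, and three times. The three identities in \eqref{eq: lem: sec: properties of Delta, lem 1, derivatives of Delta, eq 1} follow immediately with no work to do.

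Next, for the sign identities at $r=r_+$ and $r=\bar r_+$, I would use the fact that $\Delta$ is a quartic with negative leading coefficient $-1/l^2$, so $\Delta(r)\to -\infty$ as $|r|\to\infty$. By Definition \ref{def: subextremality, and roots of Delta}, on the set $\mathcal{B}_l$ the four roots $\bar r_- < r_- < r_+ < \bar r_+$ are simple and real, so the sign of $\Delta$ alternates on consecutive intervals determined by them. In particular $\Delta<0$ for $r>\bar r_+$ and $\Delta<0$ for $r_-<r<r_+$, forcing $\Delta>0$ in the exterior region $r_+<r<\bar r_+$. Because $r_+$ and $\bar r_+$ are simple roots at which $\Delta$ transitions from negative to positive and from positive to negative respectively, one concludes $\Delta'(r_+)>0$ and $\Delta'(\bar r_+)<0$.

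Finally, for the sign of $\tfrac{d}{dr}\!\bigl(\Delta/(r^2+a^2)^2\bigr)$ at the horizons, I would apply the quotient rule to get
\begin{equation*}
\frac{d}{dr}\left(\frac{\Delta}{(r^2+a^2)^2}\right) = \frac{\Delta'(r)(r^2+a^2)-4r\,\Delta(r)}{(r^2+a^2)^3},
\end{equation*}
and observe that at $r=r_+$ and $r=\bar r_+$ the term containing $\Delta$ vanishes, so the expression reduces to $\Delta'(r_\bullet)/(r_\bullet^2+a^2)^2$, whose sign is controlled by $\Delta'(r_\bullet)$ established in the previous step. There is no serious obstacle anywhere in this argument; it is a direct computation combined with the elementary sign pattern of a quartic with four simple real roots, and no external input beyond Definition \ref{def: subextremality, and roots of Delta} is used.
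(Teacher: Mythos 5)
Your proposal is correct and matches the paper's (essentially unwritten) proof: the paper merely asserts that the identities follow by direct differentiation and that the inequalities follow from the subextremality condition, and your sign-pattern argument for a quartic with four distinct simple real roots together with the quotient-rule observation that the $\Delta$-term vanishes at the horizons is precisely the standard way to make that explicit.
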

\begin{proof}

The identities~\eqref{eq: lem: sec: properties of Delta, lem 1, derivatives of Delta, eq 1} follow in a straightforward manner. The inequalities~\eqref{eq: lem: sec: properties of Delta, lem 1, derivatives of Delta, eq 3} follow immediately from the subextremality condition of Definition~\ref{def: subextremality, and roots of Delta}.
\end{proof}

We need the following lemma

\begin{lemma}\label{lem: sec: properties of Delta, lem 2, a,M,l}
Let $l>0$ and $(a,M)\in\mathcal{B}_{l}$. We have the following
\begin{equation}\label{eq: lem: sec: properties of Delta, lem 2, a,M,l, eq 1}
\bar{r}_-<0<r_-<r_+<\bar{r}_+,
\end{equation}
\begin{equation}\label{eq: lem: sec: properties of Delta, lem 2, a,M,l, eq 2}
M<r_+<\bar{r}_+<l,
\end{equation}
\begin{equation}\label{eq: lem: sec: properties of Delta, lem 2, a,M,l, eq 3}
r_+>|a|,
\end{equation}
\begin{equation}\label{eq: lem: sec: properties of Delta, lem 2, a,M,l, eq 5}
\frac{a^2}{l^2}<\frac{1}{4},
\end{equation}
	\begin{equation}\label{eq: lem: sec: properties of Delta, lem 2, a,M,l, eq 6}
\Big|\frac{a}{M}\Big|<\frac{12}{10}.
\end{equation}

Finally, 
\begin{equation}\label{eq: lem: sec: properties of Delta, lem 2, a,M,l, eq 4}
\bar{r}_+^2> \frac{1}{7}l^2
\end{equation}
and therefore~$\bar{r}_+\rightarrow \infty$ for $l\rightarrow+\infty$ ($\Lambda\rightarrow 0$). 
\end{lemma}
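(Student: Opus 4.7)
The plan is to derive all six claims from two basic inputs: sign analysis of $\Delta$, and the critical-point information $\Delta'(r_+)>0,\ \Delta'(\bar r_+)<0$ from Lemma~\ref{lem: sec: properties of Delta, lem 1, derivatives of Delta}. The single observation underpinning everything is that at any real root $r_0$ of $\Delta$, the relation $(r_0^2+a^2)(1-r_0^2/l^2) = 2Mr_0$ forces the sign of $r_0$ to agree with the sign of $1-r_0^2/l^2$ (since the first factor is strictly positive and $M>0$). Consequently every positive root lies in $(0,l)$ and every negative root lies in $(-\infty,-l)$. Combined with the Vieta identity $\bar r_-+r_-+r_++\bar r_+=0$ (the $r^3$-coefficient of $-l^2\Delta$ vanishes), two or more negative roots would make the negative sum $<-2l$ while the positive sum is bounded by $2l$, violating cancellation. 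Since $\Delta(0)=a^2\geq 0$ and $\Delta(-\infty)=-\infty$, exactly one negative root exists (the $a=0$ case handled by factoring $\Delta=-r(r^3-l^2r+2Ml^2)/l^2$), giving \eqref{eq: lem: sec: properties of Delta, lem 2, a,M,l, eq 1} together with $\bar r_+<l$.

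The central algebraic manoeuvre is to substitute $2M=(r_+^2+a^2)(1-r_+^2/l^2)/r_+$ (from $\Delta(r_+)=0$) into $\Delta'(r_+)>0$ and clear denominators, yielding the master inequality
\begin{equation}\label{eq: star}
r_+^2(l^2-3r_+^2) > a^2(l^2+r_+^2).
\end{equation}
Non-negativity of the right-hand side forces $l^2>3r_+^2$, so $r_+<l/\sqrt{3}$; dividing \eqref{eq: star} by $l^2+r_+^2$ and noting $(l^2-3r_+^2)/(l^2+r_+^2)<1$ gives $a^2<r_+^2$, which is \eqref{eq: lem: sec: properties of Delta, lem 2, a,M,l, eq 3}. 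Plugging $a^2<r_+^2$ into $2Mr_+=(r_+^2+a^2)(1-r_+^2/l^2)<2r_+^2$ gives $M<r_+$, completing \eqref{eq: lem: sec: properties of Delta, lem 2, a,M,l, eq 2}. Setting $X=r_+^2/l^2$ and $Z=a^2/l^2$, inequality \eqref{eq: star} reads $3X^2-(1-Z)X+Z<0$, whose solvability in $X$ requires discriminant $(1-Z)^2-12Z=Z^2-14Z+1$ to be positive, forcing $Z<7-4\sqrt{3}<1/4$ and hence \eqref{eq: lem: sec: properties of Delta, lem 2, a,M,l, eq 5}.

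For the lower bound \eqref{eq: lem: sec: properties of Delta, lem 2, a,M,l, eq 4} on $\bar r_+^2$, the analogous substitution using $\Delta(\bar r_+)=0$ and $\Delta'(\bar r_+)<0$ reverses the inequality to $3Y^2-(1-Z)Y+Z>0$ for $Y=\bar r_+^2/l^2$. Since $\bar r_+>r_+$ forces $Y>X$ with $X$ inside the interval $(Y_-,Y_+)$ where the quadratic is negative, $Y$ must exceed the larger root $Y_+=[(1-Z)+\sqrt{(1-Z)^2-12Z}]/6\geq(1-Z)/6\geq(4\sqrt{3}-6)/6$ (using $Z\leq 7-4\sqrt{3}$). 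The elementary check $(4\sqrt{3}-6)/6>1/7$, equivalent to $\sqrt{3}>12/7$, i.e.\ $147>144$, then yields $\bar r_+^2>l^2/7$.

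Finally, for \eqref{eq: lem: sec: properties of Delta, lem 2, a,M,l, eq 6}, assuming $a\neq 0$ (the case $a=0$ being trivial) and writing $r_+=t|a|$ with $t>1$ from \eqref{eq: lem: sec: properties of Delta, lem 2, a,M,l, eq 3}, inequality \eqref{eq: star} sharpens to $a^2/l^2<(t^2-1)/(t^2(3t^2+1))$, hence $1-r_+^2/l^2>2(t^2+1)/(3t^2+1)$; plugging into $\Delta(r_+)=0$ gives
\[
\frac{|a|}{M} < g(t) := \frac{t(3t^2+1)}{(t^2+1)^2}.
\]
Elementary calculus shows $g$ attains its maximum on $(1,\infty)$ at $t^{\ast 2}=1+2\sqrt{3}/3$ with $g(t^\ast)^2=(9+6\sqrt{3})/16$; the numerical inequality $(9+6\sqrt{3})/16<144/100=(12/10)^2$ reduces to $\sqrt{3}<2.34$, which is evident. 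I expect the main technical obstacle to lie precisely here---carrying out the optimization of $g$ and pinning down the explicit constant $12/10$---while the other five inequalities follow smoothly from the single master inequality \eqref{eq: star}.
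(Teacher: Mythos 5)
Your proof is correct, and it takes a genuinely different route from the paper. The paper's proof is a case-by-case collection of arguments: sign analysis of $\Delta$ and its derivatives for \eqref{eq: lem: sec: properties of Delta, lem 2, a,M,l, eq 1}--\eqref{eq: lem: sec: properties of Delta, lem 2, a,M,l, eq 2}, Vieta identities (sum, product, and pairwise-product of the four roots) for \eqref{eq: lem: sec: properties of Delta, lem 2, a,M,l, eq 3}, \eqref{eq: lem: sec: properties of Delta, lem 2, a,M,l, eq 5}, and \eqref{eq: lem: sec: properties of Delta, lem 2, a,M,l, eq 4}, and, for \eqref{eq: lem: sec: properties of Delta, lem 2, a,M,l, eq 6}, an importation of the explicit quartic-discriminant criteria of \cite{polynomials}, which leads to a long and somewhat opaque polynomial verification. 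Your approach instead derives a single \emph{master inequality} $r_+^2(l^2 - 3r_+^2) > a^2(l^2 + r_+^2)$ by substituting $\Delta(r_+)=0$ into $\Delta'(r_+)>0$ (and its reverse at $\bar r_+$), and then extracts all six claims by elementary manipulation of this one inequality. This is more economical and structurally cleaner: the role of the two Lemma~\ref{lem: sec: properties of Delta, lem 1, derivatives of Delta} sign conditions is made transparent, the optimization of $g(t)=t(3t^2+1)/(t^2+1)^2$ replaces the paper's ad hoc quartic-discriminant calculation for \eqref{eq: lem: sec: properties of Delta, lem 2, a,M,l, eq 6}, and your argument actually produces strictly sharper constants ($a^2/l^2 < 7-4\sqrt{3}\approx 0.072$, $\bar r_+^2/l^2 > (4\sqrt{3}-6)/6\approx 0.155$, and $|a/M|<\sqrt{(9+6\sqrt{3})/16}\approx 1.10$) than the paper's $1/4$, $1/7$, and $12/10$, which the paper itself remarks are not sharp. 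The one small presentation wrinkle: when you invoke the Vieta relation $\bar r_-+r_-+r_++\bar r_+=0$ to exclude two negative roots, you should phrase the conclusion as ``the full sum is strictly negative, contradicting Vieta'' rather than comparing a negative sum $<-2l$ against a positive sum $<2l$, since the latter wording does not by itself give a contradiction; but the intended argument is sound and the fix is immediate. Also note that the statement $0<r_-$ holds only for $a\neq 0$ (with $r_-=0$ at $a=0$), a point you and the paper both handle by factoring out $r$; the lemma statement is slightly loose here, as Definition~\ref{def: subextremality, and roots of Delta} already acknowledges.
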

\begin{proof}
We prove~\eqref{eq: lem: sec: properties of Delta, lem 2, a,M,l, eq 1}. Note from Lemma~\ref{lem: sec: properties of Delta, lem 1, derivatives of Delta}, that the following hold
\begin{equation}\label{eq: lem: sec: properties of Delta, lem 2, a,M,l,/ eq 0}
    \begin{aligned}
         &  \Delta (0) =a^2>0, \qquad  \frac{d\Delta}{dr}(0)=-2M<0, \qquad \frac{d^2\Delta}{dr^2}(0)=2-2\frac{a^2}{l^2}>0.
    \end{aligned}
\end{equation}
Now, we note that $\frac{d^2\Delta}{dr^2}$ attains exactly two roots, see Lemma \ref{lem: sec: properties of Delta, lem 1, derivatives of Delta}. For a point $r_1\in (r_+,\bar{r}_+)$ such that 
\begin{equation}
\frac{d\Delta}{dr} (r_1)<0
\end{equation}
we obtain 
\begin{equation}
\frac{d^2\Delta}{dr^2} (r_1)\leq 0. 
\end{equation}
Therefore, 
\begin{equation}
r_->0
\end{equation}
follows from~\eqref{eq: lem: sec: properties of Delta, lem 2, a,M,l,/ eq 0}. Therefore, we obtain
\begin{equation}
\bar{r}_-=-\left(r_-+r_++\bar{r}_+\right)<0. 
\end{equation}

We now prove~\eqref{eq: lem: sec: properties of Delta, lem 2, a,M,l, eq 2}. Note that the inequality $\frac{d\Delta}{dr}(r_+)>0$ implies
\begin{equation}
    2(r_+-M)>\frac{4}{l^2}r_+^3+a^2\frac{2}{l^2}r_+>0
\end{equation}
from which we obtain
\begin{equation}
	r_+> M,
\end{equation}
since $r_+>0$. Finally, the following holds
\begin{equation}
    (\bar{r}_+^2+a^{2})(1-\frac{\bar{r}_+^2}{l^{2}})-2M\bar{r}_+=0
\end{equation}
from which the following holds~$	(\bar{r}_+^2+a^{2})(1-\frac{\bar{r}_+^2}{l^{2}})>0$ and therefore
\begin{equation}
|\bar{r}_+|<l,
\end{equation}
which concludes the result.

Now, we prove~\eqref{eq: lem: sec: properties of Delta, lem 2, a,M,l, eq 3}. We write the polynomial $\Delta$ in the form 
\begin{equation}
    \Delta=-\frac{1}{l^2}(r-\bar{r}_-)(r-r_-)(r-r_+)(r-\bar{r}_+),
\end{equation}
then by using the definition of~$\Delta$, see~\eqref{eq: delta polynomial for the horizons}, and by matching the relevant powers of~$r$, we obtain the following
\begin{equation}\label{eq: lem: sec: properties of Delta, lem 2, a,M,l,/ eq 1}
    \bar{r}_-+r_-+r_++\bar{r}_+=0,
\end{equation}
\begin{equation}\label{eq: lem: sec: properties of Delta, lem 2, a,M,l,/ eq 2}
    \bar{r}_-r_-r_+\bar{r}_+=-l^2a^2,
\end{equation} 
\begin{equation}\label{eq: lem: sec: properties of Delta, lem 2, a,M,l,/ eq 3}
    r_+\bar{r}_+ +r_-r_+ + r_-\bar{r}_+ +\bar{r}_-r_+ +\bar{r}_-\bar{r}_++r_-\bar{r}_-=-l^2+a^2.
\end{equation}
We use~\eqref{eq: lem: sec: properties of Delta, lem 2, a,M,l,/ eq 1} in conjunction with~\eqref{eq: lem: sec: properties of Delta, lem 2, a,M,l,/ eq 3} to obtain
\begin{equation}\label{eq: lem: sec: properties of Delta, lem 2, a,M,l,/ eq 4}
    a^2+r_+^2+\bar{r}_+^2+r_-^2+r_-r_++r_+\bar{r}_++\bar{r}_+r_-=l^2.
\end{equation}
By using~\eqref{eq: lem: sec: properties of Delta, lem 2, a,M,l,/ eq 1} and~\eqref{eq: lem: sec: properties of Delta, lem 2, a,M,l,/ eq 2} we get 
\begin{equation}
    r_-r_+\bar{r}_+(r_-+r_++\bar{r}_+)=l^2a^2,
\end{equation}
which, in conjunction with~\eqref{eq: lem: sec: properties of Delta, lem 2, a,M,l,/ eq 4}, implies
\begin{equation}
    l^2a^2<r_-r_+ l^2<l^2 r_+^2,
\end{equation}
from which~\eqref{eq: lem: sec: properties of Delta, lem 2, a,M,l, eq 3} is immediate.

Now, we prove~\eqref{eq: lem: sec: properties of Delta, lem 2, a,M,l, eq 5}. We use equation~\eqref{eq: lem: sec: properties of Delta, lem 2, a,M,l,/ eq 4}, in conjunction with the bounds 
\begin{equation}
\bar{r}_+>r_+>|a|,\qquad r_->0
\end{equation}
to obtain 
\begin{equation}
    a^2+a^2+a^2+a^2+\left(r_-^2+r_-r_++\bar{r}_+r_-\right)<l^2
\end{equation}
which indeed implies~\eqref{eq: lem: sec: properties of Delta, lem 2, a,M,l, eq 5}.

Now, we prove~\eqref{eq: lem: sec: properties of Delta, lem 2, a,M,l, eq 6}. The necessary and sufficient conditions on the coefficients of a quartic polynomial to attain four distinct real roots have been studied extensively, see for example~\cite{polynomials}. From~\cite{polynomials} we note that~$\Delta$ attains exactly 4 distinct real roots if and only if the following two inequalities hold
\begin{equation}\label{eq: lem: sec: properties of Delta, lem 2, a,M,l,/ eq 5}
    \begin{aligned}
         &-256\left(\frac{a^6}{l^6}\right) 
         -128\left(\frac{a^4}{l^4}\right)\left(1-\frac{a^2}{l^2}\right)^2+576\left(\frac{M^2a^2}{l^4}\right)\left(1-\frac{a^2}{l^2}\right)\\
         &	\qquad\qquad\qquad-432\frac{M^4}{l^4}-16\frac{a^2}{l^2}\left(1-\frac{a^2}{l^2}\right)^4+16\frac{M^2}{l^2}\left(1-\frac{a^2}{l^2}\right)^3>0,
    \end{aligned}
\end{equation}
\begin{equation}\label{eq: lem: sec: properties of Delta, lem 2, a,M,l,/ eq 5.0}
|a|<l.
\end{equation}
We want to prove that if the black hole parameters satisfy
\begin{equation}\label{eq: lem: sec: properties of Delta, lem 2, a,M,l,/ eq 5.1}
a=\alpha M,\qquad \alpha \geq \frac{12}{10}
\end{equation}
then the $\Delta$ polynomial attains at least one non-real root, and therefore the parameters do not correspond to a subexrtremal black hole. We substitute~\eqref{eq: lem: sec: properties of Delta, lem 2, a,M,l,/ eq 5.1} in the inequality~\eqref{eq: lem: sec: properties of Delta, lem 2, a,M,l,/ eq 5} and obtain  
\begin{equation}\label{eq: lem: sec: properties of Delta, lem 2, a,M,l,/ eq 6}
    \begin{aligned}
       &-16\alpha^{10} z^{10}-16(-1+\alpha^2)z^2-48\alpha^4(11+2\alpha^2)z^6-16\alpha^6(1+4\alpha^2)z^8-16(27-33\alpha^2+4\alpha^4)z^4>0
    \end{aligned}
\end{equation}
where 
\begin{equation}
z=\frac{M}{l}.
\end{equation}
We will prove that~\eqref{eq: lem: sec: properties of Delta, lem 2, a,M,l,/ eq 6} cannot be satisfied for any~$z\in\mathbb{R}$. It suffices to prove that the following part
\begin{equation}\label{eq: lem: sec: properties of Delta, lem 2, a,M,l,/ eq 7}
    \begin{aligned}
        &   -16(-1+\alpha^2)z^2-16(27-33\alpha^2+4\alpha^4)z^4-48\alpha^4(11+2\alpha^2)z^6\\
        &   \quad =z^2\left(-16(-1+\alpha^2)-16(27-33\alpha^2+4\alpha^4)z^2-48\alpha^4(11+2\alpha^2))z^4\right),
    \end{aligned}
\end{equation}
of inequality~\eqref{eq: lem: sec: properties of Delta, lem 2, a,M,l,/ eq 6}, is negative for all~$z\in \mathbb{R}$, since the remaining terms are negative. To prove that~\eqref{eq: lem: sec: properties of Delta, lem 2, a,M,l,/ eq 7} is always negative, it suffices to prove that the polynomial 
\begin{equation}\label{eq: lem: sec: properties of Delta, lem 2, a,M,l,/ eq 8}
    -16(-1+\alpha^2)-16(27-33\alpha^2+4\alpha^4)z^2-48\alpha^4(11+2\alpha^2)z^4
\end{equation}
is always negative. We first note that 
\begin{equation}
    256(AE)^3-128(ACE)^2+16AC^4E>0,\qquad 64A^3E-16(AC)^2>0
\end{equation}
where
\begin{equation}
A=-48\alpha^4(11+2\alpha^2),\qquad C=-16(27-33\alpha^2+4\alpha^4),\qquad E=-16(-1+\alpha^2),
\end{equation}
from which, by again appealing to~\eqref{eq: lem: sec: properties of Delta, lem 2, a,M,l,/ eq 5},~\eqref{eq: lem: sec: properties of Delta, lem 2, a,M,l,/ eq 5.0} (note the conditions of~\cite{polynomials}) we obtain that the roots of the quartic polynomial~\eqref{eq: lem: sec: properties of Delta, lem 2, a,M,l,/ eq 8} are never real. Moreover, we readily obtain that the polynomial~\eqref{eq: lem: sec: properties of Delta, lem 2, a,M,l,/ eq 8} is negative at~$0$, therefore it is always negative. Now, (since the remaining terms of~\eqref{eq: lem: sec: properties of Delta, lem 2, a,M,l,/ eq 6} are also negative), we conclude that for a subextremal Kerr--de~Sitter black hole we obtain 
\begin{equation}
	|a|<\frac{12}{10}M.
\end{equation}

Finally, we prove~\eqref{eq: lem: sec: properties of Delta, lem 2, a,M,l, eq 4}. We use equation~\eqref{eq: lem: sec: properties of Delta, lem 2, a,M,l,/ eq 4}, together with the bounds~$r_-<r_+<\bar{r}_+$ to obtain
\begin{equation}
l^2<a^2+6\bar{r}_+^2<r_+^2+6\bar{r}_+^2<7\bar{r}_+^2,
\end{equation}
from which we conclude that~$\bar{r}_+\rightarrow +\infty$ as $l\rightarrow +\infty$. 
\end{proof}

\begin{remark}\label{rem: sec: delta polynomial, rem 1}
We note that 
\begin{equation}\label{eq: rem: sec: delta polynomial, rem 1, eq 1}
	\sup_{l>0,(a,M)\in\mathcal{B}_l}\left|\frac{a}{M}\right|\in \left(1,\frac{12}{10}\right),
\end{equation}
in contrast to the Kerr case where the extremal parameters are~$\left|\frac{a}{M}\right|=1$. We will not pursue to compute the sharp upper bound of~\eqref{eq: rem: sec: delta polynomial, rem 1, eq 1} as the bound~$\frac{12}{10}$ suffices for the purposes of this paper. Specifically, we only need the bound~$|\frac{a}{M}|\leq \frac{12}{10}$ in the proof of Lemma~\ref{lem: subsec: sec: trapping, subsec 1, lem 1}. 
\end{remark}

The following Lemma will also be important 

\begin{lemma}\label{lem: sec: general properties of Delta, lem 3}
Let~$l>0$ and~$(a,M)\in\mathcal{B}_l$. Then, the function 
\begin{equation}\label{eq: lem: sec: general properties of Delta, lem 3, eq 1}
    \frac{\Delta}{(r^2+a^2)^2}
\end{equation}
attains exactly one critical point in~$(r_+,\bar{r}_+)$, a maximum, which we denote as 
\begin{equation}
    r_{\Delta,\textit{frac}}.
\end{equation}

Moreover, there exists a strictly positive constant~$c(a,M,l)>0$, such that 
\begin{equation}\label{eq: lem: sec: general properties of Delta, lem 3, eq 2}
    \frac{d^2}{dr^2}\left(\frac{\Delta}{(r^2+a^2)^2}\right)(r=r_{\Delta,\textit{frac}})<-c(a,M,l)<0.
\end{equation}

For sufficiently small rotation parameters~$a$, there exists a constant~$C(M,l)>0$, such that 
\begin{equation}
|r_{\Delta,\textit{frac}}-3M|\leq C(M,l)a^2.
\end{equation}
\end{lemma}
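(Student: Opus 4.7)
The plan is to reduce the whole statement to sign analysis of a single cubic polynomial in $r$. A direct computation starting from $\Delta = (r^2+a^2)(1-r^2/l^2) - 2Mr$ yields
\begin{equation}
\frac{d}{dr}\left(\frac{\Delta}{(r^2+a^2)^2}\right) = \frac{2P(r)}{(r^2+a^2)^3},\qquad P(r) := M(3r^2-a^2) - r\,\Xi(r^2+a^2) = -\Xi r^3 + 3Mr^2 - \Xi a^2 r - Ma^2,
\end{equation}
where $\Xi = 1+a^2/l^2$, so every claim in the Lemma reduces to a statement about the cubic $P$, which has leading coefficient $-\Xi < 0$.

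For the uniqueness assertion I would simply count sign changes of $P$. One has $P(-\infty) = +\infty$, $P(+\infty) = -\infty$, $P(0) = -Ma^2 \leq 0$, and by Lemma~\ref{lem: sec: properties of Delta, lem 1, derivatives of Delta}, specifically~\eqref{eq: lem: sec: properties of Delta, lem 1, derivatives of Delta, eq 3}, the boundary signs $P(r_+) > 0$ and $P(\bar{r}_+) < 0$. In the subcase $a\neq 0$, the intermediate value theorem forces at least one root of $P$ in each of $(-\infty,0)$, $(0,r_+)$ and $(r_+,\bar{r}_+)$; since $P$ is a cubic, this exhausts its real roots, so the root in $(r_+,\bar{r}_+)$ is unique and is declared to be $r_{\Delta,\textit{frac}}$. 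The Schwarzschild subcase $a=0$ is handled directly: $P(r) = r^2(3M - r)$ has unique root $r_{\Delta,\textit{frac}} = 3M$ in $(r_+,\bar{r}_+) = (2M,\bar{r}_+)$.

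For the nondegeneracy~\eqref{eq: lem: sec: general properties of Delta, lem 3, eq 2}, I would differentiate once more and use $P(r_{\Delta,\textit{frac}}) = 0$ to obtain
\begin{equation}
\frac{d^2}{dr^2}\left(\frac{\Delta}{(r^2+a^2)^2}\right)(r_{\Delta,\textit{frac}}) = \frac{2P'(r_{\Delta,\textit{frac}})}{(r_{\Delta,\textit{frac}}^2+a^2)^3}.
\end{equation}
In the case $a\neq 0$, labelling the three real roots $\alpha_1 < \alpha_2 < \alpha_3 = r_{\Delta,\textit{frac}}$ and factoring $P(r) = -\Xi(r-\alpha_1)(r-\alpha_2)(r-\alpha_3)$ immediately gives $P'(r_{\Delta,\textit{frac}}) = -\Xi(\alpha_3-\alpha_1)(\alpha_3-\alpha_2) < 0$ strictly. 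In the Schwarzschild case, $P'(3M) = -9M^2 < 0$. Together these give the strictly negative upper bound $-c(a,M,l)$.

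Finally, for the perturbation estimate I would apply the implicit function theorem to the smooth equation $P(r;a,M,l) = 0$ at the basepoint $(r,a) = (3M,0)$, using the nonvanishing of $\partial_r P(3M,0,M,l) = -9M^2$ already identified above, to obtain a smooth branch $r(a,M,l)$ with $r(0,M,l) = 3M$. Since $P$ depends on $a$ only through $a^2$ and through $\Xi = 1 + a^2/l^2$, the polynomial $P$ is even in $a$, hence so is the branch, so its Taylor expansion at $a=0$ contains only even powers and $|r_{\Delta,\textit{frac}} - 3M| \leq C(M,l)\,a^2$ follows for $|a|$ small. I do not expect a serious obstacle in this argument; the only mildly subtle step is controlling the global shape of $P$ on $[r_+,\bar{r}_+]$, but this is bought cheaply from the boundary signs already supplied by Lemma~\ref{lem: sec: properties of Delta, lem 1, derivatives of Delta}.
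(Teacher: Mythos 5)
Your proposal is correct, and it takes a genuinely cleaner route than the paper's, even though both hinge on analyzing the same cubic: your $P(r)$ is exactly the paper's $\textit{pol}(r)$ (equivalently $-s(r)$ in their notation). For uniqueness, the paper proves convexity of $s$ on $[r_+,\bar{r}_+]$ (which in turn uses $r_+>M$ from Lemma~\ref{lem: sec: properties of Delta, lem 2, a,M,l}), then combines this with the boundary sign change. You instead use $P(-\infty)=+\infty$, $P(0)=-Ma^2<0$, and the two boundary signs to locate all three roots of the cubic in disjoint intervals at once. This is more elementary and buys the distinctness of the roots for free. For the nondegeneracy, the paper computes the discriminant of $\textit{pol}'$ and invokes the numerical bounds $|a/M|\le 12/10$, $a^2/l^2\le 1/4$ to show $\textit{pol}'$ has distinct real roots, and then deduces (somewhat tersely — what they are really ruling out is a triple root of $\textit{pol}$) that the zero of $\textit{pol}$ at $r_{\Delta,\textit{frac}}$ is simple. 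Your factorization $P(r)=-\Xi(r-\alpha_1)(r-\alpha_2)(r-\alpha_3)$ with $\alpha_1<\alpha_2<\alpha_3$ gives $P'(\alpha_3)<0$ immediately from the root separation already established, with no discriminant computation and no need for the quantitative bounds on $a/M$ and $a^2/l^2$; it also makes the "strictly negative constant" assertion transparent. For the perturbative estimate, the paper simply rearranges $s(r)=r^2(r-3M)+a^2(\cdots)$; your implicit-function-theorem plus parity argument is more systematic and additionally yields smoothness of the branch $r_{\Delta,\textit{frac}}(a,M,l)$. One trivial slip: for $a=0$ the event horizon radius in Schwarzschild--de~Sitter is not $2M$ but rather a value $r_+\in(2M,3M)$; this does not affect your argument since all you use is that $3M\in(r_+,\bar{r}_+)$, which indeed holds in the subextremal range $0<M^2/l^2<1/27$.
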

\begin{proof}
First, we calculate the derivative
\begin{equation}\label{eq: proof lem: sec: general properties of Delta, lem 3, eq 1}
    \begin{aligned}
        \frac{d}{dr}\frac{\Delta}{(r^2+a^2)^2}&    =\frac{-2}{(r^{2}+a^{2})^{3}}\left(r^{3}(1+\frac{a^2}{l^2})-3Mr^{2}+r(a^{2}+\frac{a^{4}}{l^{2}})+Ma^{2}\right)=-2\frac{s(r)}{(r^2+a^2)^3},
    \end{aligned}
\end{equation} 
where the polynomial 
\begin{equation}\label{eq: proof lem: sec: general properties of Delta, lem 3, eq 2}
    s(r)= r^{3}(1+\frac{a^2}{l^2})-3Mr^{2}+r(a^{2}+\frac{a^{4}}{l^{2}})+Ma^{2}
\end{equation}
enjoys the same roots as the function~$\frac{d}{dr}\frac{\Delta}{(r^2+a^2)^2}$, where we rewrite
\begin{equation}\label{eq: proof lem: sec: general properties of Delta, lem 3, eq 2.1}
    \frac{d}{dr}\frac{\Delta}{(r^2+a^2)^2}=\frac{\frac{d\Delta}{dr}(r^2+a^2)^2 -4r(r^2+a^2)\Delta}{(r^2+a^2)^4}.
\end{equation}
We infer that 
\begin{equation}\label{eq: proof lem: sec: general properties of Delta, lem 3, eq 3}
s(r_+)<0,\qquad s(\bar{r}_+)>0,
\end{equation}
see Lemma~\ref{lem: sec: properties of Delta, lem 1, derivatives of Delta} for the derivatives of~$\Delta$. 

Moreover we note that
\begin{equation}\label{eq: proof lem: sec: general properties of Delta, lem 3, eq 4}
    \frac{d^2 s}{dr^2}(r)=6r(1+\frac{a^2}{l^2})-6M=6(r-M)+6r\frac{a^2}{l^2}>0,\qquad r\in[r_+,\bar{r}_+]
\end{equation}
since~$r_+>M$, see Lemma~\ref{lem: sec: properties of Delta, lem 2, a,M,l}. 

Therefore, from~\eqref{eq: proof lem: sec: general properties of Delta, lem 3, eq 3},~\eqref{eq: proof lem: sec: general properties of Delta, lem 3, eq 4}, we conclude that~$s(r)$ attains exactly one critical point in~$[r_+,\bar{r}_+]$, a maximum, which we denote as
\begin{equation}
r_{\Delta,\textit{frac}}.
\end{equation}

Now, we proceed to prove~\eqref{eq: lem: sec: general properties of Delta, lem 3, eq 2}. A straightforward computation shows that 
\begin{equation}\label{eq: proof lem: sec: general properties of Delta, lem 3, eq 5}
\begin{aligned}
\frac{d}{dr}\frac{\Delta}{(r^2+a^2)^2}	&	= \frac{(r^2+a^2)\frac{d\Delta}{dr}-4r\Delta}{(r^2+a^2)^3}\\
&	=\frac{2}{(r^2+a^2)^3}\Bigg(-\Xi r^3+3M r^2+\left(-a^2\Xi\right)r+\left(-M a^2\right)\Bigg)\\
&	\dot{=} \frac{2}{(r^2+a^2)^3}\: \textit{pol}\:(r).
\end{aligned}
\end{equation}
Note that 
\begin{equation}\label{eq: proof lem: sec: general properties of Delta, lem 3, eq 6}
\frac{d\textit{pol}}{dr}\:(r) =-3\Xi r^2+6M r -a^2\Xi,
\end{equation}
which is a polynomial of degree~$2$ and has discriminant
\begin{equation}
\text{discriminant of }\frac{d\textit{pol}}{dr}=(6M)^2-12 a^2\Xi^2=\left(6M\right)^2\left(1-\frac{12a^2\Xi^2}{(6M)^2}\right).
\end{equation}
By using that~$|\frac{a}{M}|\leq \frac{12}{10},\frac{a^2}{l^2}\leq \frac{1}{4}$, see Lemma~\ref{lem: sec: properties of Delta, lem 2, a,M,l}, we readily see that 
\begin{equation}
\frac{12a^2\Xi^2}{(6M)^2}=\frac{1}{3}\left(\frac{a}{M}\right)^2\left(1+\frac{a^2}{l^2}\right)^2\leq \frac{1}{3}\left(\frac{12}{10}\right)^2\left(1+\frac{1}{4}\right)^2=\frac{3}{4}<1.
\end{equation}
Therefore, the roots of~$\frac{d\textit{pol}}{dr}$ are real and distinct and therefore the zeros of 
\begin{equation}
		\frac{d}{dr}\frac{\Delta}{(r^2+a^2)^2}
\end{equation}
are real and distinct. Therefore, we obtain that  
\begin{equation}
	\frac{d^2}{dr^2}\Big|_{r=r_{\Delta,\textit{frac}}}\frac{\Delta}{(r^2+a^2)^2}<0.
\end{equation}

Finally, we note from the definition of the polynomial~$s(r)$, see Lemma~\ref{lem: sec: general properties of Delta, lem 3}, that 
\begin{equation}
s(r)=r^2(r-3M)+a^2\left(\frac{r^3}{l^2}+r+\frac{a^2}{l^2}r^2+M\right),
\end{equation}
from which we conclude 
\begin{equation}
|r_{\Delta,\textit{frac}}-3M|\leq C(M,l)a^2,
\end{equation}
as desired.
\end{proof}

\begin{remark}\label{rem: sec: delta polynomial, rem 2}
	Note that in the Schwarzschild--de~Sitter case~$(a=0)$, Lemma~\ref{lem: sec: general properties of Delta, lem 3} implies 
	\begin{equation}
		r_{\Delta,\textit{frac}}=3M. 
	\end{equation}
\end{remark}

Finally, we note the following Lemma, which follows immediately from the subextremality conditions of Definition~\ref{def: subextremality, and roots of Delta}. 

\begin{lemma}\label{lem: sec: general properties of Delta, lem 4}
	Let $l>0$ and $(a,M)\in\mathcal{B}_l$. Then,~$\Delta$ attains exactly one critical point in~$(r_+,\bar{r}_+)$, a maximum, which we denote as~$r_{\Delta,\textit{max}}$.
\end{lemma}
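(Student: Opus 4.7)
The plan is to combine the elementary derivative computations of Lemma~\ref{lem: sec: properties of Delta, lem 1, derivatives of Delta} with the ordering of the roots of $\Delta$ from Lemma~\ref{lem: sec: properties of Delta, lem 2, a,M,l}. First I would observe that from the factorization
\begin{equation*}
\Delta(r)=-\tfrac{1}{l^2}(r-\bar{r}_-)(r-r_-)(r-r_+)(r-\bar{r}_+),
\end{equation*}
together with the bounds $\bar{r}_-<0<r_-<r_+<\bar{r}_+$ of Lemma~\ref{lem: sec: properties of Delta, lem 2, a,M,l}, one has $\Delta>0$ on the open interval $(r_+,\bar{r}_+)$ while $\Delta(r_+)=\Delta(\bar{r}_+)=0$. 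Consequently $\Delta$ achieves its maximum over $[r_+,\bar{r}_+]$ at some interior point, so at least one critical point in $(r_+,\bar{r}_+)$ exists, and this point is automatically a maximum. The remaining task is uniqueness.

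For uniqueness of the critical point, the key input is the third derivative identity $\frac{d^3\Delta}{dr^3}=-\frac{24}{l^2}r$ from~\eqref{eq: lem: sec: properties of Delta, lem 1, derivatives of Delta, eq 1}. Since $r_+>0$ by~\eqref{eq: lem: sec: properties of Delta, lem 2, a,M,l, eq 1}, we have $\frac{d^3\Delta}{dr^3}<0$ throughout $(r_+,\bar{r}_+)$, so $\frac{d\Delta}{dr}$ is strictly concave on this interval. Combining this with the endpoint sign information $\frac{d\Delta}{dr}(r_+)>0>\frac{d\Delta}{dr}(\bar{r}_+)$ from~\eqref{eq: lem: sec: properties of Delta, lem 1, derivatives of Delta, eq 3}, I would invoke the elementary fact that a strictly concave function on an interval which is positive at the left endpoint and negative at the right endpoint has exactly one zero in the interior: indeed, if there were two zeros $r_1<r_2$ in $(r_+,\bar{r}_+)$, then by concavity $\frac{d\Delta}{dr}(r)\leq 0$ for all $r<r_1$, contradicting $\frac{d\Delta}{dr}(r_+)>0$. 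Hence $\frac{d\Delta}{dr}$ vanishes at exactly one point of $(r_+,\bar{r}_+)$, and combined with the existence argument above this unique critical point must be the asserted maximum $r_{\Delta,\textit{max}}$.

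There is no substantive obstacle: the result is a direct consequence of the subextremality conditions together with the sign of $\frac{d^3\Delta}{dr^3}$ on $(0,\infty)$. If one preferred to avoid the concavity shortcut, an equally short alternative would be to observe that $\frac{d^2\Delta}{dr^2}=2-\frac{2a^2}{l^2}-\frac{12r^2}{l^2}$ is strictly decreasing on $(0,\infty)$, so it has at most one zero there; Rolle's theorem then forbids $\frac{d\Delta}{dr}$ from having more than two zeros, and the endpoint signs further cut this down to a single zero inside $(r_+,\bar{r}_+)$.
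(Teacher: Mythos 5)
Your proof is correct. The paper itself gives no explicit argument for this lemma, stating only that it ``follows immediately from the subextremality conditions of Definition~\ref{def: subextremality, and roots of Delta},'' so you are filling in a gap the authors left implicit. Your existence argument (positivity of $\Delta$ on $(r_+,\bar r_+)$ from the factored form and root ordering) is exactly right, and your uniqueness argument via $\frac{d^3\Delta}{dr^3}=-\frac{24}{l^2}r<0$ on $(0,\infty)$, so that $\frac{d\Delta}{dr}$ is strictly concave and can cross zero at most once given the endpoint signs, is sound.

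There is, however, an even shorter route that is probably what the authors have in mind when they say ``immediately'': $\Delta$ is a quartic with four distinct real roots $\bar r_-<r_-<r_+<\bar r_+$, so by Rolle's theorem $\frac{d\Delta}{dr}$, a cubic, has at least one zero in each of the three gaps $(\bar r_-,r_-)$, $(r_-,r_+)$, $(r_+,\bar r_+)$; since a cubic has at most three real zeros, it has \emph{exactly} one in $(r_+,\bar r_+)$, and the positivity of $\Delta$ on that interval with vanishing at the endpoints makes it a maximum. This avoids any derivative computations. Your concavity argument and your fallback via the monotonicity of $\frac{d^2\Delta}{dr^2}$ both work equally well; they just use the derivative formulas of Lemma~\ref{lem: sec: properties of Delta, lem 1, derivatives of Delta} rather than the root structure directly.
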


\section{Preliminaries for the Klein--Gordon equation}\label{sec: preliminaries}

\subsection{The energy momentum tensor and the divergence theorem}\label{subsec: sec: preliminaries, subsec 1}

\begin{definition}\label{def: sec: preliminaries, def 1}
Let $g$ be a smooth Lorentzian metric. The energy momentum tensor associated with a solution~$\psi$ of the inhomogeneous Klein--Gordon equation
\begin{equation}
	\Box_g\psi-\mu^2_{\textit{KG}}\psi =F,
\end{equation}
where~$F$ is a sufficiently regular function, is defined to be the following:
\begin{equation}
    \mathbb{T}_{\mu\nu}[\psi]\:\dot{=}\:\partial_{\mu}\psi\partial_{\nu}\psi-\frac{1}{2}g_{\mu\nu}\Big(|\nabla\psi|_g^2 +\mu_{\textit{KG}}^2|\psi|^2 \Big),
\end{equation}
where~$\mu_{\textit{KG}}^2\geq 0$ is the Klein--Gordon mass and~$|\nabla\psi|^2_g\:\dot{=}\:g^{\gamma\delta}\partial_{\gamma}\psi\partial_{\delta}\psi$. The energy current, with respect to a vector field~$X$, is defined as
\begin{equation}
    J^{X}_{\mu}[\psi]\:\dot{=}\: \mathbb{T}_{\mu\nu}[\psi]X^{\nu}.
\end{equation}
Note that for~$X,N$ everywhere future directed causal vector fields, the following holds 
\begin{equation}
    J^X_\mu[\psi]N^\mu=\mathbb{T}(X,N)[\psi]\geq 0.
\end{equation}

The divergence of the energy current is 
\begin{equation}
    \nabla^{\mu}J^{X}_{\mu}[\psi] =X(\psi)\cdot F +\mathrm{K}^X[\psi], 
\end{equation}
where 
\begin{equation}
	\begin{aligned}
	 \mathrm{K}^X[\psi]	= \frac{1}{2}\mathbb{T}_{\mu\nu}\:^{(X)}\pi^{\mu\nu}=\frac{1}{2}\left(\partial_\mu\psi\partial_\nu\psi-\frac{1}{2}g_{\mu\nu}|\nabla\psi|^2\right)\pi^{\mu\nu} -\frac{1}{2}\mu^2_{\textit{KG}}|\psi|^2 \Tr \: ^{(X)}\pi,\qquad 
	 ^{(X)}\pi^{\mu\nu}		=\frac{1}{2}\big( \nabla^{\mu}X^{\nu}+\nabla^{\nu}X^{\mu}  \big).
	\end{aligned}
\end{equation}
\end{definition}

The following is the divergence theorem.

\begin{proposition}\label{prop: divergence theorem}
	
Let~$l>0$ and~$(a,M)\in\mathcal{B}_l$  and~$\mu^2_{KG}\geq 0$. Let~$\psi$ satisfy the inhomogeneous Klein--Gordon equation 
\begin{equation}
	\Box_g\psi-\mu_{\textit{KG}}^2\psi=F
\end{equation}
and let~$\{t^\star=\tau\}$,~$\tau\geq 0$, be the hypersurfaces of Section~\ref{subsec: admissible hypersurfaces}. 

Then, the following holds
\begin{equation}\label{eq: lem: divergence theorem, eq 1}
    \begin{aligned}
    	&	\int_{\{t^\star=\tau_2\}}J^{X}_{\mu}[\psi]n^{\mu}+\int_{\mathcal{H}^{+}\cap D(\tau_1,\tau_2)}J^{X}_{\mu}[\psi]n^{\mu}+\int_{\bar{\mathcal{H}}^{+}\cap D(\tau_1,\tau_2)}J^{X}_{\mu}[\psi]n^{\mu}   +\int\int_{D(\tau_1,\tau_2)} \mathrm{K}^X[\psi]\\
    	&	\quad\quad\quad\quad\quad=\int_{\{t^\star=\tau_1\}} J^{X}_{\mu}[\psi]n^{\mu}-\int\int_{D(\tau_1,\tau_2)} X\psi\cdot F,
    \end{aligned}
\end{equation}
for~$\tau_1\leq \tau_2$. The identity~\eqref{eq: lem: divergence theorem, eq 1} is to be understood with respect to the normals of Section~\ref{subsec: admissible hypersurfaces},~\ref{subsec: Hawking-Reall v.f.} and the volume forms of Section~\ref{subsec: volume forms of spacelike hypersurfaces}. 
\end{proposition}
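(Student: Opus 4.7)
The plan is to apply Stokes' theorem to the energy current $1$-form $J^X[\psi]$ on the causal domain $D(\tau_1,\tau_2)$ of Definition~\ref{def: causal domain, def 1}, whose future boundary decomposes as $\{t^\star=\tau_2\}\cup(\mathcal{H}^+\cap D(\tau_1,\tau_2))\cup(\bar{\mathcal{H}}^+\cap D(\tau_1,\tau_2))$ and whose past boundary is $\{t^\star=\tau_1\}$. The first step is the pointwise identity $\nabla^\mu J^X_\mu[\psi]=X(\psi)\cdot F+\mathrm{K}^X[\psi]$ recorded in Definition~\ref{def: sec: preliminaries, def 1}. This follows by writing $\nabla^\mu J^X_\mu=(\nabla^\mu\mathbb{T}_{\mu\nu})X^\nu+\mathbb{T}_{\mu\nu}\nabla^\mu X^\nu$, recognizing the second term as $\tfrac12\mathbb{T}_{\mu\nu}\,^{(X)}\pi^{\mu\nu}=\mathrm{K}^X[\psi]$ by symmetry of $\mathbb{T}$, and computing $\nabla^\mu\mathbb{T}_{\mu\nu}=F\,\partial_\nu\psi$ from the Klein--Gordon equation $\Box_g\psi-\mu^2_{KG}\psi=F$ (the $-\tfrac12\nabla_\nu(|\nabla\psi|^2)$ piece cancels with the term coming from commuting covariant derivatives on $\partial_\mu\psi\partial^\mu\psi$, leaving only $\Box_g\psi\cdot\partial_\nu\psi-\mu^2_{KG}\psi\partial_\nu\psi=F\partial_\nu\psi$).

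Next I would invoke Stokes' theorem for the $3$-form $\iota_{J^X}dg$ on $D(\tau_1,\tau_2)$. On the spacelike slices $\{t^\star=\tau_i\}$, the induced volume form $dg_{\{t^\star=\tau_i\}}$ of Section~\ref{subsec: volume forms of spacelike hypersurfaces} is defined precisely so that $dg=n\wedge dg_{\{t^\star=\tau_i\}}$, which gives the boundary contribution $\int J^X_\mu n^\mu\,dg_{\{t^\star=\tau_i\}}$. On the null horizons, the volume forms $dg_{\mathcal{H}^+}$, $dg_{\bar{\mathcal{H}}^+}$ are defined analogously via $dg=K^\flat\wedge dg_{\mathcal{H}^+}$ and $dg=\bar{K}^\flat\wedge dg_{\bar{\mathcal{H}}^+}$, so the horizon contributions take exactly the advertised form $\int J^X_\mu n^\mu\,dg_{\mathcal{H}^+}$ and $\int J^X_\mu n^\mu\,dg_{\bar{\mathcal{H}}^+}$ with $n$ identified with the Hawking--Reall generators of Section~\ref{subsec: Hawking-Reall v.f.}. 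Combining with the divergence identity from the first step rearranges to~\eqref{eq: lem: divergence theorem, eq 1}.

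The main technical issue is the justification of Stokes' theorem in the presence of the null boundary components, together with the regularity of $\psi$ at the horizons. To handle this cleanly, I would first work on the extended manifold $\mathcal{M}_\delta$ of Definition~\ref{def: causal domain, def 2}, where $\{t^\star=\tau_i\}$ remain spacelike and the horizons $\mathcal{H}^+,\bar{\mathcal{H}}^+$ become interior smooth null hypersurfaces; on this manifold $D(\tau_1,\tau_2)$ is a manifold-with-corners to which classical Stokes applies directly for smooth $\psi$. Passing to the limit $\delta\to 0$ and using the smooth extension of the Kerr--de~Sitter metric in the $(t^\star,r,\theta^\star,\varphi^\star)$ coordinates produces the identity for smooth solutions. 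A standard density argument in the energy class of solutions under consideration then delivers~\eqref{eq: lem: divergence theorem, eq 1} in the stated generality.
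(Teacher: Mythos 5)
The paper states Proposition~\ref{prop: divergence theorem} without supplying a proof, instead deferring to standard references (the monograph of Christodoulou is cited immediately afterwards). Your argument --- divergence identity from Definition~\ref{def: sec: preliminaries, def 1}, Stokes on the manifold-with-corners $D(\tau_1,\tau_2)$, boundary contributions read off from the conventions $dg = n\wedge dg_{\{t^\star=\tau\}}$ and $dg = K^\flat\wedge dg_{\mathcal{H}^+}$, $dg = \bar{K}^\flat\wedge dg_{\bar{\mathcal{H}}^+}$ --- is precisely the standard route the paper is implicitly invoking, and it is correct.

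One small point of exposition: once you have embedded $D(\tau_1,\tau_2)$ inside $\mathcal{M}_\delta$ so that $\mathcal{H}^+$ and $\bar{\mathcal{H}}^+$ become interior smooth null hypersurfaces and $D(\tau_1,\tau_2)$ is a manifold-with-corners, classical Stokes already yields the identity directly; no limit $\delta\to 0$ is needed. (A $\delta\to 0$ limit would only be required if you instead applied Stokes to the enlarged domain $D_\delta(\tau_1,\tau_2)$, whose lateral boundaries lie at $r=r_+-\delta$ and $r=\bar{r}_++\delta$; that is a legitimate but slightly more roundabout alternative.) Equivalently, since the metric already extends smoothly to $\mathcal{M}$ itself with the horizons as smooth null boundary hypersurfaces, one can apply Stokes on $\mathcal{M}$ without ever introducing $\mathcal{M}_\delta$ --- the role of the paper's conventions $dg = K^\flat \wedge dg_{\mathcal{H}^+}$ etc.\ is exactly to sidestep the absence of a unit normal on the null pieces.
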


For a further study of currents related to partial differential equations, see the monograph of Christodoulou~\cite{christodoulou}.

\subsection{Hardy inequalities}\label{subsec: Hardy inequalities}
We need the following Hardy inequalities.

\begin{lemma}\label{lem: subsec: Hardy inequalities, lem 1}
	Suppose that $f$ is a differentiable real function of compact support in~$[1,\infty)$, then the following holds
	\begin{equation}
	\begin{aligned}
	   \int_{1}^{\infty}|f|^2(x) dx\leq C\int_{1}^{\infty}x^{2}\left|\frac{df}{dx}\right|^2 (x)dx.
	\end{aligned}
	\end{equation}
\end{lemma}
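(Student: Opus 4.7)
The plan is to establish this Hardy inequality by a standard integration by parts argument, followed by Cauchy--Schwarz, which is the canonical way to derive weighted one-dimensional Hardy estimates of this type.

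First I would rewrite the integrand on the left using $1 = \frac{d}{dx}(x-1)$ and integrate by parts:
\begin{equation*}
\int_1^\infty |f|^2(x)\, dx = \Bigl[(x-1)|f|^2(x)\Bigr]_1^\infty - 2\int_1^\infty (x-1) f(x) f'(x)\, dx.
\end{equation*}
The boundary term at $x=1$ vanishes thanks to the factor $(x-1)$, while the boundary term at infinity vanishes because $f$ has compact support. The choice of primitive $x-1$ (rather than $x$) is crucial here: it eliminates the boundary contribution at $x=1$, which has the wrong sign and would otherwise not be controllable.

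Next I would apply the Cauchy--Schwarz inequality to the remaining spacetime integral:
\begin{equation*}
\left|\int_1^\infty (x-1) f(x) f'(x)\, dx\right| \leq \left(\int_1^\infty |f|^2(x)\, dx\right)^{1/2} \left(\int_1^\infty (x-1)^2 |f'|^2(x)\, dx\right)^{1/2}.
\end{equation*}
Since $(x-1)^2 \leq x^2$ on $[1,\infty)$, the second factor is bounded by the weighted norm appearing on the right-hand side of the desired inequality.

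Denoting $A = \int_1^\infty |f|^2\, dx$ and $B = \int_1^\infty x^2 |f'|^2\, dx$, the two previous steps combine to give $A \leq 2\sqrt{AB}$, from which $A \leq 4 B$ follows, proving the lemma with $C = 4$. There is no substantial obstacle here: the only subtle point is choosing the primitive $x-1$ so that the boundary term at $x=1$ drops, but once this is observed the estimate is an immediate consequence of integration by parts and Cauchy--Schwarz.
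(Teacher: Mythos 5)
Your proof is correct. The paper states this Hardy inequality without proof, treating it as standard; your argument (integration by parts with the primitive $x-1$ to kill the boundary term at $x=1$, then Cauchy--Schwarz, then $(x-1)^2\le x^2$) is the canonical derivation and gives a valid constant $C=4$.
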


Moreover, we note the following Hardy inequality
\begin{lemma}\label{lem: subsec: Hardy inequalities, lem 2}
	Suppose that $f$ is a differentiable real function in the interval~$[r_+,\bar{r}_+]$, then the following holds
	\begin{equation}
		\int_{r_+}^{\bar{r}_+} |f|^2 dr \leq C(\delta) \int_{r_+}^{\bar{r}_+} \left|\frac{df}{dr}\right|^2dr + C(\delta) \int_{r_++\delta}^{\bar{r}_+-\delta} |f|^2 dr
	\end{equation}
	for sufficiently small~$\delta>0$. 
\end{lemma}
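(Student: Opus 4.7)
The plan is to control $|f|^2$ on the two boundary strips $[r_+, r_++\delta]$ and $[\bar{r}_+-\delta, \bar{r}_+]$ by the data on the right-hand side using the fundamental theorem of calculus combined with an averaging trick, and then to trivially add the contribution on the middle region $[r_++\delta, \bar{r}_+-\delta]$, which already appears on the right-hand side.

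First I would fix $\delta>0$ small enough so that $r_++2\delta < \bar{r}_+-2\delta$. For $r\in[r_+,r_++\delta]$ and any $y\in[r_++\delta,r_++2\delta]$, the fundamental theorem of calculus gives
\begin{equation*}
f(r) = f(y) - \int_r^y \frac{df}{ds}(s)\, ds,
\end{equation*}
so that, by Cauchy--Schwarz and $|y-r|\leq 2\delta$,
\begin{equation*}
|f(r)|^2 \leq 2|f(y)|^2 + 2\cdot 2\delta \int_{r_+}^{\bar{r}_+} \left|\frac{df}{ds}\right|^2 ds.
\end{equation*}
Averaging the inequality in $y$ over the interval $[r_++\delta,r_++2\delta]$ of length $\delta$ yields
\begin{equation*}
|f(r)|^2 \leq \frac{2}{\delta}\int_{r_++\delta}^{r_++2\delta} |f(y)|^2\, dy + 4\delta \int_{r_+}^{\bar{r}_+} \left|\frac{df}{ds}\right|^2 ds.
\end{equation*}
Now integrating in $r$ over $[r_+,r_++\delta]$ I obtain
\begin{equation*}
\int_{r_+}^{r_++\delta}|f(r)|^2\, dr \leq 2\int_{r_++\delta}^{r_++2\delta} |f(y)|^2\, dy + 4\delta^2 \int_{r_+}^{\bar{r}_+} \left|\frac{df}{ds}\right|^2 ds.
\end{equation*}

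By the symmetric argument (with $y$ ranging over $[\bar{r}_+-2\delta,\bar{r}_+-\delta]$) one obtains the analogous estimate for the right boundary strip:
\begin{equation*}
\int_{\bar{r}_+-\delta}^{\bar{r}_+}|f(r)|^2\, dr \leq 2\int_{\bar{r}_+-2\delta}^{\bar{r}_+-\delta} |f(y)|^2\, dy + 4\delta^2 \int_{r_+}^{\bar{r}_+} \left|\frac{df}{ds}\right|^2 ds.
\end{equation*}
Adding these two estimates to the trivial identity $\int_{r_++\delta}^{\bar{r}_+-\delta}|f|^2\, dr = \int_{r_++\delta}^{\bar{r}_+-\delta}|f|^2\, dr$, and noting that the intervals $[r_++\delta,r_++2\delta]$ and $[\bar{r}_+-2\delta,\bar{r}_+-\delta]$ are contained in $[r_++\delta,\bar{r}_+-\delta]$, immediately yields the claimed inequality with $C(\delta)=\max(4,4\delta^2)$.

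I do not foresee any serious obstacle here: the inequality is not sharp (it contains the bulk term $\int_{r_++\delta}^{\bar{r}_+-\delta}|f|^2$ on the right-hand side, which trivially absorbs the interior contribution), so no clever weighting is required and the averaging step is standard. The only minor care needed is in choosing $\delta$ small enough that the auxiliary intervals $[r_++\delta,r_++2\delta]$ and $[\bar{r}_+-2\delta,\bar{r}_+-\delta]$ are disjoint and lie inside $[r_++\delta,\bar{r}_+-\delta]$.
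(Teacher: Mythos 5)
Your proposal is correct (the stated constant $\max(4,4\delta^2)$ is slightly off — adding the two boundary-strip estimates and the trivial interior identity gives $8\delta^2$ in front of the derivative term and $5$ in front of the interior term — but the structure is sound and only a constant $C(\delta)$ is required). You take a genuinely different route from the paper: the paper's proof is a one-line sketch that introduces a smooth cut-off $\chi$ equal to $1$ near $r_+$ and $\bar{r}_+$ and vanishing in the interior, and then integrates by parts for $\chi f$, so that Young's inequality produces the derivative term and the $\chi'$ terms (supported in the interior) produce the bulk term. Your argument instead proceeds directly via the fundamental theorem of calculus, Cauchy--Schwarz, and averaging over an auxiliary interval; it avoids introducing a cut-off and any integration by parts, at the cost of a slightly longer (but more explicit) computation. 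Both are standard; the cut-off method fits the paper's general machinery (cut-offs are used repeatedly elsewhere), while your averaging argument is self-contained and makes the $\delta$-dependence of the constant completely transparent. No gap.
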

\begin{proof}
	The proof is straightforward by integration by parts for the function~$\chi \psi$, where the smooth cut-off~$\chi$ satisfies~$\chi(r)=1$ in sufficiently small neighborhoods of~$r_+,\bar{r}_+$ and~$\chi(r)=0$ away. 
\end{proof}

\subsection{Poincare--Wirtinger inequality}\label{subsec: poincare wirtinger}

We present a classical Poincare--Wirtinger type inequality that is taylored to the setting of the present paper.

\begin{lemma}\label{lem: subsec: poincare wirtinger, lem 1}
	Let~$l>0$ and let~$(a,M)\in\mathcal{B}_l$ and~$\mu_{KG}^2\geq 0$. Moreover, assume that~$\psi$ is a smooth function on the manifold~$\mathcal{M}$. Then, for any~$\tau\geq 0 $ we have the following 
	\begin{equation}\label{eq: lem: subsec: poincare wirtinger, lem 1, eq 1}
	\int_{\{t^\star=\tau\}} |\psi-\underline{\psi}(\tau)|^2\leq B \int_{\{t^\star=\tau\}} J^n_\mu[\psi]n^\mu 
	\end{equation}
	where~$\underline{\psi}(\tau)=\frac{1}{|\{t^\star=\tau\}|}\int_{\{t^\star=\tau\}}\psi$ and~$|\{t^\star=\tau\}|=\int_{\{t^\star=\tau\}} 1\cdot dg_{\{t^\star=\tau\}}$ with respect to the volume forms of Section~\ref{subsec: volume forms of spacelike hypersurfaces}. The constant~$B$ in inequality~\eqref{eq: lem: subsec: poincare wirtinger, lem 1, eq 1} depends only on~$|\{t^\star=\tau\}|=c$, where the constant~$c$ is independent of~$\tau$. 
\end{lemma}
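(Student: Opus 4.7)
\begin{sketch proof}
The plan is to reduce the estimate to the classical Neumann--Poincar\'e inequality on a fixed compact Riemannian 3-manifold with boundary, after exploiting the stationarity of the background under~$\partial_{t^\star}$.

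First, I would observe that since~$\partial_{t^\star}=\partial_t$ is a Killing vector field of~$g_{a,M,l}$ (see Remark after Definition~\ref{def: subsec: boyer Lindquist coordinates, def 1}), its flow~$\Phi_\tau$ maps the hypersurface~$\Sigma_0\:\dot{=}\:\{t^\star=0\}$ isometrically onto~$\Sigma_\tau\:\dot{=}\:\{t^\star=\tau\}$, carrying the induced Riemannian metric, the unit normal~$n$, and the volume form of Section~\ref{subsec: volume forms of spacelike hypersurfaces} to their counterparts. Applied to the pulled-back function~$\psi\circ\Phi_\tau$, both sides of~\eqref{eq: lem: subsec: poincare wirtinger, lem 1, eq 1} are invariant, and~$\underline{\psi}(\tau)$ equals the mean of~$\psi\circ\Phi_\tau$ on~$\Sigma_0$. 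Hence it suffices to establish the inequality on the single compact Riemannian 3-manifold-with-boundary~$\Sigma_0\cong[r_+,\bar{r}_+]\times\mathbb{S}^2$, with a constant~$B$ depending only on~$(a,M,l)$.

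Next, I would express~$J^n_\mu[\psi]n^\mu$ in terms of the intrinsic gradient of~$\psi$ on~$\Sigma_0$. Since~$n$ is future-directed timelike unit with respect to~$g$, one has~$g(n,n)=-1$, and the induced metric~$h$ on~$\Sigma_0$ is positive definite with~$|\nabla\psi|^2_g=-(n\psi)^2+|\nabla^{\Sigma}\psi|^2_h$. Substituting into Definition~\ref{def: sec: preliminaries, def 1} gives
\begin{equation*}
\mathbb{T}(n,n)[\psi]=\tfrac12(n\psi)^2+\tfrac12|\nabla^{\Sigma}\psi|^2_h+\tfrac12\mu^2_{\textit{KG}}|\psi|^2\geq \tfrac12|\nabla^{\Sigma}\psi|^2_h.
\end{equation*}
Thus the right-hand side of~\eqref{eq: lem: subsec: poincare wirtinger, lem 1, eq 1} bounds from above half the Dirichlet energy of~$\psi$ on~$\Sigma_0$.

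Finally, I would invoke the Neumann--Poincar\'e inequality on the compact Riemannian manifold with boundary~$(\Sigma_0,h)$: there exists a constant~$C=C(\Sigma_0,h)$ such that for every~$u\in H^1(\Sigma_0)$ of mean zero, $\|u\|_{L^2(\Sigma_0)}^2\leq C\|\nabla^{\Sigma}u\|_{L^2(\Sigma_0)}^2$. The standard proof is a contradiction/compactness argument using Rellich--Kondrachov: if the inequality fails, there is a sequence~$u_k$ of mean-zero unit~$L^2$ norm functions with Dirichlet energy tending to zero; a subsequence converges weakly in~$H^1$ and strongly in~$L^2$ to a weak limit~$u_\infty$ with zero Dirichlet energy and unit~$L^2$ norm, hence constant, but the mean-zero condition forces~$u_\infty\equiv0$, a contradiction. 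Applying this to~$u=\psi-\underline{\psi}(0)$ and combining with the pointwise bound of the previous step gives~\eqref{eq: lem: subsec: poincare wirtinger, lem 1, eq 1} on~$\Sigma_0$ with~$B=2C$, and then on every~$\Sigma_\tau$ by the isometry argument.

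The only conceptual subtlety is confirming that the domain of the Poincar\'e inequality is indeed a fixed compact Riemannian manifold (independent of~$\tau$) and that the constant is finite; both are immediate from the Killing property of~$\partial_{t^\star}$ and the smoothness of the induced metric on~$\Sigma_0$ up to its boundary components~$\Sigma_0\cap\mathcal{H}^+$ and~$\Sigma_0\cap\bar{\mathcal{H}}^+$, so no real obstacle arises.
\end{sketch proof}
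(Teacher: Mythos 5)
The paper does not supply a proof of this lemma; it is stated as a ``classical Poincar\'e--Wirtinger type inequality'' and left to the reader. Your argument is the expected standard one, and it is correct: $\partial_{t^\star}=\partial_t$ is Killing, so its flow is an isometry carrying $\{t^\star=0\}$ (with its induced metric, unit normal, and the volume form of Section~\ref{subsec: volume forms of spacelike hypersurfaces}) to $\{t^\star=\tau\}$, which both reduces the claim to a single compact slice and confirms that $|\{t^\star=\tau\}|$ is $\tau$-independent; the computation $\mathbb{T}(n,n)[\psi]=\tfrac12(n\psi)^2+\tfrac12|\nabla^\Sigma\psi|^2_h+\tfrac12\mu^2_{KG}|\psi|^2\geq\tfrac12|\nabla^\Sigma\psi|^2_h$ is correct and gives the needed lower bound on $J^n_\mu[\psi]n^\mu$; and the Neumann--Poincar\'e inequality on the fixed compact 3-manifold-with-boundary $[r_+,\bar{r}_+]\times\mathbb{S}^2$ (via Rellich--Kondrachov) supplies the constant. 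No gaps.
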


\subsection{Twisted derivatives and the twisted energy momentum tensor}\label{subsec: sec: preliminaries, subsec 2}

The twisted covariant derivative that will be defined in this Section has been used in a number of works, see~\cite{warnick.massicAdS,holzegel1,holzegel5} and references therein.

We define the twisting function
\begin{equation}
	f(r)=e^{C_{twi}\cdot r}
\end{equation}
where~$C_{twi}>0$ is an arbitrary positive constant. We define the $f$-twisted covariant derivatives as follows 
\begin{equation}\label{eq: subsec: sec: preliminaries, subsec 2, eq 1}
	\tilde{\nabla}_\mu (\cdot)=f\nabla_\mu\left((\cdot)~\frac{1}{f}\right),\qquad \tilde{\nabla}_\mu^\dagger(\cdot)=-\frac{1}{f}\nabla_\mu\left((\cdot)~f\right).
\end{equation}

We need the following definition
\begin{definition}
	Let~$l>0$,~$(a,M)\in\mathcal{B}_l$ and~$\mu^2_{KG}\geq 0$. We define the $f$-twisted energy momentum tensor for the Klein--Gordon equation~\eqref{eq: kleingordon} as follows 
	\begin{equation}\label{eq: subsec: sec: preliminaries, subsec 2, eq 2}
		\tilde{\mathbb{T}}_{ab}[\psi]= \tilde{\nabla}_a \psi \tilde{\nabla}_b \psi -\frac{1}{2}g_{ab}\left(\tilde{\nabla}^\alpha \psi \tilde{\nabla}_\alpha\psi +V_{twi}|\psi|^2\right),\qquad V_{twi}= -\left(\frac{\Box_g f}{f}-\mu^2_{KG}\right).
	\end{equation}
		
	Moreover, let~$\psi$ be a smooth solution of the inhomogeneous Klein--Gordon equation~$\Box\psi -\mu^2_{KG}\psi=F$. Then, for any smooth vector field~$X$ we define 
	\begin{equation}\label{eq: subsec: sec: preliminaries, subsec 2, eq 3}
		\tilde{J}^X_a [\psi] = \tilde{\mathbb{T}}_{a b} [\psi] X^b,\qquad \tilde{\mathrm{K}}^X [\psi] :=  ~^{(X)}\pi_{a b} \tilde{\mathbb{T}}^{a b} [\psi] + X^b \tilde{S}_b [\psi],
	\end{equation}
	where 
	\begin{equation}
		\tilde{S}_b [\psi] = \frac{\tilde{\nabla}^\dagger_b (fV_{twi})}{2f} \psi^2 + \frac{\tilde{\nabla}^\dagger_b f}{2f} \tilde{\nabla}_\sigma \psi \tilde{\nabla}^\sigma \psi.
	\end{equation}
\end{definition}
These are thus examples of compatible currents in the sense of Christodoulou~\cite{christodoulou}.

For a general smooth~$\psi$ we have
\begin{equation}
	\nabla_a \tilde{\mathbb{T}}^a~_b [\psi] = \left( - \tilde{\nabla}^\dagger_a \tilde{\nabla}^a \psi - V_{twi} \cdot \psi \right) \tilde{\nabla}_b \psi + \tilde{S}_b [\psi].
\end{equation}

Moreover, we compute 
\begin{equation}
	\nabla^a \tilde{J}_a^X[\psi] = F\cdot X(\psi)+ \tilde{\mathrm{K}}^X [\psi].
\end{equation}

\begin{remark}
	Note that if~$X$ is one of the Killing vector fields
	\begin{equation}
		X= \partial_t,\partial_{\varphi}
	\end{equation}
	then we have
	\begin{equation}
		\tilde{\mathrm{K}}^X[\psi]=0.
	\end{equation}
\end{remark}

We note the following lemma

\begin{lemma}\label{lem: subsec: sec: preliminaries, subsec 2, lem 2}
		Let~$l>0$,~$(a,M)\in\mathcal{B}_l$ and~$\mu^2_{KG}\geq 0$. Moreover, let~$X,Y$ be two smooth vector fields which are future directed timelike on~$\{r_+ < r < \bar{r}_+\}$. Furthermore, we make the assumption that on the event horizon~$\mathcal{H}^+$ the vector fields~$X,Y$ are either timelike or equal to~$\partial_{t}+\frac{a\Xi}{r_+^2+a^2}\partial_{\varphi}$. Similarly, we make the assumption that on the cosmological horizon~$\bar{\mathcal{H}}^+$ the vector fields~$X,Y$ are either timelike or equal to~$\partial_t+\frac{a\Xi}{\bar{r}_+^2+a^2}\partial_{\varphi}$.

		Then, there exist strictly positive constants~$c(a,M,l,X,Y)>0,C_{twi}(a,M,l,X,Y)>0$, independent of the Klein--Gordon mass~$\mu_{KG}$, and we have that
	\begin{equation}
		\tilde{\mathbb{T}}[\psi](X,Y)+ C_{twi}\cdot \mathbb{T}[\psi](X,Y)\geq c \cdot \Delta|\psi|^2+ c\Delta \cdot \sum_{i_1+i_2+i_3=1} |\partial_t^{i_1}(Z^\star)^{i_2}\slashed{\nabla}^{i_3}\psi|^2
	\end{equation}
	for all~$r\in[r_+,\bar{r}_+]$.
\end{lemma}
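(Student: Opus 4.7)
The approach is a pointwise coercivity argument that combines three ingredients: the quantitative positivity of the standard energy--momentum tensor $\mathbb{T}(X,Y)$ afforded by the timelike/causal hypotheses on $(X,Y)$, the zeroth-order control of $|\psi|^2$ generated by the twisted tensor through the $V_{twi}$ mechanism, and a Young's-inequality absorption of the $C_{twi}$-weighted cross terms. Writing $\tilde{\nabla}_a\psi = \partial_a\psi - \omega_a\psi$ with $\omega = d\log f = C_{twi}\,dr$, and using the identity $\Box_gf/f = |\omega|^2 + \text{div}\,\omega^\sharp$ to rewrite $V_{twi} = -|\omega|^2 - \text{div}\,\omega^\sharp + \mu^2_{KG}$, I obtain the pointwise decomposition
\begin{equation*}
\tilde{\mathbb{T}}(X,Y)[\psi] = \mathbb{T}(X,Y)[\psi] + A_{X,Y}|\psi|^2 + B_{X,Y}(\psi,\partial\psi),
\end{equation*}
where $A_{X,Y} = \omega(X)\omega(Y) + \tfrac12 g(X,Y)\text{div}\,\omega^\sharp - \tfrac12 g(X,Y)\mu^2_{KG}$ and $B_{X,Y}(\psi,\partial\psi)$ collects the cross terms $-\omega(X)\psi(Y\psi) - \omega(Y)\psi(X\psi) + g(X,Y)\psi\omega^\sharp\psi$, which are linear both in $C_{twi}$ and in $\psi\,\partial\psi$.

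The structural inputs are the positivity of $\mathbb{T}(X,Y)$ and the geometric behaviour of the Hawking--Reall vector fields. In the open interior $\{r_+<r<\bar{r}_+\}$, both $X$ and $Y$ are strictly future timelike, so a standard frame computation using the dominant energy condition yields $\mathbb{T}(X,Y)[\psi] \geq c(X,Y,a,M,l)\bigl(|\partial_t\psi|^2 + |Z^\star\psi|^2 + |\slashed{\nabla}\psi|^2 + \mu^2_{KG}|\psi|^2\bigr)$. On the horizons $\mathbb{T}(X,Y)\geq 0$ persists, and whenever $X$ or $Y$ degenerates to $K$ or $\bar K$, two facts are crucial: these fields are tangent to the respective horizon (so $K(r) = \bar K(r) = 0$, giving $\omega(K) = \omega(\bar K) = 0$ there), and $g(K,K)$ vanishes at $\mathcal{H}^+$ to the same order as $\Delta$, as visible from the factorisation of $g(W,W) = (\Delta/\rho^2)\mathcal{B}$ proved in Lemma~\ref{lem: causal vf E,1}.

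Having these two inputs in hand, I would close the estimate by choosing $C_{twi}$ sufficiently large (depending on $a,M,l,X,Y$ but independent of $\mu_{KG}$). The cross term is absorbed via Young's inequality, $|B_{X,Y}(\psi,\partial\psi)|\leq \delta|\partial\psi|^2 + C_{twi}^2\delta^{-1}|\psi|^2/4$, with $\delta$ chosen so that the $|\partial\psi|^2$ piece consumes only a small fraction of $C_{twi}\mathbb{T}(X,Y)$. The remaining fraction of $C_{twi}\mathbb{T}(X,Y)$ then delivers the required $c\Delta\bigl(|\partial_t\psi|^2 + |Z^\star\psi|^2 + |\slashed{\nabla}\psi|^2\bigr)$ lower bound, while the sum of $A_{X,Y}$ and the Young residue is shown to dominate $c\Delta|\psi|^2$ pointwise on $[r_+,\bar{r}_+]$.

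The hard part is the sign management of the zeroth-order coefficient $A_{X,Y}$. For the exponential twisting one computes $\text{div}\,\omega^\sharp = C_{twi}\Delta'/\rho^2$, which changes sign at $r = r_{\Delta,\max}$, so $V_{twi}$ cannot be made uniformly positive by any choice of $C_{twi}>0$. The saving observation is that the potentially unfavourable piece $\tfrac12 g(X,Y)\text{div}\,\omega^\sharp$ carries the factor $g(X,Y)$ which itself is of order $\Delta$ whenever $X$ or $Y$ reduces to a null horizon generator; so the sign-indefinite contribution is naturally compatible with the $\Delta$-weight appearing on the right-hand side. In the complementary regime where $X^rY^r$ is bounded below away from the horizons, the positive contribution $\omega(X)\omega(Y) = C_{twi}^2 X^rY^r$ can be made to dominate by taking $C_{twi}$ large. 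A case-by-case verification of these two regimes, together with the simultaneous tuning of $\delta$ and $C_{twi}$, constitutes the technical heart of the proof.
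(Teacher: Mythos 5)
Your decomposition $\tilde{\mathbb{T}}(X,Y)=\mathbb{T}(X,Y)+A_{X,Y}|\psi|^2+B_{X,Y}(\psi,\partial\psi)$ is the right framework, and it is considerably more explicit than the paper's one-sentence proof, which simply cites $\mathbb{T}(X,Y)\gtrsim\Delta\sum_{i_1+i_2+i_3=1}|\partial_t^{i_1}(Z^\star)^{i_2}\slashed{\nabla}^{i_3}\psi|^2$ and says to take $C_{twi}$ large. One small formula slip: with the paper's (massive) $\mathbb{T}$, the $\mu^2_{KG}$ contributions coming from $V_{twi}$ and from $\mathbb{T}$ cancel exactly in the subtraction, so $A_{X,Y}=\omega(X)\omega(Y)+\tfrac12 g(X,Y)\,\mathrm{div}\,\omega^\sharp$ with no $\mu^2_{KG}$ term; the extra $-\tfrac12 g(X,Y)\mu^2_{KG}$ you wrote would only appear if you measured $\tilde{\mathbb{T}}$ against the massless stress tensor. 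Since $g(X,Y)\le 0$ this slip is in the favourable direction and does not derail the argument.

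The more substantive issue is your closing ``saving observation.'' You propose a dichotomy --- either $g(X,Y)$ degenerates like $\Delta$ near the horizons, or $X^rY^r$ is bounded below in the interior --- but this dichotomy is not guaranteed by the hypotheses of the lemma. The vector field $W=\partial_t+\tfrac{a\Xi}{r^2+a^2}\partial_\varphi$ is admissible as both $X$ and $Y$ (it is future timelike in the interior and equals $K$, $\bar{K}$ on the respective horizons), yet it has $W^r\equiv 0$ and $g(W,W)$ is not small in the bulk. So $\omega(X)\omega(Y)=C_{twi}^2 X^rY^r$ vanishes identically and the only $|\psi|^2$ coefficient left in $A_{X,Y}$ is $\tfrac12 g(W,W)\,C_{twi}\Delta'/\rho^2$, which is \emph{negative} wherever $\Delta'>0$, i.e.\ on $(r_+,r_{\Delta,\max})$. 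A direct check confirms this is fatal for your argument as stated: with $\psi\equiv 1$ and $\mu_{KG}=0$ one has $\mathbb{T}(W,W)=0$, $B_{W,W}=0$, and $\tilde{\mathbb{T}}(W,W)=\tfrac12 g(W,W)\,C_{twi}\Delta'/\rho^2<0$ near $r_+$, so the left-hand side is negative while the right-hand side $c\Delta|\psi|^2$ is positive. The zeroth-order coercivity therefore genuinely requires the $C_{twi}^2\,X^rY^r$ piece, i.e.\ an implicit lower bound on $dr(X)\,dr(Y)$ away from the horizons. That condition is met by the hypersurface normal $n$ used in the paper's applications (for which $n^r$ is bounded away from zero for $r\in(r_+,\bar r_+)$), but it is not implied by the stated hypotheses of the lemma. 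Your careful bookkeeping in fact exposes this sensitivity, which the paper's terse proof glosses over: to make either argument airtight, the hypothesis $dr(X)\,dr(Y)>0$ on $(r_+,\bar r_+)$ (or its equivalent) needs to be added explicitly and then carried through the Young absorption exactly as you outline, choosing $\delta$ first and then $C_{twi}(\delta)$ large enough that the quadratic-in-$C_{twi}$ term dominates both the linear-in-$C_{twi}$ sign-indefinite term and the Young residue.
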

\begin{proof}
	This is immediate after taking~$C_{twi}(a,M,l,\mu_{KG})>0$ sufficiently large, since~$\mathbb{T}(X,Y)\gtrsim \Delta \cdot \sum_{i_1+i_2+i_3=1} |\partial_t^{i_1}(Z^\star)^{i_2}\slashed{\nabla}^{i_3}\psi|^2$.
\end{proof}

\begin{remark}\label{rem: subsec: sec: preliminaries, subsec 2, rem 2}
	We will use the twisted energy momentum tensor in Section~\ref{subsec: sec: continuity argument, subsec 0.2} to control zeroth order future boundary terms of the form~$\int_{\{t^\star=\tau_2\}}  |\psi|^2$, in the case~$\mu_{KG}=0$. 
\end{remark}

\section{Red-shift and Superradiance}\label{sec: redshift and superradiance}

In this section we construct redshift vector field multipliers and then use them to derive redshift estimates. 

\subsection{Red-shift multipliers}
We present the redshift multipliers for the horizons, see Theorem 7.1 of the lecture notes~\cite{DR5}, and~[\cite{Volker}, Proposition~$6.5$].

\begin{proposition}\label{prop: redshift on the event horizon}
Let $l>0$,~$(a,M)\in \mathcal{B}_{l}$ and~$\mu^2_{KG}\geq 0$. Then, there exist positive constants 
\begin{equation}
c(a,M,l,\mu^2_{KG}),\qquad \bar{c}(a,M,l,\mu^2_{KG})>0,
\end{equation}
there exists a parameter~$\epsilon_{red}>0$ sufficiently small satisfying
\begin{equation}
	r_+(a,M,l)<r_+ +2\epsilon_{red}(a,M,l) < \bar{r}_+-2\epsilon_{red}(a,M,l)< \bar{r}_+ ,
\end{equation}
and there exists a vector field 
\begin{equation}
	N
\end{equation}
such that the following holds.

We have
\begin{equation*}
	\begin{aligned}
			&	N\text{ is timelike in } \{r< r_++2\epsilon_{red}\}\cup \{\bar{r}_+-2\epsilon_{red}< r\},
			\qquad [N,\partial_t]=0,\qquad [N,\partial_{\varphi}]=0,
	\end{aligned}
\end{equation*}
\begin{equation*}
	N=0,\quad \textit{for } \{r_++2\epsilon_{red}\leq r\leq \bar{r}_+-2\epsilon_{red}\},
\end{equation*}
and for any sufficiently regular function~$\psi$ we have
\begin{equation}\label{eq: prop: redshift on the event horizon, eq 1}
	\mathrm{K}^{N}[\psi]\geq c J^{N}_{\mu}[\psi]N^{\mu},\quad \textit{for }  \{r\leq r_+ + \epsilon_{red}\},\qquad \mathrm{K}^{N}[\psi]\geq c  J^{N}_{\mu}[\psi]N^{\mu},\quad\textit{for } \{r\geq \bar{r}_+ -\epsilon_{red}\},
\end{equation}
where for~$\mathrm{K}^X$ see Definition~\ref{def: sec: preliminaries, def 1}. The result of the present Proposition also holds for the extended domain~$D_\delta (\tau_1,\tau_2)$, see Definition~\ref{def: causal domain, def 2}. 
\end{proposition}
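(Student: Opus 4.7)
The plan is to construct $N$ as a sum $N = N_+ + \bar N_+$ of two locally supported vector fields, one near each horizon, following the red-shift construction of Dafermos--Rodnianski [DR5, Theorem 7.1]. The key inputs are that subextremality and Lemma 3.1 (positivity of $\frac{d\Delta}{dr}(r_+)$ and negativity of $\frac{d\Delta}{dr}(\bar r_+)$) imply that both Killing horizons are non-degenerate, with strictly positive surface gravities
\begin{equation*}
\kappa_+ \:\dot{=}\: \frac{1}{2}\frac{d\Delta/dr(r_+)}{r_+^2+a^2} > 0, \qquad \bar\kappa_+ \:\dot{=}\: -\frac{1}{2}\frac{d\Delta/dr(\bar r_+)}{\bar r_+^2 + a^2} > 0,
\end{equation*}
relative to the null generators $K$ and $\bar K$ of Section 2.8.

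The construction of $N_+$ near $\mathcal{H}^+$ proceeds as follows. I fix a null vector field $Y$ along $\mathcal{H}^+$, transverse to it, normalized by $g(K,Y) = -2$ and $g(Y,Y)=0$, and extend $Y$ into a neighborhood of $\mathcal{H}^+$ by insisting that it depends only on $r$ and $\theta^\star$ in Kerr--de~Sitter star coordinates, so that automatically $[Y, \partial_t] = [Y, \partial_\varphi] = 0$. One then has considerable freedom in choosing the transverse extension of $Y$; following the standard prescription, I select it so that, in a null frame $(K, Y, e_A)$ adapted to $\mathcal{H}^+$, the deformation tensor components satisfy ${}^{(Y)}\pi(Y,Y)|_{\mathcal{H}^+} = 0$, ${}^{(Y)}\pi(K,Y)|_{\mathcal{H}^+} = 0$, ${}^{(Y)}\pi(Y,e_A)|_{\mathcal{H}^+} = 0$, and, crucially, ${}^{(Y)}\pi(K,K)|_{\mathcal{H}^+} = 2\kappa_+$. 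Setting $\tilde N_+ = \sigma_1 K + \sigma_2 Y$ with $\sigma_1, \sigma_2 > 0$, this vector field is timelike in a small neighborhood of $\mathcal{H}^+$ (since $g(\tilde N_+, \tilde N_+) = -4\sigma_1 \sigma_2 + O(r - r_+)$). Since $K$ is Killing, $\mathrm{K}^{\tilde N_+}[\psi] = \sigma_2 \mathrm{K}^{Y}[\psi]$, and the algebraic structure of ${}^{(Y)}\pi$ together with the mass contribution $-\frac{1}{2}\mu_{KG}^2|\psi|^2 \operatorname{Tr}{}^{(Y)}\pi$ produces, on $\mathcal{H}^+$,
\begin{equation*}
\mathrm{K}^{Y}[\psi]\big|_{\mathcal{H}^+} \geq c_0\Big( (K\psi)^2 + (Y\psi)^2 + |\slashed{\nabla}\psi|^2 + \mu_{KG}^2|\psi|^2 \Big)
\end{equation*}
for some $c_0(a,M,l,\mu_{KG}^2) > 0$, which is comparable to $J^{\tilde N_+}_\mu \tilde N_+^\mu$. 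By continuity, the pointwise inequality persists on a slab $\{r_+ \leq r \leq r_+ + \epsilon_{red}\}$ for $\epsilon_{red} > 0$ sufficiently small.

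The analogous construction near $\bar{\mathcal{H}}^+$ yields a vector field $\tilde{\bar N}_+ = \bar\sigma_1 \bar K + \bar\sigma_2 \bar Y$ with the same properties; the positivity of the surface gravity $\bar\kappa_+ > 0$ is exactly what makes the sign work in the same direction, producing a red-shift (rather than blue-shift) effect at the cosmological horizon. Finally, I patch via a pair of smooth radial cutoffs: choose $\chi^-(r)$ equal to $1$ on $\{r \leq r_+ + \epsilon_{red}\}$ and vanishing for $r \geq r_+ + 2\epsilon_{red}$, and symmetrically $\chi^+(r)$ for the cosmological horizon, and define $N = \chi^-(r) \tilde N_+ + \chi^+(r) \tilde{\bar N}_+$. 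Since both cutoffs depend only on $r$, the commutations $[N, \partial_t] = [N, \partial_\varphi] = 0$ are preserved. The extension to the domain $D_\delta(\tau_1,\tau_2)$ is automatic because the entire construction is carried out in Kerr--de~Sitter star coordinates, in which the metric extends smoothly across the horizons.

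The main obstacle is the algebraic verification of the positivity of $\mathrm{K}^{Y}[\psi]$ on $\mathcal{H}^+$ with the $\mu_{KG}^2 |\psi|^2$ term present. The mass contribution is proportional to $\mu_{KG}^2 \operatorname{Tr}{}^{(Y)}\pi$, whose sign must be controlled on the horizon; this is why the constant $c$ in the proposition is allowed to depend on $\mu_{KG}^2$. The prescription of ${}^{(Y)}\pi$ in the null frame at $\mathcal{H}^+$ fixes $Y$ to enough order that both the mass term and the cross terms between $K\psi$, $Y\psi$, $\slashed{\nabla}\psi$ can be absorbed by a Cauchy--Schwarz argument, producing a clean diagonal positive quadratic form. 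A secondary technical point is that the coefficient $\epsilon_{red}$ must be chosen small enough, depending on $\mu_{KG}^2$, so that the off-horizon corrections to the coercivity estimate do not spoil the sign.
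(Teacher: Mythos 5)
There is a genuine gap in the choice of the transverse extension of $Y$. You write that you ``select it so that \dots the deformation tensor components satisfy ${}^{(Y)}\pi(Y,Y)|_{\mathcal{H}^+}=0$, ${}^{(Y)}\pi(K,Y)|_{\mathcal{H}^+}=0$, ${}^{(Y)}\pi(Y,e_A)|_{\mathcal{H}^+}=0$,'' calling this the standard prescription. It is in fact the \emph{opposite} of the Dafermos--Rodnianski prescription used in the paper. The paper (and DR5, Theorem 7.1) instead imposes $\nabla_{Y^+}Y^+=-\sigma(K^+ + Y^+)$ for a large parameter $\sigma>0$, which gives
\begin{equation*}
{}^{(Y)}\pi(Y,Y)=2\sigma,\qquad {}^{(Y)}\pi(K,Y)=\sigma+\tfrac12\,g(\nabla_K Y,Y),
\end{equation*}
i.e.\ it makes these two components \emph{large and positive}, not zero.

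This freedom is exactly what supplies coercivity. Writing out $\tfrac12\mathbb{T}_{\mu\nu}\,{}^{(Y)}\pi^{\mu\nu}$ in the null frame $(K,Y,e_A)$ with $g(K,Y)=-2$, the only large positive contributions are
\begin{equation*}
\tfrac14\mathbb{T}(K,K)\,{}^{(Y)}\pi(Y,Y)=\tfrac{\sigma}{2}(K\psi)^2,\qquad \tfrac12\mathbb{T}(K,Y)\,{}^{(Y)}\pi(K,Y)=\tfrac{\sigma}{2}|\slashed\nabla\psi|^2 + O(1),
\end{equation*}
together with $\tfrac14\mathbb{T}(Y,Y)\,{}^{(Y)}\pi(K,K)=\tfrac{\kappa_+}{2}(Y\psi)^2$, and the trace contribution $-\tfrac12\mu^2_{KG}|\psi|^2\operatorname{Tr}{}^{(Y)}\pi = \tfrac{\sigma}{2}\mu^2_{KG}|\psi|^2+O(1)\mu^2_{KG}|\psi|^2$ (since $\operatorname{Tr}{}^{(Y)}\pi=-{}^{(Y)}\pi(K,Y)+\sum_A{}^{(Y)}\pi(e_A,e_A)=-\sigma+O(1)$). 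These $\sigma$-weighted terms are needed to absorb the $O(1)$ cross terms, e.g.\ the $(K\psi)(Y\psi)$ term that sits inside $\mathbb{T}(e_A,e_B)\,{}^{(Y)}\pi(e_A,e_B)$ via $\mathbb{T}_{AB}\supset -\tfrac12 g_{AB}(-K\psi\,Y\psi+|\slashed\nabla\psi|^2)$, and the $(Y\psi)\slashed\nabla\psi$ term from $-\mathbb{T}(Y,e_A)\,{}^{(Y)}\pi(K,e_A)$, whose coefficient ${}^{(Y)}\pi(K,e_A)$ is \emph{not} at your disposal (it is determined by the tangential derivatives of $Y$ along the horizon).

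With your constraints, ${}^{(Y)}\pi(Y,Y)={}^{(Y)}\pi(K,Y)=0$ wipes out both the $\sigma(K\psi)^2$ and the $\sigma|\slashed\nabla\psi|^2$ terms entirely, and forces $\operatorname{Tr}{}^{(Y)}\pi=\sum_A{}^{(Y)}\pi(e_A,e_A)$, whose sign is a fixed geometric quantity you cannot tune. The resulting quadratic form has no positive $(K\psi)^2$ contribution at all, so the mixed $(K\psi)(Y\psi)$ term cannot be absorbed by Cauchy--Schwarz, and the claimed ``clean diagonal positive quadratic form'' does not follow. The rescaling $\tilde N_+ = \sigma_1 K + \sigma_2 Y$ cannot rescue this, since $K$ Killing gives $\mathrm{K}^{\tilde N_+}=\sigma_2\mathrm{K}^Y$, which only scales the whole (indefinite) form. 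The remainder of your architecture — positivity of the two surface gravities, the parallel treatment of $\bar{\mathcal H}^+$, the $\partial_t,\partial_\varphi$-invariant radial cutoffs, the smooth extension to $D_\delta$ — is sound and matches the paper; it is precisely the prescription of $\nabla_Y Y$ that must be replaced with the DR5 one-parameter family $\nabla_Y Y=-\sigma(K+Y)$ and $\sigma$ taken large.
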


\begin{proof}
The proof can be inferred from the lecture notes~\cite{DR5} or~\cite{Volker}. We present the basic steps.

First, we note the definition of the surface gravity of the event horizon~$\mathcal{H}^+$ and the cosmological horizon~$\bar{\mathcal{H}}^+$ respectively
\begin{equation}
	\kappa_+=\frac{1}{2}|\frac{d\Delta}{dr}|,\qquad \bar{\kappa}_+=\frac{1}{2}|\frac{d\Delta}{dr}|.
\end{equation}
In view of the subextremality condition~$l>0$ and~$(a,M)\in\mathcal{B}_l$ we note that 
\begin{equation}\label{eq: proof prop: redshift on the event horizon, eq 0}
	\kappa_+,\qquad \bar{\kappa}_+ >0. 
\end{equation}

There exist vector fields 
\begin{equation}
Y^+,\qquad \bar{Y}^+
\end{equation}
that satisfy 
\begin{equation}
	\nabla_{Y^+}Y^+=-\sigma \left(K^++Y^+\right),\qquad \nabla_{\bar{Y}^+}\bar{Y}^+=-\sigma \left(\bar{K}^++\bar{Y}^+\right)
\end{equation}
\begin{equation}
	g(Y^+,K^+)=-2,\qquad g(\bar{Y}^+,\bar{K}^+)=-2
\end{equation}
which are null on the event horizon~$\mathcal{H}^+$ and the cosmological horizon~$\bar{\mathcal{H}}^+$ respectively, where for the Hawking--Reall vector fields~$K^+,\bar{K}^+$ see Section~\ref{subsec: Hawking-Reall v.f.}, and moreover~$Y^+,\bar{Y}^+$ are invariant under pushforwards with respect to~$\partial_t$ and~$\partial_\varphi$. The constant~$\sigma>0$ is to be chosen sufficiently large, see already~\eqref{eq: proof prop: redshift on the event horizon, eq 3}.

We recall from Definition~\ref{def: sec: preliminaries, def 1} that for a vector field~$X$ the following holds
\begin{equation}\label{eq: proof prop: redshift on the event horizon, eq 1}
	\begin{aligned}
		\mathrm{K}^X	&	=\frac{1}{2}\mathbb{T}_{\mu\nu}\pi^{\mu\nu}	= \frac{1}{2}\left(\partial_\mu\psi\partial_\nu\psi-\frac{1}{2}g_{\mu\nu}|\nabla\psi|^2_g\right)\pi^{\mu\nu} -\frac{1}{2}\mu^2_{\textit{KG}}|\psi|^2 \Tr \: ^{(X)}\pi. 
	\end{aligned}
\end{equation}
Therefore, for a sufficiently large~$\sigma>0$ we note that 
\begin{equation}\label{eq: proof prop: redshift on the event horizon, eq 2}
-\Tr \:^{(Y^+)}\pi >0,\qquad -\Tr \:^{(\bar{Y}^+)}\pi >0,
\end{equation}
on the event horizon $\mathcal{H}^+$ and cosmological horizon $\bar{\mathcal{H}}^+$ respectively.

Let~$\epsilon_{red}>0$ be sufficiently small. Again, by following the lecture notes~\cite{DR5} we smoothly extend~$N$ such that it is timelike in~$\{r< r_++2\epsilon_{red}\}\cup \{\bar{r}_+-2\epsilon_{red}< r\}$ and~$\partial_t,\partial_{\varphi}$ invariant with
\begin{equation}
	N=Y^+ + K^+
\end{equation}
in~$\{r\leq r_++\epsilon_{red}\}$
\begin{equation}
	N=\bar{Y}^+ + \bar{K}^+
\end{equation}
in~$\{\bar{r}_+-\epsilon_{red}\leq r\}$ and~$N=0$ at~$ \{r_++2\epsilon_{red}\leq r\leq \bar{r}_+-2\epsilon_{red}\}$. 

Therefore, by following the arguments of the lecture notes~\cite{DR5}, in view of the positivity of the surface gravities~\eqref{eq: proof prop: redshift on the event horizon, eq 0}, we take~$\sigma>0$ sufficiently large and conclude that the following hold 
\begin{equation}\label{eq: proof prop: redshift on the event horizon, eq 3}
	\mathrm{K}^{N}[\psi]\geq b J^{N}_{\mu}[\psi] N^{\mu},\qquad \mathrm{K}^{N}[\psi]\geq b J^{N}_{\mu}[\psi]N^{\mu},
\end{equation}
in~$\{r\leq r_++\epsilon_{red}\},\{\bar{r}_+-\epsilon_{red}\leq r\}$ respectively. This completes the proof.
\end{proof}

The following Lemma will be very useful

\begin{lemma}\label{lem: subsec: redshift multipliers for slow, axi, lem 1}
	
Let~$l>0$,~$(a,M)\in\mathcal{B}_l$ and~$\mu^2_{\textit{KG}}\geq 0$. Let~$\psi$ be a smooth function such that~$\partial_{\varphi}\psi=0$. Then, we have
\begin{equation}
    \mathrm{K}^{W}[\psi]=0,\qquad \tilde{\mathrm{K}}^W[\psi]=0
\end{equation}
where~$W=\partial_t+\frac{a\Xi}{r^2+a^2}\partial_{\varphi}$, see Lemma~\ref{lem: causal vf E,1}. 
\end{lemma}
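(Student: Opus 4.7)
The plan is to compute the deformation tensor of $W = \partial_t + f_W(r)\partial_\varphi$ explicitly, where $f_W(r)=\frac{a\Xi}{r^2+a^2}$, and then exploit the axisymmetry $\partial_\varphi \psi =0$ together with the special form of the coefficient $f_W$ (depending only on $r$) to show that all contractions of interest vanish.

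First, since $\partial_t$ and $\partial_\varphi$ are Killing, the deformation tensor of $W$ reduces to
\[
{}^{(W)}\pi_{\mu\nu}=\tfrac{1}{2}\bigl(\partial_\mu f_W \cdot Y_\nu + \partial_\nu f_W \cdot Y_\mu\bigr),\qquad Y_\mu := g_{\mu\varphi},
\]
and since $f_W=f_W(r)$, only the components with one index equal to $r$ survive. For the standard current, the symmetry of $\mathbb{T}$ gives
\[
\mathrm{K}^W[\psi]=\tfrac{1}{2}\mathbb{T}^{\mu\nu}\,{}^{(W)}\pi_{\mu\nu}=\mathbb{T}^{\mu\nu}\,\partial_\mu f_W\cdot Y_\nu .
\]
The bilinear piece of $\mathbb{T}^{\mu\nu}$ contributes $(\nabla^\mu\psi)\,\partial_\mu f_W\cdot(\nabla^\nu\psi)Y_\nu$, and I would use that $(\nabla^\nu\psi)Y_\nu = g^{\nu\alpha}\partial_\alpha\psi\,g_{\nu\varphi}=\partial_\varphi\psi=0$ by axisymmetry; the trace piece contributes a multiple of $g^{\mu\nu}\partial_\mu f_W\cdot Y_\nu = f_W'(r)\,Y^r = f_W'(r)\,(\partial_\varphi)^r = 0$. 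Thus $\mathrm{K}^W[\psi]=0$.

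For the twisted current I would repeat the same calculation for $\tilde{\mathbb{T}}^{\mu\nu}\,{}^{(W)}\pi_{\mu\nu}$. The key observation is that the twisted derivatives enjoy the same vanishing identities in the $\varphi$-direction: since $f=f(r)$ and $\partial_\varphi\psi=0$, one has $\tilde{\nabla}_\varphi\psi = \partial_\varphi\psi-(\partial_\varphi f/f)\psi=0$, and equivalently $(\tilde{\nabla}^\nu\psi)Y_\nu=0$ by the same contraction identity as above. Hence both the bilinear and trace parts of $\tilde{\mathbb{T}}^{\mu\nu}\,\partial_\mu f_W\cdot Y_\nu$ vanish, giving ${}^{(W)}\pi_{ab}\tilde{\mathbb{T}}^{ab}[\psi]=0$.

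It remains to handle $W^b\tilde{S}_b[\psi]$. I would split it as $\tilde{S}_t[\psi]+f_W(r)\tilde{S}_\varphi[\psi]$ and note that $\tilde{S}_b$ involves only $\tilde{\nabla}^\dagger_b(fV_{twi})$ and $\tilde{\nabla}^\dagger_b f$. Since the twisting function $f$ depends only on $r$, so does $\Box_g f/f$ and hence $V_{twi}$ is a function of $(r,\theta)$ only; in particular $fV_{twi}$ and $f$ are both independent of $t$ and $\varphi$. It then follows from the defining formula $\tilde{\nabla}^\dagger_b(h)=-\partial_b h - h\,\partial_b f/f$ that $\tilde{\nabla}^\dagger_t(fV_{twi})=\tilde{\nabla}^\dagger_\varphi(fV_{twi})=0$ and $\tilde{\nabla}^\dagger_t f=\tilde{\nabla}^\dagger_\varphi f=0$. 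Therefore $\tilde{S}_t[\psi]=\tilde{S}_\varphi[\psi]=0$, so $W^b\tilde{S}_b[\psi]=0$ and $\tilde{\mathrm{K}}^W[\psi]=0$.

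I do not foresee a serious obstacle; the statement is essentially a bookkeeping consequence of the fact that $W$ lies in the span of the two Killing fields $\partial_t,\partial_\varphi$ with $r$-dependent coefficient, combined with axisymmetry. The mild subtlety is to check that $V_{twi}$ is independent of $t,\varphi$, which follows from $f=e^{C_{twi}r}$ and the explicit form of $g_{a,M,l}$ in Boyer--Lindquist coordinates.
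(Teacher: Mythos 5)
Your proof is correct and takes essentially the same route as the paper's: exploit that $\partial_t,\partial_\varphi$ are Killing to reduce the deformation tensor of $W$ to the $\partial_\mu f_W \cdot g_{\nu\varphi}$ symmetrization, then use axisymmetry to kill the $\tilde\nabla_\varphi\psi$ (resp. $\partial_\varphi\psi$) contraction and the $r$-only dependence of $f_W$ to kill the trace. The only difference is a matter of explicitness: the paper dismisses the twisted case as ``immediate'' from its defining formula, whereas you carry out the (correct) verification that $\tilde S_t=\tilde S_\varphi=0$ because $f$ and $V_{twi}$ are independent of $t$ and $\varphi$; this is a welcome clarification but not a different argument.
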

\begin{proof}
	We denote~$f(r)=\frac{a\Xi}{r^2+a^2}$. Then, we obtain
\begin{equation}
    \begin{aligned}
         \mathrm{K}^{\partial_{t^\star}+f(r)\partial_{\varphi}}[\psi] &=\mathrm{K}^{(f(r)\partial_{\varphi})}[\psi]=\frac{1}{2}\:^{(f\partial_{\varphi})}\pi_{\mu\nu}\mathbb{T}_{\mu_{KG=0}}^{\mu\nu}=\mathbb{T}^{\mu}\:_{\varphi}(\nabla f)_\mu=\mathbb{T}(\nabla f,\partial_{\varphi})[\psi]=0,
    \end{aligned}  
\end{equation}
where we used 
\begin{equation}
	\Tr \: ^{(f\partial_{\varphi})}\pi=g^{\mu\nu} \:^{(f\partial_{\varphi})}\pi_{\mu\nu} =2g^{r\nu}\frac{df(r)}{dr} g(\partial_{\varphi},\partial_\nu) =0,
\end{equation}
to eliminate the Klein--Gordon term of the energy momentum tensor. The proof of~$\tilde{\mathrm{K}}^W=0$ is immediate from~\eqref{eq: subsec: sec: preliminaries, subsec 2, eq 3}. 
\end{proof}

\subsection{The redshift estimates}\label{subsec: red shift estimates}
We will use the redshift multiplier~$N$, of Proposition~\ref{prop: redshift on the event horizon}, to obtain the following.

\begin{proposition}\label{prop: redshift estimate}
Let~$l>0$ and~$(a,M)\in\mathcal{B}_l$ and~$\mu^2_{KG}\geq 0$. Let~$\epsilon_{red}>0$ be as in Proposition~\ref{prop: redshift on the event horizon}. Then, for all~$j\geq 1$ there exists a sufficiently small~$\delta(a,M,l,\mu_{KG},j)>0$ and there exists a positive constant
\begin{equation}
C=C(a,M,l,j)>0,
\end{equation}
such that for all smooth solutions~$\psi$ of the inhomogeneous Klein--Gordon equation
\begin{equation*}
	\Box_{g_{a,M,l}}\psi-\mu^2_{KG}\psi=F
\end{equation*}
where~$F$ is sufficiently regular function, we obtain 
\begin{equation}\label{eq: prop: redshift estimate, eq 1}
\begin{aligned}
&  \sum_{0\leq i\leq j-1}\int\int_{D(\tau_1,\tau_2)\cap \{r\leq r_++\epsilon_{red}\}}   J^N_{\mu}[N^i\psi]N^{\mu}  + \sum_{0 \leq i\leq j-1}\int\int_{D(\tau_1,\tau_2)\cap \{r\geq \bar{r}_+-\epsilon_{red}\}} J^N_{\mu}[N^i\psi]N^{\mu}  \\
& \quad  +\sum_{0\leq i\leq j-1}\Bigg(\int_{\mathcal{H}^{+}
	\cap D(\tau_1,\tau_2)} J^{N}_{\mu}[N^i\psi]n^{\mu} +\int_{\bar{\mathcal{H}}^{+}\cap D(\tau_1,\tau_2)}J^{N}_{\mu}[N^i\psi]n^{\mu}\\
	&	\qquad\qquad\qquad +\int_{\{t^\star=\tau_2\}\cap \{ r\leq r_++\epsilon_{red}\}}J^{N}_{\mu}[N^i\psi]n^{\mu} + \int_{\{t^\star=\tau_2\}\cap \{ r\geq \bar{r}_+-\epsilon_{red}\} }J^{N}_{\mu}[N^i\psi]n^{\mu}\Bigg) \\
&  \quad\quad \leq C\sum_{0\leq i\leq j-1}\int_{\{t^\star=\tau_1\}} J^n_{\mu}[n^i\psi]n^{\mu}\\
&	\qquad\qquad\qquad  + C\sum_{0\leq i\leq j-1}\Big(\int\int_{D(\tau_1,\tau_2)\cap\{ r_++\epsilon_{red}\leq r\leq r_++2\epsilon_{red} \}}   J^n_{\mu}[n^i\psi]n^{\mu} +\int\int_{D(\tau_1,\tau_2)\cap\{ \bar{r}_+-2\epsilon_{red}\leq r\leq \bar{r}_+-\epsilon_{red} \}}   J^n_{\mu}[n^i\psi]n^{\mu} \Big) \\
&	\qquad\qquad\qquad  + C\sum_{1\leq i_1+i_2+i_3\leq j-1}\Big(\int\int_{D(\tau_1,\tau_2)\cap\{ r_+\leq r\leq r_++2\epsilon_{red} \}}  |\slashed{\nabla}^{i_1}(Z^\star)^{i_2}\partial_t^{i_3} F|^2\\
&	\qquad\qquad\qquad\qquad\qquad\qquad\qquad+\int\int_{D(\tau_1,\tau_2)\cap\{ \bar{r}_+-2\epsilon_{red}\leq r\leq \bar{r}_+\}}  |\slashed{\nabla}^{i_1}(Z^\star)^{i_2}\partial_t^{i_3} F|^2 \Big),\\
\end{aligned}
\end{equation}
for all~$\tau_1\leq \tau_2$. The result of the present Proposition also holds replacing~$D(\tau_1,\tau_2)$ with the extended domain~$D_\delta (\tau_1,\tau_2)$ and~$\mathcal{H}^+,\bar{\mathcal{H}}_+$ with~$\mathcal{H}^+_\delta,\bar{\mathcal{H}}^+_\delta$, see Definition~\ref{def: causal domain, def 2}, where~$\{t^\star=c\}$ is now to be understood as a subset of~$D_\delta$. 
\end{proposition}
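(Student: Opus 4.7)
The plan is to proceed by induction on $j$, using the redshift vector field $N$ of Proposition~\ref{prop: redshift on the event horizon} both as a multiplier and as a commutator for the Klein--Gordon operator. The base case $j=1$ follows from the divergence theorem applied with multiplier $N$; the inductive step relies on the key observation that the commutator $[\Box_g - \mu^2_{KG}, N]$ has a definite sign along $N$ near the horizons, which is the analytic manifestation of the redshift effect.

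For the base case $j=1$, I would apply Proposition~\ref{prop: divergence theorem} on $D(\tau_1,\tau_2)$ with the multiplier $N$ to the solution $\psi$ of $\Box_g \psi - \mu^2_{KG}\psi = F$, obtaining the identity
\begin{equation*}
\int_{\{t^\star=\tau_2\}} J^N_\mu[\psi] n^\mu + \int_{\mathcal{H}^+ \cap D} J^N_\mu[\psi] n^\mu + \int_{\bar{\mathcal{H}}^+ \cap D} J^N_\mu[\psi] n^\mu + \int\int_{D(\tau_1,\tau_2)} \mathrm{K}^N[\psi] = \int_{\{t^\star=\tau_1\}} J^N_\mu[\psi] n^\mu - \int\int N\psi \cdot F.
\end{equation*}
By Proposition~\ref{prop: redshift on the event horizon} the bulk integrand $\mathrm{K}^N[\psi]$ is bounded below by $cJ^N_\mu[\psi]N^\mu$ in the redshift regions $\{r \leq r_+ + \epsilon_{red}\} \cup \{r \geq \bar{r}_+ - \epsilon_{red}\}$, yielding the desired positive bulk contribution on the LHS. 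In the transition regions $\{r_+ + \epsilon_{red} \leq r \leq r_+ + 2\epsilon_{red}\}$ and $\{\bar{r}_+ - 2\epsilon_{red} \leq r \leq \bar{r}_+ - \epsilon_{red}\}$, $\mathrm{K}^N[\psi]$ may have the wrong sign but is pointwise controlled by $CJ^n_\mu[\psi] n^\mu$, so moving it to the RHS produces the annular spacetime terms in the statement. Since $N$ is future-directed timelike near the horizons, $J^N_\mu[\psi]n^\mu \sim J^n_\mu[\psi]n^\mu$ on $\{t^\star=\tau_1\}$ inside the redshift region, and elsewhere on $\{t^\star=\tau_1\}$ the integrand vanishes because $N=0$; thus the initial data term on the RHS is bounded by $\int_{\{t^\star=\tau_1\}} J^n_\mu[\psi]n^\mu$. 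The horizon fluxes are non-negative by causality, and the inhomogeneity term $\int N\psi \cdot F$ is absorbed by Cauchy--Schwarz against the coercive LHS and the RHS spacetime term involving $F$.

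For the inductive step, assuming the estimate at orders $0, \dots, j-1$, I would commute the equation with $N$. The identity
\begin{equation*}
\Box_g(N u) - \mu^2_{KG}(Nu) = N(\Box_g u - \mu^2_{KG} u) + [\Box_g, N] u
\end{equation*}
shows that $N^i \psi$ satisfies an inhomogeneous Klein--Gordon equation with right-hand side $N^i F + \mathcal{E}_i[\psi]$, where $\mathcal{E}_i[\psi]$ is a linear combination of derivatives of $\psi$ of total order at most $i$. The crucial redshift structure, inherited from the commutator analysis in the lecture notes~\cite{DR5}, is that near the event horizon $[\Box_g, N]$ decomposes as $2\kappa_+ N^2 + (\text{terms with at most one }N)$, where $\kappa_+>0$ is the positive surface gravity (with the analogous statement at $\bar{\mathcal{H}}^+$ using $\bar{\kappa}_+>0$). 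Applying Proposition~\ref{prop: divergence theorem} with multiplier $N$ to $N^{j-1}\psi$, the top-order piece $2\kappa_+ N^j \psi \cdot N(N^{j-1}\psi)$ combines with $\mathrm{K}^N[N^{j-1}\psi]$ to give a strictly positive bulk contribution; the remaining tangential pieces of $\mathcal{E}_{j-1}[\psi]$ are controlled by the lower-order quantities $J^N_\mu[N^i\psi]N^\mu$ for $i\leq j-2$, which are already bounded by the inductive hypothesis. The right-hand side term involving $N^{j-1}F$ is handled by Cauchy--Schwarz, producing the $F$-derivative terms appearing in the estimate after expressing $N$ in terms of the coordinate frame $(\partial_t, Z^\star, \slashed{\nabla})$.

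The main obstacle is the bookkeeping of the commutator $[\Box_g - \mu^2_{KG}, N^j]$ near the horizons: one must verify, order by order in $j$, that the top $N$-derivative piece has the correct sign (dictated by $\kappa_+, \bar{\kappa}_+ > 0$) to be absorbed by the redshift positivity of $\mathrm{K}^N$, while all lower-order and tangential pieces can be absorbed by the inductive hypothesis using in particular Cauchy--Schwarz with a small constant. The extension to the enlarged domain $D_\delta(\tau_1,\tau_2)$ is immediate: the metric extends smoothly past the horizons, $N$ remains $\partial_t$- and $\partial_\varphi$-invariant and timelike in a full neighborhood of $\mathcal{H}^+ \cup \bar{\mathcal{H}}^+$, and the redshift positivity estimate~\eqref{eq: prop: redshift on the event horizon, eq 1} persists on this neighborhood by continuity, so the same multiplier argument goes through with $\mathcal{H}^+_\delta, \bar{\mathcal{H}}^+_\delta$ in place of $\mathcal{H}^+, \bar{\mathcal{H}}^+$.
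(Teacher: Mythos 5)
Your overall strategy — base case via the divergence theorem with $N$-multiplier, then induction by commuting the equation with a transversal vector field near the horizons, relying on the good sign of the surface gravity in the top-order commutator term — is the same as the paper's, which itself follows the standard DR5 scheme. (The paper commutes with $Y$ rather than $N$, but since $N = Y + K$ near $\mathcal{H}^+$ with $K$ Killing, this is only a cosmetic difference.)

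There is, however, a genuine gap in your inductive step. You claim that ``the remaining tangential pieces of $\mathcal{E}_{j-1}[\psi]$ are controlled by the lower-order quantities $J^N_\mu[N^i\psi]N^\mu$ for $i\leq j-2$, which are already bounded by the inductive hypothesis.'' This is not true as stated. The commutator error $[\Box_g,N^{j-1}]\psi$ contains terms of total order up to $j$ — in the paper's frame, terms of the form $E_1^{m_1}E_2^{m_2}L^{m_3}Y^{m_4}\psi$ with $|m|\le j$ and $m_4\le j-1$ — and in particular terms with two or more tangential derivatives at the top order, e.g.\ $E_1^2 Y^{j-2}\psi$. The quantity $J^N_\mu[N^i\psi]N^\mu$ for $i\le j-2$ only controls one additional derivative applied to $N^i\psi$, so it does not close against such terms. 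To control these you must invoke elliptic estimates on the (spacelike) leaves $\{t^\star=\mathrm{const}\}$, converting control of transversal and Killing-commuted quantities into control of all $j$-th order derivatives; this is precisely what the paper's terse ``standard elliptic estimates'' refers to, and it is an essential ingredient that cannot be omitted. A related omission is that you never commute with the Killing fields $\partial_t,\partial_\varphi$, which is part of what makes the elliptic estimate usable; the inductive hierarchy should be built over the full commutation set $\{N,\partial_t,\partial_\varphi\}$ (or $\{Y,\partial_t,\partial_\varphi\}$), not $N$ alone, before one can legitimately absorb the tangential commutator terms.
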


\begin{proof}
From Proposition~\ref{prop: redshift on the event horizon} we obtain immediately
\begin{equation}\label{eq: proof prop redshift estimate, eq 1}
\begin{aligned}
&  \int_{\mathcal{H}^{+}
	\cap D(\tau_1,\tau_2)} J^{N}_{\mu}[\psi] n^{\mu}_{\mathcal{H}^{+}} + \int_{\{t^\star=\tau_2\}\cap \{ r\leq r_++\epsilon_{red} \} } J^{N}_{\mu}[\psi]n^{\mu} +\int_{\bar{\mathcal{H}}^{+}\cap D(\tau_1,\tau_2)} J^{N}_{\mu} [\psi] n^{\mu}_{\bar{\mathcal{H}}^{+}} + \int_{\{t^\star=\tau_2\}\cap \{ r\geq \bar{r}_+-\epsilon_{red} \} } J^{N}_{\mu}[\psi]n^{\mu} \\
&	\qquad + \int\int_{D(\tau_1,\tau_2)\cap \{r\leq r_++\epsilon_{red}\}}  J^{N}_{\mu}[\psi]N^{\mu} + \int\int_{D(\tau_1,\tau_2)\cap \{r\geq \bar{r}_+-\epsilon_{red}\}}  J^{N}_{\mu}[\psi]N^{\mu}  \\
&  \quad\quad \leq C\int_{\{t^\star=\tau_1\}} J^{n}_{\mu}[\psi]n^{\mu} + C\int\int_{D(\tau_1,\tau_2)\cap\{ r_++\epsilon_{red}\leq r\leq \bar{r}_+-\epsilon_{red} \}}   |\mathrm{K}^N[\psi]|\\
&	\quad\quad + C\Big(\int\int_{D(\tau_1,\tau_2)\cap\{ r_++\epsilon_{red}\leq r\leq r_++2\epsilon_{red} \}}  |F|^2 +\int\int_{D(\tau_1,\tau_2)\cap\{ \bar{r}_+-2\epsilon_{red}\leq r\leq \bar{r}_+-\epsilon_{red} \}}  |F|^2 \Big) \\
\end{aligned}
\end{equation}
which easily implies~\eqref{eq: prop: redshift estimate, eq 1} with~$j=1$.

To obtain~\eqref{eq: prop: redshift estimate, eq 1} for all orders~$j\geq 1$ we proceed as follows.

For~$m=(m_1,m_2,m_3,m_4)$, the following holds on the event horizon~$\mathcal{H}^+$:
\begin{equation}\label{eq: higher order redshift}
\Box(Y^k \psi)-\mu_{\textit{KG}}^2Y^{k}\psi=  \kappa_kY^{k+1}\psi+\sum_{|m|\leq k+1,m_4\leq k}c_mE_1^{m_1}E_2^{m_2}L^{m_3}Y^{m_4}\psi+Y^k F,
\end{equation}
where~$(E_1,E_2,L,Y)$ is a local null frame on~$\mathcal{H}^+$, with~$Y=Y^+$, with~$\kappa_k>0$, and~$c_m$ is a smooth function on the horizon such that~$Y(c_m)=0$. Note that an analogous identity~\eqref{eq: higher order redshift} holds on the cosmological horizon~$\bar{\mathcal{H}}^+$ where~$Y=\bar{Y}^+$, with~$\bar{\kappa}_k>0$ in the place of~$\kappa_k$ and a smooth~$\bar{c}_m$ in the place of~$c_m$, where~$\bar{Y}(\bar{c}_m)=0$.

We apply the analogue of~\eqref{eq: proof prop redshift estimate, eq 1} to~$Y^k \psi$. The positivity~$\kappa_k,~\bar{\kappa}_k>0$ and the structure of the remainder terms in~\eqref{eq: higher order redshift} allows to conclude~\eqref{eq: prop: redshift estimate, eq 1} inductively for any~$j\geq 1$ by standard elliptic estimates. See~\cite{DR5} for more details.
\end{proof}

\subsection{Boundedness in the axisymmetric case}\label{subsec: redshift vs superradiance}

We will use the following proposition for the axisymmetric case of Theorem~\ref{main theorem 1}.

\begin{proposition}\label{prop:redshift vs superradiance estimate, for axisymmetric solutions}
Let~$l>0$ and~$(a,M)\in \mathcal{B}_{l}$ and~$\mu^2_{KG}\geq 0$. There exists a constant
\begin{equation}
C(a,M,l,\mu^2_{\textit{KG}})>0,
\end{equation}
where~$C(a,M,l,\mu_{KG}^2)$ and the following holds.

Let~$\psi$ satisfy the  Klein--Gordon equation~\eqref{eq: kleingordon} and also satisfy the axisymmetric condition
\begin{equation}
\partial_{\varphi^\star}\psi=0.
\end{equation}
Then, 
\begin{equation}\label{eq:redshift vs superradiance estimate, for axisymmetric solutions, 2}
    \begin{aligned}
         \int_{\{t^\star=\tau_2\}} J_\mu^{n}[\psi]n^\mu & \leq C \int_{\{t^\star=\tau_1\}}J_\mu^{n}[\psi]n^\mu \\
         	\int_{\{t^\star=\tau_2\}} J^n_\mu[\psi]n^\mu +|\psi|^2 & \leq C \int_{\{t^\star=\tau_1\}} J^n_\mu[\psi]n^\mu+|\psi|^2,
    \end{aligned}    
\end{equation}
for all~$0\leq \tau_1\leq \tau_2$. 
\end{proposition}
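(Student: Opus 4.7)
The plan is to use two multiplier vector fields, playing off each other: the Hawking--Reall type vector field $W=\partial_t+\frac{a\Xi}{r^2+a^2}\partial_{\varphi}$ of Lemma~\ref{lem: causal vf E,1}, which is future-directed causal on all of $\mathcal{M}$ and null on both horizons, and the red-shift vector field $N$ of Proposition~\ref{prop: redshift on the event horizon}. Since $\psi$ is axisymmetric, Lemma~\ref{lem: subsec: redshift multipliers for slow, axi, lem 1} gives $\mathrm{K}^W[\psi]=0$. Applying the divergence identity (Proposition~\ref{prop: divergence theorem}) with $X=W$ on $D(\tau_1,\tau_2)$, the bulk integral vanishes and the horizon flux integrals are manifestly non-negative since $W$ is future-causal on $\mathcal{H}^+$ and $\bar{\mathcal{H}}^+$. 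This produces the conservation identity
\begin{equation*}
\int_{\Sigma_{\tau_2}} J^{W}_\mu[\psi] n^\mu + \int_{\mathcal{H}^+\cap D(\tau_1,\tau_2)} J^{W}_\mu[\psi] n^\mu + \int_{\bar{\mathcal{H}}^+\cap D(\tau_1,\tau_2)} J^{W}_\mu[\psi] n^\mu = \int_{\Sigma_{\tau_1}} J^{W}_\mu[\psi] n^\mu,
\end{equation*}
and in particular $\int_{\Sigma_{\tau'}} J^W\cdot n \leq \int_{\Sigma_{\tau_1}} J^W\cdot n$ for every intermediate $\tau'\in[\tau_1,\tau_2]$. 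Since $W$ is uniformly timelike away from the horizons, this controls $\int_{\Sigma_{\tau_2}\cap\{r_++\epsilon_{\mathrm{red}}\leq r\leq \bar{r}_+-\epsilon_{\mathrm{red}}\}} J^n\cdot n$ by the initial energy.

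It then remains to control $J^n\cdot n$ on $\Sigma_{\tau_2}$ near the two horizons, which is precisely what the red-shift estimate of Proposition~\ref{prop: redshift estimate} with $j=1$ delivers, upon noting $J^N\cdot n\sim J^n\cdot n$ in the support of $N$. The spacetime transition-region terms appearing on the right-hand side of the red-shift estimate are supported in $\{r_++\epsilon_{\mathrm{red}}\leq r\leq r_++2\epsilon_{\mathrm{red}}\}\cup\{\bar{r}_+-2\epsilon_{\mathrm{red}}\leq r\leq \bar{r}_+-\epsilon_{\mathrm{red}}\}$, where $W$ is uniformly timelike, so there $J^n\cdot n\lesssim J^W\cdot n$; combined with the pointwise-in-$\tau$ $W$-flux conservation of the first step, these transition contributions are uniformly bounded by the initial $J^n$-energy. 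Assembling the two regions yields the first estimate of \eqref{eq:redshift vs superradiance estimate, for axisymmetric solutions, 2}.

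For the second estimate (with the zeroth-order term $|\psi|^2$), we run the same argument with the twisted energy-momentum tensor $\tilde{\mathbb{T}}$ of Section~\ref{subsec: sec: preliminaries, subsec 2}, using the twisting function $f(r)=e^{C_{\mathrm{twi}}r}$. By the second part of Lemma~\ref{lem: subsec: redshift multipliers for slow, axi, lem 1}, $\tilde{\mathrm{K}}^W[\psi]=0$ for axisymmetric $\psi$, so the twisted divergence identity for $W$ again has vanishing bulk term and non-negative horizon fluxes. By Lemma~\ref{lem: subsec: sec: preliminaries, subsec 2, lem 2}, the combination $\tilde{\mathbb{T}}(W,n)+C_{\mathrm{twi}}\mathbb{T}(W,n)$ pointwise dominates $c\Delta\big(|\psi|^2+|\nabla\psi|^2\big)$, so the conserved twisted $W$-energy controls $\int|\psi|^2$ on the bulk region where $\Delta>0$. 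To extend this $L^2$ control up to the horizons, where $\Delta$ degenerates, we use the Poincar\'e--Wirtinger inequality of Lemma~\ref{lem: subsec: poincare wirtinger, lem 1} to bound $\int_{\Sigma_\tau}|\psi-\underline{\psi}(\tau)|^2$ by $\int_{\Sigma_\tau} J^n\cdot n$, and then observe that the spatial average $\underline{\psi}(\tau)$ is controlled in terms of $\underline{\psi}(\tau_1)$ plus the already-bounded energy via integration of $\partial_{t^\star}\psi$ along the foliation.

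\paragraph*{Main obstacle.} The principal delicate point is that a naive iteration of the red-shift estimate over unit-length time slabs would introduce a multiplicative constant per slab and hence exponential growth in $\tau_2-\tau_1$. What rescues the argument is that the transition-region spacetime terms are controlled \emph{pointwise-in-$\tau$} by the conserved $W$-flux $\int_{\Sigma_\tau} J^W\cdot n$ (uniformly bounded by its value at $\tau=\tau_1$), not by a quantity that itself grows under iteration. In the second estimate the analogous issue is resolved by the twisted conservation, where the coercivity of $\tilde{\mathbb{T}}(W,n)+C_{\mathrm{twi}}\mathbb{T}(W,n)$ over $\Delta |\psi|^2$ is the key pointwise ingredient that must be matched to the Poincar\'e--Wirtinger handling of the horizon-degenerate region.
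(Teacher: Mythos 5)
Your treatment of the first estimate is essentially the same as the paper's: axisymmetry gives $\mathrm{K}^W[\psi]=0$ (Lemma~\ref{lem: subsec: redshift multipliers for slow, axi, lem 1}), so the $W$-flux is non-increasing, and this feeds the red-shift argument of Proposition~\ref{prop: redshift estimate}. One small caution about your ``main obstacle'' paragraph: pointwise-in-$\tau$ boundedness of the transition-region integrand is not by itself enough, since the spacetime integral over $D(\tau_1,\tau_2)$ still scales like $\tau_2-\tau_1$. What actually closes the iteration is the positive bulk dissipation $\mathrm{K}^N\geq b\,J^N_\mu N^\mu$ near the horizons, which produces the exponential integrating factor in the Gr\"onwall step from~\cite{DR5}. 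You invoke Proposition~\ref{prop: redshift estimate} so this ingredient is implicitly available, but it is the red-shift coercivity, not merely $W$-flux conservation, that prevents secular growth.

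The second estimate has a genuine gap. Lemma~\ref{lem: subsec: sec: preliminaries, subsec 2, lem 2} only gives $\tilde{\mathbb{T}}(W,n)+C_{twi}\mathbb{T}(W,n)\geq c\,\Delta|\psi|^2+\cdots$, so the conserved twisted $W$-energy controls the $\Delta$-weighted zeroth-order term, which degenerates at both horizons. Your route from $\Delta|\psi|^2$ to $|\psi|^2$ via Poincar\'e--Wirtinger (Lemma~\ref{lem: subsec: poincare wirtinger, lem 1}) leaves $\underline{\psi}(\tau_2)$ uncontrolled, and the step ``integration of $\partial_{t^\star}\psi$ along the foliation'' does not give a uniform bound: since $\frac{d}{d\tau}\underline{\psi}(\tau)=\frac{1}{|\Sigma_\tau|}\int_{\Sigma_\tau}\partial_{t^\star}\psi$, Cauchy--Schwarz together with the first estimate only yields $|\underline{\psi}(\tau_2)|\lesssim|\underline{\psi}(\tau_1)|+(\tau_2-\tau_1)\sqrt{E_0}$, which grows linearly in $\tau_2-\tau_1$ and defeats the purpose. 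The mechanism that bridges the horizon degeneracy is the Hardy inequality of Lemma~\ref{lem: subsec: Hardy inequalities, lem 2}: applied on $\Sigma_{\tau_2}$ it gives
\begin{equation*}
\int_{\{t^\star=\tau_2\}}|\psi|^2 \;\lesssim\; \int_{\{t^\star=\tau_2\}}|Z^\star\psi|^2 \;+\; \int_{\{t^\star=\tau_2\}\cap\{r_++\delta\le r\le\bar r_+-\delta\}}|\psi|^2,
\end{equation*}
where the first term is controlled by the already-established first estimate and the second is supported where $\Delta\ge b(\delta)>0$, hence dominated by $\int\Delta|\psi|^2$, which in turn is controlled by the non-increasing twisted $W$-energy plus $C_{twi}$ times the ordinary $W$-energy per Lemma~\ref{lem: subsec: sec: preliminaries, subsec 2, lem 2}. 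Replacing your Poincar\'e--Wirtinger step with this Hardy-type absorption closes the proof; as written, the average term $\underline{\psi}(\tau_2)$ is not bounded.
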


\begin{proof}
	 Recall that the vector field~$W=\partial_t+\frac{a\Xi}{r^2+a^2}\partial_{\varphi}$ is timelike in~$\{r_+<r< \bar{r}_+\}$, see Lemma~\ref{lem: causal vf E,1}, and
	\begin{equation}
		\mathrm{K}^{W}[\psi]=0
	\end{equation}
	for axisymmetric solutions~$\partial_{\varphi}\psi=0$, see Lemma~\ref{lem: subsec: redshift multipliers for slow, axi, lem 1}.  We apply the energy identity for the vector field~$W$, see Proposition~\ref{prop: divergence theorem}, to obtain
	\begin{equation}\label{eq proof: prop:redshift vs superradiance estimate, for axisymmetric solutions, eq 1}
		 \int_{\{t^\star=\tau_2\}} J_\mu^{W}[\psi]n^\mu \leq  \int_{\{t^\star=\tau_1\}}J_\mu^{W}[\psi]n^\mu .  \\
	\end{equation}

	Now, by using the redshift estimate of Proposition~\ref{prop: redshift estimate} for~$j=1$ and~\eqref{eq proof: prop:redshift vs superradiance estimate, for axisymmetric solutions, eq 1} we obtain the first estimate of~\eqref{eq:redshift vs superradiance estimate, for axisymmetric solutions, 2} by following arguments found in~\cite{DR5}.
	
	To obtain the second estimate of~\eqref{eq:redshift vs superradiance estimate, for axisymmetric solutions, 2} we use the now established first estimate, in conjunction with the twisted currents of Section~\ref{subsec: sec: preliminaries, subsec 2}, in view of Lemma~\ref{lem: subsec: redshift multipliers for slow, axi, lem 1}. 
\end{proof}

\begin{remark}
	Note that the constants~$C$ on the RHS of~\eqref{eq:redshift vs superradiance estimate, for axisymmetric solutions, 2} do not blow up in the limit~$\mu^2_{KG}\rightarrow 0$.
\end{remark}

\subsection{Boundedness in the Schwarzschild--de~Sitter case}

We have the following result in Schwarzschild--de~Sitter

\begin{proposition}\label{prop subsec: sec: preliminaries, subsec 2, prop 1}
	Let~$l>0$,~$0<\frac{M^2}{l^2}<\frac{1}{27}$,~$a=0$ and~$\mu^2_{\textit{KG}}\geq 0$. Then, there exists a constant
	\begin{equation}
		C(M,l,\mu^2_{KG})>0
	\end{equation}
	such that the following holds.

	Let~$\psi$ be a solution of the Klein--Gordon equation~\eqref{eq: kleingordon}. Then,
	\begin{equation}\label{eq: lem: subsec: sec: preliminaries, subsec 2, lem 1, eq 1}
		\int_{\{t^\star=\tau_2\}} J^n_\mu[\psi]n^\mu +|\psi|^2\leq C \int_{\{t^\star=\tau_1\}} J^n_\mu[\psi]n^\mu + |\psi|^2,
	\end{equation}
	\begin{equation}\label{eq: lem: subsec: sec: preliminaries, subsec 2, lem 1, eq 2}
		\int_{\mathcal{H}^+\cap \{t^\star\geq \tau_2\}}J^{\partial_t}_\mu[\psi]n^\mu+ \int_{\bar{\mathcal{H}}^+\cap\{t^\star\geq \tau_2\}}J^{\partial_t}_\mu[\psi]n^\mu+  \int_{\{t^\star=\tau_2\}} J^n_\mu[\psi]n^\mu \leq C \int_{\{t^\star=\tau_1\}} J^n_\mu[\psi]n^\mu,
	\end{equation}
	for any~$0\leq \tau_1\leq \tau_2$. We can also obtain higher order versions of the estimates~\eqref{eq: lem: subsec: sec: preliminaries, subsec 2, lem 1, eq 1},~\eqref{eq: lem: subsec: sec: preliminaries, subsec 2, lem 1, eq 2}, for any order of derivatives, by commuting the Klein--Gordon equation with~$\partial_t,N$.
\end{proposition}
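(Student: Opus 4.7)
The key observation is that when $a=0$ the vector field $W=\partial_{t}+\frac{a\Xi}{r^{2}+a^{2}}\partial_{\varphi}$ of Lemma~\ref{lem: causal vf E,1} reduces to the Killing field $\partial_{t}$, which is timelike on $\{r_+<r<\bar r_+\}$ and null on both horizons. Moreover $\mathrm{K}^{\partial_t}[\psi]\equiv 0$ holds automatically for \emph{every} smooth $\psi$ by the Killing property, without any axisymmetric hypothesis. The Schwarzschild--de~Sitter statement is therefore structurally identical to Proposition~\ref{prop:redshift vs superradiance estimate, for axisymmetric solutions}, and I would follow that argument line by line, using three ingredients: (i) the Killing energy identity for $\partial_t$, (ii) the redshift estimate of Proposition~\ref{prop: redshift estimate} to recover the transversal derivatives at the horizons, and (iii) the twisted-current machinery of Section~\ref{subsec: sec: preliminaries, subsec 2} to produce the $|\psi|^2$ boundary terms in the massless case.

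\textbf{Degenerate energy identity and redshift upgrade.} Apply Proposition~\ref{prop: divergence theorem} with $X=\partial_t$. Since $\mathrm K^{\partial_t}=0$ there is no bulk term, and the horizon integrands $J^{\partial_t}_\mu n^\mu$ are nonnegative on both $\mathcal H^+$ and $\bar{\mathcal H}^+$ (where $\partial_t$ is causal). Letting the future endpoint tend to $+\infty$ yields
\begin{equation*}
\int_{\{t^\star=\tau_2\}}J^{\partial_t}_\mu n^\mu + \int_{\mathcal H^+\cap\{t^\star\ge\tau_2\}}J^{\partial_t}_\mu n^\mu + \int_{\bar{\mathcal H}^+\cap\{t^\star\ge\tau_2\}}J^{\partial_t}_\mu n^\mu \le \int_{\{t^\star=\tau_1\}}J^{\partial_t}_\mu n^\mu \le C\int_{\{t^\star=\tau_1\}}J^n_\mu n^\mu,
\end{equation*}
which already controls the degenerate energy on $\{t^\star=\tau_2\}$ and both infinite horizon fluxes. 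To upgrade $J^{\partial_t}$ to the full non-degenerate $J^n$ on $\{t^\star=\tau_2\}$ near the horizons, add a small multiple of Proposition~\ref{prop: redshift estimate} with $j=1$ and $F=0$. The annular bulk error term on the RHS of that estimate is supported on $\{r_++\epsilon_{red}\le r\le r_++2\epsilon_{red}\}\cup\{\bar r_+-2\epsilon_{red}\le r\le \bar r_+-\epsilon_{red}\}$, on which $\partial_t$ is uniformly timelike and hence $J^n_\mu n^\mu\sim J^{\partial_t}_\mu n^\mu$; the spacetime integral of this term is absorbed via the previous display, closing the estimate exactly as at the end of the proof of Proposition~\ref{prop:redshift vs superradiance estimate, for axisymmetric solutions} and producing~\eqref{eq: lem: subsec: sec: preliminaries, subsec 2, lem 1, eq 2}.

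\textbf{Zeroth-order terms and higher derivatives.} For the first inequality~\eqref{eq: lem: subsec: sec: preliminaries, subsec 2, lem 1, eq 1}, which carries the additional $|\psi|^2$ on both sides, I would repeat the previous step with the twisted current $\tilde J^{\partial_t}$ using the twisting function $f(r)=e^{C_{twi}\,r}$ of Section~\ref{subsec: sec: preliminaries, subsec 2}. The Killing property of $\partial_t$ together with the $r$-only dependence of $f$ forces $\tilde{\mathrm K}^{\partial_t}\equiv 0$, so the twisted $\partial_t$-energy is also conserved. By Lemma~\ref{lem: subsec: sec: preliminaries, subsec 2, lem 2}, for $C_{twi}$ sufficiently large the combination $C_{twi}\,\mathbb T[\psi](n,n)+\tilde{\mathbb T}[\psi](n,n)$ pointwise dominates $\Delta|\psi|^2$ plus $\Delta$ times the first-derivative energy density; the degeneration $\Delta\to 0$ at the horizons is removed by the Hardy inequality of Lemma~\ref{lem: subsec: Hardy inequalities, lem 2}, feeding in the transversal control secured above. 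Higher-order versions follow by commuting the equation with the Killing $\partial_t$ (which commutes with $\Box_{g_{M,l}}$) and with the redshift vector field $N$, the commutator $[\Box_g,N]\psi$ being handled precisely by the redshift structure encoded in~\eqref{eq: higher order redshift}, iterating Proposition~\ref{prop: redshift estimate}.

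\textbf{Main obstacle.} The principal technical point is controlling $|\psi|^2$ in the massless case $\mu_{KG}^2=0$: since constants solve the wave equation, no pure-derivative bound can dominate $|\psi|^2$, which is exactly why that term appears on both sides of~\eqref{eq: lem: subsec: sec: preliminaries, subsec 2, lem 1, eq 1}; the twisted-current construction circumvents this obstruction by manufacturing an effective mass-like potential $V_{twi}$. The other potentially delicate point---absorbing the annular bulk error of the redshift estimate---is automatic here, because the absence of rotation ($a=0$) leaves $\partial_t$ uniformly timelike on any compact subset of $(r_+,\bar r_+)$, so there is no obstruction analogous to superradiance or trapping entering the boundedness argument at this stage.
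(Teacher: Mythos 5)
Your proposal is correct and follows essentially the same route as the paper's own (quite terse) proof: the paper likewise observes that the argument of Proposition~\ref{prop:redshift vs superradiance estimate, for axisymmetric solutions} carries over (which, as you correctly isolate, works because $W=\partial_t$ when $a=0$ and $\partial_t$ is Killing, so $\mathrm{K}^{\partial_t}=\tilde{\mathrm{K}}^{\partial_t}=0$ with no axisymmetry needed), combines the $\partial_t$-energy identity with the redshift estimate of Proposition~\ref{prop: redshift estimate} to get~\eqref{eq: lem: subsec: sec: preliminaries, subsec 2, lem 1, eq 2}, and then invokes the twisted currents of Section~\ref{subsec: sec: preliminaries, subsec 2} via Lemma~\ref{lem: subsec: sec: preliminaries, subsec 2, lem 2} to produce the $|\psi|^2$ terms in~\eqref{eq: lem: subsec: sec: preliminaries, subsec 2, lem 1, eq 1}. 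One small imprecision: the annular bulk error in the redshift step is not literally ``absorbed via the previous display'' --- a one-line $\partial_t$-energy bound gives only linear-in-$(\tau_2-\tau_1)$ control of that spacetime integral --- but is instead handled by the Gr\"onwall-type argument with the coercive negative bulk term $\int\int\mathrm{K}^N$ near the horizons, which is what the cited DR5 argument (deferred to, correctly, via Proposition~\ref{prop:redshift vs superradiance estimate, for axisymmetric solutions}) supplies.
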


\begin{proof}
	The estimate~\eqref{eq: lem: subsec: sec: preliminaries, subsec 2, lem 1, eq 2} is easy to prove, by following the same steps as in the lecture notes~\cite{DR5}~(cf. the proof of Proposition~\ref{prop:redshift vs superradiance estimate, for axisymmetric solutions} above).

To obtain the estimate of~\eqref{eq: lem: subsec: sec: preliminaries, subsec 2, lem 1, eq 1} we use the now established first estimate, in conjunction with the twisted currents of Section~\ref{subsec: sec: preliminaries, subsec 2}, in view of Lemma~\ref{lem: subsec: redshift multipliers for slow, axi, lem 1}. 
\end{proof}

\begin{remark}
	Note that the constants~$C$ on the RHS of~\eqref{eq: lem: subsec: sec: preliminaries, subsec 2, lem 1, eq 1},~\eqref{eq: lem: subsec: sec: preliminaries, subsec 2, lem 1, eq 2} do not blow up in the limit~$\mu^2_{KG}\rightarrow 0$.
\end{remark}

\section{Carter's separation of variables}\label{sec: carter separation}

In this section we discuss Carter's separation of variables, also see~\cite{Carter,holzegel3}. 

\subsection{Sufficiently integrable functions}\label{subsec: sec: carter separation, subsec -1}

We define the class of sufficiently integrable functions, with the help of which we will make sense of Carter's celebrated separation of variables of Section~\ref{subsec: sec: carter separation, radial}.

\begin{definition}\label{def: sec: carter separation, def 1}
	Let~$l>0$,~$(a,M)\in\mathcal{B}_l$ and let~$\mu^2_{\textit{KG}}\geq 0$. Then, we say that a smooth function 
	\begin{equation}
	\Psi:\mathcal{M}\rightarrow \mathbb{C}
	\end{equation}
	is \texttt{sufficiently integrable} if for every~$j\geq 1$ we have 
	\begin{equation}\label{eq: def: sec: carter separation, def 1, eq 1}
	\begin{aligned}
	\sup_{r\in[r_+,\bar{r}_+]}\int_{\mathbb{R}}\int_{\mathbb{S}^2}\left(\mu^2_{\textit{KG}}|\Psi|^2+\sum_{1\leq i_1+i_2+i_3\leq j}\left|\slashed{\nabla}^{i_1}\partial_t^{i_2}(Z^\star)^{i_3}\Psi\right|^2\right)\sin\theta d\theta d\varphi	dt &<	\infty.\\
	\end{aligned}
	\end{equation}
\end{definition}

Note that if~$\psi$ is sufficiently integrable then~$\left(\Box_{g_{a,M,l}}-\mu^2_{KG}\right)\psi$ is also sufficiently integrable.

\begin{remark}\label{rem: sec: carter separation, rem 1}
	Note that Definition~\ref{def: sec: carter separation, def 1} depends on fixing~$\mu_{KG}^2$. Specifically, in the case of the wave equation~$\mu^2_{\textit{KG}}=0$, we do not include the zeroth order term~$\sup_{r\in [r_+,\bar{r}_+]}\int_{\mathbb{R}}\int_{\mathbb{S}^2}|\Psi|^2$ on the right hand side of~\eqref{eq: def: sec: carter separation, def 1, eq 1}, as opposed to~[Definition~5.1.1,~\cite{DR2}] . Therefore, in particular, in the case~$\mu^2_{KG}=0$, constant functions belong to the space of sufficiently integrable functions as defined here. 
\end{remark}

\subsection{The class of outgoing functions}\label{subsec: sec: carter separation, subsec 0}

We will use the following definition in Proposition~\ref{prop: Carters separation, radial part}.

\begin{definition}\label{def: subsec: sec: carter separation, subsec 0, def 1}
	Let~$l>0$ and~$(a,M)\in \mathcal{B}_l$. We say that a smooth function~$\Psi$ is \texttt{outgoing} if there exists an~$\epsilon>0$ such that~$\Psi$ vanishes in~$\{t^\star=\tau\}\cap \{r_+\leq r\leq r_++\epsilon\}$ and in~$\{t^\star=\tau\}\cap \{\bar{r}_+-\epsilon\leq r\leq \bar{r}_+\}$ for all~$\tau\leq -\epsilon^{-1}$. 
\end{definition}

\subsection{Time cutoffs}\label{subsec: sec: carter separation, subsec 1}

Let~$2\leq 1+\tau_1\leq \tau_2$. We choose smooth cut-offs that satisfy the following 
\begin{equation}\label{eq: subsec: sec: carter separation, subsec 1, eq 2}
\chi_{\tau_1,\tau_2}(z) \:\dot{=}\: 
\begin{cases}
\text{1,} &\quad\{1+\tau_1\leq z\leq \tau_2-1\}\\
\text{0,} &\quad\{z\leq \tau_1\}\cup\{z\geq \tau_2\}, \\
\end{cases}
\qquad \chi_{\tau_1}(z) \:\dot{=}\: 
\begin{cases}
\text{1,} &\quad\{z\geq 1+\tau_1\}\\
\text{0,} &\quad\{z\leq \tau_1\}\\
\end{cases}
\end{equation}
where the following hold~$\partial_\varphi \chi_{\tau_1,\tau_2}\equiv 0$,~$\partial_\varphi \chi_{\tau_1}\equiv 0$ and moreover~$\chi_{\tau_1,\tau_2}\big|_{(\tau_1,\tau_1+1)}(s)=\chi(\tau_1+s)$, for some~$\chi(s)$ independent of~$\tau_1$ and moreover~$\chi_{\tau_1,\tau_2}\big|_{(\tau_2-1,\tau_2)}=\bar{\chi}(\tau_2+s)$, for some~$\bar{\chi}(s)$ independent of~$\tau_2$.

Note that for any~$n\geq 1$ there exists a constant~$C(n)>0$ such that for any~$2\leq 1+\tau_1\leq \tau_2$ we have that~$|\frac{d^n}{dz^n}\chi_{\tau_1,\tau_2}|+|\frac{d^n}{dz^n}\chi_{\tau_1}|<C$.

We define the cut-offed functions respectively
\begin{equation}
\psi_{\tau_1,\tau_2}\dot{=}\:\chi_{\tau_1,\tau_2}(t^\star)\psi(t^\star,\cdot), \qquad \psi_{\tau_1}\dot{=}\:\chi_{\tau_1}(t^\star)\psi(t^\star,\cdot).
\end{equation}

\begin{remark}\label{rem: subsec: sec: carter separation, subsec 0, rem 1}
	Let~$\psi:\mathcal{M}\rightarrow \mathbb{R}$ be a smooth function. Then, we note that~$\chi_{\tau_1,\tau_2}\psi,\chi_{\tau_1}\psi$ are outgoing, see Definition~\ref{def: subsec: sec: carter separation, subsec 0, def 1} and moreover we note that~$\chi_{\tau_1,\tau_2}\psi$ is sufficiently integrable, but~$\chi_{\tau_1}\psi$ is not necessarily sufficiently integrable. 
\end{remark}

\subsection{The angular part of Carter's separation, oblate spheroidal harmonics}

We define the following

\begin{definition}\label{def: sec: carter, sturm liouville, def 1}
Let~$f$ be a complex valued function such that~$f\in H^1(\mathbb{S}^2,\mathbb{C})$ and let~$\xi\in\mathbb{R}$. Then, for~$\mu^2_{KG}\geq 0$ we define the~$L^{2}(\sin\theta d\theta d\varphi)$ self-adjoint operator
\begin{equation}
        \begin{aligned}
             P_{\mu_{\textit{KG}}}(\xi)f \:\dot{=}\: & -\frac{1}{\sin \theta}\partial_{\theta}(\Delta_{\theta}\sin\theta\partial_{\theta}f)-\frac{\Xi^{2}}{\Delta_{\theta}}\frac{1}{\sin^{2}\theta}\partial^{2}_{\varphi}f-\Xi \frac{\xi^2}{\Delta_{\theta}}\cos^{2}\theta f-2i\xi\frac{\Xi}{\Delta_{\theta}}\frac{a^{2}}{l^{2}}\cos^{2}\theta \partial_{\varphi}f\\
             &  +\mu_{\textit{KG}}^2a^2\sin^2{\theta}\cdot f,\\          
	\end{aligned}
\end{equation}
where~$\Delta_\theta=1+\frac{a^2}{l^2}\cos^2\theta,~\Xi=1+\frac{a^2}{l^2}$, see Definition~\ref{def: delta+, and other polynomials}.
\end{definition}

By standard arguments for self-adjoint operators, see for example~\cite{holzegel3}, we have the following

\begin{proposition}\label{prop: sec: carter, finster, prop 1}
Let~$l>0$ and~$(a,M)\in\mathcal{B}_l$ and~$\mu^2_{KG}\geq 0$ and let~$\xi\in\mathbb{R}$. Then, there exist countably many real eigenvalues 
\begin{equation}\label{eq: prop: sec: carter, finster, prop 1, eq 1}
    \lambda^{(\xi),\mu_{KG}}_{m\ell}\in\mathbb{R}
\end{equation}
of the operator~$P_{\mu_{\textit{KG}}}(\xi)$, see~Definition~\ref{def: sec: carter, sturm liouville, def 1}, which are indexed by~$\ell\geq |m|$.

Moreover, the eigenvalues~\eqref{eq: prop: sec: carter, finster, prop 1, eq 1} correspond to the oblate spheroidal harmonics 
\begin{equation}
    S^{(\xi),\mu_{KG}}_{m\ell}(\cos\theta)e^{im\varphi},
\end{equation}
which form a complete orthonormal basis of~$L^2(\sin\theta d\theta d\varphi)$. 
\end{proposition}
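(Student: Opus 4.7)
The plan is to view $P_{\mu_{\textit{KG}}}(\xi)$ as a self-adjoint elliptic operator on $\mathbb{S}^2$ with smooth coefficients (observe that $\Delta_\theta$ is strictly positive on $[0,\pi]$ since $|a|/l < 1/2$ by Lemma~\ref{lem: sec: properties of Delta, lem 2, a,M,l}), decompose the Hilbert space using the azimuthal symmetry, reduce each block to a regular-singular Sturm--Liouville problem on $(0,\pi)$, and then invoke standard spectral theory.

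First, I would verify self-adjointness. Since the coefficients of $P_{\mu_{\textit{KG}}}(\xi)$ are independent of $\varphi$, the operator commutes with $\partial_\varphi$, and integration by parts (using that the weight $\sin\theta$ vanishes at the poles, so boundary terms disappear) shows it is symmetric on $C^\infty(\mathbb{S}^2)$. One then takes the Friedrichs extension (or verifies essential self-adjointness directly) on the natural form domain, using that the principal part is minus the Laplacian on $\mathbb{S}^2$ up to a smooth positive rescaling. The lower-order terms involving $\xi$ are bounded perturbations of this.

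Second, I would diagonalize in $\varphi$. Decompose
\begin{equation}
L^2(\sin\theta d\theta d\varphi) = \bigoplus_{m\in\mathbb{Z}} L^2((0,\pi),\sin\theta d\theta) \otimes \mathrm{span}(e^{im\varphi}).
\end{equation}
Since $P_{\mu_{\textit{KG}}}(\xi)$ preserves each summand, it reduces on the $m$-th summand to the ordinary differential operator
\begin{equation}
P^{(m)}_{\mu_{\textit{KG}}}(\xi) S = -\frac{1}{\sin\theta}\frac{d}{d\theta}\Bigl(\Delta_\theta \sin\theta \frac{dS}{d\theta}\Bigr) + \Bigl(\frac{\Xi^2 m^2}{\Delta_\theta\sin^2\theta} - \Xi\frac{\xi^2\cos^2\theta}{\Delta_\theta} + 2m\xi\frac{\Xi}{\Delta_\theta}\frac{a^2}{l^2}\cos^2\theta + \mu^2_{\textit{KG}}a^2\sin^2\theta\Bigr) S.
\end{equation}
This is a regular-singular Sturm--Liouville operator on $(0,\pi)$, with the same singularity structure at $\theta = 0,\pi$ as the associated Legendre operator; the principal part is equivalent to $-\partial_\theta^2 + m^2/\theta^2 + O(1)$ near $\theta=0$ (and similarly near $\pi$), so the singular endpoints are in the limit-point case for $|m|\geq 1$ and one imposes the standard regularity selection at $\theta=0,\pi$ for $m=0$.

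Third, I would apply the spectral theorem. On each $m$-th block, standard Sturm--Liouville theory (see e.g.\ the treatment of oblate/prolate spheroidal harmonics) yields countably many simple real eigenvalues accumulating at $+\infty$, with eigenfunctions $S^{(\xi),\mu_{\textit{KG}}}_{m\ell}(\cos\theta)$ indexed by $\ell\geq |m|$ (matching the classical convention since for $\xi=0$, $\mu_{\textit{KG}}=0$, $a=0$ one recovers the Legendre eigenvalues $\ell(\ell+1)$). Completeness in $L^2((0,\pi),\sin\theta d\theta)$ on each block, combined with the Fourier decomposition in $\varphi$, then yields completeness of $\{S^{(\xi),\mu_{\textit{KG}}}_{m\ell}(\cos\theta)e^{im\varphi}\}$ in $L^2(\sin\theta d\theta d\varphi)$, and the reality of the eigenvalues follows from self-adjointness.

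The only delicate step is handling the singular endpoints $\theta=0,\pi$ carefully, to ensure that the self-adjoint realization on each $m$-block is the one whose eigenfunctions extend smoothly to $\mathbb{S}^2$. I expect this to be routine given that the operator is the (shifted) Laplace--Beltrami operator on $\mathbb{S}^2$ equipped with a conformally rescaled metric plus bounded zeroth-order terms, so ellipticity on the closed manifold $\mathbb{S}^2$ directly gives a compact resolvent and hence the desired spectral decomposition without any boundary-condition bookkeeping.
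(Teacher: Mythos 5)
Your proposal is correct and is essentially the argument the paper has in mind: the paper does not write out a proof but simply invokes ``standard arguments for self-adjoint operators'' with a citation, and your sketch (symmetry plus ellipticity on the closed manifold $\mathbb{S}^2$, hence compact resolvent and a complete orthonormal eigenbasis, with the azimuthal decomposition furnishing the $(m,\ell)$ indexing) is precisely the standard argument being referenced. The reduction to a Sturm--Liouville problem on each $m$-block and the remark that ellipticity on $\mathbb{S}^2$ sidesteps the endpoint bookkeeping are both correct and consistent with the intended proof.
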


For brevity we will drop the explicit reference on $\mu^2_{KG}$, i.e. we will denote~$\lambda^{(\xi)}_{m\ell},~S^{(\xi)}_{m\ell}$.

We need the following.
\begin{lemma}\label{lem: inequality for lambda}
The eigenvalues~$\lambda^{(\xi)}_{m\ell}$ of Proposition~\ref{prop: sec: carter, finster, prop 1} satisfy the following inequalities.
If~$m\xi\geq 0$ then we have
\begin{equation}\label{eq: lem: inequality for lambda, eq 1}
	\lambda^{(\xi)}_{m\ell}+\xi^2\geq \Xi^2m^2,\quad 	\lambda^{(\xi)}_{m\ell}+\xi^2 -2m\xi \Xi > 0,\quad \lambda^{(\xi)}_{m\ell}+\xi^2 -2m\xi \frac{a^2}{l^2}> 0.
\end{equation}
If~$m\xi<0$ then we have 
\begin{equation}\label{eq: lem: inequality for lambda, eq 2}
	\lambda^{(\xi)}_{m\ell}+\xi^2 -2m\xi \Xi\geq \Xi^2m^2,\quad \lambda^{(\xi)}_{m\ell}+\xi^2 -2m\xi \frac{a^2}{l^2}\geq \Xi^2m^2.
\end{equation}

Finally, for any~$(m,\xi)\in \mathbb{Z}\times \mathbb{R}$ we have that 
\begin{equation}\label{eq: lem: inequality for lambda, eq 3}
	\lambda^{(\xi)}_{m\ell}+\Xi \xi^2 \geq \Xi m^2
\end{equation}
\end{lemma}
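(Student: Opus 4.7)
The strategy is to reduce all the stated inequalities to a handful of pointwise algebraic identities for the effective $\theta$-potential, plus one auxiliary spherical-harmonic-type lower bound. First I express $\lambda^{(\xi)}_{m\ell}$ as a Rayleigh quotient by plugging the normalised eigenfunction $Se^{im\varphi}$ into $P_{\mu_{KG}}(\xi)$; after an integration by parts in $\theta$ this gives
\[
\lambda^{(\xi)}_{m\ell} = \int_0^\pi\!\left[\Delta_\theta|S'|^2 + V_0(\theta)|S|^2\right]\sin\theta\,d\theta + \mu_{KG}^2 a^2\!\int_0^\pi\sin^2\theta\,|S|^2\sin\theta\,d\theta,
\]
with $V_0(\theta) = (\Xi^2 m^2 + \Xi\sin^2\theta\cos^2\theta(2m\xi\tfrac{a^2}{l^2} - \xi^2))/(\Delta_\theta\sin^2\theta)$. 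Using only the identity $\Delta_\theta - \Xi\cos^2\theta = \sin^2\theta$, direct expansion will then establish the three master identities
\[
(V_0 + \xi^2 - 2m\xi\Xi)\Delta_\theta\sin^2\theta = (\Xi m - \xi\sin^2\theta)^2,
\]
\[
(V_0 + \xi^2 - 2m\xi\tfrac{a^2}{l^2})\Delta_\theta\sin^2\theta = \Xi^2 m^2 + \sin^4\theta\,\xi(\xi - 2m\tfrac{a^2}{l^2}),
\]
\[
(V_0 + \xi^2)\Delta_\theta\sin^2\theta = (\Xi m + \xi\tfrac{a^2}{l^2}\sin^2\theta\cos^2\theta)^2 + \xi^2\sin^4\theta(1 - \tfrac{a^4}{l^4}\cos^4\theta).
\]

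The first identity immediately gives $V_0 + \xi^2 - 2m\xi\Xi \geq 0$ pointwise, which integrates to the middle estimate of~\eqref{eq: lem: inequality for lambda, eq 1}; moreover, when $m\xi < 0$ the two summands in $\Xi m - \xi\sin^2\theta$ share a sign, so $(\Xi m - \xi\sin^2\theta)^2 \geq \Xi^2 m^2$ pointwise, and~\eqref{eq: lem: inequality for lambda, eq 2} is thereby reduced to the auxiliary lower bound
\[
\int_0^\pi\left[\Delta_\theta|S'|^2 + \tfrac{\Xi^2 m^2}{\Delta_\theta\sin^2\theta}|S|^2\right]\sin\theta\,d\theta \geq \Xi^2 m^2\int_0^\pi|S|^2\sin\theta\,d\theta.
\]
The same mechanism applied to the second identity yields the remaining estimates involving $-2m\xi\tfrac{a^2}{l^2}$, and the completion of squares in the third identity, combined with $m\xi\geq 0$, gives $V_0 + \xi^2 \geq \tfrac{\Xi^2 m^2}{\Delta_\theta\sin^2\theta}$ pointwise and so reduces the first estimate of~\eqref{eq: lem: inequality for lambda, eq 1} to the same auxiliary bound.

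To prove this auxiliary bound I plan to substitute $z = \cos\theta$ and $S = (1-z^2)^{|m|/2}T$; one integration by parts absorbing the cross term produced by the substitution yields four manifestly non-negative contributions, the last of which is
\[
\int_{-1}^{1}\frac{m^2[\Xi^2 - z^2(1+\tfrac{a^2}{l^2}z^2)^2]}{(1+\tfrac{a^2}{l^2}z^2)(1-z^2)}(1-z^2)^{|m|}|T|^2\,dz.
\]
This will dominate $\Xi^2 m^2\int(1-z^2)^{|m|}|T|^2\,dz$ in view of the pointwise inequality $\Xi^2(1 - \tfrac{a^2}{l^2} + \tfrac{a^2}{l^2}z^2) \geq (1+\tfrac{a^2}{l^2}z^2)^2$ on $z\in[-1,1]$, which I would verify by setting $u = \tfrac{a^2}{l^2}z^2\in[0,\tfrac{a^2}{l^2}]$, noting that both sides agree at $z=\pm 1$, and controlling the sign of the derivative using $\tfrac{a^2}{l^2} < 1/4$ from Lemma~\ref{lem: sec: properties of Delta, lem 2, a,M,l}. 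I expect this auxiliary inequality to be the main technical obstacle; every other step is either a direct sign argument or the verification of a pointwise algebraic identity. Finally, for~\eqref{eq: lem: inequality for lambda, eq 3}, I plan to expand $(V_0 + \Xi\xi^2 - \Xi m^2)\Delta_\theta\sin^2\theta$ as a quadratic form in $(m,\xi)$ and check that it is positive semidefinite for every $\theta\in[0,\pi]$ by computing its discriminant, which reduces to a similar but simpler algebraic inequality.
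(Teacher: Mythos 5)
Your proposal is correct and takes essentially the same route as the paper: express $\lambda^{(\xi)}_{m\ell}$ as a Rayleigh quotient, discard the manifestly non-negative kinetic term $\int\Delta_\theta|S'|^2$, and then prove pointwise lower bounds on the effective $\theta$-potential by completing squares. I verified your three master identities directly (the key algebraic fact being $\Delta_\theta=\Xi\cos^2\theta+\sin^2\theta$), and your discriminant computation for~\eqref{eq: lem: inequality for lambda, eq 3} does produce a quadratic form whose $2\times 2$ determinant is manifestly non-negative. The paper's proof is structured a bit differently -- it works directly on $(P(\xi)+\xi^2)f$ resp.\ $(P(\xi)+\tfrac{\Xi}{\Delta_\theta}\xi^2)f$, rewriting the zeroth-order term as a square plus remainder -- but the content is the same. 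One genuine small advantage of your organization: your second master identity proves the second estimate of~\eqref{eq: lem: inequality for lambda, eq 2} by a pure sign argument, whereas the paper at that point invokes the auxiliary fact $\tfrac{a^2}{l^2}<\tfrac14$ from Lemma~\ref{lem: sec: properties of Delta, lem 2, a,M,l}; your route is slightly cleaner.

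The one thing you have misjudged is where the difficulty lies. Your ``auxiliary lower bound''
\[
\int_0^\pi\left[\Delta_\theta|S'|^2+\tfrac{\Xi^2m^2}{\Delta_\theta\sin^2\theta}|S|^2\right]\sin\theta\,d\theta\geq\Xi^2m^2\int_0^\pi|S|^2\sin\theta\,d\theta
\]
is not a technical obstacle at all: it follows in one line from $\Delta_\theta\sin^2\theta=(1+\tfrac{a^2}{l^2}\cos^2\theta)(1-\cos^2\theta)\leq 1$ (valid for any $\tfrac{a^2}{l^2}<1$), which makes the zeroth-order coefficient $\geq\Xi^2m^2$ pointwise; the derivative term just gets dropped. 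This is exactly what the paper does, citing $\tfrac{1}{\Delta_\theta\sin^2\theta}\geq 1$. The substitution $S=(1-z^2)^{|m|/2}T$, the extra integration by parts, and the pointwise inequality $\Xi^2\bigl(1-\tfrac{a^2}{l^2}+\tfrac{a^2}{l^2}z^2\bigr)\geq\bigl(1+\tfrac{a^2}{l^2}z^2\bigr)^2$ that you plan to verify via $\tfrac{a^2}{l^2}<\tfrac14$ -- all of that is unnecessary scaffolding. (The pointwise inequality is true, and your plan for checking it is sound, but you would be taking a very long road to something that is already on the surface.) Cutting that step out collapses your proof to the same length as the paper's.
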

\begin{proof}
For simplicity, we prove the Lemma for~$\mu_{\textit{KG}}^2=0$. Note that the same arguments work for~$\mu_{\textit{KG}}^2>0$. For brevity define 
\begin{equation}
f=S^{(\xi)}_{m\ell}(\xi,\cos \theta)e^{im\varphi},
\end{equation}
and note that~$\|f\|_{L^2(\mathbb{S}^2)}>0.$

First, we have the following
\begin{equation}\label{eq: proof: lem: inequality for lambda, eq 1}
	\begin{aligned}
		P(\xi)f +\xi^2f &	=	-\frac{1}{\sin\theta}\partial_\theta(\Delta_\theta\sin\theta\partial_\theta f)+\xi^2\left(1-\frac{\Xi}{\Delta_\theta}\cos^2\theta\right)f+\frac{\Xi^2}{\Delta_\theta}\frac{1}{\sin^2\theta}m^2f+2\xi m \frac{\Xi}{\Delta_\theta}\frac{a^2}{l^2}\cos^2\theta.
	\end{aligned}
\end{equation}

First, we study the case~$m\xi\geq 0$. We obtain the first inequality of~\eqref{eq: lem: inequality for lambda, eq 1} from~\eqref{eq: proof: lem: inequality for lambda, eq 1} since
\begin{equation}
	\begin{aligned}
		\int_{\mathbb{S}^2}(P(\xi)+\xi^2)|f|^2	&	= \int_{\mathbb{S}^2}\Delta_\theta|\partial_\theta f|^2 \\
		&	\qquad+ \int_{\mathbb{S}^2}\left(\xi^2\left(1-\frac{\Xi}{\Delta_\theta}\cos^2\theta\right)+\frac{\Xi^2}{\Delta_\theta}\frac{1}{\sin^2\theta}m^2+2\xi m \frac{\Xi}{\Delta_\theta}\frac{a^2}{l^2}\cos^2\theta\right)|f|^2\geq \int_{\mathbb{S}^2} m^2\Xi^2 |f|^2
	\end{aligned}
\end{equation}
where we used that~$\frac{1}{\Delta_\theta \sin^2\theta}\geq 1$. To obtain the remaining inequalities of~\eqref{eq: lem: inequality for lambda, eq 1} we proceed as follows. We rewrite the above~\eqref{eq: proof: lem: inequality for lambda, eq 1} as
\begin{equation}\label{eq: proof: lem: inequality for lambda, eq 2}
 	\begin{aligned}
 			P(\xi)f +\xi^2f &	=-\frac{1}{\sin\theta}\partial_\theta(\Delta_\theta\sin\theta\partial_\theta f)+\left(\xi \sqrt{1-\frac{\Xi}{\Delta_\theta}\cos^2\theta}\right)^2f+\left(\frac{\Xi}{\sqrt{\Delta_\theta}}\frac{1}{\sin\theta}m\right)^2f-2\xi m \sqrt{1-\frac{\Xi}{\Delta_\theta}\cos^2\theta}\frac{\Xi}{\sqrt{\Delta_\theta}}\frac{1}{\sin\theta}f\\
 			&	\qquad +2\xi m \frac{\Xi}{\Delta_\theta}\frac{a^2}{l^2}\cos^2\theta f+2\xi m \sqrt{1-\frac{\Xi}{\Delta_\theta}\cos^2\theta}\frac{\Xi}{\sqrt{\Delta_\theta}}\frac{1}{\sin\theta}f
 	\end{aligned}
\end{equation}
and after some calculations we rewrite it again as 
\begin{equation}\label{eq: proof: lem: inequality for lambda, eq 3}
	\begin{aligned}
		P(\xi)f +\xi^2f &	=-\frac{1}{\sin\theta}\partial_\theta(\Delta_\theta\sin\theta\partial_\theta f)+\left(\xi \sqrt{1-\frac{\Xi}{\Delta_\theta}\cos^2\theta}-\frac{\Xi}{\sqrt{\Delta_\theta}}\frac{1}{\sin\theta}m\right)^2f\\
		&	\qquad +2\xi m\Xi\left(1-\frac{1}{\Delta_\theta}+\sqrt{1-\frac{\Xi}{\Delta_\theta}\cos^2\theta}\frac{1}{\sqrt{\Delta_\theta}\sin\theta}\right)f.
	\end{aligned}
\end{equation}
Now, it is straightforward to prove that 
\begin{equation}
	-\frac{1}{\Delta_\theta} +\sqrt{1-\frac{\Xi}{\Delta_\theta}\cos^2\theta}\frac{1}{\sqrt{\Delta_\theta}\sin\theta}\geq 0.
\end{equation}
Therefore, from~\eqref{eq: proof: lem: inequality for lambda, eq 3} and~\eqref{eq: proof: lem: inequality for lambda, eq 1} it is easy to conclude the remaining inequalities of~\eqref{eq: lem: inequality for lambda, eq 1}.

Now, we study the case~$m\xi<0$. We obtain~\eqref{eq: lem: inequality for lambda, eq 2} in a straightforard manner from~\eqref{eq: proof: lem: inequality for lambda, eq 1}, in view of the fact that~$\frac{a^2}{l^2}<\frac{1}{4}$, see Lemma~\ref{lem: sec: properties of Delta, lem 2, a,M,l}. 

Now, we proceed to prove~\eqref{eq: lem: inequality for lambda, eq 3}. We write~\eqref{eq: proof: lem: inequality for lambda, eq 1} as follows 
\begin{equation}
	\begin{aligned}
		(P(\xi)+\frac{\Xi}{\Delta_\theta}\xi^2)f= -\frac{1}{\sin\theta}\partial_\theta(\Delta_\theta \sin\theta \partial_\theta f)+\frac{\Xi}{\Delta_\theta}\left(a\omega \sin\theta +m\frac{a^2}{l^2}\frac{\cos^2\theta}{\sin\theta}\right)^2+ \frac{m^2\Xi^2}{\Delta_\theta}\frac{1-\frac{1}{\Xi}\frac{a^4}{l^4}\cos^4\theta}{\sin^2\theta}f.
	\end{aligned}
\end{equation}
In view of the fact that~$\frac{1-\frac{1}{\Xi}\frac{a^4}{l^4}\cos^4\theta}{\sin^2\theta}\geq 1$ and that~$\frac{\Xi}{\Delta_\theta}\geq 1$ we conclude~\eqref{eq: lem: inequality for lambda, eq 3}. 
\end{proof}

We need the following definition 
\begin{definition}\label{def: subsec: sec: frequencies, subsec 3, def 0}
	Let~$\lambda^{(a\omega)}_{m\ell}$ denote the eigenvalues of the angular operator~$P_{\mu_{\textit{KG}}}$ of Definition~\ref{def: sec: carter, sturm liouville, def 1}, and~$\omega$ denote the time frequency from Carter's separation, see Proposition~\ref{prop: sec: carter, finster, prop 1}.

	Then, we define the following
	\begin{equation}\label{eq: def: subsec: sec: frequencies, subsec 3, def 0, eq 1}
		\tilde{\lambda}^{(a\omega)}_{m\ell}=\lambda^{(a\omega)}_{m\ell}+a^2\omega^2.
	\end{equation}
	We will often drop the dependence of~$\tilde{\lambda}^{(a\omega)}_{m\ell}$ on the frequencies and simply denote them as~$\tilde{\lambda}$. 
\end{definition}

\subsection{The radial part of Carter's separation}\label{subsec: sec: carter separation, radial}

We here discuss the radial part of Carter's celebrated separation of variables. For the first proof of the separation of variables see the works of Carter~\cite{Carter,Carter2,Carter3}. Also see~\cite{holzegel3} for Carter's separation in the case of negative cosmological constant~$\Lambda<0$.

We first define the Fourier transformations that will be used for the rest of the present Section

\begin{definition}\label{def: subsec: sec: carter separation, radial, def 1}
	Let~$l>0$,~$(a,M)\in \mathcal{B}_l$ and~$\mu^2_{KG}\geq 0$. Moreover, let~$\psi$ be a smooth sufficiently integrable function, see Definition~\ref{def: sec: carter separation, def 1}, where moreover the following holds~$\Box_{g_{a,M,l}}\psi-\mu^2_{KG}\psi=F$. Then, for~$\omega\neq 0$ we define the Fourier transforms as follows
	\begin{equation}
		\begin{aligned}
			\psi^{(a\omega)}_{m\ell} &	= \int_{\mathbb{R}} \int_{\mathbb{S}^2} e^{-i\omega t}e^{i m\varphi} S^{(a\omega)}_{m\ell}(\theta) \cdot(-i\omega)^{-1} \partial_t\psi d\sigma dt,\\
			F^{(a\omega)}_{m\ell} &	= \int_{\mathbb{R}} \int_{\mathbb{S}^2} e^{-i\omega t}e^{i m\varphi} S^{(a\omega)}_{m\ell}(\theta) \cdot(-i\omega)^{-1} \partial_tF d\sigma dt,
		\end{aligned}
	\end{equation}
	with~$d\sigma=\sin\theta dtd\theta d\varphi$. For all~$m,\ell$ the functions
	\begin{equation}\label{eq: prop: Carters separation, radial part, eq 1.1}
		\omega^{i_1}(Z^\star)^{i_2}\left(\sqrt{\tilde{\lambda}^{(a\omega)}_{m\ell}}\right)^{i_3}\psi^{(a\omega)}_{m\ell}(r),\qquad \mu^2_{KG}\psi^{(a\omega)}_{m\ell},\qquad \omega^{i_1}(Z^\star)^{i_2}\left(\sqrt{\tilde{\lambda}^{(a\omega)}_{m\ell}}\right)^{i_3} F^{(a\omega)}_{m\ell}
	\end{equation}
	extend to~$L^2(d\omega)$ functions for any~$i_1\geq 1,~ i_2+i_3\geq 0$ and moreover we have
	\begin{equation}\label{eq: prop: Carters separation, radial part, eq 2}
		\begin{aligned}
			\sup_{r\in[r_+,\bar{r}_+]}\sum_{m,\ell}\int_{\mathbb{R}}\sum_{1\leq i_1+i_2+i_3\leq j}\left(\mu^2_{KG}|\psi^{(a\omega)}_{m\ell}|^2+ \left|\omega^{i_1}(Z^\star)^{i_2}\left(\sqrt{\tilde{\lambda}^{(a\omega)}_{m\ell}}\right)^{i_3}\psi^{(a\omega)}_{m\ell}\right|^2  \right) d\omega 	&<	\infty,\\
			\sup_{r\in[r_+,\bar{r}_+]}\sum_{m,\ell}\int_{\mathbb{R}}\sum_{0\leq i_1+i_2+i_3\leq j}\left|\omega^{i_1}(Z^\star)^{i_2}\left(\sqrt{\tilde{\lambda}^{(a\omega)}_{m\ell}}\right)^{i_3}F^{(a\omega)}_{m\ell}\right|^2  d\omega 	&<	\infty,
		\end{aligned}
	\end{equation}
	for any~$j\geq 1$.
\end{definition}

\begin{remark}
	In the case of the wave equation~$\mu^2_{KG}=0$ we note that~$\psi^{(a\omega)}_{m\ell}$ of Definition~\ref{def: subsec: sec: carter separation, radial, def 1} is not necessarily an~$L^2(d\omega)$ function, in view of the requirement~$i_1\geq 1$ in~\eqref{eq: prop: Carters separation, radial part, eq 1.1}. 
\end{remark}

Let~$\Psi$ be a sufficiently integrable function. We denote the Fourier transform with respect to the Boyer--Lindquist coordinate~$\varphi$ as follows
\begin{equation}\label{eq: subsec: sec: carter separation, radial, eq 1}
\mathcal{F}_m(\Psi)(t,r,\theta)= \int_{0}^{2\pi} e^{-i m\varphi} \Psi(t,r,\theta,\varphi) d\varphi. 
\end{equation}

Now we are ready to present Carter's separation for the radial ode. 

\begin{proposition}[Carter's separation,~\cite{Carter,Carter2,Carter3,DR2}]\label{prop: Carters separation, radial part}
	
Let~$l>0$,~$(a,M)\in\mathcal{B}_l$ and~$\mu^2_{\textit{KG}}\geq 0$. Let~$\psi$ be a sufficiently integrable function, see Definition~\ref{def: sec: carter separation, def 1}, that solves 
\begin{equation}
	\Box_{g_{a,M,l}}\psi -\mu_{\textit{KG}}^2 \psi =F
\end{equation}
where for the operator~$\Box_{g_{a,M,l}}$ see~\eqref{eq: kleingordon}. Let~$\psi^{(a\omega)}_{m\ell},F^{(a\omega)}_{m\ell}$ be the Fourier transforms of~$\psi,F$ respectively, see Definition~\ref{def: subsec: sec: carter separation, radial, def 1}.

Then, for almost all~$\omega$ the rescaled function
\begin{equation}\label{eq: ode from carter's separation, eq 0}
u^{(a\omega)}_{m\ell}(r)=\sqrt{r^2+a^2}\psi^{(a\omega)}_{m\ell}(r)
\end{equation}
is smooth for all~$m,\ell$ and all~$r\in (r_+,\bar{r}_+)$ and moreover it satisfies Carter's fixed frequency radial ode
\begin{equation}\label{eq: ode from carter's separation}
[u^{(a\omega)}_{ml}(r)]^{\prime\prime}+(\omega^{2}-V^{(a\omega)}_{ml}(r))u^{(a\omega)}_{ml}(r)=H^{(a\omega)}_{ml}(r), \qquad H^{(a\omega)}_{ml}=\frac{\Delta}{(r^2+a^2)^{3/2}}\left(\rho^2 F\right)^{(a\omega)}_{ml}(r),
\end{equation}
with~$^\prime=\frac{d}{dr^\star}$ see Section~\ref{subsec: tortoise coordinate}.

The potential~$V$ can we written as 
\begin{equation}\label{eq: the potential V}
V\:\dot{=}\:V^{(a\omega)}_{m\ell}(r)=V_{\textit{SL}}(r)+\left(V_0\right)^{(a\omega)}_{m\ell}(r) +V_{\mu_{\textit{KG}}}(r),
\end{equation}
where
\begin{equation}\label{eq: the potentials V0,VSl,Vmu}
    \begin{aligned}
    	V_{\textit{SL}}
    	&   =(r^{2}+a^{2})^{-1/2}\left(\frac{d}{dr^\star}\right)^2(\sqrt{r^{2}+a^{2}})\\ &=-\Delta^{2}\frac{3r^{2}}{(r^{2}+a^{2})^{4}}+\Delta\frac{-5\frac{r^{4}}{l^{2}}+3r^{2}(1-\frac{a^{2}}{l^{2}})-4Mr+a^{2}}{(r^{2}+a^{2})^{3}}, \\
        V_0 &   =\frac{\Delta(\lambda^{(a\omega)}_{m\ell}+\omega^{2}a^{2})-\Xi^{2}a^{2}m^{2}-2m\omega a\Xi(\Delta -(r^{2}+a^{2}))}{(r^{2}+a^{2})^{2}}\\
        &=\frac{\Delta}{(r^2+a^2)^2}(\lambda^{(a\omega)}_{m\ell}+a^2\omega^2-2am\omega\Xi)+\omega^2-\left(\omega-\frac{am\Xi}{r^2+a^2}\right)^2,\\
        V_{\mu_{\textit{KG}}}	&=\mu_{\textit{KG}}^2\frac{\Delta}{r^2+a^2}.
    \end{aligned}
\end{equation}

Moreover, if~$\psi$ is in addition assumed outgoing, see Definition~\ref{def: subsec: sec: carter separation, subsec 0, def 1}, then the following boundary conditions hold
	\begin{equation}\label{eq: lem: sec carters separation, subsec boundary behaviour of u, boundary beh. of u, eq 3}
	\begin{aligned}
		&   \frac{d u}{d r^\star}=-i(\omega-\omega_+m) u,\qquad \frac{d u}{d r^\star}=i(\omega-\bar{\omega}_+m) u,
	\end{aligned}
\end{equation}
at~$r^\star=-\infty,r^\star=+\infty$ respectively, in the sense:
\begin{equation}\label{eq: lem: sec carters separation, subsec boundary behaviour of u, boundary beh. of u, eq 4}
	\lim_{r^\star\rightarrow -\infty}\left(u^\prime (r^\star) +i(\omega-\omega_+ m) u(r^\star)\right)=0,\qquad \lim_{r^\star\rightarrow \infty}\left(u^\prime (r^\star) -i(\omega-\bar{\omega}_+m) u(r^\star)\right)=0,
\end{equation}
where~$\omega_+=\frac{a\Xi}{r_+^2+a^2},~\bar{\omega}_+=\frac{a\Xi}{\bar{r}_+^2+a^2}$. Finally, we note that~$|u|^2(\pm \infty)= \lim_{r^\star\rightarrow\pm \infty} |u|^2(r^\star)$ are well defined. 
\end{proposition}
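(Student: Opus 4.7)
The plan is to carry out Carter's classical separation of variables and then read off the boundary asymptotics from the outgoing condition. First, I would rewrite $\rho^2(\Box_{g_{a,M,l}}-\mu^2_{KG})$ in Boyer--Lindquist coordinates using $\Box_g=|g|^{-1/2}\partial_\mu(|g|^{1/2}g^{\mu\nu}\partial_\nu)$. A direct computation with the inverse of the metric~\eqref{eq: metric in boyer lindquist coordinates} reveals Carter's crucial feature: this operator splits as $\mathcal{R}(r;\partial_r,\partial_t,\partial_\varphi)+\mathcal{A}(\theta;\partial_\theta,\partial_\varphi,\partial_t)$, where $\mathcal{A}$, after Fourier-transforming in $t$ and $\varphi$ and absorbing the $\mu^2_{KG}a^2\sin^2\theta$ piece, coincides with the spheroidal operator $P_{\mu_{KG}}(a\omega)$ of Definition~\ref{def: sec: carter, sturm liouville, def 1}. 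The sufficient integrability of $\psi$, via Plancherel, allows me to decompose in $(\omega,m,\ell)$ termwise; projecting onto $S^{(a\omega)}_{m\ell}(\theta)e^{im\varphi}$ replaces the angular operator by the eigenvalue $\lambda^{(a\omega)}_{m\ell}$ of Proposition~\ref{prop: sec: carter, finster, prop 1}, leaving a pure ODE in $r$ for the coefficient $\psi^{(a\omega)}_{m\ell}(r)$. Smoothness of this coefficient in $r$ for almost every $\omega$ follows from Fubini together with the uniform bounds~\eqref{eq: prop: Carters separation, radial part, eq 2} at arbitrarily high derivative orders.

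Next I would pass to the tortoise coordinate using $\partial_r=\frac{r^2+a^2}{\Delta}\partial_{r^\star}$ and substitute $u=\sqrt{r^2+a^2}\,\psi^{(a\omega)}_{m\ell}$. This conjugation converts the principal part into $u''$ plus a zeroth-order correction which, by a routine calculation involving the derivatives~\eqref{eq: lem: sec: properties of Delta, lem 1, derivatives of Delta, eq 1}, matches exactly the $V_{\textit{SL}}$ of~\eqref{eq: the potentials V0,VSl,Vmu}. Collecting the remaining lower-order terms and completing the square in $\omega-\frac{am\Xi}{r^2+a^2}$ yields $V_0$ in the asserted form, while the Klein--Gordon mass produces $V_{\mu_{KG}}=\mu^2_{KG}\Delta/(r^2+a^2)$. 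Tracking the same rescaling on the $\rho^2 F$ inhomogeneity gives the stated formula for $H^{(a\omega)}_{m\ell}$.

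For the boundary conditions I translate Definition~\ref{def: subsec: sec: carter separation, subsec 0, def 1} into Boyer--Lindquist variables. From~\eqref{eq: def: boyer lindquist, eq 2} one has $A(r)=-r^\star+O(1)$ and $B(r)=-\omega_+ r^\star+O(1)$ as $r\downarrow r_+$, so the plane-wave factor satisfies $e^{-i\omega t+im\varphi}=e^{-i\omega t^\star+im\varphi^\star}\cdot e^{i(\omega-m\omega_+)r^\star}\cdot e^{O(1)}$ there. On the other hand, since $\Delta\to 0$ at $r_+$ kills $V_{\textit{SL}}$, $V_{\mu_{KG}}$, and the $\Delta$-multiplied piece of $V_0$, the ODE becomes $u''+(\omega-m\omega_+)^2 u\approx 0$ near $r^\star=-\infty$, with general solution $u(r^\star)=A_- e^{-i(\omega-m\omega_+)r^\star}+B_-e^{+i(\omega-m\omega_+)r^\star}+o(1)$. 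Only the first branch produces a solution regular in Kerr--de~Sitter star coordinates; the second branch would, after multiplying by the compensating phase above, carry oscillatory support for $t^\star\to-\infty$ at fixed $r$ near $r_+$, which is ruled out by the outgoing hypothesis. Hence $B_-=0$, i.e.~$u'+i(\omega-m\omega_+)u\to 0$ at $r^\star=-\infty$. The argument at $\bar{\mathcal{H}}^+$ is symmetric with $A(r)=+r^\star+O(1)$, $B(r)=+\bar\omega_+ r^\star+O(1)$, and yields the second condition in~\eqref{eq: lem: sec carters separation, subsec boundary behaviour of u, boundary beh. of u, eq 3}. Existence of $|u|^2(\pm\infty)$ then follows from the ODE together with these asymptotics.

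The main obstacle is the last step: making the implication \emph{outgoing in the geometric sense implies that only the correct Fourier-side branch survives} rigorous at the level of the rescaled $u$. A clean way to handle this is to construct a Green's function from the two homogeneous solutions that individually satisfy the correct branch condition at $r^\star=\pm\infty$ and to verify, by a Paley--Wiener type argument exploiting the past-compact support built into the outgoing definition, that the convolution of this Green's function with $H^{(a\omega)}_{m\ell}$ recovers $\psi^{(a\omega)}_{m\ell}$; this then uniquely pins down the branch asymptotics at both horizons simultaneously.
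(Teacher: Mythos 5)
The paper does not prove this proposition itself — it cites \cite{Carter,Carter2,Carter3,DR2} — so your proposal is a blind reconstruction of the classical argument. Your overall route is the standard one and matches those references in structure: you identify Carter's separability of $\rho^2(\Box_g - \mu^2_{KG})$ into radial and angular pieces, Fourier-transform in $t,\varphi$ and expand in spheroidal harmonics, rescale $u = \sqrt{r^2+a^2}\,\psi^{(a\omega)}_{m\ell}$, and recognize that the conjugation plus the passage to $r^\star$ produces $V_{\mathit{SL}} = (r^2+a^2)^{-1/2}(d/dr^\star)^2\sqrt{r^2+a^2}$ with the rest of $V$ falling out of the completed square. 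Your phase identity $e^{-i\omega t + im\varphi} = e^{-i\omega t^\star + im\varphi^\star}\,e^{i(\omega-m\omega_+)r^\star}\,e^{O(1)}$ near $r_+$ is also correct, as are the $O(1)$ estimates for $A,B$ coming from \eqref{eq: def: boyer lindquist, eq 2}.

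Two points deserve a more careful reading. First, the step ``only the first branch produces a solution regular in Kerr--de~Sitter star coordinates'' overstates what regularity alone gives you. For real $\omega$ both branches $e^{\pm i(\omega-m\omega_+)r^\star}$ are bounded, so a mere $L^2$- or pointwise-boundedness regularity condition on the Fourier coefficient does not distinguish them. What actually pins down the branch is the \emph{outgoing} hypothesis of Definition~\ref{def: subsec: sec: carter separation, subsec 0, def 1}: $\psi$ vanishes near both horizons for $t^\star$ sufficiently negative. This past-compact support, not regularity, is the input that forces the asymptotics \eqref{eq: lem: sec carters separation, subsec boundary behaviour of u, boundary beh. of u, eq 4}; your final paragraph in effect concedes this and correctly proposes a Paley--Wiener-type argument coupled with a Green's function representation (as in~\cite{Shlapentokh_Rothman_2014_mode_stability}, Section~3, also cited in Lemma~\ref{lem: subsec: sec: Carter's separation, wronskian, lem 1}) as the rigorous version. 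Second, be careful with the sign conventions: the stated boundary conditions \eqref{eq: lem: sec carters separation, subsec boundary behaviour of u, boundary beh. of u, eq 3} are consistent with a ``physicist's mode'' ansatz $\psi\sim e^{-i\omega t + im\varphi}R(r)$, whereas the forward Fourier transform in Definition~\ref{def: subsec: sec: carter separation, radial, def 1} is written with $e^{-i\omega t}e^{im\varphi}$ (whose \emph{inverse} carries $e^{+i\omega t}e^{-im\varphi}$). The two only agree up to the substitution $(\omega,m)\to(-\omega,-m)$, and this affects the sign in $u'\mp i(\omega-\omega_{\pm}m)u\to 0$. You should check that your branch selection survives the specific convention in use; as written your computation of the compensating phase is internally consistent with the stated conclusion, but the route from Definition~\ref{def: subsec: sec: carter separation, radial, def 1} to the asserted sign in \eqref{eq: lem: sec carters separation, subsec boundary behaviour of u, boundary beh. of u, eq 3} is worth verifying once explicitly, as sign errors here propagate into the energy identity \eqref{eq: subsec: energy identity, eq 1}.
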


\subsection{Parseval identities and inequalities}\label{subsec: fourier properties}

We have the following Parseval identities

\begin{proposition}\label{prop: subsec: fourier properties, prop 1}
	Let~$l>0$,~$(a,M)\in\mathcal{B}_l$ and~$\mu^2_{KG}\geq 0$. 
	
	Let~$\Psi$ be a sufficiently integrable function, see Definition~\ref{def: sec: carter separation, def 1}. Then, we have the following identities
	
	\begin{equation}\label{eq: subsec: fourier properties, eq 1}
		\begin{aligned}
			&  \int_{\mathbb{S}^2}\int_\mathbb{R}|\partial_t\Psi|^2 \sin\theta d\theta d\varphi dt=\int_\mathbb{R}\sum_{m,\ell}\omega^2|\Psi^{(a\omega)}_{m,\ell}(r)|^2 d\omega,\\
			& \int_{\mathbb{S}^2}\int_\mathbb{R}\Psi\cdot Y \sin\theta d\theta d\varphi dt =\int_{\mathbb{R}}\sum_{m,\ell}\Psi^{(a\omega)}_{m\ell}\cdot \overline{Y^{(a\omega)}_{m,\ell}}d\omega,\\
			&\int_{\mathbb{R}}\sum_{m,\ell}\lambda^{(a\omega)}_{m\ell}|\Psi^{(a\omega)}_{m,\ell}|^2d\omega = \int_{\mathbb{S}^2}\int_\mathbb{R}|^{d\sigma_{\mathbb{S}^2}}\nabla\Psi|_{d\sigma_{\mathbb{S}^2}}^2\sin\theta d\theta d\varphi dt -a^2\int_{\mathbb{S}^2}\int_\mathbb{R}|\partial_t\Psi|^2\cos^2\theta\sin\theta d\theta d\varphi dt,
		\end{aligned}
	\end{equation}
	where~$^{d\sigma_{\mathbb{S}^2}}\nabla$ is the covariant derivative of~$d\sigma_{\mathbb{S}^2}$ and~$d\sigma_{\mathbb{S}^2}$ is the standard metric of the unit sphere, see Section~\ref{subsec: unit sphere}. For~$\lambda^{(\xi)}_{m\ell}$, with~$\xi=a\omega$, see Proposition~\ref{prop: sec: carter, finster, prop 1}. In the second identity of~\eqref{eq: subsec: fourier properties, eq 1} we also assumed that the integrals are finite.

	Furthermore, there exist constants~$c(a,M,l),C(a,M,l)>0$, such that if~$\Psi$ is in addition an outgoing function, see Definition~\ref{def: subsec: sec: carter separation, subsec 0, def 1}, then for almost all~$\omega$ the radial functions~$|\Psi^{(a\omega)}_{m\ell}|^2(\pm \infty)$ are well defined for all~$m,\ell$, and we have the following identity
	\begin{equation}\label{eq: subsec: fourier properties, eq 2}
		\begin{aligned}
			&	\int_{\mathbb{R}}\int_{\mathbb{S}^2} |\partial_\varphi\Psi|^2(r_+,\theta,\varphi,t) \sin\theta d\theta d\varphi dt= \int_\mathbb{R} \sum_{m,\ell} |m\Psi^{(a\omega)}_{m\ell}(-\infty)|^2d\omega.
		\end{aligned}
	\end{equation}
\end{proposition}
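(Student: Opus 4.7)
The four identities are all Parseval/Plancherel-type identities arising from combining the time Fourier transform with the oblate spheroidal harmonic expansion provided by Proposition~\ref{prop: sec: carter, finster, prop 1}. For a sufficiently integrable $\Psi$, the plan is to denote by $\hat\Psi(\omega, r, \theta, \varphi) := \int_{\mathbb R} e^{-i\omega t}\Psi \, dt$ the time Fourier transform. By Proposition~\ref{prop: sec: carter, finster, prop 1}, $\{S^{(a\omega)}_{m\ell}(\theta)e^{-im\varphi}\}_{m,\ell}$ is a complete orthonormal basis of $L^2(\mathbb{S}^2, \sin\theta \, d\theta \, d\varphi)$. Direct inspection of Definition~\ref{def: subsec: sec: carter separation, radial, def 1} shows that, modulo a factor of $i$, the quantity $\omega \Psi^{(a\omega)}_{m\ell}(r)$ coincides with the $(m,\ell)$-coefficient of $\hat\Psi(\omega, r, \cdot, \cdot)$ in this basis; indeed, the $(-i\omega)^{-1}\partial_t$ factor in the definition cancels the $i\omega$ arising from $\widehat{\partial_t\Psi} = i\omega\hat\Psi$, so that the formula makes sense also in the limit $\omega \to 0$ relevant for $\mu_{KG}=0$.

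First I would apply Parseval on $\mathbb{S}^2$ for fixed $(\omega,r)$, then Plancherel in $t$, to produce identity (1), the $\omega^2$ weight matching $|\widehat{\partial_t\Psi}|^2$ and returning to $|\partial_t\Psi|^2$ after Plancherel. Identity (2) follows from polarization, applied to the analogous identity for $|\Psi + Y|^2$ combined with the finiteness hypothesis. For identity (3), the eigenvalue relation $P_{\mu_{KG}}(a\omega) S^{(a\omega)}_{m\ell} = \lambda^{(a\omega)}_{m\ell} S^{(a\omega)}_{m\ell}$ and orthonormality yield, for a.e.\ $\omega$,
\begin{equation*}
\sum_{m,\ell}\lambda^{(a\omega)}_{m\ell}|\Psi^{(a\omega)}_{m\ell}(r)|^2 = \langle P_{\mu_{KG}}(a\omega)\hat\Psi(\omega, r, \cdot), \hat\Psi(\omega, r, \cdot)\rangle_{L^2(\mathbb S^2)}.
\end{equation*}
Integrating by parts on $\mathbb{S}^2$ using Definition~\ref{def: sec: carter, sturm liouville, def 1}, integrating over $\omega$, and applying Plancherel to convert each power of $\omega$ into a time derivative of $\Psi$ yields the claimed physical-space expression, after simplifying using $\Xi = 1 + a^2/l^2$ and $\Delta_\theta = 1 + (a^2/l^2)\cos^2\theta$.

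For identity (4), the outgoing assumption (Definition~\ref{def: subsec: sec: carter separation, subsec 0, def 1}) combined with the boundary behavior~\eqref{eq: lem: sec carters separation, subsec boundary behaviour of u, boundary beh. of u, eq 4} of $u^{(a\omega)}_{m\ell}$ from Proposition~\ref{prop: Carters separation, radial part} ensures that, for a.e.\ $\omega$, the limit $\Psi^{(a\omega)}_{m\ell}(-\infty) = \lim_{r \to r_+}\Psi^{(a\omega)}_{m\ell}(r)$ exists. Since $\partial_\varphi = \partial_{\varphi^\star}$ acts on the basis mode $e^{-im\varphi}$ by multiplication by $-im$, the horizon-limit Fourier coefficients of $\partial_\varphi\Psi$ are $-im\,\Psi^{(a\omega)}_{m\ell}(-\infty)$, and the Parseval argument of the first step, applied at $r = r_+$, produces the factor $m^2$ on the right-hand side.

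The main obstacle is identity (4), because Boyer--Lindquist coordinates degenerate at $r = r_+$, so the integral on the left does not admit a naive definition. One must justify passing the Parseval identity on hypersurfaces $\{r = r_+ + \epsilon\}$ to the limit $\epsilon \to 0^+$. The outgoing assumption is essential: it guarantees, via the regular Kerr--de~Sitter star coordinates of Section~\ref{subsec: Kerr dS star coordinates}, that the horizon flux $\int_{\mathbb R \times \mathbb S^2}|\partial_\varphi\Psi|^2(r_+,\cdot)\sin\theta\, d\theta\, d\varphi\, dt$ is finite and equals the limiting value from the interior, and together with the pointwise-in-$\omega$ convergence $\Psi^{(a\omega)}_{m\ell}(r) \to \Psi^{(a\omega)}_{m\ell}(-\infty)$ supplied by~\eqref{eq: lem: sec carters separation, subsec boundary behaviour of u, boundary beh. of u, eq 4}, a dominated convergence argument secures the exchange of limits required to finish the proof.
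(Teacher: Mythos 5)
Your overall strategy is the standard one: for each fixed $\omega$ use the completeness of $\{S^{(a\omega)}_{m\ell}(\theta)e^{-im\varphi}\}$ in $L^2(\mathbb S^2,\sin\theta\,d\theta\,d\varphi)$ from Proposition~\ref{prop: sec: carter, finster, prop 1}, then apply Plancherel in $t$; the $(-i\omega)^{-1}\partial_t$ in Definition~\ref{def: subsec: sec: carter separation, radial, def 1} is there precisely so that $\omega\Psi^{(a\omega)}_{m\ell}$ becomes the $(m,\ell)$-coefficient of $\widehat{\partial_t\Psi}$, making the decomposition well defined even when $\Psi$ itself fails to be $L^2_t$ (the $\mu_{KG}=0$ case). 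One small inaccuracy: the cancellation gives a factor of $-1$, not $i$, since $(-i\omega)^{-1}\cdot i\omega=-1$; harmless for the squared-modulus identities but worth stating correctly. The polarization step for the second identity should be applied to the \emph{unweighted} Parseval identity $\int|\Psi|^2=\int\sum|\Psi^{(a\omega)}_{m\ell}|^2$, which is not the first identity of~\eqref{eq: subsec: fourier properties, eq 1} (that one carries an $\omega^2$ weight); this is why the extra finiteness hypothesis is needed, as you note. The treatment of identity~\eqref{eq: subsec: fourier properties, eq 2} via the outgoing hypothesis and the boundary asymptotics~\eqref{eq: lem: sec carters separation, subsec boundary behaviour of u, boundary beh. of u, eq 4} is the right mechanism.

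The weak point is the third identity. You assert that integrating by parts against the quadratic form of $P_{\mu_{KG}}(a\omega)$ from Definition~\ref{def: sec: carter, sturm liouville, def 1} ``yields the claimed physical-space expression, after simplifying using $\Xi$ and $\Delta_\theta$,'' but the quadratic form actually reads, for a single Fourier mode,
\begin{equation*}
\int_{\mathbb S^2}\left(\Delta_\theta|\partial_\theta\hat\Psi|^2+\frac{\Xi^2m^2}{\Delta_\theta\sin^2\theta}|\hat\Psi|^2-\frac{\Xi\,a^2\omega^2\cos^2\theta}{\Delta_\theta}|\hat\Psi|^2+\frac{2a\omega m\,\Xi\,(a^2/l^2)\cos^2\theta}{\Delta_\theta}|\hat\Psi|^2+\mu_{KG}^2a^2\sin^2\theta|\hat\Psi|^2\right)\sin\theta\,d\theta\,d\varphi,
\end{equation*}
which carries $\Delta_\theta$, $\Xi^2/\Delta_\theta$ and $\Xi/\Delta_\theta$ weights, a cross term proportional to $a\omega m\cdot a^2/l^2$, and the $\mu_{KG}^2$ term. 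None of these disappear after ``simplifying,'' and the right-hand side of the third identity of~\eqref{eq: subsec: fourier properties, eq 1} has no trace of them: that expression is what one obtains only in the Kerr limit $\Xi=\Delta_\theta=1$, $a^2/l^2=0$, with $\mu_{KG}=0$. So either you must reproduce the exact Kerr--de~Sitter physical-space expression with all of the $\Delta_\theta$, $\Xi$, cross, and $\mu_{KG}^2$ terms (in which case your proof as written is incomplete), or you must argue that the stated identity is to be read as a comparison rather than an exact equality (which the remark following the proposition, with its use of $\sim$, seems to suggest). Either way, a single sentence saying that integration by parts ``yields the claimed expression'' does not close this gap; you should write out the integration by parts and confront the discrepancy explicitly.
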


\begin{remark}
	We note that~$\int_{\mathbb{S}^2}\int_\mathbb{R}|^{d\sigma_{\mathbb{S}^2}}\nabla\Psi|_{d\sigma_{\mathbb{S}^2}}^2\sin\theta d\theta d\varphi dt \sim \int_{\mathbb{S}^2}\int_\mathbb{R}|\slashed{\nabla}\Psi|^2\sin\theta d\theta d\varphi dt$.
\end{remark}

\subsection{Energy identity for outgoing solutions~\texorpdfstring{$u$}{u} of~\eqref{eq: ode from carter's separation}}\label{subsec: energy identity}

Note the following

\begin{proposition}\label{prop: subsec: energy identity, prop 1}
Let~$l>0$,~$(a,M)\in\mathcal{B}_l$ and~$\mu^2_{KG}\geq 0$. Let~$(\omega,m,\ell)\in\mathbb{R}\times\bigcup_{m\in\mathbb{Z}}\left(\{m\}\times\mathbb{Z}_{\geq |m|}\right)$. Let~$u$ be a smooth solution of Carter's radial ode~\eqref{eq: ode from carter's separation} which moreover satisfies the boundary conditions~\eqref{eq: lem: sec carters separation, subsec boundary behaviour of u, boundary beh. of u, eq 3}, in the sense of~\eqref{eq: lem: sec carters separation, subsec boundary behaviour of u, boundary beh. of u, eq 4}.

Then, we have the following 
\begin{equation}\label{eq: subsec: energy identity, eq 1}
\left(\omega-\frac{am\Xi}{\bar{r}_+^2+a^2}\right)|u|^2(\infty)+\left(\omega-\frac{am\Xi}{r_+^2+a^2}\right)|u|^2(-\infty)= \Im (\bar{u}H).
\end{equation}
\end{proposition}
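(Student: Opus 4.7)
The plan is to derive the identity via the standard multiplier trick for Carter's radial ode: multiply~\eqref{eq: ode from carter's separation} by~$\bar{u}$, take the imaginary part (killing the real potential), integrate once over~$r^\star\in(-\infty,\infty)$, and then evaluate the resulting boundary terms using the outgoing conditions~\eqref{eq: lem: sec carters separation, subsec boundary behaviour of u, boundary beh. of u, eq 3}.

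First I would multiply~\eqref{eq: ode from carter's separation} by~$\bar{u}$ to obtain
\begin{equation*}
\bar{u}\,u'' + (\omega^2 - V)\,|u|^2 \;=\; \bar{u}\,H.
\end{equation*}
Since~$V=V_0+V_{\textit{SL}}+V_{\mu_{\textit{KG}}}$ is real-valued for real frequencies~$(\omega,m,\ell)$ (inspect~\eqref{eq: the potentials V0,VSl,Vmu}; all building blocks~$\Delta$,~$\lambda^{(a\omega)}_{m\ell}$,~$\Xi$,~$r$,~$a$,~$m$,~$\omega$ are real), the second term on the left-hand side is real and disappears upon taking imaginary parts. Writing~$\Im(\bar{u}u'') = \Im\bigl((\bar{u}u')'\bigr) - \Im(|u'|^2) = \Im\bigl((\bar{u}u')'\bigr)$, the identity reduces to
\begin{equation*}
\frac{d}{dr^\star}\,\Im(\bar{u}\,u') \;=\; \Im(\bar{u}\,H).
\end{equation*}

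Next I would integrate this pointwise identity over~$r^\star\in(-\infty,\infty)$, obtaining
\begin{equation*}
\Im(\bar{u}\,u')(+\infty) \;-\; \Im(\bar{u}\,u')(-\infty) \;=\; \int_{-\infty}^{+\infty} \Im(\bar{u}\,H)\,dr^\star \;=\; \Im(\bar{u}H),
\end{equation*}
with the last equality interpreted in the shorthand sense of the statement. The boundary values are then computed from~\eqref{eq: lem: sec carters separation, subsec boundary behaviour of u, boundary beh. of u, eq 3}: at~$r^\star=-\infty$ we have~$u'=-i(\omega-\omega_+ m)u$, hence~$\bar{u}u'=-i(\omega-\omega_+ m)|u|^2$ and~$\Im(\bar{u}u')(-\infty)=-(\omega-\omega_+ m)|u|^2(-\infty)$; at~$r^\star=+\infty$, similarly,~$\Im(\bar{u}u')(+\infty)=(\omega-\bar{\omega}_+ m)|u|^2(+\infty)$. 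Substituting back and recalling~$\omega_+=\tfrac{a\Xi}{r_+^2+a^2}$ and~$\bar{\omega}_+=\tfrac{a\Xi}{\bar{r}_+^2+a^2}$ yields precisely~\eqref{eq: subsec: energy identity, eq 1}.

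The main (mild) technical point to justify is that the boundary limits of~$\Im(\bar{u}u')$ exist and agree with the computed values; here the boundary conditions in Proposition~\ref{prop: Carters separation, radial part} are formulated in the limiting sense of~\eqref{eq: lem: sec carters separation, subsec boundary behaviour of u, boundary beh. of u, eq 4}, and the existence of~$|u|^2(\pm\infty)$ is asserted at the end of that proposition. Combining these two facts, one has
\begin{equation*}
\lim_{r^\star\to\pm\infty} \Im(\bar{u}\,u') \;=\; \lim_{r^\star\to\pm\infty}\Im\bigl(\bar{u}\cdot(\mp i(\omega-\omega_\pm m))u\bigr) \;=\; \mp(\omega-\omega_\pm m)|u|^2(\pm\infty),
\end{equation*}
which is all that is needed to close the argument. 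No step here is a substantial obstacle; the entire identity is a direct energy computation, tailored to exhibit the superradiant sign structure of the horizon terms discussed in Section~\ref{subsec: sec: intro: subsec 1.1}.
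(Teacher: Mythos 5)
Your proof is correct and takes essentially the same approach as the paper: multiply the radial ODE by~$\bar{u}$, take the imaginary part to kill the real potential, integrate once in~$r^\star$, and read off the boundary contributions from the outgoing asymptotics~\eqref{eq: lem: sec carters separation, subsec boundary behaviour of u, boundary beh. of u, eq 3}. Your write-up spells out the steps (taking the imaginary part, justifying the boundary limits) that the paper's one-line proof leaves implicit.
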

\begin{proof}
We multiply the radial ode~\eqref{eq: ode from carter's separation} with~$\bar{u}$ and after integration by parts we obtain the desired result and the fact that we may rewrite the asymptotic formulas~\eqref{eq: lem: sec carters separation, subsec boundary behaviour of u, boundary beh. of u, eq 3} as follows 
\begin{equation}
u^\prime=- i\left(\omega-\frac{am\Xi}{r_+^2+a^2}\right) u,\qquad u^\prime=i \left(\omega-\frac{am\Xi}{\bar{r}_+^2+a^2}\right) u,
\end{equation}
at~$r^\star=-\infty,+\infty$ respectively.
\end{proof}

\subsection{The Wronskian and representation formulas for the inhomogeneous Carter radial ode}\label{subsec: sec: Carter's separation, wronskian}

Let~$l>0$,~$(a,M)\in\mathcal{B}_l$ and~$\mu^2_{KG}\geq 0$. Let~$\omega\neq \omega_+ m,~\omega\neq \bar{\omega}_+ m$, where~$\omega_+=\frac{a\Xi}{r_+^2+a^2},\bar{\omega}_+=\frac{a\Xi}{\bar{r}_+^2+a^2}$. Moreover, let
\begin{equation}\label{eq: subsec: sec: Carter's separation, wronskian, eq 2}
(u_{\mathcal{H}^+})^{(a\omega)}_{m\ell},\qquad (u_{\bar{\mathcal{H}}^+})^{(a\omega)}_{m\ell}
\end{equation}
simply denoted as~$u_{\mathcal{H}^+},~ u_{\bar{\mathcal{H}}^+}$, be the solutions of the homogeneous radial ode
\begin{equation}\label{eq: subsec: sec: Carter's separation, wronskian, eq 1}
	u^{\prime\prime}+(\omega^2-V)u=0,
\end{equation}
that satisfy respectively the outgoing asymptotics
\begin{equation}\label{eq: subsec: sec: Carter's separation, wronskian, eq 1.1}
u_{\mathcal{H}^+}\sim (r-r_+)^{\eta_{\mathcal{H}^+}},\qquad u_{\bar{\mathcal{H}}^+}\sim (\bar{r}_+-r)^{\eta_{\bar{\mathcal{H}}^+}} 
\end{equation}
where 
\begin{equation}
\eta_{\mathcal{H}^+}=-\frac{i\left(\omega-\omega_+m\right)}{2\kappa_{\mathcal{H}^+}},\qquad \eta_{\bar{\mathcal{H}}^+}= \frac{i\left(\omega-\bar{\omega}_+m\right)}{2\kappa_{\bar{\mathcal{H}}^+}},
\end{equation}
where note the surface gravities~$\kappa_{\mathcal{H}^+}=\frac{1}{2}\Big|\frac{d\Delta}{dr}\Big|(r_+),\:\:\kappa_{\bar{\mathcal{H}}^+}=\frac{1}{2}\Big|\frac{d\Delta}{dr}\Big|(\bar{r}_+)$. For the existence of the solutions~\eqref{eq: subsec: sec: Carter's separation, wronskian, eq 2} see the book of Olver~\cite{OlverAsymptotics}.

\begin{definition}\label{def: subsec: sec: Carter's separation, wronskian, def 1}
	Let~$l>0$,~$(a,M)\in \mathcal{B}_l$ and~$\mu^2_{KG}\geq 0$. Moreover, let~$(\omega,m,\ell)\in \mathbb{R}\times \bigcup_{m\in\mathbb{Z}}\{m\}\times \mathbb{Z}_{\geq |m|}$, where moreover~$\omega\neq \omega_+ m,\omega\neq \bar{\omega}_+ m$. We define the Wronskian associated to the homogeneous radial ode~\eqref{eq: subsec: sec: Carter's separation, wronskian, eq 1} to be 
\begin{equation}\label{eq: def: subsec: sec: Carter's separation, wronskian, def 1, eq 1}
\mathcal{W}(a,M,l,\mu_{KG},\omega,m,\ell)=\mathcal{W}[u_{\mathcal{H}^+},u_{\bar{\mathcal{H}}^+}](\omega,m,\ell)= u_{\bar{\mathcal{H}}^+}^\prime u_{\mathcal{H}^+}-u_{\bar{\mathcal{H}}^+}u_{\mathcal{H}^+}^\prime, 
\end{equation}
where we note that the RHS of~\eqref{eq: def: subsec: sec: Carter's separation, wronskian, def 1, eq 1} is independent of~$r^\star$. 
\end{definition}

Let~$\omega\in\mathbb{R},~m\in\mathbb{Z},~\ell\in \mathbb{Z}_{\geq |m|},~\omega\neq \omega_+m,~\omega\neq \bar{\omega}_+m $. We have that a non-trivial real $(\omega,m,\ell)$--frequency mode solution of Carter's radial ode~\eqref{eq: ode from carter's separation} exists if and only if we have~$\mathcal{W}[u_{\mathcal{H}^+},u_{\bar{\mathcal{H}}^+}](\omega,m,\ell)=0$.

\begin{remark}
In our discussion above on real~$(\omega,m,\ell)$-frequency mode solutions we have excluded the case~$\omega=m=0$ where in fact trivial mode solutions do exist in the case~$\mu_{KG}=0$, and they correspond to constant solutions of the wave equation~\eqref{eq: kleingordon}~(with~$\mu_{KG}=0$). These frequencies are treated with the main estimate of Section~\ref{subsubsec: sec: bounded: stationary, 1}, namely Proposition~\ref{prop: energy estimate for the bounded stationary frequencies, 1}. Due to the existence of these constant solutions we have that in Proposition~\ref{prop: energy estimate for the bounded stationary frequencies, 1} we do not control the zeroth order term of the solution of Carter's radial ode, but only control derivative terms of the solution of Carter's radial ode.

Moreover, in the definition of the Wronskain, see Definition~\ref{def: subsec: sec: Carter's separation, wronskian, def 1}, and in our discussion above on real~$(\omega,m,\ell)$ frequency mode solutions we have, more generally, excluded the frequencies~$\omega~=~\omega_+m$, $\omega=\bar{\omega}_+ m$, since we did not uniquely define the solutions~$u_{\mathcal{H}^+},~u_{\bar{\mathcal{H}}^+}$ for those frequency cases. Furthermore, a benefit of excluding those frequencies is that in Proposition~\ref{prop: subsec: sec: Carter's separation, wronskian, prop 1} we can prove that the Wronskian is a continuous function in~$\omega$ in a straightforward manner. Finally, we prove energy estimates for the bounded frequency regimes~$\{0\leq |\omega-\omega_+ m|\leq \epsilon\}\cup \{0\leq |\omega-\bar{\omega}_+m |\leq \epsilon\}$, for a sufficiently small~$\epsilon>0$, see Proposition~\ref{prop: energy estimate in the bounded non stationary frequency regime}. 
\end{remark}

We have the following proposition

\begin{proposition}\label{prop: subsec: sec: Carter's separation, wronskian, prop 1}
	Let~$l>0$,~$(a,M)\in \mathcal{B}_l$ and~$\mu^2_{KG}\geq 0$. Moreover, let~$(\omega,m,\ell)\in \mathbb{R}\times \bigcup_{m\in\mathbb{Z}}\{m\}\times \mathbb{Z}_{\geq |m|}$, where moreover~$\omega\neq \omega_+ m,~\omega\neq \bar{\omega}_+ m$. Then, the Wronskian~$\mathcal{W}(a,M,l,\mu_{KG},\cdot,m,\ell)$, see Definition~\ref{def: subsec: sec: Carter's separation, wronskian, def 1}, is a continuous function in~$\mathbb{R}\setminus\{\omega_+ m,\bar{\omega}_+m\}$. 
\end{proposition}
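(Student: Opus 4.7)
The plan is to establish continuity of $\mathcal{W}$ via continuous dependence of the fundamental solutions $u_{\mathcal{H}^+}$ and $u_{\bar{\mathcal{H}}^+}$ on the parameter $\omega$, away from the singular values $\omega_+ m$ and $\bar{\omega}_+ m$. I would exploit the fact that, away from these values, the pair of outgoing boundary conditions selects a unique (up to trivial normalization) solution near each horizon, and that this selection depends continuously on $\omega$.

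First, I would analyze $u_{\mathcal{H}^+}$ near $r=r_+$, where $r^\star\to -\infty$. Using the explicit form of $V$ from \eqref{eq: the potentials V0,VSl,Vmu} together with the simple vanishing of $\Delta$ at $r_+$, one verifies that $\omega^2 - V(r^\star,\omega,m,\ell) = (\omega - \omega_+ m)^2 + E(r^\star,\omega,m,\ell)$ where $E$ decays exponentially in $-r^\star$ as $r^\star \to -\infty$, with $E$ and $\partial_\omega E$ bounded uniformly in $\omega$ on compact subsets of $\mathbb{R}$. I would then reformulate the outgoing-solution condition \eqref{eq: subsec: sec: Carter's separation, wronskian, eq 1.1} as the Volterra integral equation obtained by writing $u_{\mathcal{H}^+}(r^\star) = e^{-i(\omega-\omega_+ m)r^\star} + \int_{-\infty}^{r^\star} G(r^\star,s;\omega-\omega_+ m)\, E(s)\, u_{\mathcal{H}^+}(s)\, ds$, where $G$ is the Green's function of the constant-coefficient equation $u'' + (\omega-\omega_+ m)^2 u = 0$. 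The factor $(\omega-\omega_+ m)^{-1}$ appearing in $G$ is bounded on any compact subset of $\mathbb{R}\setminus\{\omega_+ m\}$, and a standard contraction-mapping argument on the weighted space of bounded functions on $(-\infty, r^\star_0]$ yields $u_{\mathcal{H}^+}$ together with its $r^\star$-derivative at any fixed reference point $r_0 \in (r_+,\bar{r}_+)$, with continuous (in fact analytic) dependence on $\omega \in \mathbb{R}\setminus\{\omega_+ m\}$.

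Second, I would carry out the analogous construction for $u_{\bar{\mathcal{H}}^+}$ near $r=\bar{r}_+$, yielding continuous dependence of $u_{\bar{\mathcal{H}}^+}(r_0), u'_{\bar{\mathcal{H}}^+}(r_0)$ on $\omega\in\mathbb{R}\setminus\{\bar{\omega}_+ m\}$ by a symmetric argument at the cosmological horizon. Since the Wronskian in Definition~\ref{def: subsec: sec: Carter's separation, wronskian, def 1} is constant along $r^\star$, we may evaluate
\begin{equation*}
    \mathcal{W}(a,M,l,\mu_{KG},\omega,m,\ell) = u'_{\bar{\mathcal{H}}^+}(r_0)\, u_{\mathcal{H}^+}(r_0) - u_{\bar{\mathcal{H}}^+}(r_0)\, u'_{\mathcal{H}^+}(r_0),
\end{equation*}
and the right-hand side is a continuous function of $\omega\in\mathbb{R}\setminus\{\omega_+ m,\bar{\omega}_+ m\}$ as a polynomial combination of continuous functions of $\omega$.

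The main obstacle is the Volterra construction near each horizon, where one must control the normalized kernel uniformly in $\omega$ on compact sets disjoint from the singular frequency. The uniform bound on $(\omega-\omega_+ m)^{-1}$ fails precisely at $\omega=\omega_+ m$, which corresponds to the coalescence of the two indicial roots of the regular singular point at $r=r_+$; this is exactly why the exclusion in the statement is necessary. Away from these two values, the rest is standard: uniform exponential decay of $E(s,\omega)$ in $-s$ ensures the integral operator is a contraction on a suitable weighted space, giving both existence and continuous dependence in one stroke, and continuous dependence at the reference point propagates across $(r_+,\bar{r}_+)$ by classical continuous dependence of regular ODE solutions on parameters.
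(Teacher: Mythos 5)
Your proof is correct and follows the same route as the paper's, which simply asserts in one line that $u_{\mathcal{H}^+}$ and $u_{\bar{\mathcal{H}}^+}$ depend continuously on $\omega$ away from $\omega_+ m$ and $\bar{\omega}_+ m$ (citing Olver for the construction of these solutions); you supply the standard Volterra/contraction argument that underlies that assertion. The only cosmetic discrepancy is your normalization $u_{\mathcal{H}^+}\sim e^{-i(\omega-\omega_+ m)r^\star}$ versus the paper's $(r-r_+)^{\eta_{\mathcal{H}^+}}$ from \eqref{eq: subsec: sec: Carter's separation, wronskian, eq 1.1}, which differ by a factor that is itself continuous and nonvanishing in $\omega$, so the conclusion is unaffected.
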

\begin{proof}
	This follows easily since we have that for~$\omega\neq \omega _+ m$ and~$\omega\neq \bar{\omega}_+ m$ the solutions~$u_{\mathcal{H}^+},u_{\bar{\mathcal{H}}^+}$, see~\eqref{eq: subsec: sec: Carter's separation, wronskian, eq 1.1}, depend continuously on~$\omega$ in the desired domain.
\end{proof}

Note the following Lemma.

\begin{lemma}\label{lem: subsec: sec: Carter's separation, wronskian, lem 1}
Let~$l>0$,~$(a,M)\in\mathcal{B}_l$ and~$\mu^2_{\textit{KG}}\geq 0$. Let~$(\omega,m,\ell)$ with~$\omega\neq \omega_+ m ,\omega\neq \bar{\omega}_+ m $ be such that
\begin{equation}
|\mathcal{W}^{-1}(\omega,m,\ell)|< +\infty
\end{equation}
where for the Wronskian see Definition~\ref{def: subsec: sec: Carter's separation, wronskian, def 1}.

Then, for smooth solutions of Carter's separation of variables, see Proposition~\ref{prop: Carters separation, radial part}, that moreover satisfy the outgoing boundary conditions~\eqref{eq: lem: sec carters separation, subsec boundary behaviour of u, boundary beh. of u, eq 3}, in the sense of~\eqref{eq: lem: sec carters separation, subsec boundary behaviour of u, boundary beh. of u, eq 4}, we obtain the following representation formula
\begin{equation}\label{eq: subsec: sec: Carter's separation, wronskian, eq 3}
u(r^\star)=\mathcal{W}^{-1}\cdot\left(u_{\bar{\mathcal{H}^+}}(r^\star)\int_{-\infty}^{r^\star} u_{\mathcal{H}^+}(x^\star) H(x^\star) dx^\star+u_{\mathcal{H}^+}(r^\star)\int_{r^\star}^{+\infty} u_{\bar{\mathcal{H}^+}}(x^\star)H(x^\star)  \right),
\end{equation}
where the equality~\eqref{eq: subsec: sec: Carter's separation, wronskian, eq 3} holds pointwise; for~$u_{\mathcal{H}^+},u_{\bar{\mathcal{H}}^+}$ see~\eqref{eq: subsec: sec: Carter's separation, wronskian, eq 1.1}.
\end{lemma}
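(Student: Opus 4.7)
The plan is to verify the claimed formula by the standard variation of parameters argument, splitting it into (i) checking that the right hand side solves Carter's inhomogeneous ODE, (ii) checking that it satisfies the correct outgoing boundary conditions at $r^\star=\pm\infty$, and (iii) invoking a uniqueness statement that relies on $\mathcal{W}\neq 0$.

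First, I would denote
\begin{equation*}
\tilde u(r^\star) := \mathcal{W}^{-1}\left(u_{\bar{\mathcal{H}}^+}(r^\star)\int_{-\infty}^{r^\star}u_{\mathcal{H}^+}H\,dx^\star + u_{\mathcal{H}^+}(r^\star)\int_{r^\star}^{+\infty}u_{\bar{\mathcal{H}}^+}H\,dx^\star\right),
\end{equation*}
and differentiate in $r^\star$. The key cancellation
\begin{equation*}
u_{\bar{\mathcal{H}}^+}(r^\star) u_{\mathcal{H}^+}(r^\star) H(r^\star) - u_{\mathcal{H}^+}(r^\star) u_{\bar{\mathcal{H}}^+}(r^\star)H(r^\star) = 0
\end{equation*}
makes the first derivative $\tilde u' = \mathcal{W}^{-1}(u'_{\bar{\mathcal{H}}^+}\int_{-\infty}^{r^\star}u_{\mathcal{H}^+}H + u'_{\mathcal{H}^+}\int_{r^\star}^{+\infty}u_{\bar{\mathcal{H}}^+}H)$. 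Differentiating once more produces a boundary contribution proportional to $\mathcal{W}^{-1}(u'_{\bar{\mathcal{H}}^+}u_{\mathcal{H}^+} - u'_{\mathcal{H}^+}u_{\bar{\mathcal{H}}^+})H = H$, plus two terms that, using $u_{\mathcal{H}^+}'' = (V-\omega^2)u_{\mathcal{H}^+}$ and the analogous identity for $u_{\bar{\mathcal{H}}^+}$, reassemble into $(V-\omega^2)\tilde u$. This yields $\tilde u'' + (\omega^2-V)\tilde u = H$.

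Next, I would check the outgoing conditions. As $r^\star\to-\infty$, the integral $\int_{-\infty}^{r^\star}u_{\mathcal{H}^+}H\,dx^\star\to 0$ by the sufficient integrability of $H$ inherited from $\psi$ being outgoing (which ensures $H$ decays appropriately near the horizons, making $\int_{-\infty}^{+\infty}|u_{\mathcal{H}^+}H|+|u_{\bar{\mathcal{H}}^+}H|$ finite), so $\tilde u(r^\star)\sim \mathcal{W}^{-1}u_{\mathcal{H}^+}(r^\star)\int_{-\infty}^{+\infty}u_{\bar{\mathcal{H}}^+}H$; since $u_{\mathcal{H}^+}$ satisfies the purely outgoing asymptotics at $-\infty$ from \eqref{eq: subsec: sec: Carter's separation, wronskian, eq 1.1}, so does $\tilde u$. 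The symmetric argument at $r^\star\to+\infty$ uses that $u_{\bar{\mathcal{H}}^+}$ carries the outgoing behaviour there.

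Finally, for uniqueness, suppose $v=u-\tilde u$ solves the homogeneous ODE and satisfies the outgoing conditions \eqref{eq: lem: sec carters separation, subsec boundary behaviour of u, boundary beh. of u, eq 3} at both $\pm\infty$. Since $\{u_{\mathcal{H}^+},u_{\bar{\mathcal{H}}^+}\}$ is a basis for the solution space of \eqref{eq: subsec: sec: Carter's separation, wronskian, eq 1} (guaranteed by $\mathcal{W}\neq 0$), we write $v=c_1 u_{\mathcal{H}^+}+c_2 u_{\bar{\mathcal{H}}^+}$. The outgoing condition at $-\infty$ forces $c_2=0$ (otherwise $v$ picks up the ingoing component of $u_{\bar{\mathcal{H}}^+}$ at $-\infty$, which is nontrivial precisely because $\mathcal{W}\neq 0$), and similarly the outgoing condition at $+\infty$ forces $c_1=0$, hence $v\equiv 0$. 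The mildly subtle step is this uniqueness argument, but everything else is a routine variation of parameters computation; the convergence of the integrals at $\pm\infty$, which is the only analytic point to be careful about, is immediate from the sufficient integrability/outgoing hypotheses on $\psi$ feeding into $H$ via \eqref{eq: ode from carter's separation}.
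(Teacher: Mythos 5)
Your proof is correct and is the same argument the paper invokes: the paper simply defers to the classic variation-of-parameters argument in Shlapentokh-Rothman (Section 3 of the cited mode-stability paper), and you have spelled out what that argument amounts to, verifying that the right-hand side solves the inhomogeneous ODE, satisfies the outgoing conditions, and coincides with $u$ by the uniqueness argument that relies on $\mathcal{W}\neq 0$ (so that $u_{\bar{\mathcal{H}}^+}$ has a nontrivial ingoing component at $-\infty$ and $u_{\mathcal{H}^+}$ one at $+\infty$, forcing $c_1=c_2=0$).
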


\begin{proof}
Follow verbatim the arguments of Shlapentokh-Rothman~[\cite{Shlapentokh_Rothman_2014_mode_stability}, Section 3], which rely on a classic variation of parameters argument. 
\end{proof}

Moreover, we have the following on Schwarzschild--de~Sitter

\begin{proposition}\label{prop: subsec: energy identity, prop 2}
	Let~$l>0$,~$(0,M)\in \mathcal{B}_l$ and~$\mu^2_{KG}\geq 0$. Then, for any~$\omega\in\mathbb{R}\setminus\{0\},~m\in\mathbb{Z},~\ell\in \mathbb{Z}_{\geq |m|}$ we have that~$\mathcal{W}(\omega,m,\ell)\neq 0$. 
\end{proposition}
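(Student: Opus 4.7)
The plan is to run a contradiction argument based entirely on the boundary energy identity of Proposition~\ref{prop: subsec: energy identity, prop 1}, exploiting the fact that in the Schwarzschild--de~Sitter case $a=0$ the superradiant frequency set $\mathcal{SF}$ is empty, which makes this identity coercive for every real $\omega\neq 0$.

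Concretely, I would assume for contradiction that $\mathcal{W}(\omega,m,\ell)=0$ for some $\omega\in\mathbb{R}\setminus\{0\}$. Since $a=0$ forces $\omega_+=\bar{\omega}_+=0$, the exclusions $\omega\neq \omega_+ m$ and $\omega\neq \bar{\omega}_+ m$ appearing in Definition~\ref{def: subsec: sec: Carter's separation, wronskian, def 1} reduce to $\omega\neq 0$, so the Wronskian is well defined and the hypothesis $\mathcal{W}=0$ says that $u_{\mathcal{H}^+}$ and $u_{\bar{\mathcal{H}}^+}$ of~\eqref{eq: subsec: sec: Carter's separation, wronskian, eq 2} are proportional. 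Setting $u:=u_{\mathcal{H}^+}$, this $u$ then solves the homogeneous radial ode (i.e.\ $H=0$) and satisfies the outgoing boundary conditions~\eqref{eq: lem: sec carters separation, subsec boundary behaviour of u, boundary beh. of u, eq 3} at both $r^\star=\pm\infty$.

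The next step is to extract the limits $|u|^2(\pm\infty)$ from the Frobenius asymptotics~\eqref{eq: subsec: sec: Carter's separation, wronskian, eq 1.1}. Since $a=0$, the indicial exponents become $\eta_{\mathcal{H}^+}=-i\omega/(2\kappa_{\mathcal{H}^+})$ and $\eta_{\bar{\mathcal{H}}^+}=i\omega/(2\kappa_{\bar{\mathcal{H}}^+})$, which are purely imaginary. Hence $|u_{\mathcal{H}^+}|\to 1$ as $r\to r_+$ (and analogously at $\bar{r}_+$), so the boundary terms $|u|^2(\pm\infty)$ are both strictly positive constants. Now plugging $a=0$ and $H=0$ into the identity of Proposition~\ref{prop: subsec: energy identity, prop 1} collapses it to
\begin{equation*}
\omega\bigl(|u|^2(\infty)+|u|^2(-\infty)\bigr)=0,
\end{equation*}
which, since $\omega\neq 0$, forces $|u|^2(\pm\infty)=0$, contradicting the asymptotic computation.

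The whole proof is therefore a one-line consequence of the fact that the boundary energy identity is sign-definite away from $\mathcal{SF}$ and $\mathcal{SF}=\emptyset$ when $a=0$; this is exactly the phenomenon discussed in Section~\ref{subsec: sec: intro: subsec 1.1}. I do not anticipate a genuine obstacle, only the mild bookkeeping of verifying that the Frobenius-type exponents are purely imaginary in the Schwarzschild--de~Sitter case, so that $|u|^2(\pm\infty)$ are indeed nonzero limits consistent with the limit form~\eqref{eq: lem: sec carters separation, subsec boundary behaviour of u, boundary beh. of u, eq 4} of the boundary conditions.
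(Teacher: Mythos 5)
Your proof is correct and follows essentially the same route as the paper's: both apply the boundary energy identity of Proposition~\ref{prop: subsec: energy identity, prop 1} with $a=0$ and $H=0$, where the absence of superradiance makes the left-hand side sign-definite, to force the boundary fluxes $|u|^2(\pm\infty)$ to vanish and reach a contradiction. Your explicit appeal to the purely imaginary Frobenius exponents of~\eqref{eq: subsec: sec: Carter's separation, wronskian, eq 1.1} simply spells out what the paper compresses into the remark that $u\equiv 0$ ``follows easily'' from the vanishing boundary limits.
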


\begin{proof}
	From Proposition~\ref{prop: subsec: energy identity, prop 1} we have that 
	\begin{equation}
		\omega |u|^2(\infty)+\omega|u|^2(-\infty)= 0,
	\end{equation}
	where~$u$ is a smooth solution of Carter's homogeneous radial ode~\eqref{eq: ode from carter's separation}, from which it follows easily that~$u\equiv 0$. 
\end{proof}

\subsection{The operator~\texorpdfstring{$\mathcal{P}_{trap}$}{g}}

Let~$\Psi$ be a sufficiently integrable function, see Definition~\ref{def: sec: carter separation, def 1}. We define the following operator
\begin{equation}\label{eq: sec: main theorems, eq 1.1}
	\begin{aligned}
		\mathcal{P}_{\textit{trap}}[\Psi] &	= \frac{1}{\sqrt{2\pi}} \int_{\mathbb{R}}\sum_{m,\ell}\left|1-\frac{r_{\textit{trap}}}{r}\right|e^{i\omega t}\Psi^{(a\omega)}_{m\ell}(r)S^{(a\omega)}_{m\ell}(\theta)e^{- i m\phi}d\omega,
	\end{aligned}
\end{equation}
where~$r_{trap}(\omega,m,\ell)\in (R^-,R^+)\cup\{0\}\subsetneq (r_+,\bar{r}_+)\cup \{0\}$, see already Theorem~\ref{main theorem 1} and Theorem~\ref{thm: sec: proofs of the main theorems, thm 3}.

Furthermore, we have
\begin{equation}\label{eq: sec: main theorems, eq 1.0.1}
	\begin{aligned}
		\partial_t\mathcal{P}_{\textit{trap}}[\Psi]:=&	\frac{1}{\sqrt{2\pi}}\int_{\mathbb{R}}\sum_{m,\ell}\left|1-\frac{r_{\textit{trap}}}{r}\right|e^{i\omega t}i\omega\Psi^{(a\omega)}_{m\ell}(r)S^{(a\omega)}_{m\ell}(\theta)e^{- i m\phi}d\omega, \\		
		|\slashed{\nabla}\mathcal{P}_{\textit{trap}}[\Psi]|:=& \left|\frac{1}{\sqrt{2\pi}}\int_{\mathbb{R}}\sum_{m,\ell}\left|1-\frac{r_{\textit{trap}}}{r}\right|e^{i\omega t}\Psi^{(a\omega)}_{m\ell}(r)\slashed{\nabla}\left(S^{(a\omega)}_{m\ell}(\theta)e^{- i m\phi}\right)d\omega\right|.
	\end{aligned}
\end{equation}

We record a global Parseval identity and an estimate
\begin{equation}\label{eq: sec: main theorems, eq 1.0.2}
	\begin{aligned}
		\int_{\mathbb{R}} \int_{\mathbb{R}}\int_{\mathbb{S}^2}
		|\partial_t\mathcal{P}_{trap}[\Psi]|^2 \sin\theta d\theta d\varphi dr^\star dt &= \int_{\mathbb{R}}\int_{\mathbb{R}}\sum_{m,\ell} \omega^2\left(1-\frac{r_{trap}}{r}\right)^2|\Psi^{(a\omega)}_{m\ell}|^2  dr^\star d\omega,\\
		\int_{\mathbb{R}} \int_{\mathbb{R}}\int_{\mathbb{S}^2}
		|\slashed{\nabla}\mathcal{P}_{trap}[\Psi]|^2 \sin\theta d\theta d\varphi dr^\star dt &\leq  B \int_{\mathbb{R}}\int_{\mathbb{R}}\sum_{m,\ell} \omega^2\left(1-\frac{r_{trap}}{r}\right)^2|\Psi^{(a\omega)}_{m\ell}|^2  dr^\star d\omega \\
		&	\qquad\qquad +B \int_{\mathbb{R}}\int_{\mathbb{R}}\sum_{m,\ell} \lambda^{(a\omega)}_{m\ell}\left(1-\frac{r_{trap}}{r}\right)^2|\Psi^{(a\omega)}_{m\ell}|^2  dr^\star d\omega 
	\end{aligned}
\end{equation}
also see the Section~\ref{subsec: fourier properties} on Parseval identities.

Since~$\Psi$ is sufficiently integrable we have that 
\begin{equation}
	\sup_{r\in [r_+,\bar{r}_+]} \int_{\mathbb{R}}\int_{\mathbb{S}^2} \left(\mu^2_{KG}|\mathcal{P}_{trap}[\Psi]|^2 + \sum_{1\leq i_1+i_2\leq j} |\slashed{\nabla}^{i_1}\partial_t^{i_2}\mathcal{P}_{trap}[\Psi]|^2\right)<\infty.
\end{equation}

\subsection{The superradiant frequencies~\texorpdfstring{$\mathcal{SF}$}{g}}\label{subsec: sec: frequencies, subsec 1}

We need the following

\begin{definition}\label{def: subsec: sec: frequencies, subsec 1, def 1}
	Let~$l>0$ and~$(a,M)\in\mathcal{B}_l$. We define the superradiant frequencies as
	\begin{equation}
		\mathcal{SF}=\{(\omega,m):~am\omega\in\left(\frac{a^2m^2\Xi}{\bar{r}_+^2+a^2},\frac{a^2m^2\Xi}{r_+^2+a^2}\right)\}
	\end{equation}
	or equivalently~$\mathcal{SF}=\{(\omega,m):~\left(\omega-\frac{am\Xi}{\bar{r}_+^2+a^2}\right)\left(\omega-\frac{am\Xi}{r_+^2+a^2}\right) < 0\}$. 
\end{definition}

In the Kerr limit~($\Lambda=0$), in view of the fact that~$\bar{r}_+(\Lambda=0)=\infty$, see Lemma~\ref{lem: sec: properties of Delta, lem 2, a,M,l}, we obtain that the superradiant frequencies are
\begin{equation}
	am\omega\in \left(0,\frac{am^2}{r_+^2+a^2}\right),~~\text{or equivalently}~~ \omega\left(\omega-\frac{am}{r_+^2+a^2}\right) < 0,
\end{equation}
see~\cite{DR2}.

\subsection{The frequency set~\texorpdfstring{$\mathcal{F}_{\mathcal{SF},\mathcal{C}}$}{g}}\label{subsec: sec: carter separation, subsec 2}

Let~$l>0$ and~$(a,M)\in\mathcal{B}_l$. Let~$\mathcal{C}>0$. We define the set
\begin{equation}\label{eq: subsec: sec: carter separation, subsec 2, eq 1}
	\mathcal{F}_{\mathcal{SF},\mathcal{C}} =\Bigl\{(\omega,m,\ell):~|\omega|\in [\mathcal{C}^{-1},\mathcal{C}],~\tilde{\lambda}^{(a\omega)}_{m\ell}\leq \mathcal{C},~(\omega,m)\in \mathcal{SF},~|\omega-\omega_+ m|> \frac{1}{\mathcal{C}},~|\omega-\bar{\omega}_+m|> \frac{1}{\mathcal{C}}  \Bigr\}.
\end{equation}
For the superradiant frequencies~$\mathcal{SF}$, see Definition~\ref{def: subsec: sec: frequencies, subsec 1, def 1}. Moreover, for~$\tilde{\lambda}^{(a\omega)}_{m\ell}$ see Definition~\ref{def: subsec: sec: frequencies, subsec 3, def 0}.

\subsection{The operator~\texorpdfstring{$\mathcal{P}_{\mathcal{SF},\mathcal{C}}$}{g}}

Let~$\Psi$ be a sufficiently integrable function, see Definition~\ref{def: sec: carter separation, def 1}. Given~$\mathcal{C}>0$, we define the Fourier projection~$\mathcal{P}_{\mathcal{SF},\mathcal{C}}$ on bounded non-stationary superradiant frequencies
\begin{equation}\label{eq: sec: main theorems, eq 1}
	\begin{aligned}
		\mathcal{P}_{\mathcal{SF},\mathcal{C}} [\Psi]= \frac{1}{\sqrt{2\pi}} \int_{\mathbb{R}} \sum_{m,\ell} 1_{\mathcal{F}_{\mathcal{SF},\mathcal{C}}}\cdot e^{i\omega t}e^{-im\varphi} S^{(a\omega)}_{m\ell}(\theta) \Psi^{(a\omega)}_{m\ell},
	\end{aligned}
\end{equation}
where for~$\mathcal{F}_{\mathcal{SF},\mathcal{C}}$ see~\eqref{eq: subsec: sec: carter separation, subsec 2, eq 1}.

By using the Parseval identity~\eqref{eq: subsec: fourier properties, eq 2} we also have that 
\begin{equation}
	\begin{aligned}
		\int_{\mathcal{H}^+} |\partial_{\varphi}\mathcal{P}_{\mathcal{SF},\mathcal{C}} [\Psi]|^2 dg  & = \int_{\mathbb{R}}\int_{\mathbb{S}^2} |\partial_{\varphi} \mathcal{P}_{\mathcal{SF},\mathcal{C}}[\Psi]|^2 \sin\theta d\theta d\varphi dt  \\
		& =\int_{\mathbb{R}} \sum_{m,\ell} 1_{\mathcal{F}_{\mathcal{SF},\mathcal{C}}} |m^2\Psi^{(a\omega)}_{m\ell}|^2(r^\star=-\infty)d\omega\\
		&	\sim \int_{\mathbb{R}} \sum_{m,\ell} 1_{\mathcal{F}_{\mathcal{SF},\mathcal{C}}} |\Psi^{(a\omega)}_{m\ell}|^2(r^\star=-\infty)d\omega \\
		&	\sim \int_{\mathcal{H}^+} |\mathcal{P}_{\mathcal{SF},\mathcal{C}}[\Psi]|^2 dg,
	\end{aligned}
\end{equation}
where the similarities above are with respect to constants~$C=C(a,M,l,\mu_{KG})$.

Again, since~$\Psi$ is sufficiently integrable we have 
\begin{equation}
	\sup_{r\in [r_+,\bar{r}_+]} \int_{\mathbb{R}}\int_{\mathbb{S}^2} \left(\mu^2_{KG}|\mathcal{P}_{\mathcal{SF},\mathcal{C}}[\Psi]|^2 + \sum_{1\leq i_1+i_2\leq j} |\slashed{\nabla}^{i_1}\partial_t^{i_2}\mathcal{P}_{\mathcal{SF},\mathcal{C}}[\Psi]|^2\right)<\infty.
\end{equation}

\subsection{The set~\texorpdfstring{$\widetilde{\mathcal{MS}}_{l,\mu_{KG}}$}{g}}

In the following definition we define various sets we need throughout the paper and in particular in Proposition~\ref{prop: subsec: summing in the redshift estimate, prop 1}:

\begin{definition}\label{def: subsec: sec: main theorems, subsec 1, def 1}
	Let~$l>0$ and~$\mu^2_{KG}\geq 0$. We define the set
	\begin{equation}\label{eq: sec: main theorems, eq 1.2}
		\widetilde{\mathcal{MS}}_{l,\mu_{KG}}=\{(a,M)\in\mathcal{B}_l: \forall(\omega,m,\ell)\in\mathbb{R}\times\mathbb{Z}\times\mathbb{Z}_{\geq |m|}\setminus\{\omega= \omega_+m,~\omega= \bar{\omega}_+m\} , ~|\mathcal{W}^{-1}(a,M,l,\mu^2_{KG},\omega,m,\ell)|<+\infty\},
	\end{equation}
	where~$\omega_+=\frac{a\Xi}{r_+^2+a^2},\bar{\omega}_+=\frac{a\Xi}{\bar{r}_+^2+a^2}$.

	We define the set
	\begin{equation}
		\mathcal{B}_{0,l} = \{(0,M)\in\mathcal{B}_l\} = \{(0,M):~0<\frac{M^2}{l^2}<\frac{1}{27}\}.
	\end{equation}
	By Proposition~\ref{prop: subsec: energy identity, prop 2} we have that~$\mathcal{B}_{0,l} \subset \widetilde{\mathcal{MS}}_{l,\mu_{KG}}$. 
\end{definition}

\begin{remark}\label{rem: subsec: sec: main theorems, subsec 1, rem 0}
Note that if~$(a,M)\in \widetilde{\mathcal{MS}}_{l,\mu_{KG}}$ then we have that~$(-a,M)\in \widetilde{\mathcal{MS}}_{l,\mu_{KG}}$, in view of the fact that~$\mathcal{W}(a,M,l,\mu_{KG}^2,\omega,m,\ell)=\mathcal{W}(-a,M,l,\mu_{KG}^2,-\omega,m,\ell)$. 
\end{remark}

\subsection{Quantitative mode stability as a flux bound}

The main result of the present Section is Proposition~\ref{prop: subsec: summing in the redshift estimate, prop 1}, which is a quantitative version of mode stability.

\begin{proposition}\label{prop: subsec: summing in the redshift estimate, prop 1}

	Let~$l>0$,~$\mu^2_{KG}\geq 0$ and~$(a,M)\in\widetilde{\mathcal{MS}}_{l,\mu_{KG}}$. Let~$\psi$ be a sufficiently integrable and outgoing solution of the Klein--Gordon equation~\eqref{eq: kleingordon}.

	Then, for any~$\mathcal{C}>0$ sufficiently large, there exists a constant~$C(a,M,l,\mu_{KG},\mathcal{C})>0$ such that we have the following estimate 
	\begin{equation}\label{eq: prop: subsec: summing in the redshift estimate, prop 1, eq 0}
		\int_{\mathcal{H}^+}\left|\mathcal{P}_{\mathcal{SF},\mathcal{C}} (\chi_{\tau_1}\psi)\right|^2 \leq C\int_{\{t^\star=\tau_1\}} J^n_\mu[\psi]n^\mu,
	\end{equation}
	where~$1\leq \tau_1$. 
\end{proposition}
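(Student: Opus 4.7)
The plan is to reduce the horizon flux on the left-hand side to a fixed-frequency estimate on Carter's radial ODE and then exploit the continuity and non-vanishing of the Wronskian on the \emph{compact} frequency set $\mathcal{F}_{\mathcal{SF},\mathcal{C}}$. Since $\psi$ is sufficiently integrable (Definition~\ref{def: sec: carter separation, def 1}) and the cutoff $\chi_{\tau_1}$ is bounded with bounded derivatives, the function $\chi_{\tau_1}\psi$ is itself sufficiently integrable and outgoing; thus we may apply Carter's separation (Proposition~\ref{prop: Carters separation, radial part}). Noting that $\chi_{\tau_1}\psi$ solves the inhomogeneous Klein--Gordon equation with source
\begin{equation*}
  F_{\text{cut}} = [\Box_{g_{a,M,l}},\chi_{\tau_1}]\psi,
\end{equation*}
supported in $\{\tau_1 \leq t^\star \leq \tau_1+1\}$ and bounded by $|F_{\text{cut}}|\lesssim |\partial\chi_{\tau_1}|\cdot|\partial\psi|+|\partial^2\chi_{\tau_1}|\cdot|\psi|$, the radial part $u(r^\star)$ of the separated solution satisfies Carter's ODE with inhomogeneity $H^{(a\omega)}_{m\ell}$ arising from $F_{\text{cut}}$. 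By the Parseval identity~\eqref{eq: subsec: fourier properties, eq 2},
\begin{equation*}
  \int_{\mathcal{H}^+}|\mathcal{P}_{\mathcal{SF},\mathcal{C}}(\chi_{\tau_1}\psi)|^2 \sim \int_{\mathbb{R}}\sum_{m,\ell}\mathbf{1}_{\mathcal{F}_{\mathcal{SF},\mathcal{C}}}|u(-\infty)|^2\, d\omega,
\end{equation*}
so it suffices to bound the right-hand side.

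Next I would invoke the representation formula of Lemma~\ref{lem: subsec: sec: Carter's separation, wronskian, lem 1}. Letting $r^\star\to -\infty$, the first term in~\eqref{eq: subsec: sec: Carter's separation, wronskian, eq 3} vanishes and we obtain
\begin{equation*}
  u(-\infty)=\mathcal{W}^{-1}\cdot u_{\mathcal{H}^+}(-\infty)\cdot \int_{-\infty}^{+\infty}u_{\bar{\mathcal{H}}^+}(x^\star)H(x^\star)\,dx^\star.
\end{equation*}
The set $\mathcal{F}_{\mathcal{SF},\mathcal{C}}$ is compact in $(\omega,m,\ell)$: the bound $\tilde{\lambda}^{(a\omega)}_{m\ell}\leq \mathcal{C}$ combined with Lemma~\ref{lem: inequality for lambda} forces $|m|$ and then $\ell$ to lie in a finite set, while $|\omega|\in[\mathcal{C}^{-1},\mathcal{C}]$ bounds $\omega$ to a compact interval, bounded away from the stationary frequencies $\{\omega_+ m,\bar{\omega}_+ m\}$. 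On this compact set, the Wronskian $\mathcal{W}$ is continuous in $\omega$ (Proposition~\ref{prop: subsec: sec: Carter's separation, wronskian, prop 1}) and nonzero by the hypothesis $(a,M)\in\widetilde{\mathcal{MS}}_{l,\mu_{KG}}$; hence $|\mathcal{W}^{-1}|$ is uniformly bounded. Continuous dependence of ODE solutions on parameters also gives uniform local bounds on $u_{\mathcal{H}^+},u_{\bar{\mathcal{H}}^+}$, which behave as uniformly bounded oscillating exponentials at $r^\star=\pm\infty$.

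Applying Cauchy--Schwarz with a weight $\langle r^\star\rangle^{-2}$ (which is integrable against $|u_{\bar{\mathcal{H}}^+}|^2$ at $\pm\infty$) yields, uniformly on $\mathcal{F}_{\mathcal{SF},\mathcal{C}}$,
\begin{equation*}
  |u(-\infty)|^2 \leq C(\mathcal{C})\int_{-\infty}^{+\infty}\langle x^\star\rangle^2 |H(x^\star)|^2 \,dx^\star.
\end{equation*}
Integrating in $\omega$ and summing in $(m,\ell)$ over $\mathcal{F}_{\mathcal{SF},\mathcal{C}}$, the Parseval identity~\eqref{eq: subsec: fourier properties, eq 1} in the variables $(t,\theta,\varphi)$ converts the frequency-space integral into a physical-space integral of $|F_{\text{cut}}|^2$ (the weight $\langle r^\star\rangle^2$ and the measure factors $\Delta/(r^2+a^2)$ in $H=\Delta(r^2+a^2)^{-3/2}(\rho^2 F_{\text{cut}})^{(a\omega)}_{m\ell}$ combine into quantities bounded on $[r_+,\bar{r}_+]$). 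The result is
\begin{equation*}
  \int_{\mathcal{F}_{\mathcal{SF},\mathcal{C}}}|u(-\infty)|^2 d\omega \leq C\int\int_{\mathcal{M}}|F_{\text{cut}}|^2\, dg.
\end{equation*}

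To finish, I would use the pointwise estimate on $F_{\text{cut}}$ together with its support in $\{\tau_1\leq t^\star\leq \tau_1+1\}$ and the coarea formula~\eqref{eq: subsec: coarea formula, eq 1} to obtain
\begin{equation*}
  \int\int_{\mathcal{M}}|F_{\text{cut}}|^2\,dg \lesssim \int_{\tau_1}^{\tau_1+1}\int_{\{t^\star=\tau\}} J^n_\mu[\psi]n^\mu \,d\tau.
\end{equation*}
A direct application of the divergence theorem (Proposition~\ref{prop: divergence theorem}) with a globally timelike multiplier (e.g.\ $W$ augmented by a small multiple of $N$ near the horizons, which is already timelike near $\mathcal{H}^+,\bar{\mathcal{H}}^+$ by Proposition~\ref{prop: redshift on the event horizon}) over the \emph{finite} time-slab $[\tau_1,\tau_1+1]$ bounds the integrand for $\tau\in[\tau_1,\tau_1+1]$ by $C\int_{\{t^\star=\tau_1\}}J^n_\mu[\psi]n^\mu$, with $C$ depending only on $(a,M,l,\mu_{KG})$. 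Concatenating all estimates gives~\eqref{eq: prop: subsec: summing in the redshift estimate, prop 1, eq 0}. The main technical obstacle is the choice of weight in the Cauchy--Schwarz step (step 3) that is simultaneously compatible with the oscillatory asymptotics of $u_{\bar{\mathcal{H}}^+}$ at $r^\star=\pm\infty$ and with the decay of $H$ inherited from the compact $t$-support of $F_{\text{cut}}$; everything else follows from the compactness of $\mathcal{F}_{\mathcal{SF},\mathcal{C}}$ and the hypothesis on $\mathcal{W}$.
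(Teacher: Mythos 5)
Your proposal follows the same route as the paper's proof: cut off by $\chi_{\tau_1}$, apply the representation formula of Lemma~\ref{lem: subsec: sec: Carter's separation, wronskian, lem 1}, use compactness of $\mathcal{F}_{\mathcal{SF},\mathcal{C}}$ and continuity of the Wronskian (Proposition~\ref{prop: subsec: sec: Carter's separation, wronskian, prop 1}) together with the hypothesis $(a,M)\in\widetilde{\mathcal{MS}}_{l,\mu_{KG}}$ to bound $|\mathcal{W}^{-1}|$ uniformly, and then convert back via Parseval and a finite-in-time energy estimate. The paper keeps the last steps terse and points to~\cite{Shlapentokh_Rothman_2014_mode_stability}, while you fill in the Cauchy--Schwarz step and the physical-space conversion, so the strategy is essentially identical.

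There is, however, a genuine gap in your final step when $\mu_{KG}^2=0$. After removing the indicator $1_{\mathcal{F}_{\mathcal{SF},\mathcal{C}}}$ and applying Parseval you arrive at $\int\int_{\mathcal{M}}|F_{\text{cut}}|^2\,dg$ with $F_{\text{cut}}=2\nabla\chi_{\tau_1}\cdot\nabla\psi+\psi\,\Box\chi_{\tau_1}$, and you then claim this is bounded by $\int_{\tau_1}^{\tau_1+1}\int_{\{t^\star=\tau\}}J^n_\mu[\psi]n^\mu\,d\tau$. For $\mu_{KG}^2=0$ the second term of $F_{\text{cut}}$ contributes a pure zeroth-order integral $\int_{\tau_1\leq t^\star\leq\tau_1+1}|\psi|^2$, which is \emph{not} controlled by $J^n_\mu[\psi]n^\mu$ (the energy density contains no $|\psi|^2$ term once $\mu_{KG}=0$, and neither the Hardy inequality of Lemma~\ref{lem: subsec: Hardy inequalities, lem 2} nor the Poincar\'e--Wirtinger inequality of Lemma~\ref{lem: subsec: poincare wirtinger, lem 1} closes this by itself). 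The fix is to avoid dropping the frequency restriction prematurely: every $(\omega,m,\ell)\in\mathcal{F}_{\mathcal{SF},\mathcal{C}}$ has $|m|\geq 1$ (superradiance forces $m\neq 0$ and $\tilde\lambda\leq\mathcal{C}$ forces $|m|$ into a finite set). Therefore, while still localized to $\mathcal{F}_{\mathcal{SF},\mathcal{C}}$, one writes $(\rho^2\psi\,\Box\chi_{\tau_1})^{(a\omega)}_{m\ell}=\tfrac{1}{im}(\rho^2\partial_\varphi\psi\,\Box\chi_{\tau_1})^{(a\omega)}_{m\ell}$, converting the problematic zeroth-order term to $|\partial_\varphi\psi|^2$, and only \emph{then} drops the indicator and applies Parseval; $|\partial_\varphi\psi|^2$ is then controlled by $J^n_\mu[\psi]n^\mu$ on the time slab, and the finite-in-time estimate (Lemma~\ref{lem: subsec: sec: the main proposition, subsec 0, lem 1}, or the redshift estimate together with a timelike multiplier as you propose) closes the argument. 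With this modification, and noting that your weight $\langle r^\star\rangle^{-2}$ can equally well be replaced by the more natural $\Delta$ since $H$ already carries a factor $\Delta$, the proof is correct.
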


\begin{proof}
	Given that~$l>0$,~$\mu^2_{KG}\geq 0$,~$(a,M)\in \widetilde{\mathcal{MS}}_{l,\mu_{KG}}$, we repeat the estimates of~[\cite{Shlapentokh_Rothman_2014_mode_stability}, Section 3], which we here only sketch. First, if~$\psi$ is a smooth solution of the Klein Gordon equation~\eqref{eq: kleingordon} then we have 
	\begin{equation}
		\Box(\chi_{\tau_1}\psi)=F=2\nabla \chi_{\tau_1}\cdot \nabla\psi +\psi\Box\chi_{\tau_1}.
	\end{equation}
	Furthermore, we define~$u=\sqrt{r^2+a^2}(\chi_{\tau_1}\psi)^{(a\omega)}_{m\ell}$, and~$H=\frac{\Delta}{(r^2+a^2)^{3/2}}(\rho^2F)^{(a\omega)}_{m\ell}$. We have the representation formula
	\begin{equation}\label{eq: prop: subsec: summing in the redshift estimate, prop 1, eq 1}
		u(r^\star)=\mathcal{W}^{-1}\cdot\left(u_{\bar{\mathcal{H}^+}}(r^\star)\int_{-\infty}^{r^\star} u_{\mathcal{H}^+}(x^\star) H(x^\star) dx^\star+u_{\mathcal{H}^+}(r^\star)\int_{r^\star}^{+\infty} u_{\bar{\mathcal{H}^+}}(x^\star)H(x^\star)  \right),
	\end{equation}
	from Lemma~\ref{lem: subsec: sec: Carter's separation, wronskian, lem 1} which recall holds for~$\omega\in\mathbb{R},m\in \mathbb{Z},\ell\in \mathbb{Z}_{\geq |m|}$ such that~$|\mathcal{W}^{-1}(\omega,m,\ell)|<\infty$ and~$\omega\neq \omega_+ m,\omega\neq\bar{\omega}_+ m$. Now, in view of the continuity of the Wronskian, see Proposition~\ref{prop: subsec: sec: Carter's separation, wronskian, prop 1}, we have that for any~$\mathcal{C}>0$ there exists a~$C(a,M,l,\mu_{KG},\mathcal{C})$ such that for~$(\omega,m,\ell)\in \mathcal{F}_{\mathcal{SF},\mathcal{C}}$~(which is a compact set) we have that 
	\begin{equation}\label{eq: prop: subsec: summing in the redshift estimate, prop 1, eq 2}
		|\mathcal{W}^{-1}| \leq C(a,M,l,\mu_{KG},\mathcal{C}). 
	\end{equation}
	For~$\mathcal{F}_{\mathcal{SF},\mathcal{C}}$ see~\eqref{eq: subsec: sec: carter separation, subsec 2, eq 1}. By using~\eqref{eq: prop: subsec: summing in the redshift estimate, prop 1, eq 2} then the representation formula~\eqref{eq: prop: subsec: summing in the redshift estimate, prop 1, eq 1}, in view of Parseval identities, see Section~\ref{subsec: fourier properties}, implies the desired result. 
\end{proof}

\section{The detailed version of Theorem~\ref{main theorem 1}}\label{sec: main theorems}

In this Section we present our main theorem.

\subsection{Boundedness and Morawetz estimate: Theorem~\ref{main theorem 1}}\label{subsec: sec: main theorems, subsec 1}

First, we need the following definition

\begin{definition}\label{def: sec: main theorems, def 1}
	Let~$l>0$ and~$\mu^2_{KG}\geq 0$. We define the following set
	\begin{equation}\label{eq: sec: main theorems, eq 2}
		\begin{aligned}
			\mathcal{MS}_{l,\mu_{KG}}
		\end{aligned}
	\end{equation}
	to be the connected component of~$\widetilde{\mathcal{MS}}_{l,\mu_{KG}}$ containing~$\mathcal{B}_{0,l}$ with the standard Euclidean topology, where for the sets~$\widetilde{\mathcal{MS}}_{l,\mu_{KG}},\mathcal{B}_{0,l}$ see Definition~\ref{def: subsec: sec: main theorems, subsec 1, def 1}. 
\end{definition}

The following Proposition follows immediately from Remark~\ref{rem: subsec: sec: main theorems, subsec 1, rem 0}

\begin{proposition}\label{prop: sec: main theorems, prop 1}
	Let~$l>0$ and~$\mu^2_{KG}\geq 0$. Moreover, let~$(a,M)\in \mathcal{MS}_{l,\mu_{KG}}$. Then, we have that~$(-a,M)\in \mathcal{MS}_{l,\mu_{KG}}$. 
\end{proposition}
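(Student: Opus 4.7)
The approach is to use the symmetry $a \mapsto -a$ at the level of the larger set $\widetilde{\mathcal{MS}}_{l,\mu_{KG}}$ and then transport it to the connected component $\mathcal{MS}_{l,\mu_{KG}}$. First I would invoke Remark~\ref{rem: subsec: sec: main theorems, subsec 1, rem 0} (as the statement itself suggests), which records the identity
\begin{equation*}
\mathcal{W}(a,M,l,\mu_{KG}^2,\omega,m,\ell)=\mathcal{W}(-a,M,l,\mu_{KG}^2,-\omega,m,\ell),
\end{equation*}
and hence the fact that the involution $\sigma:(a,M)\mapsto(-a,M)$ preserves the set $\widetilde{\mathcal{MS}}_{l,\mu_{KG}}$. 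Note also that $\sigma$ preserves $\mathcal{B}_l$, since the roots of $\Delta$ depend only on $a^2$.

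The main observation is that $\sigma$ is a homeomorphism of $\mathbb{R}\times(0,\infty)$ (with the standard Euclidean topology used in Definition~\ref{def: sec: main theorems, def 1}) which restricts to a homeomorphism of $\widetilde{\mathcal{MS}}_{l,\mu_{KG}}$ onto itself. In particular, $\sigma$ sends connected components of $\widetilde{\mathcal{MS}}_{l,\mu_{KG}}$ to connected components of $\widetilde{\mathcal{MS}}_{l,\mu_{KG}}$. Moreover, $\sigma$ fixes the subset $\mathcal{B}_{0,l}=\{(0,M)\in\mathcal{B}_l\}$ pointwise, since $a=0$ there.

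Now let $(a,M)\in\mathcal{MS}_{l,\mu_{KG}}$. By definition $\mathcal{MS}_{l,\mu_{KG}}$ is the connected component of $\widetilde{\mathcal{MS}}_{l,\mu_{KG}}$ that contains $\mathcal{B}_{0,l}$. Applying $\sigma$, the image $\sigma(\mathcal{MS}_{l,\mu_{KG}})$ is again a connected subset of $\widetilde{\mathcal{MS}}_{l,\mu_{KG}}$, and it still contains $\mathcal{B}_{0,l}=\sigma(\mathcal{B}_{0,l})$. By maximality of the connected component, this forces $\sigma(\mathcal{MS}_{l,\mu_{KG}})\subseteq \mathcal{MS}_{l,\mu_{KG}}$, and hence $(-a,M)=\sigma(a,M)\in\mathcal{MS}_{l,\mu_{KG}}$, as required. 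There is no real obstacle here; the only thing to keep in mind is that one must argue at the level of the full set $\widetilde{\mathcal{MS}}_{l,\mu_{KG}}$ rather than try to exhibit a path from $(a,M)$ to $(-a,M)$ directly, since a priori one only has a path from $(a,M)$ to some point of $\mathcal{B}_{0,l}$.
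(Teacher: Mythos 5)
Your argument is correct and is exactly the content that the paper compresses into ``follows immediately from Remark~\ref{rem: subsec: sec: main theorems, subsec 1, rem 0}'': the Wronskian identity makes $\sigma:(a,M)\mapsto(-a,M)$ an involutive homeomorphism of $\widetilde{\mathcal{MS}}_{l,\mu_{KG}}$ that fixes $\mathcal{B}_{0,l}$ pointwise, so it preserves the connected component $\mathcal{MS}_{l,\mu_{KG}}$. Same approach as the paper, just with the implicit topological step written out.
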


The boundedness and Morawetz estimates read as follows

\begin{customTheorem}{1}[detailed version]\label{main theorem 1}
Let~$l>0$,~$\mu^2_{\textit{KG}}\geq 0$ and 
\begin{equation*}
	\begin{aligned}
		(a,M)\in \mathcal{MS}_{l,\mu_{KG}},
	\end{aligned}
\end{equation*}
where for~$\mathcal{MS}_{l,\mu_{KG}}$ see~\eqref{eq: sec: main theorems, eq 2}.

Then, there exist constants
\begin{equation}
	C=C(a,M,l,\mu^2_{\textit{KG}})>0,\qquad R^-(a,M,l,\mu_{KG})>0,\qquad R^+(a,M,l,\mu_{KG})>0
\end{equation}
where~$r_+<R^-<R^+<\bar{r}_+$, such that if~$\psi$ satisfies the Klein--Gordon equation~\eqref{eq: kleingordon} in~$D(\tau_1,\infty)$ then we have the following estimates
\begin{equation}\label{eq: main theorem 1, eq 1}
    \begin{aligned}
        &   \int\int_{D(\tau_1,\tau_2)} \mu^2_{\textit{KG}}|\psi|^2+ |Z^\star\psi|^2+|\slashed{\nabla}\mathcal{P}_{trap}(\chi_{\tau_1}\psi)|^2+|\partial_{t^\star}\mathcal{P}_{trap}(\chi_{\tau_1}\psi)|^2+\sum_m \left( 1_{|m|>0}|\mathcal{F}_{m}\psi|^2\right)\\
        &	\qquad\qquad \leq C \int_{\{t^\star=\tau_1\}} J^n_{\mu}[\psi]n^{\mu} , 
    \end{aligned}    
\end{equation}
\begin{equation}\label{eq: main theorem 1, eq 2}
	\int_{\mathcal{H}^+\cap D(\tau_1,\tau_2)}J^n_{\mu}[\psi] n^\mu+\int_{\bar{\mathcal{H}}^+\cap D(\tau_1,\tau_2)}J^n_{\mu}[\psi] n^\mu+\int_{\{t^\star=\tau_2\}}J^n_{\mu}[\psi] n^\mu  \leq C\int_{\{t^\star=\tau_1\}}J^n_{\mu}[\psi]n^{\mu}, 
\end{equation}
\begin{equation}\label{eq: main theorem 1, eq 3}
 \int_{\{t^\star=\tau_2\}} J^n_\mu[\psi]+|\psi|^2\leq C\int_{\{t^\star=\tau_1\}}J^n_\mu[\psi]n^\mu +|\psi|^2
\end{equation}
for all~$0\leq \tau_1\leq \tau_2$, where for~$\mathcal{P}_{trap}$ see~\eqref{eq: sec: main theorems, eq 1.1}, for~$r_{trap}\in (R^-,R^+)\cup \{0\}$ see Theorem~\ref{thm: sec: proofs of the main theorems, thm 3}, and for~$\mathcal{F}_{m}$ see~\eqref{eq: subsec: sec: carter separation, radial, eq 1}.  
\end{customTheorem}

\begin{proof}
See Section~\ref{sec: proof of Theorem 1}. 
\end{proof}

\begin{remark}\label{rem: main theorem 1, rem 1}
	
It is easy to see that in Theorem~\ref{main theorem 1} we can replace the cut-off~$\chi_{\tau_1}$ with a general cut-off ~$\chi:\mathcal{M}\rightarrow \mathbb{R}$ with the property
	\begin{equation}
		\supp\chi \cap \{t^\star< \tau_1\}=\emptyset. 
	\end{equation}
	Specifically, in our companion~\cite{mavrogiannis2} we will use the cut-off~$\chi=\eta^{(T)}_{\tau_1}\chi^2_{\tau_1,\tau_1+T^2}$ where
	\begin{equation}
		\chi^{(T)}_{\tau_1,\tau_2}(t^\star) \:=\: 
		\begin{cases}
			\text{1,} &\quad\{\tau_1+T\leq t^\star\leq \tau_2-T\}\\
			\text{0,} &\quad\{t^\star\leq \tau_1\}\cup\{t^\star\geq \tau_2\}, \\
		\end{cases}
		\qquad \eta^{(T)}_{\tau_1}(t^\star) \:=\: 
		\begin{cases}
			\text{1,} &\quad\{t^\star\geq T+\tau_1\}\\
			\text{0,} &\quad\{t^\star\leq \tau_1\},\\
		\end{cases}
	\end{equation}
	for some~$T>0$. For notational ease Theorem~\ref{main theorem 1} is proved with the cut-off~$\chi_{\tau_1}$.
\end{remark}

\begin{remark}
Given~$\mu_{KG}^2\geq 0$, suppose that mode stability on the real axis holds for all subextremal black hole parameters, i.e.~$\widetilde{\mathcal{MS}}_{l,\mu_{KG}}=\mathcal{B}_l$. Then, since~$\mathcal{B}_l$ is connected, see Lemma~\ref{lem: subsec: delta polynomial, lem 1}, we have~$\mathcal{MS}_{l,\mu_{KG}}=\mathcal{B}_l$ and thus Theorem~\ref{main theorem 1} would hold for all subextremal parameters.
\end{remark}

 \begin{remark}
	For fixed~$(a,M,l)$, the constants in the RHS of the estimates~\eqref{eq: main theorem 1, eq 1},~\eqref{eq: main theorem 1, eq 2}, blow up in the limit~$\mu_{KG}\rightarrow 0$. Note, however, that $\mu_{KG}=0$ is indeed allowed in the theorem, i.e. the constant for $\mu_{KG}=0$ is finite. In contrast, the constants on the right hand side of the boundedness estimate~\eqref{eq: main theorem 1, eq 3} can be chosen uniformly for sufficiently small~$\mu_{KG}^2>0$. Finally, it is clear from the steps of the proof of our Theorem~\ref{main theorem 1} that if we allow zeroth order terms~$|\psi|^2$ ιn the hypersurface terms of the right hand side of estimates~\eqref{eq: main theorem 1, eq 1},~\eqref{eq: main theorem 1, eq 2}, then the constants of these estimates can also be chosen uniformly for sufficiently small Klein--Gordon masses~$\mu_{KG}^2>0$. 
\end{remark}

\begin{remark}
As discussed in the introduction, the set~$\mathcal{MS}_{l,\mu_{KG}}$ includes the very slowly rotating black hole parameters~$|a|\ll M,l$, where the smallness depends on~$\mu^2_{KG}$. Moreover, the set~$\mathcal{MS}_{l,\mu_{KG}}$ includes arbitrary black hole parameters~$|a|<M\ll l$, and~$\mu^2_{KG}=0$, where the constant implicit in~$\ll$ depends on the difference to extremality~$M-|a|$, in view of the recent work of Hintz~\cite{hintz2021mode}. 
\end{remark}

\begin{remark}\label{rem: subsec: sec: main theorems, subsec 1, rem 1}
	The LHS of~\eqref{eq: main theorem 1, eq 1}, in conjunction with the local estimates of Lemma~\ref{lem: subsec: sec: the main proposition, subsec 0, lem 1}, also controls the spacetime integral
	\begin{equation}\label{eq: rem: subsec: sec: main theorems, subsec 1, rem 1, eq 1}
		\int\int_{D(\tau_1,\tau_2)} \mu^2_{\textit{KG}}|\psi|^2+ |Z^\star\psi|^2+\zeta_{\textit{trap}}(r)\big( |\slashed{\nabla}\psi|^{2} + (\partial_{t^\star}\psi)^{2} \big)   \leq C \int_{\{t^\star=\tau_1\}} J^n_{\mu}[\psi]n^{\mu} , 
	\end{equation}
	where
	\begin{equation}	\zeta_{\textit{trap}}(r)=\left(1-1_{[R^-,R^+]}\right)\left(1-\frac{\frac{R^-+R^+}{2}}{r}\right)^2,
	\end{equation}
	and for~$R^\pm$ see Corollary~\ref{cor: subsec: summing in the redshift estimate, cor 1}.

	Note that on a Schwarzschild--de~Sitter background~$a=0$ we have
	\begin{equation}
		R^-=R^+=r_{\Delta,\textit{frac}}=3M,\qquad \zeta_{\textit{trap}}(r)=  \left(1-\frac{3M}{r}\right)^2.
	\end{equation}
	
	Moreover, if the solution of the Klein--Gordon equation~\eqref{eq: kleingordon} is axisymmetric~$\partial_{\varphi}\psi=0$, then~\eqref{eq: rem: subsec: sec: main theorems, subsec 1, rem 1, eq 1} holds for~$(a,M)\in\mathcal{B}_l$, where we have~$R^-=R^+$.
\end{remark}

By also utilizing the higher order redshift estimate of Proposition~\ref{prop: redshift estimate}, applied in the extended domain~$D_\delta$, we obtain the following higher order Corollary, that also treats the inhomogeneous case

\begin{customCorollary}{1.1}\label{cor: main theorem 1, cor 1}
Let the assumptions of Theorem~\ref{main theorem 1} be satisfied, where~$\psi$ is now a solution of the inhomogeneous Klein--Gordon equation~$\Box_{g_{a,M,l}}\psi-\mu^2_{KG}\psi=F$, where~$F$ is sufficiently regular. Then, for any~$j\geq 1$ there exists a sufficiently small~$\delta(a,M,l,\mu_{KG},j)$ such that the following hold. Let~$\{t^\star=\tau^\prime\}$ be the extended spacelike hypersurfaces, defined in the extended domain~$D_\delta(\tau_1,\infty)$, see Definition~\ref{def: causal domain, def 2}. Then, there exists a constant
\begin{equation}
C=C(j,a,M,l,\mu^2_{\textit{KG}})>0,
\end{equation}
such that we obtain
\begin{equation}\label{eq: cor: main theorem 1, cor 1, eq 1}
    \begin{aligned}
        &	\int\int_{D(\tau_1,\infty)}  \sum_{1\le i_1+i_2\le j}|\nabb^{i_1}(\partial_{t^\star})^{i_2}\mathcal{P}_{trap}(\chi\psi)|^2+\int\int_{D_\delta(\tau_1,\infty)\setminus D(\tau_1,\infty)}\sum_{1\leq i_1+i_2+i_3\leq j} |\slashed{\nabla}^{i_1}\partial_t^{i_2}(Z^\star)^{i_3}\psi|^2 \\
        & \qquad\qquad + \sum_{1\le i_1+i_2+i_3\le j-1}
        \left(|\nabb^{i_1}(\partial_{t^\star})^{i_2}(Z^\star)^{i_3+1}\psi|^2+|\nabb^{i_1}(\partial_{t^\star})^{i_2}(Z^\star)^{i_3}\psi|^2\right)  \\
        &	\qquad\qquad\qquad \leq C\int_{\{t^\star=\tau_1\}}  \sum_{i_1+i_2+i_3}|\slashed{\nabla}^{i_1}\partial_t^{i_2}(Z^\star)^{i_3}\psi|^2+\sum_{1\leq i_1\leq j}\int\int_{D_\delta(\tau_1,\infty)} |\partial_t^{i_1}\psi \cdot \partial_t^{i_1-1}F|+ |\partial_{\varphi}^{i_1}\psi \cdot \partial_{\varphi}^{i_1-1}F| +|N^{i_1-1}F|^2,
    \end{aligned}
\end{equation}
\begin{equation}\label{eq: cor: main theorem 1, cor 1, eq 2}
	\begin{aligned}
		&	\int_{\mathcal{H}^+\cap D_\delta(\tau_1,\infty)} \sum_{0\le i \le j-1}{J}^N_\mu[N^{i}\psi]n^\mu_{\mathcal{H}^+}+\int_{\bar{\mathcal{H}}^+\cap D_\delta(\tau_1,\infty)}\sum_{0\le i \le j-1}{J}^{N}_\mu[N^{i}\psi]n^\mu_{\bar{\mathcal{H}}^+}\\
		&	\qquad\qquad\leq  C\int_{\{t^\star=\tau_1\}}  \sum_{1\leq i_1+i_2+i_3}|\slashed{\nabla}^{i_1}\partial^{i_2}(Z^\star)^{i_3}\psi|^2+\sum_{1\leq i_1\leq j}\int\int_{D_\delta(\tau_1,\infty)} |\partial_t^{i_1}\psi \cdot \partial_t^{i_1-1}F|+ |\partial_{\varphi}^{i_1}\psi \cdot \partial_{\varphi}^{i_1-1}F| +|N^{i_1-1}F|^2,
	\end{aligned}
\end{equation}
\begin{equation}\label{eq: cor: main theorem 1, cor 1, eq 3}
	\begin{aligned}
		&	\int_{\{t^\star=\tau_2\}}  \sum_{1\leq i_1+i_2+i_3}|\slashed{\nabla}^{i_1}\partial_t^{i_2}(Z^\star)^{i_3}\psi|^2\\
		&	\qquad\qquad\leq C\int_{\{t^\star=\tau_1\}}  \sum_{1\leq i_1+i_2+i_3}|\slashed{\nabla}^{i_1}\partial_t^{i_2}(Z^\star)^{i_3}\psi|^2\\
		&	\qquad\qquad\qquad+\sum_{1\leq i_1\leq j}\int\int_{D_\delta(\tau_1,\infty)} \epsilon^\prime|\partial_t^{i_1}\psi|^2 +(\epsilon^\prime)^{-1} |\partial_t^{i_1-1}F|^2+ \epsilon^\prime|\partial_{\varphi}^{i_1}\psi|^2 + (\epsilon^\prime)^{-1}|\partial_{\varphi}^{i_1-1}F| +|N^{i_1-1}F|^2, 
	\end{aligned}
\end{equation}
for any~$1\leq1+\tau_1\leq \tau_2$ and for any~$0\leq \epsilon^\prime<1$, for some sufficiently small~$\delta(j,a,M,l)$. Moreover, for the redshift vector field~$N$ see Proposition~\ref{prop: redshift on the event horizon}. 
\end{customCorollary}
\begin{proof}	
	We first discuss the estimates~\eqref{eq: cor: main theorem 1, cor 1, eq 1},~\eqref{eq: cor: main theorem 1, cor 1, eq 2}. The case~$j=1$ follows immediately from the consideration of the proof of Theorem~\ref{main theorem 1}, where we now keep track of the inhomogeneity. Specifically, note that Proposition~\ref{prop: subsec: summing in the redshift estimate, prop 1} also holds for the inhomogeneous Klein--Gordon equation, with the additional term~$\int\int_{D_\delta(\tau_1,\infty)}|F|^2$ on the RHS of~\eqref{eq: prop: subsec: summing in the redshift estimate, prop 1, eq 0}. To prove the cases~$j>1$ we proceed as follows. We commute the inhomogeneous Klein--Gordon equation~$\Box_{g_{a,M,l}}\psi-\mu^2_{KG}\psi=F$ with~$\partial_t,\partial_{\varphi},N$ appropriately many times and use elliptic estimates~(see for example the lecture notes~\cite{DR5}), where we also use the redshift Proposition~\ref{prop: redshift estimate} to control derivatives near the horizons.

Now, we discuss the boundedness estimate~\eqref{eq: cor: main theorem 1, cor 1, eq 3}. Again, the~$j=1$ case follows from the considerations of the proof of Theorem~\ref{main theorem 1}, where we keep track of the inhomogeneities. Specifically, we use Young's inequalities on the error terms on the RHS of estimate~\eqref{eq: cor: main theorem 1, cor 1, eq 1} and folllow the steps of Section~\ref{sec: proof of boundedness}, where we will now have appropriate inhomogeneity on the RHS of the estimates of Section~\ref{sec: proof of boundedness}. We can obtain the higher order estimates by commuting with~$\partial_t,\partial_{\varphi},N$. 
\end{proof}

\subsection{Logic of the proof of Theorem~\ref{main theorem 1}}

We here present the steps of the proof of Theorem~\ref{main theorem 1}, in an order that further clarifies the logic of the proof.

The first step is to prove the desired Morawetz estimate for sufficiently integrable solutions of the inhomogeneous Klein--Gordon equation, valid for all subextremal Kerr--de~Sitter black hole parameters~$(a,M)\in \mathcal{B}_l$, but with an additional horizon term on the RHS, see Theorem~\ref{thm: subsec: summing in the redshift estimate, thm -1} in Section~\ref{sec: the main theorem, no cases}. To prove Corollary~\ref{cor: subsec: summing in the redshift estimate, cor 1} we use the fixed frequency estimate of Theorem~\ref{thm: sec: proofs of the main theorems, thm 3}, which is proved in Section~\ref{sec: proof: prop: sec: proofs of the main theorems}. 

In the case where moreover~$(a,M)\in\widetilde{\mathcal{MS}}_{l,\mu_{KG}}$, we use the quantitative mode stability in the form of the flux bound of Proposition~\ref{prop: subsec: summing in the redshift estimate, prop 1}, to remove the additional horizon term on the RHS, and we obtain Corollary~\ref{cor: subsec: summing in the redshift estimate, cor 1} for `future integrable' solutions of the homogeneous Klein--Gordon equation~\eqref{eq: kleingordon}.

In the~$(a,M)\in\mathcal{MS}_{l,\mu_{KG}}$ we use a continuity argument, see Section~\ref{sec: continuity argument}, to prove that all solutions of the Klein--Gordon equation~\eqref{eq: kleingordon} are in fact `future integrable', see Theorem~\ref{prop: sec: continuity argument, prop 1} in Section~\ref{sec: continuity argument}.

In Section~\ref{sec: proof of Theorem 1} we combine Corollary~\ref{cor: subsec: summing in the redshift estimate, cor 1} and Theorem~\ref{prop: sec: continuity argument, prop 1} to conclude the proof of the Morawetz estimate~\eqref{eq: main theorem 1, eq 1} of Theorem~\ref{main theorem 1}, for~$(a,M)\in\mathcal{MS}_{l,\mu_{KG}}$.

The remaining boundedness estimates~\eqref{eq: main theorem 1, eq 2},~\eqref{eq: main theorem 1, eq 3} are proved in Section~\ref{sec: proof of boundedness}, where note we follow a similar approach to~\cite{DR2}.

For the axisymmetric case of Theorem~\ref{main theorem 1} we do not need a continuity argument, and in fact the proofs are significantly simpler, see the proof in Section~\ref{sec: proof of main theorem in axisymmetry}. Specifically, by using the boundedness estimates for axisymmetric solutions of Proposition~\ref{prop:redshift vs superradiance estimate, for axisymmetric solutions}, we conclude that axisymmetric solutions of the Klein--Gordon equation are in fact future integrable for all~$(a,M)\in\mathcal{B}_l$ and therefore we can use directly the Morawetz estimate of Theorem~\ref{thm: subsec: summing in the redshift estimate, thm -1}, which concludes the proof.

\section{A Morawetz estimate for sufficiently integrable functions}\label{sec: the main theorem, no cases}

The main result of this Section is Theorem~\ref{thm: subsec: summing in the redshift estimate, thm -1}. To obtain Theorem~\ref{thm: subsec: summing in the redshift estimate, thm -1} we will sum the fixed frequency ode estimate of Theorem~\ref{thm: sec: proofs of the main theorems, thm 3}, by using Proposition~\ref{prop: Carters separation, radial part} and Proposition~\ref{prop: subsec: fourier properties, prop 1}, and we will also use the redshift estimates of Section~\ref{sec: redshift and superradiance}.~(The proof of Theorem~\ref{thm: sec: proofs of the main theorems, thm 3} is deferred to Section~\ref{sec: proof: prop: sec: proofs of the main theorems}.)

\subsection{Notation on constants}\label{subsec: sec: proofs of the main theorems, subsec 1}

From this point onwards we use the constants
\begin{equation}
	b(a,M,l,\mu_{\textit{KG}})>0,\qquad B(a,M,l,\mu_{\textit{KG}})>0,
\end{equation}
to denote respectively potentially small and potentially large positive constants both depending only on the black hole parameters~$a,M,l$, the Klein--Gordon mass~$\mu^2_{KG}$. Specifically, we note that for certain inequalities the constants~$b,B$ will depend on the sufficiently large and sufficiently small parameters~$r^\star_{\pm \infty}$, which will be determined later. Furthermore, for certain inequalities the constants~$b,B$ will depend on a sufficiently large~$\mathcal{C}>0$ constant. This dependence will be clearly denoted when used.

Informally, we record the following algebra of constants 
\begin{equation}
	B+B=B,\qquad B\cdot B=B, \qquad b+b=b,\qquad b\cdot b=b.
\end{equation}

Moreover, let~$f,g\geq 0$. When we use~$f\sim g$ we mean~$b g\leq  f\leq B g$.

\subsection{The main Theorem~\ref{thm: subsec: summing in the redshift estimate, thm -1}}\label{subsec: sec: proofs of the main theorems, subsec 3}

We have the following Theorem

\begin{theorem}\label{thm: subsec: summing in the redshift estimate, thm -1}
	Let~$l>0$ and~$(a,M)\in\mathcal{B}_l$ and~$\mu^2_{\textit{KG}}\geq 0$. Let~$\mathcal{C}>0$ be sufficiently large. Let~$\psi$ be a sufficiently integrable and outgoing function, see Definitions~\ref{def: sec: carter separation, def 1},~\ref{def: subsec: sec: carter separation, subsec 0, def 1}, satisfying the inhomogeneous Klein--Gordon equation~$\Box_{g_{a,M,l}}\psi-\mu^2_{KG}\psi=F$.
	
	Then, we have the following
	\begin{equation}\label{eq: thm: subsec: summing in the redshift estimate, thm -1, eq 1}
		\begin{aligned}
			\int\int_{D(-\infty,+\infty)} & \mu^2_{\textit{KG}}
			|\psi|^2+ |\partial_{r^\star}\psi|^2+ |Z^\star\psi|^2 +|\partial_t\mathcal{P}_{\textit{trap}}[\psi]|^2+|\slashed{\nabla}\mathcal{P}_{\textit{trap}}[\psi]|^2 +\sum_m 1_{|m|>0} |\mathcal{F}_{m}(\psi)|^2\\
			&	\quad\leq   B(\mathcal{C}) \int_{\mathcal{H}^+}\left|\mathcal{P}_{\mathcal{SF},\mathcal{C}} (\psi)\right|^2+	B(\mathcal{C})\int\int_{D(-\infty,+\infty)} |\partial_t\psi \cdot F| +|\partial_{\varphi}\psi \cdot F| +|F|^2,
		\end{aligned}
	\end{equation}
	where for~$\mathcal{P}_{\textit{trap}},\mathcal{P}_{\mathcal{SF},\mathcal{C}}$ see~\eqref{eq: sec: main theorems, eq 1.1},~\eqref{eq: sec: main theorems, eq 1} respectively, and for~$\mathcal{F}_{m}$ see~\eqref{eq: subsec: sec: carter separation, radial, eq 1}.
\end{theorem}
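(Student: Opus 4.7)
The strategy is to reduce the claimed spacetime Morawetz estimate to a frequency-localized ODE statement by means of Carter's separation of variables (Proposition \ref{prop: Carters separation, radial part}), apply the fixed-frequency Theorem \ref{thm: sec: proofs of the main theorems, thm 3} ode-by-ode, sum the resulting estimates using the Parseval identities of Proposition \ref{prop: subsec: fourier properties, prop 1}, and finally upgrade the degenerate tortoise derivative $\partial_{r^\star}\psi$ to the regular spacelike derivative $Z^\star\psi$ near the two horizons via the redshift estimate of Proposition \ref{prop: redshift estimate}. Since $\psi$ is assumed sufficiently integrable and outgoing, the separation is justified and the radial unknowns $u^{(a\omega)}_{m\ell} = \sqrt{r^2+a^2}\,\psi^{(a\omega)}_{m\ell}$ satisfy Carter's inhomogeneous radial ode together with the outgoing boundary conditions \eqref{eq: lem: sec carters separation, subsec boundary behaviour of u, boundary beh. of u, eq 3}, so the ODE-level Theorem \ref{thm: sec: proofs of the main theorems, thm 3} is applicable for almost every frequency triple.

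For each $(\omega, m, \ell)$ the fixed-frequency Theorem \ref{thm: sec: proofs of the main theorems, thm 3} will yield a schematic estimate of the form
\begin{equation*}
\int_{-\infty}^{+\infty} \Bigl[ |u'|^2 + \mu_{KG}^2|u|^2 + \bigl(1 - \tfrac{r_{\textit{trap}}}{r}\bigr)^2\bigl(\omega^2 + \tilde\lambda\bigr)|u|^2 \Bigr]\, dr^\star \;\lesssim\; \mathbf{1}_{\mathcal{F}_{\mathcal{SF},\mathcal{C}}}(\omega,m,\ell)\,|u|^2(-\infty) + \mathcal{E}(H,u),
\end{equation*}
where $\mathcal{E}(H,u)$ denotes the inhomogeneous cross terms arising from the Morawetz current acting on $H$. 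The crucial structural feature is that the horizon boundary flux $|u|^2(-\infty)$ is only needed on the right-hand side for frequencies in $\mathcal{F}_{\mathcal{SF},\mathcal{C}}$ — for all other real frequencies (including bounded non-stationary non-superradiant, stationary, trapped large-$\tilde\lambda$, and unbounded regimes), the frequency-localized multiplier closes the estimate without any horizon term.

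Summing these ode estimates in $m,\ell$ and integrating in $\omega$ via the Parseval identities \eqref{eq: subsec: fourier properties, eq 1} reassembles the bulk: the $|u'|^2$ integral converts to $\int\!\!\int |\partial_{r^\star}\psi|^2$ (up to the measure factor $\frac{\Delta}{r^2+a^2}$ from \eqref{eq: subsec: volume forms of spacelike hypersurfaces, eq 0}), the trapping factor $|1 - r_{\textit{trap}}/r|$ together with $\omega, \tilde\lambda$ reconstructs $|\partial_t \mathcal{P}_{\textit{trap}}\psi|^2 + |\slashed{\nabla}\mathcal{P}_{\textit{trap}}\psi|^2$ by \eqref{eq: sec: main theorems, eq 1.0.2}, and the $\mu_{KG}^2$ term reproduces $\mu_{KG}^2|\psi|^2$. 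On the right, the collected horizon boundary terms $\mathbf{1}_{\mathcal{F}_{\mathcal{SF},\mathcal{C}}}|u|^2(-\infty)$ reassemble by the second Parseval identity \eqref{eq: subsec: fourier properties, eq 2} precisely as $\int_{\mathcal{H}^+}|\mathcal{P}_{\mathcal{SF},\mathcal{C}}\psi|^2$, while $\mathcal{E}(H,u)$ contributes $\int\!\!\int|\partial_t\psi\cdot F| + |\partial_\varphi\psi\cdot F| + |F|^2$. To obtain the non-degenerate $|Z^\star\psi|^2$ part of the LHS, apply the inhomogeneous redshift estimate of Proposition \ref{prop: redshift estimate} on the two slabs $\{r_+\leq r\leq r_++2\epsilon_{red}\}$ and $\{\bar r_+ - 2\epsilon_{red} \leq r\leq \bar r_+\}$ and absorb the resulting interior bulk into the already-controlled $|\partial_{r^\star}\psi|^2$ term on $\{r_++\epsilon_{red}\leq r\leq \bar r_+ - \epsilon_{red}\}$. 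The zeroth order piece $\sum_m \mathbf{1}_{|m|>0}|\mathcal{F}_m\psi|^2$ is recovered from the $\partial_\varphi$-bulk via the trivial Poincaré inequality $|\mathcal{F}_m\psi|^2 \leq m^{-2}|\partial_\varphi \mathcal{F}_m\psi|^2$ for $m\neq 0$.

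The principal difficulty is entirely encapsulated in the fixed-frequency Theorem \ref{thm: sec: proofs of the main theorems, thm 3}, whose proof in Section \ref{sec: proof: prop: sec: proofs of the main theorems} requires a delicate construction of frequency-localized currents across the trapped, superradiant, and de Sitter frequency regimes of Section \ref{sec: frequencies}. In the present result, that theorem is consumed as a black box; the remaining bookkeeping — carefully matching spacetime measures with the $dr^\star d\omega$ measure on the frequency side, distinguishing the four boundary situations at $r^\star = \pm\infty$ according to whether $(\omega,m,\ell) \in \mathcal{F}_{\mathcal{SF},\mathcal{C}}$, and absorbing the redshift interior bulk — is essentially routine modulo the technical care needed to ensure that the constants depend only on $(a,M,l,\mu_{KG},\mathcal{C})$ and not on the frequency parameters themselves.
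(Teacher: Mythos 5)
Your overall route --- Carter separation, apply Theorem~\ref{thm: sec: proofs of the main theorems, thm 3} frequency-by-frequency, sum via the Parseval identities, then a redshift step near both horizons --- is the paper's route, and most of the reassembly is as you describe. There are, however, two places where your write-up skips over a step that fails as stated.

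First, the sign-indefinite $E$-term. You schematize the output of Theorem~\ref{thm: sec: proofs of the main theorems, thm 3} as ``$\lesssim \mathbf{1}_{\mathcal{F}_{\mathcal{SF},\mathcal{C}}}|u|^2(-\infty) + \mathcal{E}(H,u)$'' and then say $\mathcal{E}(H,u)$ contributes $\int\!\!\int|\partial_t\psi\cdot F| + |\partial_\varphi\psi\cdot F| + |F|^2$. But the actual right-hand side of Theorem~\ref{thm: sec: proofs of the main theorems, thm 3} contains $E\int\bigl(\omega-\frac{am\Xi}{r_+^2+a^2}\bigr)\Im(\bar u H) + \bigl(\omega-\frac{am\Xi}{\bar{r}_+^2+a^2}\bigr)\Im(\bar u H)\,dr^\star$ with a \emph{fixed} coefficient $E$ and no absolute values, precisely because (see Remark~\ref{rem: thm: sec: proofs of the main theorems, rem 1}) this term has no sign and one is not permitted to enlarge $E$. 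If one applies Cauchy--Schwarz at the frequency-localized level one generates $\epsilon\int(\omega^2+m^2)|u|^2 + \epsilon^{-1}\int\frac{1}{\Delta}|H|^2$, and the first piece is exactly what the Morawetz estimate must produce, so the argument does not close. The correct procedure --- and the one the paper uses --- is to carry the $E$-term intact through the summation $\int d\omega\sum_{m\ell}$, invoke the second Parseval identity of~\eqref{eq: subsec: fourier properties, eq 1} to convert $\Im(\bar u H)$ into a physical-space bilinear form in $\partial_t\psi,\partial_\varphi\psi,F$, and only \emph{afterwards} take absolute values. Your $\lesssim$ and generic-constant phrasing obscure precisely this step.

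Second, the zeroth-order term $\sum_m\mathbf{1}_{|m|>0}|\mathcal{F}_m\psi|^2$. You propose to derive it via the Poincar\'e inequality $|\mathcal{F}_m\psi|^2\le m^{-2}|\partial_\varphi\mathcal{F}_m\psi|^2$ from ``the $\partial_\varphi$-bulk''. But after summing Theorem~\ref{thm: sec: proofs of the main theorems, thm 3}, the angular bulk carries the trapping weight $(1-r_{\textit{trap}}/r)^2$: $m^2\le\tilde\lambda$ puts the $m$-dependence inside the degenerate term, so Poincar\'e only gives you a trapping-\emph{degenerate} zeroth-order integral, not the non-degenerate one appearing in~\eqref{eq: thm: subsec: summing in the redshift estimate, thm -1, eq 1}. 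In fact the non-degenerate zeroth-order piece $1_{\{|m|>0\}}|u|^2$ is already present on the left-hand side of~\eqref{eq: cor: sec: proofs of the main theorems, cor 1, eq 1}, and the paper simply reads it off there; no Poincar\'e step is needed or used. Finally, a smaller omission: the redshift estimate of Proposition~\ref{prop: redshift estimate} carries a data flux on $\{t^\star=\tau_1\}$, and there is no such term allowed on the right of~\eqref{eq: thm: subsec: summing in the redshift estimate, thm -1, eq 1}; since $\psi$ is outgoing and sufficiently integrable, one must choose a pigeonhole sequence $\tau_n\to-\infty$ along which $\int_{\{t^\star=\tau_n\}}J^n_\mu[\psi]n^\mu\to 0$, apply the redshift estimate on $D(\tau_n,\tau_2)$, and pass to the limit.
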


\begin{proof}
See Section~\ref{subsec: summing in the redshift estimate}. We will need first the fixed frequency ode estimate of Theorem~\ref{thm: sec: proofs of the main theorems, thm 3}, to be proven in Section~\ref{subsec: sec: proofs of the main theorems, subsec 4} below. 
\end{proof}

\subsection{The main fixed frequency ode estimate}\label{subsec: sec: proofs of the main theorems, subsec 4}

In the proof of our main Theorem~\ref{thm: subsec: summing in the redshift estimate, thm -1} we will need to appeal to the following fixed frequency integrated estimate for smooth solutions of Carter's radial ode that satisfy outgoing boundary conditions.

\begin{customTheorem}{8.3}\label{thm: sec: proofs of the main theorems, thm 3}
	Let~$l>0$~$(a,M)\in\mathcal{B}_l$ and~$\mu^2_{\textit{KG}}\geq 0$. Then, there exist constants
	\begin{equation}
		\begin{aligned}
			E=E(a,M,l,\mu_{\textit{KG}})>0,\qquad r_+<R^-<R^+<\bar{r}_+,
		\end{aligned} 
	\end{equation} 
	and a frequency dependent~$r$-value
	\begin{equation}
		r_{trap}(\omega,m,\ell)\in (R^-,R^+)\cup \{0\},
	\end{equation}
	 such that for all~$r_{-\infty}$ sufficiently close to~$r_+$ and for all~$r_{+\infty}$ sufficiently close to~$\bar{r}_+$, and for any sufficiently large~$\mathcal{C}>0$ there exists a constant~$B(\mathcal{C},r_{\pm\infty})$, as in Section~\ref{subsec: sec: proofs of the main theorems, subsec 1}, such that for smooth solutions
	\begin{equation}
		u(\omega,m,\ell,r),
	\end{equation}
	of Carter's inhomogeneous radial ode, see~\eqref{eq: ode from carter's separation}, that moreover satisfy the outgoing boundary conditions~\eqref{eq: lem: sec carters separation, subsec boundary behaviour of u, boundary beh. of u, eq 3} we have the following integrated estimate
	\begin{equation}\label{eq: cor: sec: proofs of the main theorems, cor 1, eq 1}
	\begin{aligned}
		&	\int_{r^\star_{-\infty}}^{r^\star_{\infty}} \left( |\Psi^\prime|^2+1_{\{m>0\}}\Delta |\Psi|^2+1_{\{|m|>0\}}|u^\prime|^2 + (\omega^2+\tilde{\lambda})\left(1-\frac{r_{trap}}{r}\right)^2|u|^2 +\left(\mu^2_{KG}+1_{\{|m|>0\}}\right)|u|^2 \right)  \\
		&	\qquad \leq  B(\mathcal{C},r_{\pm \infty})\cdot 1_{\{m>0\}}\int_{-\infty}^{r^\star_{-\infty}}  |u^\prime H|+|\omega\cdot uH|+|m\cdot u H|+B(\mathcal{C},r_{\pm \infty})\cdot 1_{\{m>0\}} \int_{r^\star_\infty}^\infty  |u^\prime H|+|\omega\cdot uH|+|m\cdot u H|\\
		&	\qquad\qquad + B(\mathcal{C},r_{\pm \infty})\int_{\mathbb{R}}dr^\star \frac{1}{\Delta} |H|^2\\
		&	\qquad\qquad +E \int_{\mathbb{R}} \left(\left(\omega-\frac{am\Xi}{r_+^2+a^2}\right)\Im (\bar{u}H)+\left(\omega-\frac{am\Xi}{r_+^2+a^2}\right)\Im(\bar{u}H)\right)dr^\star\\
		&	\qquad\qquad +B(\mathcal{C},r_{\pm \infty}) 1_{\mathcal{F}_{\mathcal{SF},\mathcal{C}}} |\omega-\omega_+m||\omega-\bar{\omega}_+ m||u|^2(-\infty),
	\end{aligned}
\end{equation}
where for~$\mathcal{F}_{\mathcal{SF},\mathcal{C}}$ see~\eqref{eq: subsec: sec: carter separation, subsec 2, eq 1}. 
\end{customTheorem}
\begin{proof}
	Immediate by inspecting Theorem~\ref{thm: sec: proofs of the main theorems}. 
\end{proof}

\begin{remark}\label{rem: thm: sec: proofs of the main theorems, rem 1}
	Note that the term in~\eqref{eq: cor: sec: proofs of the main theorems, cor 1, eq 1} that multiplies~$E$ in general does not have a sign. Therefore, in what follows we cannot replace~$E$ by a larger constant. 
\end{remark}

\begin{remark}\label{rem: thm: sec: proofs of the main theorems, rem 2}
	We only control the terms~$\int_{-\infty}^{\infty} 1_{\{m>0\}}\Delta |\Psi|^2$ on the left hand side of~\eqref{eq: cor: sec: proofs of the main theorems, cor 1, eq 1}. The lack of control of this term comes from the lack of control of the respective term in Proposition~\ref{prop: energy estimate for the bounded stationary frequencies, 1}, for the case~$\mu^2_{KG}=0$. 
\end{remark}

\begin{remark}
	We may replace the superradiant set~$\mathcal{SF}$ in the definition of~$\mathcal{F}_{\mathcal{SF},\mathcal{C}}$ on the RHS of~\eqref{eq: cor: sec: proofs of the main theorems, cor 1, eq 1} with the smaller set found in~\cite{casals2021hidden}, for which real frequency mode stability is still not known. 
\end{remark}

We obtain immediately the following Corollary 
\begin{customCorollary}{8.1}\label{lem: sec: proofs of the main theorems}
 	Let~$l>0$ and~$(a,M)\in\mathcal{B}_l$, ~$\mu^2_{\textit{KG}}\geq 0$. Then, for any~$	(\omega,m)\in (\mathcal{SF})^c$ and~$\omega\neq \omega_+ m,\omega\neq \bar{\omega}_+ m$ and any~$\ell\geq |m|$ we have that 
 	\begin{equation}\label{eq: lem: sec: proofs of the main theorems, eq 1}
 		|\mathcal{W}^{-1}(a,M,l,\mu_{KG},\omega,m,\ell)|<+\infty.
 	\end{equation}
 	Moreover, for 
 	\begin{equation}\label{eq: lem: sec: proofs of the main theorems, eq 2}
 		(\omega,m,\ell)\in\left(\{(\omega,m,\ell):~\mathcal{C}^{-1}\leq |\omega|\leq \mathcal{C}\}\cap \{(\omega,m,\ell):~\tilde{\lambda}\leq\mathcal{C}\}\right)^c\setminus \{\omega=\omega_+ m,\omega=\bar{\omega}_+ m\},
 	\end{equation}
 	where~$\mathcal{C}>0$ is sufficiently large, we have that~\eqref{eq: lem: sec: proofs of the main theorems, eq 1} again holds.

 	Furthermore, for~$\mathcal{C}>0$ sufficiently large then for
 	\begin{equation}\label{eq: lem: sec: proofs of the main theorems, eq 3}
 		(\omega,m,\ell)\in 	 \{0<|\omega-\omega_+m|\leq \mathcal{C}^{-1}\}\cup \{0<|\omega-\bar{\omega}_+ m|\leq \mathcal{C}^{-1}\}
 	\end{equation}
 	we again have that~\eqref{eq: lem: sec: proofs of the main theorems, eq 1} holds. 
\end{customCorollary}
\begin{proof}
Let~$(\omega,m)\in (\mathcal{SF})^c$ and~$\omega\neq \omega_+m,\omega\neq \bar{\omega}_+ m$. Then, in view of the energy estimate of Proposition~\ref{prop: subsec: energy identity, prop 1} we conclude the desired result~\eqref{eq: lem: sec: proofs of the main theorems, eq 1}.

Now, let~$(\omega,m,\ell)\in\left(\{(\omega,m,\ell):~\mathcal{C}^{-1}\leq |\omega|\leq \mathcal{C}\}\cap \{(\omega,m,\ell):~\tilde{\lambda}\leq\mathcal{C}\}\right)^c\setminus \{\omega=\omega_+ m,\omega=\bar{\omega}_+ m\}$, for any sufficiently large~$\mathcal{C}>0$. Then, in view of Theorem~\ref{thm: sec: proofs of the main theorems, thm 3} we note that 
	\begin{equation}
			\begin{aligned}
				 \int_{r^\star_{-\infty}}^{r^\star_\infty} \left( |u^\prime|^2+|u|^2+\left(1-\frac{r_{\textit{trap}}}{r}\right)^2\left(\tilde{\lambda}+\omega^2\right)|u|^2\right)  =0,
			\end{aligned}
	\end{equation}
	for~$H=0$, which immediately concludes that~$u\equiv 0$ for the desired frequencies, which in turn concludes~\eqref{eq: lem: sec: proofs of the main theorems, eq 1}.

	Now we study the frequencies
	\begin{equation}\label{eq: proof prop: subsec: sec: continuity argument, subsec 1, prop 1, eq 2.1}
	  \{0<|\omega-\omega_+m|\leq \mathcal{C}^{-1}\}\cup \{0<|\omega-\bar{\omega}_+ m|\leq \mathcal{C}^{-1}\}.
	\end{equation}	
For the frequencies~\eqref{eq: proof prop: subsec: sec: continuity argument, subsec 1, prop 1, eq 2.1} we can use Theorem~\ref{thm: sec: proofs of the main theorems, thm 3} for solutions~$u$ of the homogeneous Carter's radial ode that satisfy the boundary conditions~\eqref{eq: lem: sec carters separation, subsec boundary behaviour of u, boundary beh. of u, eq 3} and obtain the result~\eqref{eq: cor: sec: proofs of the main theorems, cor 1, eq 1} without the last boundary term on the RHS~(since~$\mathcal{F}_{\mathcal{SF},\mathcal{C}}\cap$\eqref{eq: proof prop: subsec: sec: continuity argument, subsec 1, prop 1, eq 2.1}~$=\emptyset$) and without the inhomogeneities. Therefore, we conclude that for the frequencies~\eqref{eq: proof prop: subsec: sec: continuity argument, subsec 1, prop 1, eq 2.1} we have~$|\mathcal{W}^{-1}|<\infty$.
\end{proof}

\subsection{Applying Parseval and summing in the redshift estimates: Proof of Theorem~\ref{thm: subsec: summing in the redshift estimate, thm -1}}\label{subsec: summing in the redshift estimate}

We are now ready to prove Theorem~\ref{thm: subsec: summing in the redshift estimate, thm -1}.

\begin{proof}[\textbf{Proof of Theorem~\ref{thm: subsec: summing in the redshift estimate, thm -1}}]

We apply Carter's separation of variables to the inhomogeneous pde
\begin{equation}
	\Box_{g_{a,M,l}}\psi -\mu^2_{KG} \psi =F.
\end{equation}

In view of the assumption of the present Theorem, specifically sufficient integrability and outgoingness of~$\psi$, then Proposition~\ref{prop: Carters separation, radial part} guarantees that for almost all~$\omega$ the function~$u^{(a\omega)}_{m\ell}(r)$ is smooth for all~$m,\ell$ and~$r\in (r_+,\bar{r}_+)$, and moreover the outgoing boundary conditions~\eqref{eq: lem: sec carters separation, subsec boundary behaviour of u, boundary beh. of u, eq 3} are satisfied. Therefore, we may apply the fixed frequency integrated result of Theorem~\ref{thm: sec: proofs of the main theorems, thm 3}, and then integrate with~$\int_{-\infty}^\infty d\omega \sum_{m\ell}$, to obtain 
\begin{equation}\label{eq: proof prop: subsec: summing in the redshift estimate, prop 0, eq -1}
	\begin{aligned}
		&	\int_{-\infty}^\infty d\omega \sum_{m\ell}\int_{r^\star_{-\infty}}^{r^\star_{\infty}} \left( |\Psi^\prime|^2 + (\omega^2+\tilde{\lambda})\left(1-\frac{r_{trap}}{r}\right)^2|u|^2 +\left(\mu^2_{KG}+1_{\{|m|>0\}}\right)|u|^2 \right) dr^\star \\
		&	\qquad \leq  B(\mathcal{C},r_{\pm \infty}) \int_{-\infty}^\infty d\omega \sum_{m\ell}\Big(\int_{-\infty}^{r^\star_{-\infty}} 1_{\{|m|>0\}} (|u^\prime H|+|\omega\cdot uH|+|m\cdot u H|)dr^\star\\
		&	\qquad\qquad\qquad\qquad\qquad\qquad+ \int_{r^\star_\infty}^\infty 1_{\{|m|>0\}}( |u^\prime H|+|\omega\cdot uH|+|m\cdot u H|)\Big)dr^\star\\
		&	\qquad\qquad + B(\mathcal{C},r_{\pm \infty}) \int_{\mathbb{R}} d\omega \sum_{m,\ell}\int_{\mathbb{R}} \frac{1}{\Delta} |H|^2dr^\star\\
		&	\qquad\qquad +E \int_{-\infty}^\infty d\omega \sum_{m\ell} \int_{\mathbb{R}} \left(\left(\omega-\frac{am\Xi}{r_+^2+a^2}\right)\Im (\bar{u}H)+\left(\omega-\frac{am\Xi}{r_+^2+a^2}\right)\Im(\bar{u}H)\right)dr^\star\\
		&	\qquad\qquad +B(\mathcal{C},r_{\pm \infty}) \int_{-\infty}^\infty d\omega \sum_{m\ell} 1_{\mathcal{F}_{\mathcal{SF},\mathcal{C}}}|u|^2(-\infty),
	\end{aligned}
\end{equation}
where in the above we used
\begin{equation}
	H=(H)^{(a\omega)}_{ml}=\frac{\Delta}{(r^2+a^2)^{3/2}}\left(\rho^2 F\right)^{(a\omega)}_{ml}(r).
\end{equation}

We begin by bounding the inhomogeneous terms on the RHS of~\eqref{eq: proof prop: subsec: summing in the redshift estimate, prop 0, eq -1}. Specifically, let~$C^{(a\omega)}_{m\ell}$ denote the inhomogeneous terms (the terms related to~$H$) on the right hand side of inequality~\eqref{eq: proof prop: subsec: summing in the redshift estimate, prop 0, eq -1}.	It is a direct consequence of Young's inequalities and Parseval's identities that we have the following bound
	\begin{equation}\label{eq: proof prop: subsec: summing in the redshift estimate, prop 0, eq 0}
	\begin{aligned}
		&	\int_{\mathbb{R}}\int_{\mathbb{R}}d\omega   \sum_{m\ell}C^{(a\omega)}_{m\ell}dr^\star \\
		&	\leq   B(\mathcal{C},r_{\pm \infty}) \epsilon_{\textit{cut}}\int_{\mathbb{R}}d\omega \sum_{|m|>0,\ell}\int_{-\infty}^{r^\star_{-\infty}} \Delta\left(\omega^2|u|^2 +m^2|u|^2\right)dr^\star \\
		&	\quad +B(\mathcal{C},r_{\pm \infty}) \epsilon_{\textit{cut}}\int_{\mathbb{R}}d\omega \sum_{|m|>0,\ell}\int_{r^\star_\infty}^{\infty} \Delta\left(\omega^2|u|^2 +m^2|u|^2\right)dr^\star \\
		&	\quad + \frac{B(\mathcal{C},r_{\pm \infty})}{\epsilon_{\textit{cut}}}\int_{\mathbb{R}}\sum_{m,\ell}\int_{\mathbb{R}} \frac{1}{\Delta} |H^{(a\omega)}_{m\ell}|^2 dr^\star\\
		&	\quad +E\left|\frac{1}{i}\int_{\mathbb{R}}dt\int_{\mathbb{R}}dr^\star \int_{\mathbb{S}^2}\sin\theta d\theta d\varphi \left(\partial_t\psi\cdot H -\frac{a\Xi}{\bar{r}_+^2+a^2}\partial_{\varphi}\psi\cdot H+\partial_t\psi\cdot H -\frac{a\Xi}{r_+^2+a^2}\partial_{\varphi}\psi\cdot H\right)\right|,
	\end{aligned}
\end{equation}
for a sufficiently small~$\epsilon_{cut}>0$, where in the last line of~\eqref{eq: proof prop: subsec: summing in the redshift estimate, prop 0, eq 0} we used that~$H=\frac{\Delta}{(r^2+a^2)^{3/2}}\left(\rho^2 F\right)$. Note that it is significant that in~\eqref{eq: proof prop: subsec: summing in the redshift estimate, prop 0, eq -1} that term multiplying~$E$ appears in that non-signed form. In~\eqref{eq: proof prop: subsec: summing in the redshift estimate, prop 0, eq 0}, in order to bound the term multiplying~$E$, we used the second Parseval's identity of~\eqref{eq: subsec: fourier properties, eq 1}.

Now, in view of the inequality
\begin{equation}
	\int_{-\infty}^\infty d\omega \sum_{m\ell} 1_{\mathcal{F}_{\mathcal{SF},\mathcal{C}}}|u|^2(-\infty)\leq B\int_{\mathcal{H}^+}\left|\mathcal{P}_{\mathcal{SF},\mathcal{C}} (\psi)\right|^2,
\end{equation}
where for the Fourier projection on bounded non-stationary superradiant frequencies~$\mathcal{P}_{\mathcal{SF},\mathcal{C}}$ see~\eqref{eq: sec: main theorems, eq 1}, we combine~\eqref{eq: proof prop: subsec: summing in the redshift estimate, prop 0, eq -1} and the bound of the cut-off terms~\eqref{eq: proof prop: subsec: summing in the redshift estimate, prop 0, eq 0} to obtain that   
\begin{equation}\label{eq: proof prop: subsec: summing in the redshift estimate, prop 0, eq 1}
\begin{aligned}
&\int_{r^\star_{-\infty}}^{r^\star_{-\infty}}\int_{\mathbb{R}}d\omega\sum_{m,\ell} \left( |\Psi^\prime|^2+ 1_{|m|>0}|u^\prime|^2 + (\omega^2+\tilde{\lambda})\left(1-\frac{r_{trap}}{r}\right)^2|u|^2 +\left(\mu^2_{KG}+1_{\{|m|>0\}}\right)|u|^2 \right) dr^\star \\
&	\leq   B(\mathcal{C},r_{\pm \infty}) \epsilon_{\textit{cut}}\int_{\mathbb{R}}d\omega \sum_{|m|>0,\ell}\int_{-\infty}^{r^\star_{-\infty}} \Delta\left(\omega^2|u|^2 +m^2|u|^2\right)dr^\star \\
&	\quad +B(\mathcal{C},r_{\pm \infty}) \epsilon_{\textit{cut}}\int_{\mathbb{R}}d\omega \sum_{|m|>0,\ell}\int_{r^\star_\infty}^{\infty} \Delta\left(\omega^2|u|^2 +m^2|u|^2\right)dr^\star \\
&	\quad + \frac{B(\mathcal{C},r_{\pm \infty})}{\epsilon_{\textit{cut}}}\int_{\mathbb{R}}\sum_{m,\ell}\int_{\mathbb{R}} \frac{1}{\Delta} |H^{(a\omega)}_{m\ell}|^2dr^\star \\
&	\quad +E\left|\frac{1}{i}\int_{\mathbb{R}}dt\int_{\mathbb{R}}dr^\star \int_{\mathbb{S}^2}\sin\theta d\theta d\varphi \left(\partial_t\psi \cdot H -\frac{a\Xi}{\bar{r}_+^2+a^2}\partial_{\varphi}\psi_{\tau_1}\cdot H+\partial_t\psi \cdot H -\frac{a\Xi}{r_+^2+a^2}\partial_{\varphi}\psi\cdot H\right)\right|\\
&	\quad +B(\mathcal{C},r_{\pm \infty})\cdot\int_{\mathcal{H}^+}\left|\mathcal{P}_{\mathcal{SF},\mathcal{C}} (\psi)\right|^2,
\end{aligned}
\end{equation}
where for the sufficiently small constant~$\epsilon_{\textit{cut}}$ see the estimate~\eqref{eq: proof prop: subsec: summing in the redshift estimate, prop 0, eq 0}. In the second to last line of~\eqref{eq: proof prop: subsec: summing in the redshift estimate, prop 0, eq 1} we used that~$H=\frac{\Delta}{(r^2+a^2)^{3/2}}\left(\rho^2 F\right)$.

Now, since~$\Psi$ is sufficiently integrable and outgoing, we can use a pigeonhole argument and we can find a sequence~$\tau_n\rightarrow-\infty$ such that 
 \begin{equation}
 	\int_{\{t^\star=\tau_n\}} J^n_\mu[\Psi]n^\mu \leq \frac{C}{\tau_n}
 \end{equation}
for some constant~$C(a,M,l,\Psi)$~(Also, see already Lemma~\ref{lem: sec: proof of Theorem 1, lem 1} where we present a similar result, for functions that vanish on the horizons.) We use the redshift estimate of Proposition~\ref{prop: redshift estimate} in the spacetime domain~$D(\tau_n,\tau_2)$ to obtain 
\begin{equation}\label{eq: proof prop: subsec: summing in the redshift estimate, prop 0, eq 1.3}
	\begin{aligned}
		& \int\int_{D(\tau_n,\tau_2)\cap \{r\leq r_++\epsilon_{red}\}}   J^N_{\mu}[\Psi]N^{\mu}  + \int\int_{D(\tau_n,\tau_2)\cap \{r\geq \bar{r}_+-\epsilon_{red}\}} J^N_{\mu}[\Psi]N^{\mu}  \\
		&  \quad\quad \leq B \int_{\{t^\star=\tau_n\}} J^n_{\mu}[\psi]n^{\mu}\\
		&	\qquad\qquad\qquad  +B\Big(\int\int_{D(\tau_n,\tau_2)\cap\{ r_++\epsilon_{red}\leq r\leq r_++2\epsilon_{red} \}}   J^n_{\mu}[\psi]n^{\mu} +\int\int_{D(\tau_n,\tau_2)\cap\{ \bar{r}_+-2\epsilon_{red}\leq r\leq \bar{r}_+-\epsilon_{red} \}}   J^n_{\mu}[\psi]n^{\mu} \Big) \\
		&	\qquad\qquad\qquad  + B\Big(\int\int_{D(\tau_n,\tau_2)\cap\{ r_+\leq r\leq r_++2\epsilon_{red} \}}  | F|^2 +\int\int_{D(\tau_n,\tau_2)\cap\{ \bar{r}_+-2\epsilon_{red}\leq r\leq \bar{r}_+\}}  |F|^2 \Big) \\
	\end{aligned}
\end{equation}

Now, we sum~\eqref{eq: proof prop: subsec: summing in the redshift estimate, prop 0, eq 1.3} and~\eqref{eq: proof prop: subsec: summing in the redshift estimate, prop 0, eq 1}, for~$r_{-\infty}<r_++\epsilon_{red}<\bar{r}_+-\epsilon_{red}<r_{+\infty}$, we take~$\tau_n\rightarrow -\infty$, and we conclude that the following holds
\begin{equation}\label{eq: proof prop: subsec: summing in the redshift estimate, prop 0, eq 3}
	\begin{aligned}
		\int\int_{D(-\infty,+\infty)} & \mu^2_{\textit{KG}}
		|\psi|^2+ 1_{|m|>0}|\psi|^2+|\partial_{r^\star}\psi|^2+ |Z^\star\psi|^2+|\partial_t\mathcal{P}_{\textit{trap}}[\chi_{\tau_1}\psi]|^2+|\slashed{\nabla}\mathcal{P}_{\textit{trap}}[\chi_{\tau_1}\psi]|^2  \\
		& \qquad  \leq   B(\mathcal{C},r_{\pm \infty})\int_{\mathcal{H}^+}\left|\mathcal{P}_{\mathcal{SF},\mathcal{C}} (\psi)\right|^2\\
		&	\qquad\qquad +B(\mathcal{C},r_{\pm \infty}) \int\int_{D(-\infty,\infty)} |\partial_t\psi \cdot F| +|\partial_{\varphi}\psi \cdot F| +|F|^2,
	\end{aligned}
\end{equation}
after taking~$\epsilon_{cut}>0$ sufficiently small. For the operator~$\mathcal{P}_{\textit{trap}}$ see~\eqref{eq: sec: main theorems, eq 1.1}.

We have concluded the result of Theorem~\ref{thm: subsec: summing in the redshift estimate, thm -1}. 
\end{proof}

\subsection{Finite in time energy estimates}\label{subsec: sec: the main proposition, subsec 0}

Note the following Lemma 
\begin{lemma}\label{lem: subsec: sec: the main proposition, subsec 0, lem 1}
	Let $l>0$, $(a,M)\in\mathcal{B}_l$ and $\mu^2_{\textit{KG}}\geq 0$. Given~$0\leq \tau_1\leq\tau_2<+\infty$, there exists a constant 
	\begin{equation}
		C(\tau_2-\tau_1,a,M,l)>0
	\end{equation}
	such that for any solution~$\psi$ of the inhomogeneous Klein--Gordon equation~$\Box_{g_{a,M,l}}\psi-\mu^2_{KG}\psi=F$, where~$F$ is a smooth function, then the following hold
	\begin{equation}\label{eq: lem: subsec: sec: the main proposition, subsec 0, lem 1, eq 1}
		\int_{\{t^\star=\tau_2\}} J^n_\mu[\psi] n^\mu  \leq C(\tau_2-\tau_1,a,M,l) \left(\int_{\{t^\star=\tau_1\}} J^n_\mu[\psi] n^\mu+ \int\int_{D(\tau_1,\tau_2)} |F|^2\right),
	\end{equation}
	\begin{equation}\label{eq: lem: subsec: sec: the main proposition, subsec 0, lem 1, eq 2}
		\int_{\{t^\star=\tau_2\}}|\psi|^2 \leq  C(\tau_2-\tau_1,a,M,l)\left(\int_{\{t^\star=\tau_1\}} |\psi|^2 +J^n_\mu[\psi]n^\mu  + \int\int_{D(\tau_1,\tau_2)} |F|^2\right).
	\end{equation}
\end{lemma}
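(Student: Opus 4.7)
The strategy is a standard Grönwall-type energy estimate using a globally future-directed timelike multiplier. Since no Killing vector field is timelike throughout $\mathcal{M}$ (the Hawking--Reall vector field $W$ of Lemma~\ref{lem: causal vf E,1} is timelike in the open region $\{r_+<r<\bar{r}_+\}$ but becomes null on the horizons), I would construct a composite multiplier
\begin{equation*}
	X \:\dot{=}\: N + C_\ast W,
\end{equation*}
where $N$ is the redshift vector field of Proposition~\ref{prop: redshift on the event horizon} and $C_\ast = C_\ast(a,M,l)>0$ is a large constant. By construction $X$ is future-directed and timelike everywhere on $\mathcal{M}$: near the horizons this follows because $N$ is timelike there (Proposition~\ref{prop: redshift on the event horizon}) while $W$ is future-directed and causal, and in the interior region $\{r_++2\epsilon_{red}\leq r\leq \bar{r}_+-2\epsilon_{red}\}$ we have $N=0$ and $W$ is timelike. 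Consequently there are constants depending only on $a,M,l$ such that $J^X_\mu[\psi]n^\mu \sim J^n_\mu[\psi]n^\mu$ pointwise on each hypersurface $\{t^\star=\tau\}$.

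The main step is to apply the divergence theorem of Proposition~\ref{prop: divergence theorem} with multiplier $X$ on the domain $D(\tau_1,\tau)$, $\tau \in [\tau_1,\tau_2]$. The horizon flux terms $\int_{\mathcal{H}^+\cap D}J^X_\mu[\psi]n^\mu$ and $\int_{\bar{\mathcal{H}}^+\cap D}J^X_\mu[\psi]n^\mu$ are non-negative since $X$ is timelike and the null generators are future-directed, so they can be dropped from the left-hand side. For the bulk, since both $^{(N)}\pi$ and $^{(W)}\pi$ are smooth bounded tensors, the current $\mathrm{K}^X[\psi]$ is a quadratic form in $\partial\psi$ together with a term in $\mu^2_{KG}|\psi|^2$, hence satisfies
\begin{equation*}
	|\mathrm{K}^X[\psi]| \leq C(a,M,l)\cdot J^n_\mu[\psi]n^\mu
\end{equation*}
pointwise. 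The inhomogeneous term is controlled by Cauchy--Schwarz and Young's inequality: $|X\psi \cdot F|\leq J^n_\mu[\psi]n^\mu + C|F|^2$. Combining all of the above with the coarea formula~\eqref{eq: subsec: coarea formula, eq 1} gives
\begin{equation*}
	\int_{\{t^\star=\tau\}} J^n_\mu[\psi]n^\mu \leq C\int_{\{t^\star=\tau_1\}} J^n_\mu[\psi]n^\mu + C\int_{\tau_1}^{\tau}\int_{\{t^\star=\tau'\}} J^n_\mu[\psi]n^\mu\, d\tau' + C\int\int_{D(\tau_1,\tau_2)}|F|^2,
\end{equation*}
and the Grönwall inequality in $\tau$ yields~\eqref{eq: lem: subsec: sec: the main proposition, subsec 0, lem 1, eq 1} with constant $C = C(a,M,l)\,e^{C(a,M,l)(\tau_2-\tau_1)}$.

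For the zeroth order estimate~\eqref{eq: lem: subsec: sec: the main proposition, subsec 0, lem 1, eq 2}, I would integrate the pointwise identity $\psi^2(\tau_2,x)=\psi^2(\tau_1,x)+2\int_{\tau_1}^{\tau_2}\psi\, \partial_{t^\star}\psi \, dt^\star$ along the integral curves of $\partial_{t^\star}$ (noting these are transverse to the foliation $\{t^\star=\tau\}$). Applying Young's inequality and then integrating over the slice gives
\begin{equation*}
	\int_{\{t^\star=\tau_2\}}|\psi|^2 \leq \int_{\{t^\star=\tau_1\}}|\psi|^2 + C\int_{\tau_1}^{\tau_2}\int_{\{t^\star=\tau'\}}\left(|\psi|^2 + |\partial_{t^\star}\psi|^2\right)\, d\tau',
\end{equation*}
and since $|\partial_{t^\star}\psi|^2\lesssim J^n_\mu[\psi]n^\mu$, the estimate~\eqref{eq: lem: subsec: sec: the main proposition, subsec 0, lem 1, eq 1} together with a second Grönwall argument closes the proof. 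There is no genuine obstacle in this argument --- the only point to handle with minor care is the construction of $X$ so that the redshift contribution near the horizons is correctly combined with the interior timelike field $W$, which is why $X=N+C_\ast W$ is preferable to just a multiple of $W$.
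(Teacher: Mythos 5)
Your proof is correct and takes essentially the same route as the paper: a divergence identity with the combined $W$/$N$ multiplier followed by Grönwall for the first estimate, and the fundamental theorem of calculus along integral curves plus Grönwall (reusing the first estimate) for the second. The only difference is presentational — you package the two multipliers into a single globally timelike $X = N + C_\ast W$ and argue causality explicitly, whereas the paper invokes them "in conjunction" without spelling this out; both are standard finite-in-time energy arguments and the substance is the same.
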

\begin{proof}
	This is a standard proof. 
	
	To prove~\eqref{eq: lem: subsec: sec: the main proposition, subsec 0, lem 1, eq 1} we use divergence theorem, see Proposition~\ref{prop: divergence theorem}, with a~$W=\partial_t+\frac{a\Xi}{r^2+a^2}\partial_\varphi$ multiplier in conjunction with an~$N$-redshift multiplier, see Proposition~\ref{prop: redshift estimate}. Then, we use Gr\"onwall and conclude. 
	
	To prove~\eqref{eq: lem: subsec: sec: the main proposition, subsec 0, lem 1, eq 2} we use Gr\"onwall's inequality in view of the direct application of fundamental theorem of calculus
	\begin{equation}
		|\psi|^2(\tau_2,r,\theta,\varphi) \leq C(\tau_2-\tau_1) \int_{\tau_1}^{\tau_2} |\partial_t\psi|^2(\tau,r,\theta,\varphi) d\tau +|\psi|^2(\tau_1,r,\theta,\varphi)
	\end{equation}
	and conclude by also using~\eqref{eq: lem: subsec: sec: the main proposition, subsec 0, lem 1, eq 1}. 
\end{proof}

\subsection{Corollary for future integrable homogeneous solutions for parameters in the set~\texorpdfstring{$\widetilde{\mathcal{MS}}_{l,\mu_{KG}}$}{PDFstring}}

In the following Corollary~\ref{cor: subsec: summing in the redshift estimate, cor 1} we restrict the assumptions of Theorem~\ref{thm: subsec: summing in the redshift estimate, thm -1}, and study `future integrable' solutions of the homogeneous Klein--Gordon equation~\eqref{eq: kleingordon}. 

First, we define the `future integrable' class of solutions:
\begin{definition}\label{def: subsec: sec: proofs of the main theorems, subsec 2, def 1}
	Let $l>0$ and~$(a,M)\in\mathcal{B}_l$. Moreover, assume that~$\psi$ is a smooth solution of the Klein--Gordon equation~\eqref{eq: kleingordon} arising from smooth initial data on the hypersurface~$\{t^\star=0\}$. We say that~$\psi$ is \underline{future integrable} if for every~$\tau_1>1$ the function~$\chi_{\tau_1}\psi$ is sufficiently integrable, see Definition~\ref{def: sec: carter separation, def 1}. For the cut-off~$\chi_{\tau_1}$ see Section~\ref{subsec: sec: carter separation, subsec 1}. 
  \end{definition}

We have the following 

\begin{customCorollary}{8.2}\label{cor: subsec: summing in the redshift estimate, cor 1}
	Let~$l>0$,~$\mu^2_{\textit{KG}}\geq 0$ and~$(a,M)\in\widetilde{\mathcal{MS}}_{l,\mu_{KG}}$. Let~$\psi$ be a future integrable solution, see Definition~\ref{def: subsec: sec: proofs of the main theorems, subsec 2, def 1}, of the Klein--Gordon equation~\eqref{eq: kleingordon} arising from smooth initial data on~$\{t^\star=0\}$. Then, we have
	\begin{equation}\label{eq: cor: subsec: summing in the redshift estimate, cor 1, eq 1}
		\begin{aligned}
			\int\int_{D(\tau_1,+\infty)} & \mu^2_{\textit{KG}}
			|\psi|^2+ |\partial_{r^\star}\psi|^2+ |Z^\star\psi|^2 +|\partial_t\mathcal{P}_{\textit{trap}}[\chi_{\tau_1}\psi]|^2+|\slashed{\nabla}\mathcal{P}_{\textit{trap}}[\chi_{\tau_1}\psi]|^2 \leq  B\int_{\{t^\star=\tau_1\}} J_{\mu}^{n}[\psi]n^{\mu}+ |\psi|^2,
		\end{aligned}
	\end{equation}
	for any~$0\leq \tau_1$, where for~$\mathcal{P}_{\textit{trap}}$ see~\eqref{eq: sec: main theorems, eq 1.1}, and for~$\chi_{\tau_1}$ see Section~\ref{sec: carter separation}. For~$\widetilde{\mathcal{MS}}_{l,\mu_{KG}}$ see Definition~\ref{def: subsec: sec: main theorems, subsec 1, def 1}. 
\end{customCorollary}

\begin{proof}[\textbf{Proof of Corollary~\ref{cor: subsec: summing in the redshift estimate, cor 1}}]

	We cut off the solutions of the Klein--Gordon equation~\eqref{eq: kleingordon} as follows 
	\begin{equation}
		\Box_{g_{a,M,l}}(\chi_{\tau_1}\psi)-\mu^2_{KG}\cdot\chi_{\tau_1}\psi =F
	\end{equation}
where~$F= 2\nabla\chi_{\tau_1}\cdot \nabla \psi+ \psi \Box(\chi_{\tau_1})$. 

The function~$\psi$ is future integrable and therefore~$\chi_{\tau_1}\psi$ sufficiently integrable and outgoing. We use the result of Theorem~\ref{thm: subsec: summing in the redshift estimate, thm -1} for~$\chi_{\tau_1}\psi$ in the place of~$\psi$ to obtain the following 
	\begin{equation}\label{eq proof: cor: subsec: summing in the redshift estimate, cor 1, eq 1}
	\begin{aligned}
		\int\int_{D(-\infty,+\infty)} & \mu^2_{\textit{KG}}
		|\chi_{\tau_1}\psi|^2+ |\partial_{r^\star}(\chi_{\tau_1}\psi)|^2+ |Z^\star(\chi_{\tau_1}\psi)|^2 +|\partial_t\mathcal{P}_{\textit{trap}}[\chi_{\tau_1}\psi]|^2+|\slashed{\nabla}\mathcal{P}_{\textit{trap}}[\chi_{\tau_1}\psi]|^2 \\
		&	\quad\leq   B \int_{\mathcal{H}^+}\left|\mathcal{P}_{\mathcal{SF},\mathcal{C}} (\chi_{\tau_1}\psi)\right|^2+	B\int\int_{D(-\infty,+\infty)} |\partial_t\chi_{\tau_1}\psi \cdot F| +|\partial_{\varphi}\chi_{\tau_1}\psi \cdot F| +|F|^2.
	\end{aligned}
\end{equation}

	Now, by using the bound of Proposition~\ref{prop: subsec: summing in the redshift estimate, prop 1}, for the horizon term of~\eqref{eq proof: cor: subsec: summing in the redshift estimate, cor 1, eq 1}, and the finite in time energy estimates of Lemma~\ref{lem: subsec: sec: the main proposition, subsec 0, lem 1},
	we obtain the desired result~\eqref{eq: cor: subsec: summing in the redshift estimate, cor 1, eq 1}. 
\end{proof}

\section{The continuity argument}\label{sec: continuity argument}

The main result of this Section is the following

\begin{theorem}\label{prop: sec: continuity argument, prop 1}
	Let~$l>0$ and~$\mu^2_{KG}\geq 0$. Then, if~$(a,M)\in\mathcal{MS}_{l,\mu_{KG}}$, see~\eqref{eq: sec: main theorems, eq 2}, then the solution~$\psi$ of the Klein--Gordon equation~\eqref{eq: kleingordon} that arises from smooth initial data~$(\psi_0,\psi_1)$ is future integrable, see Definition~\ref{subsec: sec: proofs of the main theorems, subsec 1}. 
\end{theorem}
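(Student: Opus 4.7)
The plan is a continuity argument in the black hole parameters $(a,M)$ along a continuous path in $\mathcal{MS}_{l,\mu_{KG}}$ starting from a Schwarzschild--de~Sitter point. The set $\widetilde{\mathcal{MS}}_{l,\mu_{KG}}$ is open in $\mathcal{B}_l$ by continuity of $\mathcal{W}$ in $(a,M)$ (Proposition~\ref{prop: subsec: sec: Carter's separation, wronskian, prop 1}), compactness of $\mathcal{F}_{\mathcal{SF},\mathcal{C}}$, and Corollary~\ref{lem: sec: proofs of the main theorems} which rules out nontrivial zeros of $\mathcal{W}$ on the complementary frequency regimes uniformly; hence its connected component $\mathcal{MS}_{l,\mu_{KG}}$ is path-connected in $\mathbb{R}^2$. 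Fix a continuous path $\gamma\colon [0,1]\to\mathcal{MS}_{l,\mu_{KG}}$ with $\gamma(0)\in\mathcal{B}_{0,l}$ and $\gamma(1)=(a,M)$, and define
\begin{equation*}
\mathcal{A}\;:=\;\bigl\{\,s\in[0,1]\;:\;\text{every smooth-data solution }\psi_s\text{ of }\eqref{eq: kleingordon}\text{ at parameters }\gamma(s)\text{ is future integrable}\,\bigr\}.
\end{equation*}
The goal is to show $\mathcal{A}$ is non-empty, open, and closed in $[0,1]$, whence $\mathcal{A}=[0,1]$ and in particular $1\in\mathcal{A}$.

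\textbf{Non-emptiness.} $0\in\mathcal{A}$: on Schwarzschild--de~Sitter, the boundedness of Proposition~\ref{prop subsec: sec: preliminaries, subsec 2, prop 1} combined with the Dafermos--Rodnianski degenerate Morawetz estimate of \cite{DR3} and iterated commutation with $\partial_t$ and the redshift vector $N$ (Proposition~\ref{prop: redshift estimate}) yields integrated decay at all orders, hence polynomial (indeed exponential) decay along the foliation $\{t^\star=\tau\}$. This is more than enough for sufficient integrability of $\chi_{\tau_1}\psi$ for every $\tau_1>1$ in the sense of Definition~\ref{def: sec: carter separation, def 1}, where crucially (cf.\ Remark~\ref{rem: sec: carter separation, rem 1}) the zeroth-order term is absent when $\mu_{KG}=0$, so the constant-function obstruction does not appear.

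\textbf{Closedness.} Let $s_n\to s_\infty$ with $s_n\in\mathcal{A}$ and let $\psi_{s_n},\psi_{s_\infty}$ denote the solutions from given smooth data. The hypersurfaces $\{t^\star=\tau\}$ are parameter-independent (Remark~\ref{rem: sec: the kerr de sitter spacetime, rem 2}), and by smooth dependence of $g_{a,M,l}$ on $(a,M)$ we have $\psi_{s_n}\to\psi_{s_\infty}$ in $C^k_{\textrm{loc}}(\mathcal{M})$. Continuity of $\mathcal{W}$ together with compactness of $\mathcal{F}_{\mathcal{SF},\mathcal{C}}$ makes the constant in the quantitative mode stability bound Proposition~\ref{prop: subsec: summing in the redshift estimate, prop 1}, and hence in Corollary~\ref{cor: subsec: summing in the redshift estimate, cor 1}, locally uniform in $s$ near $s_\infty$. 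Applying that corollary to each $\psi_{s_n}$ gives a uniform spacetime bound, which passes to $\psi_{s_\infty}$ by Fatou, yielding future integrability at $s_\infty$.

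\textbf{Openness and main obstacle.} This is the key step. For $s_0\in\mathcal{A}$ and $s$ near $s_0$, we cannot apply Corollary~\ref{cor: subsec: summing in the redshift estimate, cor 1} directly to $\psi_s$ since its hypotheses include the very future integrability we wish to prove. Following the framework of \cite{DR2}, the resolution is a cut-off approximation: replace $\chi_{\tau_1}\psi_s$ by $\chi_{\tau_1,\tau_2}\psi_s$, which is automatically sufficiently integrable and outgoing (Remark~\ref{rem: subsec: sec: carter separation, subsec 0, rem 1}), apply Theorem~\ref{thm: subsec: summing in the redshift estimate, thm -1} (valid on all of $\mathcal{B}_l$), and control the superradiant horizon flux term on its right-hand side by Proposition~\ref{prop: subsec: summing in the redshift estimate, prop 1} with constants uniform for $s$ in a neighborhood of $s_0$. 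The cut-off error at $\tau_2$ is then handled by a pigeonhole argument: the $\tau_2$-energy of $\psi_s$ is bounded locally uniformly in $s$ by the boundedness estimates in Corollary~\ref{cor: main theorem 1, cor 1} applied with inhomogeneity controlled by the $C^k_{\textrm{loc}}$-closeness $\psi_s\approx\psi_{s_0}$ on $\{0\le t^\star\le\tau_2\}$ together with the Morawetz tail of $\psi_{s_0}$ beyond $\tau_2$. Sending $\tau_2\to\infty$ along a good sequence produces a spacetime estimate for $\psi_s$ on $D(\tau_1,\infty)$ sufficient to establish future integrability, closing the bootstrap. Managing this bootstrap — ensuring the cut-off errors can be absorbed uniformly in $\tau_2$ and in $s$ simultaneously — is the main technical obstacle and is precisely the de~Sitter analogue of the continuity argument in \cite{DR2}, with the added difficulty that the $\omega=0$ de~Sitter trapping in $\mathcal{DSF}$ must be reconciled with our fixed-frequency estimates near the trapped regime.
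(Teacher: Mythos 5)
Your proposal identifies the correct overall strategy (a continuity argument in parameter space starting from Schwarzschild--de~Sitter), but it omits the reduction to fixed azimuthal frequency $m$, which is the central device the paper uses to actually close the argument — and is exactly the thing you would need to resolve the obstacle you correctly flag at the end as ``the main technical obstacle'' without closing it.

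Concretely, the paper first proves future integrability for each azimuthal mode $\psi_m$ separately (Proposition~\ref{prop: sec: continuity argument, prop 1.3}), introduces the parameter-space set $\mathcal{MS}_{l,\mu_{KG},m}$ for fixed $m$, and runs the clopen argument for that set. Then Theorem~\ref{prop: sec: continuity argument, prop 1} follows by the mode expansion $\psi=\sum_m\psi_m$ and $\ell^2$-orthogonality. The fixed-$m$ reduction is not cosmetic: it is what makes the openness step work. Near a parameter point $(\mathring a,\mathring M)$ where fixed-$m$ solutions are already known to be future integrable, one introduces the interpolating metric $\tilde g_{\tau_2}$ and the interpolating solution $\tilde\psi_{\tau_2}$; the cut-off commutator error $F=(\Box_{g_{\mathring a,\mathring M}}-\Box_{\tilde g_{\tau_2}})\tilde\psi_{\tau_2}$ is then absorbed via the Morawetz--plus--boundedness estimate of Proposition~\ref{prop: sec: continuity argument, prop 2}, whose constants are $B(m,j)$ and whose degeneracy set $\tilde\zeta_{\mathrm{trap}}$ is a small interval around $r_{\Delta,\mathrm{frac}}$. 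This estimate, and in particular the boundedness input, comes from the $W_{\mathrm{mod}}$ multiplier of Lemma~\ref{lem: sec: continuity argument, lem 2}, which is constructed to be \emph{Killing} exactly in a neighborhood of the trapped region. But trapping can only be localized to a shrinking neighborhood of $r_{\Delta,\mathrm{frac}}$ for \emph{fixed} $m$ by taking $\omega_{\mathrm{high}}(m,\epsilon_{\mathrm{trap}})$ large (Lemma~\ref{lem: sec: continuity argument, lem 1}); for the full solution the trapping location ranges over all of $[R^-,R^+]$, a fixed interval where no timelike $W_{\mathrm{mod}}$ can be made Killing, and the constants would need to be uniform in $m$, which is not achievable at this stage. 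A further use of the fixed-$m$ structure is the replacement $m^2|\psi|^2=|\partial_\varphi\psi|^2$ in~\eqref{eq: proof prop: sec: continuity argument, prop 2, eq 0}, which is what lets one avoid zeroth-order terms in the future fluxes so the boundedness estimate closes. Your definition of $\mathcal{A}$ over all $m$ at once therefore does not have, as far as you have argued, an estimate in the openness step with constants uniform enough to absorb the interpolation error, and your closedness argument likewise invokes bounds (effectively Proposition~\ref{prop: sec: continuity argument, prop 2}) whose constants blow up in $m$. The Fatou/Banach--Alaoglu passage at the limit point is fine once the uniform bound is established, but the uniform bound is exactly what the fixed-$m$ decomposition supplies.

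Two smaller points. First, for non-emptiness the paper only needs the boundedness statement of Proposition~\ref{prop subsec: sec: preliminaries, subsec 2, prop 1} combined with Lemma~\ref{lem: subsec: sec: continuity argument, subsec 0, lem 1}; quantitative decay of \cite{DR3} is not required, and you should note that a $\sup_\tau$ energy bound is what feeds Lemma~\ref{lem: subsec: sec: continuity argument, subsec 0, lem 1}, not an integrated-in-$t$ bound. Second, you cite the argument as ``precisely the de~Sitter analogue of the continuity argument in \cite{DR2},'' but the continuity argument of \cite{DR2} also reduces to fixed azimuthal frequency; if you intend to mirror that argument, the fixed-$m$ reduction should appear explicitly.
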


\begin{proof}
	See Section~\ref{subsec: sec: continuity argument, subsec 2}. 
\end{proof}

\begin{remark}
	
We will use Theorem~\ref{prop: sec: continuity argument, prop 1} in the next Section~\ref{sec: proof of Theorem 1}. If the solution of the Klein--Gordon equation~\eqref{eq: kleingordon} is axisymmetric~$\partial_{\varphi^\star}\psi=0$ then we do not need the continuity argument of the present Section.
\end{remark}

We use the generic constants of Section~\ref{subsec: sec: proofs of the main theorems, subsec 1}, with the algebra of constants discussed there, but we will also use the notation~$B(m)$ to denote that the constant~$B$ additionally depends on the azimuthal frequency~$m$.

Before presenting and proving our main results we need several preparatory Lemmata and Propositions regarding alternative trapping parameters for fixed azimuthal frequency~$m$.

\subsection{The causal vector field~\texorpdfstring{$W_{mod}$}{ng}}

The Lemma of this Section will be used later in the fixed azimuthal frequency Proposition~\ref{prop: sec: continuity argument, prop 2}.

Let~$\epsilon_{trap}>0$ be sufficiently small, as chosen in Lemma~\ref{lem: sec: continuity argument, lem 1}. We now modify the timelike vector field~$W$ of Lemma~\ref{lem: causal vf E,1} to make it Killing in the sufficiently small region 
\begin{equation}
	\{r_{\Delta,\textit{frac}}-2\epsilon_{\textit{trap}}\leq r\leq r_{\Delta,\textit{frac}}+2\epsilon_{\textit{trap}}\}.
\end{equation}
We state this result in the following Lemma 

\begin{lemma}\label{lem: sec: continuity argument, lem 2} 
Let~$l>0$ and~$(a,M)\in\mathcal{B}_l$. Moreover, let~$\epsilon_{\textit{trap}}>0$ of Lemma~\ref{lem: sec: continuity argument, lem 1} be sufficiently small.

Then, there exists a causal vector field 
\begin{equation}
	W_{\textit{mod}}
\end{equation}
such that 
\begin{equation}\label{eq: lem: sec: continuity argument, lem 2, eq 1} 
	\begin{aligned}
		W_{\textit{mod}}	&	=W,\qquad \{r_+\leq r\leq r_{\Delta,\textit{frac}}-3\epsilon_{\textit{trap}}\}\cup\{r_{\Delta,\textit{frac}}+3\epsilon_{\textit{trap}}\leq r\leq\bar{r}_+\},\\
		W_{\textit{mod}}	&	\text{ is timelike in } \{r_{\Delta,\textit{frac}}-3\epsilon_{\textit{trap}}\leq r \leq r_{\Delta,\textit{frac}}-2\epsilon_{\textit{trap}}\}\cup \{r_{\Delta,\textit{frac}}+2\epsilon_{\textit{trap}}\leq r\leq r_{\Delta,\textit{frac}}+3\epsilon_{\textit{trap}}\},\\
		W_{\textit{mod}}	&	\text{ is Killing and timelike in } \{r_{\Delta,\textit{frac}}-2\epsilon_{\textit{trap}}\leq r\leq r_{\Delta,\textit{frac}}+2\epsilon_{\textit{trap}}\},
	\end{aligned}
\end{equation}
where~$W=\partial_t+\frac{a\Xi}{r^2+a^2}\partial_{\varphi}$, see Lemma~\ref{lem: causal vf E,1}. Moreover, we have the following pointwise estimate
\begin{equation}\label{eq: lem: sec: continuity argument, lem 2, eq 2} 
	|\mathrm{K}^{W_{mod}}[\psi]| \lesssim  \Delta \cdot 1_{\{r_+\leq r\leq \bar{r}_+\}\setminus\{r_{\Delta,\textit{frac}}-2\epsilon_{\textit{trap}}\leq r\leq r_{\Delta,\textit{frac}}+2\epsilon_{\textit{trap}}\}} \cdot\left( \epsilon|Z^\star\psi|^2+ \epsilon(\partial_{r^\star}\psi)^2 +\epsilon^{-1} |\partial_\phi \psi|^2 \right),
\end{equation}
for a sufficiently small~$\epsilon>0$, where~$r_{\Delta,frac}$ is the unique critical point of~$\frac{\Delta}{(r^2+a^2)^2}$, see Lemma~\ref{lem: sec: general properties of Delta, lem 3}. 
\end{lemma}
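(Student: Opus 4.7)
The plan is to define $W_{\textit{mod}} := \partial_{t^\star} + f(r)\partial_{\varphi^\star}$ where $f$ is a smooth radial function interpolating between $\frac{a\Xi}{r^2+a^2}$ (which recovers $W_{\textit{mod}}=W$) and the constant $f_0 := \frac{a\Xi}{r_{\Delta,\textit{frac}}^2+a^2}$. Concretely, with a smooth cut-off $\chi(r)$ equal to $1$ on the inner interval $[r_{\Delta,\textit{frac}}-2\epsilon_{\textit{trap}}, r_{\Delta,\textit{frac}}+2\epsilon_{\textit{trap}}]$ and vanishing outside $[r_{\Delta,\textit{frac}}-3\epsilon_{\textit{trap}}, r_{\Delta,\textit{frac}}+3\epsilon_{\textit{trap}}]$, I would take $f(r) := \chi(r) f_0 + (1-\chi(r))\frac{a\Xi}{r^2+a^2}$. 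The first property of~\eqref{eq: lem: sec: continuity argument, lem 2, eq 1} is then immediate by construction, and on the inner interval $W_{\textit{mod}}$ is a constant-coefficient linear combination of the Killing fields $\partial_{t^\star}, \partial_{\varphi^\star}$, establishing the Killing portion of the third item.

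For causality, note that $f(r_{\Delta,\textit{frac}}) = \frac{a\Xi}{r_{\Delta,\textit{frac}}^2+a^2}$, so $W_{\textit{mod}}|_{r_{\Delta,\textit{frac}}} = W|_{r_{\Delta,\textit{frac}}}$, which is strictly timelike by Lemma~\ref{lem: causal vf E,1} since $r_{\Delta,\textit{frac}}\in(r_+,\bar{r}_+)$. The explicit expression for $g(\partial_t+f(r)\partial_\varphi, \partial_t+f(r)\partial_\varphi)$ computed in the proof of Lemma~\ref{lem: causal vf E,1} is valid for arbitrary smooth $f$ and is jointly continuous in $(r,f)$; combined with the pointwise bound $|f(r)-\frac{a\Xi}{r^2+a^2}|= O(\epsilon_{\textit{trap}})$ from the interpolation, this yields strict negativity on the full outer interval $[r_{\Delta,\textit{frac}}-3\epsilon_{\textit{trap}}, r_{\Delta,\textit{frac}}+3\epsilon_{\textit{trap}}]$ provided $\epsilon_{\textit{trap}}$ is taken sufficiently small, giving the remaining parts of~\eqref{eq: lem: sec: continuity argument, lem 2, eq 1}.

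For the pointwise estimate~\eqref{eq: lem: sec: continuity argument, lem 2, eq 2}, I would repeat the computation from the proof of Lemma~\ref{lem: subsec: redshift multipliers for slow, axi, lem 1} without specializing to $\partial_\varphi\psi = 0$. Since $\partial_{t^\star}$ is Killing and $\Tr\,^{(f\partial_{\varphi^\star})}\pi = 0$ (the only nontrivial component of $\partial f$ is $\partial_r f$, while $g^{r\varphi}=0$ in Boyer--Lindquist coordinates), one obtains
\begin{equation*}
\mathrm{K}^{W_{\textit{mod}}}[\psi] = \mathbb{T}(\nabla f, \partial_{\varphi^\star})[\psi] = \tfrac{\Delta}{\rho^2}\, f'(r)\, \partial_r|_{\textit{BL}}\psi \cdot \partial_{\varphi^\star}\psi,
\end{equation*}
using in addition that $g^{\mu r}=\frac{\Delta}{\rho^2}\delta^{\mu r}$ and $g_{r\varphi}=0$. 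The factor $f'(r)$ vanishes identically on the inner interval, producing the claimed indicator. On its support $\Delta$, $f'$ and $\rho^{-2}$ are all smooth and bounded, and, after choosing $\epsilon_{\textit{hyb}}$ small enough that $Z^\star = \partial_r|_{\textit{BL}}$ on this region, a single application of Young's inequality to the product $|\partial_r|_{\textit{BL}}\psi||\partial_{\varphi^\star}\psi|$ yields the stated pointwise bound.

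The main obstacle is really just bookkeeping: the parameter $\epsilon_{\textit{trap}}$ must be chosen small enough to satisfy several constraints simultaneously, namely compatibility with Lemma~\ref{lem: sec: continuity argument, lem 1}, persistence of strict timelikeness throughout the transition regions via the continuity argument, and containment of the support of $f'$ within the region where the coordinate vector fields $Z^\star$ and $\partial_r|_{\textit{BL}}$ agree. All three can be met by taking the minimum of the corresponding thresholds.
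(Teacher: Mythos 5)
Your proposal is correct and follows essentially the same approach as the paper's (very terse) proof: take $W_{\textit{mod}} = \partial_t + f_{\textit{mod}}(r)\partial_\varphi$ with $f_{\textit{mod}}$ constant on the inner interval and equal to $\frac{a\Xi}{r^2+a^2}$ outside, then verify causality by continuity/smallness of $\epsilon_{\textit{trap}}$ and derive the pointwise bound via the deformation-tensor computation (with $\Tr\,^{(f\partial_\varphi)}\pi = 0$ killing the Klein--Gordon contribution) followed by Young's inequality. The only cosmetic remark is that the paper displays the intermediate identity as $\mathrm{K}^{W_{\textit{mod}}}=2\frac{\Delta}{\rho^2}\mathbb{T}(\nabla f,\partial_\varphi)$ (an apparent bookkeeping discrepancy with its own Lemma~\ref{lem: subsec: redshift multipliers for slow, axi, lem 1}, which drops the prefactor), but these constant factors are immaterial to the estimate, and your expression is the consistent one.
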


\begin{proof}
	The proof of~\eqref{eq: lem: sec: continuity argument, lem 2, eq 1}  is straightforward by considering a vector field of the form
	\begin{equation}
		W_{mod}= \partial_t + f_{mod} (r) \partial_\varphi,
	\end{equation} 
	where~$f_{mod}(r)$ is constant in~$\{r_{\Delta,\textit{frac}}-2\epsilon_{\textit{trap}}\leq r\leq r_{\Delta,\textit{frac}}+2\epsilon_{\textit{trap}}\}$.

	To prove~\eqref{eq: lem: sec: continuity argument, lem 2, eq 2} we note that the following holds pointwise
	\begin{equation}
		\mathrm{K}^{W_{mod}} = 2\frac{\Delta}{\rho^2} \mathbb{T}(\nabla f,\partial_\phi)[\psi]
	\end{equation}
	and conclude by using Young's inequality.
\end{proof}

\subsection{Morawetz estimate for fixed azimuthal frequency~\texorpdfstring{$m$}{m}}\label{subsec: sec: continuity argument, subsec 0.2}

Note the following Proposition, which \underline{does not} assume future integrability.

\begin{proposition}\label{prop: sec: continuity argument, prop 2}
Let~$l>0$,~$(a,M)\in\mathcal{MS}_{l,\mu_{KG}}$ and $\mu^2_{KG}\geq 0$. For any~$j\geq 1$ and~$|m|\geq 0$ there exists a constant 
\begin{equation}
	B=B(m,j,a,M,l,\mu^2_{\textit{KG}})>0,
\end{equation}
and there exists a smooth function 
\begin{equation}
\tilde{\zeta}_{\textit{trap}}(r)
\end{equation}
with
\begin{equation}\label{eq: prop: sec: continuity argument, prop 2, eq 0}
\tilde{\zeta}_{\textit{trap}}(r)=
\begin{cases}
1,\qquad \{r_+\leq r\leq r_{\Delta,\textit{frac}}-2\epsilon_{\textit{trap}}\}\cup\{r_{\Delta,\textit{frac}}+2\epsilon_{\textit{trap}},\bar{r}_+\}\\
0,\qquad \{r_{\Delta,\textit{frac}}-\epsilon_{\textit{trap}}\leq r\leq r_{\Delta,\textit{frac}}+\epsilon_{\textit{trap}}\},
\end{cases}
\end{equation}
where for the sufficiently small~$\epsilon_{\textit{trap}}>0$ see Lemma~\ref{lem: sec: continuity argument, lem 2}, such that for any smooth solution~$\psi$ of Klein--Gordon equation~\eqref{eq: kleingordon} with fixed azimuthal frequency
\begin{equation}
|m|> 0
\end{equation}
we obtain 
\begin{equation}\label{eq: prop: sec: continuity argument, prop 2, eq 1}
\begin{aligned}
&\int\int_{D(\tau_1,\tau_2)}  \tilde{\zeta}_{\textit{trap}}(r) \sum_{1\le i_1+i_2+i_3\le j}
|\nabb^{i_1}(\partial_{t^\star})^{i_2}(Z^\star)^{i_3}\psi|^2 \\
&\qquad\qquad\qquad\qquad+ \sum_{1\le i_1+i_2+i_3\le j-1}
|\nabb^{i_1}(\partial_{t^\star})^{i_2}(Z^\star)^{i_3+1}\psi|^2+|\nabb^{i_1}(\partial_{t^\star})^{i_2}(Z^\star)^{i_3}\psi|^2 \\
& \qquad   \leq B(m,j)\int_{\{t^\star=\tau_1\}} \sum_{0 \leq i_1+i_2+i_3\leq j}
|\nabb^{i_1}(\partial_{t^\star})^{i_2}(Z^\star)^{i_3}\psi|^2 + B(j)\int_{\{t^\star=\tau_2\}} \sum_{1\leq i_1+i_2+i_3\leq j}
|\nabb^{i_1}(\partial_{t^\star})^{i_2}(Z^\star)^{i_3}\psi|^2\\
\end{aligned}
\end{equation}
and 
\begin{equation}\label{eq: prop: sec: continuity argument, prop 2, eq 2}
	\begin{aligned}
	&	\int_{\{t^\star=\tau_2\}} \sum_{1\leq i_1+i_2+i_3\leq j}
		|\nabb^{i_1}(\partial_{t^\star})^{i_2}(Z^\star)^{i_3}\psi|^2 \\
		&\quad+\int\int_{D(\tau_1,\tau_2)}  \tilde{\zeta}_{\textit{trap}}(r) \sum_{1\le i_1+i_2+i_3\le j}
		|\nabb^{i_1}(\partial_{t^\star})^{i_2}(Z^\star)^{i_3}\psi|^2 \\
		&\qquad\qquad\qquad\qquad+ \sum_{1\le i_1+i_2+i_3\le j-1}
		|\nabb^{i_1}(\partial_{t^\star})^{i_2}(Z^\star)^{i_3+1}\psi|^2+|\nabb^{i_1}(\partial_{t^\star})^{i_2}(Z^\star)^{i_3}\psi|^2 \\
		&	\qquad \leq B(m,j)\int_{\{t^\star=\tau_1\}} \sum_{0 \leq i_1+i_2+i_3\leq j}
		|\nabb^{i_1}(\partial_{t^\star})^{i_2}(Z^\star)^{i_3}\psi|^2 \\
		&	\qquad\qquad   + B(m,j)\int_{\tau_1}^{\tau_2}d\tau\left(\int_{\{t^\star=\tau\}\cap [r_+,r_{\Delta,\textit{frac}}-2\epsilon_{\textit{trap}})}+\int_{\{t^\star=\tau\}\cap (r_{\Delta,\textit{frac}}+2\epsilon_{\textit{trap}},\bar{r}_+]}\right) |\partial_{\varphi}\psi|^2 \\
	\end{aligned}
\end{equation}
for all~$2\leq 1+\tau_1\leq\tau_2$. The value~$r_{\Delta,\textit{frac}}$ is the unique critical point of~$\frac{\Delta}{(r^2+a^2)^2}$, see Lemma~\ref{lem: sec: general properties of Delta, lem 3}.

Moreover, in the case of future integrable solutions of the Klein--Gordon equation we can drop the two last bulk terms from the RHS of~\eqref{eq: prop: sec: continuity argument, prop 2, eq 1}.

Furthermore, for \underline{the~axisymmetric~case~$m=0$} we replace~$(a,M)\in\mathcal{MS}_{l,\mu_{KG}}$ with the entire subextremal family~$(a,M)\in\mathcal{B}_l$ and moreover inequality~\eqref{eq: prop: sec: continuity argument, prop 2, eq 1} holds without the two last bulk terms from the RHS of~\eqref{eq: prop: sec: continuity argument, prop 2, eq 1}.
\end{proposition}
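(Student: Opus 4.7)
The plan is to apply Theorem~\ref{thm: subsec: summing in the redshift estimate, thm -1} to the time-cutoff $\psi_{\tau_1,\tau_2}:=\chi_{\tau_1,\tau_2}(t^\star)\psi$, which has compact $t^\star$-support and is therefore automatically sufficiently integrable and outgoing. The resulting cutoff inhomogeneity $F=2\nabla\chi_{\tau_1,\tau_2}\cdot\nabla\psi+\psi\,\Box_{g_{a,M,l}}\chi_{\tau_1,\tau_2}$ is supported in $\{t^\star\in[\tau_1,\tau_1+1]\cup[\tau_2-1,\tau_2]\}$, and its contribution to the right-hand side of Theorem~\ref{thm: subsec: summing in the redshift estimate, thm -1} is controlled by the finite-in-time energy bounds of Lemma~\ref{lem: subsec: sec: the main proposition, subsec 0, lem 1} in terms of boundary fluxes at $\tau_1$ and $\tau_2$. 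The delicate quantity to handle is the superradiant horizon flux $\int_{\mathcal{H}^+}|\mathcal{P}_{\mathcal{SF},\mathcal{C}}\psi_{\tau_1,\tau_2}|^2$ on the right-hand side of Theorem~\ref{thm: subsec: summing in the redshift estimate, thm -1}.

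For the axisymmetric case $m=0$, since $(\omega,0)\notin\mathcal{SF}$ for every $\omega\in\mathbb{R}$ by Definition~\ref{def: subsec: sec: frequencies, subsec 1, def 1}, the superradiant horizon flux vanishes identically, so Theorem~\ref{thm: subsec: summing in the redshift estimate, thm -1} yields~\eqref{eq: prop: sec: continuity argument, prop 2, eq 1} without the future boundary term on the right-hand side, and for the entire subextremal range $(a,M)\in\mathcal{B}_l$.

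For $|m|>0$ and $(a,M)\in\mathcal{MS}_{l,\mu_{KG}}$, I use the identity $|\partial_\varphi\psi|^2=m^2|\psi|^2$ at fixed azimuthal frequency together with the Parseval identities of Proposition~\ref{prop: subsec: fourier properties, prop 1} to bound the superradiant horizon flux by $B(m)\int_{\mathcal{H}^+\cap D(\tau_1,\tau_2)}|\psi|^2$. This in turn is absorbed using the energy identity for the modified vector field $W_{mod}$ of Lemma~\ref{lem: sec: continuity argument, lem 2}, which coincides with the Hawking--Reall generators on each horizon and is Killing and timelike near the trapping region; the bulk error $\mathrm{K}^{W_{mod}}$ is pointwise controlled by~\eqref{eq: lem: sec: continuity argument, lem 2, eq 2}, so after a small-$\epsilon$ split its $\epsilon$-part is absorbed into the trapping-free part of the Morawetz integrand on the LHS, while the $\epsilon^{-1}|\partial_\varphi\psi|^2$ contribution generates the bulk $\partial_\varphi\psi$ error appearing on the RHS of~\eqref{eq: prop: sec: continuity argument, prop 2, eq 2}. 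The $m$-independent future boundary term in~\eqref{eq: prop: sec: continuity argument, prop 2, eq 1} arises from the future flux $\int_{\{t^\star=\tau_2\}}J^{W_{mod}}_\mu[\psi]n^\mu$ produced by this divergence identity.

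Higher-order estimates ($j>1$) then follow by commuting the Klein--Gordon equation with the Killing fields $\partial_t,\partial_\varphi$ and the redshift vector field $N$ of Proposition~\ref{prop: redshift on the event horizon}, combined with standard elliptic estimates on the slices $\{t^\star=\tau\}$ and the higher-order redshift estimate of Proposition~\ref{prop: redshift estimate}. The main obstacle is ensuring that the future boundary coefficient $B(j)$ in~\eqref{eq: prop: sec: continuity argument, prop 2, eq 1} is truly independent of $m$: this is essential for the subsequent continuity argument of Theorem~\ref{prop: sec: continuity argument, prop 1}, where a summation over $m$ must ultimately be carried out, and any $m$-dependence in this coefficient would preclude closing the bootstrap.
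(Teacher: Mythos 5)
The overall architecture (time-cutoff plus Theorem~\ref{thm: subsec: summing in the redshift estimate, thm -1}, Parseval plus fixed-$m$ identity, $W_{mod}$ multiplier, commutation for higher $j$) is the right toolbox and matches the paper, but two key steps are misattributed or do not work as stated.

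First, for $m=0$ you claim that since $\mathcal{P}_{\mathcal{SF},\mathcal{C}}\psi=0$, Theorem~\ref{thm: subsec: summing in the redshift estimate, thm -1} already yields~\eqref{eq: prop: sec: continuity argument, prop 2, eq 1} without the future boundary term. This is not so: the future boundary term at $\{t^\star=\tau_2\}$ does not come from the superradiant horizon flux but from the cut-off inhomogeneity $F$, whose support near $\tau_2$ produces a $\tau_2$-flux via Lemma~\ref{lem: subsec: sec: the main proposition, subsec 0, lem 1}. The paper removes this flux by invoking the axisymmetric boundedness estimate of Proposition~\ref{prop:redshift vs superradiance estimate, for axisymmetric solutions}, which dominates the $\tau_2$-flux by data; that extra step is missing from your outline.

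Second, and more seriously, for $|m|>0$ you attempt to absorb $\int_{\mathcal{H}^+}|\mathcal{P}_{\mathcal{SF},\mathcal{C}}(\chi_{\tau_1,\tau_2}\psi)|^2$ via the $W_{mod}$ energy identity. This cannot work: by construction $W_{mod}=W=\partial_t+\frac{a\Xi}{r^2+a^2}\partial_\varphi$ near the horizons, so on $\mathcal{H}^+$ the flux $J^{W_{mod}}_\mu n^\mu$ is the degenerate null flux $|K^+\psi|^2$, which controls neither $|\psi|^2$ nor $|\partial_\varphi\psi|^2$ on $\mathcal{H}^+$. A nonnegative degenerate horizon flux sitting on the left-hand side of the divergence identity gives you nothing to absorb a zeroth-order or tangential-derivative horizon term against. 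For the same reason $\int_{\{t^\star=\tau_2\}}J^{W_{mod}}_\mu n^\mu$ degenerates at the ends of the slice and does not control the full future flux $\sum_{i_1+i_2+i_3=1}|\nabb^{i_1}\partial_{t^\star}^{i_2}(Z^\star)^{i_3}\psi|^2$; the paper's passage from~\eqref{eq: prop: sec: continuity argument, prop 2, eq 1} to~\eqref{eq: prop: sec: continuity argument, prop 2, eq 2} uses $W_{mod}$ \emph{together with the redshift vector field $N$} precisely to repair this degeneration. In the paper's proof of~\eqref{eq: prop: sec: continuity argument, prop 2, eq 1} itself, the $W_{mod}$ multiplier is not used at all: the horizon term is treated by writing, on the bounded compact frequency set $\mathcal{F}_{\mathcal{SF},\mathcal{C}}$ with $|m|\ge 1$, $|\mathcal{P}_{\mathcal{SF},\mathcal{C}}\Psi|^2\sim|\partial_\varphi\mathcal{P}_{\mathcal{SF},\mathcal{C}}\Psi|^2$ via Parseval on the horizon, then bounding this first-order horizon flux, together with the zeroth-order $\tau_2$-flux (via $m^2|\psi|^2=|\partial_\varphi\psi|^2$), by the first-order boundary fluxes at $\tau_1,\tau_2$; the cut-off structure and finite-in-time estimates do the rest. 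You should also note a subtlety you flag but do not resolve: making the $\tau_2$-flux constant genuinely $m$-independent ($B(j)$ rather than $B(m,j)$) as the statement asserts is exactly what this careful bookkeeping delivers, and your $W_{mod}$ route does not supply it.
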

\begin{proof}	

\boxed{First,~ we~ prove~\eqref{eq: prop: sec: continuity argument, prop 2, eq 1}}~ for~ a~ fixed~ azimuthal~ frequency~$|m|>0$. For any~$\epsilon_{trap}>0$ sufficiently small, we take~$\omega_{high}(m,\epsilon_{trap}),~\lambda_{low}^{-1}(\epsilon_{trap})$ of Lemma~\ref{lem: sec: continuity argument, lem 1} sufficiently large and in view of Lemma~\ref{lem: sec: continuity argument, lem 2} we conclude 
\begin{equation}
	\supp \tilde{\zeta}_{trap} \subset (r_{\Delta,frac}-\epsilon_{trap},r_{\Delta,frac}+\epsilon_{trap}). 
\end{equation}
where~$\tilde{\zeta}_{trap}$ is smooth and
\begin{equation}
	\tilde{\zeta}_{\textit{trap}}(r)=
	\begin{cases}
		1,\qquad \{r_+\leq r\leq r_{\Delta,\textit{frac}}-2\epsilon_{\textit{trap}}\}\cup\{r_{\Delta,\textit{frac}}+2\epsilon_{\textit{trap}},\bar{r}_+\}\\
		0,\qquad \{r_{\Delta,\textit{frac}}-\epsilon_{\textit{trap}}\leq r\leq r_{\Delta,\textit{frac}}+\epsilon_{\textit{trap}}\}.
	\end{cases}
\end{equation}

Given~$\psi$ we now apply the cut-off 
\begin{equation}
\chi_{\tau_1,\tau_2},
\end{equation}
for~$2\leq 1+\tau_1\leq\tau_2$, see Section~\ref{subsec: sec: carter separation, subsec 1}, in the place of~$\chi_{\tau_1}$ and repeat the arguments from the proof of Theorem~\ref{thm: subsec: summing in the redshift estimate, thm -1}, which note holds for the inhomogeneous Klein--Gordon equation~$\Box_{g_{a,M,l}}\psi-\mu^2_{KG}\psi=F$.

Specifically, we fix the constant
\begin{equation*}
	\mathcal{C}(a,M,l,\mu_{KG})>0, 
\end{equation*}
see Theorem~\ref{thm: subsec: summing in the redshift estimate, thm -1}, and we obtain the following
\begin{equation}\label{eq: proof prop: sec: continuity argument, prop 2, eq 0}
\begin{aligned}
&\int\int_{D(\tau_1,\tau_2)}  \tilde{\zeta}_{\textit{trap}}(r) \sum_{ i_1+i_2+i_3 = 1}
|\nabb^{i_1}(\partial_{t^\star})^{i_2}(Z^\star)^{i_3}\psi|^2+|Z^\star\psi|^2 \\
& \qquad   \leq B(m,j)\int_{\{t^\star=\tau_1\}} \sum_{0\leq i_1+i_2+i_3\leq 1}
|\nabb^{i_1}(\partial_{t^\star})^{i_2}(Z^\star)^{i_3}\psi|^2 +B(m,j)\int_{\{t^\star=\tau_2\}} \sum_{0\leq i_1+i_2+i_3 \leq 1}
|\nabb^{i_1}(\partial_{t^\star})^{i_2}(Z^\star)^{i_3}\psi|^2\\
&	\qquad\qquad +B(m,j) \cdot \int_{\mathcal{H}^+}\left|\mathcal{P}_{\mathcal{SF},\mathcal{C}} (\chi_{\tau_1,\tau_2}\psi)\right|^2\\
& \qquad   \leq B(m,j)\int_{\{t^\star=\tau_1\}} \sum_{0\leq i_1+i_2+i_3\leq 1}
|\nabb^{i_1}(\partial_{t^\star})^{i_2}(Z^\star)^{i_3}\psi|^2 +B(m,j)\int_{\{t^\star=\tau_2\}} \sum_{i_1+i_2+i_3 = 1}
|\nabb^{i_1}(\partial_{t^\star})^{i_2}(Z^\star)^{i_3}\psi|^2,\\
\end{aligned}
\end{equation}
for any~$2\leq 1+\tau_1\leq \tau_2$, where~$\mathcal{P}_{\mathcal{SF},\mathcal{C}}$ is a Fourier projection to bounded non-stationary superradiant frequencies, see~\eqref{eq: sec: main theorems, eq 1}. Note that to obtain the last estimate of~\eqref{eq: proof prop: sec: continuity argument, prop 2, eq 0} we used that
\begin{equation}
	m^2|\psi|^2=(\partial_\varphi\psi)^2
\end{equation}
for our fixed azimuthal frequency~$|m|>0$ in order to bound the future flux appropriately so that no zeroth order terms appear.

By commuting appropriately many times with~$\partial_t,N$, and by using elliptic estimates, we obtain the higher order statement 
\begin{equation}\label{eq: proof prop: sec: continuity argument, prop 2, eq 2}
	\begin{aligned}
		&\int\int_{D(\tau_1,\tau_2)}  \tilde{\zeta}_{\textit{trap}}(r) \sum_{1\le i_1+i_2+i_3\le j}
		|\nabb^{i_1}(\partial_{t^\star})^{i_2}(Z^\star)^{i_3}\psi|^2 \\
		&\qquad\qquad\qquad\qquad+ \sum_{1\le i_1+i_2+i_3\le j-1}
		|\nabb^{i_1}(\partial_{t^\star})^{i_2}(Z^\star)^{i_3+1}\psi|^2+|\nabb^{i_1}(\partial_{t^\star})^{i_2}(Z^\star)^{i_3}\psi|^2 \\
		& \qquad   \leq B(m,j)\int_{\{t^\star=\tau_1\}} \sum_{0 \leq i_1+i_2+i_3\leq j}
		|\nabb^{i_1}(\partial_{t^\star})^{i_2}(Z^\star)^{i_3}\psi|^2 + B(m,j)\int_{\{t^\star=\tau_2\}} \sum_{1 \leq i_1+i_2+i_3\leq j}
		|\nabb^{i_1}(\partial_{t^\star})^{i_2}(Z^\star)^{i_3}\psi|^2,\\
	\end{aligned}
\end{equation}
which concludes~\eqref{eq: prop: sec: continuity argument, prop 2, eq 1} for solutions supported in the fixed azimuthal frequency~$|m|>0$.

\boxed{Now,~ we~ prove~\eqref{eq: prop: sec: continuity argument, prop 2, eq 1}} for fixed azimuthal frequency~$|m|=0$, i.e.~for axisymmetric solutions~$\psi$ of the Klein--Gordon equation. By using Theorem~\ref{thm: subsec: summing in the redshift estimate, thm -1} we have that for~$l>0$ and~$(a,M)\in\mathcal{B}_l$~(namely the entire subextremal parameter family without assuming mode stability) and~$\mu^2_{KG}\geq 0$ then for axisymmetric solutions~$\psi$ of the Klein--Gordon equation the following holds 
\begin{equation}\label{eq: proof prop: sec: continuity argument, prop 2, eq 3}
	\begin{aligned}
		&\int\int_{D(\tau_1,\tau_2)}  \left(1-\frac{r_{\Delta,frac}}{r}\right)^2 \sum_{ i_1+i_2+i_3 = 1}
		|\nabb^{i_1}(\partial_{t^\star})^{i_2}(Z^\star)^{i_3}\psi|^2+|Z^\star\psi|^2 \\
		& \qquad   \leq B(j)\int_{\{t^\star=\tau_1\}} \sum_{0\leq i_1+i_2+i_3\leq 1}
		|\nabb^{i_1}(\partial_{t^\star})^{i_2}(Z^\star)^{i_3}\psi|^2 +B(j)\int_{\{t^\star=\tau_2\}} \sum_{0\leq i_1+i_2+i_3 \leq 1}
		|\nabb^{i_1}(\partial_{t^\star})^{i_2}(Z^\star)^{i_3}\psi|^2,\\
	\end{aligned}
\end{equation}
where note we included the zero order terms on the future fluxes. Since~$\psi$ is axisymmetric we have 
\begin{equation}\label{eq: proof prop: sec: continuity argument, prop 2, eq 3.9}
	\mathcal{P}_{\mathcal{SF},\mathcal{C}}\psi=0,
\end{equation}
see~\eqref{eq: sec: main theorems, eq 1} for the definition of the operator~$\mathcal{P}_{\mathcal{SF},\mathcal{C}}$. Moreover, we note from the redshift Proposition for axisymmetric solutions, see Proposition~\ref{prop:redshift vs superradiance estimate, for axisymmetric solutions}, that the following holds 
\begin{equation}\label{eq: proof prop: sec: continuity argument, prop 2, eq 5}
	\int_{\{t^\star=\tau_2\}} J^n_\mu[\psi]n^\mu +|\psi|^2 \leq B \int_{\{t^\star=\tau_1\}} J^n_\mu[\psi]n^\mu+|\psi|^2.
\end{equation}

Therefore, by using the integrated estimate~\eqref{eq: proof prop: sec: continuity argument, prop 2, eq 3} and the bounds~\eqref{eq: proof prop: sec: continuity argument, prop 2, eq 5} we conclude that the following holds for axisymmetric solutions
\begin{equation}\label{eq: proof prop: sec: continuity argument, prop 2, eq 3.1}
	\begin{aligned}
		&\int\int_{D(\tau_1,\tau_2)}  \left(1-\frac{r_{\Delta,frac}}{r}\right)^2 \sum_{ i_1+i_2+i_3 = 1}
		|\nabb^{i_1}(\partial_{t^\star})^{i_2}(Z^\star)^{i_3}\psi|^2+|Z^\star\psi|^2 \\
		& \qquad   \leq B(j)\int_{\{t^\star=\tau_1\}} \sum_{0\leq i_1+i_2+i_3\leq 1}
		|\nabb^{i_1}(\partial_{t^\star})^{i_2}(Z^\star)^{i_3}\psi|^2.
	\end{aligned}
\end{equation}
We have concluded~\eqref{eq: prop: sec: continuity argument, prop 2, eq 1} for~$m=0$.

\boxed{Now, ~we ~proceed ~to~ prove~\eqref{eq: prop: sec: continuity argument, prop 2, eq 2}}. We use the~$W_{mod}$ multiplier of Lemma~\ref{lem: sec: continuity argument, lem 2}, the redshift vector field~$N$, in conjunction with the integrated estimate~\eqref{eq: prop: sec: continuity argument, prop 2, eq 1} and the standard redshift argument found in the lecture notes~\cite{DR5} to appropriately bound the~$\dot{H}^1$ future fluxes on the RHS of~\eqref{eq: prop: sec: continuity argument, prop 2, eq 1} as follows 
\begin{equation}\label{eq: proof prop: sec: continuity argument, prop 2, eq 2.2}
			\begin{aligned}
				&	\int_{\{t^\star=\tau_2\}} \sum_{1\leq i_1+i_2+i_3\leq j}
				|\nabb^{i_1}(\partial_{t^\star})^{i_2}(Z^\star)^{i_3}\psi|^2  \\
				&	\qquad \leq B(j)\int_{\{t^\star=\tau_1\}} \sum_{1\leq i_1+i_2+i_3\leq j}
				|\nabb^{i_1}(\partial_{t^\star})^{i_2}(Z^\star)^{i_3}\psi|^2 \\
				&	\qquad\qquad   + B(m,j)\int_{\tau_1}^{\tau_2}d\tau\left(\int_{\{t^\star=\tau\}\cap [r_+,r_{\Delta,\textit{frac}}-2\epsilon_{\textit{trap}})}+\int_{\{t^\star=\tau\}\cap (r_{\Delta,\textit{frac}}+2\epsilon_{\textit{trap}},\bar{r}_+]}\right) |\psi|^2\\
			\end{aligned}
		\end{equation}
		for all~$2\leq 1+\tau_1\leq\tau_2$

Therefore, we bound the RHS of the integrated estimate~\eqref{eq: prop: sec: continuity argument, prop 2, eq 1} by using the bound~\eqref{eq: proof prop: sec: continuity argument, prop 2, eq 2.2} on the future fluxes to obtain the desired inequality~\eqref{eq: prop: sec: continuity argument, prop 2, eq 2}.

Finally, we note that if~the~solution~$\psi$~is~future~integrable, see Definition~\ref{subsec: sec: proofs of the main theorems, subsec 1}, then we can use the cut-offed wave~$\chi_{\tau_1}\psi$, see Section~\ref{subsec: sec: carter separation, subsec 1}, and repeat the arguments of the present proof to obtain the estimate~\eqref{eq: prop: sec: continuity argument, prop 2, eq 1}, without the future boundary term on the RHS. 
\end{proof}

\subsection{Reduction to fixed azimuthal frequency~\texorpdfstring{$m$}{g}}\label{subsec: sec: continuity argument, subsec 0}

The main result Theorem~\ref{prop: sec: continuity argument, prop 1} of the present Section will follow as a Corollary from the following fixed azimuthal frequency Proposition, which will be proved with a continuity argument.

\begin{proposition}\label{prop: sec: continuity argument, prop 1.3}
	Let the assumptions of Theorem~\ref{prop: sec: continuity argument, prop 1} hold, but moreover assume that~$\psi=\psi_m$ where~$\psi_m$ is the~$m$-th azimuthal mode. Then,~$\psi$ is future integrable, see Definition~\ref{def: subsec: sec: proofs of the main theorems, subsec 2, def 1}. 
\end{proposition}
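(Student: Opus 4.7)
I would argue by continuity along a path in $\mathcal{MS}_{l,\mu_{KG}}$. Since by Definition~\ref{def: sec: main theorems, def 1} this set is the connected component of $\widetilde{\mathcal{MS}}_{l,\mu_{KG}}$ containing $\mathcal{B}_{0,l}$, and $\widetilde{\mathcal{MS}}_{l,\mu_{KG}}$ is open in the open set $\mathcal{B}_l$, I can choose a continuous path $\gamma:[0,1]\to\mathcal{MS}_{l,\mu_{KG}}$ with $\gamma(0)\in\mathcal{B}_{0,l}$ and $\gamma(1)=(a,M)$. By Remark~\ref{rem: sec: the kerr de sitter spacetime, rem 2} the hypersurface $\{t^\star=0\}$ is parameter-independent, so the initial data projected onto the $m$-th azimuthal mode yields a family of solutions $\psi^{(s)}$ of the Klein--Gordon equation on $(\mathcal{M},g_{\gamma(s)})$. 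I then define the bootstrap set
\[
  \mathcal{A}_m := \{\, s\in[0,1] : \psi^{(s)} \text{ is future integrable in the sense of Definition~\ref{def: subsec: sec: proofs of the main theorems, subsec 2, def 1}} \,\},
\]
and the goal is to prove $\mathcal{A}_m=[0,1]$. Non-emptiness follows from $0\in\mathcal{A}_m$: on Schwarzschild--de~Sitter the unconditional Morawetz estimate of Theorem~\ref{thm: subsec: summing in the redshift estimate, thm -1} applies (since $\mathcal{B}_{0,l}\subset\widetilde{\mathcal{MS}}_{l,\mu_{KG}}$ by Proposition~\ref{prop: subsec: energy identity, prop 2}), which combined with the boundedness Proposition~\ref{prop subsec: sec: preliminaries, subsec 2, prop 1} and a pigeonhole/redshift iteration---or directly via \cite{bony,Dyatlov1}---yields exponential decay, implying sufficient integrability of $\chi_{\tau_1}\psi^{(0)}$ to all orders.

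\textbf{Openness and closedness.} The essential fact is that on any compact sub-path $\gamma([s_1,s_2])\subset\mathcal{MS}_{l,\mu_{KG}}$, the Wronskian $(s,\omega,m,\ell)\mapsto\mathcal{W}(\gamma(s),\omega,m,\ell)$ is jointly continuous on $[s_1,s_2]\times\mathcal{F}_{\mathcal{SF},\mathcal{C}}$ (by joint continuous dependence of solutions of Carter's radial ODE~\eqref{eq: ode from carter's separation} on both its coefficients and $\omega$, extending Proposition~\ref{prop: subsec: sec: Carter's separation, wronskian, prop 1}), so $|\mathcal{W}^{-1}|$ is uniformly bounded on $[s_1,s_2]\times\mathcal{F}_{\mathcal{SF},\mathcal{C}}$. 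Through Proposition~\ref{prop: subsec: summing in the redshift estimate, prop 1} this yields a horizon flux bound uniform in $s$, and then through Corollary~\ref{cor: subsec: summing in the redshift estimate, cor 1} a Morawetz estimate with $s$-uniform constant valid on any future integrable $\psi^{(s)}$. Closedness of $\mathcal{A}_m$ follows by taking $s_n\to s_*$ in $\mathcal{A}_m$ and using well-posedness to get convergence in Sobolev norms on every bounded time interval, then passing to the limit in the uniform Morawetz estimate. For openness at $s_0\in\mathcal{A}_m$, the key is to apply Theorem~\ref{thm: subsec: summing in the redshift estimate, thm -1} directly to the finite-time cutoff $\chi_{\tau_1,T}\psi^{(s)}$ (which is automatically sufficiently integrable) and to exploit the $s$-uniform horizon flux bound to obtain a $T$-uniform Morawetz-type estimate; letting $T\to\infty$ for $s$ in a small neighborhood of $s_0$ yields future integrability of $\psi^{(s)}$.

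\textbf{Main obstacle.} The hard step is precisely the circularity between Corollary~\ref{cor: subsec: summing in the redshift estimate, cor 1} (which \emph{assumes} future integrability) and the conclusion (future integrability itself). The openness argument above circumvents this by using the unconditional Theorem~\ref{thm: subsec: summing in the redshift estimate, thm -1} on the compactly supported cutoff $\chi_{\tau_1,T}\psi^{(s)}$ together with the uniform Wronskian bound on a compact neighborhood in parameter space; without such uniformity the $T\to\infty$ limit would not close. The restriction to a fixed azimuthal mode $m$ is crucial in the bootstrap: the annular zeroth-order term on the right-hand side of estimate~\eqref{eq: prop: sec: continuity argument, prop 2, eq 2} is $m^2\int|\psi|^2$, for which $|m|\ge 1$ is used to control it by the energy flux via $|\partial_\varphi\psi|^2\ge|\psi|^2$, preventing the Gr\"onwall blow-up that would occur for a sum over $m$ with $m$-dependent constants. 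Once the basic future integrability of $\psi^{(s)}$ is in hand, higher-order integrability in the sense of Definition~\ref{def: sec: carter separation, def 1} is obtained by commuting the Klein--Gordon equation with the Killing vector fields $\partial_{t^\star},\partial_{\varphi^\star}$ and the redshift vector field $N$ of Proposition~\ref{prop: redshift on the event horizon}, followed by standard elliptic estimates as in \cite{DR2} Section~9.
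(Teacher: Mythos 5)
Your overall architecture agrees with the paper's: a clopen (bootstrap) argument for fixed azimuthal number $m$ along a curve inside $\mathcal{MS}_{l,\mu_{KG}}$, with non-emptiness seeded at $\mathcal{B}_{0,l}$ and closedness argued by taking a convergent parameter sequence with fixed initial data and passing to the limit (the paper phrases this via Fatou's lemma, which is essentially your ``well-posedness plus limit'' argument). You also correctly identify that the fixed azimuthal frequency is what lets zeroth-order future-flux terms be absorbed into $|\partial_\varphi\psi|^2$.

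However, your openness step has a genuine gap. You claim that applying Theorem~\ref{thm: subsec: summing in the redshift estimate, thm -1} to $\chi_{\tau_1,T}\psi^{(s)}$, together with the $s$-uniform Wronskian bound, yields a $T$-uniform Morawetz estimate. This is not correct: the right-hand side of Theorem~\ref{thm: subsec: summing in the redshift estimate, thm -1} contains the cutoff inhomogeneity terms $\iint_{D}\big(|\partial_t\psi\cdot F|+|\partial_\varphi\psi\cdot F|+|F|^2\big)$, and for the two-sided cutoff $\chi_{\tau_1,T}$ the inhomogeneity $F$ is supported both near $\tau_1$ \emph{and} near $T$. The near-$T$ contribution is comparable to the energy flux through $\{t^\star=T\}$, which is precisely the quantity whose $T$-uniformity you are trying to establish; the circularity is not broken. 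Likewise, the quantitative mode-stability bound (Proposition~\ref{prop: subsec: summing in the redshift estimate, prop 1}) feeds the $H$-terms of the representation formula through $\|H\|$, so it too picks up the flux at $T$ when applied to $\chi_{\tau_1,T}\psi$ rather than $\chi_{\tau_1}\psi$. The uniform Wronskian bound controls the superradiant \emph{horizon} flux term; it does nothing to close the top cutoff. The paper's resolution is the interpolating metric $\tilde g_{\tau_2}=\chi_{\tau_2}g_{\mathring a,\mathring M,l}+(1-\chi_{\tau_2})g_{a,M,l}$ and the interpolating solution $\tilde\psi_{\tau_2}$: it agrees with $\psi$ in the past of $\{t^\star=\tau_2-\delta_0\}$ by domain of dependence, it is automatically future integrable because it solves the homogeneous equation on the base-point (good) metric in the future, and it solves $\Box_{g_{\mathring a,\mathring M,l}}\tilde\psi_{\tau_2}=F$ with $F$ supported in the past of $\tau_2$ and $\mathcal O(\epsilon)$-small. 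One then applies the fixed-$m$ Morawetz/boundedness estimate of Proposition~\ref{prop: sec: continuity argument, prop 2} \emph{on the base-point background} (where no future boundary term appears, since $\tilde\psi_{\tau_2}$ is already future integrable there), and absorbs the small $F$-errors. This device --- transferring the estimate from a nearby parameter where it already holds, rather than trying to prove it directly on the new parameter --- is the missing idea, and without it the $T\to\infty$ limit in your proposal does not close.

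Two smaller points: (i) you should treat $m=0$ as a separate base case (the paper does this directly from Corollary~\ref{cor: subsec: summing in the redshift estimate, cor 1} with $\mathcal P_{\mathcal{SF},\mathcal C}=0$, valid on all of $\mathcal B_l$), since the absorption $m^2|\psi|^2=|\partial_\varphi\psi|^2$ you invoke is unavailable there; (ii) your base case claim that the Schwarzschild--de~Sitter estimate can be obtained ``directly via \cite{bony,Dyatlov1}'' is true but unnecessary --- the paper's Propositions~\ref{prop subsec: sec: preliminaries, subsec 2, prop 1} and~\ref{prop: subsec: energy identity, prop 2} suffice.
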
 

\begin{proof}
	See Section~\ref{subsec: sec: continuity argument, subsec 1}.
\end{proof}

Note the following Lemma

\begin{lemma}\label{lem: subsec: sec: continuity argument, subsec 0, lem 1}
	Let~$l>0$,~$\mu^2_{KG}\geq 0$. Moreover, let~$a,M,\psi,m$ be as in Proposition~\ref{prop: sec: continuity argument, prop 1.3}. Then,~$\psi$ is future integrable if 
	\begin{equation}\label{eq: cor: subsec: sec: continuity argument, subsec 0, cor 1, eq 1}
	 \sup_{\tau\geq 0}\int_{\{t^\star=\tau\}} \sum_{1\le i_1+i_2+i_3\le j}
		|\nabb^{i_1}(\partial_{t^\star})^{i_2}(Z^\star)^{i_3}\psi|^2<+\infty,\qquad \forall j\geq 1.
	\end{equation}
\end{lemma}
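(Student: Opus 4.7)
By Definition~\ref{def: subsec: sec: proofs of the main theorems, subsec 2, def 1}, proving future integrability amounts to showing that, for each fixed $\tau_1>1$ and every $j\geq 1$,
\begin{equation}\label{eq: proposal goal}
\sup_{r\in[r_+,\bar{r}_+]}\int_{\mathbb{R}}\int_{\mathbb{S}^2}\Bigl(\mu^2_{\textit{KG}}|\chi_{\tau_1}\psi|^2 + \sum_{1\leq i_1+i_2+i_3\leq j}|\slashed{\nabla}^{i_1}\partial_t^{i_2}(Z^\star)^{i_3}(\chi_{\tau_1}\psi)|^2\Bigr)\sin\theta\, d\theta d\varphi\, dt < \infty.
\end{equation}
Since at any fixed $r\in(r_+,\bar{r}_+)$ the Boyer--Lindquist relation reduces to $t=t^\star+A(r)$, $\varphi=\varphi^\star+B(r)$, $\theta=\theta^\star$, the integral in \eqref{eq: proposal goal} is nothing but the integral over $t^\star\in\mathbb{R}$ of an $\mathbb{S}^2$-norm, and the analogous interpretation extends smoothly up to the horizons via the star coordinates. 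Thus the plan is to convert the hypothesis of uniform-in-$\tau$ slice bounds into a spacetime integrability statement over $D(\tau_1,\infty)$ and then extract pointwise-in-$r$ control by one-dimensional Sobolev embedding.

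\textbf{Step 1 (spacetime integrability).} Apply Proposition~\ref{prop: sec: continuity argument, prop 2}, estimate~\eqref{eq: prop: sec: continuity argument, prop 2, eq 1}, to $\psi=\psi_m$ at order $j+1$ on the spacetime slab $D(\tau_1,\tau_2)$. By the hypothesis \eqref{eq: cor: subsec: sec: continuity argument, subsec 0, cor 1, eq 1}, both slice fluxes on the right-hand side are bounded by a constant independent of $\tau_2$ (for $|m|>0$ the zeroth-order term at $\tau_1$ is controlled since $|\psi|^2=m^{-2}|\partial_\varphi\psi|^2$; for $m=0$ the hypothesis gives all positive-order slice norms, and smoothness of the data together with Lemma~\ref{lem: subsec: sec: the main proposition, subsec 0, lem 1} yields finiteness of the single $\tau_1$-slice zeroth-order term). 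Using that the second, unweighted sum on the LHS controls all derivatives of order $\leq j$ without trapping degeneration, monotone convergence $\tau_2\to\infty$ produces
\begin{equation}\label{eq: proposal spacetime}
\int\int_{D(\tau_1,\infty)} \sum_{|\alpha|\leq j}\bigl(|D^{\alpha+e_3}\psi|^2+|D^{\alpha}\psi|^2\bigr)\, dg < \infty,
\end{equation}
where $D^{\alpha}=\slashed{\nabla}^{i_1}\partial_{t^\star}^{i_2}(Z^\star)^{i_3}$.

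\textbf{Step 2 (Fubini and Sobolev in $r$).} By the volume form structure of Section~\ref{subsec: volume forms of spacelike hypersurfaces}, the integral \eqref{eq: proposal spacetime} is comparable, up to a positive smooth weight $v(r,\theta)\sim 1$, to $\int_{r_+}^{\bar{r}_+}\mathcal{G}_j(r)\,dr$, where
\begin{equation}
\mathcal{G}_j(r)\:\dot{=}\:\int_{\mathbb{R}}\int_{\mathbb{S}^2}\sum_{|\alpha|\leq j}|D^{\alpha}\psi|^2\,dt^\star d\theta d\varphi.
\end{equation}
Hence $\mathcal{G}_j\in L^1_r([r_+,\bar{r}_+])$. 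A direct Cauchy--Schwarz in $(t^\star,\theta,\varphi)$ yields the pointwise bound $|\partial_r\mathcal{G}_j(r)|\leq C\,\mathcal{G}_{j+1}(r)^{1/2}\mathcal{G}_j(r)^{1/2}$ (the radial derivative $\partial_r$ agrees with $Z^\star$ on the interior, up to smooth coefficients), and a further Cauchy--Schwarz in $r$ gives $\|\partial_r\mathcal{G}_j\|_{L^1_r}\leq C\|\mathcal{G}_{j+1}\|_{L^1_r}^{1/2}\|\mathcal{G}_j\|_{L^1_r}^{1/2}$. Invoking Step~1 at order $j+2$ (so that $\mathcal{G}_{j+1}\in L^1_r$), the embedding $W^{1,1}_r([r_+,\bar{r}_+])\hookrightarrow L^\infty_r([r_+,\bar{r}_+])$ delivers $\sup_r\mathcal{G}_j(r)<\infty$.

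\textbf{Step 3 (cutoff and conclusion).} To pass from $\psi$ to $\chi_{\tau_1}\psi$ in~\eqref{eq: proposal goal}, use the Leibniz rule: $D^{\alpha}(\chi_{\tau_1}\psi)$ equals $\chi_{\tau_1}D^{\alpha}\psi$ plus terms in which at least one derivative falls on $\chi_{\tau_1}$. The former contributes $\leq\mathcal{G}_j(r)<\infty$ uniformly in $r$ by Step~2; the latter are supported in $\{\tau_1\leq t^\star\leq\tau_1+1\}$ with $|\partial_{t^\star}^k\chi_{\tau_1}|\leq C(k)$, so, integrating over the bounded time interval, Cauchy--Schwarz together with the uniform slice bound of the hypothesis controls them as well (Sobolev in $r$ is applied in the same manner on each fixed-time slice). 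The zeroth-order $\mu_{KG}^2$-term in~\eqref{eq: proposal goal} is absorbed in $\mathcal{G}_j$ when $|m|>0$ via $|\psi|^2=m^{-2}|\partial_\varphi\psi|^2$, and absorbed by the Leibniz error terms when $m=0$ and $\mu_{KG}=0$ (the definition of sufficient integrability then omits it, cf.~Remark~\ref{rem: sec: carter separation, rem 1}). This gives~\eqref{eq: proposal goal} for every $j\geq 1$ and completes the proof.

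\textbf{The main subtlety} is that Proposition~\ref{prop: sec: continuity argument, prop 2} degenerates at trapping in its top-order derivatives; however, since the \emph{lower-order} piece of its LHS comes without the degeneration factor $\tilde{\zeta}_{\textit{trap}}$, by simply applying the proposition at one order higher one loses nothing, and the iteration described above in $j$ closes without ever meeting the trapping obstruction. A secondary point to handle carefully is the zeroth-order term in the $m=\mu_{KG}=0$ case, which requires the tailored convention of sufficient integrability from Remark~\ref{rem: sec: carter separation, rem 1}.
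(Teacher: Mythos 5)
Your proof is correct and follows the same route as the paper, which disposes of this lemma in a single line by pointing to Proposition~\ref{prop: sec: continuity argument, prop 2}: the uniform slice bounds on both $\{t^\star=\tau_1\}$ and $\{t^\star=\tau_2\}$ make the right-hand side of~\eqref{eq: prop: sec: continuity argument, prop 2, eq 1} finite uniformly in $\tau_2$, and the non-degenerate part of its left-hand side (at one order higher) yields the bulk integrability that future integrability is built on. The only substantive content you add, and rightly so, is the one-dimensional Sobolev step in $r$ passing from the $L^1_r$ control furnished by the bulk integral to the $\sup_{r}$ required in Definition~\ref{def: sec: carter separation, def 1}; the paper leaves this standard upgrade implicit, and your treatment of the zeroth-order term (via $|\partial_\varphi\psi_m|$ for $|m|>0$, via Remark~\ref{rem: sec: carter separation, rem 1} when $m=\mu_{KG}=0$) and of the Leibniz cutoff errors is careful and consistent with the paper's conventions.
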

\begin{proof}
	This is immediate in view of Proposition~\ref{prop: sec: continuity argument, prop 2}.
\end{proof}

In the following definition, we define a set we need for the remainder of the present Section 
\begin{definition}\label{def: subsec: sec: continuity argument, subsec 0, def 1}
	Let~$l>0$ and~$\mu^2_{KG}\geq 0$.  We define 
	\begin{equation}\label{eq: sec: continuity argument, eq 1}
		\begin{aligned}
			\mathcal{MS}_{l,\mu_{KG},m} &	= \{(a,M)\in \mathcal{MS}_{l,\mu_{KG}}:~\eqref{eq: cor: subsec: sec: continuity argument, subsec 0, cor 1, eq 1}~\text{ holds for all solutions}~\psi~ \text{of}~\eqref{eq: kleingordon}~ \text{supported}\\
			&	\qquad\qquad \text{on the fixed azimuthal frequency}~m~\text{with respect to}~g_{a,M}\}.
		\end{aligned}
	\end{equation}
	For the set~$\mathcal{MS}_{l,\mu_{KG}}$ see~\eqref{eq: sec: main theorems, eq 2}.
\end{definition}

\subsection{Proof of Proposition~\ref{prop: sec: continuity argument, prop 1.3}}\label{subsec: sec: continuity argument, subsec 1}

The proof of Proposition~\ref{prop: sec: continuity argument, prop 1.3} will use a continuity argument. Note that the original continuity argument of~\cite{DR2} was a continuity argument only in the rotation parameter~$a$ for a fixed black hole mass~$M$. In contrast to~\cite{DR2} the continuity argument of the present Section needs to take place in the space of parameters
\begin{equation}
	\mathcal{MS}_{l,\mu_{KG}}
\end{equation}
which is by definition connected, see Section~\ref{subsec: sec: main theorems, subsec 1}.

First, we prove the following Proposition
\begin{proposition}\label{prop: subsec: sec: continuity argument, subsec 1, prop 1}
	Let~$l>0$ and~$\mu^2_{KG}\geq 0$. Then,~$\mathcal{MS}_{l,\mu_{KG}}$ is an open subset of~$\mathcal{B}_l$. 
\end{proposition}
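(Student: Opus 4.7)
The plan is to reduce the openness of $\mathcal{MS}_{l,\mu_{KG}}$ to that of $\widetilde{\mathcal{MS}}_{l,\mu_{KG}}$: since $\mathcal{B}_l$ is open in $\mathbb{R}^2$ (Definition~\ref{def: subextremality, and roots of Delta}) and hence locally connected, any connected component of an open subset of $\mathcal{B}_l$ is itself open, so it suffices to prove $\widetilde{\mathcal{MS}}_{l,\mu_{KG}}$ is open. Fix $(a_0,M_0)\in\widetilde{\mathcal{MS}}_{l,\mu_{KG}}$ and choose $\mathcal{C}=\mathcal{C}(a_0,M_0,l,\mu_{KG})$ sufficiently large. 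By Corollary~\ref{lem: sec: proofs of the main theorems}, for every $(a,M)\in\mathcal{B}_l$ the relation $|\mathcal{W}^{-1}|<+\infty$ is automatic outside the bounded non-stationary superradiant set $\mathcal{F}_{\mathcal{SF},\mathcal{C}}$ of~\eqref{eq: subsec: sec: carter separation, subsec 2, eq 1}, provided we avoid the stationary frequencies $\omega=\omega_+m,\bar{\omega}_+m$ (which are excluded in the definition of $\widetilde{\mathcal{MS}}$). Thus the task reduces to propagating the nonvanishing of the Wronskian on the compact frequency set $\mathcal{F}_{\mathcal{SF},\mathcal{C}}$.

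Next I would verify that the relevant $(m,\ell)$ index set arising in $\mathcal{F}_{\mathcal{SF},\mathcal{C}}$ is in fact finite and stable under small perturbations of $(a,M)$: the constraint $\tilde{\lambda}^{(a\omega)}_{m\ell}\leq\mathcal{C}$ together with Lemma~\ref{lem: inequality for lambda} caps $|m|$, while standard eigenvalue growth of $P_{\mu_{KG}}(a\omega)$ in $\ell$ caps $\ell$ for each $m$, and $\omega$ ranges over a compact interval bounded away from the horizon frequencies. The superradiant condition defines an open subset which varies continuously with $(a,M)$, so for $(a,M)$ close to $(a_0,M_0)$ one may choose slightly enlarged $\mathcal{C}$ for which $\mathcal{F}_{\mathcal{SF},\mathcal{C}}(a,M)\subset \mathcal{F}_{\mathcal{SF},\mathcal{C}'}(a_0,M_0)$ with the same finite $(m,\ell)$-index set.

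Then I would establish joint continuity of $\mathcal{W}(a,M,l,\mu_{KG},\omega,m,\ell)$ in $(a,M,\omega)$ for each such $(m,\ell)$. Since $r_\pm$ are regular singular points of Carter's radial ode, the outgoing solutions $u_{\mathcal{H}^+},u_{\bar{\mathcal{H}}^+}$ in~\eqref{eq: subsec: sec: Carter's separation, wronskian, eq 1.1} admit Frobenius expansions whose coefficients are smooth in $(a,M,\omega,m,\ell)$ as long as $|\omega-\omega_+m|,|\omega-\bar{\omega}_+m|$ stay bounded away from zero (which they do on $\mathcal{F}_{\mathcal{SF},\mathcal{C}}$ by construction). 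Normalizing these solutions via their Frobenius data at a point $r_+(a,M)+\varepsilon$ close to the event horizon (with $\varepsilon$ uniform in $(a,M)$), standard continuous dependence of linear ODE solutions on parameters propagates the smooth dependence to an arbitrary interior reference point $r_\ast\in(r_+(a_0,M_0)+\varepsilon,\bar{r}_+(a_0,M_0)-\varepsilon)$; the analogous statement holds for $u_{\bar{\mathcal{H}}^+}$. Evaluating the Wronskian at $r_\ast$ then gives the desired joint continuity.

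With continuity in hand, the conclusion is an easy compactness argument: by hypothesis, the continuous positive function $|\mathcal{W}(a_0,M_0,\cdot)|$ attains a strictly positive minimum $c>0$ on the compact set $\mathcal{F}_{\mathcal{SF},\mathcal{C}}$, and by the continuity just established this lower bound persists, say as $c/2$, on $\mathcal{F}_{\mathcal{SF},\mathcal{C}}(a,M)$ for $(a,M)$ in a sufficiently small neighborhood of $(a_0,M_0)$, so such $(a,M)\in\widetilde{\mathcal{MS}}_{l,\mu_{KG}}$. The main obstacle is the second step: rigorously justifying the continuous dependence of the Frobenius-selected solutions $u_{\mathcal{H}^+},u_{\bar{\mathcal{H}}^+}$ on the background parameters $(a,M)$, since both the location of the regular singular point and the indicial roots vary with $(a,M)$; this is handled by working in a coordinate in which the location of the horizon is $(a,M)$-independent and invoking the classical analytic theory of regular singular points with parameters.
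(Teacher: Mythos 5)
Your proposal is correct and takes the same route as the paper: dispose of the large and near-stationary frequencies for all $(a,M)\in\mathcal{B}_l$ via Corollary~\ref{lem: sec: proofs of the main theorems}, then propagate non-vanishing of $\mathcal{W}$ on the residual bounded frequency set by continuity and compactness. Your explicit topological reduction (connected components of open subsets of the locally connected set $\mathcal{B}_l\subset\mathbb{R}^2$ are open, so it suffices to show $\widetilde{\mathcal{MS}}_{l,\mu_{KG}}$ is open) is what the paper does implicitly by exhibiting a ball around each $(a,M)\in\mathcal{MS}_{l,\mu_{KG}}$ contained in $\widetilde{\mathcal{MS}}_{l,\mu_{KG}}$.

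Where you are genuinely more careful than the written proof: Proposition~\ref{prop: subsec: sec: Carter's separation, wronskian, prop 1}, which the paper tacitly relies on, only asserts continuity of $\mathcal{W}$ in $\omega$ at fixed $(a,M)$, whereas the openness argument needs joint continuity in $(a,M,\omega)$ on a compact set, together with the uniform-in-$(a,M)$ finiteness of the $(m,\ell)$ range that $\tilde{\lambda}\leq\mathcal{C}$ entails. You flag both points and sketch the right fix — Frobenius parameter dependence normalized at a fixed interior radius, or equivalently working in a coordinate in which the location of the horizon does not move with $(a,M)$, to tame the moving regular singular points whose indicial exponents also depend on $(a,M)$. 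This is precisely the content compressed in the paper's phrase ``the Wronskian is a continuous function, on a compact domain,'' and spelling it out as you do is the honest way to write the proof.
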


\begin{proof}
Now, let~$(a,M)\in \mathcal{MS}_{l,\mu_{KG}}$. We recall from Definition~\ref{def: sec: main theorems, def 1} that the set~$\mathcal{MS}_{l,\mu_{KG}}$ is the connected component of the set~$\mathcal{B}_{0,l}$ in the set 
\begin{equation}\label{eq: proof prop: subsec: sec: continuity argument, subsec 1, prop 1, eq 1}
		\widetilde{\mathcal{MS}}_{l,\mu_{KG}}=\{(a,M)\in\mathcal{B}_l: \forall(\omega,m,\ell)\in\mathbb{R}\times\mathbb{Z}\times \mathbb{Z}_{\geq |m|},~\omega\neq \omega_+m,~\omega\neq \bar{\omega}_+m 
		, ~~|\mathcal{W}^{-1}(a,M,l,\omega,m,\ell)|<+\infty\}.
\end{equation}
For~$\epsilon>0$ sufficiently small we obtain that if
\begin{equation}
	|\mathring{a}-a|+|\mathring{M}-M|\leq \epsilon
\end{equation}
then~$(\mathring{a},\mathring{M})\in \mathcal{B}_l$.

For both the sufficiently large frequencies
\begin{equation}
	\{\omega^2+m^2+\tilde{\lambda}\geq \mathcal{C}\},
\end{equation}
and the frequencies
\begin{equation}
	 \{0<|\omega-\omega_+m|\leq \mathcal{C}^{-1}\}\cup \{0<|\omega-\bar{\omega}_+ m|\leq \mathcal{C}^{-1}\}
\end{equation}
we use Corollary~\ref{lem: sec: proofs of the main theorems} and we obtain
\begin{equation}
	|\mathcal{W}^{-1}(\mathring{a},\mathring{M},l,\omega,m,\ell)|<+\infty.
\end{equation}

For the remaining bounded frequencies
\begin{equation}
	\{\omega^2+m^2+\tilde{\lambda}\leq\mathcal{C}\}\cap \left(\{|\omega-\omega_+m| \geq \mathcal{C}^{-1}\}\cup \{|\omega-\bar{\omega}_+ m| \geq \mathcal{C}^{-1}\}\right).
\end{equation}
we use that the Wronskian is a continuous function, on a compact domain, and conclude that indeed~$|\mathcal{W}^{-1}(\mathring{a},\mathring{M},l,\omega,m,\ell)|<+\infty$ holds, for a sufficiently small~$\epsilon>0$.

We conclude~$(\mathring{a},\mathring{M})\in\mathcal{MS}_{l,\mu_{KG}}$.
\end{proof}

Now, we are ready to prove Proposition~\ref{prop: sec: continuity argument, prop 1.3}

\begin{proof}[\textbf{Proof of Proposition~\ref{prop: sec: continuity argument, prop 1.3}}]

Let~$l>0$. In this Proposition we assume that 
\begin{equation}\label{eq: proof prop: sec: continuity argument, prop 1.3, eq 1}
\psi\:\:\text{has fixed azimuthal frequency}\:\: m .
\end{equation}
Specifically, we assume that the solution~$\psi$ of the Klein--Gordon equation~\eqref{eq: kleingordon} arises from initial data
\begin{equation}\label{eq: proof prop: sec: continuity argument, prop 1.3, eq 1.1}
	(\psi_0,\psi_1)
\end{equation}
of fixed azimuthal frequency~$m$.

First, we observe that for~$m=0$ that indeed the solution~$\psi$ of Klein--Gordon arising from~$(\psi_0,\psi_1)$ is future integrable, see Corollary~\ref{cor: subsec: summing in the redshift estimate, cor 1} with~$\mathcal{P}_{\mathcal{SF},\mathcal{C}}=0$ and Lemma~\ref{lem: subsec: sec: continuity argument, subsec 0, lem 1}.

Therefore, \underline{in the rest of the proof we assume that~$|m|>0$}. We follow the analogous arguments of~\cite{DR2}.

First, we recall Lemma~\ref{lem: subsec: sec: continuity argument, subsec 0, lem 1} where we proved that solutions~$\psi$ of the Klein--Gordon equation~\eqref{eq: kleingordon} that satisfy 
\begin{equation}\label{eq: cor: sec: continuity argument, cor 1, eq 1}
 \sup_{\tau\geq 0}\int_{\{t^\star=\tau\}} \sum_{1\le i_1+i_2+i_3\le j}
|\nabb^{i_1}(\partial_{t^\star})^{i_2}(Z^\star)^{i_3}\psi|^2<+\infty,\qquad \forall j\geq 1,
\end{equation}
are future integrable. We will use this to show that the set~$\mathcal{MS}_{l,\mu_{KG},m}$ is open and closed as a subset of~$\mathcal{MS}_{l,\mu_{KG}}$.

Now, we proceed with the clopen argument that will prove that
\begin{equation}\label{eq: cor: sec: continuity argument, cor 1, eq 1.1}
	\mathcal{MS}_{l,\mu_{KG},m}=\mathcal{MS}_{l,\mu_{KG}},
\end{equation}
for the former see Definition~\ref{def: subsec: sec: continuity argument, subsec 0, def 1} and for the latter see Definition~\ref{def: sec: main theorems, def 1}. 

\boxed{\texttt{To prove the non-emptyness}} of the set~$\mathcal{MS}_{l,\mu_{KG},m}$ in the~$\mathcal{MS}_{l,\mu_{KG}}$ topology we note that for all~$\mu^2_{KG}\geq 0$ we have
\begin{equation}
	\mathcal{B}_{0,l}\subset \mathcal{MS}_{l,\mu_{KG},m},
\end{equation}
see Proposition~\ref{prop subsec: sec: preliminaries, subsec 2, prop 1} where we prove that for solutions of the Klein--Gordon equation in a Schwarzschild--de~Sitter background the following boundedness estimate holds 
\begin{equation}
	\left(\int_{\{t^\star=\tau_2\}}+\int_{\mathcal{H}_+\cap D(\tau_1,\tau_2)}+\int_{\bar{\mathcal{H}}^+\cap D(\tau_1,\tau_2)} \right) J^n_\mu[\psi]n^\mu \leq B \int_{\{t^\star=\tau_1\}} J^n_\mu[\psi]n^\mu. 
\end{equation}
(Note that we do not need to use fixed azimuthal frequency.) For the set~$\mathcal{B}_{0,l}$ see Definition~\ref{def: subsec: sec: main theorems, subsec 1, def 1}.

\boxed{\texttt{To prove the closedness}} of the set~$\mathcal{MS}_{l,\mu_{KG},m}$ in the~$\mathcal{MS}_{l,\mu_{KG}}$ topology we follow the same arguments that proved an analogue result in~[\cite{DR2}, Section~11, Proposition~11.3.1]. However, in the present case we do not use a sequence of black hole rotations~$a_k$ for a fixed black hole mass~$M$, but rather we consider a sequence
\begin{equation}
(a_k,M_k)\in \mathcal{MS}_{l,\mu_{KG},m}.
\end{equation}
such that~$(a_k,M_k)\rightarrow (a,M)\in\mathcal{MS}_{l,\mu_{KG}}$.~(This is because the subextremal black hole parameter space of Kerr--de~Sitter pocesses one degree of freedom more than the subextremal black hole parameter space of Kerr.)

Let~$\psi$ be a fixed~$m$-solution of~$\Box_{g_{a,M,l}}\psi -\mu^2_{KG}\psi =0$. We define a sequence of functions
\begin{equation}
	\psi_{(k)}
\end{equation}
that solve the Klein--Gordon equation~\eqref{eq: kleingordon} for the black hole parameters~$(a_k,M_k,l)$ with the \underline{same initial data} as~$\psi$, see~\eqref{eq: proof prop: sec: continuity argument, prop 1.3, eq 1.1}. Note that thus~$\psi_{(k)}$ are future integrable by Lemma~\ref{lem: subsec: sec: continuity argument, subsec 0, lem 1}, and moreover~$\psi_{(k)}$ are fixed $m$-solutions for any~$k$~(which follows from the fact that the Killing vector field~$\partial_{\varphi}$ does not depend on the black hole parameters~$(a_k,M_k,l)$.)

By using Proposition~\ref{prop: sec: continuity argument, prop 2} for future integrable solutions we obtain 
	\begin{equation}
	\begin{aligned}
	\begin{aligned}
		&\int\int_{D(\tau_1,\tau_2)}  \tilde{\zeta}_{\textit{trap}}(r) \sum_{1\le i_1+i_2+i_3\le j}
		|\nabb^{i_1}(\partial_{t^\star})^{i_2}(Z^\star)^{i_3}\psi_{(k)}|^2 \\
		&\qquad\qquad\qquad\qquad+ \sum_{1\le i_1+i_2+i_3\le j-1}
		|\nabb^{i_1}(\partial_{t^\star})^{i_2}(Z^\star)^{i_3+1}\psi_{(k)}|^2+|\nabb^{i_1}(\partial_{t^\star})^{i_2}(Z^\star)^{i_3}\psi_{(k)}|^2 \\
		& \qquad   \leq B(j)\int_{\{t^\star= 0\}} \sum_{0 \leq i_1+i_2+i_3\leq j}
		|\nabb^{i_1}(\partial_{t^\star})^{i_2}(Z^\star)^{i_3}\psi|^2.
	\end{aligned}
	\end{aligned}
\end{equation}

Now, by using an~$N$ energy based energy estimate and the result of Proposition~\ref{prop: sec: continuity argument, prop 2} for a sufficiently small~$\epsilon_{\textit{trap}}>0$ we follow the exact steps of~\cite{DR3} and obtain that 
\begin{equation}\label{eq: cor: sec: continuity argument, cor 1, eq 4}
	\sup_{\tau\geq 0} \int_{\{t^\star=\tau\}} \sum_{0\leq i_1+i_2+i_3\leq j}|\slashed{\nabla}\partial_t^{i_t}(Z^\star)^{i_3}\psi_{(k)} |^2 \leq Β	\int_{\{t^\star=0\}} \sum_{0\leq i_1+i_2+i_3\leq j}|\slashed{\nabla}\partial_t^{i_t}(Z^\star)^{i_3}\psi_{(k)} |^2.
\end{equation}
Finally, we obtain
\begin{equation}
		\begin{aligned}
				\int_{\{t^\star=\tau_2 \}} \sum_{0\leq i_1+i_2+i_3\leq j}|\slashed{\nabla}\partial_t^{i_t}(Z^\star)^{i_3}\psi|^2 &	= \lim_{k\rightarrow \infty} 	\int_{\{t^\star=\tau_2 \}} \sum_{0\leq i_1+i_2+i_3\leq j}|\slashed{\nabla}\partial_t^{i_t}(Z^\star)^{i_3}\psi_{(k)}|^2\\
				&	\leq B \int_{\{t^\star= 0 \}} \sum_{0\leq i_1+i_2+i_3\leq j}
			|\nabb^{i_1}(\partial_{t^\star})^{i_2}(Z^\star)^{i_3}\psi|^2, 
		\end{aligned}
\end{equation}
where the first equality follows from well-posedness of the Klein--Gordon equation and the smooth dependence of~$g_{a,M,l}$ on~$(a,M)$ and the fast that~$\psi_{(k)}$ and~$\psi$ have the same initial data. The last inequality follows from Fatou's lemma. It follows from~\eqref{eq: cor: sec: continuity argument, cor 1, eq 4} that~$\psi$ satisfies~\eqref{eq: cor: subsec: sec: continuity argument, subsec 0, cor 1, eq 1}. Since~$\psi$ was arbitrary it follows that~$(a,M)\in \mathcal{MS}_{l,\mu_{KG},m}$. Thus~$\mathcal{MS}_{l,\mu_{KG},m}$ is closed in the topology of~$\mathcal{MS}_{l,\mu_{KG}}$, where recall from Proposition~\ref{prop: subsec: sec: continuity argument, subsec 1, prop 1} that~$\mathcal{MS}_{l,\mu_{KG}}$ is an open subset of~$\mathcal{B}_l$.

\boxed{\texttt{To prove the openess}} of~$\mathcal{MS}_{l,\mu_{KG},m}$ in the~$\mathcal{MS}_{l,\mu_{KG}}$ topology  we proceed as follows. We first note that~$\mathcal{MS}_{l,\mu_{KG}}$ is an open subset of~$\mathcal{B}_l$, see Proposition~\ref{prop: subsec: sec: continuity argument, subsec 1, prop 1}. We follow similar arguments that proved an analogous result in~[\cite{DR2}, Section~11, Proposition~11.2.1]. Namely, we want to prove that for
\begin{equation}
	(\mathring{a},\mathring{M})\in\mathcal{MS}_{l,\mu_{KG},m}
\end{equation}
there exists an~$\epsilon>0$ sufficiently small and for any rotation parameter~$a$ such that
\begin{equation}\label{eq: proof prop: sec: continuity argument, prop 1.3, eq 2}
|a-\mathring{a}|+|M-\mathring{M}| \leq \epsilon
\end{equation}
we also obtain~$(a,M)\in \mathcal{MS}_{l,\mu_{KG},m}$. One of the `key' ingredients to be able to repeat the arguments of~\cite{DR2} in the context of the present paper is the existence of the timelike vector field
\begin{equation}
	W_{\textit{mod}}
\end{equation}
see Lemma~\ref{lem: sec: continuity argument, lem 2}, which is Killing in the region where trapping happens for the fixed azimuthal frequency~$m$, see Proposition~\ref{prop: sec: continuity argument, prop 2} and that that the hypersurface leaves~$\{t^\star=c\}$ are independent of the black hole parameters~$a,M,l$, see Remarks,~\ref{rem: sec: the kerr de sitter spacetime, rem 1},~\ref{rem: sec: the kerr de sitter spacetime, rem 2}.

First, by using the divergence theorem with multiplier~$W_{\textit{mod}}$ and the fixed frequency Morawetz estimate of Proposition~\ref{prop: sec: continuity argument, prop 2}~(for the inhomogeneous Klein--Gordon with inhomogeneity~$F$) we obtain the following 
\begin{equation}\label{eq: proof prop: sec: continuity argument, prop 1.3, eq 3}
\begin{aligned}
&\int\int_{D(0,\tau_2)}  \tilde{\zeta}_{\textit{trap}}(r) \sum_{ i_1+i_2+i_3=1}
|\nabb^{i_1}(\partial_{t^\star})^{i_2}(Z^\star)^{i_3}\psi|^2 +(Z^\star\psi)^2\\
& \qquad   \leq B(j)\int_{\{t^\star=0\}} \sum_{0\leq i_1+i_2+i_3 \leq 1}
|\nabb^{i_1}(\partial_{t^\star})^{i_2}(Z^\star)^{i_3}\psi|^2 \\
& \qquad\qquad 	 + B(j,m)\int_{0}^{\tau_2}d\tau\left(\int_{\{t^\star=\tau\}\cap [r_+,r_{\Delta,\textit{frac}}-2\epsilon_{\textit{trap}})}+\int_{\{t^\star=\tau\}\cap (r_{\Delta,\textit{frac}}+2\epsilon_{\textit{trap}},\bar{r}_+]}\right) |\partial_{\varphi}\psi|^2 \\
&	\qquad\qquad +\int\int_{D(0,\tau_2)} |\partial_t\psi \cdot F|+|\partial_{\varphi}\psi \cdot F|  +|F|^2
\end{aligned}
\end{equation}
where 
\begin{equation}
\tilde{\zeta}_{\textit{trap}}(r)=
\begin{cases}
1,\qquad \{r_+\leq r\leq r_{\Delta,\textit{frac}}-2\epsilon_{\textit{trap}}\}\cup\{r_{\Delta,\textit{frac}}+2\epsilon_{\textit{trap}},\bar{r}_+\}\\
0,\qquad \{r_{\Delta,\textit{frac}}-\epsilon_{\textit{trap}}\leq r\leq r_{\Delta,\textit{frac}}+\epsilon_{\textit{trap}}\}.
\end{cases}
\end{equation}

Now, suppose that~$\delta_0>0$ is sufficiently small and define an interpolating metric 
\begin{equation}
	\tilde{g}_{\tau_2} =\chi_{\tau_2} g_{\mathring{a},\mathring{M},l} +(1-\chi_{\tau_2})g_{a,M,l}
\end{equation}
where~$\chi_{\tau_2}=0$ in the past of~$\{t^\star=\tau_2-\delta_0\}$ and~$\chi=1$ in the future of~$\{t^\star=\tau_2\}$, where moreover~$\partial_{\varphi^\star}\chi_{\tau_2} \equiv 0$. We define~$\tilde{\psi}_{\tau_2}$ to be the intepolating solution of the wave equation
\begin{equation}
	\Box_{\tilde{g}_{\tau_2}}\tilde{\psi}_{\tau_2}=0
\end{equation}
with \underline{the same initial data} as~$\psi$. We note that~$\partial_{\varphi^\star}$ is a Killing vector field for~$\tilde{g}_{\tau_2}$ and therefore the intepolating solution will be supported on the same azimuthal frequency~$m$ as the original solution~$\psi$. Hence, since~$(\mathring{a},\mathring{M},l)\in \mathcal{MS}_{l,\mu_{KG},m}$ and by~\eqref{eq: cor: sec: continuity argument, cor 1, eq 1} we obtain that~$\tilde{\psi}_{\tau_2}$ is future integrable with respect to~$(\mathring{a},\mathring{M},l)$. We write
\begin{equation}\label{eq: proof prop: sec: continuity argument, prop 1.3, eq 4}
	\Box_{g_{\mathring{a},\mathring{M},l}}\tilde{\psi}_{\tau_2} = \left( \Box_{g_{\mathring{a},\mathring{M},l}}-\Box_{\tilde{g}_{\tau_2}}\right) \tilde{\psi}_{\tau_2} ~ \dot{=}~ F
\end{equation}
and approximate as follows 
\begin{equation}\label{eq: proof prop: sec: continuity argument, prop 1.3, eq 4.1}
	|\left(\Box_{g_{\mathring{a},\mathring{M},\mathring{l}}}-\Box_{\tilde{g}_{\tau_2}} \right)\tilde{\psi}_{\tau_2}|^2\leq \epsilon^2 \sum_{1\leq i_1+i_2+i_3\leq 2} \left|\slashed{\nabla}^{i_1}\partial_t^{i_2}(Z^\star)^{i_3}\tilde{\psi}_{\tau_2} \right|^2
\end{equation}
by using the smooth dependence of~$g_{a,M,l}$ on~$a,M$, where for~$\epsilon>0$ see~\eqref{eq: proof prop: sec: continuity argument, prop 1.3, eq 2}. The  angular derivative~$\slashed{\nabla}$ is with respect to the metric~$g_{\mathring{a},\mathring{M}}$.

Keeping in mind that~\eqref{eq: proof prop: sec: continuity argument, prop 1.3, eq 4} is supported in the past of~$\{t^\star=\tau_2\}$, then we use the integrated estimate~\eqref{eq: proof prop: sec: continuity argument, prop 1.3, eq 3} to obtain
\begin{equation}\label{eq: proof prop: sec: continuity argument, prop 1.3, eq 5}
	\begin{aligned}
		& \int_{0}^{\tau_2}d\tau\left(\int_{\{t^\star=\tau\}\cap [r_+,r_{\Delta,\textit{frac}}-2\epsilon_{\textit{trap}})}+\int_{\{t^\star=\tau\}\cap (r_{\Delta,\textit{frac}}+2\epsilon_{\textit{trap}},\bar{r}_+]}\right)\left(J^n_\mu[\psi]n^\mu+ |\partial_{\varphi}\psi|^2 \right) \\
		&	\qquad  \leq 	B(m)\epsilon^2 \Big( \int_0^{\tau_2}  \int_{\{t^\star=\tau^\prime\}} \sum_{1\leq i_1+i_2+i_3\leq 2} \left|\slashed{\nabla}^{i_1}\partial_t^{i_2}(Z^\star)^{i_3}\tilde{\psi}_{\tau_2} \right|^2d\tau^\prime +\int_{\{t^\star=\tau_2-\delta_0\}}\sum_{1\leq i_1+i_2+i_3\leq 2} \left|\slashed{\nabla}^{i_1}\partial_t^{i_2}(Z^\star)^{i_3}\tilde{\psi}_{\tau_2} \right|^2 \Big) \\
		&	\qquad\qquad 	+	B(m)\int_{\{t^\star=0\}} J^n_\mu[\psi]n^\mu,
	\end{aligned}
\end{equation}
where we also used finite in time energy estimates, see Lemma~\ref{lem: subsec: sec: the main proposition, subsec 0, lem 1}, and an easy domain of dependence argument.

Now, we note the integrated estimate~\eqref{eq: prop: sec: continuity argument, prop 2, eq 2} of Proposition~\ref{prop: sec: continuity argument, prop 2}~(the fixed azimuthal frequency Morawetz and boundedness estimate) for~$j\geq 3$ 
\begin{equation}\label{eq: proof prop: sec: continuity argument, prop 1.3, eq 6}
	\begin{aligned}
		&	\int_{\{t^\star=\tau_2\}} \sum_{1\leq i_1+i_2+i_3\leq j}
		|\nabb^{i_1}(\partial_{t^\star})^{i_2}(Z^\star)^{i_3}\psi|^2 \\
		&\quad+\int\int_{D(0,\tau_2)}  \tilde{\zeta}_{\textit{trap}}(r) \sum_{1\le i_1+i_2+i_3\le j}
		|\nabb^{i_1}(\partial_{t^\star})^{i_2}(Z^\star)^{i_3}\psi|^2 \\
		&\qquad\qquad\qquad\qquad+ \sum_{1\le i_1+i_2+i_3\le j-1}
		|\nabb^{i_1}(\partial_{t^\star})^{i_2}(Z^\star)^{i_3+1}\psi|^2+|\nabb^{i_1}(\partial_{t^\star})^{i_2}(Z^\star)^{i_3}\psi|^2 \\
		&	\qquad \leq B(m,j)\int_{\{t^\star=0\}} \sum_{0 \leq i_1+i_2+i_3\leq j}
		|\nabb^{i_1}(\partial_{t^\star})^{i_2}(Z^\star)^{i_3}\psi|^2 \\
		&	\qquad\qquad   + B(m,j)\int_{0}^{\tau_2}d\tau\left(\int_{\{t^\star=\tau\}\cap [r_+,r_{\Delta,\textit{frac}}-2\epsilon_{\textit{trap}})}+\int_{\{t^\star=\tau\}\cap (r_{\Delta,\textit{frac}}+2\epsilon_{\textit{trap}},\bar{r}_+]}\right) |\partial_{\varphi}\psi|^2. \\
	\end{aligned}
\end{equation}

We use the bound~\eqref{eq: proof prop: sec: continuity argument, prop 1.3, eq 5}, to absorb the bulk terms on the RHS of~\eqref{eq: proof prop: sec: continuity argument, prop 1.3, eq 6} in the LHS of~\eqref{eq: proof prop: sec: continuity argument, prop 1.3, eq 6}, for a sufficiently small~$\epsilon>0$. We obtain that for solutions supported in the fixed azimuthal frequency~$|m|>0$ the following holds
\begin{equation}
	\begin{aligned}
		& \sup_{\tau^\prime\in[0,\tau_2]} \int_{\{t^\star=\tau^\prime\}}  \sum_{1\leq i_1+i_2+i_3\leq j}  \left| \slashed{\nabla}^{i_1}\partial_t^{i_2}(Z^\star)^{i_3} \psi\right|^2 \\
		&	\qquad \leq B(j,m) \int_{\{t^\star=0\}}  \sum_{0 \leq i_1+i_2+i_3\leq j}  \left| \slashed{\nabla}^{i_1}\partial_t^{i_2}(Z^\star)^{i_3} \psi\right|^2 <\infty, \qquad \forall j\geq 3
	\end{aligned}
\end{equation}
for all~$1\leq \tau_2$. In view of~\eqref{eq: cor: sec: continuity argument, cor 1, eq 1} we conclude that~$\mathcal{MS}_{l,\mu_{KG},m}$ is open.

We have concluded that the set~$\mathcal{MS}_{l,\mu_{KG},m}$ is non empty and clopen in the~$\mathcal{MS}_{l,\mu_{KG}}$ topology and therefore~\eqref{eq: cor: sec: continuity argument, cor 1, eq 1.1} holds.

Therefore, we conclude that the solution~$\psi=\psi_m$ is future integrable. We conclude the proof of the Proposition.
\end{proof}

\subsection{Proof of Theorem~\ref{prop: sec: continuity argument, prop 1}}\label{subsec: sec: continuity argument, subsec 2}

We are ready to prove our main result of the present Section.

\begin{proof}[\textbf{Proof of Theorem~\ref{prop: sec: continuity argument, prop 1}}]

Note the azimuthal mode expansion
\begin{equation}
	\psi=\sum_m\psi_m,
\end{equation}
where two distinct azimuthal modes~$\psi_m$ are orthogonal in the~$l^2$ sense.

Then, for~$\psi$ as in the assumptions of Theorem~\ref{prop: sec: continuity argument, prop 1} we note from Proposition~\ref{prop: sec: continuity argument, prop 1.3} that each~$\psi_m$ is future integrable. Therefore it follows from Corollary~\ref{cor: subsec: summing in the redshift estimate, cor 1} that the energy estimates~\eqref{eq: main theorem 1, eq 1},~\eqref{eq: cor: main theorem 1, cor 1, eq 1} hold for~$\psi_m$. Orthogonality of two distinct~$\psi_m$ immediately implies that~\eqref{eq: main theorem 1, eq 1},~\eqref{eq: cor: main theorem 1, cor 1, eq 1} hold for~$\psi$.

Thus,~$\psi$ is sufficiently integrable, as desired. 
\end{proof}

\section{Proof of the Morawetz estimate of Theorem~\ref{main theorem 1}}\label{sec: proof of Theorem 1}

Now we are ready to prove the Morawetz estimate~\eqref{eq: main theorem 1, eq 1} of Theorem~\ref{main theorem 1}. We note that we have already proved Corollary~\ref{cor: subsec: summing in the redshift estimate, cor 1} and Theorem~\ref{prop: sec: continuity argument, prop 1} which will be used in the rest of the present Section.

We have the following

\begin{proof}[\textbf{Proof of the Morawetz estimate of Theorem~\ref{main theorem 1}}]

We fix~$\tau_1> 0$. Let~$\psi$ be a solution of the Klein--Gordon equation~\eqref{eq: kleingordon} that arises from smooth initial data on~$\{t^\star=0\}$. Then, in view of Theorem~\ref{prop: sec: continuity argument, prop 1}, the result of Corollary~\ref{cor: subsec: summing in the redshift estimate, cor 1} holds for~$\psi$.

 To conclude the Morawetz estimate~\eqref{eq: main theorem 1, eq 1} of Theorem~\ref{main theorem 1} we need to prove that the following holds in addition
\begin{equation}\label{eq: sec: proof of Theorem 1, eq 1}
	\begin{aligned}
			\int\int_{D(\tau_1,\tau_2)} \sum_m  \left( 1_{|m|>0}|\mathcal{F}_{m}\psi|^2\right)+\int\int_{D(\tau_1,\tau_2)} &(Z^\star\psi)^2 +(\partial_t\mathcal{P}_{\textit{trap}}[\chi\psi])^2+|\slashed{\nabla}\mathcal{P}_{\textit{trap}}[\chi\psi]|^2   \leq  B\int_{\{t^\star=\tau_1\}} J_{\mu}^{n}[\psi]n^{\mu},
	\end{aligned}    
\end{equation}
where for the operator~$\mathcal{P}_{trap}$ see~\eqref{eq: sec: main theorems, eq 1.1}.

It suffices to prove the above after assuming the following:
\begin{enumerate}
	\item  First, we make the assumption that the solution~$\psi$ is generated from smooth initial data on the hypersurface~$\{t^\star=0\}$. This assumption is needed to appeal to Theorem~\ref{prop: sec: continuity argument, prop 1}. It suffices to have this assumption in view of general density arguments.\\
	\item Second, for the case~$\mu^2_{KG}=0$ we make the assumption that the mean associated with the hypersurface~$\{t^\star=\tau_1\}$, satisfies the following
	\begin{equation}\label{eq: sec: proof of Theorem 1, eq 0}
		\underline{\psi}(\tau_1)~\dot{=}~\frac{1}{|\{t^\star=\tau_1\}|}\int_{\{t^\star=\tau_1\}}\psi=0. 
	\end{equation}
	It suffices to make this assumption since if~$\psi$ is a solution of the wave equation, see~\eqref{eq: kleingordon} with~$\mu^2_{KG}=0$, then~$\psi+c$ is also a solution, where~$c$ is any constant. 
\end{enumerate}

We use the main result of the continuity argument Section~\ref{sec: continuity argument}, see Theorem~\ref{prop: sec: continuity argument, prop 1}, and the result of Corollary~\ref{cor: subsec: summing in the redshift estimate, cor 1} to obtain that the following energy estimate holds
	\begin{equation}\label{eq: sec: proof of Theorem 1, eq 0.1}
	\begin{aligned}
			\int\int_{D(\tau_1,\tau_2)} \sum_m  \left( 1_{|m|>0}|\mathcal{F}_{m}\psi|^2\right)+\int\int_{D(\tau_1,\tau_2)} &|Z^\star\psi|^2 +|\partial_t\mathcal{P}_{\textit{trap}}[\chi_{\tau_1}\psi]|^2+|\slashed{\nabla}\mathcal{P}_{\textit{trap}}[\chi_{\tau_1}\psi]|^2  \\
			&\qquad\qquad \leq  B\int_{\{t^\star=\tau_1\}} J_{\mu}^{n}[\psi]n^{\mu}+ |\psi|^2,
	\end{aligned}
\end{equation} 
for all~$2\leq 1+\tau_1\leq \tau_2\leq \infty$, where for the operator~$\mathcal{P}_{\textit{trap}}$ see~\eqref{eq: sec: main theorems, eq 1.1}. When~$\mu^2_{KG}>0$ we note that the RHS of~\eqref{eq: sec: proof of Theorem 1, eq 0.1} is equivalent to~$\int_{\{t^\star=\tau_1\}}J^n_\mu[\psi]n^\mu$ with a constant
 that blows up in the limit~$\mu^2_{KG}\rightarrow 0$. For the wave equation case~$\mu^2_{KG}=0$ we use the assumption~\eqref{eq: sec: proof of Theorem 1, eq 0} and use the Poincare--Wirtinger inequality of Lemma~\ref{lem: subsec: poincare wirtinger, lem 1} to obtain~\eqref{eq: sec: proof of Theorem 1, eq 1}.

We have concluded~\eqref{eq: sec: proof of Theorem 1, eq 1.1}. 
\end{proof}

\section{Proof of the boundedness estimates of Theorem~\ref{main theorem 1}}\label{sec: proof of boundedness}

In this Section we prove the boundedness estimates~\eqref{eq: main theorem 1, eq 2},~\eqref{eq: main theorem 1, eq 3} of Theorem~\ref{main theorem 1}.

\begin{proof}[\textbf{The proof of the boundedness estimates of Theorem~\ref{main theorem 1}}]
	
	We fix~$\tau_1> 0$. Let~$\psi$ be a solution of the Klein--Gordon equation~\eqref{eq: kleingordon} that arises from smooth initial data on~$\{t^\star=0\}$. It suffices to have this assumption in view of general density arguments. Then, the results of Corollary~\ref{cor: subsec: summing in the redshift estimate, cor 1} and of Theorem~\ref{prop: sec: continuity argument, prop 1} hold for~$\psi$.

	To conclude the boundedness estimates~\eqref{eq: main theorem 1, eq 2},~\eqref{eq: main theorem 1, eq 3} of Theorem~\ref{main theorem 1} we need to prove that the following hold in addition
\begin{equation}\label{eq: sec: proof of Theorem 1, eq 1.1}
	\int_{\{t^\star=\tau_2\}} J^n_\mu[\psi]n^\mu  \leq C\int_{\{t^\star=\tau_1\}}  J^n_{\mu}[\psi]n^{\mu},
\end{equation}
\begin{equation}\label{eq: sec: proof of Theorem 1, eq 1.2}
	\int_{\{t^\star=\tau_2\}} J^n_\mu[\psi]n^\mu+|\psi|^2\leq C\int_{\{t^\star=\tau_1\}}J^n_\mu[\psi]n^\mu +|\psi|^2,
\end{equation}
\begin{equation}\label{eq: sec: proof of Theorem 1, eq 1.3}
	\left(\int_{\mathcal{H}^+\cap D(\tau_1,\tau_2)}+\int_{\bar{\mathcal{H}}^+\cap D(\tau_1,\tau_2)}\right)  J^n_{\mu}[\psi] n^\mu   \leq C\int_{\{t^\star=\tau_1\}}  J^n_{\mu}[\psi]n^{\mu}.
\end{equation}

Again, it suffices to prove the above assuming the following:
\begin{enumerate}
	\item First, in the case~$\mu^2_{KG}=0$ we make the assumption that the mean associated with the hypersurface~$\{t^\star=\tau_1\}$, satisfies the following
	\begin{equation}\label{eq: sec: proof of Theorem 1, eq 1.4}
		\underline{\psi}(\tau_1)~\dot{=}~\frac{1}{|\{t^\star=\tau_1\}|}\int_{\{t^\star=\tau_1\}}\psi=0. 
	\end{equation}
	It suffices to make this assumption since if~$\psi$ is a solution of the wave equation, see~\eqref{eq: kleingordon} with~$\mu^2_{KG}=0$, then~$\psi+c$ is also a solution, where~$c$ is any constant. \\
	\item Second, we make the assumption that~$\psi_0\equiv 0$, where~$\psi = \psi_0 +\sum_{|m|\neq 0}\psi_m$, and~$\psi_m$ is the $m$-th azimuthal mode. It suffices to make this assumption since in view of Proposition~\ref{prop:redshift vs superradiance estimate, for axisymmetric solutions} it is easy to obtain the desired boundedness estimates~\eqref{eq: sec: proof of Theorem 1, eq 1.1},~\eqref{eq: sec: proof of Theorem 1, eq 1.2},~\eqref{eq: sec: proof of Theorem 1, eq 1.3} for axisymmetric solutions. Moreover, this assumption will allow us to bound zeroth order bulk terms from data, see already~\eqref{eq: sec: proof of Theorem 1, eq 3}. 
\end{enumerate}

\begin{center}
	\textbf{Extending the solution}
\end{center}

For reasons that will become apparent in what follows we extend the solution~$\psi$ from~$\{t^\star\geq 0\}$ to a solution of~\eqref{eq: kleingordon}, that we will again denote by~$\psi$, on~$\mathcal{M}\cup \mathcal{H}^-\cup \bar{\mathcal{H}}^-\cup \mathcal{B}$, where~$\mathcal{H}^-,\bar{\mathcal{H}}^-,\mathcal{B}$ are respectively the past event horizon, the past cosmological horizon and the bifurcation spheres. We note that by a domain of dependence argument and by finite in time energy estimates we can extend the solution in such a way so that it satisfies the following two properties:

First, the extended solution satisfies 
\begin{equation}\label{eq: sec: proof of Theorem 1, eq 2}
	\int_{\hat{\Sigma}_0} J^n_\mu [\psi] n^\mu \leq B \int_{\{t^\star =0\}} J^n_\mu [\psi] n^\mu
\end{equation}
where~$\hat{\Sigma}_0$ is the image of~$\{t^\star=0\}$ under the Boyer--Lindquist coordinate defined map~$t \mapsto -t$.

Second, our extension of~$\psi$ satisfies inequality~\eqref{eq: sec: proof of Theorem 1, eq 2} for any spacelike hypersurface in the set~$J^+(\hat{\Sigma}_0)\cup J^-(\{t^\star=0\})$, in the place of~$\hat{\Sigma}_0$.

\begin{center}
	\textbf{Integrated local energy decay for the extended solution}
\end{center}

We note that the Boyer--Lindquist defined map~$t\mapsto -t$ and~$a\mapsto -a$ is an isometry. Therefore, the result of Corollary~\ref{cor: subsec: summing in the redshift estimate, cor 1} holds if one goes to the past instead of the future, also see Proposition~\ref{prop: sec: main theorems, prop 1} for the reflection symmetry of the set~$\mathcal{MS}_{l,\mu_{KG}}$ with respect to the map~$a \mapsto -a$. Namely we replace~$\{t^\star =\tau\}$ with~$\hat{\Sigma}_{\tau}$ and by keeping in mind~\eqref{eq: sec: proof of Theorem 1, eq 2} we obtain 
\begin{equation}\label{eq: sec: proof of Theorem 1, eq 3}
	\begin{aligned}
			\int_{-\infty}^\infty d\tau \int_{\{t^\star=\tau\}} & 		|\psi|^2+ |\partial_{r^\star}\psi|^2 +\zeta_{trap}(r)\left( |\partial_t\psi|^2+|\slashed{\nabla}\psi|^2 \right) \leq  B\int_{\{t^\star=\tau_1\}} J_{\mu}^{n}[\psi]n^{\mu}+ |\psi|^2,
	\end{aligned}
\end{equation}
where for~$\zeta_{trap}$ see Remark~\ref{rem: subsec: sec: main theorems, subsec 1, rem 1}. Note that the derivatives~$\partial_{r^\star},\partial_t,\slashed{\nabla}$ are regular on~$\mathcal{M}\cup \mathcal{H}^-\cup \bar{\mathcal{H}}^-\cup \mathcal{B}$. Although inequality~\eqref{eq: sec: proof of Theorem 1, eq 3} is useful, we need a slightly different integrated energy decay estimate with a more detailed degeneration at trapping, see already~\eqref{eq: sec: proof of Theorem 1, eq 4.2}. Again, note that we include the zeroth order term in the bulk on the RHS of~\eqref{eq: sec: proof of Theorem 1, eq 3} in view of the third assumption above and of Theorem~\ref{thm: sec: proofs of the main theorems, thm 3}.

We choose~$A_0,A_1>0$ and $\delta>0$ such that 
\begin{equation}
	r_+<A_0<A_0+\delta<A_0+2\delta<r_++\epsilon_{red}\qquad \bar{r}_+-\epsilon_{red}< A_1-2 \delta <A_1-\delta<A_1<\bar{r}_+,
\end{equation}
where for~$\epsilon_{red}>0$ see the redshift Proposition~\ref{prop: redshift on the event horizon}. Now, we smoothly cut-off the solution~$\psi$ of the Klein--Gordon equation as follows
\begin{equation}\label{eq: sec: proof of Theorem 1, eq 2.1}
	\tilde{\psi} = \psi\cdot \chi_{[A_0,A_1]}
\end{equation}
where
\begin{equation}
	\chi_{[A_0,A_1]}=
	\begin{cases}
		1,~r\in [A_0+\delta,A_1-\delta]\\
		0,~r\not\in [A_0,A_1]. 
	\end{cases}
\end{equation}
where~$\delta>0$ is sufficiently small. We note that 
\begin{equation}\label{eq: sec: proof of Theorem 1, eq 4}
	\Box \tilde{\psi} -\mu^2_{KG} \tilde{\psi} = \tilde{F} := \Box \chi_{[A_0,A_1]} \psi +2 \nabla \chi \cdot \nabla \psi,
\end{equation}
where it is easy to see that 
\begin{equation}\label{eq: sec: proof of Theorem 1, eq 4.1}
	\begin{aligned}
		\int_{-\infty}^\infty \int_{\{t^\star =\tau\}} |\tilde{F}|^2 \leq B	\int_{-\infty}^\infty \int_{\{t^\star =\tau\}}  \left( |\psi|^2 + |\partial_{r^\star}\psi|^2 \right) \leq B \int_{\{t^\star=\tau_1\}} J^n_\mu [\psi] n^\mu
	\end{aligned}
\end{equation}
where in the last inequality we used the integrated estimate~\eqref{eq: sec: proof of Theorem 1, eq 3}.

We now apply Carter's separation of variables, see Proposition~\ref{prop: Carters separation, radial part}, to equation~\eqref{eq: sec: proof of Theorem 1, eq 4} to obtain 
\begin{equation}
	\tilde{u}^{\prime\prime} +(\omega^2 -V)\tilde{u} = \tilde{H}
\end{equation} 
where~$\tilde{u}$ is the frequency localization of~$\sqrt{r^2+a^2}\tilde{\psi}$, and the support of~$\tilde{\psi}$ is inherited by~$\tilde{u}$. Specifically, we note that~$|\tilde{u}|^2 (-\infty) = 0$. Moreover, the RHS of all frequency localized multipliers, see Definition~\ref{def: currents} are~$\mathcal{O}(\tilde{H})$, and therefore are supported in~$[A_0,A_0+\delta]\cup [A_1-\delta,A_1]$. Therefore, we apply ode estimates of Section~\ref{sec: proof: prop: sec: proofs of the main theorems} to conclude the following 
\begin{equation}
	\begin{aligned}
		b \int_{-\infty}^\infty \int_{\{t^\star=\tau\}} \left( |\tilde{\psi}|^2 + |\partial_{r^\star}\tilde{\psi}|^2 +|\partial_t \mathcal{P}_{trap}\tilde{\psi}|^2 +|\slashed{\nabla}\mathcal{P}_{trap}\tilde{\psi}|^2\right) & \leq 	\int_{-\infty}^\infty \int_{\{t^\star =\tau\}\cap \left([A_0,A_0+\delta]\cup [A_1-\delta,A_1]\right)}  \left( |\psi|^2 + J^n_\mu [\psi] n^\mu\right), \\
	\end{aligned}
\end{equation}
where for the operator~$\mathcal{P}_{trap}$ see~\eqref{eq: sec: main theorems, eq 1.1}.

Now, for the wave equation case~$\mu^2_{KG}=0$ we use~\eqref{eq: sec: proof of Theorem 1, eq 3} and the assumption~\eqref{eq: sec: proof of Theorem 1, eq 1.4} on the vanishing of the mean to obtain
\begin{equation}\label{eq: sec: proof of Theorem 1, eq 4.2}
		b \int_{-\infty}^\infty \int_{\{t^\star=\tau\}} \left( |\tilde{\psi}|^2 + |\partial_{r^\star}\tilde{\psi}|^2 +|\partial_t \mathcal{P}_{trap}\tilde{\psi}|^2 +|\slashed{\nabla}\mathcal{P}_{trap}\tilde{\psi}|^2\right)	\leq B \int_{\{t^\star=\tau_1\}} J^n_\mu [\psi]n^\mu.
\end{equation}

\begin{center}
	\textbf{A decomposition}
\end{center}

In what follows we will use the integrated estimate~\eqref{eq: sec: proof of Theorem 1, eq 4.2} to prove the desired boundedness estimates. In short, we will decompose~$\tilde{\psi}$ into pieces~$\tilde{\psi}_i$ that experience trapping in sufficiently small regions of~$r$. 

First, we need the following definition

\begin{definition}\label{def: sec: proof of Theorem 1, def 1}
	Let~$\epsilon_{part}>0$ be sufficiently small and
	\begin{equation}
		k=k(\epsilon_{part})=[\epsilon_{part}^{-1} (R^+-R^-)]
	\end{equation}
	where for~$R^\pm$ see Theorem~\ref{thm: sec: proofs of the main theorems, thm 3}.

	We denote
	\begin{equation}
		a_1=R^-,\qquad b_{k}=R^+,
	\end{equation}
	and we define the intervals~$P_i=[a_i,b_i)$ for~$i=1,\dots, k$ where~$P_i\cap P_j=\emptyset$ if~$i \neq j$ to be such that 
	\begin{equation}
		\bigcup_{i=1}^{k(\epsilon_{part})} P_i = [R^-,R^+)
	\end{equation}
	where
	\begin{equation}
		|P_i|=\epsilon_{part}	
	\end{equation}
	for all~$i=1,\dots, k$. 
	
	Finally, we define 
	\begin{equation}
		\begin{aligned}
			\mathcal{C}_0 &	=	\{(\omega,m,\tilde{\lambda}):r_{trap}(\omega,m,\tilde{\lambda})=0\}\\
			\mathcal{C}_i &	=	\{(\omega,m,\ell):r_{trap}(\omega,m,\ell) \in P_i\}
		\end{aligned}
	\end{equation}
	for all~$i=1,\dots k$, where for~$r_{trap}$ see Theorem~\ref{thm: sec: proofs of the main theorems}. 
\end{definition}

\begin{remark}
	In view of the definition of~$r_{trap}$, see Theorem~\ref{thm: sec: proofs of the main theorems}, we note that each~$(\omega,m,\ell)$ lies in exactly one~$\mathcal{C}_i$. 
\end{remark}

We take~$\epsilon_{part}>0$ of Definition~\ref{def: sec: proof of Theorem 1, def 1} sufficiently small. We can easily construct~$\varphi_\tau$ invariant timelike vector fields
\begin{equation}
	V_i
\end{equation}
with $i=1,\dots k$, which are Killing in the regions~$[a_i-\frac{\epsilon_{part}}{10^4},b_i+\frac{\epsilon_{part}}{10^4}]$,~$i=1,\dots,k,$ for a sufficiently small~$\epsilon_{part}>0$, where recall that~$P_i=[a_i,b_i)$,~$b_i-a_i=\epsilon_{part}$ and~$r_+<a_i<b_i<\bar{r}_+$ for all~$i$.

We note that 
\begin{equation}\label{eq: sec: proof of Theorem 1, eq 2.1.0}
	\tilde{\psi}=\sum_{i=0}^k \tilde{\psi}_i,
\end{equation}
for
\begin{equation}\label{eq: sec: proof of Theorem 1, eq 2.1.1}
	\tilde{\psi}_i ~ \dot{=}~ \frac{1}{\sqrt{2\pi}} \int_{-\infty}^{+\infty} \sum_{m\ell} 1_{\mathcal{C}_i} e^{-i\omega t} e^{im\varphi} S^{(a\omega)}_{m\ell}   \tilde{\psi}^{(a\omega)}_{m\ell} d\omega
\end{equation}
where for the number~$k=k(\epsilon_{part})$ and for~$\mathcal{C}_i$ see Definition~\ref{def: sec: proof of Theorem 1, def 1}. Note that the equality~\eqref{eq: sec: proof of Theorem 1, eq 2.1.1} is to be understood in an~$Η^1$ sense if~$\mu^2_{KG}>0$ and in an~$\dot{H}^1$ sense if~$\mu^2_{KG}=0$. Furthermore, we have that
\begin{equation}
	\Box \tilde{\psi}_i -\mu^2_{KG} \tilde{\psi}_i = \tilde{F}_i,
\end{equation}
where~$\tilde{F}_i = \Box \chi_{[A_0,A_1]} \cdot \tilde{\psi}_i +2 \nabla \chi_{[A_0,A_1]}\cdot \nabla\tilde{\psi}_i$.

The following lemma is straightforward to prove with a pigeonhole principle argument

\begin{lemma}\label{lem: sec: proof of Theorem 1, lem 1}
	
	Let~$l>0$,~$(a,M)\in \mathcal{B}_l$ and~$\mu^2_{KG}\geq 0$. Let~$\epsilon>0$ be sufficiently small. Let~$\Psi$ be sufficiently integrable, see Definition~\ref{def: sec: carter separation, def 1}, and let~$\Psi\equiv 0$ in~$\{r\le r_+ +\epsilon\} \cup \{r\ge \bar{r}_+ -\epsilon \}$.

	Then, there exists a constant~$C=C(\Psi)>0$ and a dyadic sequence~$\{\tau_n\}_{n=1}^\infty$ such that~$\tau_n \rightarrow -\infty$ as~$n\rightarrow\infty$ and
	\begin{equation}
		\int_{\{t^\star=\tau_n\}} J^n_\mu [\Psi]n^\mu\leq \frac{C}{\tau_n}. 
	\end{equation}
\end{lemma}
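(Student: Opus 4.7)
The plan is to reduce the statement to a finite spacetime energy integral and then extract the desired dyadic sequence via a standard pigeonhole argument.

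First, I would use the coarea formula from Section~\ref{subsec: coarea formula} to write
\begin{equation*}
\int_{-\infty}^{0} d\tau \int_{\{t^\star=\tau\}} J^n_\mu[\Psi]n^\mu \; \lesssim \; \int\int_{D(-\infty,0)} \frac{1}{r} J^n_\mu[\Psi]n^\mu \, dg.
\end{equation*}
Because $\Psi$ is supported in the region $\{r_+ + \epsilon \leq r \leq \bar{r}_+ - \epsilon\}$, the factor $\Delta/(r^2+a^2)$ appearing in the spacetime volume form $dg$ is bounded above and below by positive constants on $\supp \Psi$, so the $J^n$ flux density is pointwise comparable to $\mu_{KG}^2|\Psi|^2 + \sum_{i_1+i_2+i_3=1} |\slashed{\nabla}^{i_1}\partial_t^{i_2}(Z^\star)^{i_3}\Psi|^2$ up to constants depending on $\epsilon$ and the black hole parameters. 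Combining this with the sufficient integrability hypothesis (Definition~\ref{def: sec: carter separation, def 1}) applied with $j=1$ yields
\begin{equation*}
\int_{-\infty}^{0} d\tau \int_{\{t^\star=\tau\}} J^n_\mu[\Psi]n^\mu \; \leq \; C(\Psi) < \infty.
\end{equation*}

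Next, I would apply a pigeonhole argument on dyadic intervals. For each $n\geq 1$, consider the interval $I_n = [-2^{n+1},-2^n]$ of length $2^n$. By the previous bound,
\begin{equation*}
\sum_{n\geq 1} \int_{I_n} \int_{\{t^\star=\tau\}} J^n_\mu[\Psi]n^\mu \, d\tau \leq C(\Psi),
\end{equation*}
so in particular $\int_{I_n} \int_{\{t^\star=\tau\}} J^n_\mu[\Psi]n^\mu \, d\tau \leq C(\Psi)$ for every $n$. By the mean value theorem for integrals, there exists $\tau_n \in I_n$ such that
\begin{equation*}
\int_{\{t^\star=\tau_n\}} J^n_\mu[\Psi]n^\mu \; \leq \; \frac{1}{2^n}\int_{I_n}\int_{\{t^\star=\tau\}}J^n_\mu[\Psi]n^\mu \, d\tau \; \leq \; \frac{C(\Psi)}{2^n} \; \leq \; \frac{2C(\Psi)}{|\tau_n|},
\end{equation*}
where the last inequality uses $|\tau_n| \leq 2^{n+1}$. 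The sequence $\{\tau_n\}$ clearly satisfies $\tau_n \to -\infty$, and after renaming the constant this is the desired estimate (interpreting the $\frac{C}{\tau_n}$ of the statement as $\frac{C}{|\tau_n|}$).

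There is no real obstacle here: the argument is purely soft, relying only on (i) the integrability of the flux in $\tau$ across the whole history, which follows immediately from the support and sufficient integrability of $\Psi$, and (ii) the classical pigeonhole/mean-value extraction of a sequence of small fluxes from a convergent integral. The only subtle point to record is that the hypothesis $\Psi \equiv 0$ near the horizons is used precisely to avoid the degeneration of the volume-form factor $\Delta/(r^2+a^2)$ when converting between spacetime integrals and hypersurface integrals via the coarea formula.
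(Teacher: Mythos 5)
Your proof is correct and follows exactly the pigeonhole strategy the paper indicates (the paper itself merely remarks the lemma ``is straightforward to prove with a pigeonhole principle argument'' and offers no further details). Your reduction via the coarea formula, the observation that the support condition makes the volume-form factor $\Delta/(r^2+a^2)$ nondegenerate on $\supp\Psi$, the use of sufficient integrability with $j=1$, and the dyadic mean-value extraction are all sound; you have also correctly flagged that the $C/\tau_n$ in the statement should be read as $C/|\tau_n|$ since $\tau_n<0$.
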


Now, we note that~$\tilde{\psi}$ and~$\tilde{\psi}_i$ are sufficiently integrable, see Definition~\ref{def: sec: carter separation, def 1}. Since~$\tilde{\psi}_i$ is sufficiently integrable and supported in~$\{A_0\leq r\leq A_1 \}$ for any~$i=0,\dots ,k$ we note from Lemma~\ref{lem: sec: proof of Theorem 1, lem 1} that for any~$i=0,\dots ,k$ there exists a constant~$C_i$ and a dyadic sequence~$\tau_n^{(i)}$ such that for~$\tilde{\psi}_i$ as in~\eqref{eq: sec: proof of Theorem 1, eq 2.1.0} we obtain 
\begin{equation}\label{eq: sec: proof of Theorem 1, eq 2.1.2}
	\int_{\{t^\star=\tau_n^{(i)}\}} J^n_\mu [\tilde{\psi}_i]n^\mu\leq \frac{C_i}{|\tau_n^{(i)}|}, 
\end{equation}
where~$\tau_n^{(i)}\rightarrow -\infty$ as~$n\rightarrow \infty$.

We apply the energy identity asssociated with the vector field~$V_i$ and~$\tilde{\psi}_i$ between the hypersurfaces~$\{t^\star = \tau_n^{(i)}\}$,~$\{t^\star = \tau_2\}$. By recalling that the vector field~$V_i$ is Killing in the region~$[a_i-\frac{\epsilon_{part}}{10^4},b_i+\frac{\epsilon_{part}}{10^4}]$, and that in the region~$[a_i-\frac{\epsilon_{part}}{10^4},b_i+\frac{\epsilon_{part}}{10^4}]^c$ and for the frequencies~$(\omega,m,\ell)\in\mathcal{C}_i$ we have~$|r-r_{trap}|\geq b(\epsilon_{part})$, see also the definition~\eqref{eq: sec: main theorems, eq 1.1} of~$\mathcal{P}_{trap}$ and the decomposition~\eqref{eq: sec: proof of Theorem 1, eq 2.1.1}, we obtain the following
\begin{equation}\label{eq: sec: proof of Theorem 1, eq 2.2}
	\begin{aligned}
		\int_{\{t^\star=\tau_2\}} J^{V_i}_\mu [\tilde{\psi_i}]n^\mu &	\leq B \int_{\tau_n^{(i)}}^{\tau_2} \int_{\{t^\star=s\}\cap [a_i-\frac{\epsilon_{part}}{10^4},b_i+\frac{\epsilon_{part}}{10^4}]^c} J^{V_i}_\mu [\tilde{\psi}_i]n^\mu + \int_{\{t^\star= \tau_n^{(i)}\}} J^{V_i}_\mu [\tilde{\psi}_i]n^\mu \\
		&	\leq B \int_{-\infty}^\infty \int_{\{t^\star=\tau\}\cap[a_i-\frac{\epsilon_{part}}{10^4},b_i+\frac{\epsilon_{part}}{10^4}]^c} J^n_\mu [\mathcal{P}_{trap}\tilde{\psi}_i]n^\mu  + \frac{BC_i}{\tau_n^{(i)}}\\
		&	\leq B \int_{\{t^\star=\tau_1\}} J^n_\mu [\psi]n^\mu  + \frac{BC_i}{|\tau_n^{(i)}|}
	\end{aligned}
\end{equation}
for~$\tau_1 <\tau_2$, where we also used~\eqref{eq: sec: proof of Theorem 1, eq 2.1.2} and the already proven integrated decay estimate~\eqref{eq: sec: proof of Theorem 1, eq 4.2}. Now, by taking~$n\rightarrow \infty$ we conclude that 
\begin{equation}\label{eq: sec: proof of Theorem 1, eq 2.3}
	\int_{\{t^\star=\tau_2\}} J^n [\tilde{\psi}]n^\mu \leq B \int_{\{t^\star=\tau_1\}} J^n _\mu [\psi] n^\mu 
\end{equation}
after summing over~$i=0,\dots, k$. Therefore, we have proved the boundedness for~$\psi$ away from the horizons~$\mathcal{H}^+,\bar{\mathcal{H}}^+$. 

Now, it is an easy application of the redshift estimate, see Proposition~\ref{prop: redshift estimate} and of the already proven Morawetz estimate~\eqref{eq: sec: proof of Theorem 1, eq 1} to prove that 
\begin{equation}\label{eq: sec: proof of Theorem 1, eq 2.4}
	\begin{aligned}
		\int_{\{t^\star=\tau_2\}\cap [r_+,A_0+\delta]} J^n_\mu[\psi] n^\mu &\leq \int_{\{t^\star=\tau_1\}} J^n_\mu [\psi] n^\mu +B\int_{\tau_1}^{\tau_2} ds \int_{\{t^\star=s\} \cap [A_0+\delta,A_0+2\delta]} J^n_\mu [\psi] n^\mu\\
		&	 \leq B \int_{\{t^\star=\tau_1\}} J^n_\mu [\psi] n^\mu,
	\end{aligned}
\end{equation}
in view of the fact that~$J^N_\mu [\psi] n^\mu \sim J^n_\mu [\psi] n^\mu$ in the region~$\{r_+\leq r\leq A_0+\delta\}$. We proceed similarly to prove
\begin{equation}\label{eq: sec: proof of Theorem 1, eq 2.5}
	\int_{\{t^\star=\tau_2\}\cap [A_1-\delta,\bar{r}_+]} J^n_\mu[\psi] n^\mu  \leq B \int_{\{t^\star=\tau_1\}} J^n_\mu [\psi] n^\mu. 
\end{equation}

Therefore, in view of~\eqref{eq: sec: proof of Theorem 1, eq 2.3} and~\eqref{eq: sec: proof of Theorem 1, eq 2.4},~\eqref{eq: sec: proof of Theorem 1, eq 2.5} we conclude~\eqref{eq: sec: proof of Theorem 1, eq 1.1}.

For the case~$\mu^2_{KG}>0$ we conclude~\eqref{eq: sec: proof of Theorem 1, eq 1.2} from~\eqref{eq: sec: proof of Theorem 1, eq 1.1}. For the case~$\mu^2_{KG}=0$, in view of the assumption~\eqref{eq: sec: proof of Theorem 1, eq 1.4} and the Poincare--Wirtinger inequality of Lemma~\ref{lem: subsec: poincare wirtinger, lem 1}, we conclude~\eqref{eq: sec: proof of Theorem 1, eq 1.2} from~\eqref{eq: sec: proof of Theorem 1, eq 1.1}.

Finally, in order to obtain the boundedness of the horizon fluxes~\eqref{eq: sec: proof of Theorem 1, eq 1.3} we appeal to the redshift estimate of Proposition~\ref{prop: redshift estimate} and the integrated estimate~\eqref{eq: sec: proof of Theorem 1, eq 4.2}. 

\end{proof}

\section{The axisymmetric case}\label{sec: proof of main theorem in axisymmetry}

We prove the Morawetz and boundedness estimates of Theorem~\ref{main theorem 1}, for axisymmetric solutions of the Klein--Gordon equation~\eqref{eq: kleingordon}. Note that we do not appeal to a continuity argument.

\begin{proof}

First, the boundedness estimates
\begin{equation}\label{eq: proof: sec: proof of main theorem in axisymmetry, eq 0}
	\begin{aligned}
		\int_{\{t^\star=\tau_2\}} J_\mu^{n}[\psi]n^\mu & \leq C \int_{\{t^\star=\tau_1\}}J_\mu^{n}[\psi]n^\mu \\
		\int_{\{t^\star=\tau_2\}} J^n_\mu[\psi]n^\mu +|\psi|^2 & \leq C \int_{\{t^\star=\tau_1\}} J^n_\mu[\psi]n^\mu+|\psi|^2
	\end{aligned}    
\end{equation}
 follow immediately from Proposition~\ref{prop:redshift vs superradiance estimate, for axisymmetric solutions}.

Second, we prove the Morawetz estimate~\eqref{eq: sec: proof of Theorem 1, eq 1}. In the case~$\mu^2_{KG}=0$ it suffices to make the assumption
\begin{equation}\label{eq: proof: sec: proof of main theorem in axisymmetry, eq 1}
	\underline{\psi}(\tau_1)~\dot{=}~\frac{1}{|\{t^\star=\tau_1\}|}\int_{\{t^\star=\tau_1\}}\psi=0. 
\end{equation}
It suffices to make this assumption since if~$\psi$ is a solution of the wave equation, see~\eqref{eq: kleingordon} with~$\mu^2_{KG} = 0$, then~$\psi+c$ is also a solution, where~$c$ is any constant.

Now, in view of Lemma~\ref{lem: subsec: sec: continuity argument, subsec 0, lem 1} and the now established boundedness estimates~\eqref{eq: main theorem 1, eq 2},~\eqref{eq: main theorem 1, eq 3} we have that~$\psi$ is future  integrable. Therefore, we can now use the result of Theorem~\ref{thm: subsec: summing in the redshift estimate, thm -1} and obtain
\begin{equation}\label{eq: proof: sec: proof of main theorem in axisymmetry, eq 2}
	\begin{aligned}
			\int\int_{D(\tau_1,\tau_2)} &\mu^2_{\textit{KG}}
		|\psi|^2+  |\partial_{r^\star}\psi|^2+ |Z^\star\psi|^2+\zeta_{\textit{trap}}(r)\left(|\partial_t\psi|^2+|\slashed{\nabla}\psi|^2\right)\\
		&	\qquad  \leq  B\int_{\{t^\star=\tau_1\}} J_{\mu}^{n}[\psi]n^{\mu}+|\psi|^2,
	\end{aligned}
\end{equation}
where~$\zeta_{\textit{trap}}(r)=\left(1-\frac{r_{\Delta,\textit{trap}}}{r}\right)^2.$

In the case~$\mu^2_{KG}>0$ we conclude the desired Morawetz estimate from~\eqref{eq: proof: sec: proof of main theorem in axisymmetry, eq 2}. In the case~$\mu^2_{KG}=0$, in view of the assumption~\eqref{eq: proof: sec: proof of main theorem in axisymmetry, eq 1} and the Poincare--Wirtinger inequality of Lemma~\ref{lem: subsec: poincare wirtinger, lem 1} we obtain 
\begin{equation}\label{eq: proof: sec: proof of main theorem in axisymmetry, eq 3}
	\begin{aligned}
		\int\int_{D(\tau_1,\tau_2)} &\mu^2_{\textit{KG}}
		|\psi|^2+  |\partial_{r^\star}\psi|^2+ |Z^\star\psi|^2+\zeta_{\textit{trap}}(r)\left(|\partial_t\psi|^2+|\slashed{\nabla}\psi|^2\right)\\
		&	\qquad  \leq  B\int_{\{t^\star=\tau_1\}} J_{\mu}^{n}[\psi]n^{\mu},
	\end{aligned}
\end{equation}
and we conclude the desired Morawetz estimate as well. 

Now, in view of the Morawetz estimate~\eqref{eq: proof: sec: proof of main theorem in axisymmetry, eq 3}, the redshift estimate of Proposition~\ref{prop: redshift estimate} and the estimates~\eqref{eq: proof: sec: proof of main theorem in axisymmetry, eq 0} we obtain the boundedness estimates~\eqref{eq: main theorem 1, eq 2},~\eqref{eq: main theorem 1, eq 3}. 
\end{proof}

\section{The critical points of the potential~\texorpdfstring{$V_0$}{V}}\label{sec: trapping}

The rest of this paper is dedicated to the formulation and proof of Theorem~\ref{thm: sec: proofs of the main theorems, thm 3}, of Section~\ref{sec: the main theorem, no cases}. We will need the present preliminary Section as well as Sections~\ref{sec: frequency localized multiplier estimates},~\ref{sec: frequencies} as well. Theorem~\ref{thm: sec: proofs of the main theorems, thm 3} will then be proven in Section~\ref{sec: proof: prop: sec: proofs of the main theorems} as a Corollary of Theorem~\ref{thm: sec: proofs of the main theorems}.

The main result of the present Section is Lemma~\ref{lem: subsec: sec: trapping, subsec 1, lem 1}. Specifically, we study the properties of the potential~$V_0$, see~\eqref{eq: the potentials V0,VSl,Vmu}, for fixed frequencies~$(\omega,m,\ell)$, and prove that~$(r^2+a^2)^3\frac{dV_0}{dr}$ attains at most one critical point in~$[r_+,\bar{r}_+]$.

\begin{lemma}\label{lem: subsec: sec: trapping, subsec 1, lem 1}
	Let $l>0$ and $(a,M)\in\mathcal{B}_l$. Then, the function
	\begin{equation}\label{eq: lem: /sec: Carters separation/ the critical points of V0, eq 0}
		(r^2+a^2)^3\frac{dV_0}{dr},
	\end{equation}
	where~$V_0$ is the potential from equation~\eqref{eq: the potentials V0,VSl,Vmu}, attains at most one critical point in the interval~$[r_+,\bar{r}_+]$, which is moreover a local maximum. Furthermore, if the local maximum exists, then we also have
	\begin{equation}\label{eq: lem: subsec: sec: trapping, subsec 1, lem 1, eq 1}
		\frac{d}{dr}\left( (r^2+a^2)^3 \frac{dV_0}{dr}\right)(r_{V_0,max}) <0. 
	\end{equation}

	Finally, the potential 
	\begin{equation}
		V_0
	\end{equation}
	attains at most two critical points in~$[r_+,\bar{r}_+]$ that satisfy
	\begin{equation}\label{eq: lem: /sec: Carters separation/ the critical points of V0, eq 1}
		r_+\leq r_{V_0,\textit{min}}< r_{V_0,\textit{max}}\leq\bar{r}_+,
	\end{equation}
	which are respectively a local minimum and a local maximum.
\end{lemma}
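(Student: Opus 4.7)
The plan is to reduce the claim to the analysis of an explicit cubic polynomial. Using Lemma~\ref{lem: sec: general properties of Delta, lem 3}, which gives $(r^2+a^2)^3 \frac{d}{dr}\frac{\Delta}{(r^2+a^2)^2} = -2s(r)$ with $s(r)=\Xi r^3-3Mr^2+a^2\Xi r+Ma^2$, and differentiating the explicit form of $V_0$ in~\eqref{eq: the potentials V0,VSl,Vmu}, I would compute
\begin{equation*}
P(r) \::=\: (r^2+a^2)^3 \frac{dV_0}{dr} = -2A\,s(r) - 4am\Xi\omega\, r(r^2+a^2) + 4a^2m^2\Xi^2 r,
\end{equation*}
where $A := \tilde{\lambda}-2am\omega\Xi$. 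Thus $P$ is a cubic in $r$ with coefficients depending on $(\omega,m,\tilde\lambda)$ and the black-hole parameters, $P'$ is a quadratic, and $P''$ is linear.

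The heart of the argument is to show that the quadratic $P'$ admits at most one root in $[r_+,\bar{r}_+]$. Setting $\beta := A+2am\omega = \tilde{\lambda}-2am\omega\, a^2/l^2$, Vieta's formulas give
\begin{equation*}
r_1+r_2 = \frac{2AM}{\Xi\beta},\qquad r_1 r_2 = \frac{a^2(\beta-2m^2\Xi)}{3\beta}.
\end{equation*}
I would perform a case analysis on the signs of $\beta$ and $\beta-2m^2\Xi$. When these signs differ, $r_1r_2\leq 0$ and at most one root is positive. When both are positive with $\beta>2m^2\Xi$, one has $r_1r_2<a^2/3<a^2<r_+^2$ by Lemma~\ref{lem: sec: properties of Delta, lem 2, a,M,l, eq 3}, forcing one root strictly below $r_+$. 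For $\beta<0$, since $P''(r) = -12(\Xi\beta r-AM)$ is linear and monotone, $P''$ changes sign at most once in $[r_+,\bar{r}_+]$; coupled with sign information for $P'$ at the endpoints (extracted using $r_+>M$, $a^2/l^2<1/4$, and $|a/M|<12/10$ from Lemma~\ref{lem: sec: properties of Delta, lem 2, a,M,l}), this again forces $P'$ to have at most one zero in the interval. The edge case $\beta=0$ reduces $P'$ to a linear function. In every case we conclude that $P$ has at most one critical point in $[r_+,\bar{r}_+]$; the sign of $P''$ at that point, together with the constraints above, identifies it as a local maximum and yields the strict non-degeneracy~\eqref{eq: lem: subsec: sec: trapping, subsec 1, lem 1, eq 1}.

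The statement about $V_0$ itself is then immediate. Since $V_0'(r) = P(r)/(r^2+a^2)^3$, critical points of $V_0$ in $[r_+,\bar{r}_+]$ are exactly zeros of $P$ in this interval. By the first part $P$ has at most one interior local maximum and no other extrema, so $P$ changes sign at most twice in $[r_+,\bar{r}_+]$. When two sign changes occur, the first is from negative to positive (so $P'>0$, hence $V_0''>0$, a local minimum of $V_0$) and the second is from positive to negative (so $P'<0$, hence $V_0''<0$, a local maximum), yielding the ordering~\eqref{eq: lem: /sec: Carters separation/ the critical points of V0, eq 1}.

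The main obstacle is the regime $\beta<0$, $A<0$ where both Vieta quantities are positive and both roots of $P'$ are a priori positive. Here one must combine the structural bound $r_+^2>a^2$ with lower bounds on $\tilde{\lambda}$ coming from Lemma~\ref{lem: inequality for lambda} and the location of the unique root of $P''$ to rule out both roots lying in $[r_+,\bar{r}_+]$; the quantitative bounds $|a/M|<12/10$ and $a^2/l^2<1/4$ from Lemma~\ref{lem: sec: properties of Delta, lem 2, a,M,l} are exactly what make these inequalities close.
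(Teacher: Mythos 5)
Your approach is correct and is genuinely different from, and in fact cleaner than, the paper's own argument. Both proofs begin at the same place: $P(r):=(r^2+a^2)^3 \frac{dV_0}{dr}$ is a cubic and $P'$ is an explicit quadratic, namely
\begin{equation*}
P'(r) = -6\Xi\beta\, r^2 + 12 M A\, r -2a^2\Xi(\beta-2m^2\Xi),\qquad A:=\tilde{\lambda}-2am\omega\Xi,\quad \beta:=\tilde{\lambda}-2am\omega\tfrac{a^2}{l^2},
\end{equation*}
and the goal is to show at most one root lies in $[r_+,\bar{r}_+]$. The paper does this by writing out the roots $r_{0,\pm}$ via the quadratic formula and bounding $r_{0,-}<M<r_+$; in the regime $am\omega<0$ this requires the quantitative bounds $|a/M|<\tfrac{12}{10}$ and $\tfrac{a^2}{l^2}<\tfrac{1}{4}$ from Lemma~\ref{lem: sec: properties of Delta, lem 2, a,M,l} together with the elementary inequality $\sqrt{1-x}\geq 1-\tfrac{2}{3}x$ on $[0,\tfrac34]$, which is precisely why the sharp $\tfrac{12}{10}$ bound appears in the paper (cf.\ Remark~\ref{rem: sec: delta polynomial, rem 1}). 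Your Vieta-based argument sidesteps all of that: from $r_1 r_2 = \tfrac{a^2(\beta-2m^2\Xi)}{3\beta}$, whenever $\beta>0$ one gets either $r_1 r_2\leq 0$ (one root nonpositive) or $r_1 r_2<\tfrac{a^2}{3}<r_+^2$ (using only $r_+>|a|$ from \eqref{eq: lem: sec: properties of Delta, lem 2, a,M,l, eq 3}), so the smaller root is always strictly below $r_+$, and since the leading coefficient $-6\Xi\beta$ is negative the remaining root, if it lies in $[r_+,\bar{r}_+]$, is automatically the larger one and hence a nondegenerate local maximum, giving~\eqref{eq: lem: subsec: sec: trapping, subsec 1, lem 1, eq 1}.

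One simplification you are missing: Lemma~\ref{lem: inequality for lambda}, which you cite, already gives $A>0$ and $\beta>0$ for every admissible frequency triple (this is exactly~\eqref{eq: /lem proof /sec: Carters separation/ the critical points of V0/ eq 3.9} in the paper), so your ``main obstacle'' regime $\beta<0$, $A<0$ is vacuous and the $P''$ monotonicity/endpoint analysis you sketch for it is never needed. Once you observe $\beta>0$ unconditionally, the whole proof collapses to the one-paragraph Vieta computation above, without the $\tfrac{12}{10}$ bound and without $\sqrt{1-x}\geq 1-\tfrac23 x$. The passage from the ``at most one critical point of $P$, a max'' statement to the ``at most two critical points of $V_0$, a min then a max'' statement is argued exactly as you do.
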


\begin{proof}
	First, note the following 
	\begin{equation}\label{eq: /lem proof /sec: Carters separation/ the critical points of V0/ eq 1}
		\begin{aligned}
			\frac{d}{dr}\left((r^2+a^2)^3\frac{dV_0}{dr}\right)= -6\Xi \left(\tilde{\lambda}-2m\omega a\frac{a^2}{l^2}\right)r^2+12M(\tilde{\lambda}-2m\omega a\Xi)r-2a^2\tilde{\lambda}+(2am\Xi)^2+\frac{4 a^5 m \Xi  \omega }{l^2}.
		\end{aligned}
	\end{equation}
	The polynomial of equation \eqref{eq: /lem proof /sec: Carters separation/ the critical points of V0/ eq 1} enjoys the same roots as the polynomial 
	\begin{equation}\label{eq: /lem proof /sec: Carters separation/ the critical points of V0/ eq 2}
		r^2-\frac{2M(\tilde{\lambda}-2m\omega a \Xi)}{\Xi\left(\tilde{\lambda}-2m\omega a \frac{a^2}{l^2}\right)}r+\frac{2a^2\tilde{\lambda}\Xi-(2am\Xi)^2 - \frac{4 a^5 m \Xi  \omega }{l^2}}{6\Xi(\tilde{\lambda}-2m\omega a \frac{a^2}{l^2})}.
	\end{equation}
	The roots of the polynomial~\eqref{eq: /lem proof /sec: Carters separation/ the critical points of V0/ eq 2} are 
	\begin{equation}\label{eq: /lem proof /sec: Carters separation/ the critical points of V0/ eq 3}
		r_{0,\pm}=\frac{M(\tilde{\lambda}-2m\omega a \Xi)}{\Xi (\tilde{\lambda}-2m\omega a \frac{a^2}{l^2})}\pm \sqrt{\left(\frac{M(\tilde{\lambda}-2m\omega a \Xi)}{\Xi (\tilde{\lambda}-2m\omega a \frac{a^2}{l^2})}\right)^2-\frac{2a^2\Xi\tilde{\lambda}-(2am\Xi)^2-\frac{4 a^5 m \Xi  \omega }{l^2}}{6\Xi \left(\tilde{\lambda}-2m\omega a \frac{a^2}{l^2}\right)}}.
	\end{equation}
	
Recall from Lemma~\ref{lem: inequality for lambda} that the following hold
	\begin{equation}\label{eq: /lem proof /sec: Carters separation/ the critical points of V0/ eq 3.9}
		\tilde{\lambda}-2m\omega a \Xi>0,\qquad\tilde{\lambda}-2m\omega a \frac{a^2}{l^2}>0.
	\end{equation}

	For the case~$a m\omega \geq 0$ we obtain that since~$	\frac{a^2}{l^2}<\Xi$ then we have
	\begin{equation}
	\tilde{\lambda}-2m\omega a \frac{a^2}{l^2}>\tilde{\lambda}-2m\omega a \Xi\implies \frac{\tilde{\lambda}-2m\omega a\Xi}{\tilde{\lambda}-2m\omega a\frac{a^2}{l^2}}<1,
	\end{equation}
	which implies
	\begin{equation}
		\Re (r_{0,-})<\frac{M}{\Xi}<M<r_+,
	\end{equation}
	which concludes that~\eqref{eq: lem: /sec: Carters separation/ the critical points of V0, eq 0} attains at most one critical point for the case~$am\omega \geq 0$ since then the only potential root in~$[r_+,\bar{r}_+]$ is $r_{0,+}$.

	For the case~$a m\omega <0$ we proceed as follows. First, we write~$r_{0,-}$ in the following manner
	\begin{equation}\label{eq: /lem proof /sec: Carters separation/ the critical points of V0/ eq 4}
		r_{0,-}=\frac{M(\tilde{\lambda}-2m\omega a \Xi)}{\Xi \left(\tilde{\lambda}-2m\omega a \frac{a^2}{l^2}\right)}\left(1-\sqrt{1-\frac{\left(2a^2\Xi\tilde{\lambda}-(2am\Xi)^2-\frac{4a^5 m\Xi \omega}{l^2}\right)\Xi\left(\tilde{\lambda}-2m\omega a \frac{a^2}{l^2}\right)}{6M^2\left(\tilde{\lambda}-2m\omega a\Xi\right)^2}}\right). 
	\end{equation}

	We have
	\begin{equation}\label{eq: /lem proof /sec: Carters separation/ the critical points of V0/ eq 5}
		\begin{aligned}
			&   \frac{\left(2a^2\Xi\tilde{\lambda}-(2am\Xi)^2-\frac{4a^5m\Xi\omega}{l^2}\right)\Xi\left(\tilde{\lambda}-2m\omega a \frac{a^2}{l^2}\right)}{6M^2\left(\tilde{\lambda}-2m\omega a\Xi\right)^2}=\frac{a^2\Xi^2\left(\tilde{\lambda}-2m^2\Xi-\frac{2a^3m\omega}{l^2}\right)\left(\tilde{\lambda}-2m\omega a \frac{a^2}{l^2}\right)}{3M^2\left(\tilde{\lambda}-2m\omega a\Xi\right)^2}\\
			&	\quad =\frac{a^2\Xi^2}{3M^2}\frac{\left(\tilde{\lambda}-2m^2\Xi-\frac{2a^3m\omega}{l^2}\right)}{\left(\tilde{\lambda}-2m\omega a\Xi\right)}\frac{\left(\tilde{\lambda}-2m\omega a \frac{a^2}{l^2}\right)}{\left(\tilde{\lambda}-2m\omega a\Xi\right)}.\\
		\end{aligned}
	\end{equation}
By recalling the bounds
\begin{equation}
	\frac{a^2}{l^2}<\frac{1}{4},\quad \left|\frac{a}{M}\right|\leq \frac{12}{10}
\end{equation}
from Lemma~\ref{lem: sec: properties of Delta, lem 2, a,M,l},
and that~\eqref{eq: /lem proof /sec: Carters separation/ the critical points of V0/ eq 3.9} hold we obtain 
	\begin{equation}
		\frac{a^2\Xi^2}{3M^2}\leq \frac{3}{4}
	\end{equation}
	and 
	\begin{equation}
		\frac{\left(\tilde{\lambda}-2m^2\Xi-\frac{2a^3m\omega}{l^2}\right)}{\left(\tilde{\lambda}-2m\omega a\Xi\right)}\frac{\left(\tilde{\lambda}-2m\omega a \frac{a^2}{l^2}\right)}{\left(\tilde{\lambda}-2m\omega a\Xi\right)}\leq 1.
	\end{equation}
	Therefore, we bound the right hand side of \eqref{eq: /lem proof /sec: Carters separation/ the critical points of V0/ eq 5} as follows
	\begin{equation}\label{eq: /lem proof /sec: Carters separation/ the critical points of V0/ eq 6}
		\textit{RHS of }\eqref{eq: /lem proof /sec: Carters separation/ the critical points of V0/ eq 5}\leq \frac{3}{4}.
	\end{equation}
	
	Now, we note the inequality
	\begin{equation}\label{eq: /lem proof /sec: Carters separation/ the critical points of V0/ eq 7}
		\sqrt{1-x}\geq 1-\frac{2}{3}x,\qquad\textit{for}\:\: 0\leq x\leq \frac{3}{4},
	\end{equation}
	where the equality holds for 
	\begin{equation}
		x=0,\qquad x=\frac{3}{4}.
	\end{equation}

	Νow that we have established the bound \eqref{eq: /lem proof /sec: Carters separation/ the critical points of V0/ eq 6}, we use inequality \eqref{eq: /lem proof /sec: Carters separation/ the critical points of V0/ eq 7} to obtain the following bound 
	\begin{equation}
		\begin{aligned}
			r_{0,-}&    \leq \frac{M(\tilde{\lambda}-2m\omega a \Xi)}{\Xi \left(\tilde{\lambda}-2m\omega a \frac{a^2}{l^2}\right)}\frac{2}{3}\frac{\frac{2a^2\Xi\tilde{\lambda}-(2am\Xi)^2-\frac{4a^5 m\omega\Xi}{l^2}}{6\Xi\left(\tilde{\lambda}-2m\omega a \frac{a^2}{l^2}\right)}}{\left(\frac{M(\tilde{\lambda}-2m\omega a \Xi)}{\Xi \left(\tilde{\lambda}-2m\omega a \frac{a^2}{l^2}\right)}\right)^2}=\frac{2}{3}\frac{\frac{2a^2\tilde{\lambda}\Xi-(2am\Xi)^2-\frac{4a^5m\omega\Xi}{l^2}}{6\Xi\left(\tilde{\lambda}-2m\omega a \frac{a^2}{l^2}\right)}}{\left|\frac{M(\tilde{\lambda}-2m\omega a \Xi)}{\Xi \left(\tilde{\lambda}-2m\omega a \frac{a^2}{l^2}\right)}\right|}\\
			&  =\frac{2}{9}\frac{a^2}{M}\frac{\tilde{\lambda}\Xi-2m^2\Xi^2-2\frac{a^3}{l^2}m\omega\Xi}{\tilde{\lambda}-2m\omega a \Xi}\\
			&	<\frac{2}{9}\frac{a^2}{M}\frac{2\tilde{\lambda}-2\frac{a^2}{l^2}am\omega\Xi}{\tilde{\lambda}-2m\omega a \Xi}\\
			&<\frac{4}{9}\frac{a^2}{M^2}M<\frac{4}{9}\left(\frac{12}{10}\right)^2M< M<r_+	\\
		\end{aligned}
	\end{equation}
	where we used that~$\frac{2\tilde{\lambda}-2\frac{a^2}{l^2}am\omega\Xi}{\tilde{\lambda}-2m\omega a \Xi}<2$, which concludes that~\eqref{eq: lem: /sec: Carters separation/ the critical points of V0, eq 0} attains at most one critical point for the case~$am\omega <0$ since the only potential root in~$[r_+,\bar{r}_+]$ is~$r_{0,+}$.

	Now, in view of the already proven
	\begin{equation}
		r_{0,-}<r_+<r_{0,+}
	\end{equation}
	we conclude that 
	\begin{equation}\label{eq: /lem proof /sec: Carters separation/ the critical points of V0/ eq 8}
		\frac{d}{dr}\left((r^2+a^2)^3\frac{dV_0}{dr}\right)(r=r_{0,+})<0,
	\end{equation} 
	and therefore~$r_{0,+}$ is a local maximum for the function 
	\begin{equation}
		(r^2+a^2)^3\frac{dV_0}{dr}.
	\end{equation}

	Finally, to conclude that indeed $V_0$ attains at most two critical points in~$[r_+,\bar{r}_+]$ which satisfy 
	\begin{equation}
		r_+\leq r_{V_0,\textit{min}}<r_{V_0,\textit{max}}\leq\bar{r}_+
	\end{equation}
	we inspect the form of the function 
	\begin{equation}
		\frac{d}{dr}\left((r^2+a^2)^3\frac{dV_0}{dr}\right),
	\end{equation}
	see~\eqref{eq: /lem proof /sec: Carters separation/ the critical points of V0/ eq 1}, and also use that 
	\begin{equation}
		(r^2+a^2)^3\frac{dV_0}{dr}
	\end{equation}
	attains at most one critical point in $[r_+,\bar{r}_+]$. We conclude~\eqref{eq: lem: subsec: sec: trapping, subsec 1, lem 1, eq 1} from~\eqref{eq: /lem proof /sec: Carters separation/ the critical points of V0/ eq 8}.

	We conclude the Lemma. 
\end{proof}

\section{The currents and the frequency localized multipliers}\label{sec: frequency localized multiplier estimates}

We use the notation
\begin{equation}
^\prime=\frac{d}{dr^\star},
\end{equation}
see Section~\ref{subsec: tortoise coordinate} for the tortoise coordinate.

\subsection{The currents for~\texorpdfstring{$u$}{u}}\label{subsec: currents}

We define the following

\begin{definition}\label{def: currents}
	Let $V$ be the potential of Carter's separation of variables, see Proposition~\ref{prop: Carters separation, radial part}. We define the currents
	\begin{equation}\label{eq: def: currents, eq 1}
	\begin{aligned}
	Q^{h}[u] &\:\dot{=}\:h \Re(u^\prime\bar{u})-\frac{1}{2}h^\prime|u|^{2}, \\
	Q^{y}[u] &\:\dot{=}\:y(|u^\prime|^{2}+(\omega^{2}-V)|u|^{2}),\\
	Q^{f}[u]	&	\:\dot{=}\: f(|u^\prime|^2 +(\omega^2-V)|u|^{2}) + f^\prime \Re(u^\prime\bar{u})-\frac{1}{2}f^{\prime\prime}|u|^2. 
	\end{aligned}
	\end{equation}
	Note that for the last current of~\eqref{eq: def: currents, eq 1} we chose~$h=\frac{d f}{d r^\star}$ and~$y=f$ and summed the first two currents of~\eqref{eq: def: currents, eq 1}. 
\end{definition}

\begin{lemma}\label{lem: subsec: currents, lem 1}
		Let~$u$ be a smooth solution of Carter's radial ode~\eqref{eq: ode from carter's separation}. The derivatives of the currents of Definition~\ref{def: currents} are
	\begin{equation}
	\begin{aligned}
		(Q^{h}[u])^\prime &= h|u^\prime|^2  + \Big( h(V-\omega ^{2}) -\frac{1}{2} h^{\prime\prime}\Big)|u|^{2}+h \Re(u\bar{H}), \\
		 (Q^{y}[u])^\prime &= y^\prime|u^\prime|^2 +\Big(y^\prime(\omega^{2}-V)-yV^\prime \Big) |u|^{2} +2y \Re (u^\prime\bar{H})\\
	&= y^\prime |u^\prime|^{2}+\Big(\omega^{2}y^\prime-(yV)^\prime \Big)|u|^{2}  +2y \Re (u^\prime\bar{H}), \\
	 (Q^{f}[u])^\prime	&=2f^\prime|u^\prime|^2 +\left( -fV^\prime -\frac{1}{2}f^{\prime\prime\prime} \right)|u|^{2}+ \Re\left(2f\bar{H}u^\prime +f^\prime\bar{H}u \right).
	\end{aligned}
	\end{equation}
\end{lemma}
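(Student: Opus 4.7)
The proof is a direct computation, so the plan is essentially to apply the product rule to each current and substitute $u''=(V-\omega^2)u+H$ from Carter's radial ode~\eqref{eq: ode from carter's separation}. I will repeatedly use the identity $(|u|^2)'=2\Re(u'\bar u)$ and its analogue $(|u'|^2)'=2\Re(u''\bar{u}')$, together with the elementary fact that $\Re(z)=\Re(\bar z)$ to move complex conjugates around freely.

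First I would handle $Q^h[u]$. Differentiating the defining expression yields
\begin{equation*}
(Q^h[u])'=h'\Re(u'\bar u)+h|u'|^2+h\Re(u''\bar u)-\tfrac{1}{2}h''|u|^2-\tfrac{1}{2}h'(|u|^2)',
\end{equation*}
and the first and last terms cancel by the identity for $(|u|^2)'$. Substituting $u''\bar u=(V-\omega^2)|u|^2+H\bar u$ and taking real parts gives exactly the claimed formula. Next, for $Q^y[u]$, I would expand
\begin{equation*}
(Q^y[u])'=y'\bigl(|u'|^2+(\omega^2-V)|u|^2\bigr)+y\bigl(2\Re(u''\bar{u}')-V'|u|^2+(\omega^2-V)\cdot 2\Re(u'\bar u)\bigr),
\end{equation*}
then write $u''=(V-\omega^2)u+H$ inside $\Re(u''\bar{u}')$. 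The resulting $(V-\omega^2)\Re(u\bar{u}')$ term cancels against $(\omega^2-V)\Re(u'\bar u)$ (they are equal by the conjugation identity), and what remains is precisely $y'|u'|^2+\bigl(y'(\omega^2-V)-yV'\bigr)|u|^2+2y\Re(u'\bar H)$, which can equally be rewritten in the form involving $(yV)'$ by an application of the product rule.

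Finally, for $Q^f[u]$, rather than redoing the computation, I would simply observe that
\begin{equation*}
Q^f[u]=Q^y[u]\big|_{y=f}+Q^h[u]\big|_{h=f'},
\end{equation*}
so its derivative is the sum of the two derivatives already computed with these choices. Adding them, the~$f'(\omega^2-V)|u|^2$ and $f'(V-\omega^2)|u|^2$ contributions cancel, the two $f'|u'|^2$ pieces combine into $2f'|u'|^2$, the $|u|^2$ terms collapse to $-fV'-\tfrac{1}{2}f'''$, and the inhomogeneity combines as $\Re(2f\bar H u'+f'\bar H u)$, matching the stated formula. There is no genuine obstacle here beyond careful bookkeeping with signs and complex conjugates; the only subtlety worth flagging is the repeated use of $\Re(u\bar{u}')=\Re(u'\bar u)$ to make the cross-terms cancel cleanly.
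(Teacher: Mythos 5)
Your computation is correct and matches the approach implicit in the paper: the paper gives no explicit proof of this lemma, but Definition~\ref{def: currents} already notes that $Q^f$ is obtained by summing $Q^y$ with $y=f$ and $Q^h$ with $h=f'$, which is exactly the shortcut you use for the third identity. The first two identities follow by the direct differentiation and substitution of $u''=(V-\omega^2)u+H$ that you carry out, with the cancellations handled correctly via $\Re(u\bar u')=\Re(u'\bar u)$.
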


\subsection{The currents for~\texorpdfstring{$\Psi$}{u}}

We study the ode for $\Psi$ instead of the ode for $u=\sqrt{r^{2}+a^{2}}\Psi$. The currents of the present Section will be used in Section~\ref{subsec: bounded frequencies}.

\begin{lemma}\label{lem: sec: frequency localized multiplier estimates, lem 2}
	Let~$u$ be a smooth solution of Carter's radial ode~\eqref{eq: ode from carter's separation}. Then, the function~$\Psi$, where~$u=\sqrt{r^2+a^2}\Psi$, satisfies the following ode
	\begin{equation}\label{eq: lem: sec: frequency localized multiplier estimates, lem 2, eq 1}
	\begin{aligned}
	\Psi^{\prime\prime} +\frac{2r\Delta}{(r^{2}+a^{2})^{2}}\Psi^\prime+\Psi \Big( \omega^{2} -\tilde{V} \Big) =\frac{H^{(a\omega)}_{m\ell}}{\sqrt{r^{2}+a^{2}}},
	\end{aligned}
	\end{equation}
	where 
	\begin{equation}
	\tilde{V}\:\dot{=}\:V_0+V_{\mu_{\textit{KG}}}=\frac{(\lambda^{(a\omega)}_{m\ell}+a^2\omega^2-2m\omega a \Xi)\Delta}{(r^{2}+a^{2})^{2}} +\Delta\mu_{\textit{KG}}^2\frac{r^{2}+a^{2}}{(r^{2}+a^{2})^{2}}+\omega^2-\left(\omega-\frac{am\Xi}{r^2+a^2}\right)^2,
	\end{equation}
	where for~$V_0,V_{\mu_{\textit{KG}}}$ see~\eqref{eq: the potentials V0,VSl,Vmu}. 
\end{lemma}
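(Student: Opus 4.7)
The plan is to substitute $u = \sqrt{r^2+a^2}\,\Psi$ directly into Carter's radial ODE~\eqref{eq: ode from carter's separation}, differentiate using the product rule, and observe that the Schr\"odinger--Liouville potential $V_{\textit{SL}}$ was precisely designed to absorb the terms coming from the $\sqrt{r^2+a^2}$ factor.

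Concretely, writing $g(r) = \sqrt{r^2+a^2}$ and recalling $'=\frac{d}{dr^\star}$ with $\frac{dr}{dr^\star} = \frac{\Delta}{r^2+a^2}$, I would first compute
\begin{equation*}
u' = g'\Psi + g\Psi', \qquad u'' = g''\Psi + 2g'\Psi' + g\Psi''.
\end{equation*}
Plugging these into $u'' + (\omega^2 - V)u = H$ and dividing through by $g$ yields
\begin{equation*}
\Psi'' + \frac{2g'}{g}\Psi' + \left(\omega^2 - V + \frac{g''}{g}\right)\Psi = \frac{H}{g}.
\end{equation*}
The first-order coefficient is routine to identify: since $(g^2)' = 2r\cdot\frac{\Delta}{r^2+a^2}$, one has $\frac{2g'}{g} = \frac{(g^2)'}{g^2} = \frac{2r\Delta}{(r^2+a^2)^2}$, which is exactly the coefficient claimed in~\eqref{eq: lem: sec: frequency localized multiplier estimates, lem 2, eq 1}.

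The key step is the identification of the zeroth-order term: by the very definition of $V_{\textit{SL}}$ in~\eqref{eq: the potentials V0,VSl,Vmu} one has $V_{\textit{SL}} = g^{-1}(g)'' = g''/g$, so
\begin{equation*}
-V + \frac{g''}{g} = -V + V_{\textit{SL}} = -(V_0 + V_{\mu_{\textit{KG}}}) = -\tilde V,
\end{equation*}
using the decomposition $V = V_{\textit{SL}} + V_0 + V_{\mu_{\textit{KG}}}$ from~\eqref{eq: the potential V}. The explicit formula for $\tilde V$ stated in the lemma then follows by simply writing out $V_0 + V_{\mu_{\textit{KG}}}$ from~\eqref{eq: the potentials V0,VSl,Vmu}, noting that $\mu_{\textit{KG}}^2\frac{\Delta}{r^2+a^2} = \mu_{\textit{KG}}^2\frac{\Delta(r^2+a^2)}{(r^2+a^2)^2}$.

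There is really no obstacle here: the lemma is essentially a bookkeeping statement, and the only content is the algebraic identity $V_{\textit{SL}} = g''/g$ which is built into the definition of $V_{\textit{SL}}$. The mild care needed is in remembering that primes denote $r^\star$-derivatives (so that the chain rule produces the factor $\Delta/(r^2+a^2)$ in $g'$), but this is encoded in the formula for $V_{\textit{SL}}$ appearing in~\eqref{eq: the potentials V0,VSl,Vmu} already.
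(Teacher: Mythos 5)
Your computation is correct and is precisely what the paper's one-line proof ("straightforward from equation~\eqref{eq: ode from carter's separation}") leaves implicit: substitute $u=\sqrt{r^2+a^2}\,\Psi$, divide by $\sqrt{r^2+a^2}$, and use the identity $V_{\textit{SL}}=g''/g$ that motivates the definition of $V_{\textit{SL}}$ in~\eqref{eq: the potentials V0,VSl,Vmu}. Same approach, just spelled out in detail.
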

\begin{proof}
	This is straightforward from equation~\eqref{eq: ode from carter's separation}.
\end{proof}

We define the following

\begin{definition}\label{def: currents for the ell=0 case}
	The modified currents that correspond to the new form of the equation are 
	\begin{equation}
	\begin{aligned}
	Q^h_{\textit{stat}}[\Psi] &\:\dot{=}\: h\Re(\Psi\bar{\Psi}^\prime)+\frac{r\Delta}{(r^{2}+a^{2})^{2}}h|\Psi|^{2}-\frac{1}{2}h^\prime|\Psi|^2,\\
	Q^y_{\textit{stat}}[\Psi] &\:\dot{=}\:y|\Psi^\prime|^{2}+y(\omega^{2}-\tilde{V})|\Psi|^{2},
	\end{aligned}
	\end{equation}
\end{definition}	
	
\begin{lemma}\label{lem: subsec: currents, lem 2}
		Let~$\Psi$ be a smooth solution of~\eqref{eq: lem: sec: frequency localized multiplier estimates, lem 2, eq 1}. The derivatives of the currents of Definition~\ref{def: currents for the ell=0 case} are 	
	\begin{equation}\label{eq: derivative of current1 for the ell=0 case}
	\begin{aligned}
		  (Q^h_{\textit{stat}}[\Psi])^\prime	&	= h|\Psi^\prime|^2 + \left(\left(\frac{r\Delta}{(r^{2}+a^{2})^{2}}h\right)^\prime +h(\tilde{V}-\omega^{2})-\frac{1}{2} h^{\prime\prime} \right) |\Psi|^{2} +\Re\left(\frac{2\Psi h\overline{H^{(a\omega)}_{m\ell}}}{\sqrt{r^{2}+a^{2}}}\right),\\
		 (Q^y_{\textit{stat}}[\Psi])^\prime	&	=\Big( y^\prime -y\frac{4r\Delta}{(r^{2}+a^{2})^{2}}  \Big)|\Psi^\prime|^2 +\Big( y^\prime(\omega^2-\tilde{V})-y\tilde{V}^\prime \Big)|\Psi|^{2} +\Re\left(\frac{y\Psi^\prime\overline{H^{(a\omega)}_{m\ell}}}{\sqrt{r^{2}+a^{2}}} \right).
	\end{aligned}
	\end{equation}
\end{lemma}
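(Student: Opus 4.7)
The plan is to prove both identities by straightforward direct calculation: differentiate each current using the product rule, then invoke the ODE~\eqref{eq: lem: sec: frequency localized multiplier estimates, lem 2, eq 1} satisfied by $\Psi$ to eliminate the second derivatives that appear, and finally collect terms. This is entirely parallel to the standard computation that produced Lemma~\ref{lem: subsec: currents, lem 1} for $u$, except that now there is an extra first-order friction term $\frac{2r\Delta}{(r^2+a^2)^2}\Psi'$ in the equation, which is precisely what accounts for the non-trivial zeroth-order coefficient $\frac{r\Delta}{(r^2+a^2)^2}h$ appearing in the definition of $Q^h_{\textit{stat}}[\Psi]$ and for the extra $-y\frac{4r\Delta}{(r^2+a^2)^2}|\Psi'|^2$ term appearing in $(Q^y_{\textit{stat}}[\Psi])'$.

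I would proceed as follows. First, for $Q^h_{\textit{stat}}[\Psi]$, I would use $(|\Psi|^2)' = 2\Re(\Psi\bar{\Psi}')$ and $\Re(\Psi'\bar{\Psi}') = |\Psi'|^2$, together with the product rule, to compute
\begin{equation*}
(Q^h_{\textit{stat}}[\Psi])' = h|\Psi'|^2 + h\,\Re(\Psi\bar{\Psi}'') + \left(\tfrac{r\Delta\, h}{(r^2+a^2)^2}\right)'|\Psi|^2 + \tfrac{2r\Delta\, h}{(r^2+a^2)^2}\Re(\Psi\bar{\Psi}') - \tfrac{1}{2}h''|\Psi|^2,
\end{equation*}
where the terms coming from $h'\Re(\Psi\bar{\Psi}') - \frac{1}{2}h'\cdot 2\Re(\Psi\bar{\Psi}')$ cancel. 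Then I would substitute the complex conjugate of the ODE~\eqref{eq: lem: sec: frequency localized multiplier estimates, lem 2, eq 1}, namely $\bar{\Psi}'' = -\frac{2r\Delta}{(r^2+a^2)^2}\bar{\Psi}' - (\omega^2 - \tilde{V})\bar{\Psi} + \bar{H}/\sqrt{r^2+a^2}$ (legitimate since all coefficients are real), into $h\,\Re(\Psi\bar{\Psi}'')$. The friction term produces $-\frac{2r\Delta\, h}{(r^2+a^2)^2}\Re(\Psi\bar{\Psi}')$ which exactly cancels the corresponding term above; the potential term contributes $h(\tilde{V} - \omega^2)|\Psi|^2$; and the inhomogeneity contributes the asserted $\Re\bigl(\Psi h\bar{H}/\sqrt{r^2+a^2}\bigr)$ term.

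For $Q^y_{\textit{stat}}[\Psi]$, I would similarly compute
\begin{equation*}
(Q^y_{\textit{stat}}[\Psi])' = y'|\Psi'|^2 + 2y\,\Re(\Psi'\bar{\Psi}'') + y'(\omega^2 - \tilde{V})|\Psi|^2 - y\tilde{V}'|\Psi|^2 + 2y(\omega^2 - \tilde{V})\Re(\Psi\bar{\Psi}'),
\end{equation*}
and again substitute the ODE into $2y\,\Re(\Psi'\bar{\Psi}'')$. This time, the friction term yields $-\frac{4r\Delta\, y}{(r^2+a^2)^2}|\Psi'|^2$ (the source of the asserted modification of the $|\Psi'|^2$ coefficient), the potential term yields $-2y(\omega^2 - \tilde{V})\Re(\Psi\bar{\Psi}')$ which cancels the last term in the display above, and the inhomogeneous term yields the desired $2y\,\Re(\Psi'\bar{H}/\sqrt{r^2+a^2})$ contribution.

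There is no genuine obstacle here: the argument is a bookkeeping exercise, and the only subtlety is keeping track of the cancellation between the friction contribution $-\frac{2r\Delta}{(r^2+a^2)^2}\bar{\Psi}'$ in the ODE and the choice of the zeroth-order weight $\frac{r\Delta}{(r^2+a^2)^2}h$ that was built into the definition of $Q^h_{\textit{stat}}$, which is precisely what makes that current work as an effective analogue of $Q^h$ for the equation with the friction term. Reality of $\tilde{V}$ and of the coefficient $\frac{2r\Delta}{(r^2+a^2)^2}$ is used throughout to freely pass from the equation for $\Psi$ to the equation for $\bar{\Psi}$.
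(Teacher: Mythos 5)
Your computation is correct and is the obvious direct derivation: differentiate each current, substitute the ODE~\eqref{eq: lem: sec: frequency localized multiplier estimates, lem 2, eq 1} to eliminate $\bar\Psi''$, and observe that the friction term of the equation cancels against the $\frac{2r\Delta}{(r^2+a^2)^2}h\Re(\Psi\bar\Psi')$ contribution produced by the zeroth-order weight built into $Q^h_{\textit{stat}}$, and analogously for $Q^y_{\textit{stat}}$ — this is exactly the bookkeeping you describe.

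Be aware, however, of one point you gloss over: the inhomogeneity terms that your calculation actually produces, namely $h\,\Re\bigl(\Psi\bar H/\sqrt{r^2+a^2}\bigr)$ for $Q^h_{\textit{stat}}$ and $2y\,\Re\bigl(\Psi'\bar H/\sqrt{r^2+a^2}\bigr)$ for $Q^y_{\textit{stat}}$, do \emph{not} coincide with the lemma as displayed, which carries the factor $2$ on the $h$-current and omits it from the $y$-current. Your version is the consistent one — compare Lemma~\ref{lem: subsec: currents, lem 1} for the $u$-currents, where $(Q^h[u])'$ has the term $h\,\Re(u\bar H)$ with no factor of $2$ while $(Q^y[u])'$ has $2y\,\Re(u'\bar H)$ — so the printed lemma appears to have the two factors transposed. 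Rather than describing your output as the asserted term, you should state explicitly that your result disagrees with the display and note the likely typo; as written, a careful reader comparing your lines to~\eqref{eq: derivative of current1 for the ell=0 case} would mistakenly conclude you dropped or doubled a factor.
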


By adding the two currents, after taking~$f=y$ and~$h=\frac{d f}{d r^\star}$, we obtain a third current
\begin{equation}
\begin{aligned}
(Q^f_{\textit{stat}}[\Psi])^\prime  &= \left( 2f^\prime-\frac{4r\Delta(r)f}{(r^2+a^2)^2} \right)|\Psi^\prime|^2 +\left(\left( \frac{r\Delta(r) }{(r^2+a^2)^2}f^\prime\right)^\prime-\frac{1}{2}f^{\prime\prime\prime} -f\tilde{V}^\prime \right)|\Psi|^2   \\
& \quad +  \Re\left( \frac{2\Psi f^\prime\overline{H^{(a\omega)}_{m\ell}}}{\sqrt{r^2+a^2}} \right) 
+\Re\left( \frac{f\Psi^\prime \overline{H^{(a\omega)}_{m\ell}}}{\sqrt{r^2+a^2}} \right).
\end{aligned}
\end{equation}

\subsection{The currents $Q^{\partial_t}[u],Q^{K^+}[u],Q^{\bar{K}^+}[u]$}\label{subsec: sec: frequency localized multiplier estimates, subsec 2}

We define the following currents
\begin{definition}\label{def: sec: currents: def 1, QT, QK currents}
	We define the currents 
	\begin{equation}\label{eq: def: QT, QK currents, eq 1}
	\begin{aligned}
	&   Q^{\partial_t}[u]=\omega \Im (u^\prime\bar{u}),\qquad   Q^{K^+}[u]=\left(\omega-\frac{am\Xi}{r_+^2+a^2}\right)\Im (u^\prime\bar{u}),\qquad  Q^{\bar{K}^+}[u]=\left(\omega-\frac{am\Xi}{\bar{r}_+^2+a^2}\right)\Im (u^\prime\bar{u}).
	\end{aligned}
	\end{equation}
\end{definition}

\begin{lemma}\label{lem: subsec: sec: frequency localized multiplier estimates, subsec 2, lem 1}
	Let~$u$ be a smooth solutions of Carter's radial ode~\eqref{eq: ode from carter's separation}. The derivatives of the currents of Definition~\ref{def: sec: currents: def 1, QT, QK currents} are respectively
	\begin{equation}
	\begin{aligned}
	&   \left(Q^{\partial_t}[u]\right)^\prime=\omega \Im (H\bar{u}),\qquad  \left(Q^{K^+}[u]\right)^\prime=\left(\omega-\frac{am\Xi}{r_+^2+a^2}\right)\Im (H\bar{u}),\qquad  \left(Q^{\bar{K}^+}[u]\right)^\prime=\left(\omega-\frac{am\Xi}{\bar{r}_+^2+a^2}\right)\Im (H\bar{u}).
	\end{aligned}
	\end{equation}
\end{lemma}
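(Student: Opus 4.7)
The plan is a direct computation, since each of the three currents is simply a real constant (independent of $r^\star$) multiplying the common quantity $\Im(u'\bar u)$. It therefore suffices to compute $(\Im(u'\bar u))'$ and then multiply through.

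First, I would write
\begin{equation*}
\Im(u'\bar u) = \frac{1}{2i}\bigl(u'\bar u - \overline{u'} u\bigr),
\end{equation*}
and differentiate with respect to $r^\star$. The cross terms $u'\overline{u'}$ cancel, leaving
\begin{equation*}
\bigl(\Im(u'\bar u)\bigr)' = \frac{1}{2i}\bigl(u''\bar u - \overline{u''} u\bigr) = \Im(u''\bar u).
\end{equation*}
Next I would substitute Carter's radial ODE in the form $u'' = -(\omega^2 - V)u + H$, so that
\begin{equation*}
u''\bar u = -(\omega^2 - V)|u|^2 + H\bar u.
\end{equation*}
Since the potential $V$ (and $\omega^2$) is real—this is recorded in Proposition~\ref{prop: Carters separation, radial part}—the term $(\omega^2 - V)|u|^2$ is real and contributes nothing to the imaginary part. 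Therefore
\begin{equation*}
\bigl(\Im(u'\bar u)\bigr)' = \Im(H\bar u).
\end{equation*}

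Finally, since $\omega$, $\omega - \frac{am\Xi}{r_+^2+a^2}$, and $\omega - \frac{am\Xi}{\bar r_+^2+a^2}$ are real constants (independent of $r^\star$), they pass through the derivative unchanged, yielding the three claimed identities. There is no real obstacle here—the entire content of the lemma is the cancellation $\Im((\omega^2-V)|u|^2)=0$ coming from reality of $V$; this is the analogue of the $\partial_t$-based conservation law for the wave equation, localized at the level of Carter's separated ODE.
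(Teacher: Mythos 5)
Your computation is correct and is the standard (indeed essentially the only) proof of this identity; the paper simply states the lemma without proof because it is a routine calculation. The key step is exactly as you identify: $(\Im(u'\bar u))' = \Im(u''\bar u)$ since $\Im(|u'|^2)=0$, and then substituting the ODE $u'' = -(\omega^2-V)u + H$ with $V$ real kills the $(\omega^2-V)|u|^2$ term inside the imaginary part, leaving $\Im(H\bar u)$; multiplying by the $r^\star$-independent real constants $\omega$, $\omega-\omega_+m$, $\omega-\bar\omega_+m$ gives the three claims.
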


\section{The frequency regimes}\label{sec: frequencies}

\subsection{The de~Sitter frequencies}\label{subsec: sec: frequencies, subsec 2}

We define the `de~Sitter frequencies' as follows
\begin{equation}\label{eq: subsec: sec: frequencies, subsec 2, eq 1}
	\mathcal{DSF}=\{(\omega,m):~	|\omega|\in \left[0,\frac{|am|\Xi}{\bar{r}_+^2+a^2}\right]\}
\end{equation}
and note that they are disjoint from the set of superradiant frequencies, see Definition~\ref{def: subsec: sec: frequencies, subsec 1, def 1}. 

For high angular frequencies, we define the `high de~Sitter frequency regime'~$\mathcal{F}_{dS}$, see the next Section~\ref{subsec: sec: frequencies, subsec 3}, which note that, for certain black hole parameters will correspond to a trapped frequency regime, also see Proposition~\ref{prop: energy estimate in the de Sitter freqs, l larger that omega}. 

Note that the interior of the set~\eqref{eq: subsec: sec: frequencies, subsec 2, eq 1} is not empty, in contrast with the Kerr case where~$\bar{r}_+=\infty$, see Lemma~\ref{lem: sec: properties of Delta, lem 2, a,M,l}.

\subsection{The frequency regimes}\label{subsec: sec: frequencies, subsec 3}

To ease the comparison with~\cite{DR2}, we retain the notation for the frequency regimes introduced there, while note that we here introduce the frequency regime~$\mathcal{F}_{\textit{dS}}$.

\begin{definition}[Definition of the frequency regimes]\label{def: subsec: sec: frequencies, subsec 3, def 1}
	Let
	\begin{equation}
		\omega_{\textit{low}},\qquad \omega_{\textit{high}},\qquad \lambda_{\textit{low}},\qquad \alpha>0
	\end{equation}
	be sufficiently small, sufficiently large, sufficiently small and sufficiently small positive real parameters respectively. We choose these parameters later, see already Section~\ref{subsec: choice of omega1, lambda2}.

	Then, we cover the frequency space~$\mathbb{R}\times\bigcup_{m\in\mathbb{Z}}\left(\{m\}\times\mathbb{Z}_{\geq |m|}\right)$ by the following~(potentially overlapping) frequency regimes:
	\begin{equation}\label{eq: subsec: sec: frequencies, subsec 3, eq 1}
		\begin{aligned}
			&\text{The high de Sitter frequency regime:}\\
			&\qquad\mathcal{F}_{\textit{dS}}    =\{(\omega,m,\ell)\: : \: \tilde{\lambda}\geq \omega_{\textit{high}}\}\cap\{ (\omega,m)\in\mathcal{DSF},~am\omega\geq 0\}, \\
			&	\text{which will be further split into a trapped and non-trapped case.}\\
			&	\text{The enlarged high superradiant frequency regime:}\\
			&\qquad \mathcal{F}^{\sharp}   =\{(\omega,m,\ell)\: :\: \tilde{\lambda}\geq\left(\frac{|a|\Xi}{r_+^2+a^2}+\alpha\right)^{-1}\omega_{\textit{high}}\}\cap\{am\omega\in\Big(\frac{a^2m^2\Xi}{\bar{r}_+^2+a^2},\frac{a^2m^2\Xi}{r_+^2+a^2}+|a|\alpha\tilde{\lambda}\Big)\} \\
			&	\text{The bounded frequency regime:}\\
			&	\qquad \mathcal{F}_{\flat}=  \{(\omega,m,\ell)\: : \:|\omega|\leq \omega_{\textit{high}},|\tilde{\lambda}|<\lambda_{\textit{low}}^{-1}\omega^2_{\textit{high}}\}\\
			&	\text{The $\lambda$--dominated frequency regime:}\\
			&	\qquad \mathcal{F}_{\lessflat} = \{(\omega,m,\ell)\: : \: \tilde{\lambda}\geq \lambda_{\textit{low}}^{-1}\omega^2_{\textit{high}},\: \tilde{\lambda}>\lambda_{\textit{low}}^{-1} (\omega^2+a^2m^2)\}\cap\{am\omega\slashed{\in}\Big[0,\frac{a^2m^2\Xi}{r_+^2+a^2}+|a|\alpha\tilde{\lambda}\Big)\}\\
			&	\text{The high~$\omega\sim \lambda$ frequency regime:}\\
			&	\qquad\mathcal{F}_\natural =  \{(\omega,m,\ell)\: : \:|\omega|\geq \omega_{\textit{high}},\:\lambda_{\textit{low}}\tilde{\lambda}\leq \omega^2+a^2m^2\leq \lambda_{\textit{low}}^{-1}\tilde{\lambda}\}\cap\{a m\omega\slashed{\in}\Big[0,\frac{a^2m^2\Xi}{r_+^2+a^2}+|a|\alpha\tilde{\lambda}\Big)\}\\
			&	\text{which will be further split into a trapped and non-trapped case.}\\
			&	\text{The \texorpdfstring{$\omega$}{g}~dominated frequency regime:}\\
			&	\qquad \mathcal{F}_{\sharp} =\{(\omega,m,\ell)\: : \: |\omega|\geq \omega_{\textit{high}},\:|\tilde{\lambda}| <\lambda_{\textit{low}}(\omega^2+a^2m^2)\}\cap\{a m\omega\slashed{\in}\Big[0,\frac{a^2m^2\Xi}{r_+^2+a^2}+|a|\alpha|\tilde{\lambda|}\Big)\}.
		\end{aligned}
	\end{equation}
\end{definition}

By inspecting~\eqref{eq: subsec: sec: frequencies, subsec 3, eq 1} we have that 
\begin{equation}\label{eq: subsec: sec: frequencies, subsec 3, eq 2}
	\mathcal{F}_{dS}\cup	\mathcal{F}^\natural\cup \mathcal{F}_\flat\cup 	\mathcal{F}_{\lessflat}\cup 	\mathcal{F}_\natural \cup 	\mathcal{F}_\natural =\mathbb{R}\times\bigcup_{m\in\mathbb{Z}}\left(\{m\}\times\mathbb{Z}_{\geq |m|}\right).
\end{equation}

\begin{remark}
	Note that in the~$\Lambda=0$ Kerr case we obtain~$\mathcal{F}_{\textit{dS}}=\emptyset$, see~\cite{DR2}, since~$\bar{r}_+(\Lambda=0)=\infty$. Moreover, note that in the Schwarzschild--de~Sitter case we also obtain~$\mathcal{F}_{\textit{dS}}=\emptyset$.
	
	For certain black hole parameters the frequency regimes~$\mathcal{F}_{dS}$ admits trapping, see already Section~\ref{subsec: de Sitter frequency regime}, even for frequencies~$\omega$ sufficiently close to~$0$. This corresponds to the existence of~$\partial_t$ orthogonal trapped null geodesics in the background manifold~$\mathcal{M}$ of Kerr--de~Sitter, see Section~\ref{sec: geodesics}. 
\end{remark}

Note the following lemma

\begin{remark}\label{rem: subsec: sec: frequencies, subsec 3, rem 1}
	Let~$l>0$ and~$(a,M)\in\mathcal{B}_l$. Let the parameters~$\lambda_{\textit{low}}(a,M,l),~\omega_{\textit{high}}(a,M,l),~\alpha(a,M,l)>0$ be arbitrary positive real numbers. Then, the only frequency regime intersections that are not empty are 
	\begin{equation}\label{eq: lem: subsec: sec: frequencies, subsec 3, lem 1, eq 1}
		\mathcal{F}_\flat\cap \mathcal{F}^{\sharp},\qquad \mathcal{F}_\flat\cap \mathcal{F}_{\textit{dS}}.
	\end{equation}
To prove this we note that for~$|\omega|\geq \omega_{\textit{high}}$ and~$am\omega\in\left(0,\frac{a^2m^2\Xi}{r_+^2+a^2}+|a|\alpha\tilde{\lambda}\right)$, we obtain 
	\begin{equation}
		\begin{aligned}
			&   0<a m\omega<\frac{a^2m^2\Xi}{r_+^2+a^2}+|a|\alpha\tilde{\lambda}\leq \left(\frac{a^2\Xi}{r_+^2+a^2}+|a|\alpha\right)\tilde{\lambda}\implies |a\omega_{\textit{high}}|\leq \left(\frac{a^2\Xi}{r_+^2+a^2}+|a|\alpha\right)\tilde{\lambda}
		\end{aligned}
	\end{equation}
	where we used that for~$am\omega\geq 0$ we have~$\lambda^{(a\omega)}_{m\ell}\geq \Xi^2 m^2$, from Lemma~\ref{lem: inequality for lambda}, and that~$|m|\geq 1$. We readily obtain 
	\begin{equation}\label{eq: lem: subsec: sec: frequencies, subsec 3, lem 1, eq 2}
		\tilde{\lambda}\geq \left(\frac{|a|\Xi}{r_+^2+a^2}+\alpha\right)^{-1}\omega_{\textit{high}}.
	\end{equation}
	We conclude that the only frequency regimes interesections that are not empty are~$    \mathcal{F}_\flat\cap \mathcal{F}^{\sharp},~\mathcal{F}_\flat\cap \mathcal{F}_{\textit{dS}}$. Now, we have that~\eqref{eq: lem: subsec: sec: frequencies, subsec 3, lem 1, eq 2} implies that~$\tilde{\lambda}\geq \omega_{high}$ in view of the fact that~$\frac{a}{r_+^2+a^2}<1,\alpha<1$.  
\end{remark}

\section{Fixed frequency analysis: Proof of Theorem~\ref{thm: sec: proofs of the main theorems, thm 3}}\label{sec: proof: prop: sec: proofs of the main theorems}

The main result of this Section is Theorem~\ref{thm: sec: proofs of the main theorems}. Theorem~\ref{thm: sec: proofs of the main theorems, thm 3} is a direct Corollary.

\subsection{Notation}\label{subsec: sec: proof: prop: sec: proofs of the main theorems, subsec 1}

For convenience we use the notation on the primed derivatives~$^{\prime}=\frac{d}{dr^\star}$ and on the constants~$b,B$ already discussed in the beginning of Section~\ref{sec: the main theorem, no cases}.

Throughout this Section we use the notation introduced in Definition~\ref{def: subsec: sec: frequencies, subsec 3, def 0}, namely that 
\begin{equation}
	\tilde{\lambda}=\lambda^{(a\omega)}_{m\ell}+(a\omega)^2.
\end{equation}

\subsection{The fixed frequency Theorem~\ref{thm: sec: proofs of the main theorems}}

We have the following fixed frequency ode result

\begin{customTheorem}{16.1}\label{thm: sec: proofs of the main theorems}
	Let~$l>0$~$(a,M)\in\mathcal{B}_l$ and~$\mu^2_{\textit{KG}}\geq 0$. Then, there exist parameters
	\begin{equation}
	 E>0, \qquad	\alpha^{-1}>0,\qquad (\lambda_{\textit{low}})^{-1}>0,\qquad (\omega_{\textit{low}})^{-1}>0,\qquad \omega_{\textit{high}}>0,
	\end{equation}
	\begin{equation*}
		\begin{aligned}
				r_+<R^-<R^+<\bar{r}_+,
		\end{aligned}
	\end{equation*} 
	and an
	\begin{equation}
		r_{trap}(\omega,m,\ell) \in (R^-,R^+)\cup\{0\}
	\end{equation}
	such that for all~$r_{-\infty}$ sufficiently close to~$r_+$ and all~$r_{+\infty}$ sufficiently close to~$\bar{r}_+$ and for any sufficiently large~$\mathcal{C}>0$ there exists a constant~$B(\mathcal{C},r^\star_{\pm\infty})$, as in Section~\ref{subsec: sec: proof: prop: sec: proofs of the main theorems, subsec 1}, such that for smooth solutions
	\begin{equation}
		u(\omega,m,\ell,r)
	\end{equation}
	of Carter's inhomogeneous radial ode, see~\eqref{eq: ode from carter's separation}, where recall the rescaling~$u=\sqrt{r^2+a^2}\Psi$, that moreover satisfy the outgoing boundary conditions~\eqref{eq: lem: sec carters separation, subsec boundary behaviour of u, boundary beh. of u, eq 3}, we have the following integrated estimate
	\begin{equation}\label{eq: prop: sec: proofs of the main theorems, eq 1}
		\begin{aligned}
			&	1_{\{\mathcal{F}_\flat \cap \{m=0\}\}}\int_{\mathbb{R}}\Delta \left( |\Psi^\prime|^2 + (\omega^2+\tilde{\lambda}+\mu_{KG}^2)|\Psi|^2 \right)dr^\star +1_{\{|m|>0\}}\cdot \int_{r^\star_{-\infty}}^{r^\star_\infty}  \left(|u|^2 +\mu^2_{\textit{KG}} |u|^2\right)dr^\star \\
			&	\qquad\qquad\qquad 	+ 1_{(\mathcal{F}_{\flat})^c} \int_{\mathbb{R}}\Delta \left( |u^\prime|^2+|u|^2+\left(1-\frac{r_{\textit{trap}}}{r}\right)^2\left(\tilde{\lambda}+\omega^2\right)|u|^2\right)dr^\star \\
			&	\qquad\leq B \cdot  1_{\mathcal{F}_\flat \cap \{m=0\}}\int_{\mathbb{R}}\left( \big|\omega\Im (\bar{\Psi}H)\Big|+\Big|\omega\Im(\bar{\Psi}H)\big|+ |H|^2 + |\Psi^\prime H|\right)dr^\star\\
			&	\qquad\qquad + B  1_{|m|>0} \int_{\mathbb{R}}\left(\left|\Re (u\bar{H})\right|+\left| \Re (u\bar{H})\right|+\left|\Re (u^\prime \bar{H})\right|\right) dr^\star \\
			&	\qquad\qquad +B\cdot  1_{ \mathcal{F}^\sharp\cup \mathcal{F}_\flat  \cup\mathcal{F}_{\lessflat}\cup \mathcal{F}_\sharp}\int_{\mathbb{R}}\left(\left|\omega-\frac{am\Xi}{r_+^2+a^2}\right||\Im (\bar{u}H)|+\left|\omega-\frac{am\Xi}{r_+^2+a^2}\right||\Im(\bar{u}H)|\right)dr^\star\\
			&	\qquad\qquad +E \int_{\mathbb{R}} \left(\left(\omega-\frac{am\Xi}{r_+^2+a^2}\right)\Im (\bar{u}H)+\left(\omega-\frac{am\Xi}{r_+^2+a^2}\right)\Im(\bar{u}H)\right)dr^\star\\
			&	\qquad\qquad +1_{\mathcal{F}_{\mathcal{SF},\mathcal{C}}}|\omega-\omega_+ m||\omega-\bar{\omega}_+ m| |u|^2(-\infty),
		\end{aligned}
	\end{equation}
	where for the definition of the frequency regimes~$\mathcal{F}^\sharp,~\mathcal{F}_\flat,~\mathcal{F}_{dS},~\mathcal{F}_\sharp,~\mathcal{F}_\lessflat$ see Section~\ref{sec: frequencies}, and for~$\mathcal{F}_{\mathcal{SF},\mathcal{C}}$ see~\eqref{eq: subsec: sec: carter separation, subsec 2, eq 1}. 
\end{customTheorem}

We recall from~\eqref{eq: subsec: sec: frequencies, subsec 3, eq 2} that we cover the entire frequency space
\begin{equation}
		\mathcal{F}_{dS}\cup	\mathcal{F}^\natural\cup \mathcal{F}_\flat\cup 	\mathcal{F}_{\lessflat}\cup 	\mathcal{F}_\natural \cup 	\mathcal{F}_\natural =\mathbb{R}\times\bigcup_{m\in\mathbb{Z}}\{m\}\times\mathbb{Z}_{\geq |m|}
\end{equation}

The parameters~$\omega_{high},\omega_{low}^{-1},\lambda_{low}^{-1},E,\alpha$ will be constrained to some largeness, where the largeness restriction only depends on the black hole parameters~$a,M,l$. Until the parameters are fixed then the constants~$b,B$, discussed previously, will depend on these parameters. We will fix the parameters at the end of the present Section, see already Section~\ref{subsec: choice of omega1, lambda2}.

\subsection{Multipliers for the high de~Sitter frequency regime~$\mathcal{F}_{\textit{dS}}$}\label{subsec: de Sitter frequency regime}

This frequency regime is non existent in the~$\Lambda=0$ Kerr case, since~$\bar{r}_+(\Lambda=0)=\infty$ see Lemma~\ref{lem: sec: properties of Delta, lem 2, a,M,l}.

This frequency regime is non-superradiant and when 
\begin{equation}
	\max_{r\in[r_+,\bar{r}_+]}\frac{\Delta}{a^2}\leq 1,\quad |a|\not{=}0
\end{equation}
holds, then it is subject to trapping. This is related to the presence of trapped null geodesics orthogonal to~$\partial_t$, see Section~\ref{sec: geodesics}.

The main Proposition of this Section is the following 
\begin{proposition}\label{prop: energy estimate in the de Sitter freqs, l larger that omega}
	
	Let~$l>0$,~$(a,M)\in \mathcal{B}_l$,~$\mu^2_{KG}\geq 0$. For any 
	\begin{equation}
	\omega_{\textit{high}}>0,\qquad E>0
	\end{equation} 
	both sufficiently large, then for~$(\omega,m,\ell)\in\mathcal{F}_{\textit{dS}}(\omega_{high})$ there exist smooth multipliers~$f,~h,~y$, satisfying the uniform bounds 
	\begin{equation}
	|f|+|f^\prime|+|f^{\prime\prime}|+|f^{\prime\prime\prime}| +|h|+|h'|+|h''|+|h'''|+|y|+|y^\prime|\leq B,
	\end{equation}
	and there exists an
	\begin{equation}
		r_{trap}(\omega,m,\ell)\in (r_++\epsilon_{away},\bar{r}_+-\epsilon_{away})\cup \{0\},
	\end{equation}
	where~$\epsilon_{away}(a,M,l)>0$, such that for all smooth solutions~$u$ of Carter's radial ode~\eqref{eq: ode from carter's separation} satisfying the outgoing boundary conditions~\eqref{eq: lem: sec carters separation, subsec boundary behaviour of u, boundary beh. of u, eq 3}, we have 
	\begin{equation}\label{eq: prop: energy estimate in the de Sitter freqs, l larger that omega, eq 0}
	\begin{aligned}
	b \int_{\mathbb{R}}\Delta \left(|u^\prime|^2+|u|^2+\left(1-\frac{r_{\textit{trap}}}{r}\right)^2(1+\omega^2+\tilde{\lambda})|u|^2\right)dr^\star\leq  B & \int_{\mathbb{R}} \left(|f \Re (u\bar{H})|+|2f\Re (u^\prime \bar{H})|\right)dr^\star\\
	&	+B\int_{\mathbb{R}} \left(|h\Re (u\bar{H})|+|f \Re (u\bar{H})|+|2f\Re (u^\prime \bar{H})|\right)dr^\star\\
	&	+B \int_{\mathbb{R}} |2y\Re (u^\prime H)|dr^\star \\
	&  + E\int_{\mathbb{R}}\left(\omega-\frac{am\Xi}{r_+^2+a^2}\right)\Im (\bar{u}H)dr^\star\\
	&	+ E\int_{\mathbb{R}}\left(\omega-\frac{am\Xi}{r_+^2+a^2}\right)\Im(\bar{u}H)dr^\star. 
	\end{aligned}
	\end{equation}
\end{proposition}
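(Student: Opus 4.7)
The plan is to follow the general philosophy of the microlocal multiplier construction of \cite{DR2} adapted to the de~Sitter regime, where the essential new feature is that trapping can occur even at $\omega=0$ with $|m|\gg 1$. The starting point is to study the structure of the potential $V_0$ given by \eqref{eq: the potentials V0,VSl,Vmu} on $\mathcal{F}_{\textit{dS}}$. Because $am\omega\geq 0$, Lemma~\ref{lem: inequality for lambda} gives $\tilde{\lambda}-2am\omega\Xi>0$, and together with $\tilde{\lambda}\geq\omega_{\textit{high}}$ this ensures that the leading term of $V_0-\omega^2$ (with the factor $\Delta/(r^2+a^2)^2$) is large and strictly positive. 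Lemma~\ref{lem: subsec: sec: trapping, subsec 1, lem 1} then says that $V_0$ has at most one interior maximum $r_{V_0,\max}\in[r_+,\bar{r}_+]$, and at this maximum $\frac{d}{dr}((r^2+a^2)^3V_0')<0$ quantitatively. My $r_{\textit{trap}}(\omega,m,\ell)$ will be defined as $r_{V_0,\max}$ if that maximum lies in a fixed interval $(r_++\epsilon_{\textit{away}},\bar{r}_+-\epsilon_{\textit{away}})$, and set to $0$ otherwise (in which case no genuine trapping occurs and a standard $f$-multiplier will suffice without degeneration).

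Next, I would construct the frequency-localised $f$-multiplier for the $\Psi$ equation (or equivalently for $u$) with $f$ monotonically increasing, $f'>0$, chosen so that $f(r^\star_{\textit{trap}})=0$ and $f$ smoothly interpolating between $-1$ near $\mathcal{H}^+$ and $+1$ near $\bar{\mathcal{H}}^+$, with $f$ essentially linear in $r^\star-r^\star_{\textit{trap}}$ on the bounded $r$-region and $|f|,|f'|,|f''|,|f'''|\leq B$. Using the current identity of Lemma~\ref{lem: subsec: currents, lem 1} and the structural identity
\[
-fV_0'=\frac{f}{(r^2+a^2)^3}\cdot(r^2+a^2)^3V_0',
\]
together with the sign structure of $(r^2+a^2)^3V_0'$ (it changes sign from $+$ to $-$ at $r_{V_0,\max}$ by Lemma~\ref{lem: subsec: sec: trapping, subsec 1, lem 1}), I obtain $-fV_0'\geq b\,\Delta\,(1-r_{\textit{trap}}/r)^2\tilde{\lambda}$ on bounded $r$-intervals. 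The $2f'|u'|^2$ term gives the $|u'|^2$ bulk, and the $-\tfrac12 f'''|u|^2$ term, together with an added $h$-multiplier with $h=f'$ appropriately rescaled, produces the zeroth-order bulk control of the form $\Delta|u|^2$; the $V_{\textit{SL}}$, $V_{\mu_{\textit{KG}}}$ contributions are absorbed by taking $\omega_{\textit{high}}$ large enough since $\tilde\lambda\geq\omega_{\textit{high}}$.

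The $y$-multiplier I take with $y'\geq 0$ supported near $r=r_+$ and $r=\bar{r}_+$, to generate nonnegative $|u'|^2$ bulk there and eat the zeroth-order errors from the $f$-current using that $|V'|\sim|\Delta'|$ near the horizons. The boundary terms generated by the $f,h,y$ currents at $r^\star=\pm\infty$ are of the form $|u'|^2\pm(\omega^2-V)|u|^2$ evaluated at the horizons; combined with $|u'|^2=|\omega-\omega_+ m|^2|u|^2$ at $r^\star=-\infty$ and analogously at $r^\star=+\infty$ from \eqref{eq: lem: sec carters separation, subsec boundary behaviour of u, boundary beh. of u, eq 3}, these produce terms of the form $(\omega-\omega_+ m)^2|u|^2(-\infty)$. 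Here the non-superradiant, de~Sitter character of $\mathcal{F}_{\textit{dS}}$ is crucial: since $am\omega\in[0,\tfrac{a^2m^2\Xi}{\bar{r}_+^2+a^2}]$ one checks that both $(\omega-\omega_+ m)$ and $(\omega-\bar{\omega}_+ m)$ have the same sign, so that Proposition~\ref{prop: subsec: energy identity, prop 1} together with a large enough $E$ yields the desired sign of the currents $EQ^{K^+}[u]+EQ^{\bar{K}^+}[u]$ integrated to the horizons, absorbing the leftover boundary terms into the $E$-contribution on the right-hand side of \eqref{eq: prop: energy estimate in the de Sitter freqs, l larger that omega, eq 0}.

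The main obstacle will be carrying out the positivity of the trapping term $-fV_0'$ uniformly in the frequency triple $(\omega,m,\ell)$ as $r_{V_0,\max}$ varies over the trapping set. The quantitative bound $\frac{d}{dr}((r^2+a^2)^3V_0')(r_{V_0,\max})\leq -c(a,M,l)\tilde{\lambda}$ (which will follow by examining the explicit polynomial in Lemma~\ref{lem: subsec: sec: trapping, subsec 1, lem 1} and using $\tilde\lambda-2am\omega\Xi\gtrsim\tilde\lambda$ in $\mathcal{F}_{\textit{dS}}$) is the key input that turns $-fV_0'$ into $b\Delta(1-r_{\textit{trap}}/r)^2\tilde\lambda|u|^2$ uniformly. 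Once this uniform quadratic nondegeneracy at trapping is established, the construction of $f$ by rescaling an arctangent-type profile in $r^\star-r^\star_{\textit{trap}}$ yields all remaining bounds in a standard manner, and the concluding summation with the $h$-, $y$-, $Q^{K^+}$-, $Q^{\bar{K}^+}$-currents gives \eqref{eq: prop: energy estimate in the de Sitter freqs, l larger that omega, eq 0}.
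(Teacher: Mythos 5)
Your overall strategy is the same as the paper's: use the critical-point analysis of $V_0$ from Lemma~\ref{lem: subsec: sec: trapping, subsec 1, lem 1}, build a sign-changing $f$-multiplier vanishing at the potential maximum to get the degenerate trapped bulk, add $h$ and $y$ for lower-order control, and absorb the horizon boundary terms by a large multiple of $Q^{K^+}+Q^{\bar{K}^+}$ using Proposition~\ref{prop: subsec: energy identity, prop 1} and the non-superradiance of $\mathcal{F}_{\textit{dS}}$. The observation that the quantitative concavity $\frac{d}{dr}\bigl((r^2+a^2)^3V_0'\bigr)(r_{V_0,\max})\lesssim-\tilde{\lambda}$ comes from $\tilde{\lambda}-2am\omega\Xi\gtrsim\tilde{\lambda}$ is the key point, and it does match what the paper does.

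However, the case decomposition is genuinely different and contains a gap. You split according to whether $r_{V_0,\max}$ lies in a fixed interior interval, whereas the paper splits according to whether $\bigl|\max_{r}V_0-\omega^2\bigr|\leq\epsilon_{\textit{trap}}\tilde{\lambda}$ (the ``(Trap)'' condition) and then, within the non-trapped set, further distinguishes three sub-regimes. The sub-regime you do not account for is $\omega^2-\max_r V_0\geq\epsilon_{\textit{trap}}\tilde{\lambda}$, i.e.\ $\omega^2-V\gtrsim\tilde{\lambda}$ on all of $[r_+,\bar{r}_+]$; there the paper uses a global exponential multiplier $y=e^{C_{\textit{large}}r}$ to exploit the everywhere-positivity of $\omega^2-V$ in the term $y'(\omega^2-V)|u|^2$, and no $f$-current is needed. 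Your $y$ is supported only near the horizons, so it cannot supply a bulk estimate of the form $b\,\Delta\,\tilde{\lambda}|u|^2$ across the whole interval, and the sign-changing $f$-current you propose would instead degenerate at the (possibly boundary-located or nonexistent interior) critical point of $V$ with no mechanism to recover the bulk. Similarly, the assertion that when $r_{V_0,\max}$ is not interior ``a standard $f$-multiplier will suffice without degeneration'' is not substantiated: a priori $V_0$ could be monotone increasing up to $\bar{r}_+$, and then a globally bounded, nonnegative-derivative $f$ with $-fV_0'\geq0$ cannot simultaneously yield the quantitative bulk and favorable boundary terms at $r^\star=+\infty$. The paper avoids this by showing (via $\frac{dV_0}{dr}(\bar{r}_+)<-b\tilde{\lambda}$ under the relevant frequency inequalities, and the contradiction argument in the sub-case $\max V_0-\omega^2\geq\epsilon_{\textit{trap}}\tilde{\lambda}$) that whenever the $f$-multiplier route is used, the maximum of $V$ is indeed interior. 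You would need to reproduce these structural facts to make your decomposition self-consistent.
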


\begin{proof}[\textbf{Proof of Proposition~\ref{prop: energy estimate in the de Sitter freqs, l larger that omega}}]
		We have that
		\begin{equation}
			V_0-\omega^2 =\frac{\Delta}{(r^2+a^2)^2}(\tilde{\lambda}-2m\omega a\Xi)-\left(\omega-\frac{am\Xi}{r^2+a^2}\right)^2
		\end{equation}
and we compute
		\begin{equation}\label{eq: proof: prop: energy estimate in the de Sitter freqs, l larger that omega, eq 0}
		\frac{dV_0}{dr}= \frac{d}{dr}\frac{\Delta}{(r^2+a^2)^2} \left( \tilde{\lambda}-2m\omega a \Xi\right)-\frac{4 a m \Xi  r \left(\omega  \left(a^2+r^2\right)-a m \Xi \right)}{\left(a^2+r^2\right)^3},
	\end{equation}
	which we also write as 
	\begin{equation}\label{eq: proof: prop: energy estimate in the de Sitter freqs, l larger that omega, eq 0.1}
		\begin{aligned}
			\frac{dV_0}{dr}	&	= \frac{1}{l^2(r^2+a^2)^3}\Big(r^3 \left(4 a^3 m \Xi  \omega -2 a^2 \tilde{\lambda}  -2 \tilde{\lambda}  l^2\right)\\
			&	\qquad\qquad\qquad +r^2 \left(6 \tilde{\lambda}  l^2 M-12 a l^2 m M \Xi  \omega \right)\\
			&	\qquad\qquad\qquad+r \left(4 a^5 m \Xi  \omega -2 a^4 \tilde{\lambda}  -2 a^2 \tilde{\lambda}   l^2+4 a^2 l^2 m^2 \Xi ^2\right)+4 a^3 l^2 m M \Xi  \omega -2 a^2 \tilde{\lambda}  l^2 M\Big).
		\end{aligned}
	\end{equation}

		First, in view of Lemma~\ref{lem: inequality for lambda} we have that 
	\begin{equation}\label{eq: proof: prop: energy estimate in the de Sitter freqs, l larger that omega, eq 1.0}
		\tilde{\lambda}-2m\omega a\Xi \geq \Xi^2 m^2-2\frac{a^2\Xi^2}{\bar{r}_+^2+a^2}m^2\geq \left(1-2\frac{a^2}{\bar{r}_+^2+a^2}\right)\Xi^2m^2\geq bm^2. 
	\end{equation}
	for any~$(\omega,m,\ell)\in \mathcal{F}_{dS}$. A similar argument also shows that
	\begin{equation}\label{eq: proof: prop: energy estimate in the de Sitter freqs, l larger that omega, eq 1.0.1}
		\tilde{\lambda} -2m\omega a\Xi = \epsilon_2\tilde{\lambda}+(1-\epsilon_2)\tilde{\lambda} -2m\omega a \Xi \geq \epsilon_2\tilde{\lambda}+\left(1-\epsilon_2-2\frac{a^2}{\bar{r}_+^2+a^2}\right)\Xi^2m^2\geq b(\epsilon_2)\tilde{\lambda},
	\end{equation}
	for some sufficiently small~$\epsilon_2(a,M,l)>0$ which we now fix.

	Since	
		\begin{equation}
			am\omega\leq \frac{a^2m^2\Xi}{\bar{r}_+^2+a^2}< \frac{a^2m^2\Xi}{r_+^2+a^2}
		\end{equation}
		we have that for~$(\omega,m,\ell)\in\mathcal{F}_{dS}(\omega_{high})$ the following holds
			\begin{equation}\label{eq: prop: energy estimate in the de Sitter freqs, l larger that omega, eq 1}
			\frac{dV_0}{dr}(r_+)>b\tilde{\lambda}. 
		\end{equation}	
		for any sufficiently large~$\omega_{high}>0$.

		Now, let~$\epsilon_{trap}>0$ be sufficiently small and let~$\omega_{high}>0$ be sufficiently large. We study the two cases 
		\begin{equation}\label{eq: proof: prop: energy estimate in the de Sitter freqs, l larger that omega, eq 1}
		\begin{aligned}
			\text{(Trap)}	&=		\{(\omega,m,\ell)\in\mathcal{F}_{dS}:~|\max_{r\in [r_+,\bar{r}_+] }V_0-\omega^2|\leq \epsilon_{trap} \tilde{\lambda}\}, \\
			\text{(Not Trap)} &	=	\{(\omega,m,\ell)\in\mathcal{F}_{dS}:~|\max_{r\in [r_+,\bar{r}_+]} V_0-\omega^2|> \epsilon_{trap} \tilde{\lambda}\}. 
		\end{aligned}
	\end{equation}

	\underline{We study the first case} of~\eqref{eq: proof: prop: energy estimate in the de Sitter freqs, l larger that omega, eq 1}. For the sake of contradiction, suppose that the maximum of~$V_0$ is attained at the cosmological horizon~$\bar{r}_+$. Then, we would have that 
	\begin{equation}\label{eq: proof: prop: energy estimate in the de Sitter freqs, l larger that omega, eq 1.1}
		\left(\omega-\frac{am\Xi}{\bar{r}_+^2+a^2}\right)^2 \leq \epsilon_{trap}\tilde{\lambda}. 
	\end{equation} 
	Then, we use~\eqref{eq: proof: prop: energy estimate in the de Sitter freqs, l larger that omega, eq 1.1} for~$\epsilon_{trap}>0$ sufficiently small and we compute
	\begin{equation}\label{eq: proof: prop: energy estimate in the de Sitter freqs, l larger that omega, eq 1.2}
		\frac{dV_0}{dr}(\bar{r}_+)< -b\tilde{\lambda}.
	\end{equation}
	Therefore, in view of Lemma~\ref{lem: subsec: sec: trapping, subsec 1, lem 1} we obtain a contradiction. We have also concluded that~\eqref{eq: proof: prop: energy estimate in the de Sitter freqs, l larger that omega, eq 1.2} holds for the first case of~\eqref{eq: proof: prop: energy estimate in the de Sitter freqs, l larger that omega, eq 1}.

	Now, in view of Lemma~\ref{lem: subsec: sec: trapping, subsec 1, lem 1} we have that for any~$\epsilon_{away}>0$ sufficiently small the critical point~$r_{V_0,max}$ is attaιned at~$(\bar{r}_+-\epsilon_{away},\bar{r}_+]$~(the critical point cannot be attained at~$[r_+,r_++\epsilon_{away}]$ since~\eqref{eq: prop: energy estimate in the de Sitter freqs, l larger that omega, eq 1} holds).

	In view of~\eqref{eq: prop: energy estimate in the de Sitter freqs, l larger that omega, eq 1},~\eqref{eq: proof: prop: energy estimate in the de Sitter freqs, l larger that omega, eq 1.2} and Lemma~\ref{lem: subsec: sec: trapping, subsec 1, lem 1}  we can now rewrite~$\frac{dV_0}{dr}$ as follows 
	\begin{equation}\label{eq: proof: prop: energy estimate in the de Sitter freqs, l larger that omega, eq 1.3}
		\frac{dV_0}{dr}= \left(2\tilde{\lambda}l^2+2a^2(\tilde{\lambda}-2am\omega \Xi)\right) (r-r_{V_0,max}) s(r),
	\end{equation}
	see~\eqref{eq: proof: prop: energy estimate in the de Sitter freqs, l larger that omega, eq 0.1} for the calculation of~$\frac{dV_0}{dr}$, where the function~$s(r)$ satisfies~$s(r)>b$ for all~$r\in [r_+,\bar{r}_+]$.

	Now, recall from Proposition~\ref{prop: Carters separation, radial part} that~$V=V_0+V_{SL}+V_{\mu_{KG}}$, where~$V_{SL},V_{\mu_{KG}}$ are frequency independent.
	Therefore, it is an immediate consequence of~\eqref{eq: proof: prop: energy estimate in the de Sitter freqs, l larger that omega, eq 1.3} that for~$\omega_{high}>0$ sufficiently large and for~$\epsilon_{away}(a,M,l,\mu_{KG})>0$ sufficiently small we have that 
	\begin{equation}\label{eq: proof: prop: energy estimate in the de Sitter freqs, l larger that omega, eq 1.4}
		\begin{aligned}
			&	\frac{dV}{dr}>b\tilde{\lambda},\qquad r\in [r_+,r_{V_0,max}-2\epsilon_{away}]\\
			&	\frac{dV}{dr}<-b\tilde{\lambda},\qquad r\in [r_{V_0,max}+2\epsilon_{away},\bar{r}_+],
		\end{aligned}
	\end{equation}
	in~$\mathcal{F}_{dS}$. In view of~\eqref{eq: proof: prop: energy estimate in the de Sitter freqs, l larger that omega, eq 1.3} we have that for a sufficiently large~$\omega_{high}>0$ and for a sufficiently small~$\epsilon_{away}>0$
	\begin{equation}\label{eq: proof: prop: energy estimate in the de Sitter freqs, l larger that omega, eq 1.5}
		\frac{d^2V_0}{dr^2} < -b\tilde{\lambda},\qquad r\in [r_{V_0,max}-2\epsilon_{away},r_{V_0,max}+2\epsilon_{away}].
	\end{equation}
	Therefore, for a sufficiently large~$\omega_{high}>0$ and a sufficiently small~$\epsilon_{away}$ we have that
	\begin{equation}\label{eq: proof: prop: energy estimate in the de Sitter freqs, l larger that omega, eq 1.6}
		\frac{d^2V}{dr^2} < -b\tilde{\lambda},\qquad r\in [r_{V_0,max}-2\epsilon_{away},r_{V_0,max}+2\epsilon_{away}],
	\end{equation}
	where the constant~$b$ of~\eqref{eq: proof: prop: energy estimate in the de Sitter freqs, l larger that omega, eq 1.6} differs from the relevant constant of~\eqref{eq: proof: prop: energy estimate in the de Sitter freqs, l larger that omega, eq 1.5}. Moreover, it is evident from~\eqref{eq: proof: prop: energy estimate in the de Sitter freqs, l larger that omega, eq 1.4},~\eqref{eq: proof: prop: energy estimate in the de Sitter freqs, l larger that omega, eq 1.6}, that the potential~$V$ attains a unique critical point~$r_{V,max}\in (r_++\epsilon_{away},\bar{r}_+-\epsilon_{away})$, a maximum, where we also have
	\begin{equation}
		-(r-r_{V,max})V^\prime>b\tilde{\lambda}(r-r_{V,max})^2\Delta,\qquad r\in [r_+,\bar{r}_+].
	\end{equation}

	We define 
	\begin{equation}
		r_{trap}=r_{V,max}.
	\end{equation}

For a sufficiently small~$\epsilon_{away}(a,M,l,\mu_{KG})>0$ it is straightforward to solve the following ode
	\begin{equation}
		-\frac{1}{2}f^{\prime\prime\prime} =\Delta,
	\end{equation} 
	in the neighborhood~$(r_{trap}-\epsilon_{away},r_{trap}+\epsilon_{away})$ of ~$r_{\textit{trap}}$, with initial conditions
	\begin{equation}
		f(r_{\textit{trap}})=0,\qquad f^\prime(r_{\textit{trap}})=1,\qquad f^{\prime\prime}(r_{\textit{trap}})=0.
	\end{equation}
	Then, we smoothly extend~$f$ in the~$[r_+,\bar{r}_+]$ such that
	\begin{equation}\label{eq: proof: prop: energy estimate in the de Sitter freqs, l larger that omega, eq 3}
		f(r^\star=\infty)=1,\quad f(r^\star=-\infty)=-1,\qquad \frac{d f}{dr}(r) >b,\:r\in [r_+,\bar{r}_+]
	\end{equation}
	\begin{equation}
		|f(r)|,\quad |\frac{d f}{dr}|,\quad |\frac{d^2 f}{dr^2}|,\quad |\frac{d^3 f }{dr^3}| < B,\qquad r\in [r_+,\bar{r}_+].
	\end{equation}
	
	Therefore, by integrating the energy identity of Lemma~\ref{lem: subsec: currents, lem 1}, associated to~$Q^f$, and by taking~$\omega_{\textit{high}}>0$ sufficiently large such that 
	\begin{equation}
		-fV^\prime -\frac{1}{2}f^{\prime\prime\prime}\geq b(r-r_{trap})^2 \tilde{\lambda}\Delta +b\Delta
	\end{equation}
	we obtain for~$(\omega,m,\ell)\in \mathcal{F}_{dS}(\omega_{high})$ the following holds
	\begin{equation}\label{eq: proof/prop: energy estimate in the de Sitter freqs, l larger that omega, eq 1}
	\begin{aligned}
	&   \int_{\mathbb{R}}\Delta \left(|u^\prime|^2+|u|^2+\left(1-\frac{r_{\textit{trap}}}{r}\right)^2(1+\omega^2+\tilde{\lambda})|u|^2\right)dr^\star\\
	&   \quad \leq  Q^f(\infty)-  Q^f(-\infty)+B \int_{\mathbb{R}}\left(|2f \Re (u^\prime \bar{H})|+|f\Re (u\bar{H})|\right)dr^\star\\
	&   \quad \leq  B f(\infty)\left(\omega-\frac{am\Xi}{\bar{r}_+^2+a^2}\right)^2|u|^2(r^\star=\infty)+B(-f(-\infty))\left(\omega-\frac{am\Xi}{r_+^2+a^2}\right)^2|u|^2(r^\star=-\infty)\\
	&   \quad\quad +B\int_{\mathbb{R}}\left(|2f \Re (u^\prime \bar{H})|+|f\Re (u\bar{H})|\right)dr^\star,
	\end{aligned}
\end{equation}
where we also used~\eqref{eq: proof: prop: energy estimate in the de Sitter freqs, l larger that omega, eq 1.0} to generate all of the derivatives on the LHS.

In order to bound the boundary terms at~$\pm\infty$ on the right hand side of~\eqref{eq: proof/prop: energy estimate in the de Sitter freqs, l larger that omega, eq 1} we use that the high de~Sitter frequency regime~$\mathcal{F}_{\textit{dS}}$ is non-superradiant. Specifically, for any sufficiently large and independent of the frequencies
	\begin{equation}
		E>0,
\end{equation}
we integrate the formula of Lemma~\ref{lem: subsec: sec: frequency localized multiplier estimates, subsec 2, lem 1} associated with the current
\begin{equation}
	-E Q^{K^+}-E Q^{\bar{K}^+}, 
\end{equation}
to obtain
\begin{equation}
	-E Q^{K^+}(r^\star)-E Q^{\bar{K}^+}(r^\star) \Big|_{-\infty}^{+\infty}= -E \int_{\mathbb{R}}\left(\omega-\omega_+ m\right)\Im (H\bar{u})dr^\star-E \int_{\mathbb{R}} (\omega-\bar{\omega}_+ m)\Im (H \bar{u})dr^\star,
\end{equation}
which we rewrite as follows 
\begin{equation}\label{eq: proof/prop: energy estimate in the de Sitter freqs, l larger that omega, eq 1.1}
	E\left(\omega-\frac{am\Xi}{r_+^2+a^2}\right)^2|u|^2(-\infty)+E\left(\omega-\frac{am\Xi}{\bar{r}_+^2+a^2}\right)^2|u|^2(+\infty)= -E \int_{\mathbb{R}}\left(\omega-\omega_+ m\right)\Im (H\bar{u})dr^\star-E \int_{\mathbb{R}} (\omega-\bar{\omega}_+ m)\Im (H \bar{u})dr^\star,
\end{equation}
We add~\eqref{eq: proof/prop: energy estimate in the de Sitter freqs, l larger that omega, eq 1.1} equation \eqref{eq: proof/prop: energy estimate in the de Sitter freqs, l larger that omega, eq 1} and conclude the desired result~\eqref{eq: prop: energy estimate in the de Sitter freqs, l larger that omega, eq 0}~(with~$h=0$), in view of Proposition~\ref{prop: subsec: energy identity, prop 1}.

We now fix~$\epsilon_{trap}(a,M,l,\mu_{KG})>0$.

\underline{We study the second case} of~\eqref{eq: proof: prop: energy estimate in the de Sitter freqs, l larger that omega, eq 1}. For frequencies~$(\omega,m,\ell)\in (Not~Trap)$ we define
\begin{equation}
	r_{trap}(\omega,m,\ell)=0.
\end{equation}
Let~$\epsilon'>0$ be sufficiently small. We split the set~$\text{(Not Trap)}=\{|\max_{r\in [r_+,\bar{r}_+]}V_0-\omega^2|\geq \epsilon_{trap}\tilde{\lambda}\}$ as follows 
\begin{equation}\label{eq: proof: prop: energy estimate in the de Sitter freqs, l larger that omega, eq 5}
	\begin{aligned}
		&	\{(\omega,m,\ell)\in \mathcal{F}_{dS}: \left(\omega-\frac{am\Xi}{\bar{r}_+^2+a^2}\right)^2\leq \epsilon^\prime \tilde{\lambda}\}\cap \{|\max_{r\in [r_+,\bar{r}_+]}V_0-\omega^2|\geq \epsilon_{trap}\tilde{\lambda}\},\\
		&	\{(\omega,m,\ell)\in \mathcal{F}_{dS}:\left(\omega-\frac{am\Xi}{\bar{r}_+^2+a^2}\right)^2> \epsilon^\prime \tilde{\lambda}\}\cap \{\max_{r\in [r_+,\bar{r}_+]}V_0 -\omega^2\geq \epsilon_{trap}\tilde{\lambda}\},\\
		&	\{(\omega,m,\ell)\in \mathcal{F}_{dS}:~\left(\omega-\frac{am\Xi}{\bar{r}_+^2+a^2}\right)^2> \epsilon^\prime \tilde{\lambda}\}\cap \{\omega^2-\max_{r\in [r_+,\bar{r}_+]}V_0\geq \epsilon_{trap}\tilde{\lambda}\}.\\
	\end{aligned}
\end{equation}

\underline{We discuss the first case of~\eqref{eq: proof: prop: energy estimate in the de Sitter freqs, l larger that omega, eq 5}}. In view of~\eqref{eq: proof: prop: energy estimate in the de Sitter freqs, l larger that omega, eq 0} and of the condition~$\left(\omega-\frac{am\Xi}{\bar{r}_+^2+a^2}\right)^2\leq \epsilon^\prime \tilde{\lambda}$ we compute
\begin{equation}
	\begin{aligned}
		\frac{dV_0}{dr}(\bar{r}_+)	&	= \frac{d}{dr}\left(\frac{\Delta}{r^2+a^2}\right)(\tilde{\lambda}-2m\omega a \Xi) -\frac{4 a m \Xi  \bar{r}_+ \left(\omega  \left(a^2+\bar{r}_+^2\right)-a m \Xi \right)}{\left(a^2+\bar{r}_+^2\right)^3}\\
		&	< \frac{d}{dr}\left(\frac{\Delta}{r^2+a^2}\right)(\tilde{\lambda}-2m\omega a \Xi) +\sqrt{\epsilon^\prime}\tilde{\lambda}.
	\end{aligned}
\end{equation}
Therefore, in view of~\eqref{eq: proof: prop: energy estimate in the de Sitter freqs, l larger that omega, eq 1.0.1} and of the inequality~$\Xi^2m^2\leq \tilde{\lambda}$, see Lemma~\ref{lem: inequality for lambda}, we have that 
\begin{equation}\label{eq: proof: prop: energy estimate in the de Sitter freqs, l larger that omega, eq 6}
	\frac{dV_0}{dr}(\bar{r}_+)< -b\tilde{\lambda}
\end{equation}
for any~$\epsilon^\prime>0$ sufficiently small. Therefore, in view of~\eqref{eq: proof: prop: energy estimate in the de Sitter freqs, l larger that omega, eq 6},~\eqref{eq: prop: energy estimate in the de Sitter freqs, l larger that omega, eq 1}, Lemma~\ref{lem: subsec: sec: trapping, subsec 1, lem 1}, we have that the potential~$V_0$ attains a unique critical point~$r_{V_0,max}\in(r_++\epsilon_{away},\bar{r}_+-\epsilon_{away})$, for any~$\epsilon_{away}>0$ sufficiently small. Therefore, by following similar arguments presented earlier we have that for any~$\omega_{high}>0$ sufficiently large we obtain that the potential~$V$ attains a unique critical point~$r_{V,max}\in(r_++\epsilon_{away},\bar{r}_+-\epsilon_{away})$, for any~$\epsilon_{away}>0$ sufficiently small.

We construct a smooth~$f$ multiplier by solving the ode
\begin{equation}\label{eq: proof/prop: energy estimate in the de Sitter freqs, l larger that omega, eq 1.4}
	-\frac{1}{2}f^{\prime\prime\prime}=\Delta
\end{equation}
in a neighborhood of~$r_{V,max}$ with initial conditions 
\begin{equation}\label{eq: proof/prop: energy estimate in the de Sitter freqs, l larger that omega, eq 1.4.1}
	f(r_{V_0,max})=0,\qquad f^\prime(r_{V_0,max})=1,\qquad f^{\prime\prime}(r_{V_0,max})=0,
\end{equation}
and then we smoothly extend it such that
\begin{equation}\label{eq: proof/prop: energy estimate in the de Sitter freqs, l larger that omega, eq 1.4.2}
	f(r^\star=\infty)=1,\qquad f(r^\star=-\infty)=-1,\qquad \frac{df}{dr}>b,~r\in [r_+,\bar{r}_+],
\end{equation}
\begin{equation}
	|f(r)|,\quad |\frac{d f}{dr}|,\quad |\frac{d^2 f}{dr^2}|,\quad |\frac{d^3 f }{dr^3}| < B,\qquad r\in [r_+,\bar{r}_+].
\end{equation}
We integrate the energy identity of Lemma~\ref{lem: subsec: currents, lem 1} associated with the current~$Q^f$ and obtain the following energy estimate
	\begin{equation}\label{eq: proof/prop: energy estimate in the de Sitter freqs, l larger that omega, eq 3}
	\begin{aligned}
		&   \int_{\mathbb{R}}\Delta \left(|u^\prime|^2+|u|^2+\left(1-\frac{r_{V,max}}{r}\right)^2(1+\omega^2+\tilde{\lambda})|u|^2\right)dr^\star\\
		&   \quad \leq  B f(\infty)\left(\omega-\frac{am\Xi}{\bar{r}_+^2+a^2}\right)^2|u|^2(r^\star=\infty)+B(-f(-\infty))\left(\omega-\frac{am\Xi}{r_+^2+a^2}\right)^2|u|^2(r^\star=-\infty)\\
		&   \quad\quad +B\int_{\mathbb{R}} \left( |2f \Re (u^\prime \bar{H})|+|f\Re (u\bar{H})|\right) dr^\star. 
	\end{aligned}
\end{equation}
Now, for any~$\epsilon^\prime>0$ sufficiently small we construct a smooth~$h$ multiplier with the following properties 
\begin{equation}
	h= \begin{cases}
		1,\qquad r\in (r_{V,max}-\frac{1}{10}\epsilon^\prime,r_{V,max}+\frac{1}{10}\epsilon^\prime)\\
		0,\qquad r\in[r_+,\bar{r}_+]\setminus (r_{V,max}-\frac{1}{5}\epsilon^\prime,r_{V,max}+\frac{1}{5}\epsilon^\prime).
	\end{cases}
\end{equation}
In view of the non trapping property~(Not Trap) we have that
\begin{equation}\label{eq: proof/prop: energy estimate in the de Sitter freqs, l larger that omega, eq 1.3}
	V_0-\omega^2 \geq \epsilon_{trap} \tilde{\lambda},\qquad r\in (r_{V,max}-\frac{1}{10}\epsilon^\prime,r_{V,max}+\frac{1}{10}\epsilon^\prime)
\end{equation}
for any~$\epsilon^\prime>0$ both sufficiently small. Now, we integrate the energy identity of Lemma~\ref{lem: subsec: currents, lem 1} associated with the current~$Q^h$ and obtain
\begin{equation}
	\begin{aligned}
			&	\int_{\mathbb{R}} h |u|^2dr^\star+ \int_{(r_{V,max}-\frac{\epsilon'}{10},r_{V,max}+\frac{\epsilon'}{10})^\star} \left( h(V-\omega^2)-\frac{1}{2}h^{\prime\prime} \right)|u|^2dr^\star\\
			&	\qquad +  \int_{\mathbb{R}\setminus(r_{V,max}-\frac{\epsilon'}{10},r_{V,max}+\frac{\epsilon'}{10})^\star} \left( h(V-\omega^2)-\frac{1}{2}h^{\prime\prime} \right)|u|^2 dr^\star= -\int_{\mathbb{R}}h\Re (u\bar{H})dr^\star. 
	\end{aligned}
\end{equation}
Therefore, in view of the non trapping condition~\eqref{eq: proof/prop: energy estimate in the de Sitter freqs, l larger that omega, eq 1.3} we take~$\omega_{high}>0$ sufficiently large and conclude
\begin{equation}\label{eq: proof/prop: energy estimate in the de Sitter freqs, l larger that omega, eq 1.2}
	\begin{aligned}
		&	\int_{\mathbb{R}} h |u|^2dr^\star+ \int_{(r_{V,max}-\frac{\epsilon'}{10},r_{V,max}+\frac{\epsilon'}{10})^\star}  \tilde{\lambda}\Delta|u|^2dr^\star\\
		&	\qquad +  \int_{\mathbb{R}\setminus(r_{V,max}-\frac{\epsilon'}{10},r_{V,max}+\frac{\epsilon'}{10})^\star} \left( h(V-\omega^2)-\frac{1}{2}h^{\prime\prime} \right)|u|^2dr^\star \leq B |\int_{\mathbb{R}}h\Re (u\bar{H})|dr^\star. 
	\end{aligned}
\end{equation}

Then, we sum the above~\eqref{eq: proof/prop: energy estimate in the de Sitter freqs, l larger that omega, eq 1.2} to the energy estimate~\eqref{eq: proof/prop: energy estimate in the de Sitter freqs, l larger that omega, eq 3} to obtain the following
	\begin{equation}\label{eq: proof/prop: energy estimate in the de Sitter freqs, l larger that omega, eq 2}
	\begin{aligned}
		&   \int_{\mathbb{R}}\Delta \left(|u^\prime|^2+|u|^2+(1+\omega^2+\tilde{\lambda})|u|^2\right)dr^\star\\
		&   \quad \leq  B f(\infty)\left(\omega-\frac{am\Xi}{\bar{r}_+^2+a^2}\right)^2|u|^2(r^\star=\infty)-B f(-\infty)\left(\omega-\frac{am\Xi}{r_+^2+a^2}\right)^2|u|^2(r^\star=-\infty)\\
		&   \quad\quad +B\int_{\mathbb{R}}\left(| 2f \Re (u^\prime \bar{H})|+|f\Re (u\bar{H})|\right)dr^\star+B\int_{\mathbb{R}}\left(|h\Re (u\bar{H})|+|f \Re (u\bar{H})|+2|f\Re (u^\prime \bar{H})|\right)dr^\star.
	\end{aligned}
\end{equation}

Similarly to the argument presented before, see~\eqref{eq: proof/prop: energy estimate in the de Sitter freqs, l larger that omega, eq 1.1}, we integrate the energy identity of Lemma~\ref{lem: subsec: currents, lem 1} associated with the current~$-E Q^{K^+}- E Q^{\bar{K}^+}$, for a sufficiently large~$E>0$, we sum it to the above~\eqref{eq: proof/prop: energy estimate in the de Sitter freqs, l larger that omega, eq 2} and we conclude the desired result~\eqref{eq: prop: energy estimate in the de Sitter freqs, l larger that omega, eq 0}.

We now fix~$\epsilon^\prime(a,M,l,\mu_{KG})>0$.

\underline{We discuss the second case of~\eqref{eq: proof: prop: energy estimate in the de Sitter freqs, l larger that omega, eq 5}}. We note that the maximum of~$V_0$ cannot be attained at~$\bar{r}_+$ since then we would have~$-\left(\omega-\frac{am\Xi}{\bar{r}_+^2+a^2}\right)^2\geq \epsilon_{trap}\tilde{\lambda}>0$. Therefore, for any~$\omega_{high}>0$ sufficiently large and for any~$\epsilon_{away}>0$ sufficiently small we have that~$r_{V,max}\in [r_++\epsilon_{away},\bar{r}_+-\epsilon_{away}]$.

We fix~$\epsilon_{away}(a,M,l,\mu_{KG})>0$. We construct an~$f$ multiplier that satisfies exactly the properties~\eqref{eq: proof/prop: energy estimate in the de Sitter freqs, l larger that omega, eq 1.4},~\eqref{eq: proof/prop: energy estimate in the de Sitter freqs, l larger that omega, eq 1.4.1},~\eqref{eq: proof/prop: energy estimate in the de Sitter freqs, l larger that omega, eq 1.4.2}. We construct a smooth~$h$ multiplier that satisfies
\begin{equation}
	h= \begin{cases}
		1,\qquad r\in (r_{V,max}-\frac{\epsilon_1}{10},r_{V,max}+\frac{\epsilon_1}{10})\\
		0,\qquad r\in[r_+,\bar{r}_+]\setminus (r_{V,max}-\frac{\epsilon_1}{5},r_{V,max}+\frac{\epsilon_1}{5}),
	\end{cases}
\end{equation}
for some sufficiently small~$\epsilon_1(a,M,l,\mu_{KG})>0$. Now, we integrate the energy identities of Lemmata~\ref{lem: subsec: currents, lem 1},~\ref{lem: subsec: sec: frequency localized multiplier estimates, subsec 2, lem 1}, associated with the current 
\begin{equation}
	Q^f+Q^h-E Q^{K^+}-E Q^{\bar{K}^+},
\end{equation}
for a sufficiently large~$E>0$, and we conclude the desired energy estimate~\eqref{eq: prop: energy estimate in the de Sitter freqs, l larger that omega, eq 0}.

\underline{We discuss the third case of~\eqref{eq: proof: prop: energy estimate in the de Sitter freqs, l larger that omega, eq 5}}. This last case is the easiest since the non trapping condition~$\omega^2-\max_{r\in [r_+,\bar{r}_+]}V_0\geq \epsilon_{trap}\tilde{\lambda}$ implies immediately that
\begin{equation}
	\omega^2-V\geq \epsilon_{trap}\tilde{\lambda}
\end{equation}
for any~$r\in [r_+,\bar{r}_+]$. We construct a~$y$ multiplier of the form
\begin{equation}
	y(r)=e^{C_{large}\cdot r}
\end{equation}
where~$C_{large}>0$ will be chosen sufficiently large. We now integrate the energy identity of Lemma~\ref{lem: subsec: currents, lem 1} associated with the current~$Q^y$ to obtain 
\begin{equation}
	\int_{\mathbb{R}} \left(y^\prime |u^\prime|^2 +\left(y^\prime (\omega^2-V)-yV^\prime\right)|u|^2\right)dr^\star = Q^y[r^\star]\Big|_{r^\star=-\infty}^{r^\star=+\infty}+ \int_{\mathbb{R}}2y\Re (u^\prime H)dr^\star. 
\end{equation}
Now, we take~$C_{large}>0$ sufficiently large and we conclude 
\begin{equation}\label{eq: proof/prop: energy estimate in the de Sitter freqs, l larger that omega, eq 6}
	\int_{\mathbb{R}} \left(\Delta |u^\prime|^2 +\Delta\tilde{\lambda}|u|^2\right)dr^\star \leq B \Big|Q^y[r^\star]\Big|_{r^\star=-\infty}^{r^\star=+\infty}\big|+ |\int_{\mathbb{R}}2y\Re (u^\prime H)|dr^\star. 
\end{equation}
We fix~$C_{large}>0$. Now, we integrate the energy identities of Lemma~\ref{lem: subsec: sec: frequency localized multiplier estimates, subsec 2, lem 1} associated with the current~$-E Q^{K^+}-E  Q^{\bar{K}^+}$, for a sufficiently large~$E>0$ we sum it to~\eqref{eq: proof/prop: energy estimate in the de Sitter freqs, l larger that omega, eq 6}, in order to absorb the boundary terms, and obtain the desired energy estimate~\eqref{eq: prop: energy estimate in the de Sitter freqs, l larger that omega, eq 0}. 

\end{proof}

We have the following remarks

\begin{remark}
		We claim that for a sufficiently small~$0\leq a\ll M,l$ the set~$(\text{Trap})$, see~\eqref{eq: proof: prop: energy estimate in the de Sitter freqs, l larger that omega, eq 1}, would be empty, for any~$\epsilon_{trap}>0$ sufficiently small. To prove our claim we note that for~$(\omega,m,\ell)\in \mathcal{F}_{dS}$ we have~$0\leq |\omega|\leq \frac{|am|\Xi}{\bar{r}_+^2+a^2}\leq \frac{|am|\Xi}{r^2+a^2}$ for any~$r\in [r_+,\bar{r}_+]$. Therefore, for any~$0\leq|a|\ll M,l$ sufficiently small we have that~$\frac{dV_0}{dr}(r_+)\geq b\tilde{\lambda}$ and~$\frac{dV_0}{dr}(\bar{r}_+)< -b\tilde{\lambda}$ and therefore, in view of Lemma~\ref{lem: subsec: sec: trapping, subsec 1, lem 1}, we have that $\max_{r\in [r_+,\bar{r}_+]}V_0-\omega^2\geq b \tilde{\lambda}$, where the constant~$b$ is independent of~$\epsilon_{trap}$. Therefore, for any~$\epsilon_{trap}>0$ sufficiently small the set~$(Trap)$ is empty in the slowly rotating case~$0\leq|a|\ll M,l$.  
\end{remark}

\begin{remark}
When the condition~$\max_{r\in [r_+,\bar{r}_+]}\frac{\Delta}{a^2}\leq 1$ holds, see Section~\ref{sec: geodesics}, then we have that~$(Trap)\neq \emptyset$. Specifically, there exists a sequence of frequencies~$(\omega_n,m_n,\ell_n)\in \mathcal{F}_{dS}$ such that
\begin{equation*}
	\omega^2_n-\max_{r\in[r_+,\bar{r}_+]}V_0(r,a,M,l,\omega_n,m_n,\ell_n)\rightarrow 0,\qquad \omega_n\rightarrow 0. 
\end{equation*}
Namely, there exists a trapped null geodesic which is orthogonal to~$\partial_t$. 
\end{remark}

\subsection{Multipliers for the enlarged high superradiant frequency regime~$\mathcal{F}^{\sharp}$}\label{subsec: superradiant frequency regime}

This frequency regime is superradiant and `quantitatively' non-trapped, where for the later see already Lemma~\ref{lem: superradiant frequency energy estimate, lem 1}.

The main Proposition of this Section is the following
\begin{proposition}\label{prop: energy estimate in superradiant frequencies}
	
	Let~$l>0$,~$(a,M)\in \mathcal{B}_l$,~$\mu^2_{KG}\geq 0$. For any 
	\begin{equation}
	\alpha^{-1}>0,\qquad \omega_{high}>0
	\end{equation}
	both sufficiently large, then for~$(\omega,m,\ell)\in\mathcal{F}^{\sharp}(\omega_{high},\alpha)$, there exist smooth functions~$f,h,\chi_1,\chi_2$, satisfying the uniform bound
	\begin{equation}
	|f|+|f^{\prime}|+|f^{\prime\prime}|+|f^{\prime\prime\prime}|+|h|+|h^\prime|+|h^{\prime\prime}|+|\chi_1|+|\chi_2|+|\chi_1^\prime|+|\chi_2^\prime|\leq B,
	\end{equation}
	such that for all smooth solutions~$u$ of Carter's radial ode~\eqref{eq: ode from carter's separation} satisfying the outgoing boundary conditions~\eqref{eq: lem: sec carters separation, subsec boundary behaviour of u, boundary beh. of u, eq 3}, we have
	\begin{equation}\label{eq: prop: energy estimate in superradiant frequencies, eq 1}
	\begin{aligned}
	&   \int_{\mathbb{R}}\Delta\left(|u^\prime|^2+(1+\omega^2+\lambda^{(a\omega)}_{m\ell})|u|^2\right)dr^\star\\
	&   \quad\leq B\int_{\mathbb{R}}\left(\left|2f \Re (u^\prime\bar{H})\right|+\left|(f^\prime+h)\Re (u\bar{H})\right|+\left|\chi_2 (\omega-\frac{am\Xi}{\bar{r}_+^2+a^2})\Im (\bar{u}H)\right|+\left|\chi_1 (\omega-\frac{am\Xi}{r_+^2+a^2})\Im (\bar{u}H)\right|\right)dr^\star.
	\end{aligned}
	\end{equation}
\end{proposition}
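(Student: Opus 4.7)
[\textbf{Proof Proposal for Proposition~\ref{prop: energy estimate in superradiant frequencies}}]
The plan is to follow closely the strategy developed in~\cite{DR2} for the analogous enlarged high superradiant regime on Kerr, adapted to the Kerr--de~Sitter potential. The key observation is that inside $\mathcal{F}^{\sharp}$ the frequency triple lies in a \emph{quantitatively enlarged} superradiant box whose width (in $am\omega$) is only $|a|\alpha\tilde{\lambda}$ past the endpoint $a^2m^2\Xi/(r_+^2+a^2)$. Since $\tilde{\lambda}$ is simultaneously forced to be large, one checks, using Lemma~\ref{lem: inequality for lambda} together with the frequency bound $am\omega\leq a^2m^2\Xi/(r_+^2+a^2)+|a|\alpha\tilde{\lambda}$, that for $\alpha^{-1}$ large enough we retain the quantitative coercivity
\begin{equation*}
\tilde{\lambda}-2am\omega\Xi \geq b\,\tilde{\lambda},
\end{equation*}
so the $\tilde{\lambda}$-part of the potential $V_0$ behaves just as in the non-superradiant high frequency setting. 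First, then, I would use this coercivity together with Lemma~\ref{lem: subsec: sec: trapping, subsec 1, lem 1} to show that $V_0$ attains a unique interior maximum $r_{V_0,\max}\in(r_++\epsilon_{away},\bar{r}_+-\epsilon_{away})$ with $-V_0^\prime/(r-r_{V_0,\max})\gtrsim \tilde{\lambda}\,\Delta$; for $\omega_{\text{high}}$ sufficiently large the frequency-independent perturbations $V_{SL}+V_{\mu_{KG}}$ do not spoil this, so the full potential $V$ also has a unique nondegenerate maximum $r_{V,\max}$.

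Next I would construct the $f$-multiplier exactly as in Proposition~\ref{prop: energy estimate in the de Sitter freqs, l larger that omega}, by solving $-\tfrac12 f^{\prime\prime\prime}=\Delta$ in a small neighbourhood of $r_{V,\max}$ with data $f(r_{V,\max})=0$, $f^\prime(r_{V,\max})=1$, $f^{\prime\prime}(r_{V,\max})=0$ and extending smoothly to $f(-\infty)=-1$, $f(+\infty)=1$, $f^\prime>b$ throughout. Integrating the identity for $Q^f$ from Lemma~\ref{lem: subsec: currents, lem 1} yields the desired bulk term $\int_{\mathbb R}\Delta(|u^\prime|^2+(1+\omega^2+\tilde\lambda)|u|^2)\,dr^\star$ on the left, with boundary contributions at $\pm\infty$ of the form $B\,(\omega-\bar\omega_+m)^2|u|^2(+\infty)-B\,(\omega-\omega_+m)^2|u|^2(-\infty)$ and inhomogeneous terms $|2f\Re(u^\prime\bar H)|+|f^\prime\Re(u\bar H)|$ on the right. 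An auxiliary $h$-multiplier, as in the $(\text{Not Trap})$ case of Proposition~\ref{prop: energy estimate in the de Sitter freqs, l larger that omega}, can be added if one needs to reinforce coercivity on a small neighbourhood of $r_{V,\max}$.

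The heart of the argument, and the main obstacle, is the elimination of the two boundary terms at $r^\star=\pm\infty$. This is where the problem departs from the non-superradiant regimes: in $\mathcal F^{\sharp}$ we have $(\omega-\omega_+m)(\omega-\bar\omega_+m)<0$, so the energy identity of Proposition~\ref{prop: subsec: energy identity, prop 1} does \emph{not} furnish a coercive control of $|u|^2(\pm\infty)$ as it did before. Instead, I would follow the DR2 strategy: use two frequency-dependent cut-off weights $\chi_1(\omega,m,\ell),\chi_2(\omega,m,\ell)$ and integrate the currents $-\chi_1 Q^{K^+}-\chi_2 Q^{\bar K^+}$ of Lemma~\ref{lem: subsec: sec: frequency localized multiplier estimates, subsec 2, lem 1}; this produces exactly the pure-squared boundary contributions $\chi_1(\omega-\omega_+m)^2|u|^2(-\infty)+\chi_2(\omega-\bar\omega_+m)^2|u|^2(+\infty)$ plus the inhomogeneous error terms on the right-hand side of~\eqref{eq: prop: energy estimate in superradiant frequencies, eq 1}. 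The cut-offs $\chi_1,\chi_2$ are chosen (depending on $\omega,m,\ell$ and the sign of $am$) so that $\chi_i$ dominates the constant $B$ appearing from $Q^f$, and the final ingredient is the algebraic relation in $\mathcal F^{\sharp}$, combined with the coercivity $\tilde\lambda-2am\omega\Xi\gtrsim \tilde\lambda$, which forces $(\omega-\omega_+m)^2+(\omega-\bar\omega_+m)^2\lesssim \tilde\lambda$, so that both boundary corrections are absorbed by the LHS after a suitable fraction of the main bulk term is expended. One then closes the estimate for $\omega_{\text{high}}$ large and $\alpha^{-1}$ large to obtain~\eqref{eq: prop: energy estimate in superradiant frequencies, eq 1}. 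The delicate point, and the place that requires the most care, is ensuring this absorption is uniform over the whole enlarged superradiant box and that the constants $B$ do not depend on the frequencies.
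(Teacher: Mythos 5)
The broad outline you propose—establish the coercivity $\tilde\lambda - 2am\omega\Xi \gtrsim \tilde\lambda$ (Lemma~\ref{lem: energy estimate in superradiant frequencies, lem 1}), deduce a unique interior nondegenerate maximum $r_{V,\max}$ of $V$ (Lemmata~\ref{lem: superradiant frequency energy estimate, lem 1},~\ref{lem: /superradiant frequency energy estimate/ lem 2/ critical points of V}), and run a $Q^f+Q^h$ current to produce the coercive bulk—matches the paper's structure. The $f$-construction you use (solving $-\tfrac12 f'''=\Delta$ near $r_{V,\max}$) differs cosmetically from the paper's ansatz $f=(r-r_{V,\max})\tilde f$, but both produce a sign-changing multiplier with $f'>b$ and are interchangeable here.

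However, the mechanism by which you eliminate the boundary terms at $r^\star=\pm\infty$ is not correct as stated, and this is the decisive step in the superradiant regime. You describe $\chi_1,\chi_2$ as ``frequency-dependent cut-off weights $\chi_1(\omega,m,\ell),\chi_2(\omega,m,\ell)$,'' i.e.\ scalars for each fixed frequency triple. If they are constants in $r^\star$, then $-\chi_1 Q^{K^+}$ integrated over $r^\star$ produces boundary contributions at \emph{both} endpoints: the good term $\chi_1(\omega-\omega_+m)^2|u|^2(-\infty)$, and the cross term $-\chi_1(\omega-\omega_+m)(\omega-\bar\omega_+m)|u|^2(+\infty)$, which in $\mathcal{SF}$ has the wrong sign (this is precisely why the energy identity of Proposition~\ref{prop: subsec: energy identity, prop 1} is not coercive here). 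The ``pure-squared boundary contributions'' you assert can only arise if $\chi_1,\chi_2$ are cut-off functions \emph{in $r^\star$}, with $\chi_1\equiv 1$ near $r^\star=-\infty$, $\chi_1\equiv 0$ near $r^\star=+\infty$, and symmetrically for $\chi_2$, as in~\eqref{eq: energy estimate in superradiant frequencies, eq 2} of the paper. The price one then pays is a bulk error term proportional to $\chi_i'$, which is supported in a small neighborhood of $r_{V,\max}$ where $\text{supp}\,\chi_i'$ lives. The ``final ingredient'' you cite, $(\omega-\omega_+m)^2+(\omega-\bar\omega_+m)^2\lesssim\tilde\lambda$, is not what closes the estimate. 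What actually closes it is the \emph{quantitative non-trapping} bound $V(r)-\omega^2\geq b\tilde\lambda$ for $r\in(r_{V,\max}-\delta,r_{V,\max}+\delta)$ from Lemma~\ref{lem: /superradiant frequency energy estimate/ lem 2/ critical points of V}: this makes the $Q^h$ current coercive precisely on $\text{supp}\,\chi_i'$, allowing the localized $\chi_i'$ error to be absorbed after choosing $E_0$ and $\delta^{-1}$ appropriately. Without identifying the $r^\star$-cutoff structure of $\chi_1,\chi_2$ and the role of the non-trapping bound near $r_{V,\max}$, the argument does not close.
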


First, note the following lemma 
\begin{lemma}\label{lem: energy estimate in superradiant frequencies, lem 1}
	Let~$l>0$,~$(a,M)\in\mathcal{B}_l$ and~$\mu^2_{KG}\geq 0$. Let~$\omega_{high}>0$. For any sufficiently large
	\begin{equation}
		\alpha^{-1}>0
	\end{equation}
	then for~$(\omega,m,\ell)\in\mathcal{F}^{\sharp}(\alpha,\omega_{high})$ we have
	\begin{equation}\label{eq: sec: energy estimate for superradiant regime/ eq 1}
	\tilde{\lambda}-2a\omega m\Xi\geq b\tilde{\lambda}>0.
	\end{equation}
\end{lemma}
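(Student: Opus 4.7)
The plan is to directly exploit the definition of $\mathcal{F}^\sharp$ together with the eigenvalue lower bound from Lemma~\ref{lem: inequality for lambda} and the bound $r_+ > |a|$ from Lemma~\ref{lem: sec: properties of Delta, lem 2, a,M,l}. First, note that membership $(\omega,m,\ell)\in\mathcal{F}^\sharp$ forces $am\omega > a^2m^2\Xi/(\bar{r}_+^2+a^2)\geq 0$, so in particular $am\omega\geq 0$. Therefore Lemma~\ref{lem: inequality for lambda} is applicable in its ``$m\xi\geq 0$'' form with $\xi=a\omega$, which yields the key inequality
\begin{equation*}
\tilde{\lambda} = \lambda^{(a\omega)}_{m\ell} + a^2\omega^2 \geq \Xi^2 m^2.
\end{equation*}

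Next, I would use the upper bound built into $\mathcal{F}^\sharp$, namely $am\omega < a^2 m^2\Xi/(r_+^2+a^2) + |a|\alpha\tilde{\lambda}$. Multiplying by $2\Xi>0$ and subtracting from $\tilde{\lambda}$, one gets
\begin{equation*}
\tilde{\lambda} - 2a\omega m\Xi \;>\; \tilde{\lambda} - \frac{2a^2\Xi^2 m^2}{r_+^2+a^2} - 2\Xi|a|\alpha\tilde{\lambda}\;\geq\; \tilde{\lambda}\left(1 - \frac{2a^2}{r_+^2+a^2} - 2\Xi|a|\alpha\right),
\end{equation*}
where in the last step I used $\Xi^2 m^2 \leq \tilde{\lambda}$ from the previous paragraph.

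The remaining step is to show that the quantity in parentheses is bounded below by a positive $b(a,M,l)$ for $\alpha$ small enough. By Lemma~\ref{lem: sec: properties of Delta, lem 2, a,M,l} we have $r_+ > |a|$, which gives $r_+^2+a^2 > 2a^2$, hence
\begin{equation*}
c_0 := 1 - \frac{2a^2}{r_+^2+a^2} \in (0,1).
\end{equation*}
Choosing $\alpha^{-1}$ sufficiently large (depending only on $a,M,l$) so that $2\Xi|a|\alpha \leq c_0/2$, one concludes $\tilde{\lambda}-2a\omega m\Xi \geq (c_0/2)\tilde{\lambda} =: b\tilde{\lambda} > 0$, which is the claimed inequality.

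There is no substantive obstacle here: the lemma is a bookkeeping consequence of the $\mathcal{F}^\sharp$ definition, the spheroidal eigenvalue lower bound, and the subextremal bound $r_+>|a|$. The only ``choice'' is to take $\alpha$ small compared to the explicit constant $c_0/(4\Xi|a|)$, which is harmless since $\alpha$ is a free parameter of the frequency decomposition to be fixed later in Section~\ref{subsec: choice of omega1, lambda2}.
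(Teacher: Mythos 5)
Your proof is correct and follows essentially the same route as the paper's: both bound $-2am\omega\Xi$ from below using the defining upper bound of $\mathcal{F}^\sharp$, then invoke $\tilde{\lambda}\geq\Xi^2 m^2$ (from Lemma~\ref{lem: inequality for lambda}, using $am\omega\geq 0$) together with $r_+>|a|$ to get the positive lower bound, and finally shrink $\alpha$. Your presentation is slightly more streamlined than the paper's (you go directly to the factored form $\tilde{\lambda}\left(1-\tfrac{2a^2}{r_+^2+a^2}-2\Xi|a|\alpha\right)$ rather than introducing an intermediate $\epsilon$ as in the paper), but the underlying ingredients and structure are identical.
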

\begin{proof}
	We use the following bound 
	\begin{equation}
	-2am\omega\Xi\geq -2\frac{(am\Xi)^2}{r_+^2+a^2}-2a\Xi\alpha\tilde{\lambda},    
	\end{equation}
	which follows immediately from the definition of the enlarged high superradiant frequency regime~$\mathcal{F}^\sharp$, see Section~\ref{sec: frequencies}, to obtain 
	\begin{equation}\label{eq: sec: energy estimate for superradiant regime/ eq 2}
	\tilde{\lambda}-2am\omega\Xi\geq \tilde{\lambda}-2\frac{(am\Xi)^2}{r_+^2+a^2}-2a\Xi\alpha\tilde{\lambda}.
	\end{equation}
	In view of the inequality~$r_+^2>a^2$, see Lemma~\ref{lem: sec: properties of Delta, lem 2, a,M,l}, we conclude that for a sufficiently small~$0<\epsilon(a,M,l)<1$ we obtain
	\begin{equation}\label{eq: sec: energy estimate for superradiant regime/ eq 2.5}
		\frac{2a^2}{r_+^2+a^2}\leq 1-\epsilon(a,M,l).
	\end{equation}
	
	Therefore, from~\eqref{eq: sec: energy estimate for superradiant regime/ eq 2.5},~\eqref{eq: sec: energy estimate for superradiant regime/ eq 2} in conjunction with the inequality~$\lambda^{(a\omega)}_{m\ell}+(a\omega)^2\geq \Xi^2 m^2$ for~$am\omega>0$, see Lemma~\ref{lem: inequality for lambda}, we obtain 
	\begin{equation}\label{eq: sec: energy estimate for superradiant regime/ eq 3}
		\begin{aligned}
			\tilde{\lambda}-2\frac{(am\Xi)^2}{r_+^2+a^2}	&	\geq \tilde{\lambda}+\Xi^2m^2\left(\epsilon(a,M,l)-1\right) \geq \left(1-\frac{\epsilon(a,M,l)}{2}\right)\tilde{\lambda} +\frac{\epsilon(a,M,l)}{2}\tilde{\lambda}-  \Xi^2m^2\left(1-\epsilon(a,M,l)\right)\\
			&	\geq \left(1-\frac{\epsilon(a,M,l)}{2}\right)m^2\Xi^2 -(1-\epsilon(a,M,l))\Xi^2m^2+\frac{\epsilon(a,M,l)}{2}\tilde{\lambda} \\
			&	\geq b\tilde{\lambda},
		\end{aligned} 
	\end{equation}
	for a sufficiently small~$\epsilon(a,M,l)>0$. 
	
	Finally, by using  equations \eqref{eq: sec: energy estimate for superradiant regime/ eq 2}, \eqref{eq: sec: energy estimate for superradiant regime/ eq 3}, we take~$\alpha>0$ sufficiently small and conclude. 
\end{proof}

The following lemma is a quantitative manifestation of the fact that the high (enlarged) superradiant frequencies are not trapped.
 
\begin{lemma}\label{lem: superradiant frequency energy estimate, lem 1}
	Let~$l>0$,~$(a,M)\in \mathcal{B}_l$ and~$\mu^2_{KG}\geq 0$. Then, for any
	\begin{equation}
	\alpha^{-1}>0,\qquad \omega_{\textit{high}}>0
	\end{equation}
	both sufficiently large, then for~$(\omega,m,\ell)\in\mathcal{F}^{\sharp}(\alpha,\omega_{high})$ there exists a unique global maximum~$r_{V,\textit{max}}\in (r_+,\bar{r}_+)$ of~$V$, where we have 
	\begin{equation}
	V(r_{V,\textit{max}})-\omega^2\geq b\tilde{\lambda}.
	\end{equation}
\end{lemma}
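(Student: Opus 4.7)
The strategy will be to combine the sharp lower bound on $\tilde{\lambda}-2am\omega\Xi$ already furnished by Lemma~\ref{lem: energy estimate in superradiant frequencies, lem 1} with the rigid polynomial structure of $V_0$ established in Section~\ref{sec: trapping}. First I would fix $\alpha^{-1}$ large enough so that Lemma~\ref{lem: energy estimate in superradiant frequencies, lem 1} delivers
\begin{equation*}
\tilde{\lambda}-2am\omega\Xi \geq b\tilde{\lambda} \qquad \text{on } \mathcal{F}^{\sharp},
\end{equation*}
and rewrite
\begin{equation*}
V_0-\omega^2 \;=\; \frac{\Delta(r)\,(\tilde{\lambda}-2am\omega\Xi)}{(r^2+a^2)^2} \;-\; \left(\omega-\frac{am\Xi}{r^2+a^2}\right)^2.
\end{equation*}
Direct evaluation gives $(V_0-\omega^2)(r_+)=-(\omega-\omega_+m)^2\leq 0$ and $(V_0-\omega^2)(\bar{r}_+)=-(\omega-\bar{\omega}_+m)^2\leq 0$, so neither endpoint hosts a positive value.

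To produce an interior point at which $V_0-\omega^2\geq b\tilde{\lambda}$, I would evaluate at the preferred radius $r_{\omega}$ determined by $\omega=am\Xi/(r_\omega^2+a^2)$, which lies strictly inside $(r_+,\bar{r}_+)$ for $\omega$ in the strict superradiant interval $(\bar{\omega}_+m,\omega_+m)$ because $am\omega>0$. At this radius the cross term vanishes and we obtain
\begin{equation*}
V_0(r_\omega)-\omega^2 = \frac{\Delta(r_\omega)(\tilde{\lambda}-2am\omega\Xi)}{(r_\omega^2+a^2)^2} \geq b\tilde{\lambda}\cdot\frac{\Delta(r_\omega)}{(r_\omega^2+a^2)^2}.
\end{equation*}
The key geometric step is to ensure $r_\omega$ remains uniformly bounded away from $r_+,\bar{r}_+$; for the part of $\mathcal{F}^{\sharp}$ outside strict superradiance (where $r_\omega$ may exit $(r_+,\bar{r}_+)$), I would evaluate instead at $r_{\Delta,\textit{frac}}$ from Lemma~\ref{lem: sec: general properties of Delta, lem 3} and use $m^2\Xi^2\leq\tilde{\lambda}$ from Lemma~\ref{lem: inequality for lambda} together with the defining inequality $|am(\omega-\omega_+m)|\leq |a|\alpha\tilde{\lambda}$ to absorb the quadratic defect $(\omega-am\Xi/(r_{\Delta,\textit{frac}}^2+a^2))^2$ into a small fraction of $b\tilde{\lambda}\,\Delta(r_{\Delta,\textit{frac}})/(r_{\Delta,\textit{frac}}^2+a^2)^2$ by choosing $\alpha$ sufficiently small.

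For uniqueness of $r_{V,\textit{max}}$, I would invoke Lemma~\ref{lem: subsec: sec: trapping, subsec 1, lem 1}, which states that $V_0$ admits at most two interior critical points in $(r_+,\bar{r}_+)$, namely one minimum and one maximum. Since the endpoint values of $V_0-\omega^2$ are nonpositive while an interior value is bounded below by $b\tilde{\lambda}>0$, the maximum exists and is unique. Absorbing the frequency-independent smooth perturbations $V_{SL}$ and $V_{\mu_{\textit{KG}}}$ is then immediate: both are uniformly bounded on $[r_+,\bar{r}_+]$ and vanish at the horizons, so choosing $\omega_{\textit{high}}$ large (which forces $\tilde{\lambda}$ to be correspondingly large by the $\mathcal{F}^{\sharp}$ constraint) guarantees that the critical structure of $V$ mirrors that of $V_0$ and that the lower bound $V(r_{V,\textit{max}})-\omega^2\geq b\tilde{\lambda}$ persists.

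The main obstacle I anticipate is the uniform $b\tilde{\lambda}$ bound in the transition regime where $r_\omega$ approaches $r_+$ (equivalently $\omega\to\omega_+m$). There $\Delta(r_\omega)\to 0$ and the $r_\omega$-evaluation degenerates, while simultaneously the enlarged-superradiance bound $|\omega-\omega_+m|\leq \alpha\tilde{\lambda}/|m|$ can fail to be small relative to $m$ when $\tilde{\lambda}/m^2$ is large. Resolving this will require a local expansion near $r_+$: the positive term grows linearly in $(r-r_+)$ with slope $2\kappa_+\cdot(\tilde{\lambda}-2am\omega\Xi)/(r_+^2+a^2)^2$, while the negative term grows only quadratically, and optimizing in $(r-r_+)$ produces a balance of order $\kappa_+^2(\tilde{\lambda}-2am\omega\Xi)^2/(a^2m^2\Xi^2)\gtrsim \tilde{\lambda}$ using $m^2\Xi^2\leq\tilde{\lambda}$. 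This delicate balance, carried out uniformly in all parameters of $\mathcal{F}^{\sharp}$, is the heart of the argument.
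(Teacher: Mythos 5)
Your core strategy matches the paper's: a case split between strict superradiance $am\omega\in(\frac{a^2m^2\Xi}{\bar r_+^2+a^2},\frac{a^2m^2\Xi}{r_+^2+a^2})$ and the enlarged fringe $am\omega\in[\frac{a^2m^2\Xi}{r_+^2+a^2},\frac{a^2m^2\Xi}{r_+^2+a^2}+|a|\alpha\tilde\lambda)$, with Lemma~\ref{lem: energy estimate in superradiant frequencies, lem 1} furnishing the key lower bound $\tilde\lambda-2am\omega\Xi\geq b\tilde\lambda$, and uniqueness of the maximum imported from Lemma~\ref{lem: subsec: sec: trapping, subsec 1, lem 1}. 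The strict-superradiance evaluation at $r_\omega=r_s$ (where the cross term vanishes) is precisely what the paper does; the handling of the two endpoints and the absorption of $V_{SL},V_{\mu_{KG}}$ via $\omega_{high}$ large also agree.

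Where your argument is not tight is the enlarged fringe $\omega\geq\omega_+m$. Evaluating at the fixed point $r_{\Delta,\textit{frac}}$ produces a defect whose geometric contribution
\begin{equation*}
\Bigl(\omega_+m-\tfrac{am\Xi}{r_{\Delta,\textit{frac}}^2+a^2}\Bigr)^2
= a^2m^2\Xi^2\Bigl(\tfrac{1}{r_+^2+a^2}-\tfrac{1}{r_{\Delta,\textit{frac}}^2+a^2}\Bigr)^2
\end{equation*}
is a \emph{fixed multiple of $m^2$} that does not shrink as $\alpha\to 0$. Since $m^2$ can be comparable to $\tilde\lambda$ (the Lemma~\ref{lem: inequality for lambda} bound $\Xi^2m^2\leq\tilde\lambda$ allows equality up to a constant), whether this defect is beaten by the gain $c_0\,b\,\tilde\lambda$ with $c_0=\Delta(r_{\Delta,\textit{frac}})/(r_{\Delta,\textit{frac}}^2+a^2)^2$ depends on the precise black hole parameters, and you give no reason this works uniformly up to extremality. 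The paper sidesteps this entirely: it does not evaluate at any fixed interior point for the fringe, but instead (in the companion Lemma~\ref{lem: /superradiant frequency energy estimate/ lem 2/ critical points of V}) evaluates at $r_++\sqrt\alpha$, where the gain scales like $\sqrt\alpha\,\tilde\lambda$ and the geometric defect like $\alpha\, m^2\lesssim\alpha\tilde\lambda$, so the comparison is won by choosing $\alpha$ small without any parameter-dependent constant fight.

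Your final paragraph (the ``main obstacle'' discussion) in fact rediscovers the right remedy: a near-horizon expansion with the positive term growing linearly in $(r-r_+)$ with slope $\sim\tilde\lambda$, the defect opening quadratically with constant $\sim m^2$, and an optimization in $(r-r_+)$ giving a balance $\gtrsim\tilde\lambda^2/m^2\gtrsim\tilde\lambda$. This is exactly what makes the paper's $r_++\sqrt\alpha$ evaluation work (the paper just fixes the scale rather than optimizing, which is simpler since it avoids the concern that the optimal $(r-r_+)\sim\tilde\lambda/m^2$ may leave the validity range of the local expansion). So: discard the $r_{\Delta,\textit{frac}}$ evaluation, carry out the near-horizon balance you sketched but at a fixed small scale $\sim\sqrt\alpha$, and the proof closes in the same way the paper's does.
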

\begin{proof}
	We study the following cases
	\begin{equation}\label{eq: /subsec: energy estimate for superrad freq/ lem 1/ split the frequencies}
	\begin{aligned}
	  	 am\omega \in \left[\frac{a^2m^2\Xi}{r_+^2+a^2},\frac{a^2m^2\Xi}{r_+^2+a^2}+|a|\alpha\tilde{\lambda}\right),\qquad a m\omega \in\left(\frac{a^2m^2\Xi}{\bar{r}_+^2+a^2}\,\frac{a^2m^2\Xi}{r_+^2+a^2}\right).
	\end{aligned}
	\end{equation}
	
	In this proof, we use
	\begin{equation}
		r_{V_0,\textit{max}},\qquad r_{V,\textit{max}}
	\end{equation}
	to denote respectively the locations of the unique global maximums of the potentials~$V_0,V$.

		\boxed{For ~the ~first ~case~ of~\eqref{eq: /subsec: energy estimate for superrad freq/ lem 1/ split the frequencies}} we use Lemma~\ref{lem: energy estimate in superradiant frequencies, lem 1}, in view of the that~$\frac{dV_0}{dr}(r_+)\geq 0,\frac{dV_0}{dr}(\bar{r}_+)<-b\tilde{\lambda}$ and conclude that if we take~$\alpha>0$ sufficiently small, then
	\begin{equation}
	V_0(r_{V_0,\textit{max}})-\omega^2\geq b\tilde{\lambda},
	\end{equation}
	for~$r_{V_0,max}\in (r_+,\bar{r}_+)$. Therefore, by taking~$\omega_{high}>0$ sufficiently large we obtain the desired result for~$V$.

	\boxed{For ~the ~second ~case~ of~\eqref{eq: /subsec: energy estimate for superrad freq/ lem 1/ split the frequencies}}, namely 
	\begin{equation}
	a m\omega\in\left(\frac{a^2m^2\Xi}{\bar{r}_+^2+a^2},\frac{a^2m^2\Xi}{r_+^2+a^2}\right)    
	\end{equation}
	we proceed as follows. There exists aν~$r_s (\omega,m) \in(r_+,\bar{r}_+)$ such that
	\begin{equation}
	a m\omega=\frac{a^2m^2\Xi}{r_s^2+a^2}.
	\end{equation}
	We use Lemma~\ref{lem: energy estimate in superradiant frequencies, lem 1} and obtain~$\omega^2-V_0(r_s)<-b\tilde{\lambda}$ which implies
	\begin{equation}
	V_0(r_s)-\omega^2 > b\tilde{\lambda}. 
	\end{equation}
	and therefore we trivially obtain 
	\begin{equation}
	V_{0}(r_{V_0,\textit{max}})-\omega^2\geq b\tilde{\lambda}. 
	\end{equation}

	Therefore, in all the cases of~\eqref{eq: /subsec: energy estimate for superrad freq/ lem 1/ split the frequencies} we have concluded that for any~$\alpha>0$ sufficiently small there exist a global maximum~$r_{V_0,\textit{max}}$, of~$V_0$, such that 
	\begin{equation}
	V_0(r_{V_0,\textit{max}})-\omega^2\geq b\tilde{\lambda}.
	\end{equation} 
	Then, for a sufficiently large~$\omega_{\textit{high}}>0$ we obtain 
	\begin{equation}
	V(r_{V,\textit{max}})-\omega^2\geq b\tilde{\lambda},   
	\end{equation}
	where~$r_{V,\textit{max}}\in (r_+,\bar{r}_+)$ is the value of the global maximum of~$V$.

	We conclude the proof. 
\end{proof}

Moreover, note the following lemma. 
\begin{lemma}\label{lem: /superradiant frequency energy estimate/ lem 2/ critical points of V}
	
	Let~$l>0$ and~$(a,M)\in\mathcal{B}_l$ and~$\mu^2_{KG}\geq 0$. Then, for any
	\begin{equation}
		\alpha^{-1}>0,\qquad 
	\omega_{\textit{high}}>0,\qquad \delta^{-1}>0,
	\end{equation}
	all sufficiently large, then for~$(\omega,m,\ell)\in \mathcal{F}^\sharp(\omega_{high},\alpha)$ the following hold  \begin{equation}\label{eq: lem: /superradiant frequency energy estimate/ lem 2/ critical points of V/ eq 1}
	\begin{aligned}
	&   V(r)-\omega^2\geq b\tilde{\lambda},\qquad r\in (r_{V,\textit{max}}-\delta,r_{V,\textit{max}}+\delta)\\ 
	&   -(r-r_{V,\textit{max}})\frac{dV}{dr}\geq b\tilde{\lambda} (r-r_{V,\textit{max}})^2,\qquad r\in[r_+,\bar{r}_+].
	\end{aligned}
	\end{equation} 
\end{lemma}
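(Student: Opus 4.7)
The plan is to prove the two claims separately. Claim (i) is a local quadratic lower bound for $V-\omega^2$ near $r_{V,\mathrm{max}}$ and reduces to a uniform $C^1$ bound on $V$ combined with Lemma~\ref{lem: superradiant frequency energy estimate, lem 1}. Claim (ii) is a global quadratic bound on $[r_+,\bar{r}_+]$ and exploits the cubic polynomial structure of $(r^2+a^2)^3 \frac{dV_0}{dr}$ established in Lemma~\ref{lem: subsec: sec: trapping, subsec 1, lem 1}.

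For Claim (i), I would first establish the uniform gradient bound $\bigl|\frac{dV}{dr}\bigr|\leq B\tilde\lambda$ on $[r_+,\bar{r}_+]$, uniformly in $(\omega,m,\ell)\in\mathcal{F}^\sharp$. Inspecting the formula \eqref{eq: proof: prop: energy estimate in the de Sitter freqs, l larger that omega, eq 0.1} for $\frac{dV_0}{dr}$ and using $m^2\Xi^2\leq\tilde\lambda$ (Lemma~\ref{lem: inequality for lambda}, valid since $am\omega\geq 0$ in $\mathcal{F}^\sharp$), the defining bound $am\omega< a^2m^2\Xi/(r_+^2+a^2)+|a|\alpha\tilde\lambda$, and $r_+>|a|$ (Lemma~\ref{lem: sec: properties of Delta, lem 2, a,M,l}), each term in $(r^2+a^2)^3\frac{dV_0}{dr}$ is controlled by $B\tilde\lambda$ uniformly in $r$. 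The frequency-independent pieces $V_{SL}+V_{\mu_{KG}}$ are absorbed by taking $\omega_{\mathrm{high}}$ large. Combined with $V(r_{V,\mathrm{max}})-\omega^2\geq b\tilde\lambda$ from Lemma~\ref{lem: superradiant frequency energy estimate, lem 1}, the Taylor estimate
\[
V(r)-\omega^2 \geq b\tilde\lambda - |r-r_{V,\mathrm{max}}|\,\sup_{[r_+,\bar{r}_+]}\Bigl|\tfrac{dV}{dr}\Bigr| \geq b\tilde\lambda - B\delta\tilde\lambda
\]
yields Claim (i) once $\delta$ is chosen so that $B\delta\leq b/2$.

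For Claim (ii), the central quantitative input is the bound $\frac{d^2V_0}{dr^2}(r_{V_0,\mathrm{max}})\leq -b\tilde\lambda$. Since $\frac{dV_0}{dr}(r_{V_0,\mathrm{max}})=0$, differentiating gives $(r_{V_0,\mathrm{max}}^2+a^2)^3\frac{d^2V_0}{dr^2}(r_{V_0,\mathrm{max}}) = \frac{d}{dr}\bigl[(r^2+a^2)^3\frac{dV_0}{dr}\bigr]\big|_{r_{V_0,\mathrm{max}}}$, which by \eqref{eq: /lem proof /sec: Carters separation/ the critical points of V0/ eq 1} is a quadratic in $r$ with leading coefficient $-6\Xi(\tilde\lambda-2am\omega a^2/l^2)$; running the argument of Lemma~\ref{lem: energy estimate in superradiant frequencies, lem 1} with $a^2/l^2$ in place of $\Xi$ (using $a^2/l^2<1/4$) shows this coefficient is $\leq-b\tilde\lambda$ in $\mathcal{F}^\sharp$ for $\alpha$ small. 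Granting $\frac{d^2V_0}{dr^2}(r_{V_0,\mathrm{max}})\leq -b\tilde\lambda$, the Taylor expansion of $V_0$ at $r_{V_0,\mathrm{max}}$ combined with the uniform bound on $\frac{d^3V_0}{dr^3}$ yields Claim (ii) in a neighborhood of $r_{V_0,\mathrm{max}}$. Away from that neighborhood, the monotonicity structure of $V_0$ (Lemma~\ref{lem: subsec: sec: trapping, subsec 1, lem 1} gives at most two critical points, a min before the max) together with the quantitative endpoint bounds $\frac{dV_0}{dr}(r_+)\geq b\tilde\lambda$ and $\frac{dV_0}{dr}(\bar{r}_+)\leq -b\tilde\lambda$ — obtained by strengthening the sign analysis in the proof of Lemma~\ref{lem: superradiant frequency energy estimate, lem 1} — provides the pointwise bound $\bigl|\frac{dV_0}{dr}\bigr|\geq b\tilde\lambda\,|r-r_{V_0,\mathrm{max}}|$. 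Passage from $V_0$ to $V$ is by taking $\omega_{\mathrm{high}}$ large, which also shifts $r_{V_0,\mathrm{max}}$ to $r_{V,\mathrm{max}}$ by $O(\omega_{\mathrm{high}}^{-1})$ via the implicit function theorem.

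The hard part is the quantitative strengthening of \eqref{eq: lem: subsec: sec: trapping, subsec 1, lem 1, eq 1} from the qualitative $<0$ to $\leq -b\tilde\lambda$ at $r_{V_0,\mathrm{max}}$, i.e.\ the bound $\frac{d^2V_0}{dr^2}(r_{V_0,\mathrm{max}})\leq -b\tilde\lambda$. This requires using the algebraic relation $\frac{dV_0}{dr}(r_{V_0,\mathrm{max}})=0$ to eliminate the explicit dependence on $r_{V_0,\mathrm{max}}$ in the quadratic expression for $\frac{d^2V_0}{dr^2}(r_{V_0,\mathrm{max}})$, separately in each of the two superradiant subcases \eqref{eq: /subsec: energy estimate for superrad freq/ lem 1/ split the frequencies}. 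In the first subcase $am\omega\in[a^2m^2\Xi/(r_+^2+a^2),\,a^2m^2\Xi/(r_+^2+a^2)+|a|\alpha\tilde\lambda)$, the key input is the quantitative bound $\frac{dV_0}{dr}(r_+)\geq b\tilde\lambda/(r_+^2+a^2)^2$, which follows by combining Lemma~\ref{lem: energy estimate in superradiant frequencies, lem 1} ($\tilde\lambda-2am\omega\Xi\geq b\tilde\lambda$) with $\Delta'(r_+)>0$ and smallness of $\alpha$. In the second subcase, the intermediate point $r_s\in(r_+,\bar{r}_+)$ with $am\omega=a^2m^2\Xi/(r_s^2+a^2)$ provides a location where $V_0-\omega^2\geq b\tilde\lambda$ directly, and a compactness argument then pins down $r_{V_0,\mathrm{max}}$ into a frequency-independent subinterval $[r_++\epsilon_{\mathrm{away}},\bar{r}_+-\epsilon_{\mathrm{away}}]$. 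Careful tracking of these constants through the quadratic yields $\frac{d^2V_0}{dr^2}(r_{V_0,\mathrm{max}})\leq -b\tilde\lambda$ and completes the proof.
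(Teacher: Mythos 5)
Your proposal is correct and takes essentially the same route as the paper — Lemma~\ref{lem: superradiant frequency energy estimate, lem 1}, the endpoint sign analysis of $dV_0/dr$, and the cubic structure from Lemma~\ref{lem: subsec: sec: trapping, subsec 1, lem 1}. You are usefully more explicit at the crucial quantitative step: the paper's phrase ``the function $(r^2+a^2)^3\frac{dV_0}{dr}$ is strictly negative'' is a misstatement (that function changes sign on $[r_+,\bar{r}_+]$), and your curvature bound $\frac{d^2V_0}{dr^2}(r_{V_0,\textit{max}})\leq -b\tilde\lambda$, extracted from the leading coefficient of \eqref{eq: /lem proof /sec: Carters separation/ the critical points of V0/ eq 1} via $\tilde\lambda-2am\omega\,\tfrac{a^2}{l^2}\geq b\tilde\lambda$, supplies the quantitative input the paper leaves implicit.
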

\begin{proof}
	The first inequality of \eqref{eq: lem: /superradiant frequency energy estimate/ lem 2/ critical points of V/ eq 1}, follows readily from Lemma \ref{lem: superradiant frequency energy estimate, lem 1} by the continuity of~$V$, after taking~$\delta>0$ sufficiently small. 
	
	For the second inequality of equation \eqref{eq: lem: /superradiant frequency energy estimate/ lem 2/ critical points of V/ eq 1} we proceed as follows. We compute the derivative 
	\begin{equation}\label{eq:lem: /superradiant frequency energy estimate/ lem 2/ critical points of V, eq 1}
	\frac{dV_0}{dr}(r)=\frac{d}{dr}\left(\frac{\Delta}{(r^2+a^2)^2}\right)(\tilde{\lambda}-2a\omega m\Xi)-2\left(\omega-\frac{am\Xi}{r^2+a^2}\right)\left(2r\frac{am\Xi}{(r^2+a^2)^2}\right),
	\end{equation}
	and we note that, in view of Lemma~\ref{lem: energy estimate in superradiant frequencies, lem 1}, the following hold
	\begin{equation}\label{eq:lem: /superradiant frequency energy estimate/ lem 2/ critical points of V, eq 2}
	\frac{dV_0}{dr}(r_+)=\frac{d}{dr}\left(\frac{\Delta}{(r^2+a^2)^2}\right)(r_+)(\tilde{\lambda}-2a\omega m\Xi)-2\left(\omega-\frac{am\Xi}{r^2+a^2}\right)\left(2r\frac{am\Xi}{(r^2+a^2)^2}\right)(r_+)>b\tilde{\lambda}>0,
	\end{equation}
	\begin{equation}\label{eq:lem: /superradiant frequency energy estimate/ lem 2/ critical points of V, eq 3}
	\frac{dV_0}{dr}(\bar{r}_+)=\frac{d}{dr}\left(\frac{\Delta}{(r^2+a^2)^2}\right)(\bar{r}_+)(\tilde{\lambda}-2a\omega m\Xi)-2\left(\omega-\frac{am\Xi}{r^2+a^2}\right)\left(2r\frac{am\Xi}{(r^2+a^2)^2}\right)(\bar{r}_+)<-b\tilde{\lambda}<0,
	\end{equation}
	where we used inequality~\eqref{eq: sec: energy estimate for superradiant regime/ eq 1} and that~$a m\omega\in\left(\frac{a^2m^2\Xi}{\bar{r}_+^2+a^2},\frac{a^2m^2\Xi}{r_+^2+a^2}+|a|\alpha\tilde{\lambda}\right)$ for~$\alpha>0$ sufficiently small.

	We claim that the potential~$V_0$ attains exactly one critical point~$r_{V_0,\textit{max}}\in (r_+,\bar{r}_+)$. To prove this note that if the potential~$V_0$ attained more than one critical points, then it would have to attain at least three critical points, because of the behaviour of the derivatives~\eqref{eq:lem: /superradiant frequency energy estimate/ lem 2/ critical points of V, eq 2},~\eqref{eq:lem: /superradiant frequency energy estimate/ lem 2/ critical points of V, eq 3}. But then the functions~$\frac{dV_0}{dr}$ and~$(r^2+a^2)^3\frac{dV_0}{dr}$ would have to change sign at least three times. This implies that 
	\begin{equation}
	(r^2+a^2)^3\frac{dV_0}{dr}
	\end{equation}
	would attain at least 2 local extrema in~$[r_+,\bar{r}_+]$. This is a contradiction to Lemma~\ref{lem: subsec: sec: trapping, subsec 1, lem 1}. Therefore, the potential~$V_0$ attains exactly one critical point~$r_{V_0}\in [r_+,\bar{r}_+]$.

	Again, in view of Lemma~\ref{lem: subsec: sec: trapping, subsec 1, lem 1}, and specifically in view of the fact that the function~$(r^2+a^2)^3\frac{dV_0}{dr}$ is strictly negative, and therefore for~$\omega_{high}>0$ sufficiently large the function
	\begin{equation}
		(r^2+a^2)^3\frac{dV}{dr}
	\end{equation}
	is strictly negative. Therefore, we obtain the second inequality of~\eqref{eq: lem: /superradiant frequency energy estimate/ lem 2/ critical points of V/ eq 1}.

	Now, we prove the first inequality of~\eqref{eq: lem: /superradiant frequency energy estimate/ lem 2/ critical points of V/ eq 1} which is a quantitative manifestation of that the enlarged high superradiant frequencies are not trapped. For the superradiant frequencies~$\mathcal{SF}$ we use that there exists a~$r_s\in (r_+,\bar{r}_+)$ such that~$am\omega = \frac{a^2m^2\Xi}{r_s^2+a^2}$. We easily conclude that
	\begin{equation}
		V(r_s)-\omega^2 \geq b \tilde{\lambda}
	\end{equation}
	and therefore
	\begin{equation}
		\max_{r\in (r_+,\bar{r}_+)}V(r)-\omega^2 \geq b\tilde{\lambda}.
	\end{equation}
	Therefore, for~$\delta>0$ sufficiently small and~$\omega_{high}>0$ sufficiently large the inequalities~\eqref{eq: lem: /superradiant frequency energy estimate/ lem 2/ critical points of V/ eq 1}  hold.

	Now, we prove the first inequality of~\eqref{eq: lem: /superradiant frequency energy estimate/ lem 2/ critical points of V/ eq 1} for the enlarged superradiant frequencies
	\begin{equation}
		am\omega \in \big(\frac{a^2m^2\Xi}{\bar{r}_+^2+a^2},\frac{a^2m^2\Xi}{r_+^2+a^2}+|a|\alpha\tilde{\lambda} \big). 
	\end{equation}
	Specifically, for~$am\omega \in \big(\frac{a^2m^2\Xi}{r_+^2+a^2},\frac{a^2m^2\Xi}{r_+^2+a^2}+|a|\alpha\tilde{\lambda} \big)$ we proceed as follows. 
	
	We have that
	\begin{equation}
		V_0(r_++\sqrt{\alpha})-\omega^2=\frac{\Delta}{(r^2+a^2)^2}\Big|_{r=r_++\sqrt{\alpha}}(\tilde{\lambda}-2m\omega a\Xi)-\left(\omega-\frac{am\Xi}{(r_++\sqrt{\alpha})^2+a^2}\right)^2
	\end{equation}
	Now, we note that as~$\alpha\rightarrow 0$ we have that
	\begin{equation}
		\frac{\Delta}{(r^2+a^2)^2}\Big|_{r=r_++\sqrt{\alpha}}(\tilde{\lambda}-2m\omega a \Xi)\sim \sqrt{\alpha}(\tilde{\lambda}-2m\omega a \Xi),\qquad \left(\omega-\frac{am\Xi}{(r_++\sqrt{\alpha})^2+a^2}\right)^2\lesssim \alpha\left(|a|\tilde{\lambda} +(am)^2\right)
	\end{equation}
	Therefore, we immediately obtain that
	\begin{equation}
		V_0(r_++\sqrt{\alpha})-\omega^2\geq b(\alpha)\tilde{\lambda}.
	\end{equation}
	 We may now easily conclude the desired result.
\end{proof}

Now, we are ready to prove Proposition \ref{prop: energy estimate in superradiant frequencies}. 

\begin{proof}[\textbf{Proof of Proposition~\ref{prop: energy estimate in superradiant frequencies}}]
	Let~$\delta>0$ be sufficiently small~(also see Lemma~\ref{lem: /superradiant frequency energy estimate/ lem 2/ critical points of V}). We define a smooth multiplier function~$h$, such that 
	\begin{equation}
	h=
	\begin{cases}
	1,\: r\in (r_{V,\textit{max}}-\frac{\delta}{2},r_{V,\textit{max}}+\frac{\delta}{2})\\
	0,\: r\in (r_{V,\textit{max}}-\delta,r_{V,\textit{max}}+\delta)^c.
	\end{cases}
	\end{equation}
	
	We define a smooth multiplier function~$f$, such that 
	\begin{equation}
	f=(r-r_{V,\textit{max}})\tilde{f}    
	\end{equation}
	where~$\tilde{f},\frac{d f}{dr}>c_1>0$, for some strictly positive constant~$c_1=c_1(a,M,l,\mu_{KG})>0$.

	Therefore, we integrate the energy identity of Lemma~\ref{lem: subsec: currents, lem 1} associated to~$Q^f+Q^h$.  by summing the two currents~$Q^h,Q^f$ we obtain the identity
	\begin{equation}
	\begin{aligned}
	\int_\mathbb{R}\left(\left(h+2f^\prime\right)|u^\prime|^2+\left(h(V-\omega^2)-\frac{1}{2}h^{\prime\prime}-fV^\prime-\frac{1}{2}f^{\prime\prime\prime}\right)|u|^2\right)dr^\star&=Q^{f}(\infty)-Q^f(-\infty)\\
	&   \quad +\int_{\mathbb{R}}\left(-h\Re(u\bar{H})-\Re(2f\bar{H}u^\prime+f^\prime \bar{H}u)\right)dr^\star,
	\end{aligned}
	\end{equation}
	which, by using Lemmata~\ref{lem: superradiant frequency energy estimate, lem 1},~\ref{lem: /superradiant frequency energy estimate/ lem 2/ critical points of V} and by taking~$\omega_{\textit{high}}>0$ sufficiently large, we obtain  
	\begin{equation}\label{eq: energy estimate in superradiant frequencies, eq 1}
	\begin{aligned}
	&  b \int_\mathbb{R}  \Delta\left(|u^\prime|^2+\left(1+\omega^2+\lambda^{(a\omega)}_{m\ell}\right)|u|^2\right)dr^\star\\
	& \quad \leq  Q^{f}(\infty)-Q^f(-\infty) + \int_{\mathbb{R}}\left(-h\Re(u\bar{H})-\Re(2f\bar{H}u^\prime+f^\prime \bar{H}u)\right)dr^\star\\
	&   \quad \leq  f(\infty)\left(\omega-\frac{am\Xi}{\bar{r}_+^2+a^2}\right)^2|u|^2(\infty)-f(-\infty)\left(\omega-\frac{am\Xi}{r_+^2+a^2}\right)^2|u|^2(-\infty)\\
	&   \quad\quad+ \int_{\mathbb{R}}\left(-h\Re(u\bar{H})-\Re(2f\bar{H}u^\prime+f^\prime \bar{H}u)\right)dr^\star.
	\end{aligned}
	\end{equation}

	Now, we will absorb the boundary terms on the right hand side of equation~\eqref{eq: energy estimate in superradiant frequencies, eq 1}. We choose smooth cut-off functions~$\chi_1,~\chi_2$ such that
	\begin{equation}\label{eq: energy estimate in superradiant frequencies, eq 2}
	\chi_1 =
	\begin{cases}
	1,\: r\in[r_+,r_{V,\textit{max}}-\delta]\\
	0,\: r\in[r_{V,\textit{max}}+\delta,\bar{r}_+]
	\end{cases}
	,\qquad 	\textit{supp}(\chi^\prime_1)\subset (r_{V,\textit{max}}-\frac{\delta}{2},r_{V,\textit{max}}+\frac{\delta}{2}),
	\end{equation}
	\begin{equation}
	\chi_2=
	\begin{cases}
	0,\: r\in[r_+,r_{V,\textit{max}}-\delta]\\
	1,\: r\in[r_{V,\textit{max}}+\delta,\bar{r}_+]  
	\end{cases}
	,\qquad \textit{supp}(\chi^\prime_2)\subset (r_{V,\textit{max}}-\frac{\delta}{2},r_{V,\textit{max}}+\frac{\delta}{2}).
	\end{equation}
	By using the fundamental theorem of calculus the following inequality holds
	\begin{equation}\label{eq: energy estimate in superradiant frequencies, eq 3}
		\begin{aligned}
			\left(|u^\prime|^2 +(\omega-\frac{am\Xi}{r_+^2+a^2})|u|^2 \right)(r_+)\leq E_0 \Big| \int_{r_{V,max}-\delta/2}^{r_{V,max}+\delta/2} \chi_1^\prime (\omega-\frac{am\Xi}{r_+^2+a^2})\Im (u^\prime\bar{u})  dr\Big| +E_0\int_\mathbb{R} \chi_1 (\omega-\frac{am\Xi}{r_+^2+a^2})\Im (H\bar{u})dr
		\end{aligned}
	\end{equation}
	where we also have the bound
	\begin{equation}
		\begin{aligned}
			 E_0 \Big| \int_{r_{V,max}-\delta/2}^{r_{V,max}+\delta/2} \chi_1^\prime (\omega-\frac{am\Xi}{r_+^2+a^2})\Im (u^\prime\bar{u})  dr\Big| &	\leq E_0\delta^{-1} \int_{r_{V,max}-\delta/2}^{r_{V,max}+\delta/2} \left( |u^\prime|^2 + (\omega^2+m^2)|u|^2 \right)dr\\
			 &	\ll A \int_{r_{V,max}-\delta/2}^{r_{V,max}+\delta/2} \left( |u^\prime|^2 + (\omega^2+m^2)|u|^2 \right)dr,
		\end{aligned}
	\end{equation}
	where we choose~$E_0\delta^{-1}\ll \frac{1}{2}\tilde{\epsilon}\omega^2_{high}$ and~$A=\frac{1}{2}\tilde{\epsilon}\omega^2_{high}$. Of course, a similar estimate holds for~$\chi_2$.

	Finally, we integrate the energy identity of Lemma~\ref{lem: subsec: sec: frequency localized multiplier estimates, subsec 2, lem 1} associated with the current
	\begin{equation}
	-E_0\chi_1Q^{K^+}-E_0\chi_2 Q^{\bar{K}^+},
	\end{equation}
	for a sufficiently large~$E_0>0$, such that  in view of Proposition~\ref{prop: subsec: energy identity, prop 1} we bound the boundary terms of~\eqref{eq: energy estimate in superradiant frequencies, eq 1} in view of~\eqref{eq: energy estimate in superradiant frequencies, eq 3} and by using the quantitative non trapping inequality that holds for the enlarged high superradiant frequencies, see the first inequality of~\eqref{eq: lem: /superradiant frequency energy estimate/ lem 2/ critical points of V/ eq 1}.

	We conclude the Proposition~\ref{prop: energy estimate in superradiant frequencies}.  
\end{proof}

\subsection{Multipliers for the bounded frequency regime~$F_{\flat}$}\label{subsec: bounded frequencies}
We split the bounded frequency regime~$\mathcal{F}_\flat$, see Section~\ref{sec: frequencies}, into the following near stationary cases 
\begin{equation}
	\begin{aligned}
		&	\bullet \{|\omega|\leq \omega_{\textit{low}}\}\cap\{|m|>0\}\quad\text{and}\quad |a|\leq a_0\\ 
		&	\bullet 	\{|\omega|\leq \omega_{\textit{low}}\}\cap\{|m|=0\}\\
		&	\bullet 	\{|\omega|\leq \omega_{\textit{low}}\}\cap\{|m|>0\}\quad\text{and}\quad |a|\geq a_0
	\end{aligned}
\end{equation}
and the non-stationary case
\begin{equation}
	\{|\omega|\geq \omega_{\textit{low}}\}
\end{equation}
where
\begin{equation}
a_0,\qquad \omega_{\textit{low}}>0,
\end{equation}
are both sufficiently small, and are chosen in Propositions~\ref{prop: energy estimate for the bounded stationary frequencies},~\ref{prop: subsubsec: bounded stationary freqs, large a} respectively.

\subsubsection{\textbf{Near stationary case} \texorpdfstring{$\{|\omega|\leq \omega_{\textit{low}}\}\cap\{|m|\geq 1\}$ and \texorpdfstring{$\{|a|\leq a_0\}$}{g}}{g}}\label{subsubsec: sec: bounded: stationary}

The main Proposition of this Section is the following 
\begin{proposition}\label{prop: energy estimate for the bounded stationary frequencies}
	
	Let~$l>0$,~$(a,M)\in \mathcal{B}_l$,~$\mu^2_{KG}\geq 0$. Let~$\lambda_{\textit{low}},\omega_{\textit{high}}>0$ be arbitrary. Then, for any
	\begin{equation}
	a_0(M,l,\mu_{KG})>0,\qquad \omega_{\textit{low}}>0,
	\end{equation}
	both sufficiently small we have that if
	\begin{equation}
	|a|\leq a_0
	\end{equation}
then for~$(\omega,m,\ell)\in\mathcal{F}_\flat(\omega_{high},\lambda_{low}) \cap\{|\omega|\leq \omega_{\textit{low}}\}\cap\{|m|>0\}$, there exist sufficiently regular functions~$h,y$, where~$y$ is piecewise~$C^1$, satisfying the uniform bound
	\begin{equation}
	|h|+|h^\prime|+|h^{\prime\prime}|+|y|+|y^\prime|\leq B,
	\end{equation}
	such that for all smooth solutions~$u$ of Carter's radial ode~\eqref{eq: ode from carter's separation} satisfying the outgoing boundary conditions~\eqref{eq: lem: sec carters separation, subsec boundary behaviour of u, boundary beh. of u, eq 3}, we have 
	\begin{equation}\label{eq: prop: energy estimate for the bounded stationary frequencies, eq 1}
	\begin{aligned}
	\int_{\mathbb{R}}  & \Delta\big( |\Psi^\prime|^2+(1+\omega^2+\lambda^{(a\omega)}_{m\ell})|\Psi|^2  \big)dr^\star\\
	&   \leq B \int_{\mathbb{R}}\Bigg( \left|\left(\omega-\frac{am\Xi}{\bar{r}_+^2+a^2}\right)\Im (\bar{\Psi}H)\right|+\left|\left(\omega-\frac{am\Xi}{r_+^2+a^2}\right)\Im(\bar{\Psi}H)\right|\\
	&   \qquad\qquad\qquad + \left|\Re\left(\frac{2\Psi h\bar{H}}{\sqrt{r^{2}+a^{2}}}\right)\right|+ \left|\Re\left(\frac{y\Psi^{\prime}\bar{H}}{\sqrt{r^{2}+a^{2}}} \right)\right|\Bigg)dr^\star.
	\end{aligned}
	\end{equation}
\end{proposition}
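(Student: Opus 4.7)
The plan is to integrate a combined current of the form $Q^{f}_{\textit{stat}}[\Psi] - E\chi_1 Q^{K^+}[u] - E\chi_2 Q^{\bar{K}^+}[u]$, where the first is the combined $\Psi$-current of Section~\ref{subsec: currents} (with $h = f'$ and $y = f$), and to exploit that for $|a|\leq a_0$ and $|\omega|\leq \omega_{low}$ both sufficiently small, with $|m|\geq 1$, the potential $\tilde V$ of Lemma~\ref{lem: sec: frequency localized multiplier estimates, lem 2} is a small perturbation of the strictly positive Schwarzschild--de~Sitter Regge--Wheeler potential. I would work with the $\Psi$-form of Carter's ODE rather than the $u$-form because of its cleaner behavior at the horizons and its better interaction with the $K^+, \bar{K}^+$ boundary currents.

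First I would verify that for all frequencies under consideration, $V_0$ admits a unique critical point $r_{V,\textit{max}}(a,\omega,m,\ell) \in (r_+,\bar{r}_+)$ which is a global maximum. Since $\tilde\lambda - 2am\omega\Xi \geq b\tilde\lambda \geq b m^2$ by Lemma~\ref{lem: inequality for lambda} provided $a_0,\omega_{low}$ are small enough, the boundary derivatives satisfy $\frac{dV_0}{dr}(r_+)>0$ and $\frac{dV_0}{dr}(\bar{r}_+)<0$ by direct computation from \eqref{eq: the potentials V0,VSl,Vmu}; uniqueness of the critical point then follows from Lemma~\ref{lem: subsec: sec: trapping, subsec 1, lem 1}. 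Combining with Lemma~\ref{lem: sec: general properties of Delta, lem 3}, $r_{V,\textit{max}}$ lies close to $3M$ (by perturbation from Schwarzschild--de~Sitter) and $\tilde V''(r_{V,\textit{max}})$ is bounded away from zero uniformly.

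Next I would construct $f$ centered at $r_{V,\textit{max}}$ by solving $-\tfrac{1}{2}f''' = c\Delta$ near $r_{V,\textit{max}}$ with initial conditions $f(r_{V,\textit{max}}) = 0$, $f'(r_{V,\textit{max}}) = 1$, $f''(r_{V,\textit{max}}) = 0$, and extending smoothly so that $f(\pm\infty) = \pm 1$ and $f' > 0$ throughout $[r_+, \bar{r}_+]$. Integrating the combined $Q^f_{\textit{stat}}$ identity of Section~\ref{subsec: currents} would produce a coercive bulk of the form $\int \Delta(|\Psi'|^2 + (1+\omega^2+\tilde\lambda)|\Psi|^2)\, dr^\star$: near $r_{V,\textit{max}}$ this comes from $-\tfrac{1}{2}f''' \geq b\Delta$ combined with the uniform boundedness $1+\omega^2+\tilde\lambda \lesssim 1 \lesssim \tilde\lambda$ in the regime $\mathcal{F}_\flat \cap \{|m|\geq 1\}$; away from $r_{V,\textit{max}}$ it comes from $-f\tilde V' \geq b\Delta\tilde\lambda(r-r_{V,\textit{max}})^2$. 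To dispose of the boundary contributions at $r^\star = \pm\infty$, of the form $f(\pm\infty)(\omega - am\Xi/(r_\pm^2+a^2))^2|u|^2(\pm\infty)$, I would add $-E\chi_1 Q^{K^+} - E\chi_2 Q^{\bar{K}^+}$ with cutoffs $\chi_1, \chi_2$ concentrated near the two horizons and $E$ sufficiently large, exactly as in the proof of Proposition~\ref{prop: energy estimate in superradiant frequencies}. The interior cross-terms generated by $\chi_1', \chi_2'$ are controlled via Young's inequality by the already established positive bulk, and the remaining horizon-flux terms match the Hawking--Reall expressions on the RHS of \eqref{eq: prop: energy estimate for the bounded stationary frequencies, eq 1}.

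The main obstacle will be ensuring coercivity of the full zeroth-order coefficient of $Q^{f}_{\textit{stat}}$ uniformly in the frequency regime. The potential $\tilde V$ contains the interference contribution $-(\omega - am\Xi/(r^2+a^2))^2$ of size $O(\omega_{low}^2 + a_0^2 m^2)$ and the twist $2am\omega\Xi$ of size $O(a_0\omega_{low}|m|)$, neither of which has a fixed sign; both must be dominated by the leading positive piece $\frac{\Delta}{(r^2+a^2)^2}\tilde\lambda$ both at $r_{V,\textit{max}}$ and globally via $-f\tilde V'$. The hypothesis $|a|\leq a_0$ with $a_0 = a_0(M,l,\mu_{KG})$ sufficiently small is used precisely for this: choosing first $a_0$, then $\omega_{low}$ depending on $a_0$, makes these corrections subdominant and closes the estimate as a perturbation of the Schwarzschild--de~Sitter analysis of \cite{DR3}.
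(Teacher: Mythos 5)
Your proposal is a genuinely different route from the paper's. The paper does \emph{not} build a trapping-style $f$-multiplier centered at the maximum of the potential for this regime. Instead it uses a frequency-\emph{independent} piecewise-$C^1$ multiplier of polynomial form, $y=(r^2+a^2)^2 - \bigl((\tfrac{r_++\bar{r}_+}{2})^2+a^2\bigr)^2$ in a middle region, together with an $h$-multiplier of the form $C_{\textit{large}}\tilde\lambda/(r^2+a^2)$ that exploits the exact algebraic identity $-\tfrac{1}{2}g''+\bigl(\tfrac{r\Delta}{(r^2+a^2)^2}g\bigr)' = 0$ for $g=\tfrac{1}{r^2+a^2}$ (Lemma~\ref{lem: h multiplier in the middle region in stationary frequencies, 2}), plus a tailored extension of $y$ near the horizons governed by $\tfrac{dy}{dr}-\tfrac{4r}{r^2+a^2}y=A_{\textit{large}}$. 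The paper's coercivity thus needs only $\tilde V - \omega^2 \geq b\tilde\lambda$ on a fixed compact middle region and the near-horizon behavior of $\tilde V'$ recorded in Lemma~\ref{lem: stationary frequencies, lem 0}; it never requires any global critical-point analysis of the potential in the $\Psi$-form.

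Your approach, by contrast, centers $f$ at the maximum of the potential and argues as in the $\lambda$-dominated regime. This is where there is a genuine gap. You verify uniqueness of the critical point of $V_0$ by invoking Lemma~\ref{lem: subsec: sec: trapping, subsec 1, lem 1}, and then silently pass to $\tilde V=V_0+V_{\mu_{\textit{KG}}}$ when you assert $\tilde V''(r_{V,\textit{max}})$ is bounded away from zero and $-f\tilde V' \geq b\Delta\tilde\lambda(r-r_{V,\textit{max}})^2$ globally. But Lemma~\ref{lem: subsec: sec: trapping, subsec 1, lem 1} only controls the critical points of $V_0$, i.e.\ it concerns $(r^2+a^2)^3\tfrac{dV_0}{dr}$. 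In this bounded near-stationary regime $\tilde\lambda$ can be as small as $\tfrac12$, so the Klein--Gordon contribution $V_{\mu_{\textit{KG}}}=\mu^2_{\textit{KG}}\tfrac{\Delta}{r^2+a^2}$ is \emph{not} a subdominant perturbation of $\tfrac{\Delta}{(r^2+a^2)^2}\tilde\lambda$ for $\mu^2_{\textit{KG}}$ of order one; moreover the critical points of $\tfrac{\Delta}{(r^2+a^2)^2}$ and of $\tfrac{\Delta}{r^2+a^2}$ sit at different radii, so there is no reason the sum has a unique critical point without further argument. Your construction of $f$ needs exactly the statements that $\tilde V$ has a unique interior maximum and that $-(r-r_{V,\textit{max}})\tilde V'$ has a sign on all of $[r_+,\bar r_+]$; neither is established. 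Relatedly, because you insist on the $\Psi$-form, the zeroth-order coefficient of $Q^f_{\textit{stat}}$ also contains the term $\bigl(\tfrac{r\Delta f'}{(r^2+a^2)^2}\bigr)'$ inherited from $Q^h_{\textit{stat}}$ with $h=f'$, which is not addressed in your sketch; while it turns out to be harmless near $r_{\Delta,\textit{frac}}$ (it is in fact positive there), it must be tracked in the extension region. In short, the paper's algebraic-identity multiplier sidesteps precisely the structural questions about $\tilde V$ that your approach raises, and your proposal as written leaves those questions open for $\mu^2_{\textit{KG}}>0$.
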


We recall from Lemma~\ref{lem: sec: frequency localized multiplier estimates, lem 2} that 
\begin{equation}\label{eq: subsubsec: sec: bounded: stationary, eq 1}
\tilde{V}\:\dot{=}\:\frac{(\lambda^{(a\omega)}_{m\ell}+a^2\omega^2-2m\omega a \Xi)\Delta}{(r^{2}+a^{2})^{2}} +\Delta\mu_{\textit{KG}}^2\frac{r^{2}+a^{2}}{(r^{2}+a^{2})^{2}}+\omega^2-\left(\omega-\frac{am\Xi}{r^2+a^2}\right)^2.
\end{equation}

Note the following Lemma on the behaviour of~$\tilde{V}$ in a neighborhood of the infinities.

\begin{lemma}\label{lem: stationary frequencies, lem 0}
	Let~$l>0$,~$(a,M)\in \mathcal{B}_l$ and~$\mu^2_{KG}\geq 0$. Then, for any
	\begin{equation}
		 a_0^{-1}>0,\qquad \omega_{\textit{low}}^{-1}>0,\qquad R^\star>0
	\end{equation}
	all sufficiently large, then if~$|a|\leq a_0$ then for~$(\omega,m,\ell)\in\mathcal{F}_\flat \cap\{|\omega|\leq \omega_{\textit{low}}\}\cap\{|m|>0\}$ we have
	\begin{equation}\label{eq: lem: stationary frequencies, lem 0, eq 1}
	\begin{aligned}
	\tilde{V}^\prime 	 <-b\Delta\tilde{\lambda},\:\:r^\star\in(R^\star,\infty),\qquad 
	\tilde{V}^\prime >b \Delta\tilde{\lambda},\:\: r^\star\in (-\infty,-R^\star).
	\end{aligned}
	\end{equation}
	\begin{equation}\label{eq: lem: stationary frequencies, lem 0, eq 2}
		|\tilde{V}|\leq \Delta \tilde{\lambda}+(a_0m)^2+ \omega_{low}^2,\:\: r^\star\in (R^\star,\infty), 
	\end{equation}	
	where for the potential~$\tilde{V}$ see~\eqref{eq: subsubsec: sec: bounded: stationary, eq 1}. 
	
	Moreover, we have that 
	\begin{equation}
	\lambda^{(a\omega)}_{m\ell} \geq \frac{1}{2}. 
	\end{equation}
\end{lemma}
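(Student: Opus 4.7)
\textbf{Proof plan for Lemma \ref{lem: stationary frequencies, lem 0}.}

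First, I would establish the lower bound $\lambda^{(a\omega)}_{m\ell} \geq 1/2$. Using Lemma~\ref{lem: inequality for lambda}, we have $\tilde{\lambda} = \lambda^{(a\omega)}_{m\ell} + a^2\omega^2 \geq \Xi^2 m^2$ in the case $am\omega \geq 0$, and $\lambda^{(a\omega)}_{m\ell} + a^2\omega^2 - 2am\omega\Xi \geq \Xi^2 m^2$ in the case $am\omega < 0$. Since $|m|\geq 1$, $\Xi \geq 1$, and we may take $a_0\omega_{low}$ as small as desired, the term $|2am\omega\Xi|\leq C a_0\omega_{low}|m|$ and $a^2\omega^2 \leq a_0^2\omega_{low}^2$ are both bounded above by $1/4$, yielding $\lambda^{(a\omega)}_{m\ell}\geq 1/2$. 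Moreover, the same reasoning combined with $\tilde\lambda\gtrsim m^2$ shows that $|\tilde\lambda - 2am\omega\Xi| \sim \tilde\lambda$ uniformly in the frequency regime, provided $a_0\omega_{low}$ is sufficiently small; this equivalence will be used repeatedly.

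Next, for the differential inequalities \eqref{eq: lem: stationary frequencies, lem 0, eq 1}, I would compute $\tilde{V}' = \frac{\Delta}{r^2+a^2}\frac{d\tilde{V}}{dr}$, where
\begin{equation*}
\tilde{V} = \frac{\Delta}{(r^2+a^2)^2}(\tilde{\lambda} - 2am\omega\Xi) + \mu_{KG}^2 \frac{\Delta}{r^2+a^2} + 2\omega\frac{am\Xi}{r^2+a^2} - \frac{a^2m^2\Xi^2}{(r^2+a^2)^2}.
\end{equation*}
The dominant contribution to $\frac{d\tilde{V}}{dr}$ at $r_+$ and $\bar{r}_+$ comes from $\frac{d}{dr}\left[\frac{\Delta}{(r^2+a^2)^2}\right](\tilde\lambda - 2am\omega\Xi)$, which by Lemma~\ref{lem: sec: properties of Delta, lem 1, derivatives of Delta} is strictly positive at $r_+$ and strictly negative at $\bar{r}_+$ (bounded away from zero by a constant depending only on $a,M,l$). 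Using the equivalence $\tilde\lambda - 2am\omega\Xi \sim \tilde\lambda$, this contribution is $\sim \pm c\tilde\lambda$ near the two horizons. The remaining three terms in $\frac{d\tilde{V}}{dr}$ are bounded above, respectively, by $C\mu_{KG}^2$, $C|\omega||a||m|\leq Ca_0\omega_{low}\sqrt{\tilde\lambda}$, and $C(am)^2 \leq Ca_0^2\tilde\lambda$. Taking $a_0, \omega_{low}$ small and $R^\star$ large (so that $r$ is correspondingly close to $r_+$ or $\bar r_+$ and continuity pushes the sign), these error terms are absorbed by the leading $\pm c\tilde\lambda$ contribution, and after multiplication by $\frac{\Delta}{r^2+a^2}$ we conclude \eqref{eq: lem: stationary frequencies, lem 0, eq 1}.

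For the pointwise bound \eqref{eq: lem: stationary frequencies, lem 0, eq 2}, I would estimate each term of $\tilde V$ in $r^\star \in (R^\star,\infty)$. The first term satisfies $\left|\frac{\Delta}{(r^2+a^2)^2}(\tilde\lambda - 2am\omega\Xi)\right| \lesssim \Delta\tilde\lambda$ since $r^2+a^2$ is bounded below by a positive constant depending only on $a,M,l$. The mass term is $\mu_{KG}^2\Delta/(r^2+a^2) \lesssim \Delta \lesssim \Delta\tilde\lambda$ (using $\tilde\lambda \geq 1/2$). The remaining two terms give
\begin{equation*}
\left|2\omega\frac{am\Xi}{r^2+a^2} - \frac{a^2m^2\Xi^2}{(r^2+a^2)^2}\right| \leq C|\omega||a||m| + C(am)^2 \leq \omega_{low}^2 + C(a_0 m)^2,
\end{equation*}
by Young's inequality applied to the cross term. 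Combining, we obtain \eqref{eq: lem: stationary frequencies, lem 0, eq 2}.

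The main obstacle is ensuring that the smallness thresholds for $a_0$, $\omega_{low}$, and $R^\star$ can be chosen consistently and independently of the frequency triple $(\omega,m,\ell)$, even though $|m|$ and $\tilde\lambda$ may be arbitrarily large. This is resolved by the key observation $\tilde\lambda \gtrsim m^2$ from Lemma~\ref{lem: inequality for lambda}, which makes $|2am\omega\Xi|/\tilde\lambda \lesssim a_0\omega_{low}/|m|$ uniformly small, so the smallness constants depend only on the black hole parameters and $\mu_{KG}$, not on the frequencies.
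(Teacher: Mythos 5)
Your proof follows the same overall route as the paper (differentiate $\tilde V$, use Lemma~\ref{lem: inequality for lambda} to get $\tilde\lambda \sim \tilde\lambda - 2am\omega\Xi \gtrsim m^2$, read off the sign of the leading term at the horizons from Lemma~\ref{lem: sec: properties of Delta, lem 1, derivatives of Delta}, absorb the small remainders). However, there is a genuine gap in the treatment of the Klein--Gordon mass term.

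You list $C\mu_{KG}^2$ among the ``error terms'' in $\frac{d\tilde V}{dr}$ and claim that ``taking $a_0,\omega_{low}$ small and $R^\star$ large, these error terms are absorbed by the leading $\pm c\tilde\lambda$ contribution.'' None of the parameters $a_0$, $\omega_{low}$, $R^\star$ has any effect on $C\mu_{KG}^2$: the term contains no factor of $a$ or $\omega$, and its $r$-derivative does not vanish as $r\to r_+$ or $r\to\bar r_+$. Since $\tilde\lambda$ is only bounded below by a number of order $1$ in this regime, the inequality $c\tilde\lambda > C\mu_{KG}^2$ fails whenever $\mu_{KG}$ is large, and the lemma is stated for all $\mu_{KG}^2\geq 0$. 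The absorption argument therefore does not close. The correct observation is that the mass term does not need to be absorbed at all, because it has the same (favorable) sign as the leading term near each horizon: one has
\begin{equation*}
\mu_{KG}^2\,\frac{d}{dr}\!\left(\frac{\Delta}{r^2+a^2}\right)(\bar r_+) = \mu_{KG}^2\,\frac{\Delta'(\bar r_+)}{\bar r_+^2+a^2}<0,
\qquad
\mu_{KG}^2\,\frac{d}{dr}\!\left(\frac{\Delta}{r^2+a^2}\right)(r_+) = \mu_{KG}^2\,\frac{\Delta'(r_+)}{r_+^2+a^2}>0,
\end{equation*}
by the subextremality inequalities in Lemma~\ref{lem: sec: properties of Delta, lem 1, derivatives of Delta}. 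So near $\bar r_+$ both the $\frac{d}{dr}\big(\frac{\Delta}{(r^2+a^2)^2}\big)(\tilde\lambda-2am\omega\Xi)$ term and the mass term are strictly negative, and only the cross term $\lesssim a_0\omega_{low}\sqrt{\tilde\lambda}$ and the quadratic term $\lesssim a_0^2\tilde\lambda$ need absorbing (which your smallness of $a_0,\omega_{low}$ does handle, using $\tilde\lambda\gtrsim m^2$). Once this sign observation replaces the (unjustified) absorption of $C\mu_{KG}^2$, the rest of your argument goes through.

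A minor point: in establishing $\lambda^{(a\omega)}_{m\ell}\geq 1/2$ you assert that ``$|2am\omega\Xi|\leq C a_0\omega_{low}|m|$ \ldots\ bounded above by $1/4$,'' which is false for large $|m|$. What you actually need (and correctly use later) is only $a^2\omega^2 \leq a_0^2\omega_{low}^2 \leq 1/4$ together with the inequality $\lambda^{(a\omega)}_{m\ell}+\Xi a^2\omega^2\geq\Xi m^2\geq 1$ from Lemma~\ref{lem: inequality for lambda}; the cross term never enters this particular estimate. For \eqref{eq: lem: stationary frequencies, lem 0, eq 2}, your argument gives the bound up to a multiplicative constant $B(a,M,l)$ in front of $\Delta\tilde\lambda$; whether the constant can be taken to be $1$ as written requires $(r^2+a^2)^2\geq 1$, which is not automatic, but this cosmetic normalization does not affect the way the lemma is used.
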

\begin{proof}
From Lemma~\ref{lem: inequality for lambda} we note that~$|m|>0$ implies that~$\lambda^{(a\omega)}_{m\ell}\geq \frac{1}{2}$, for~$a_0>0$ sufficiently small

	For~$\omega_{\textit{low}}$ sufficiently small Lemma~\ref{lem: inequality for lambda} implies the following 
	\begin{equation}
	\tilde{\lambda}-2m\omega a \Xi >b>0. 
	\end{equation}
	Therefore, again it is immediate that for any~$R^\star$ sufficiently large, we differentiate the potential~$\tilde{V}$, see~\eqref{eq: subsubsec: sec: bounded: stationary, eq 1}, and obtain the desired result~\eqref{eq: lem: stationary frequencies, lem 0, eq 1}, for~$a_0,~\omega_{\textit{low}}>0$ sufficiently small. 
	
	Then, it is easy to see that for~$\omega_{low}>0$ sufficiently small we obtain inequality~\eqref{eq: lem: stationary frequencies, lem 0, eq 2} by inspecting~\eqref{eq: subsubsec: sec: bounded: stationary, eq 1}.

	We conclude the proof of the Lemma. 
\end{proof}

Now, we proceed to construct the multipliers~$h,y$ of Proposition~\ref{prop: energy estimate for the bounded stationary frequencies}. For a graphic representation see Figure~\ref{fig: current}, where note that the (red) graph that vanishes at the horizons corresponds to the function~$h$ and the green graph corresponds to~$y$. 
	\begin{figure}[htbp]
	\centering
	\includegraphics[scale=1.5]{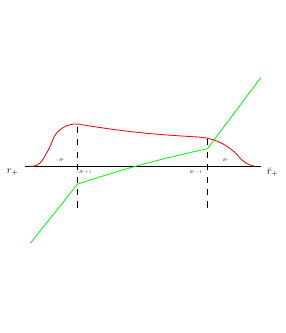}
	\caption{The red graph is~$h$, the green graph is~$y$}
	\label{fig: current}
\end{figure}

First, we construct a piecewise~$C^1$ multiplier~$y$ in~$(-R^\star+1,R^\star-1)$ and employ a~$Q^y_{\textit{stat}}$ current, see Definition~\ref{def: currents for the ell=0 case}.

\begin{lemma}\label{lem: stationary frequencies for y, 1}
	Let~$R^\star>0$ be sufficiently large and let~$A_{\textit{large}}>0$. Then, there exists a piecewise~$C^1$ multiplier~$y$ such that the following hold. 
	
	For~$r^\star\in (-R^\star+1,R^\star-1)$ we have 
	\begin{equation}\label{eq: lem: stationary frequencies for y, 1, eq 1}
	\begin{aligned}
	(Q^y_{\textit{stat}})^\prime &  =\left(\left(\frac{r_++\bar{r}_+}{2}\right)^2+a^2\right)\frac{4r\Delta(r)}{(r^2+a^2)^2}|\Psi^\prime|^2 \\
	&   \quad + \left(  4r(r^2+a^2)\frac{\Delta}{r^2+a^2}(\omega^2-\tilde{V}) -\left( (r^2+a^2)^2 -\left(\left(\frac{r_++\bar{r}_+}{2}\right)^2+a^2\right)^2   \right)\tilde{V}^\prime \right)|\Psi|^2 \\
	&  \quad +\Re\left(\frac{y\Psi^\prime\bar{H}}{\sqrt{r^2+a^2}}\right),
	\end{aligned}
	\end{equation}
	where for the current~$Q^y_{\textit{stat}}$ see Definition~\ref{def: currents for the ell=0 case}.
	
	For~$r^\star\in (-\infty,-R^\star+1)\cup  (R^\star-1,\infty)$ we have 
	\begin{equation}\label{eq: lem: stationary frequencies for y, 1, eq 2}
		\frac{d y}{dr} -\frac{4r}{r^2+a^2}y=A_{\textit{large}}>0.	
	\end{equation}
\end{lemma}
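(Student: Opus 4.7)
The construction of $y$ splits naturally according to the two regimes. In the middle region $r^\star\in(-R^\star+1,R^\star-1)$ I would take the explicit algebraic choice
\begin{equation*}
y(r) = (r^2+a^2)^2 - c_0^2,\qquad c_0 := \Big(\tfrac{r_++\bar{r}_+}{2}\Big)^2+a^2,
\end{equation*}
so that $y$ vanishes at the midpoint $r=(r_++\bar{r}_+)/2$, is negative below it and positive above it. Since $\frac{dy}{dr}=4r(r^2+a^2)$ and $\frac{dr}{dr^\star}=\frac{\Delta}{r^2+a^2}$, one obtains $y^\prime=4r\Delta$, and a direct substitution into the formula for $(Q^y_{\textit{stat}})^\prime$ from Lemma~\ref{lem: subsec: currents, lem 2} reproduces the stated middle-region identity: the $|\Psi^\prime|^2$ coefficient collapses to $\frac{4r\Delta}{(r^2+a^2)^2}\cdot c_0^2$ via $y\cdot\tfrac{4r\Delta}{(r^2+a^2)^2}=\tfrac{4r\Delta}{(r^2+a^2)^2}((r^2+a^2)^2-c_0^2)$, while the $|\Psi|^2$ coefficient becomes $4r\Delta(\omega^2-\tilde{V})-((r^2+a^2)^2-c_0^2)\tilde{V}^\prime$ straight from $y^\prime=4r\Delta$ and the explicit value of $y$.

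In each of the outer intervals $r^\star\in(-\infty,-R^\star+1)$ and $r^\star\in(R^\star-1,\infty)$ I would prescribe the stated first-order linear ODE in $r$ and solve it via the integrating factor $\mu(r)=(r^2+a^2)^{-2}$, which gives $\tfrac{d}{dr}\big(y/(r^2+a^2)^2\big)=A_{\textit{large}}/(r^2+a^2)^2$ and hence
\begin{equation*}
y(r) = (r^2+a^2)^2\bigg[C_{\textit{out}} + A_{\textit{large}}\int_{r_{\textit{out}}}^{r}\frac{ds}{(s^2+a^2)^2}\bigg],
\end{equation*}
with one constant of integration $C_{\textit{out}}$ per outer interval. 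The two constants are fixed by matching continuously at the transition points $r^\star=\pm(R^\star-1)$ to the already-specified middle-region values $y=(r^2+a^2)^2-c_0^2$. Since the matching points correspond to $r$-values strictly inside $(r_+,\bar{r}_+)$ and the integrand $(s^2+a^2)^{-2}$ is manifestly bounded on $[r_+,\bar{r}_+]$, the resulting $y$ is continuous on $[r_+,\bar{r}_+]$ and satisfies $|y|+|y^\prime|\leq B$ with $B$ depending only on $a,M,l,A_{\textit{large}},R^\star$. The function is merely piecewise $C^1$ (rather than $C^1$) because the algebraic middle-region $y$ and the outer ODE solutions will generically disagree in their first $r$-derivatives at the matching points.

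The only point that genuinely requires care is usability downstream: in combination with Lemma~\ref{lem: stationary frequencies, lem 0}, one needs $-y\tilde{V}^\prime$ to carry a favourable sign in both outer regions, which forces $y$ to be negative as $r^\star\to-\infty$ and positive as $r^\star\to+\infty$. The middle-region formula has this sign pattern by construction (since $(r^2+a^2)^2-c_0^2$ changes sign exactly at the midpoint), and because the driving term $A_{\textit{large}}>0$ makes $y/(r^2+a^2)^2$ strictly increasing in $r$ on each outer interval, the outer pieces preserve the sign inherited from the middle piece all the way to the horizons. This is the main (and essentially only) obstacle; once it is verified, the two assertions of the lemma are immediate from the explicit formula and the integrating-factor computation.
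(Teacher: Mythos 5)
Your proposal is correct and follows essentially the same construction as the paper: the algebraic choice $y=(r^2+a^2)^2-\big(\big(\tfrac{r_++\bar{r}_+}{2}\big)^2+a^2\big)^2$ in the middle region, and integrating-factor solutions $y=(r^2+a^2)^2\big[C_{\textit{out}}+A_{\textit{large}}\int (s^2+a^2)^{-2}\,ds\big]$ matched continuously at $r^\star=\pm(R^\star-1)$ in the outer regions. One small remark worth flagging: your computation of the $|\Psi^\prime|^2$ coefficient correctly yields $\big(\big(\tfrac{r_++\bar{r}_+}{2}\big)^2+a^2\big)^2\cdot\tfrac{4r\Delta}{(r^2+a^2)^2}$, i.e.\ with the outer square, whereas the lemma as printed shows only $\big(\tfrac{r_++\bar{r}_+}{2}\big)^2+a^2$ without the square; the $|\Psi|^2$ term's explicit $(r^2+a^2)^2-\big(\cdots\big)^2$ confirms the stated $y$, so the unsquared coefficient in the $|\Psi^\prime|^2$ term is a typographical slip in the statement, and your derivation is the correct one.
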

\begin{proof}
	Let~$A_{\textit{large}}>0$ be any positive number and let~$R^\star>0$ be sufficiently large. In the region~$r^\star\in (-R^\star+1,R^\star-1)$ we define the multiplier
\begin{equation}\label{eq: proof: lem: stationary frequencies for y, 1, eq 1}
y=(r^2+a^2)^2-\left(\left(\frac{r_++\bar{r}_+}{2}\right)^2+a^2\right)^2,    
\end{equation}
which satisfies~\eqref{eq: lem: stationary frequencies for y, 1, eq 1}.

In the regions~$r^\star\in (-\infty,-R^\star+1)$ and~$r^\star\in (R^\star-1,\infty)$ we define respectively 
\begin{equation}\label{eq: proof lem: stationary frequencies for y, 1, eq 2}
y(r^\star(r))=\left( \frac{y}{(r^2+a^2)^2}\right)(r_2)(r^2+a^2)^2+A_{\textit{large}}(r^2+a^2)^2\int_{r_2}^{r} \frac{d r}{(r^2+a^2)^2},
\end{equation}
\begin{equation}\label{eq: proof lem: stationary frequencies for y, 1, eq 3}
y(r^\star(r))=\left( \frac{y}{(r^2+a^2)^2}\right)(r_1)(r^2+a^2)^2+A_{\textit{large}}(r^2+a^2)^2\int_{r_1}^r \frac{d r}{(r^2+a^2)^2},
\end{equation}
where~$r_1,r_2$ are such that~$r^\star(r_2)=-R^\star-1,~r^\star(r_1)=R^\star-1$ where note that for both~\eqref{eq: proof lem: stationary frequencies for y, 1, eq 2},~\eqref{eq: proof lem: stationary frequencies for y, 1, eq 3} the equation~\eqref{eq: lem: stationary frequencies for y, 1, eq 2} holds. 
\end{proof}

We construct a multiplier~$h$ in order to employ a~$Q^h_{\textit{stat}}$ current, see Definition~\ref{def: currents for the ell=0 case}.

\begin{lemma}\label{lem: h multiplier in the middle region in stationary frequencies, 2} 
Let~$l>0$ and~$(a,M)\in\mathcal{B}_l$ and~$\mu^2_{KG}\geq 0$. Let~$R^\star>0$ be any sufficiently large positive number, and let the multiplier~$y$ be as in Lemma~\ref{lem: stationary frequencies for y, 1}. Then, for any
\begin{equation}
	a_0(M,l,\mu_{KG})>0,\qquad \omega_{\textit{low}}>0,
\end{equation}
both sufficiently small such that if~$|a|\leq a_0$ then for~$(\omega,m,\ell)\in\mathcal{F}_\flat \cap\{|\omega|\leq \omega_{\textit{low}}\}\cap\{|m|>0\}$ and for any sufficiently large constant 
\begin{equation}
	C_{\textit{large}}(a,a_0,M,l,\mu_{KG},y|_{(-R^\star+1,R^\star-1)},\omega_{low})>0
\end{equation}
there exists a smooth~$h$ multiplier, satisfying the uniform bound 
\begin{equation}
|h|+|h^\prime|+|h^{\prime\prime}|\leq B(\omega_{low},a_0),
\end{equation}
for all~$r^\star\in \mathbb{R}$ and~$h\equiv 0$ in neighborhoods of~$\pm \infty$, and we have
	\begin{equation}\label{eq: lem: h multiplier in the middle region in stationary frequencies, 2, eq 1}
	h=C_{\textit{large}}\frac{\tilde{\lambda}}{r^2+a^2},\qquad y^\prime \omega^2-(y\tilde{V})^\prime-\frac{1}{2}h^{\prime\prime}+ \left( \frac{r\Delta h}{(r^2+a^2)^2}\right)^\prime +h(\tilde{V}-\omega^2)\geq b\tilde{\lambda},\qquad y^\prime -\frac{4r\Delta}{(r^2+a^2)^2}y+h\geq b\Delta,
	\end{equation} 
	for~$r^\star\in(-R^\star+1,R^\star-1)$. 
\end{lemma}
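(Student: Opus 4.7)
The plan is to take $h := C_{\textit{large}}\tilde\lambda/(r^2+a^2)$ on the middle region $(-R^\star + 1, R^\star - 1)$, and smoothly cut $h$ off to zero outside so that it vanishes near $r^\star = \pm \infty$; the two displayed inequalities are asserted only on the middle region, so the cutoff has no effect there. Uniform bounds on $h, h', h''$ follow immediately from the boundedness of $\tilde\lambda$ on $\mathcal{F}_\flat$, the boundedness of $r^2+a^2$ above and below, and the relation $d/dr^\star = (\Delta/(r^2+a^2))\,d/dr$.

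The second inequality is then easy: by the proof of Lemma~\ref{lem: stationary frequencies for y, 1}, the combination $y' - y\frac{4r\Delta}{(r^2+a^2)^2}$ equals a non-negative multiple of $r\Delta/(r^2+a^2)^2$ on the middle region. Meanwhile $h \geq bC_{\textit{large}}$ there, using $\tilde\lambda \geq \Xi^2$ from Lemma~\ref{lem: inequality for lambda} (since $|m|\geq 1$); as $b\Delta$ is bounded above, taking $C_{\textit{large}}$ sufficiently large immediately makes $h$ alone dominate.

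The first inequality is the substantive part. Using the explicit expansion of $\tilde V$ from Lemma~\ref{lem: sec: frequency localized multiplier estimates, lem 2} together with $\tilde\lambda - 2am\omega\Xi \geq b$ from Lemma~\ref{lem: inequality for lambda} (valid for $|m|\geq 1, |a|\leq a_0, |\omega|\leq \omega_{\textit{low}}$ and $a_0,\omega_{\textit{low}}$ small), I would show $\tilde V - \omega^2 \geq b\tilde\lambda\,\Delta/(r^2+a^2)^2$ pointwise on the middle region, the superradiant and Klein--Gordon pieces being absorbed by smallness of $a_0$ and $\omega_{\textit{low}}$. Consequently $h(\tilde V - \omega^2) \geq bC_{\textit{large}}\tilde\lambda^2 \Delta/(r^2+a^2)^3$, which with the lower bound $\Delta \geq c(R^\star) > 0$ on the middle region gives $h(\tilde V - \omega^2) \geq b'C_{\textit{large}}\tilde\lambda^2$. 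The $y$-dependent terms $y'\omega^2 - (y\tilde V)'$ are bounded in absolute value by $B(y|_{(-R^\star+1,R^\star-1)},\omega_{\textit{low}})\tilde\lambda$, independently of $C_{\textit{large}}$; the $h$-derivative terms $-\tfrac12 h''$ and $(r\Delta h/(r^2+a^2)^2)'$ together with $-h\omega^2$ are, by direct computation, of order $C_{\textit{large}}\tilde\lambda$ with a coefficient determined by the geometry, plus a piece of order $\omega_{\textit{low}}^2 C_{\textit{large}}\tilde\lambda$. Choosing first $\omega_{\textit{low}}$ small and then $C_{\textit{large}}$ large enough, with the dependence indicated in the lemma, produces the desired net lower bound $\geq b\tilde\lambda$.

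The main technical obstacle will be tracking the sign balance between $h(\tilde V - \omega^2)$ and the $h$-derivative terms $-\tfrac12 h''$ and $(r\Delta h/(r^2+a^2)^2)'$: a direct computation shows that these two combine as $2C_{\textit{large}}\tilde\lambda\,\Delta\bigl(\Delta + r\Delta' - 6r^2\Delta/(r^2+a^2)\bigr)/(r^2+a^2)^4$, an expression of indefinite sign on $(r_+,\bar r_+)$ since the bracketed factor is positive at $r_+$ and negative at $\bar r_+$. The argument closes because this quantity carries only one power of $\tilde\lambda$, whereas $h(\tilde V - \omega^2)$ carries $\tilde\lambda^2$; combined with the positive lower bound $\tilde\lambda \geq \Xi^2$ from Lemma~\ref{lem: inequality for lambda} and the uniform lower bound on $\Delta$ in the middle region, the $\tilde\lambda^2$ contribution dominates once $C_{\textit{large}}$ is chosen sufficiently large with respect to the now-fixed geometric constants.
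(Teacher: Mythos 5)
You have correctly identified -- and this is a departure from the paper's own proof -- that the cancellation the paper invokes is not an identity. The paper's proof rests on the claim (equation~\eqref{eq: proof lem: h multiplier in the middle region in stationary frequencies, 2, eq 1}) that $-\tfrac12 g'' + \bigl(\tfrac{r\Delta}{(r^2+a^2)^2}\,g\bigr)' \equiv 0$ for $g = (r^2+a^2)^{-1}$. Writing $k = \tfrac{r\Delta}{(r^2+a^2)^2}$ and noting that $g' = -2kg$ (while the kernel of $-\tfrac12(\cdot)'' + (k\,\cdot)'$ contains functions satisfying $\cdot' = +2k\,\cdot$, i.e.\ constant multiples of $r^2+a^2$, not its reciprocal), one finds that $-\tfrac12 g''$ and $\bigl(kg\bigr)'$ are \emph{equal} rather than opposite, and their sum is exactly the expression you display, $2(k'-2k^2)g = \tfrac{2\Delta\bigl[(\Delta + r\frac{d\Delta}{dr})(r^2+a^2) - 6r^2\Delta\bigr]}{(r^2+a^2)^5}$. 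So the paper's stated proof does not go through, and you are right to treat the non-cancellation as the central difficulty.

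Your repair, however, does not close the gap. With $h = C_{\textit{large}}\tilde\lambda\,g$, both $h(\tilde V - \omega^2)$ and the derivative combination $-\tfrac12 h'' + \bigl(r\Delta h/(r^2+a^2)^2\bigr)'$ carry exactly one factor of $C_{\textit{large}}$, so enlarging $C_{\textit{large}}$ rescales both simultaneously and cannot change the sign of their sum; it only helps absorb the $C_{\textit{large}}$-independent $y$-terms. What is actually needed is the $C_{\textit{large}}$-independent pointwise bound
\begin{equation*}
g(\tilde V - \omega^2) - \tfrac12 g'' + \left(\tfrac{r\Delta\,g}{(r^2+a^2)^2}\right)' \ge b > 0
\end{equation*}
uniformly on the middle region and over $\mathcal{F}_\flat\cap\{|\omega|\le \omega_{\textit{low}}\}\cap\{|m|>0\}$. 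You argue this follows because ``$\tilde\lambda^2$ dominates $\tilde\lambda$,'' but on $\mathcal{F}_\flat$ one only has $\tilde\lambda\gtrsim 1$, so at the lowest modes this is a sharp numerical comparison between fixed geometric constants, not a scaling statement. And it fails: taking $a=0$, $\omega=0$, $\mu^2_{KG}=0$, the left side reduces to $\tfrac{\Delta}{r^8}\bigl(r^2(\tilde\lambda-6)+2r^4/l^2+16Mr\bigr)$, which for $\tilde\lambda = 2$ (the $\ell=m=1$ mode) becomes negative once $r$ exceeds roughly $4M$, a region lying well inside $(r_+,\bar r_+)$ when $M/l$ is small and hence inside the middle region; the $y$-terms are also negative there and cannot help. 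So the dominance you invoke does not hold pointwise with the prescribed multiplier, and the $\omega_{\textit{low}}$-then-$C_{\textit{large}}$ tuning cannot repair it -- the obstruction appears to call for a genuinely different choice of $h$, not just sharper constants.
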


\begin{proof}
	Consider a
	\begin{equation}
	h=C_{\textit{large}}\tilde{\lambda} \frac{1}{r^2+a^2}   
	\end{equation}
	multiplier, where~$C_{\textit{large}}>0$, and note
	\begin{equation}\label{eq: proof lem: h multiplier in the middle region in stationary frequencies, 2, eq 1}
	g(r)\equiv \frac{1}{r^2+a^2}\implies -\frac{1}{2}g^{\prime\prime}+\left(\frac{r\Delta}{(r^2+a^2)^2} g\right)^\prime\equiv 0.
	\end{equation}
	Therefore, for any~$a_0(M,l,\mu_{KG})>0$,~$\omega_{\textit{low}}(a_0,M,l,\mu_{KG})>0$ both sufficiently small, we obtain the following
	\begin{equation}\label{eq: proof lem: h multiplier in the middle region in stationary frequencies, 2, eq 2}
	\begin{aligned}
	&   -\frac{1}{2}h^{\prime\prime}+ \left( \frac{r\Delta h}{(r^2+a^2)^2}\right)^\prime +h(\tilde{V}-\omega^2) \\
	&	= C_{\textit{large}}\frac{1}{r^2+a^2}\tilde{\lambda}(\tilde{V}-\omega^2) \\
	&   =C_{\textit{large}} \frac{1}{r^2+a^2} \tilde{\lambda}\left(\frac{\Delta}{(r^2+a^2)^2}(\tilde{\lambda}-2m\omega a\Xi)-\left(\omega-\frac{am\Xi}{r^2+a^2}\right)^2\right)\\
	&	\geq C_{\textit{large}} b\tilde{\lambda},
	\end{aligned}
	\end{equation}
	for~$r^\star\in(-R^\star+1,R^\star-1)$, where~$C_{\textit{large}}>0$ is a frequency independent constant. In the last inequality of~\eqref{eq: proof lem: h multiplier in the middle region in stationary frequencies, 2, eq 2} we used Lemma~\ref{lem: stationary frequencies, lem 0} and also took~$\omega_{\textit{low}},~a_0>0$ sufficiently small.

	Therefore if we add the current~$Q^y$ and take~$C_{\textit{large}}>0$ sufficiently large we conclude the first inequality of~\eqref{eq: lem: h multiplier in the middle region in stationary frequencies, 2, eq 1}. We also readily obtain the last inequality of~\eqref{eq: lem: h multiplier in the middle region in stationary frequencies, 2, eq 1} by recalling the construction of~$y$ in Lemma~\ref{lem: stationary frequencies for y, 1}.

	We extend~$h$ in~$(-\infty,-R^\star+1)\cup(R^\star-1,\infty)$ such that~$h=0$ in open neighborhoods of~$\pm \infty$, with 
	\begin{equation}
	|h|+|h^\prime|+|h^{\prime\prime}|\leq B,
	\end{equation}
	for all~$r^\star$ in a straightforward manner. 
\end{proof}

We need the following Lemma

\begin{lemma}\label{lem: stationary frequencies for y, 4}
	
	Let the constants~$C_{large},~A_{large}>0$ be both sufficiently large, where moreover~$A_{large}\geq C_{large}^2$.

	Then, the following hold
	\begin{equation}\label{eq: lem: stationary frequencies for y, 4, eq 1}
	y^\prime \omega^2-(y\tilde{V})^\prime+h(\tilde{V}-\omega^2)-\frac{1}{2}h^{\prime\prime}+ \left( \frac{r\Delta h}{(r^2+a^2)^2}\right)^\prime \geq b \Delta \tilde{\lambda},\qquad y^\prime -\frac{4r\Delta}{r^2+a^2}y+h\geq b\Delta,
	\end{equation}
	for~$r^\star\in (-\infty,-R^\star-1)\cup (R^\star-1,\infty)$.  
\end{lemma}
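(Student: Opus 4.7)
The plan is to verify both inequalities separately in each of the two asymptotic regions $r^\star > R^\star-1$ and $r^\star < -R^\star+1$, with the arguments on the two sides being parallel in structure. The key structural facts I will exploit are: (i) by Lemma~\ref{lem: stationary frequencies, lem 0}, $\tilde{V}^\prime$ has a definite, quantitative sign $\mp b\Delta\tilde\lambda$ in these regions; (ii) by Lemma~\ref{lem: stationary frequencies for y, 1}, the explicit representation of $y$ via~\eqref{eq: proof lem: stationary frequencies for y, 1, eq 2},~\eqref{eq: proof lem: stationary frequencies for y, 1, eq 3} shows that $y$ itself has a definite sign in each asymptotic region (namely, positive as $r^\star\to+\infty$ and negative as $r^\star\to-\infty$, since $y((r_++\bar r_+)/2)=0$ and $y$ is monotone through the choice of $A_{\text{large}}>0$); and (iii) $h$ and its derivatives are bounded uniformly in terms of $C_{\text{large}}$ by Lemma~\ref{lem: h multiplier in the middle region in stationary frequencies, 2}.

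I would dispatch the second inequality of~\eqref{eq: lem: stationary frequencies for y, 4, eq 1} first, since it is essentially a direct calculation. Starting from the ODE~\eqref{eq: lem: stationary frequencies for y, 1, eq 2} and converting to the $r^\star$-derivative via ${}^\prime=\tfrac{\Delta}{r^2+a^2}\tfrac{d}{dr}$, one obtains
\begin{equation*}
y^\prime-\frac{4r\Delta}{(r^2+a^2)^2}\,y=\frac{A_{\text{large}}\,\Delta}{r^2+a^2}\geq b\, A_{\text{large}}\,\Delta
\end{equation*}
in $(-\infty,-R^\star+1)\cup(R^\star-1,\infty)$, where $b$ depends only on the fact that $r^2+a^2$ is bounded on $[r_+,\bar r_+]$. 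Since $h\geq 0$ by Lemma~\ref{lem: h multiplier in the middle region in stationary frequencies, 2}, adding $h$ yields the second inequality for any $A_{\text{large}}$ sufficiently large.

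For the first inequality, I would rewrite
\begin{equation*}
y^\prime \omega^2-(y\tilde V)^\prime+h(\tilde V-\omega^2)-\tfrac{1}{2}h^{\prime\prime}+\Big(\tfrac{r\Delta h}{(r^2+a^2)^2}\Big)^\prime
=y^\prime(\omega^2-\tilde V)-y\tilde V^\prime+\mathcal{E}_h,
\end{equation*}
where $\mathcal{E}_h$ collects all terms containing $h$. The dominant positive contribution is $-y\tilde V^\prime$: combining Lemma~\ref{lem: stationary frequencies for y, 1} with Lemma~\ref{lem: stationary frequencies, lem 0}, the signs of $y$ and $\tilde V^\prime$ are opposite in each asymptotic region, and a direct lower bound from the explicit formula~\eqref{eq: proof lem: stationary frequencies for y, 1, eq 2}--\eqref{eq: proof lem: stationary frequencies for y, 1, eq 3} gives $|y|\geq b\, A_{\text{large}}$ (with $b$ depending only on $a,M,l$ and the fixed value of $R^\star$) once one moves a fixed distance into either asymptotic region, since the transition value $y(r_i)$ is controlled by $\sup_{(-R^\star+1,R^\star-1)}|y|$, which is independent of $A_{\text{large}}$. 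Therefore $-y\tilde V^\prime\geq b\,A_{\text{large}}\,\Delta\tilde\lambda$.

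It remains to absorb the remaining contributions. The term $y^\prime(\omega^2-\tilde V)$ is harmless because $y^\prime\cdot(\omega^2-\tilde V)$ is bounded in absolute value by $B\cdot A_{\text{large}}\cdot(\Delta\tilde\lambda+\omega_{\text{low}}^2+(a_0m)^2)$ from~\eqref{eq: lem: stationary frequencies, lem 0, eq 2}, and the subdominant pieces $\omega_{\text{low}}^2+(a_0m)^2$ are dominated after taking $a_0,\omega_{\text{low}}$ sufficiently small; the leading $A_{\text{large}}\Delta\tilde\lambda$ piece has no definite sign but combines with $-y\tilde V^\prime$ in a form that I expect to reorganize as a total derivative of $y\cdot(\omega^2-\tilde V)$ plus controlled remainders. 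All the $\mathcal{E}_h$-terms are bounded pointwise by $B(C_{\text{large}})\cdot\Delta\tilde\lambda$ using the uniform bounds on $h,h^\prime,h^{\prime\prime}$ from Lemma~\ref{lem: h multiplier in the middle region in stationary frequencies, 2} together with~\eqref{eq: lem: stationary frequencies, lem 0, eq 2}. Choosing $A_{\text{large}}\geq C_{\text{large}}^2$ with $C_{\text{large}}$ large ensures the good term $b\,A_{\text{large}}\,\Delta\tilde\lambda$ dominates $B(C_{\text{large}})\,\Delta\tilde\lambda$, giving the required lower bound.

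The main obstacle I anticipate is bookkeeping the $y^\prime(\omega^2-\tilde V)$ term: its sign is indefinite and its magnitude scales with $A_{\text{large}}$, so naive absorption into $-y\tilde V^\prime$ is not immediate. The cleanest route is probably to not split $y^\prime\omega^2-(y\tilde V)^\prime$ at all, but instead to exploit that this expression is the $r^\star$-derivative of $y(\omega^2-\tilde V)$ only up to the $y^\prime\omega^2$ term, and combine with the explicit formula for $y$ in the outer region to extract positivity directly. This is the one place where the relation $A_{\text{large}}\geq C_{\text{large}}^2$ — rather than just $A_{\text{large}}\gg 1$ and $A_{\text{large}}\gg C_{\text{large}}$ separately — genuinely matters.
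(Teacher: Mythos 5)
Your outline is essentially correct and follows the same route as the paper's: the second inequality is a direct calculation from the ODE~\eqref{eq: lem: stationary frequencies for y, 1, eq 2}, and for the first the good term is $-y\tilde V'$, positive with magnitude $\gtrsim A_{\text{large}}\Delta\tilde\lambda$ by the explicit formula for $y$ combined with Lemma~\ref{lem: stationary frequencies, lem 0}. Let me address the one place you flag as a possible obstacle.

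Your worry about $y'(\omega^2-\tilde V)$ rests on a claimed bound $|y'(\omega^2-\tilde V)|\leq B\,A_{\text{large}}(\Delta\tilde\lambda+\omega_{\text{low}}^2+(a_0m)^2)$, which would indeed be borderline. But this drops a crucial factor of $\Delta$: converting to the tortoise derivative, $y'=\tfrac{\Delta}{r^2+a^2}\tfrac{dy}{dr}$, and from~\eqref{eq: lem: stationary frequencies for y, 1, eq 2} together with $|y|\lesssim A_{\text{large}}$ one has $|\tfrac{dy}{dr}|\lesssim A_{\text{large}}$, hence $|y'|\lesssim A_{\text{large}}\Delta$. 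The correct bound is therefore $|y'(\omega^2-\tilde V)|\leq B\,A_{\text{large}}\Delta\bigl(\Delta\tilde\lambda+\omega_{\text{low}}^2+(a_0m)^2\bigr)$, which is $\ll A_{\text{large}}\Delta\tilde\lambda$ in the outer regions once $a_0,\omega_{\text{low}}$ are small (the $\Delta\tilde\lambda$ piece picks up an extra $\Delta$, the others use $\tilde\lambda\gtrsim 1$). This is exactly what the paper exploits by factoring out $\tfrac{\Delta}{r^2+a^2}$ and estimating $\tfrac{dy}{dr}\omega^2-y\tfrac{d\tilde V}{dr}-\tfrac{dy}{dr}\tilde V\geq bA_{\text{large}}\tilde\lambda$ pointwise in~\eqref{eq: proof: lem: stationary frequencies for y, 4, eq 1}. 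Consequently no reorganization into a total derivative is needed; the identity $y'\omega^2-(y\tilde V)'=(y(\omega^2-\tilde V))'$ holds but does not by itself yield a pointwise sign --- the sign structure of $y$ and $\tilde V'$ does. Your handling of the $h$-terms (uniform bounds supported away from the horizons, absorbed by $A_{\text{large}}$ large relative to $C_{\text{large}}$) is fine; the paper instead notes that the $h$-contribution is already non-negative on $(-R^\star+1,-R^\star)\cup(R^\star-1,R^\star)$, but either bookkeeping works.
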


\begin{proof}
	First, we note from Lemma~\ref{lem: h multiplier in the middle region in stationary frequencies, 2}, that the multiplier~$h$ satisfies 
	\begin{equation}
	h(\tilde{V}-\omega^2)-\frac{1}{2}h^{\prime\prime}+ \left( \frac{r\Delta h}{(r^2+a^2)^2}\right)^\prime \geq 0,\qquad r^\star\in (-R^\star+1,-R^\star)\cup(R^\star-1,R^\star),
	\end{equation}
	\begin{equation}
	|h|+|h^\prime|+|h^{\prime\prime}|\leq B,\quad r^\star\in \mathbb{R}.
	\end{equation}
	
	We prove the first inequality of~\eqref{eq: lem: stationary frequencies for y, 4, eq 1} in~$(R^\star,\infty)$. Similar estimates also conclude the first inequality of~\eqref{eq: lem: stationary frequencies for y, 4, eq 1} in~$(-\infty,-R^\star)$.

	We compute
	\begin{equation}
	\begin{aligned}
	&   y^\prime\omega^2-y^\prime \tilde{V} -y             \tilde{V}^\prime+h(\tilde{V}-\omega^2) -\frac{1}{2}h^{\prime\prime} +\Big( \frac{r\Delta}{(r^2+a^2)^2}h\Big)^\prime \\
	& =\frac{\Delta}{r^2+a^2}  \left( \frac{d y}{dr}\omega^2-\frac{dy}{dr}\tilde{V}-y\frac{d\tilde{V}}{dr}-\frac{1}{2}\frac{d}{dr}\big( \frac{\Delta}{r^2+a^2}\frac{d h}{dr} \big)+\frac{\Delta}{r^2+a^2}h+r\frac{d}{dr}\big( \frac{\Delta}{(r^2+a^2)^2}\big)h+\frac{r\Delta}{(r^2+a^2)^2}\frac{d h}{dr} \right)\\
	& \quad  +h(\tilde{V}-\omega^2).
	\end{aligned}
	\end{equation}
	In view of the construction of the multiplier~$y$, see Lemma~\ref{lem: stationary frequencies for y, 1} and of the properties of the potential~$\tilde{V}$, see Lemma~\ref{lem: stationary frequencies, lem 0}, we note that 
	\begin{equation}\label{eq: proof: lem: stationary frequencies for y, 4, eq 1}
		\begin{aligned}
			&	\frac{d y}{dr}\omega^2-y\frac{d\tilde{V}}{dr} - \frac{dy}{dr}\tilde{V}\\
			&	\qquad =\left(\frac{4r}{r^2+a^2}y\right)\omega^2-\left(\left( \frac{y}{(r^2+a^2)^2}\right)(r_1)(r^2+a^2)^2+A_{\textit{large}}(r^2+a^2)^2\int_{r_1}^r \frac{d r}{(r^2+a^2)^2}\right)\frac{d\tilde{V}}{dr}-\frac{dy}{dr}\tilde{V}\\
			&	\qquad \geq b A_{\textit{large}}\tilde{\lambda}, 
		\end{aligned}
	\end{equation}
	for~$r^\star\in(R^\star,+\infty]$. In the last inequality of~\eqref{eq: proof: lem: stationary frequencies for y, 4, eq 1} we took~$a_0,\omega_{\textit{low}}>0$ both sufficiently small. Therefore, we obtain
	\begin{equation}
	y^\prime \omega^2-(yV)^\prime+h(\tilde{V}-\omega^2)-\frac{1}{2}h^{\prime\prime}+ \left( \frac{r\Delta h}{(r^2+a^2)^2}\right)^\prime \geq b(\epsilon) \Delta \tilde{\lambda},
	\end{equation}
	in~$(R^\star,\infty]$, where we take $A_{\textit{large}}(C_{\textit{large}})>0$ larger if needed.

	We conclude that after taking~$a_0,\omega_{\textit{low}}$ both sufficiently small, and~$A_{\textit{large}}(C_{\textit{large}})$ sufficiently larger if needed, we obtain that 
	\begin{equation}
		y^\prime \omega^2-(y\tilde{V})^\prime+h(\tilde{V}-\omega^2)-\frac{1}{2}h^{\prime\prime}+ \left( \frac{r\Delta h}{(r^2+a^2)^2}\right)^\prime \geq b\Delta \tilde{\lambda},
	\end{equation}
	for~$r^\star\in(R^\star,\infty)$, and conclude the first inequality of~\eqref{eq: lem: stationary frequencies for y, 4, eq 1}. We also readily conclude the second inequality of~\eqref{eq: lem: stationary frequencies for y, 4, eq 1}. 
\end{proof}

The currents~$y,h$ have now been constructed.

\begin{proof}[\textbf{Proof of Proposition~\ref{prop: energy estimate for the bounded stationary frequencies}}]

	By combining the Lemmata~\ref{lem: stationary frequencies for y, 1},~\ref{lem: h multiplier in the middle region in stationary frequencies, 2},~\ref{lem: stationary frequencies for y, 4} we integrate the energy identity of Lemma~\ref{lem: subsec: currents, lem 2} associated with the current 
	\begin{equation}
		Q^y_{stat}+Q^h_{stat}
	\end{equation}
	and obtain
	\begin{equation}\label{eq: proof subsec: stationary frequencies, prop: energy estimate for stationary frequencies, eq 1}
	\begin{aligned}
	\int_{\mathbb{R}} & \big( \Delta |\Psi^\prime|^2+\Delta(\omega^2+\lambda^{(a\omega)}_{m\ell}+1)|\Psi|^2  \big)dr^\star+\int_{-R^\star}^{R^\star}C_{\textit{large}}\big( \Delta |\Psi^\prime|^2+\Delta(\omega^2+\lambda^{(a\omega)}_{m\ell})|\Psi|^2  \big)dr^\star\\
	&   \leq Q^y[\Psi](+\infty)-Q^y[\Psi](-\infty) +  B\int_{\mathbb{R}}\left(\Re\left(\frac{2\Psi h\bar{H}}{\sqrt{r^{2}+a^{2}}}\right)+ \Re\left(\frac{y\Psi^{\prime}\bar{H}}{\sqrt{r^{2}+a^{2}}} \right)\right)dr^\star\\
	&   \leq  B y(\infty)\left(\omega-\frac{am\Xi}{\bar{r}_+^2+a^2}\right)^2|\Psi|^2(\infty)-By(-\infty)\left(\omega-\frac{am\Xi}{r_+^2+a^2}\right)^2|\Psi|^2(-\infty) \\
	&   \quad\quad +  B\int_{\mathbb{R}}\left(\left|\Re\left(\frac{2\Psi h\bar{H}}{\sqrt{r^{2}+a^{2}}}\right)\right|+ \left|\Re\left(\frac{y\Psi^{\prime}\bar{H}}{\sqrt{r^{2}+a^{2}}} \right)\right|\right)dr^\star,\\
	\end{aligned}
	\end{equation}
	where we utilized that~$h(r^\star)\equiv 0$ in open neighborhoods of~$-\infty,\infty$ respectively, see Lemma~\ref{lem: h multiplier in the middle region in stationary frequencies, 2}. Moreover, we used that~$\lambda^{(a\omega)}_{m\ell}+(a\omega)^2\geq \frac{1}{2}$ from Lemma~\ref{lem: stationary frequencies, lem 0}.

	Now, we need to absorb the boundary terms on the right hand side of inequality~\eqref{eq: proof subsec: stationary frequencies, prop: energy estimate for stationary frequencies, eq 1}. Let~$\chi(r^\star)$ be a smooth function such that~$\chi=1$ in~$(-\infty,-R^\star)$ and~$\chi=0$ in~$(R^\star,\infty)$. Moreover, let~$\bar{\chi}(r^\star)$ be a smooth function such that~$\bar{\chi}(r^\star)=0$ in~$(-\infty,-R^\star)$ and~$\bar{\chi}(r^\star)=1$ in~$(R^\star,\infty)$. Now, in view of Proposition~\ref{prop: subsec: energy identity, prop 1}, we integrate the energy identity of Lemma~\ref{lem: subsec: sec: frequency localized multiplier estimates, subsec 2, lem 1} associated with the current 
	\begin{equation}
	-E_0\chi(r^\star)Q^{K^+}[u]-E_0 \bar{\chi}(r^\star)Q^{\bar{K}^+}[u]
	\end{equation}
	for a sufficiently large~$E_0$, and the we sum it in inequality~\eqref{eq: proof subsec: stationary frequencies, prop: energy estimate for stationary frequencies, eq 1}. By taking~$a_0>0$ sufficiently small and~$\omega_{\textit{low}}>0$ sufficiently small~(if necessary) we obtain 
	\begin{equation}\label{eq: proof subsec: stationary frequencies, prop: energy estimate for stationary frequencies, eq 2}
		\begin{aligned}
			\int_{\mathbb{R}} & \big( \Delta |\Psi^\prime|^2+\Delta(\omega^2+\lambda^{(a\omega)}_{m\ell})|\Psi|^2  \big)dr^\star+\int_{-R^\star}^{R^\star}C_{\textit{large}}\big( \Delta |\Psi^\prime|^2+\Delta(\omega^2+\lambda^{(a\omega)}_{m\ell})|\Psi|^2  \big)dr^\star\\
			&   \leq  B\cdot E_0 \int_{-R^\star}^{R^\star} \left(|\chi^\prime Q^{K^+}[u]|+|\bar{\chi}^\prime Q^{\bar{K}^+}[u]| \right)dr^\star\\
			&   \quad\quad +  B\int_{\mathbb{R}}\left(\left|\Re\left(\frac{2\Psi h\bar{H}}{\sqrt{r^{2}+a^{2}}}\right)\right|+ \left|\Re\left(\frac{y\Psi^{\prime}\bar{H}}{\sqrt{r^{2}+a^{2}}} \right)\right|\right)dr^\star.\\
		\end{aligned}
 \end{equation}
	Then, to absorb the terms generated at~$(-R^\star,R^\star)$, generated by the cut-offs~$\chi,\bar{\chi}$, namely 
	\begin{equation}
	E_0\int_{-R^\star}^{R^\star} \left(|\chi^\prime Q^{K^+}[u]|+|\bar{\chi}^\prime Q^{\bar{K}^+}[u]| \right)dr^\star\leq B\cdot  E_0 \int_{-R^\star}^{R^\star} \left(\left(1+\frac{1}{\tilde{\epsilon}}\right)|\Psi|^2 +\tilde{\epsilon}|\Psi^\prime|^2\right)dr^\star,
	\end{equation}
	for a sufficiently small~$\tilde{\epsilon}>0$, we take the constant~$C_{\textit{large}}>0$ sufficiently larger if needed, see Lemma~\ref{lem: h multiplier in the middle region in stationary frequencies, 2}, and absorb them.

	Thus the proof of Proposition~\ref{prop: energy estimate for the bounded stationary frequencies} is complete.
\end{proof}

\subsubsection{\textbf{Near stationary case}~$\{|\omega|\leq \omega_{\textit{low}}\}\cap\{|m|=0\}$}\label{subsubsec: sec: bounded: stationary, 1}

In view of the definition of the frequency regimes, see Definition~\ref{def: subsec: sec: frequencies, subsec 3, def 1}, this frequency regime is manifestly non-superradiant. 

\begin{remark}
	Note that Proposition~\ref{prop: energy estimate for the bounded stationary frequencies, 1} is the only main Proposition of Section~\ref{sec: proof: prop: sec: proofs of the main theorems} where in fact the LHS of~\eqref{eq: prop: energy estimate for the bounded stationary frequencies, 1, eq 1} does not control the lower order terms~$|\Psi|^2$ in the bulk of the left hand side in the case~$\mu^2_{KG}=0$.
\end{remark}

The main Proposition of this Section is the following

\begin{proposition}\label{prop: energy estimate for the bounded stationary frequencies, 1}

		Let~$l>0$,~$(a,M)\in \mathcal{B}_l$,~$\mu^2_{KG}\geq 0$. Let~$	\lambda_{\textit{low}},\omega_{\textit{high}}>0$ be arbitrary. For any
	\begin{equation}
	\omega_{\textit{low}}>0
	\end{equation}
	sufficiently small, then for~$(\omega,m,\ell)\in\mathcal{F}_\flat(\lambda_{low},\omega_{high}) \cap\{|\omega|\leq \omega_{\textit{low}}\}\cap\{|m|=0\}$, there exist sufficiently regular functions~$h,y$ satisfying the uniform bound
	\begin{equation}
	|h|+|h^\prime|+|h^{\prime\prime}|+|y|+|y^\prime|\leq B,
	\end{equation}
	where~$y$ is piecewise~$C^1$, such that for all smooth solutions~$u$ of Carter's radial ode~\eqref{eq: ode from carter's separation} satisfying the outgoing boundary conditions~\eqref{eq: lem: sec carters separation, subsec boundary behaviour of u, boundary beh. of u, eq 3}, we have 
	\begin{equation}\label{eq: prop: energy estimate for the bounded stationary frequencies, 1, eq 1}
	\begin{aligned}
	\int_{\mathbb{R}}  & \Delta\left(|\Psi^\prime|^2+(\omega^2+\lambda^{(a\omega)}_{0\ell})|\Psi|^2 +\mu^2_{\textit{KG}}|\Psi|^2\right)dr^\star\\
	&   \leq B\int_{\mathbb{R}}\Bigg( \left|\omega\Im (\bar{\Psi}H)\right|+\left|\omega\Im(\bar{\Psi}H)\right|+ \mu^2_{KG}\left|\Re\left(\frac{2\Psi h\bar{H}}{\sqrt{r^{2}+a^{2}}}\right)\right|+ \left|\Re\left(\frac{y\Psi^{\prime}\bar{H}}{\sqrt{r^{2}+a^{2}}} \right)\right|\Bigg)dr^\star.\\
	\end{aligned}
	\end{equation}
\end{proposition}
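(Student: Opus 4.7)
The plan is to mimic the proof of Proposition~\ref{prop: energy estimate for the bounded stationary frequencies}, exploiting the considerable simplifications available when $m=0$. First, note that for $m=0$ the potential of Lemma~\ref{lem: sec: frequency localized multiplier estimates, lem 2} reduces to
\[
\tilde V\big|_{m=0}=\frac{\tilde\lambda\,\Delta}{(r^2+a^2)^2}+\frac{\mu_{KG}^2\,\Delta}{r^2+a^2},
\]
which is manifestly non-negative and vanishes at both horizons; in particular this frequency range is non-superradiant and the two Hawking--Reall vector fields $K^+,\bar K^+$ both reduce to $\partial_t$. This is the reason only $\omega\,\Im(\bar\Psi H)$ appears on the RHS of~\eqref{eq: prop: energy estimate for the bounded stationary frequencies, 1, eq 1}.

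The plan is to first construct the piecewise $C^1$ multiplier $y$ exactly as in Lemma~\ref{lem: stationary frequencies for y, 1}: inside $(-R^\star+1,R^\star-1)$ set $y=(r^2+a^2)^2-\big((\tfrac{r_++\bar r_+}{2})^2+a^2\big)^2$, and extend it outside by the ODE $\tfrac{dy}{dr}-\tfrac{4r}{r^2+a^2}y=A_{\text{large}}$. The asymptotic monotonicity statement of Lemma~\ref{lem: stationary frequencies, lem 0} is proved using $|a|\le a_0$ small, but when $m=0$ the proof applies for any $a$ after taking $\omega_{\text{low}}$ sufficiently small, since the troublesome $-2am\omega\Xi$ contribution vanishes identically. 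Next, the plan is to construct $h=C_{\text{large}}/(r^2+a^2)$ on a compact $r^\star$-interval, smoothly cut off to zero near $\pm\infty$; the key identity
\[
-\tfrac12\big(1/(r^2+a^2)\big)''+\Big(\tfrac{r\Delta}{(r^2+a^2)^2}\cdot\tfrac{1}{r^2+a^2}\Big)'\equiv0
\]
from~\eqref{eq: proof lem: h multiplier in the middle region in stationary frequencies, 2, eq 1} yields
\[
\Big(\tfrac{r\Delta h}{(r^2+a^2)^2}\Big)'-\tfrac12 h''+h(\tilde V-\omega^2)=\frac{C_{\text{large}}}{r^2+a^2}\big(\tilde V-\omega^2\big),
\]
which using the explicit form of $\tilde V|_{m=0}$ dominates $b\,\Delta(\tilde\lambda+\mu_{KG}^2)$ up to a $C_{\text{large}}\omega^2$ defect. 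Note the crucial difference from Lemma~\ref{lem: h multiplier in the middle region in stationary frequencies, 2}: because $\tilde\lambda$ may vanish here (e.g.\ $\ell=0$, $\mu_{KG}=0$), the prefactor of $h$ cannot be $C_{\text{large}}\tilde\lambda$, so instead it is chosen to be $C_{\text{large}}$, and the $\mu_{KG}^2\Delta|\Psi|^2$ on the LHS is produced directly from $h\tilde V$.

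Integrating $(Q^y_{\text{stat}}+Q^h_{\text{stat}})'$ as in Lemmata~\ref{lem: h multiplier in the middle region in stationary frequencies, 2} and~\ref{lem: stationary frequencies for y, 4} yields the bulk terms on the LHS, with the defect $-C_{\text{large}}\omega^2 h|\Psi|^2$ absorbed by the positive $y'\omega^2|\Psi|^2$ contribution from $Q^y_{\text{stat}}$ after taking $\omega_{\text{low}}$ sufficiently small and $A_{\text{large}}\gg C_{\text{large}}^2$. The boundary terms at $r^\star=\pm\infty$ that arise are then handled by adjoining $-E_0\chi\,Q^{\partial_t}[u]-E_0\bar\chi\,Q^{\partial_t}[u]$ with cut-offs $\chi,\bar\chi$ as in~\eqref{eq: energy estimate in superradiant frequencies, eq 2}; since $m=0$, a single multiplier $Q^{\partial_t}$ suffices in place of the pair $Q^{K^+},Q^{\bar K^+}$, producing exactly the $|\omega\,\Im(\bar\Psi H)|$ terms on the RHS of~\eqref{eq: prop: energy estimate for the bounded stationary frequencies, 1, eq 1}. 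The cut-off error terms in $(-R^\star,R^\star)$ are absorbed by further enlarging $C_{\text{large}}$ against the interior coercivity.

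The main obstacle is the zero-mode / constant-solution issue: when $\mu_{KG}^2=\omega=\tilde\lambda=0$ the constant function solves the radial ODE, and any estimate must be compatible with this. This is precisely why there is no lone $|\Psi|^2$ term on the LHS of~\eqref{eq: prop: energy estimate for the bounded stationary frequencies, 1, eq 1}, only the combinations $(\omega^2+\lambda^{(a\omega)}_{0\ell}+\mu_{KG}^2)|\Psi|^2$, and it is reflected in the above construction by the fact that coercivity of $h(\tilde V-\omega^2)$ in the bulk degenerates exactly at these zero frequencies. The argument must therefore track the defect with the correct weights so that the LHS degenerates consistently with the trivial solution, which it does.
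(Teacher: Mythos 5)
Your uniform multiplier argument has a genuine gap in the sub-regime where $\omega^2\gtrsim\tilde\lambda+\mu_{KG}^2$ (which, since for $m=0$ the eigenvalue $\tilde\lambda$ has no positive lower bound, cannot be excluded). With $h=C_{\text{large}}/(r^2+a^2)$, the $Q^h_{\text{stat}}$ coercivity in the middle region reduces, via the identity from~\eqref{eq: proof lem: h multiplier in the middle region in stationary frequencies, 2, eq 1}, to
\[
h(\tilde V-\omega^2)=\frac{C_{\text{large}}}{r^2+a^2}\left(\frac{\tilde\lambda\,\Delta}{(r^2+a^2)^2}+\frac{\mu_{KG}^2\,\Delta}{r^2+a^2}-\omega^2\right).
\]
When $\tilde\lambda+\mu_{KG}^2\lesssim\omega^2$ this is \emph{negative} of magnitude comparable to $C_{\text{large}}\,\omega^2$ (for $\tilde\lambda=\mu_{KG}=0$ it is pure defect), whereas the competing positive contribution $y'\,\omega^2|\Psi|^2$ from $Q^y_{\text{stat}}$ in the middle region is $\mathcal{O}(\Delta\,\omega^2|\Psi|^2)$ with a constant that does \emph{not} scale with $C_{\text{large}}$ (the interior $y$ of Lemma~\ref{lem: stationary frequencies for y, 1} is fixed). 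So for $C_{\text{large}}$ large the defect overwhelms the $y$ term, and shrinking $\omega_{\text{low}}$ does not help because both scale identically in $\omega$. Meanwhile $C_{\text{large}}$ \emph{must} be taken large to absorb the $\chi',\bar\chi'$ cut-off errors produced by the horizon currents, exactly as in the proof of Proposition~\ref{prop: energy estimate for the bounded stationary frequencies}; this was harmless there only because the lower bound $\lambda^{(a\omega)}_{m\ell}\geq\tfrac12$ (valid for $|m|>0$, small $a$) makes the $h$ coercivity nonnegative and $C_{\text{large}}$-large, a feature you lose at $m=0$.

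This is the reason the paper does \emph{not} run a single multiplier construction for $\mu_{KG}^2=0$, but instead splits the $(\omega,\tilde\lambda)$ range into three sub-regimes: when $\tilde\lambda\leq\epsilon'\omega^2$ the potential is harmless and a $Q^y_{\text{stat}}$-current with $y=(r^2+a^2)^3$ alone suffices (no $h$ at all, so no defect); when $\tilde\lambda\geq\tfrac{1}{\epsilon'}\omega^2$ the argument of Proposition~\ref{prop: energy estimate for the bounded stationary frequencies} carries over verbatim because $\tilde\lambda$ dominates $\omega^2$ and the $h$-defect is absorbed by the $\tilde\lambda$-coercivity; and in the intermediate regime $\tilde\lambda\sim\omega^2$ they again use $y=(r^2+a^2)^3$ but integrate by parts the term $\int y\,\tilde V'|\Psi|^2$ against $\tilde V(r_\pm)=0$ and exploit that the resulting $\tilde\lambda^2|\Psi|^2$ error is $\mathcal{O}(\omega^4|\Psi|^2)\ll\omega^2|\Psi|^2$ for $\omega_{\text{low}}$ small. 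The case $\mu_{KG}^2>0$ is handled separately by observing that the Proposition~\ref{prop: energy estimate for the bounded stationary frequencies} multipliers apply directly, since $\mu_{KG}^2$ plays the role of the uniform lower bound on $\tilde\lambda$. You would need to adopt some version of this trichotomy (or the integration-by-parts trick in the balanced regime) to close the argument.
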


First, we note that following lemma that for the case of axisymmetry~$m=0$ the potential~$\tilde{V}$, see Lemma~\ref{lem: sec: frequency localized multiplier estimates, lem 2}, can be written as 
\begin{equation}\label{eq: proof: lem: subsubsec: sec: bounded: stationary, 1, lem 0, eq 1}
	\tilde{V}\:\dot{=}\:\frac{\tilde{\lambda}\Delta}{(r^{2}+a^{2})^{2}} +\Delta\mu_{\textit{KG}}^2\frac{r^{2}+a^{2}}{(r^{2}+a^{2})^{2}}.
\end{equation}

We are ready to prove the main Proposition

\begin{proof}[\textbf{Proof of Proposition~\ref{prop: energy estimate for the bounded stationary frequencies, 1}}]
	
First, we discuss the case~$\mu^2_{KG}>0$. We note that we may apply the same multipliers that we used in the previous Section~\ref{subsubsec: sec: bounded: stationary}, where recall that we treated the near stationary regime~$\{|\omega|\leq \omega_{low}\}\cap\{|m|>0\}$ and~$|a|\leq a_0$, and conclude the desired result~\eqref{eq: prop: energy estimate for the bounded stationary frequencies, 1, eq 1} for any~$\omega_{low} >0$ sufficiently small.

Now, we continue with the more elaborate case~$\mu^2_{KG}=0$. Let~$\epsilon'>0$ be a sufficiently small real number. First, it is immediate from Lemma~\ref{lem: inequality for lambda} that~$\tilde{\lambda}\geq 0$ for~$m=0$. We consider the three cases
 \begin{enumerate}
 	\item $ \tilde{\lambda}\leq \epsilon^\prime \omega^2$
 	\item $\tilde{\lambda}\geq \frac{1}{\epsilon^\prime}\omega^2$
 	\item $\epsilon^\prime\omega^2\leq \tilde{\lambda}\leq \frac{1}{\epsilon^\prime}\omega^2$. 
 \end{enumerate}

 \underline{In the first case} we proceed as follows. We define the multiplier
 \begin{equation}
 	y=(r^2+a^2)^3
 \end{equation}
and we calculate
\begin{equation}
	y^\prime -y \frac{4r\Delta}{(r^2+a^2)^2}=2r(r^2+a^2)\Delta\geq b\Delta.
\end{equation}
We integrate the energy identity of Lemma~\ref{lem: subsec: currents, lem 2}, associated to~$Q^y_{stat}$, and obtain that for any~$\epsilon^\prime>0$ sufficiently small the following holds
\begin{equation}\label{eq: proof prop: energy estimate for the bounded stationary frequencies, 1, eq 0}
	\begin{aligned}
		\int_{\mathbb{R}}\left(\Delta|\Psi^\prime|^2 +\Delta\omega^2|\Psi|^2 \right)dr^\star\leq  B |\int_{\mathbb{R}} (Q^y_{\textit{stat}}[\Psi])^\prime	dr^\star| +\int_{\mathbb{R}}\left|\Re\left(\frac{y\Psi^\prime\overline{H^{(a\omega)}_{m\ell}}}{\sqrt{r^{2}+a^{2}}} \right)\right|dr^\star.
	\end{aligned}
\end{equation} 
In view of the fact that this frequency regime is axisymmetric and thus non-superradiant, we integrate the energy estimate of Lemma~\ref{lem: subsec: sec: frequency localized multiplier estimates, subsec 2, lem 1}, associated with the current ~$-E_0 Q^{K^+}[u]-E_0 Q^{\bar{K}^+}[u]$ for a sufficiently large~$E_0>0$, in view of Proposition~\ref{prop: subsec: energy identity, prop 1}, and then we sum it in~\eqref{eq: proof prop: energy estimate for the bounded stationary frequencies, 1, eq 0}, and conclude the desired~\eqref{eq: prop: energy estimate for the bounded stationary frequencies, 1, eq 1}.

 \underline{In the second case} we can again use the same multipliers that we used in the previous Section~\ref{subsubsec: sec: bounded: stationary}, for any sufficiently small~$\epsilon^\prime>0$,~$\omega_{low}>0$. Specifically, see Lemmata~\ref{lem: stationary frequencies for y, 1},~\ref{lem: h multiplier in the middle region in stationary frequencies, 2} ,~\ref{lem: stationary frequencies for y, 4} for the construction of the multipliers. To ease the comparison, recall that in Section~\ref{subsubsec: sec: bounded: stationary} we used that~$\lambda\gg\omega^2$ to construct the multipliers~$h,y$. The same arguments can be carried over to the present proof with the assumption of axisymmetry~$m=0$, instead of slow rotation~$a_0\ll 1$. 
 
At this point we fix the~$\epsilon^\prime>0$.

\underline{In the third case} we proceed as follows. Again we define 
\begin{equation}
	y = (r^2+a^2)^3.
\end{equation}
We integrate the energy identity of Lemma~\ref{lem: subsec: currents, lem 2}, associated to~$Q^y_{stat}$ to write 
\begin{equation}\label{eq: proof prop: energy estimate for the bounded stationary frequencies, 1, eq 1}
	\begin{aligned}
			\int_{\mathbb{R}}\left(\Big( y^\prime -y\frac{4r\Delta}{(r^{2}+a^{2})^{2}}  \Big)|\Psi^\prime|^2 +\Big(y^\prime \omega^2 -(y\tilde{V})^\prime\Big)|\Psi|^2\right)dr^\star = -\int_{\mathbb{R}} (Q^y_{\textit{stat}}[\Psi])^\prime	dr^\star -\int_{\mathbb{R}}\Re\left(\frac{y\Psi^\prime\overline{H^{(a\omega)}_{m\ell}}}{\sqrt{r^{2}+a^{2}}} \right)dr^\star.
	\end{aligned}
\end{equation} 
By integration by parts and by using that~$\tilde{V}(r_+)=\tilde{V}(\bar{r}_+)=0$ we obtain 
\begin{equation}
	\begin{aligned}
		b \int_{\mathbb{R}}\left( \Delta  |\Psi^\prime|^2 + y^\prime \omega^2 |\Psi|^2 \right)dr^\star	&	\leq  -Q^y_{stat}[\Psi](+\infty)+Q^y_{stat}[\Psi](-\infty) + \epsilon\int_{\mathbb{R}}\Delta |\Psi^\prime|^2dr^\star+ \frac{1}{\epsilon} \int_{\mathbb{R}}\Delta \tilde{\lambda}^2|\Psi|^2dr^\star\\
		&	\qquad-\int_{\mathbb{R}}\Re\left(\frac{y\Psi^\prime\overline{H^{(a\omega)}_{m\ell}}}{\sqrt{r^{2}+a^{2}}} \right)dr^\star,
	\end{aligned}
\end{equation}
where we also used a Young's ineqality. By taking~$\epsilon>0,\omega_{low}>0$ both sufficiently small, since~$\tilde{\lambda}^2\sim \omega^4$, we obtain the result 
\begin{equation}
b \int_{\mathbb{R}} \left(\Delta |\Psi^\prime|^2 + y^\prime \omega^2 |\Psi|^2 \right) dr^\star \leq  -Q^y_{stat}[\Psi](+\infty)+Q^y_{stat}[\Psi](-\infty)  -\int_{\mathbb{R}}\Re\left(\frac{y\Psi^\prime\overline{H^{(a\omega)}_{m\ell}}}{\sqrt{r^{2}+a^{2}}} \right)dr^\star.
\end{equation}

Finally, in view of the fact that this frequency regime is axisymmetric and thus non-superradiant, we integrate the energy identities of Lemmata~\ref{lem: subsec: currents, lem 2},~\ref{lem: subsec: sec: frequency localized multiplier estimates, subsec 2, lem 1} associated with the following current
	\begin{equation}
	Q_{\textit{stat}}^y[\Psi]-E_0 Q^{K^+}[u]-E_0 Q^{\bar{K}^+}[u],
	\end{equation}
	 in view of Proposition~\ref{prop: subsec: energy identity, prop 1},  for a sufficiently large~$E_0>0$, and conclude
	\begin{equation}
		\begin{aligned}
			\int_{\mathbb{R}} & \Delta\left(|\Psi^\prime|^2+(\omega^2+\lambda^{(a\omega)}_{0\ell})|\Psi|^2 \right)dr^\star\leq  B \int_{\mathbb{R}}\Bigg( \left|\omega\Im(\bar{\Psi}H)\right|+ \left|\Re\left(\frac{y\Psi^{\prime}\bar{H}}{\sqrt{r^{2}+a^{2}}} \right)\right|\Bigg)dr^\star.\\
		\end{aligned}
	\end{equation}		
We conclude the result of the Proposition. 	
\end{proof}

\subsubsection{\textbf{Near stationary case} \texorpdfstring{$\{|\omega|\leq \omega_{\textit{low}}\}\cap\{|m|\geq 1\}$ and \texorpdfstring{$\{|a|\geq a_0\}$}{g}}{g}}\label{subsubsec: bounded stationary freqs, large a}

In view of the definition of the frequency regimes, see Definition~\ref{def: subsec: sec: frequencies, subsec 3, def 1}, this frequency regime is manifestly non-superradiant for a sufficiently small~$\omega_{\textit{low}}>0$.

The main Proposition of this Section is the following

\begin{proposition}\label{prop: subsubsec: bounded stationary freqs, large a}

		Let~$l>0$,~$(a,M)\in \mathcal{B}_l$,~$\mu^2_{KG}\geq 0$. Let~$\omega_{high},\lambda_{low}>0$. Let~$|a|\geq a_0$. Then, for
	\begin{equation}
		-\infty< r^\star_{-\infty}<0<r^\star_{+\infty}<+\infty
	\end{equation}
	sufficiently small~(and negative) and sufficiently large~(and positive) respectively, and for any
	\begin{equation}
	\omega_{\textit{low}}>0,
	\end{equation}
	sufficiently small, then for~$(\omega,m,\ell)\in\mathcal{F}_\flat(\omega_{high},\lambda_{low}) \cap\{|\omega|\leq \omega_{\textit{low}}\}$ there exists a multiplier~$y$ satisfying the uniform bound 
	\begin{equation}
	|y|+|y^\prime|\leq B(\lambda_{\textit{low}},\omega_{\textit{high}})
	\end{equation}
	such that for all smooth solutions~$u$ of Carter's radial ode~\eqref{eq: ode from carter's separation} satisfying the outgoing boundary conditions~\eqref{eq: lem: sec carters separation, subsec boundary behaviour of u, boundary beh. of u, eq 3}, we have 
	\begin{equation}
	\begin{aligned}
	\int_{r^\star_{-\infty}}^{r^\star_{+\infty}}\big(|u^\prime|^2 + (1+\lambda^{(a\omega)}_{m\ell}+\omega^2)|u|^2\big)dr^\star \leq B \int_{\mathbb{R}}\left(\left|2y\Re (u^\prime H)\right|+\left|\Im(\bar{u}H)\right|\right) dr^\star.    
	\end{aligned}
	\end{equation} 
\end{proposition}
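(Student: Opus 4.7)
The plan is to prove Proposition~\ref{prop: subsubsec: bounded stationary freqs, large a} by combining a quantitative mode stability argument with a Morawetz $y$-multiplier. The crucial input is that for $\omega_{low}(a_0,M,l)$ sufficiently small, $|a|\geq a_0$, $|m|\geq 1$, the two Hawking-type phases $\omega-\omega_+ m$ and $\omega-\bar\omega_+ m$ have the same sign (both of sign $-\mathrm{sign}(am)$) with magnitudes uniformly bounded below by some $c(a_0,M,l)>0$, so the frequency regime is quantitatively non-superradiant. Applying Proposition~\ref{prop: subsec: energy identity, prop 1} then yields the boundary control
\begin{equation*}
|u|^2(+\infty)+|u|^2(-\infty)\leq C(a_0,M,l)\,|\Im(\bar u H)|,
\end{equation*}
which matches exactly the $|\Im(\bar u H)|$ term on the right-hand side of the stated estimate.

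Setting $H=0$ above forces $|u|(\pm\infty)=0$, and in view of the outgoing asymptotics $u\sim e^{\mp i(\omega-\omega_\pm m)r^\star}$ this forces $u\equiv 0$, i.e.\ real-axis mode stability holds throughout the compact frequency set $\mathcal{F}_\flat\cap\{|\omega|\leq \omega_{low}\}\cap\{|a|\geq a_0\}$. Since $|\omega-\omega_\pm m|\geq c>0$ uniformly, Proposition~\ref{prop: subsec: sec: Carter's separation, wronskian, prop 1} applies and the Wronskian $\mathcal{W}(\omega,m,\ell)$ is continuous and non-vanishing on this compact set, hence $|\mathcal{W}^{-1}|$ is uniformly bounded there. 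Combined with Lemma~\ref{lem: subsec: sec: Carter's separation, wronskian, lem 1} and the uniform boundedness of the reference solutions $u_{\mathcal{H}^+},u_{\bar{\mathcal{H}}^+}$ on the bounded interval $[r^\star_{-\infty},r^\star_{+\infty}]$, this furnishes a pointwise bound on $|u|$ and $|u^\prime|$ in terms of $B\int_{\mathbb R}|H|$ on that interval.

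To cast the estimate in the precise form required, I would choose $y$ smooth monotone with $y(-\infty)=-1$, $y(+\infty)=+1$, $y^\prime\geq b>0$ on $[r^\star_{-\infty},r^\star_{+\infty}]$, and $|y|+|y^\prime|\leq B(\lambda_{low},\omega_{high})$, and integrate the $Q^y$ energy identity of Lemma~\ref{lem: subsec: currents, lem 1} over $\mathbb R$. Using the asymptotics $|u^\prime|^2(\pm\infty)=(\omega-\omega_\pm m)^2|u|^2(\pm\infty)$ and $\omega^2-V(\pm\infty)=(\omega-\omega_\pm m)^2$, the boundary term reduces to $Q^y\big|_{-\infty}^{+\infty}=2(\omega-\bar\omega_+ m)^2|u|^2(+\infty)+2(\omega-\omega_+ m)^2|u|^2(-\infty)$, which by the first paragraph is bounded by $B|\Im(\bar u H)|$. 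The $y^\prime|u^\prime|^2$ term then delivers the $\int_{r^\star_{-\infty}}^{r^\star_{+\infty}}|u^\prime|^2$ part of the bulk, with source $\int_{\mathbb R}|2y\Re(u^\prime\bar H)|$.

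The main obstacle is closing the $(1+\tilde\lambda+\omega^2)|u|^2$ part of the bulk, since a pure $y$-current only directly produces $|u^\prime|^2$ coercivity. Because $\tilde\lambda\leq\lambda_{low}^{-1}\omega_{high}^2$ and $|\omega|\leq\omega_{low}$ are uniformly bounded in this regime, it suffices to control $\int_{r^\star_{-\infty}}^{r^\star_{+\infty}}|u|^2$, which follows either by a Poincar\'e--Hardy-type argument combining the $|u^\prime|^2$ control with the boundary bound $|u|^2(\pm\infty)\leq C|\Im(\bar u H)|$, or more directly by substituting the pointwise Wronskian-based bound from the second paragraph. The delicate point throughout is keeping all constants uniform across the compact frequency set, which is precisely what the quantitative non-superradiance combined with the continuity of the Wronskian provides.
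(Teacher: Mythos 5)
Your first paragraph is sound: the quantitative non-superradiance (both $\omega-\omega_+m$ and $\omega-\bar\omega_+m$ uniformly bounded away from zero with the same sign) together with Proposition~\ref{prop: subsec: energy identity, prop 1} does give the boundary bound $|u|^2(\pm\infty)\lesssim|\Im(\bar u H)|$, and this is indeed what the paper exploits. However, the rest does not close.

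The gap is exactly where you yourself flag it, and neither of your two proposed fixes actually works. (i) A bounded, monotone $y$ with $|y|+|y'|\leq B$ makes the term $(y'\omega^2-(yV)')|u|^2$ sign-indefinite and of the same size as $y'$; it cannot be absorbed into $y'|u'|^2$ alone, and you have no positive $|u|^2$ bulk to absorb it into. A Poincar\'e-type argument starting from $|u|^2(\pm\infty)$ requires integrating $|u'|$ over the semi-infinite tails $[r^\star_{+\infty},+\infty)$, where the outgoing asymptotics $u'\sim i(\omega-\bar\omega_+m)u$ make $\int|u'|$ divergent and your $y'|u'|^2$ bulk degenerates (since $\int y'<\infty$ forces $y'\to0$ at the horizons). (ii) The Wronskian route gives a pointwise bound $|u(r^\star)|\lesssim|\mathcal W^{-1}|\cdot\|u_{\mathcal H^+}\|\|u_{\bar{\mathcal H}^+}\|\int|H|$, which is quadratic in $\|H\|_{L^1}$; this has the wrong structure and cannot be compared to the right-hand side $\int\bigl(|2y\Re(u'\bar H)|+|\Im(\bar u H)|\bigr)$, which is linear in $H$ against $u,u'$. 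Mode stability on a compact frequency set is a true statement, but it does not furnish the claimed multiplier estimate.

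The paper's actual proof avoids both problems with one move: it writes $\omega^2-V=(\omega-\omega_+m)^2-\tilde V$ and takes the \emph{exponential} multiplier $y=-\exp(-C\int\chi)$. Then $(Q^y)'=y'|u'|^2+y'(\omega-\omega_+m)^2|u|^2-(y\tilde V)'|u|^2+\ldots$; the crucial lemma $(\omega-\omega_+m)^2\geq b|m|^2\geq b$ makes the second term a genuine $|u|^2$ bulk, and the exponential structure ($|y|\lesssim y'/C$) lets the $(y\tilde V)'$ term be absorbed by Young's inequality with a gain of $C^{-1}$ by taking $C$ large. The boundary terms are then removed, as you anticipate, with $-E_0Q^{K^+}-E_0Q^{\bar K^+}$ using non-superradiance. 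Your write-up is missing both the reparametrisation of $\omega^2-V$ around $(\omega-\omega_+m)^2$ and the exponential choice of $y$, and these are precisely the two ingredients needed to produce and control the $|u|^2$ bulk.
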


First, note the following Lemma 
\begin{lemma}
	Let the assumptions of Proposition~\ref{prop: subsubsec: bounded stationary freqs, large a} hold. Then the following hold 
	\begin{equation}
	\left(\omega-\frac{am\Xi}{r^2+a^2}\right)^2\geq b |m|^2,\qquad 
	\omega^2-V(r_+)\geq b|m|^2,\qquad \omega^2-V(\bar{r}_+)\geq b|m|^2. 
	\end{equation}
\end{lemma}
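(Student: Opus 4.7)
The plan is to prove the first inequality directly from the hypotheses on $a$, $\omega$, $m$, and then to observe that the remaining two inequalities reduce to the first one by evaluating the potential $V$ at the horizons.

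For the first inequality, I would expand
\[
\left(\omega - \frac{am\Xi}{r^2+a^2}\right)^2 = \omega^2 - 2\omega\cdot \frac{am\Xi}{r^2+a^2} + \left(\frac{am\Xi}{r^2+a^2}\right)^2.
\]
Since $|a|\geq a_0 > 0$, $|m|\geq 1$, and $r\in[r_+,\bar{r}_+]$, the last term is bounded below:
\[
\left(\frac{am\Xi}{r^2+a^2}\right)^2 \geq \frac{a_0^2\Xi^2}{(\bar{r}_+^2+a^2)^2} m^2 =: c_0\, m^2,
\]
with $c_0 = c_0(a,M,l) > 0$. The cross term is controlled by Young's inequality:
\[
\left|2\omega\cdot \frac{am\Xi}{r^2+a^2}\right| \leq \tfrac{1}{2}\left(\frac{am\Xi}{r^2+a^2}\right)^2 + 2\omega^2.
\]
Thus, taking $\omega_{\textit{low}}$ small enough that $2\omega_{\textit{low}}^2 \leq \tfrac{1}{4}c_0$, we obtain $\left(\omega - \frac{am\Xi}{r^2+a^2}\right)^2 \geq \tfrac{1}{4}c_0\, m^2$, which is the desired estimate with $b = c_0/4$.

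For the second and third inequalities, recall from Proposition~\ref{prop: Carters separation, radial part} the decomposition $V = V_0 + V_{\textit{SL}} + V_{\mu_{\textit{KG}}}$, with explicit expressions \eqref{eq: the potentials V0,VSl,Vmu}. At $r=r_+$ and $r=\bar{r}_+$ we have $\Delta = 0$, so the formulas give immediately $V_{\textit{SL}}(r_+) = V_{\textit{SL}}(\bar{r}_+) = 0$ and $V_{\mu_{\textit{KG}}}(r_+) = V_{\mu_{\textit{KG}}}(\bar{r}_+) = 0$, while
\[
V_0(r_+) - \omega^2 = -\left(\omega - \frac{am\Xi}{r_+^2+a^2}\right)^2,\qquad V_0(\bar{r}_+) - \omega^2 = -\left(\omega - \frac{am\Xi}{\bar{r}_+^2+a^2}\right)^2.
\]
Combining with the first inequality evaluated at $r=r_+$ and $r=\bar{r}_+$ yields $\omega^2 - V(r_+) \geq b\,m^2$ and $\omega^2 - V(\bar{r}_+) \geq b\,m^2$, as required.

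There is essentially no main obstacle here; the only quantitative choice is fixing $\omega_{\textit{low}}$ small in terms of $a_0$ and the black hole parameters to absorb the cross term, and this is precisely the content of the assumption ``$\omega_{\textit{low}}>0$ sufficiently small'' in the statement of the Proposition.
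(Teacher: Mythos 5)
Your proof is correct and fills in the details of what the paper dismisses as ``straightforward'' (the paper gives no proof of this lemma). The structure is exactly right: establish the lower bound on $\bigl(\omega-\tfrac{am\Xi}{r^2+a^2}\bigr)^2$ uniformly in $r\in[r_+,\bar{r}_+]$ using $|a|\geq a_0$, $|m|\geq 1$ and $|\omega|\leq\omega_{\textit{low}}$ small, and then observe that $V_{\textit{SL}}$ and $V_{\mu_{\textit{KG}}}$ both carry a factor of $\Delta$, while the $\Delta$-part of $V_0$ drops out at the horizons, so $\omega^2-V(r_\pm)$ literally reduces to the first quantity evaluated at $r_\pm$.

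One cosmetic remark: the constant you call $c_0$ involves $\Xi(a)$ and $\bar{r}_+(a,M,l)$, so it is not purely a function of $a_0,M,l$ but of the actual parameters $(a,M,l)$; since the paper's $b,B$ constants are allowed to depend on $(a,M,l)$, this is harmless, and the role of $a_0$ is only to keep $c_0$ bounded away from zero. Also, an alternative one-line route to the first inequality, avoiding Young, is to note $\bigl|\omega-\tfrac{am\Xi}{r^2+a^2}\bigr| \geq \tfrac{|a m|\Xi}{r^2+a^2} - |\omega| \geq \tfrac{a_0\Xi}{\bar{r}_+^2+a^2}|m| - \omega_{\textit{low}} \geq \tfrac12\tfrac{a_0\Xi}{\bar{r}_+^2+a^2}|m|$ once $\omega_{\textit{low}}$ is small enough; both arguments are fine.
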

\begin{proof}
	This is straightforward. 
\end{proof}

Now we are ready to prove Proposition~\ref{prop: subsubsec: bounded stationary freqs, large a}.

\begin{proof}[\textbf{Proof of Proposition~\ref{prop: subsubsec: bounded stationary freqs, large a}}]

	For simplicity, we define the potential $\tilde{V}$ such that 
	\begin{equation}
	\omega^2-V=\left(\omega-\frac{am\Xi}{r_+^2+a^2}\right)^2-\tilde{V}
	\end{equation}
	
	We integrate the energy identity of Lemma~\ref{lem: subsec: currents, lem 1}, associated to~$Q^y$, to get 
	\begin{equation}\label{eq: proof: prop: subsubsec: bounded stationary freqs, large a, eq 1}
	\begin{aligned}
	\int_{\mathbb{R}}\left( y^\prime |u^\prime|^2+y^\prime\left(\omega-\frac{am\Xi}{r_+^2+a^2}\right)^2 |u|^2\right)dr^\star &	=	Q^y[u](\infty)-Q^y[u](-\infty)	\\
	&	\qquad  +\int_{\mathbb{R}}\left( \left(y\tilde{V}\right)^\prime |u|^2 +2y\Re (u^\prime \bar{H})\right)dr^\star .
	\end{aligned}
	\end{equation}
	Consider the multiplier
	\begin{equation}
	y=-\exp (-C\int^{r^\star}_{-\infty} \chi_{r^\star_{\infty}}),
	\end{equation}
	such that~$\chi_{r^\star_{\infty}}$ satisfies the following
	\begin{equation}
	\chi_{r^\star_\infty}(r) = 
	\begin{cases}
	\text{$1$} &\quad\text{if}\:r\in (r_+,r_\infty -1)\\
	\text{$0$} &\quad\text{if}\: r\in(r^\star_\infty,\bar{r}_{+}), \\
	\end{cases}
	\end{equation}
	for $r^\star_\infty$ sufficiently large. 
	
	We note the following straightforward computation
	\begin{equation}\label{eq: proof: prop: subsubsec: bounded stationary freqs, large a, eq 2}
	\begin{aligned}
	\int_{\mathbb{R}}\left(y\tilde{V}\right)^\prime |u|^2dr^\star \leq \epsilon \int_{\mathbb{R}}y^\prime |u^\prime|^2dr^\star  +B\epsilon^{-1}C^{-2}\left(\omega-\frac{am\Xi}{r_+^2+a^2}\right)^{-2}\int_{\mathbb{R}}y^\prime \left(\omega-\frac{am\Xi}{r_+^2+a^2}\right)^2 |u|^2dr^\star .
	\end{aligned}
	\end{equation}
	
	Therefore, from~\eqref{eq: proof: prop: subsubsec: bounded stationary freqs, large a, eq 1},~\eqref{eq: proof: prop: subsubsec: bounded stationary freqs, large a, eq 2} we obtain 
	\begin{equation}\label{eq: proof: prop: subsubsec: bounded stationary freqs, large a, eq 3}
	\begin{aligned}
	b\int_{\mathbb{R}}\left(y^\prime |u^\prime|^2+y^\prime \left(\omega-\frac{am\Xi}{r_+^2+a^2}\right)^2|u|^2\right)dr^\star 	&	\leq Q^y[u](r^\star=-\infty)-\int_{\mathbb{R}}2 y \Re (u^\prime \bar{H})dr^\star \\
	&	\leq y(-\infty) \left(|u^\prime|^2+\left(\omega^2-V\right)\right)(r^\star=-\infty)\\
	&	\qquad -\int_{\mathbb{R}}2 y \Re (u^\prime \bar{H})dr^\star .
	\end{aligned}
	\end{equation}

	As noted earlier, for~$|a|\geq a_0$, the present frequency regime 
	\begin{equation}
	\{|\omega|\leq \omega_{\textit{low}}\}\cap\{|m|\geq 1\}
	\end{equation}
	is non-superradiant, namely for~$a_0>0$ as in Proposition~\ref{prop: energy estimate for the bounded stationary frequencies} and for a sufficiently small
	\begin{equation}
	\omega_{\textit{low}}(a,M,l,\mu_{KG})
	\end{equation}
	we obtain 
	\begin{equation}
		\left(\omega-\frac{am\Xi}{\bar{r}_+^2+a^2}\right)\cdot \left(\omega-\frac{am\Xi}{r_+^2+a^2}\right)>0.
	\end{equation}

	Therefore,  in view of Proposition~\ref{prop: subsec: energy identity, prop 1}, we integrate the energy identity of Lemma~\ref{lem: subsec: sec: frequency localized multiplier estimates, subsec 2, lem 1} associated with the current 
	\begin{equation}
		-E_0Q^{K^+}-E_0 Q^{\bar{K}^+}
	\end{equation}
	for a sufficiently large~$E_0$, we sum it in~\eqref{eq: proof: prop: subsubsec: bounded stationary freqs, large a, eq 3} and we obtain the result. 
\end{proof}

\subsubsection{\textbf{Non stationary case} $\{ |\omega|\geq \omega_{\textit{low}} \}$}\label{subsec: bounded non stationary frequencies}

The main Proposition of this Section is the following

\begin{proposition}\label{prop: energy estimate in the bounded non stationary frequency regime}

		Let~$l>0$~$(a,M)\in \mathcal{B}_l$,~$\mu^2_{KG}\geq 0$. Let
	\begin{equation}
	\omega_{\textit{low}}>0,\qquad \lambda_{\textit{low}}>0,\qquad \omega_{\textit{high}}>0
	\end{equation}
	be any positive real numbers and let~$\mathcal{C}>1$ be sufficiently large. Then, for~$(\omega,m,\ell)\in\mathcal{F}_\flat(\omega_{low},\omega_{high},\lambda_{low}) \cap\{|\omega|\geq \omega_{\textit{low}}\}$ and for any 
	\begin{equation}
	-\infty<r^\star_{-\infty}<0<r^\star_{+\infty}<+\infty
	\end{equation}
	sufficiently small~(and negative) and sufficiently large~(and positive) respectively, there exists a multiplier~$y$ satisfying the uniform bound
	\begin{equation}
	|y|+|y^\prime|\leq B(\lambda_{\textit{low}},\omega_{\textit{high}},\mathcal{C})
	\end{equation}
	such that for all smooth solutions~$u$ of Carter's radial ode~\eqref{eq: ode from carter's separation} satisfying the outgoing boundary conditions~\eqref{eq: lem: sec carters separation, subsec boundary behaviour of u, boundary beh. of u, eq 3}, we have the following:

	If in addition~$(\omega,m,\ell)\in\mathcal{F}_\flat\cap\mathcal{SF}\cap \left(\{|\omega-\omega_+ m|\geq \mathcal{C}^{-1}\}\cup \{|\omega-\bar{\omega}_+ m|\geq \mathcal{C}^{-1}\}\right)$ then we have
	\begin{equation}\label{eq: prop: energy estimate in the bounded non stationary frequency regime, eq 1}
	\begin{aligned}
	\int_{r^\star_{-\infty}}^{r^\star_{+\infty}}\big(|u^\prime|^2 + (1+\lambda^{(a\omega)}_{m\ell}+\omega^2)|u|^2\big)dr^\star &	\leq B(r^\star_{\pm\infty},\mathcal{C})\Big( \int_{\mathbb{R}} \left(\left|2y\Re (u^\prime H)\right|+\left|\Im(\bar{u}H)\right|\right)dr^\star\\
	&	\qquad\qquad\qquad+\left|\left(\omega-\frac{am\Xi}{r_+^2+a^2}\right)\left(\omega-\frac{am\Xi}{\bar{r}_+^2+a^2}\right)\right||u|^2(-\infty)\Big),
	\end{aligned}
	\end{equation} 
	and if in addition~$(\omega,m,\ell)\in\mathcal{F}_\flat\cap \left(\{|\omega-\omega_+ m|\leq \mathcal{C}^{-1}\}\cup \{|\omega-\bar{\omega}_+ m|\leq \mathcal{C}^{-1}\}\cup \mathcal{SF}^c\right)$ then we have
	\begin{equation}\label{eq: prop: energy estimate in the bounded non stationary frequency regime, eq 1.1}
	\begin{aligned}
		\int_{r^\star_{-\infty}}^{r^\star_{+\infty}}\big(|u^\prime|^2 + (1+\lambda^{(a\omega)}_{m\ell}+\omega^2)|u|^2\big)dr^\star	&	\leq B(r^\star_{\pm \infty},\mathcal{C}) \int_{\mathbb{R}} \Big(\left|2y\Re (u^\prime H)\right|+\left|\Im(\bar{u}H)\right|\Big) dr^\star.
	\end{aligned}
	\end{equation} 
\end{proposition}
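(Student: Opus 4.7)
The proposal is to prove both estimates~\eqref{eq: prop: energy estimate in the bounded non stationary frequency regime, eq 1} and~\eqref{eq: prop: energy estimate in the bounded non stationary frequency regime, eq 1.1} by the same $y$--multiplier construction used in Proposition~\ref{prop: subsubsec: bounded stationary freqs, large a}, namely $y = -\exp\bigl(-C\int^{r^\star}_{-\infty}\chi_{r^\star_{+\infty}}\bigr)$, where $\chi_{r^\star_{+\infty}}$ is a smooth cut-off supported in $(r^\star_{-\infty}-1, r^\star_{+\infty}-1)$ and $C=C(\lambda_{\textit{low}},\omega_{\textit{high}},\omega_{\textit{low}})>0$ is taken sufficiently large. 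Since $y' = C\chi_{r^\star_{+\infty}}|y|>0$ throughout the compact $r^\star$-region, and since the bounded frequency restriction together with $|\omega|\ge \omega_{\textit{low}}$ gives uniform bounds $|V|+|V'|\le B$ and $y'\omega^2 \ge \omega_{\textit{low}}^2 y'$, a Young's inequality lets us absorb $|yV'||u|^2$ into $y'\omega^2|u|^2$ by taking $C$ large relative to $\omega_{\textit{low}}^{-2}B$. Integrating the identity of Lemma~\ref{lem: subsec: currents, lem 1} for $Q^y$ then yields the baseline bulk bound
\[
b\!\int_{r^\star_{-\infty}}^{r^\star_{+\infty}}\!\bigl(|u'|^2+(1+\tilde\lambda+\omega^2)|u|^2\bigr)\,dr^\star \;\le\; Q^y(\infty)-Q^y(-\infty)+ B\!\int_{\mathbb R}|2y\,\Re(u'\bar H)|\,dr^\star,
\]
common to both cases.

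The outgoing boundary conditions~\eqref{eq: lem: sec carters separation, subsec boundary behaviour of u, boundary beh. of u, eq 3} evaluate $Q^y(\pm\infty) = y(\pm\infty)\cdot 2(\omega-\omega_{\pm}m)^2|u|^2(\pm\infty)$, where we abbreviate $\omega_-=\omega_+$, $\omega_+^{\mathrm{cosm}}=\bar\omega_+$. By construction $y<0$ with $|y(-\infty)|=1$ and $|y(\infty)|$ small, so the $+\infty$ boundary contribution is automatically negative (hence favorable when moved to the left-hand side), while the $-\infty$ contribution produces an undesired $+2(\omega-\omega_+m)^2|u|^2(-\infty)$ on the right. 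The strategy to close the estimate is to add Killing-current multipliers $-E_0 Q^{K^+}-E_0 Q^{\bar K^+}$ with $E_0$ sufficiently large and exploit Proposition~\ref{prop: subsec: energy identity, prop 1}, distinguishing cases.

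In case~\eqref{eq: prop: energy estimate in the bounded non stationary frequency regime, eq 1.1}, if $(\omega,m)\in\mathcal{SF}^c$ the boundary terms generated by $Q^{K^+}$ and $Q^{\bar K^+}$ carry matched signs, and the standard absorption of Proposition~\ref{prop: subsubsec: bounded stationary freqs, large a} works verbatim. If instead $|\omega-\omega_+m|\le\mathcal{C}^{-1}$, the offending boundary term is already small: $(\omega-\omega_+m)^2|u|^2(-\infty)\le \mathcal{C}^{-1}|\omega-\omega_+m||u|^2(-\infty)$. The energy identity~\eqref{eq: subsec: energy identity, eq 1} of Proposition~\ref{prop: subsec: energy identity, prop 1} then bounds $|\omega-\omega_+m||u|^2(-\infty)$ by $|\omega-\bar\omega_+m||u|^2(\infty)+|\Im(\bar u H)|$, so adding $-E_0 Q^{\bar K^+}$ (whose boundary term controls $(\omega-\bar\omega_+m)^2|u|^2(\infty)$) with $E_0$ large absorbs everything once $\mathcal{C}$ is chosen sufficiently large. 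The symmetric subcase $|\omega-\bar\omega_+m|\le\mathcal{C}^{-1}$ is handled identically.

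In case~\eqref{eq: prop: energy estimate in the bounded non stationary frequency regime, eq 1}, superradiance forces $(\omega-\omega_+m)(\omega-\bar\omega_+m)<0$, so no single linear combination of $Q^{K^+},Q^{\bar K^+}$ can absorb both horizon boundary terms; hence the term $|u|^2(-\infty)$ is retained on the right-hand side with weight $|(\omega-\omega_+m)(\omega-\bar\omega_+m)|$. Since in this subregime $\min(|\omega-\omega_+m|,|\omega-\bar\omega_+m|)\ge \mathcal{C}^{-1}$ and both quantities are uniformly bounded above (because $\omega,m,\tilde\lambda$ are bounded on $\mathcal{F}_\flat$), the bookkeeping $(\omega-\omega_+m)^2|u|^2(-\infty)\le B(\mathcal{C})\,|\omega-\omega_+m||\omega-\bar\omega_+m||u|^2(-\infty)$ yields~\eqref{eq: prop: energy estimate in the bounded non stationary frequency regime, eq 1}. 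The main obstacle I expect is the degenerate corner in which both $|\omega-\omega_\pm m|\le\mathcal{C}^{-1}$ while still being superradiant: this pinches $\omega$ into a small interval and forces $m$ to be bounded, and to absorb the two competing boundary defects one must play the $y$-current against $Q^{K^+}$ and $Q^{\bar K^+}$ simultaneously with small Young's parameters that depend on $\mathcal{C}$, producing constants $B(\mathcal{C},r^\star_{\pm\infty})$ that blow up as $\mathcal{C}\to\infty$—this divergence is consistent with the statement but the reconciliation requires careful tracking of all the constants.
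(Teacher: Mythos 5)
Your multiplier~$y$ and the overall strategy (exploiting the $Q^y$ current, then adding $-E_0Q^{K^+}-E_0Q^{\bar K^+}$ and case-splitting on superradiance and on proximity to the resonances $\omega=\omega_\pm m$) are the same as the paper's, but there is a genuine gap in how you derive the baseline bulk bound, and that gap is where the essential technical idea of the paper's proof lives.

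\textbf{The gap.} You claim that, after noting $|V|+|V'|\le B$ and $\omega^2\ge\omega_{\mathrm{low}}^2$, a Young's inequality absorbs $|yV'|\,|u|^2$ into $y'\omega^2|u|^2$ by taking $C$ large, and that this yields the baseline estimate
\begin{equation*}
b\int_{r^\star_{-\infty}}^{r^\star_{+\infty}}\bigl(|u'|^2+(1+\tilde\lambda+\omega^2)|u|^2\bigr)\,dr^\star\le Q^y(\infty)-Q^y(-\infty)+B\int_{\mathbb R}|2y\,\Re(u'\bar H)|\,dr^\star.
\end{equation*}
But the coefficient of $|u|^2$ in $(Q^y)'$ is $y'(\omega^2-V)-yV'$, not $y'\omega^2-yV'$. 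You absorb only the $yV'$ piece; the piece $-y'V|u|^2$ is never addressed. It cannot be absorbed by the same device: $y'V$ and $y'\omega^2$ carry the \emph{same} factor $C\chi|y|$, so their ratio is $V/\omega^2$, which is \emph{not} made small by increasing $C$ and which can exceed $1$ near the critical point of the potential (trapping is possible in $\mathcal F_\flat$ for appropriate black hole parameters, so $\omega^2-V$ need not be nonnegative in the bulk). Your claimed bulk bound therefore does not follow from the argument you give.

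\textbf{What the paper does instead.} The proof integrates by parts on the full term $\int (yV)'|u|^2$:
\begin{equation*}
\int_{\mathbb R}(yV)'|u|^2\,dr^\star=\bigl[yV|u|^2\bigr]_{-\infty}^{+\infty}-\int_{\mathbb R}yV(u\bar u'+u'\bar u)\,dr^\star,
\end{equation*}
then applies Young's inequality to the last term, producing $\epsilon\int y'|u'|^2+\frac1\epsilon\int\frac{y^2V^2}{y'}|u|^2$. Crucially $\frac{y^2}{y'}=\frac{|y|}{C\chi}$ on $\operatorname{supp}\chi$, so the surviving zeroth-order defect carries a $1/C$ factor relative to $y'\omega^2=C\chi|y|\omega^2$, making $C$-large absorption legitimate. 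This integration by parts is the step your proposal is missing, and it is what eliminates the $y'V|u|^2$ obstruction I describe above. Your handling of the boundary terms and of the near-borderline superradiant subcase (via the energy identity of Proposition~\ref{prop: subsec: energy identity, prop 1} and the factor $\mathcal C^{-1}$) is in the right spirit, though the paper instead applies a localized version of the $Q^{\bar K^+}$ current via the fundamental theorem of calculus on a cut-off $\chi\cdot 1_{\{|\omega-\omega_+m|\le\mathcal C^{-1}\}}Q^{\bar K^+}$ to express $|\omega-\omega_+m|\,|\omega-\bar\omega_+m|\,|u|^2(-\infty)$ in terms of bulk integrals that can then be absorbed; either route seems workable once the bulk estimate is fixed, but you should correct the bulk estimate first.
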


\begin{proof}
	
    We integrate the energy identity of Lemma~\ref{lem: subsec: currents, lem 1}, associated to~$Q^y$, to get 
	\begin{equation}\label{eq: proof: prop: energy estimate in the bounded non stationary frequency regime, eq 1}
	\int_{\mathbb{R}}\left( y^\prime |u^\prime|^2+y^\prime\omega^2 |u|^2\right)dr^\star =Q^y[u](\infty)-Q^y[u](-\infty) +\int_{\mathbb{R}}\left( (yV)^\prime |u|^2 +2y\Re (u^\prime \bar{H})\right)dr^\star .
	\end{equation}
	Let~$\epsilon>0$ be sufficiently small. Consider the multiplier
	\begin{equation}\label{eq: prop: energy estimate in the bounded non stationary frequency regime, eq 2}
	y=-\exp (-C\int^{r^\star}_{-\infty} \chi_{r^\star_{\infty}}),
	\end{equation}
	such that~$\chi_{r^\star_{\infty}}$ satisfies the following
	\begin{equation}
	\chi_{r^\star_\infty}(r) = 
	\begin{cases}
	\text{$1$} &\quad\text{if}\:r\in (r_+,r_\infty-\epsilon))\\
	\text{$0$} &\quad\text{if}\: r\in(r_\infty,\bar{r}_{+}), \\
	\end{cases}
	\end{equation}
	for $r^\star_\infty>0$ sufficiently large. By using the definition of the multiplier $y$, in equation \eqref{eq: prop: energy estimate in the bounded non stationary frequency regime, eq 2}, we note the following 
	\begin{equation}\label{eq: prop: energy estimate in the bounded non stationary frequency regime, eq 3}
	\begin{aligned}
	\int_{\mathbb{R}} (yV)^{\prime}|u|^2dr^\star &=\big[yV|u|^2 \big]_{-\infty}^{\infty}  -\int_{\mathbb{R}} yV(u\bar{u}^{\prime}+u^{\prime}\bar{u}) dr^\star\\
	&   \leq \left(yV|u|^{2}\right)(r^\star=\infty)-\left(yV|u|^{2}\right)(r^\star=-\infty)-\int_{\mathbb{R}}|yV|(u\bar{u}^{\prime}+u^{\prime}\bar{u})dr^\star\\
	& \leq B \left(\omega^2-\left(\omega-\frac{am\Xi}{\bar{r}_+^2+a^2}\right)^2\right)\left(y|u|^2\right)(\infty) -B\left(\omega^2-\left(\omega-\frac{am\Xi}{r_+^2+a^2}\right)^2\right)\left(y|u|^2\right)(-\infty) \\
	&   \quad+ \epsilon\int_{\mathbb{R}} y^{\prime}|u^{\prime}|^{2}dr^\star+\frac{1}{\epsilon}\int_{\mathbb{R}} y^{2}\frac{V^{2}}{y^{\prime}}|u|^{2}dr^\star. \\
	\end{aligned}
	\end{equation}

	Observe now that $y^\prime>0$. So, the first bulk term on the on the right hand side of the inequality~\eqref{eq: prop: energy estimate in the bounded non stationary frequency regime, eq 3} will be absorbed, in the left hand side of equation \eqref{eq: proof: prop: energy estimate in the bounded non stationary frequency regime, eq 1}, by taking~$\epsilon>0$ sufficiently small. The last bulk term on the right hand side of~\eqref{eq: prop: energy estimate in the bounded non stationary frequency regime, eq 3}, will be absorbed, in the left hand side of equation~\eqref{eq: proof: prop: energy estimate in the bounded non stationary frequency regime, eq 1}, by taking $C>0$ sufficiently large. We thus obtain the energy estimate 
	\begin{equation}\label{eq: prop: energy estimate in the bounded non stationary frequency regime, eq 4}
	\begin{aligned}
	&	\int_{\mathbb{R}} y^\prime\big(|u^\prime|^2 + (1+\lambda^{(a\omega)}_{m\ell}+\omega^2)|u|^2\big)dr^\star \\
	&\qquad \leq  B Q^y(\infty)- BQ^{y}(-\infty)\\
	&   \qquad\qquad+B \left(\omega^2-\left(\omega-\frac{am\Xi}{\bar{r}_+^2+a^2}\right)^2\right)\left(y|u|^2\right)(\infty) -B\left(\omega^2-\left(\omega-\frac{am\Xi}{r_+^2+a^2}\right)^2\right)\left(y|u|^2\right)(-\infty)\\
	&   \qquad\qquad +\int_{\mathbb{R}} 2y\Re (u^\prime H)dr^\star.
	\end{aligned}
	\end{equation}
	
	\begin{remark}
			Note that the intersection~$\mathcal{F}_\flat\cap \{am\omega\in\left(\frac{a^2m^2\Xi}{\bar{r}_+^2+a^2},\frac{a^2m^2\Xi}{r_+^2+a^2} \right)\}$ is in general not empty, see Remark~\ref{rem: subsec: sec: frequencies, subsec 3, rem 1}, and therefore the boundary terms on the right hand side of~\eqref{eq: prop: energy estimate in the bounded non stationary frequency regime, eq 4}, in general, have different signs.
	\end{remark}

	We proceed as follows. 	In the superradiant frequencies
	\begin{equation}
		(\omega,m,\ell)\in \mathcal{F}_\flat \cap\mathcal{SF}
	\end{equation}
	we integrate the energy identity of Lemma~\ref{lem: subsec: sec: frequency localized multiplier estimates, subsec 2, lem 1} associated with the current~$-E Q^{K^+}-EQ^{\bar{K}^+}$, where $E$ is sufficiently large. We sum it to we use estimate~\eqref{eq: prop: energy estimate in the bounded non stationary frequency regime, eq 4}, and take~$E$ sufficiently large so that we do absorb the boundary term $Q^{y}(\infty)$ and we conclude the following
	\begin{equation}\label{eq: prop: energy estimate in the bounded non stationary frequency regime, eq 5}
	\begin{aligned}
	\int_{\mathbb{R}} y^\prime\big(|u^\prime|^2 + (1+\lambda^{(a\omega)}_{m\ell}+\omega^2)|u|^2\big)dr^\star& \leq B \left|\left(\omega-\frac{am\Xi}{r_+^2+a^2}\right)\left(\omega-\frac{am\Xi}{\bar{r}_+^2+a^2}\right)\right||u|^2(-\infty)\\
	&   \quad +B\int_{\mathbb{R}}\left( \left|2y\Re (u^\prime H)\right|+\left|\Im(\bar{u}H)\right|\right)dr^\star.
	\end{aligned}
	\end{equation}
	In the non superradiant frequencies
	\begin{equation}
		(\omega,m,\ell)\in \mathcal{F}_\flat \cap\mathcal{SF}^c
	\end{equation}
	we can remove the boundary term of~\eqref{eq: prop: energy estimate in the bounded non stationary frequency regime, eq 5} by summing in~\eqref{eq: prop: energy estimate in the bounded non stationary frequency regime, eq 4} an energy identity associates with a current of the form~$E_0Q^{K^+}+E_0Q^{\bar{K}^+}$ and obtain
		\begin{equation}\label{eq: prop: energy estimate in the bounded non stationary frequency regime, eq 6}
		\begin{aligned}
			\int_{\mathbb{R}} y^\prime\big(|u^\prime|^2 + (1+\lambda^{(a\omega)}_{m\ell}+\omega^2)|u|^2\big)dr^\star& \leq B\int_{\mathbb{R}}\left( \left|2y\Re (u^\prime H)\right|+\left|\Im(\bar{u}H)\right|\right)dr^\star.
		\end{aligned}
	\end{equation}

	Now, assume that
	\begin{equation}
		(\omega,m,\ell)\in \mathcal{F}_\flat \cap\left(\{|\omega-\omega_+ m|\leq \mathcal{C}^{-1}\}\cup \{|\omega-\bar{\omega}_+ m|\leq \mathcal{C}^{-1}\}\cup \mathcal{SF}\right)
	\end{equation}
	for a sufficiently large~$\mathcal{C}>0$. In what follows we only discuss the case~$|\omega-\omega_+ m|\leq \mathcal{C}^{-1}$, since the case~$|\omega-\bar{\omega}_+ m|\leq \mathcal{C}^{-1}$ admits an almost identical treatment. Let~$\chi$ be a smooth function with
	 \begin{equation}
		\chi=
		\begin{cases}
			1,\qquad r\in [r_+,2r_{-\infty}]\\
			0,\qquad r\in [3r_{-\infty},\bar{r}_+]. 
		\end{cases}
	\end{equation}
	Let~$Q^{\bar{K}^+}$ be the currents of Definition~\ref{def: sec: currents: def 1, QT, QK currents}. By using the fundamental theorem of calculus for~$\chi\cdot1_{|\omega-\omega_+m|\leq \mathcal{C}^{-1}}Q^{\bar{K}^+}$  we obtain
\begin{equation}\label{eq: prop: energy estimate in the bounded non stationary frequency regime, eq 7}
	|\omega-\omega_+ m||\omega-\bar{\omega}_+ m||u|^2(-\infty) \leq \int_{2r_{-\infty}^\star}^{3r_{-\infty}^\star} |\omega-
	\omega_+ m| |\chi^\prime \Im (u^\prime \bar{u})|dr^\star + \int_{-\infty}^{3r^\star_{\infty}} |\chi||\omega-\omega_+ m| |\Im (H \bar{u})|dr^\star . 
\end{equation}
	
Now, by taking~$\mathcal{C}$ sufficiently large, we use~\eqref{eq: prop: energy estimate in the bounded non stationary frequency regime, eq 7} in~\eqref{eq: prop: energy estimate in the bounded non stationary frequency regime, eq 5} and obtain the desired result
	\begin{equation}\label{eq: prop: energy estimate in the bounded non stationary frequency regime, eq 8}
	\begin{aligned}
		\int_{\mathbb{R}} y^\prime\big(|u^\prime|^2 + (1+\lambda^{(a\omega)}_{m\ell}+\omega^2)|u|^2\big)dr^\star& \leq B\int_{\mathbb{R}}\left( \left|2y\Re (u^\prime H)\right|+\left|\Im(\bar{u}H)\right|\right)dr^\star.
	\end{aligned}
\end{equation}
	
Therefore, from~\eqref{eq: prop: energy estimate in the bounded non stationary frequency regime, eq 8},~\eqref{eq: prop: energy estimate in the bounded non stationary frequency regime, eq 6},~\eqref{eq: prop: energy estimate in the bounded non stationary frequency regime, eq 5} we conclude~\eqref{eq: prop: energy estimate in the bounded non stationary frequency regime, eq 1},~\eqref{eq: prop: energy estimate in the bounded non stationary frequency regime, eq 1.1} and the proof.

	We conclude the proof.
\end{proof}

\subsection{Multipliers for the $\lambda$~dominated frequency regime $F_{\lessflat}$}\label{subsec: subsection on angular dominated frequencies}

In view of the definition of the frequency regimes, see Definition~\ref{def: subsec: sec: frequencies, subsec 3, def 1}, this frequency regime is manifestly non-superradiant.

The main Proposition of this Section is the following 

\begin{proposition}\label{prop: energy estimate in the angular dominated frequency regime}

	Let~$l>0$,~$(a,M)\in \mathcal{B}_l$,~$\mu^2_{KG}\geq 0$. For 
	\begin{equation}
	\alpha^{-1}>0,\qquad \omega_{\textit{high}}>0,\qquad	\lambda_{\textit{low}}^{-1}>0
	\end{equation}
	all sufficiently large, where moreover~$\lambda_{low}\ll \alpha^2$, then for~$(\omega,m,\ell)\in\mathcal{F}_{\lessflat}(\lambda_{low},\omega_{high},\alpha)$ there exist sufficiently regular functions~$f,h$ satisfying the uniform bounds
	\begin{equation}
	|f|+|f^\prime|+|f^{\prime\prime}|+|h|+|h^\prime|+|h^{\prime\prime}|\leq B(\lambda_{\textit{low}},\omega_{\textit{high}})
	\end{equation}
	such that for all smooth solutions~$u$ of Carter's radial ode~\eqref{eq: ode from carter's separation} satisfying the outgoing boundary conditions~\eqref{eq: lem: sec carters separation, subsec boundary behaviour of u, boundary beh. of u, eq 3}, we have 
	\begin{equation}
	\begin{aligned}
	\int_{\mathbb{R}}  \Delta\Bigg( |u^{\prime}|^{2}+\Delta(1+\lambda^{(a\omega)}_{m\ell}+\omega^{2})|u|^{2}  \Bigg)dr^\star &\leq B \int_{\mathbb{R}} \Big( \left|\left(\omega-\frac{am\Xi}{\bar{r}_+^2+a^2}\right)\Im (u\bar{H})\right|+\left|\left(\omega-\frac{am\Xi}{r_+^2+a^2}\right)\Im (u\bar{H})\right|\\
	&\quad\quad\quad+ \left|2fRe(u^{\prime}\bar{H})\right|+(|f^{\prime}|+|h|)\left|Re(\bar{H}u)\right|\Big)dr^\star.
	\end{aligned}
	\end{equation}
\end{proposition}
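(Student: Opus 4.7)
The plan is to mirror the multiplier construction of Proposition~\ref{prop: energy estimate in superradiant frequencies} (the enlarged high superradiant case), with two crucial simplifications: in $\mathcal{F}_\lessflat$ the regime is strictly non-superradiant and the angular dominance $\tilde{\lambda} \geq \lambda_{low}^{-1}(\omega^2+a^2m^2)$ directly supplies the required coercivity of the potential, so no cancellation against near-horizon boundary fluxes of the wrong sign is needed. I will build multipliers $f$ and $h$ yielding a positive bulk, then absorb the two boundary terms using a current of the form $\pm E\chi_1 Q^{K^+} \pm E\chi_2 Q^{\bar K^+}$.

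The first step is to establish the angular dominance bound $\tilde{\lambda} - 2am\omega\Xi \geq b\tilde{\lambda}$ on $\mathcal{F}_\lessflat$. If $am\omega \leq 0$ this is immediate; in the case $am\omega > \tfrac{a^2m^2\Xi}{r_+^2+a^2} + |a|\alpha\tilde{\lambda}$ one uses $2|am\omega|\Xi \leq \Xi(\omega^2+a^2m^2) \leq \Xi\lambda_{low}\tilde{\lambda}$ together with $\lambda_{low}$ sufficiently small. From this and the identity
\begin{equation*}
\frac{dV_0}{dr}(r_+) = \frac{d}{dr}\frac{\Delta}{(r^2+a^2)^2}(r_+)(\tilde{\lambda}-2am\omega\Xi) - \frac{4am\Xi r_+}{(r_+^2+a^2)^2}\left(\omega-\frac{am\Xi}{r_+^2+a^2}\right),
\end{equation*}
together with Lemma~\ref{lem: sec: properties of Delta, lem 1, derivatives of Delta} and the estimates $|am(\omega - \omega_+m)| \leq C(\lambda_{low}+\alpha)\tilde{\lambda}$ (here the hypothesis $\lambda_{low}\ll \alpha^2$ is used so the negative contribution is strictly subleading), I obtain $\frac{dV_0}{dr}(r_+) \geq b\tilde{\lambda}$ and symmetrically $\frac{dV_0}{dr}(\bar r_+) \leq -b\tilde{\lambda}$. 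Lemma~\ref{lem: subsec: sec: trapping, subsec 1, lem 1} then supplies a unique critical point $r_{V_0,\max}\in(r_+,\bar r_+)$ which is a local maximum. Evaluating at $r_{\Delta,\textit{frac}}$ (Lemma~\ref{lem: sec: general properties of Delta, lem 3}) and using $|{\omega}-\tfrac{am\Xi}{r^2+a^2}|^2 = O(\omega^2+a^2m^2) = O(\lambda_{low})\tilde{\lambda}$ gives the quantitative peak estimate $V_0(r_{V_0,\max}) - \omega^2 \geq b\tilde{\lambda}$. Taking $\omega_{high}$ large enough to absorb $V_{SL}+V_{\mu_{KG}}$ transfers all these properties to the full potential $V$, yielding a unique interior maximum $r_{V,\max}$ with $V(r_{V,\max})-\omega^2 \geq b\tilde{\lambda}$ and the quantitative decay $-(r-r_{V,\max})V' \geq b\tilde{\lambda}\Delta(r-r_{V,\max})^2$ (proved as in Lemma~\ref{lem: /superradiant frequency energy estimate/ lem 2/ critical points of V}).

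For the multipliers, I would pick $f = (r - r_{V,\max})\tilde f$ with $\tilde f, f' > b > 0$ and $f(\pm\infty) = \pm 1$, and $h$ a smooth cutoff equal to $1$ on $(r_{V,\max}-\delta/2, r_{V,\max}+\delta/2)$ and vanishing outside $(r_{V,\max}-\delta, r_{V,\max}+\delta)$. Integrating the identity of Lemma~\ref{lem: subsec: currents, lem 1} for $Q^f + Q^h$ produces, via the peak bound near $r_{V,\max}$ and the quadratic decay bound away from it, a bulk integral controlling $b\int_{\mathbb R}\Delta(|u'|^2 + \Delta(1+\lambda+\omega^2)|u|^2)\,dr^\star$ plus the desired $H$-dependent terms, modulo boundary contributions $f(\infty)(\omega-\bar\omega_+m)^2|u|^2(\infty) - f(-\infty)(\omega-\omega_+m)^2|u|^2(-\infty)$.

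Finally, since $\mathcal{F}_\lessflat$ is strictly non-superradiant, $(\omega-\omega_+m)(\omega-\bar\omega_+m)\geq 0$, so both boundary terms have the same sign. I introduce cutoffs $\chi_1,\chi_2$ as in \eqref{eq: energy estimate in superradiant frequencies, eq 2} and add the current $-E\chi_1 Q^{K^+} - E\chi_2 Q^{\bar K^+}$ (signs chosen to match the boundary signs) for $E$ sufficiently large; by Proposition~\ref{prop: subsec: energy identity, prop 1} this generates horizon flux terms of precisely the form $|(\omega-\omega_\pm m)\Im(u\bar H)|$ on the right-hand side of the proposition, while the $\chi_i'$-generated bulk error on $(r_{V,\max}-\delta/2, r_{V,\max}+\delta/2)$ is absorbed into the bulk because $V-\omega^2\geq b\tilde{\lambda}$ there carries a factor $\tilde{\lambda}\geq\lambda_{low}^{-1}\omega_{high}^2$ swallowing any $\omega^2+\lambda$ factor from $Q^{K^+},Q^{\bar K^+}$. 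The main obstacle is the careful juggling of the four small/large parameters: $\alpha$ must be small enough for the derivative signs at the horizons, $\lambda_{low}\ll \alpha^2$ so that the angular dominance survives the enlarged-superradiant exclusion, and $\omega_{high}$ then large enough to make $V_{SL}+V_{\mu_{KG}}$ negligible against $\tilde{\lambda}\Delta$.
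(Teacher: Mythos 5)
Your proposal is correct and follows the same overall strategy as the paper: establish that $\tilde\lambda-2am\omega\Xi\gtrsim\tilde\lambda$ on $\mathcal{F}_{\lessflat}$, deduce the unique interior maximum and the quantitative peak/decay estimates for $V$ via Lemma~\ref{lem: subsec: sec: trapping, subsec 1, lem 1}, then integrate a $Q^f+Q^h$ current and absorb boundary fluxes with Hawking--Reall currents using Proposition~\ref{prop: subsec: energy identity, prop 1}. Two of your steps are more elaborate than what the paper does, and it is worth noting why the paper's version is simpler. First, instead of splitting into the two sign cases for $am\omega$ and invoking AM--GM in the second, the paper's Lemma~\ref{lem: angular dominated frequencies, lem 1} shows that the case $am\omega\geq\frac{a^2m^2\Xi}{r_+^2+a^2}+|a|\alpha\tilde\lambda$ is actually \emph{empty} in $\mathcal{F}_{\lessflat}$ once $\lambda_{low}\ll\alpha^2$: the exclusion forces both $|m|\geq\alpha\lambda_{low}^{-1}|\omega|$ and $|\omega|\geq\alpha\Xi|m|$, which together give $\lambda_{low}\geq\alpha^2\Xi$, a contradiction. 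So $am\omega\leq0$ may be assumed outright, and all the derivative and peak estimates become immediate without bookkeeping the enlarged-superradiant margin. Second, because $\mathcal{F}_{\lessflat}$ is non-superradiant, the two boundary fluxes $Q^f(\pm\infty)$ have the \emph{same} sign, so the paper simply adds the global current $-E_0Q^{K^+}-E_0Q^{\bar K^+}$; the localization $-E\chi_1Q^{K^+}-E\chi_2Q^{\bar K^+}$ you import from the enlarged superradiant case (where the fluxes have opposite signs and must be treated independently) is harmless here but introduces $\chi_i'$-error terms that you then have to re-absorb. Your narrow cutoff $h$ near $r_{V,\max}$ also works, but you should be aware the paper instead takes $h$ equal to $1$ on a wide interior interval $(r_{-\infty},r_{+\infty})$ and vanishing only near the horizons, which is what lets the constant $+1$ appear in the coercivity $\Delta(1+\lambda+\omega^2)$ on the left without relying on $-fV'\gtrsim\Delta$ in the outer region.
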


Note the following lemma for the potential $V$.
\begin{lemma}\label{lem: angular dominated frequencies, lem 1}
	Let~$l>0$ and~$(a,M)\in\mathcal{B}_l$  and~$\mu^2_{KG}\geq 0$. 
	\begin{equation}
		\omega_{\textit{high}}>0,\qquad \lambda^{-1}_{\textit{low}}>0,\qquad \alpha^{-1}>0
	\end{equation}
	be sufficiently large, where~$\lambda_{low}\ll\alpha^2$. Then, for~$(\omega,m,\ell)\in\mathcal{F}_{\lessflat}(\lambda_{low},\omega_{high},\alpha)$ the potential~$V$ attains a unique critical point, a maximum, at a value 
	\begin{equation}
	r_{V,\textit{max}}\in (r_+,\bar{r}_+).
	\end{equation}
	and moreover there exist~$r^\star_{-\infty}<0<r^\star_{+\infty}$ sufficiently negative and sufficiently positive respectively, such that 
	\begin{equation}\label{eq: lem: angular dominated frequencies, lem 1, eq 1}
		\begin{aligned}
			&	b\Delta \tilde{\lambda}\leq V^\prime \leq B\Delta \tilde{\lambda},\quad r^\star<r^\star_{-\infty},\\
			&b\Delta \tilde{\lambda} \leq - V^\prime \leq  B\Delta \tilde{\lambda},\quad r^\star> r^\star_\infty.
		\end{aligned} 
	\end{equation}
	Finally, the following holds
	\begin{equation}\label{eq: lem: angular dominated frequencies, lem 1, eq 1.0}
		|\max_{r\in [r_+,\bar{r}_+]}V-\omega^2  |\geq b \tilde{\lambda}.
	\end{equation}
\end{lemma}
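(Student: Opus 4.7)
The overall strategy is to exploit the dominance condition $\tilde{\lambda}>\lambda_{\textit{low}}^{-1}(\omega^2+a^2m^2)$ so that $V'$ is well-approximated by $\tilde{\lambda}\cdot\frac{d}{dr}\frac{\Delta}{(r^2+a^2)^2}$, and then deduce all the conclusions from the known behavior of $\frac{\Delta}{(r^2+a^2)^2}$ (Lemma~\ref{lem: sec: general properties of Delta, lem 3}) together with the critical-point count for $V_0$ given by Lemma~\ref{lem: subsec: sec: trapping, subsec 1, lem 1}. The contributions $V_{\textit{SL}}'$ and $V_{\mu_{\textit{KG}}}'$ are frequency-independent and bounded, so they are absorbed at the end by taking $\omega_{\textit{high}}$ large.

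First I would verify the quantitative non-superradiance estimate $\tilde{\lambda}-2am\omega\Xi\geq b\tilde{\lambda}$ on $\mathcal{F}_{\lessflat}$. Since $|am\omega|\leq \tfrac{1}{2}(\omega^2+a^2m^2)\leq \tfrac{1}{2}\lambda_{\textit{low}}\tilde{\lambda}$, we obtain $\tilde{\lambda}-2am\omega\Xi\geq (1-C\lambda_{\textit{low}})\tilde{\lambda}$; the constraint $\lambda_{\textit{low}}\ll\alpha^2$ guarantees consistency with the enlarged superradiant exclusion in the definition of $\mathcal{F}_{\lessflat}$. Using the formula from Lemma~\ref{lem: subsec: sec: trapping, subsec 1, lem 1},
\begin{equation*}
\frac{dV_0}{dr}=\left(\frac{d}{dr}\frac{\Delta}{(r^2+a^2)^2}\right)(\tilde{\lambda}-2am\omega\Xi)-\frac{4am\Xi r\left(\omega(r^2+a^2)-am\Xi\right)}{(r^2+a^2)^3},
\end{equation*}
I would bound the remainder term by $B(\omega^2+a^2m^2)\leq B\lambda_{\textit{low}}\tilde{\lambda}$. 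Lemma~\ref{lem: sec: properties of Delta, lem 1, derivatives of Delta} gives $\bigl(\frac{\Delta}{(r^2+a^2)^2}\bigr)'(r_+)>0$ and $<0$ at $\bar{r}_+$, so for $\lambda_{\textit{low}}$ small $V_0'(r_+)\geq b\tilde{\lambda}$ and $V_0'(\bar{r}_+)\leq-b\tilde{\lambda}$, and taking $\omega_{\textit{high}}$ large then yields the same signs for $V'$.

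Uniqueness of the critical point follows from Lemma~\ref{lem: subsec: sec: trapping, subsec 1, lem 1}: that lemma allows at most one minimum $r_{V_0,\min}$ followed by one maximum $r_{V_0,\max}$ in $[r_+,\bar{r}_+]$, but a minimum would force $V_0'$ to change from negative to positive, contradicting $V_0'(r_+)>0$. Hence only the maximum survives, and it lies strictly in $(r_+,\bar{r}_+)$ by the sign at the endpoints; adding the frequency-independent terms and taking $\omega_{\textit{high}}$ large transfers this to $V$ and gives $r_{V,\max}\in(r_+,\bar{r}_+)$. For~\eqref{eq: lem: angular dominated frequencies, lem 1, eq 1}, I would take $r^\star_{\pm\infty}$ far enough from $r_{V,\max}$ in tortoise coordinate so that $\bigl|\frac{d}{dr}\frac{\Delta}{(r^2+a^2)^2}\bigr|\sim \Delta$ with the correct sign on $(-\infty,r^\star_{-\infty})\cup(r^\star_{+\infty},+\infty)$ by Lemma~\ref{lem: sec: general properties of Delta, lem 3}; combined with the perturbative estimate above, this immediately yields the two-sided sandwich $b\Delta\tilde{\lambda}\leq\pm V'\leq B\Delta\tilde{\lambda}$ in the respective regions, again after using $\omega_{\textit{high}}$ large to control frequency-independent terms.

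For the non-trapping bound~\eqref{eq: lem: angular dominated frequencies, lem 1, eq 1.0}, I would evaluate $V_0-\omega^2$ at the interior point $r_{\Delta,\textit{frac}}$ where $\frac{\Delta}{(r^2+a^2)^2}\geq c(a,M,l)>0$. There
\begin{equation*}
(V_0-\omega^2)(r_{\Delta,\textit{frac}})=\frac{\Delta}{(r^2+a^2)^2}(\tilde{\lambda}-2am\omega\Xi)-\Bigl(\omega-\tfrac{am\Xi}{r^2+a^2}\Bigr)^2\geq b\tilde{\lambda}-C(\omega^2+a^2m^2)\geq b\tilde{\lambda}
\end{equation*}
for $\lambda_{\textit{low}}$ small, hence $\max_{[r_+,\bar{r}_+]}(V-\omega^2)\geq b\tilde{\lambda}$ after absorbing $V_{\textit{SL}}+V_{\mu_{\textit{KG}}}$ with $\omega_{\textit{high}}$ large. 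The principal technical obstacle is simply keeping track of the hierarchy $\lambda_{\textit{low}}\ll \alpha^2$, $\omega_{\textit{high}}\gg 1$, $\alpha\ll 1$, so that every inequality retains its sign after absorbing the subleading frequency-independent potential and the enlarged superradiant exclusion is correctly exploited; no new geometric input beyond Lemmata~\ref{lem: sec: properties of Delta, lem 1, derivatives of Delta}, \ref{lem: sec: general properties of Delta, lem 3}, \ref{lem: subsec: sec: trapping, subsec 1, lem 1} and \ref{lem: inequality for lambda} is needed.
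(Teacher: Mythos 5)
Your proposal is correct and arrives at the same conclusions, but the key first step is established by a genuinely different mechanism than the paper's. Where you bound the superradiance obstruction and the remainder terms in $V_0'$ perturbatively via $|am\omega|\leq\tfrac{1}{2}(\omega^2+a^2m^2)\leq\tfrac{1}{2}\lambda_{\textit{low}}\tilde\lambda$ (so both $\tilde\lambda-2am\omega\Xi\geq(1-C\lambda_{\textit{low}})\tilde\lambda$ and the error terms are $O(\lambda_{\textit{low}}\tilde\lambda)$), the paper instead proves by contradiction that $am\omega\leq 0$ on $\mathcal{F}_{\lessflat}$ — the enlarged superradiant exclusion $am\omega\not\in[0,\tfrac{a^2m^2\Xi}{r_+^2+a^2}+|a|\alpha\tilde\lambda)$ would force $|\omega|\geq\alpha^2\lambda_{\textit{low}}^{-1}\Xi|\omega|$, impossible when $\lambda_{\textit{low}}\ll\alpha^2$. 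The paper then reads off $V_0'(r_+)>b\tilde\lambda$ from the signs of the three terms rather than from a size estimate. Your route is more elementary, needs only $\lambda_{\textit{low}}$ small for this step (the $\lambda_{\textit{low}}\ll\alpha^2$ hierarchy becomes superfluous here), and is arguably cleaner; the paper's sign argument is more structural and mirrors the technique used in the neighbouring $\mathcal{F}^\sharp$ analysis. Downstream, you fill in the critical-point-count argument and the evaluation of $V_0-\omega^2$ at $r_{\Delta,\textit{frac}}$ in more detail than the paper, which is terse on both; your reasoning there is sound.

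One small slip: near the horizons one has $\bigl|\tfrac{d}{dr}\tfrac{\Delta}{(r^2+a^2)^2}\bigr|\sim 1$ (it limits to a nonzero constant by Lemma~\ref{lem: sec: properties of Delta, lem 1, derivatives of Delta}), not $\sim\Delta$. The factor of $\Delta$ in the target estimate $b\Delta\tilde\lambda\leq\pm V'\leq B\Delta\tilde\lambda$ comes from $V'=\tfrac{d}{dr^\star}V=\tfrac{\Delta}{r^2+a^2}\tfrac{dV}{dr}$, i.e.\ from the change to the tortoise derivative; you should either write $\tfrac{d}{dr^\star}\tfrac{\Delta}{(r^2+a^2)^2}\sim\Delta$ or note explicitly that $\tfrac{dV}{dr}\sim\tilde\lambda$ near the endpoints and then multiply by $\Delta/(r^2+a^2)$. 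The conclusion you reach is correct, but the intermediate claim as written is false.
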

\begin{proof}

	First, by recalling the definition of the potential $V$, see \eqref{eq: the potential V}, we note that 
	\begin{equation}
	\frac{dV}{dr}=\frac{dV_0}{dr}+\frac{d V_{\textit{SL}}}{dr}+\frac{dV_\mu}{dr},
	\end{equation}
	where 
	\begin{equation}\label{eq: lem: angular dominated frequencies, lem 1, eq -4}
	\begin{aligned}
	\frac{dV_0}{dr} &   =\frac{d}{dr}\left(\frac{\Delta}{(r^2+a^2)^2}\right)\left(\tilde{\lambda}-2m\omega a \Xi\right)-2\left(\omega-\frac{am\Xi}{r^2+a^2}\right)\frac{2r}{(r^2+a^2)^2}am\Xi\\
	&   =\frac{d}{dr}\left(\frac{\Delta}{(r^2+a^2)^2}\right)\left(\tilde{\lambda}-2m\omega a \Xi\right)-\frac{4r}{(r^2+a^2)^2}(am\omega\Xi)+\frac{4r}{(r^2+a^2)^3}(am\Xi)^2.
	\end{aligned}
	\end{equation}

	Note that for a sufficiently small~$\alpha(a,M,l)>0$ and for~$(\omega,m,\ell)\in\mathcal{F}_{\lessflat}$ we may assume
	\begin{equation}\label{eq: lem: angular dominated frequencies, lem 1, eq -3}
	a m\omega \leq 0.
	\end{equation}
	Suppose the opposite, namely that $a m\omega>0$. Then, we have
	\begin{equation}\label{eq: lem: angular dominated frequencies, lem 1, eq -2}
	a m\omega \geq \frac{a^2m^2\Xi^2}{r_+^2+a^2}+|a|\alpha\tilde{\lambda}\geq |a| \alpha \lambda_{\textit{low}}^{-1}\omega^2\implies |m|\geq \alpha \lambda_{\textit{low}}^{-1}|\omega|.
	\end{equation}
	On the other hand the following holds
	\begin{equation}\label{eq: lem: angular dominated frequencies, lem 1, eq -1}
	am\omega \geq \frac{a^2m^2\Xi}{r_+^2+a^2}+|a|\alpha\tilde{\lambda} \implies |\omega|\geq \alpha \frac{\tilde{\lambda}}{|m|}\geq \alpha \Xi |m|,
	\end{equation}
	where we also used~\eqref{eq: lem: inequality for lambda, eq 1} from Lemma~\ref{lem: inequality for lambda}. Combining~\eqref{eq: lem: angular dominated frequencies, lem 1, eq -2},~\eqref{eq: lem: angular dominated frequencies, lem 1, eq -1} we obtain 
	\begin{equation}
	|\omega|\geq \alpha^2\lambda_{\textit{low}}^{-1}\Xi|\omega|.\implies \lambda_{low}\geq \alpha^2 \Xi.
	\end{equation}
	Therefore, for a sufficiently small $\alpha>0$ if we take~$\lambda_{low}(\alpha)\sim \alpha^3>0$ sufficiently small we obtain a contradiction. Therefore, we conclude that~$a m\omega\leq 0$.

	Now, since $am\omega \leq 0$, then from~\eqref{eq: lem: angular dominated frequencies, lem 1, eq -4} we obtain 
	\begin{equation}\label{eq: lem: angular dominated frequencies, lem 1, eq 0.4}
	\frac{dV_0}{dr}(r_+)>b\tilde{\lambda}\implies (r^2+a^2)^3\frac{dV_0}{dr}(r=r_+)>b\tilde{\lambda}. 
	\end{equation}
	which immediately implies that~$\frac{dV}{dr}(r_+)\geq b\tilde{\lambda}$, by taking~$\lambda_{low}^{-1}$ sufficiently large.

	Therefore, by taking~$\lambda_{low}^{-1}$ sufficiently large, and in view of the fact that~$\tilde{\lambda}>\lambda_{low}^{-1}(\omega^2+a^2m^2)$ we obtain that indeed~$V$ attains a unique maximum~$r_{V,max}$ and moreover~\eqref{eq: lem: angular dominated frequencies, lem 1, eq 1},~\eqref{eq: lem: angular dominated frequencies, lem 1, eq 1.0} hold.	
\end{proof}

\begin{proof}[\textbf{Proof of Proposition \ref{prop: energy estimate in the angular dominated frequency regime}}]
	
We integrate the energy identity of Lemma~\ref{lem: subsec: currents, lem 1}, associated to~$Q^f+Q^h$, to get 
	\begin{equation}\label{eq: proof prop: energy estimate angular dominated, eq 1}
	\begin{aligned}
	&   \int_{\mathbb{R}} \left(2f^\prime +h\right)|u^\prime|^2dr^\star+\int_{\mathbb{R}}\left(h(V-\omega^2)-\frac{1}{2}h^{\prime\prime}-fV^\prime-\frac{1}{2}f^{\prime\prime\prime}\right)|u|^2dr^\star\\
	&   \quad = Q^f(\infty)-Q^f(-\infty)+Q^h(\infty)-Q^h(-\infty)\\
	&   \quad\quad -\int_{\mathbb{R}} \left(2f \Re (u^\prime\bar{H})+f^\prime\Re(u\bar{H})+h\Re(u\bar{H})\right)dr^\star.
	\end{aligned}
	\end{equation}
	Specifically, for a sufficiently small~$p>0$ we consider the following regions
	\begin{equation}
	-\infty<e^{p^{-1}}r^\star_{-\infty}<r^\star_{-\infty}<0<r^\star_{+\infty}<e^{p^{-1}}r^\star_{+\infty}<\infty,
	\end{equation}
	where $r^\star_{-\infty}<0<r^\star_{+\infty}$ are from Lemma \ref{lem: angular dominated frequencies, lem 1}.

	First, we construct the multiplier 
	\begin{equation}\label{eq: proof prop: energy estimate angular dominated, eq 2}
	f=\frac{2}{\pi}\arctan (r-r_{V,\textit{max}})   
	\end{equation}
	for all $r\in [r_+,\bar{r}_+]$. Note that $\frac{d f}{dr}>0$ in $[r_+,\bar{r}_+]$ and 
	\begin{equation}
	-fV^\prime\geq b \Delta (r-r_{V,\textit{max}})^2\left(\lambda^{(a\omega)}_{m\ell}+a^2\omega^2\right),\:r\in [r_+,\bar{r}_+].
	\end{equation}
	
	Now, to deal with the region away from the horizons, we construct the multiplier $h$ such that
	\begin{equation}\label{eq: proof prop: energy estimate angular dominated, eq 3}
	h=
	\begin{cases}
	0,\quad r^\star\in(-\infty,e^{p^{-1}}r^\star_{-\infty})\cup (e^{p^{-1}}r^\star_{+\infty},\bar{r}_{+}) \\
	1,\quad r^\star\in (r_{-\infty},r_{+\infty}),
	\end{cases}    
	\end{equation}
	and $h(r)\geq 0$ for~$r\in [r_+,\bar{r}_+]$.

	Now, the integrand in the left hand side of \eqref{eq: proof prop: energy estimate angular dominated, eq 1} satisfies the following
	\begin{equation}
	-fV^\prime+h(V-\omega^2)-\frac{1}{2}h^{\prime\prime}-\frac{1}{2}f^{\prime\prime\prime}\geq b \Delta\left(\lambda^{(a\omega)}_{m\ell}+a^2\omega^2+\omega^2+1\right),\:r\in[r_+,\bar{r}_+],
	\end{equation}
	by requiring~$\lambda_{\textit{low}}$ smaller if needed, in view of Lemma~\ref{lem: angular dominated frequencies, lem 1}. Moreover, note~$h(\pm \infty)=0$, therefore we obtain 
	\begin{equation}\label{eq: proof prop: energy estimate angular dominated, eq 4}
	\begin{aligned}
	&   \int_{\mathbb{R}} \Delta\Bigg( |u^{\prime}|^{2}+(1+\lambda^{(a\omega)}_{m\ell}+a^{2}\omega^{2}+\omega^{2})|u|^{2}  \Bigg)dr^\star  \\
	&   \quad \leq B Q^{f}(\infty)-BQ^{f}(-\infty)+B\int_{\mathbb{R}} \left(2fRe(u^{\prime}\bar{H})+(f^{\prime}+h)Re(\bar{H}u)\right)dr^\star\\
	&   \quad \leq  Bf(\infty)\left(\omega-\frac{am\Xi}{\bar{r}_+^2+a^2}\right)^2|u|^2(\infty)-Bf(-\infty)\left(\omega-\frac{am\Xi}{r_+^2+a^2}\right)^2|u|^2(-\infty)\\
	&   \quad\quad +B\int_{\mathbb{R}} \left(2fRe(u^{\prime}\bar{H})+(f^{\prime}+h)Re(\bar{H}u)\right)dr^\star.
	\end{aligned}
	\end{equation}
	Finally, in view of the fact that the $\lambda$--dominated frequency regime~$\mathcal{F}_{\lessflat}$ is non-superradiant, see Section~\ref{sec: frequencies}, then,  in view of Proposition~\ref{prop: subsec: energy identity, prop 1}, we integrate the energy identity of Lemma~\ref{lem: subsec: sec: frequency localized multiplier estimates, subsec 2, lem 1} associated with the current
	\begin{equation}
		-E_0 Q^{K^+}-E_0 Q^{\bar{K}^+}
	\end{equation}
	and sum it to inequality \eqref{eq: proof prop: energy estimate angular dominated, eq 4} to conclude the desired result. 
\end{proof}

\subsection{Multipliers for the high~\texorpdfstring{$\omega\sim \lambda$}{g}~frequency regime $F_{\natural}$}\label{subsec: trapped frequencies}

In view of the definition of the frequency regimes, see Definition~\ref{def: subsec: sec: frequencies, subsec 3, def 1}, this frequency regime is manifestly non-superradiant.

The main Proposition of this Section is the following

\begin{proposition}\label{prop: energy estimate for the trapped frequency regime}

	Let~$l>0$~$(a,M)\in \mathcal{B}_l$,~$\mu^2_{KG}\geq 0$. Let~$\alpha>0$. For any
	\begin{equation}
	\lambda_{low}^{-1}>0,\qquad\omega_{\textit{high}}>0,\qquad E>0
	\end{equation}
	all sufficiently large, then for~$(\omega,m,\ell)\in\mathcal{F}_\natural(\omega_{high},\lambda_{low},\alpha)$ there exist smooth multipliers~$f(\omega,m,\tilde{\lambda},r),~h(\omega,m,\tilde{\lambda},r)$ and there exists a piecewise~$C^1$ multiplier~$y(\omega,m,\tilde{\lambda},r)$ that satisfy the uniform bound
	\begin{equation}
	|h|+|h^\prime|+|h^{\prime\prime}|+|y|+|y^\prime|+|f|+|f^\prime|+|f^{\prime\prime}|+|f^{\prime\prime\prime}|+|h|+|h^\prime|+|h^{\prime\prime}|\leq B,
	\end{equation}
	and there exitsts an
	\begin{equation}
		r_{trap}(\omega,m,\ell)\in (r_++\epsilon_{away},\bar{r}_+-\epsilon_{away})\cup \{0\}
	\end{equation}
	where~$\epsilon_{away}(a,M,l)>0$ is any sufficiently small real number, such that for all smooth solutions~$u$ of Carter's radial ode~\eqref{eq: ode from carter's separation} satisfying the outgoing boundary conditions~\eqref{eq: lem: sec carters separation, subsec boundary behaviour of u, boundary beh. of u, eq 3}, we have 
	\begin{equation}\label{eq: prop: energy estimate for the trapped frequency regime, eq 1}
	\begin{aligned}
	&   \int_{\mathbb{R}} \Delta\left(|u^{\prime}|^2+|u|^2+ \left(1-\frac{r_{\textit{trap}}}{r}\right)^2(\omega^2+\tilde{\lambda}) |u|^2\right)dr^\star  \\
	&\quad \leq 
	B\int_{\mathbb{R}} \left(|2f\Re(u^{\prime}\bar{H})|+|f^{\prime}\Re(\bar{H}u)|+|y\Re(u^\prime\bar{H})|+|h \Re(u\bar{H})|+\left|  2y\Re(u^{\prime}H) \right|\right)dr^\star\\
	&	\qquad  + E\int_{\mathbb{R}}\left( \left(\omega-\frac{am\Xi}{r_+^2+a^2}\right)\Im (\bar{u}H)+\left(\omega-\frac{am\Xi}{r_+^2+a^2}\right)\Im(\bar{u}H)\right)dr^\star .
	\end{aligned}
	\end{equation}
\end{proposition}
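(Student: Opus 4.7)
The plan is to follow the blueprint laid out in Proposition 13.1 for the high de Sitter frequencies, which handled the other main case involving trapping, now adapted to the non-superradiant high-frequency regime $\mathcal{F}_\natural$ where $\omega^2 \sim a^2m^2 \sim \tilde{\lambda}$. The first step is to study the critical points of the potential $V$. By Lemma 10.1, the function $(r^2+a^2)^3 \frac{dV_0}{dr}$ admits at most one critical point in $[r_+,\bar{r}_+]$, which is a local maximum, so $V_0$ admits at most one interior maximum $r_{V_0,\textit{max}}$. Since $(\omega,m,\ell)\in \mathcal{F}_\natural$ is non-superradiant (i.e., $am\omega \notin [0, \frac{a^2m^2\Xi}{r_+^2+a^2}+|a|\alpha\tilde{\lambda})$) and high-frequency, one shows using the explicit formula for $\frac{dV_0}{dr}$ and the bounds of Lemma 9.2 that the maximum of $V$ is attained in the interior at some $r_{V,\textit{max}} \in (r_+ + \epsilon_{\textit{away}}, \bar{r}_+ - \epsilon_{\textit{away}})$, and moreover $-(r-r_{V,\textit{max}})V^\prime \geq b\tilde\lambda \Delta (r-r_{V,\textit{max}})^2$ in $[r_+,\bar{r}_+]$.

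Next, following the same split as in the de Sitter case, partition $\mathcal{F}_\natural$ into a trapped piece $(\textit{Trap})$, where $|\max V-\omega^2|\leq \epsilon_{\textit{trap}}\tilde\lambda$, and a non-trapped piece $(\textit{Not Trap})$, where this quantity is $\geq \epsilon_{\textit{trap}}\tilde\lambda$. In $(\textit{Trap})$, set $r_{\textit{trap}} := r_{V,\textit{max}}$, and construct $f$ by solving $-\frac{1}{2}f^{\prime\prime\prime} = \Delta$ in a neighborhood of $r_{\textit{trap}}$ with initial data $f(r_{\textit{trap}})=0$, $f^\prime(r_{\textit{trap}})=1$, $f^{\prime\prime}(r_{\textit{trap}})=0$, then extend smoothly to $[r_+,\bar r_+]$ with $f(\pm\infty)=\pm 1$, $f^\prime > b$ and all derivatives uniformly bounded. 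The sign of $V^\prime$ together with the quadratic lower bound on $-(r-r_{V,\textit{max}})V^\prime$ then forces $-fV^\prime -\frac{1}{2}f^{\prime\prime\prime} \geq b\Delta(1+(r-r_{\textit{trap}})^2\tilde\lambda)$ throughout $[r_+,\bar r_+]$, which after integration of the energy identity for $Q^f$ gives the Morawetz-type bulk estimate with the $(1-r_{\textit{trap}}/r)^2$ weight on the $\omega^2+\tilde\lambda$ term. In $(\textit{Not Trap})$, set $r_{\textit{trap}} := 0$; construct the same $f$ (with trivial weight), add a cutoff $h$ supported near $r_{V,\textit{max}}$ with $h(r_{V,\textit{max}})=1$, and exploit the non-trapping lower bound $|V-\omega^2|\geq b\tilde\lambda$ on $\textrm{supp}(h)$ to close the bulk estimate, exactly as in the second case of Proposition 13.1.

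To absorb the resulting boundary terms $Q^f(\pm\infty)$, which are proportional to $(\omega-\omega_+m)^2|u|^2(-\infty)$ and $(\omega-\bar\omega_+m)^2|u|^2(\infty)$, one uses that $\mathcal{F}_\natural$ is non-superradiant: integrate the identity of Lemma 11.4 for the current $-EQ^{K^+}-EQ^{\bar K^+}$ with $E$ sufficiently large and add. The energy identity of Proposition 12.5 then shows the $Q^{K^+},Q^{\bar K^+}$ boundary contributions at $\pm\infty$ have the correct sign to absorb the $Q^f$ boundary terms, producing the two inhomogeneity contributions multiplying $E$ on the RHS of \eqref{eq: prop: energy estimate for the trapped frequency regime, eq 1}. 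Finally, a $Q^y$ current with $y$ a piecewise $C^1$ function of the form $-\exp(-C\int^{r^\star}\chi)$ (as in Proposition 13.11) adds the missing non-degenerate control of $|u^\prime|^2$ and $|u|^2$ in a compact range of $r^\star$, and a frequency-independent cutoff current $Q^h$ adds the $\Delta|u|^2$ term in the bulk, paralleling the construction in Proposition 13.1.

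The main obstacle I expect is the simultaneous balancing of parameters: one must choose $\omega_{\textit{high}},\lambda_{\textit{low}}^{-1},E$ all large enough (and in the right order) so that (i) the frequency-scale separation $\omega^2\sim\tilde\lambda \gg 1$ dominates the Schwarzschild-like and Klein--Gordon pieces $V_{\textit{SL}}+V_{\mu_{KG}}$, ensuring that the unique critical point $r_{V,\textit{max}}$ of $V$ inherits the properties of $r_{V_0,\textit{max}}$ with the quadratic lower bound intact; (ii) the coefficient of $Q^{K^+}+Q^{\bar K^+}$ is large enough to absorb the $f$-boundary terms without destroying the bulk estimate; and (iii) the constant $E$ remains \emph{independent} of the frequency parameters and is in particular not enlarged when passing to subsequent estimates (cf.\ Remark 8.4). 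Handling the boundary value $r_{V,\textit{max}}\to \bar r_+$ or $r_{V,\textit{max}}\to r_+$ at the frequency-regime boundary (i.e.\ verifying $r_{V,\textit{max}}\in (r_++\epsilon_{\textit{away}},\bar r_+-\epsilon_{\textit{away}})$ uniformly in $\mathcal{F}_\natural$) is the place where the detailed bounds \eqref{eq: lem: inequality for lambda, eq 1}--\eqref{eq: lem: inequality for lambda, eq 3} and the explicit form of $\frac{dV_0}{dr}$ at the horizons have to be combined carefully, in the spirit of Lemma 13.4 in the superradiant case.
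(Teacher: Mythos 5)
Your proposal has the right overall shape (trapping split, $Q^f$ built by solving $-\tfrac12 f'''=\Delta$, horizon currents $-E Q^{K^+}-E Q^{\bar K^+}$ to kill the boundary terms), but it contains a genuine gap: you assert that the maximum $r_{V,\textit{max}}$ is always an interior critical point and that the uniform quadratic bound $-(r-r_{V,\textit{max}})V'\geq b\tilde\lambda\,\Delta\,(r-r_{V,\textit{max}})^2$ holds throughout $[r_+,\bar r_+]$. This is false on the sub-regime $am\omega\geq \frac{a^2m^2\Xi}{r_+^2+a^2}+|a|\alpha\tilde\lambda$, which is part of $\mathcal{F}_\natural$. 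On that sub-regime the cross term in
\[
\frac{dV_0}{dr}(r_+)=\frac{d}{dr}\!\left(\frac{\Delta}{(r^2+a^2)^2}\right)(r_+)\,(\tilde\lambda-2am\omega\Xi)\;-\;2\Bigl(\omega-\tfrac{am\Xi}{r_+^2+a^2}\Bigr)\,\frac{2r_+\,am\Xi}{(r_+^2+a^2)^2}
\]
is strictly negative (since $am(\omega-\tfrac{am\Xi}{r_+^2+a^2})>0$), so $\frac{dV_0}{dr}(r_+)$ need not be positive, $V_0$ can dip to a local minimum before rising to its maximum, and consequently $-fV'$ is \emph{not} nonnegative near the event horizon. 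Your $Q^f$ current alone therefore cannot close the bulk estimate there, and the quadratic bound you invoke is not available.

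The paper's Lemma~\ref{lem: f current in trapped frequencies} resolves exactly this by splitting into four cases according to both the trapping condition \emph{and} the sign of $am\omega$. When $am\omega<0$ (your favorable case) the unique interior maximum and the quadratic bound do hold, and $f$ alone works, as you describe. When $am\omega\geq \frac{a^2m^2\Xi}{r_+^2+a^2}+|a|\alpha\tilde\lambda$ the paper instead finds an intermediate point $r_3$ with $\omega^2-V\geq b\tilde\lambda$ on $[r_+,r_3]$ and $\frac{dV_0}{dr}(r_3)\geq b\tilde\lambda$, builds $y=1-e^{C_{\textit{large}}(r_3-r)}$ on $[r_+,r_3]$, $y\equiv 0$ on $[r_3,\bar r_+]$, and glues this $Q^y$ current to the $Q^f$ current supported past $r_3$. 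The piecewise $Q^y$ current you mention at the end of your proposal plays that structural role, but you motivate it merely as "adding non-degenerate control on a compact set" and you model it on the $y=-\exp(-C\int^{r^\star}\chi)$ construction of the bounded regime rather than on the case-2 construction that is actually needed. The non-trapped half also requires a further sub-split (whether $\omega^2-\max V_0\gtrsim\tilde\lambda$ or $\max V_0-\omega^2\gtrsim\tilde\lambda$), which is absent from your sketch. To complete your argument you would need to make the $am\omega$-sign dichotomy explicit and insert the $Q^y$ horizon-side current in the case $am\omega\geq\frac{a^2m^2\Xi}{r_+^2+a^2}+|a|\alpha\tilde\lambda$.
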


\begin{remark}
If the solution~$\psi$ of the Klein--Gordon equation~\eqref{eq: kleingordon} is axisymmetric~$\partial_{\varphi^\star}\psi=0$ then we note that in Proposition~\ref{prop: energy estimate for the trapped frequency regime}, we have~$r_{\textit{trap}}=r_{\Delta,\textit{trap}}$, where~$r_{\Delta,\textit{frac}}$ is the value of the unique local maximum of~$\frac{\Delta}{(r^2+a^2)^2}$, see Lemma~\ref{lem: sec: general properties of Delta, lem 3}. 
\end{remark}

We note that for the present frequency regime~$(\omega,m,\bar{\lambda})\in \mathcal{F}_\natural$ the following hold:
\begin{itemize}
	\item if~$am\omega<0$ then~$\frac{dV_0}{dr}(r_+)>b\tilde{\lambda}$,\\
	\item  if~$am\omega> \frac{a^2m^2\Xi}{r_+^2+a^2} +\alpha|a|\tilde{\lambda}$ then~$\frac{dV_0}{dr}(\bar{r}_+)<-b\tilde{\lambda}$. 
\end{itemize}
Therefore, in view of Lemma~\ref{lem: subsec: sec: trapping, subsec 1, lem 1}, it would be convenient to keep in mind that the potential~$V_0$ satisfies only one of the following 
\begin{equation}\label{eq: sec: trapped frequencies, eq 1} 
	\begin{aligned}
		&	(V_0~case~1)~\text{There exists a unique critical point of}~V_0~\text{that satisfies}~r_+<r_{V_0,max}<\bar{r}_+\\
		&		(V_0~case~2)~\text{There exist exist two critical points of}~V_0~\text{that satisfy}~r_+<r_{V_0,min}< r_{V_0,max}<\bar{r}_+\\
		&		(V_0~case~3)~\text{There exist exist two critical points of}~V_0~\text{that satisfy}~r_+<r_{V_0,min}< r_{V_0,max}= \bar{r}_+\\
		&		(V_0~case~4)~ \frac{dV_0}{dr}>0~ \text{or}~\frac{dV_0}{dr}<0~\textit{for all}~r\in (r_+,\bar{r}_+). \\
	\end{aligned}
\end{equation}
For a graphic representation of the above cases see Figure~\ref{fig: V0}. Note that in the~$(Potential~case~3)$ we do not exclude that~$V_0$ attains a critical point at~$\bar{r}_+$.

\begin{figure}[htbp]
	\begin{multicols}{4}
		\includegraphics[scale=0.17]{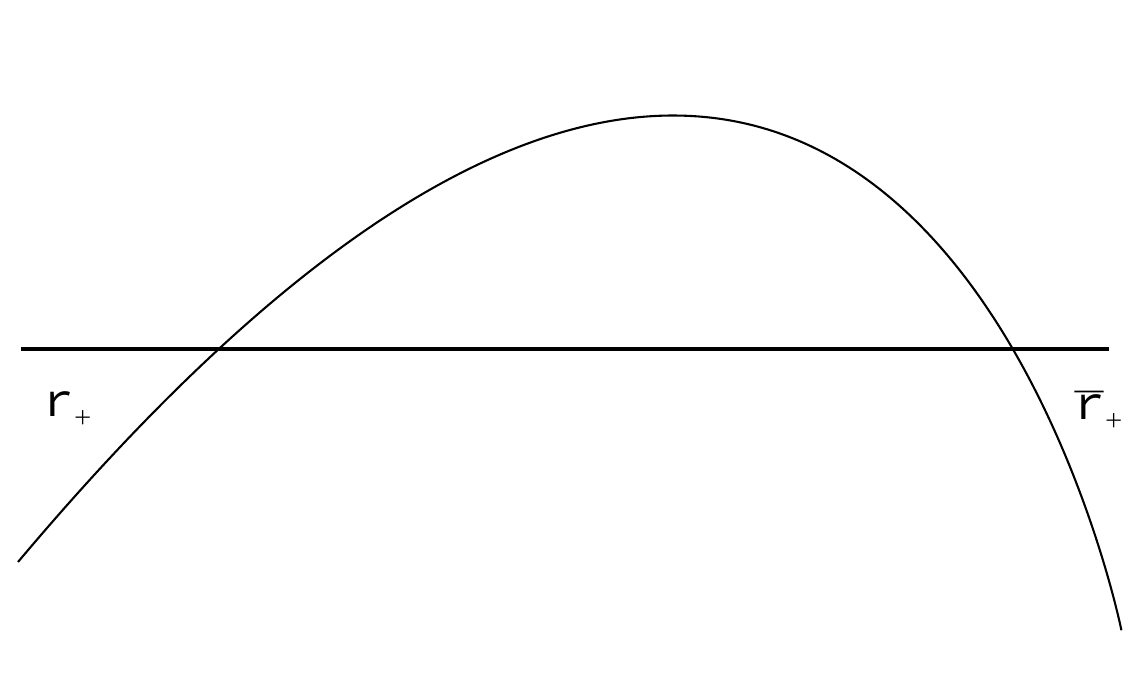}\par
		\includegraphics[scale=0.17]{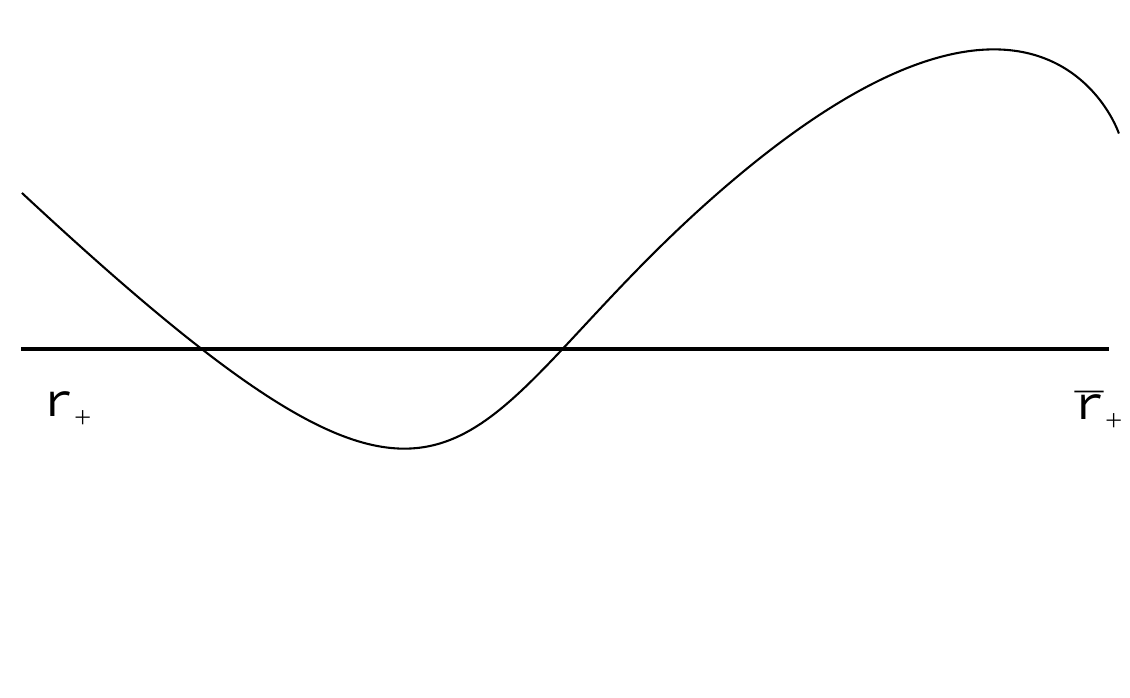}\par 
		\includegraphics[scale=0.17]{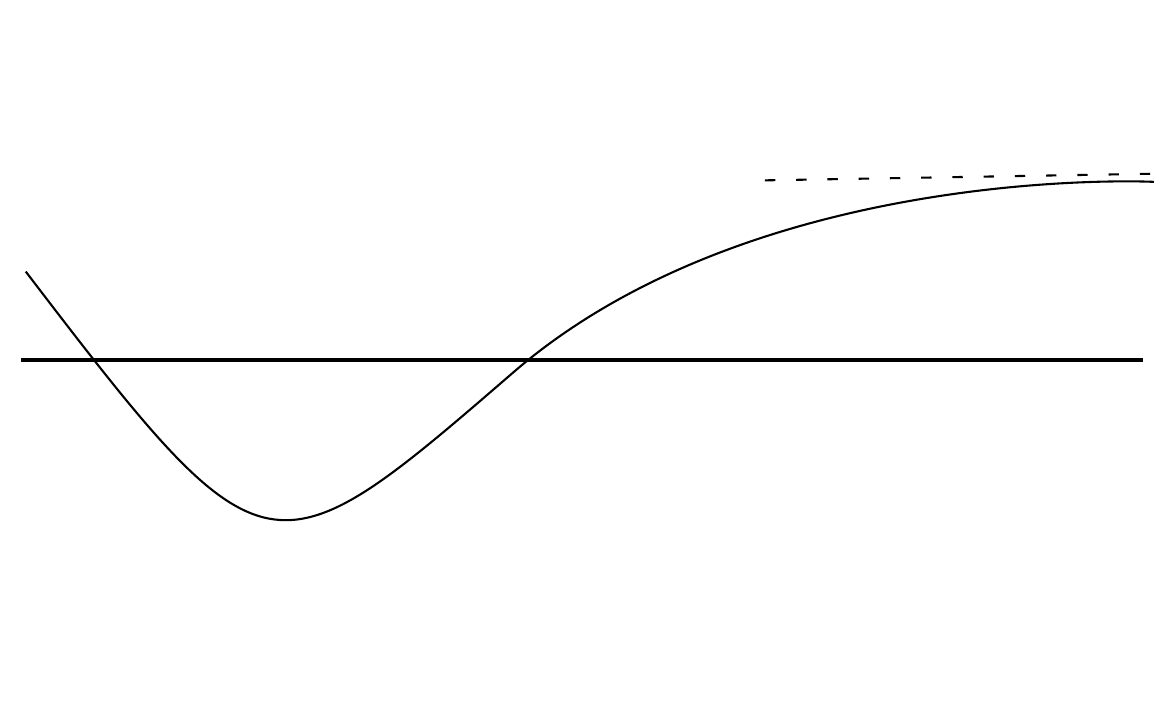}\par
		\includegraphics[scale=0.17]{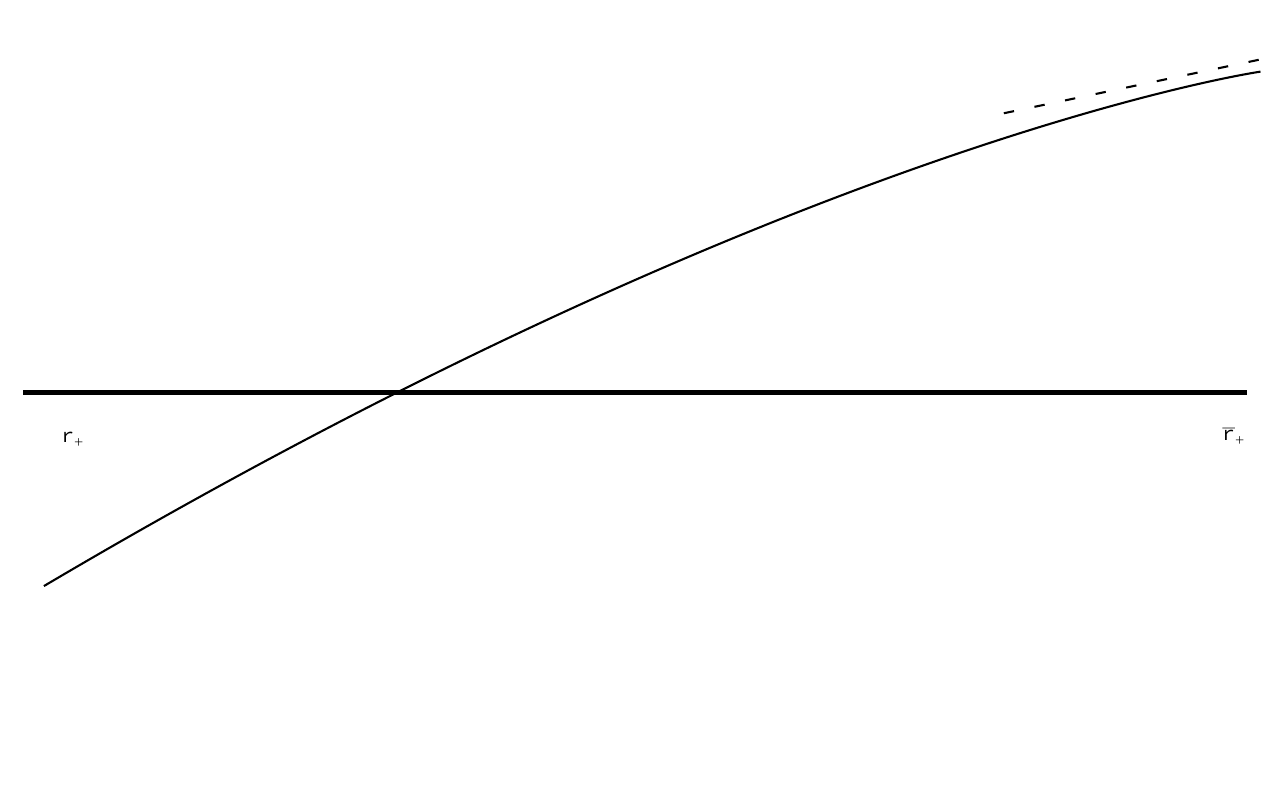}\par
	\end{multicols}
	\caption{The cases of~\eqref{eq: sec: trapped frequencies, eq 1}  respectively.}
	\label{fig: V0}
\end{figure}

Now, we construct the $Q^f,Q^y,Q^h$ currents, see Definition~\ref{def: currents}.

\begin{lemma}\label{lem: f current in trapped frequencies}
	Let $l>0$ and $(a,M)\in\mathcal{B}_l$ and~$\mu^2_{KG}\geq 0$. Let~$\alpha>0$. For any
	\begin{equation}
		\omega_{\textit{high}}>0,\qquad (\lambda_{low})^{-1}>0,\qquad \epsilon_{away}^{-1}>0
	\end{equation}
	all sufficiently large then for~$(\omega,m,\ell)\in \mathcal{F}_\natural(\omega_{high},\lambda_{low},\alpha)$ there exists a trapping parameter
	\begin{equation}
		r_{trap} (\omega,m,\ell) \in [r_++\epsilon_{away},\bar{r}_+-\epsilon_{away}]\cup \{0\},
	\end{equation}
	where~$\epsilon_{away}>0$, such that the following hold.

	There exist smooth multiplier~$f=f(\omega,m,\ell),h=h(\omega,m,\ell)$ and there exists a piecewise~$C^1$ multiplier $y=y(\omega,m,\ell)$ such that the following uniform bound holds piecewise
	\begin{equation}
		|h|+|h^\prime|+|h^{\prime\prime}|+|f|+|f^\prime|+|f^{\prime\prime}|+|f^{\prime\prime\prime}| +|y|+|y^\prime|\leq B(\omega_{high},\lambda_{low}),
	\end{equation}
and we have
	\begin{equation}\label{eq: prop: f current in trapped frequencies, eq 0}
		\begin{aligned}
			&	(P1)\quad  f(r_\textit{trap})=0,\\
			&	(P2)\quad h+ y^\prime +2f^\prime \geq b\Delta,~r\in (r_+,\bar{r}_+)\\
			&	(P3)\quad  \left(-fV^{\prime}-\frac{1}{2}f^{\prime\prime\prime}\right) +\left(y^\prime(\omega^2-V)-yV^\prime\right) +\left(h(V-\omega^2)-\frac{1}{2}h^{\prime\prime}\right)\geq b\Delta\cdot(r-r_{\textit{trap}})^2\left(\tilde{\lambda}+\omega^2\right)+b\Delta, \: r\in (r_+,\bar{r}_+).\\
		\end{aligned}
	\end{equation}	
\end{lemma}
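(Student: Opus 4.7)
The plan is to proceed by case analysis based on the critical point structure of $V_0$ (and therefore of $V$ for $\omega_{\textit{high}}$ sufficiently large), following the four cases listed in~\eqref{eq: sec: trapped frequencies, eq 1}. The starting point is Lemma~\ref{lem: subsec: sec: trapping, subsec 1, lem 1}, which tells us that $(r^2+a^2)^3 V_0'$ has at most one critical point in $[r_+,\bar{r}_+]$ and hence $V_0$ admits at most two critical points $r_{V_0,\textit{min}}<r_{V_0,\textit{max}}$, the latter being a local maximum at which~\eqref{eq: lem: subsec: sec: trapping, subsec 1, lem 1, eq 1} furnishes a quantitative negative second-derivative bound. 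Since on $\mathcal{F}_\natural$ we have $\omega^2+\tilde{\lambda}\gtrsim \omega_{\textit{high}}^2$ while $V_{\textit{SL}}+V_{\mu_{\textit{KG}}}$ is frequency-independent, for $\omega_{\textit{high}}$ large enough $V$ inherits the same critical-point structure as $V_0$, and the critical point $r_{V,\textit{max}}$ of $V$ lies at distance $\mathcal{O}(\omega_{\textit{high}}^{-1})$ from $r_{V_0,\textit{max}}$. A preliminary step is to show that under the $\mathcal{F}_\natural$ constraint on $am\omega$ together with Lemma~\ref{lem: inequality for lambda}, whenever $r_{V_0,\textit{max}}$ lies in $(r_+,\bar{r}_+)$ it is bounded away from the horizons by some $\epsilon_{\textit{away}}(a,M,l)>0$; this is the analogue here of the analysis in Sections~\ref{subsec: de Sitter frequency regime}--\ref{subsec: superradiant frequency regime}.

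In Case~1 (unique interior maximum), I set $r_{\textit{trap}}=r_{V,\textit{max}}$ and take $y\equiv h\equiv 0$. I define $f$ by solving $-\tfrac{1}{2}f'''=\Delta$ in $(r_{\textit{trap}}-\epsilon_{\textit{away}},r_{\textit{trap}}+\epsilon_{\textit{away}})$ with initial data $f(r_{\textit{trap}})=0$, $f'(r_{\textit{trap}})=1$, $f''(r_{\textit{trap}})=0$, and then extending smoothly to $[r_+,\bar{r}_+]$ with $f'>b$ and uniform $C^3$ bounds (precisely the construction used in the proof of Proposition~\ref{prop: energy estimate in the de Sitter freqs, l larger that omega}). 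Property (P1) holds by construction, (P2) follows from $f'\geq b$ (which on $[r_+,\bar{r}_+]$ means $2f'\geq b\Delta$ after absorbing constants), and for (P3) the quadratic vanishing of $V'$ at $r_{V,\textit{max}}$ combined with~\eqref{eq: lem: subsec: sec: trapping, subsec 1, lem 1, eq 1} gives $-fV'\geq b\Delta(r-r_{\textit{trap}})^2(\tilde{\lambda}+\omega^2)$, while $-\tfrac{1}{2}f'''$ contributes the lower-order $b\Delta$ term. Case~2 is identical once one verifies that $r_{V_0,\textit{min}}$, which in our regime can only occur in a region where $V_0'\geq 0$ is controlled by the term $(\tilde{\lambda}-2am\omega\Xi)(\Delta/(r^2+a^2)^2)'$, does not interfere with the sign of $-fV'$; this can be arranged by extending $f$ monotonically through $r_{V_0,\textit{min}}$ with $f'\geq b$.

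In Cases~3 and~4, $V$ has no interior maximum (for $\omega_{\textit{high}}$ large), so I set $r_{\textit{trap}}=0$ and build the multiplier very differently. Here I take $f$ monotone with $f'\geq b$ but with no vanishing constraint beyond (P1) (which is vacuous since $r_{\textit{trap}}=0\notin[r_+,\bar{r}_+]$), and employ the exponential multiplier $y=-\exp\!\bigl(-C\int_{-\infty}^{r^\star}\chi\bigr)$ of the type used in Proposition~\ref{prop: energy estimate in the bounded non stationary frequency regime}, with $C(\omega_{\textit{high}},\lambda_{\textit{low}})$ chosen large. Since $V'$ has a definite sign throughout $(r_+,\bar{r}_+)$ in these cases, the dominant term $-yV'$ (or $-y'V$ after integration by parts) supplies the full $\Delta(\tilde{\lambda}+\omega^2)|u|^2$ coercivity, provided we then add a small compactly supported $h$ that kills the boundary integration by parts at the scale $\Delta$; the cut-off $\chi$ in the definition of $y$ is chosen so that $y'\equiv 0$ in neighborhoods of $\pm\infty$ with $y'\geq b\Delta$ on the interior. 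Property (P2) is then immediate from $y'+2f'\geq b\Delta$, and (P3) is verified by the sign of $V'$ together with $\omega^2+\tilde\lambda\gtrsim \omega_{high}^2$ absorbing the error terms from $V_{SL}$, $V_{\mu_{KG}}$ and the cross term $\omega^2 y'$.

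The main obstacle I anticipate is the transition case where $r_{V_0,\textit{max}}$ is close to the boundary of the range of $am\omega$ allowed by $\mathcal{F}_\natural$, namely near $am\omega=\tfrac{a^2m^2\Xi}{r_+^2+a^2}+|a|\alpha\tilde\lambda$, so that $r_{V_0,\textit{max}}$ may slide toward $r_+$ as $\alpha\to 0$. Controlling this requires showing that, under the $\mathcal{F}_\natural$ constraint $\omega^2+a^2m^2\leq \lambda_{\textit{low}}^{-1}\tilde\lambda$ together with the quantitative information from Lemmata~\ref{lem: energy estimate in superradiant frequencies, lem 1} and~\ref{lem: /superradiant frequency energy estimate/ lem 2/ critical points of V}, the location of $r_{V,\textit{max}}$ remains uniformly bounded away from both $r_+$ and $\bar{r}_+$ by a constant depending only on $(a,M,l,\alpha)$ and not on the individual frequencies; this is what fixes $\epsilon_{\textit{away}}$ and is what ensures the $f$ constructed above in Case~1 satisfies the required uniform bounds with constants of the form $B(\omega_{\textit{high}},\lambda_{\textit{low}})$ rather than frequency-dependent ones.
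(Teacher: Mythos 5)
Your case decomposition is based on the structural shape of $V_0$ (the four cases in~\eqref{eq: sec: trapped frequencies, eq 1}), whereas the paper instead splits $\mathcal{F}_\natural$ according to the quantitative trapping condition $|\max_{[r_+,\bar{r}_+]}V-\omega^2|\lessgtr \epsilon_{\textit{trap}}\tilde{\lambda}$ crossed with the sign of $am\omega$. That difference is not cosmetic: it is what allows the paper to decide when $r_{\textit{trap}}\neq 0$, and it is what your proposal is missing.

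The concrete gap is in your treatment of the cases where $V_0$ has a local minimum $r_{V_0,\textit{min}}\in(r_+,\bar{r}_+)$ (your Cases~2 and~3). You propose to keep $y\equiv h\equiv 0$ and ``extend $f$ monotonically through $r_{V_0,\textit{min}}$ with $f'\geq b$.'' This does not repair the sign of the bulk term. With $f(r_{\textit{trap}})=0$, $f'>0$ and $r_{V_0,\textit{min}}<r_{\textit{trap}}$, you have $f<0$ on $(r_+,r_{V_0,\textit{min}})$; on a left neighborhood of $r_{V_0,\textit{min}}$ the derivative $V_0'$ is also negative, so $-fV'<0$ there. Making $f$ monotone through the minimum does not change this, and the $-\tfrac12 f'''=\Delta$ contribution has no $\tilde\lambda$ scaling to compensate a loss of size $\Delta\tilde\lambda$. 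Similarly, your claim in Case~3 that ``$V'$ has a definite sign throughout $(r_+,\bar{r}_+)$'' is false: structurally in that case $V_0'<0$ on $(r_+,r_{V_0,\textit{min}})$ and $V_0'\geq 0$ on $(r_{V_0,\textit{min}},\bar{r}_+)$, so a monotone-$y$ argument based on a single sign of $V'$ cannot close either.

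What is missing is the mechanism the paper uses precisely in these subcases: the $\mathcal{F}_\natural$ constraint $am\omega\geq \tfrac{a^2m^2\Xi}{r_+^2+a^2}+|a|\alpha\tilde\lambda$, together with $\omega^2-V(r_+)=(\omega-\omega_+m)^2\geq b\tilde\lambda$, guarantees a definite window $[r_+,r_3]$ on which $\omega^2-V\geq b\tilde\lambda$. On that window the sign-changing $-fV'$ term is replaced by the exponential $y$-current (i.e.~one uses $y'(\omega^2-V)\geq b\Delta\tilde\lambda$ together with $-yV'$ after choosing $C_{\textit{large}}$ large), and the $f$-multiplier is applied only on $[r_3,\bar{r}_+]$, where $V'$ vanishes exactly once, at $r_{V,\textit{max}}$. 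Without this split of the radial interval and the accompanying $y$-current near the event horizon, the pointwise inequality~(P3) simply fails on $(r_+,r_{V_0,\textit{min}})$ in the cases where $V_0'(r_+)<0$.

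As a secondary remark, your plan sets $r_{\textit{trap}}=r_{V,\textit{max}}$ whenever a unique interior maximum exists, regardless of whether $|\max V-\omega^2|$ is small. The lemma as stated does permit this, but it is a deliberately weaker choice than the paper's: the paper sets $r_{\textit{trap}}=0$ whenever the maximum is quantitatively separated from $\omega^2$, using a compactly supported $h$ near $r_{V,\textit{max}}$ to recover full non-degeneracy. It is worth internalizing that the decision variable here is the trapping inequality $|\max V-\omega^2|\lessgtr\epsilon_{\textit{trap}}\tilde\lambda$, not the critical-point count of $V_0$.
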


\begin{proof}	
	
	We note that 
	\begin{equation}\label{eq: proof prop: f current in trapped frequencies, eq 0}
		\omega^2-V(r_+)= \left(\omega-\frac{am\Xi}{r_+^2+a^2}\right)^2\geq b\tilde{\lambda},\qquad \omega^2-V(\bar{r}_+)= \left(\omega-\frac{am\Xi}{\bar{r}_+^2+a^2}\right)^2\geq b\tilde{\lambda}
	\end{equation}
	where in the inequalities above we used that~$(\omega,m,\ell)\in \mathcal{F}_\natural$ for sufficiently large~$\omega_{high}>0$.

	Let~$\epsilon_{trap}>0$ be sufficiently small. In view of the definition of the high~$\omega\sim\lambda$ frequency regime~$\mathcal{F}_\natural$, see Definition~\ref{def: subsec: sec: frequencies, subsec 3, def 1}, we separate the frequency space~$(\omega,m,\ell)\in \mathcal{F}_\natural$ in the following sets 
	\begin{enumerate}\label{eq: proof prop: f current in trapped frequencies, eq 1}
		\item $|\max_{r\in [r_+,\bar{r}_+]}V-\omega^2 |\leq \epsilon_{trap} \tilde{\lambda}$,~$am\omega<0$\\
		\item $|\max_{r\in [r_+,\bar{r}_+]}V-\omega^2 |\leq \epsilon_{trap} \tilde{\lambda}$,~$am\omega\geq \frac{a^2m^2\Xi}{r_+^2+a^2}+\alpha|a|\tilde{\lambda}$\\
		\item $|\max_{r\in [r_+,\bar{r}_+]}V-\omega^2 |> \epsilon_{trap} \tilde{\lambda}$,~$am\omega<0$\\
		\item $|\max_{r\in [r_+,\bar{r}_+]}V-\omega^2 |> \epsilon_{trap} \tilde{\lambda}$,~$am\omega\geq \frac{a^2m^2\Xi}{r_+^2+a^2}+\alpha|a|\tilde{\lambda}$\\
	\end{enumerate}
As will become apparent below, the first two sets above are subject to trapping, namely~$r_{trap}\neq 0$.

$\bullet$ We begin with the first case~$(1)$. In this case we define the multipliers~$y,h\equiv 0$. First, we note that because~$am\omega<0$ we obtain
\begin{equation}
	\frac{dV_0}{dr} (r_+)> b\tilde{\lambda}.
\end{equation}
Moreover, we easily conclude that
\begin{equation}
	\frac{dV_0}{dr}(\bar{r}_+)<-b\tilde{\lambda},
\end{equation}
 in view of the fact that~$|\max_{r\in [r_+,\bar{r}_+]}V-\omega^2 |\leq \epsilon_{trap} \tilde{\lambda}$ and therefore the maximum value of~$V_0$ cannot be attained at the cosmological horizon~$\bar{r}_+$, also see the graphs of Figure~\ref{fig: V0}. Now, in view of Lemma~\ref{lem: subsec: sec: trapping, subsec 1, lem 1}, on the critical points of the potential~$V_0$, we conclude that indeed there exists a unique critical point~$r_{V_0,max}\in (r_++\epsilon_{away},\bar{r}_+-\epsilon_{away})$ of~$V_0$ which is a maximum and moreover the following holds~$\frac{d^2V_0}{dr^2}(r)<-b\tilde{\lambda}$ for~$r\in (r_{V_0}-\epsilon_{away},r_{V_0}+\epsilon_{away})$. Now, we take~$\omega_{high}>0$ sufficiently large and~$\epsilon_{away}>0$ sufficiently small to conclude that 
 \begin{equation}
 	\begin{aligned}
 		&	\frac{dV}{dr}>b\tilde{\lambda},\qquad r\in [r_+,r_{V_0}-2^{-1}\epsilon_{away}]\\
 		&		\frac{dV}{dr}<-b\tilde{\lambda},\qquad r\in [r_{V_0}+2^{-1}\epsilon_{away},\bar{r}_+]
 	\end{aligned}
 \end{equation}
 and moreover~$\frac{d^2V}{dr^2}(r)<0$ for~$r\in [r_{V_0}-2^{-1}\epsilon_{away},r_{V_0}+2^{-1}\epsilon_{away}]$. Therefore, we obtain that the potential~$V$ attains a unique maximum at~$r_{V,max}\in (r_++\epsilon_{away},\bar{r}_+-\epsilon_{away})$. In this case we define 
\begin{equation}
	r_{trap}=r_{V,max}.
\end{equation}

In view of the above, we construct a multiplier~$f$ as follows
\begin{equation}\label{eq: proof prop: f current in trapped frequencies, eq 2}
	-\frac{1}{2} f^{\prime\prime\prime} = \Delta,\qquad f(r_{trap})=0,\qquad f^\prime(r_{trap})=1,\qquad f^{\prime\prime}(r_{trap})=0,
\end{equation}	
in a neighborghood of~$r_{trap}$. We extend~$f$ smoothly in the entire interval~$[r_+,\bar{r}_+]$ such that we have
\begin{equation}
	f^\prime \geq b\Delta,\qquad -fV^\prime-\frac{1}{2}f^{\prime\prime\prime}\geq b\tilde{\lambda}\Delta (r-r_{trap})^2+b\Delta
\end{equation}
	and conclude~\eqref{eq: prop: f current in trapped frequencies, eq 0}.

$\bullet$ We continue with the second case~$(2)$. We first define the multiplier~$h\equiv 0$. Since~$am\omega\geq \frac{a^2m^2\Xi}{r_+^2+a^2}+\alpha|a|\tilde{\lambda}$ then we obtain~$\frac{dV_0}{dr}(\bar{r}_+)< -b\tilde{\lambda}$. Moreover, in view of~\eqref{eq: proof prop: f current in trapped frequencies, eq 0} we can find an~$r_3(\omega_{high},\lambda_{low},\epsilon_{trap})\in [r_+,\bar{r}_+]$ such that 
\begin{equation}
	\omega^2 -V \geq b \tilde{\lambda},\qquad r\in [r_+,r_3]. 
\end{equation}
It is an easy consequence of the trapping condition~$|\max_{r\in[r_+,\bar{r}_+]} V -\omega^2|\leq \epsilon_{trap}\tilde{\lambda}$ and of Lemma~\ref{lem: subsec: sec: trapping, subsec 1, lem 1}, on the critical points of~$V_0$, that for any~$\omega_{high},\lambda_{low}^{-1}>0$ sufficiently large and for any~$\epsilon_{trap}>0$ sufficiently small we can have
\begin{equation}\label{eq: proof prop: f current in trapped frequencies, eq 3}
	\frac{dV_0}{dr}(r_3)\geq b\tilde{\lambda},
\end{equation}
the reader should also look at the second Figure~\ref{fig: V0}. 
	
It is obvious now that, for sufficiently positive~$\omega_{high}$, the critical point~$r_{V,max}$ is attained inside the interval~$[r_3,\bar{r}_+]$. We define 
\begin{equation}
	r_{trap}=r_{V,max}
\end{equation}	
Moreover, for sufficiently small~$\epsilon_{trap},~\epsilon_{away},\lambda_{low}>0$ we conclude that~$r_{trap}\in [r_++\epsilon_{away},\bar{r}_+-\epsilon_{away}]$ and of course~$r_3<\bar{r}_+-\epsilon_{away}$.

In view of the above, we will now construct two multipliers~$y,f$. First, our multiplier~$y$ will satisfy the following
\begin{equation}\label{eq: proof prop: f current in trapped frequencies, eq 4}
	\begin{aligned}
		y	&	= 1- e^{C_{large}(r_3-r)},\quad r\in [r_+,r_3],\\
		y	&	=0,\qquad r\in [r_3,\bar{r}_+],
	\end{aligned}
\end{equation}
where~$C_{large}>0$ will be chosen sufficiently large later. Note that
\begin{equation}
	y^\prime(\omega^2-V)-yV^\prime =  \frac{\Delta}{r^2+a^2}C_{large} e^{C_{large}(r_3-r)} (\omega^2-V)-yV^\prime \geq b(C_{large}) \tilde{\lambda}
\end{equation}
in the interval~$(r_+,r_3)$ ,where in the last inequality we used that~$\omega^2\sim \tilde{\lambda}$ and we took~$C_{large}(\omega_{high},\lambda_{low})>0$ sufficiently large.

We construct the multiplier~$f$ again by solving the initial value problem~\eqref{eq: proof prop: f current in trapped frequencies, eq 2}. We choose~$C_{large}$ sufficiently positive to obtain the following 
\begin{equation}
	\begin{aligned}
	&	2f^\prime +y^\prime \geq b\Delta,\\
	&	\left( y^\prime (\omega^2-V)-y V^\prime \right)+\left( -fV^\prime -\frac{1}{2}f^{\prime\prime}\right)\geq b\tilde{\lambda}(r-r_{trap})^2\Delta + b\Delta,
	\end{aligned}
\end{equation}
	which concludes~\eqref{eq: prop: f current in trapped frequencies, eq 0}. 
	
$\bullet$~We continue with the third case~$(3)$. In this case we set
\begin{equation}
	r_{trap}=0.
\end{equation}
We first define~$y\equiv 0$. We note argue similarly to case~$(1)$. Specifically, since~$am\omega<0$ we have that
\begin{equation}
	\frac{dV_0}{dr}(r_+)\geq b\tilde{\lambda}. 
\end{equation}
Therefore, by  Lemma~\ref{lem: subsec: sec: trapping, subsec 1, lem 1}, we take~$\omega_{high}>0$ sufficiently large and we find that there exists a maximum of the potential~$V$, which we here call~$r_{V,max}$ such that~$r_{V,max}\in (r_+,\bar{r}_+]$. Note that~$r_{V,max}$ is not necessarily a critical point and that it can be the case that~$r_{V,max}=\bar{r}_+$.

We define a multiplier~$f$ by solving the initial value problem
\begin{equation}
	-\frac{1}{2} f^{\prime\prime\prime} = \Delta,\qquad f(r_{V,max})=0,\qquad f^\prime(r_{V,max})=1,\qquad f^{\prime\prime}(r_{V,max})=0,
\end{equation}	
in a neighborhood of~$r_{V,max}$. Recall that we do not exclude the case~$r_{V,max}=\bar{r}_+$. We extend~$f$ smoothly in the entire interval~$[r_+,\bar{r}_+]$ such that
\begin{equation}
	\begin{aligned}
		f^\prime \geq b\Delta,\qquad -fV^\prime -\frac{1}{2}f^{\prime\prime\prime}\geq b\Delta (r-r_{trap})^2 \tilde{\lambda}+b\Delta.
	\end{aligned}
\end{equation}
after taking~$\omega_{high}>0$ sufficiently large. 	

We use the non trapping condition~$|\max_{r}V-\omega^2|\geq \epsilon_{trap} \tilde{\lambda}$ so that we can construct a multiplier~$h$ such that for sufficiently large~$\omega_{high},\lambda_{low}^{-1}$ and sufficiently small~$\epsilon_{trap}>0$ we find that 
\begin{equation}\label{eq: proof prop: f current in trapped frequencies, eq 5}
	\left(h(V-\omega^2)-\frac{1}{2}h^{\prime\prime}\right) + \left(-fV^\prime -\frac{1}{2}f^{\prime\prime}\right)\geq b \Delta \tilde{\lambda},
\end{equation}	
	which concludes~\eqref{eq: prop: f current in trapped frequencies, eq 0}.

$\bullet$ Finally, we study that fourth case~$(4)$. We split the frequency space of case $(4)$ into the two frequency regimes
\begin{equation}\label{eq: proof prop: f current in trapped frequencies, eq 6}
	\begin{aligned}
		&	\{\omega^2-\max_{r\in[r_+,\bar{r}_+]}V_0\geq \epsilon_{trap} \tilde{\lambda} \}\\
		&	\{\max_{r\in[r_+,\bar{r}_+]}V_0-\omega^2\geq \epsilon_{trap} \tilde{\lambda} \}.
	\end{aligned}
\end{equation}

For the first case of~\eqref{eq: proof prop: f current in trapped frequencies, eq 6} we note that~$\omega^2-V(r)\geq \epsilon_{trap}\tilde{\lambda}$ for all~$r\in [r_+,\bar{r}_+]$. We define~$h,f\equiv 0$ and we define a multiplier of the form~$y=e^{C_{large}r}$. For a sufficiently large~$C_{large}>0$ we obtain that 
\begin{equation}
	y^\prime(\omega-V)-yV^\prime \geq b\Delta \tilde{\lambda}.
\end{equation}
We fix the~$C_{large}>0$.

We now study the second case of~\eqref{eq: proof prop: f current in trapped frequencies, eq 6}. First, we note that since~$am\omega> \frac{a^2m^2\Xi}{r_+^2+a^2}>\frac{a^2m^2\Xi}{\bar{r}_+^2+a^2}$ and~$am\omega \geq \alpha|a|\tilde{\lambda}$ then we have that
\begin{equation}\label{eq: proof prop: f current in trapped frequencies, eq 6.1}
	\frac{dV_0}{dr}(\bar{r}_+)<-b\tilde{\lambda}. 
\end{equation}
We note that the maximum value of~$V_0$ cannot be attained at the event horizon since then we would have~$-\left(\omega-\frac{am\Xi}{r_+^2+a^2}\right)^2\geq \epsilon_{trap}\tilde{\lambda}$. Therefore, for any sufficiently small~$\epsilon_{away}>0$ and for any~$\omega_{high}>0$ sufficiently large we have that the potential~$V$ attaines a maximim 
\begin{equation}
	r_{V,max}\in (r_++\epsilon_{away},\bar{r}_+-\epsilon_{away}). 
\end{equation}
Now, for any~$\epsilon_1>0$ sufficiently small we have that~$\frac{dV_0}{dr}(r_{V_0,max}-\epsilon_1)>b(\epsilon_1)\tilde{\lambda}$. We define~$r_3=r_{V_0,max}-\epsilon_1$. We fix~$\epsilon_1$.

We now construct three multipliers~$y,f,h$ as follows. First, we define~$y$ to be the following piecewise smooth function
\begin{equation}
	y=
	\begin{cases}
		&	1-e^{C_1(r_3-r)},~r\in [r_+,r_3]\\
		&	0,~r\in [r_3,\bar{r}_+],
	\end{cases}
\end{equation}
for some~$C_1>0$ that will be chosen sufficiently large later. We define a multiplier~$f$ by solving the initial value problem
\begin{equation}
	-\frac{1}{2} f^{\prime\prime\prime} = \Delta,\qquad f(r_{V_0,max})=0,\qquad f^\prime(r_{V_0,max})=1,\qquad f^{\prime\prime}(r_{V_0,max})=0,
\end{equation}	
in a neighborhood of~$r_{V,max}$. We extend~$f$ smoothly in the entire interval~$[r_+,\bar{r}_+]$ such that
\begin{equation}
	\begin{aligned}
		f^\prime \geq b\Delta,\qquad -fV^\prime-\frac{1}{2}f^{\prime\prime\prime}\geq b\Delta(r-r_{V,max})^2\tilde{\lambda}.
	\end{aligned}
\end{equation}
We define the smooth multiplier~$h$ such that it satisfies the following 
\begin{equation}
	h=
	\begin{cases}
		&	1,\qquad r\in [r_{V,max}-2\epsilon_1,r_{V,max}+2\epsilon_1]\\
		&	0,\qquad r\in [r_+,\bar{r}_+]\setminus[r_{V,max}-2\epsilon_1,r_{V,max}+2\epsilon_1].
	\end{cases}
\end{equation}

Therefore, for a sufficiently large~$C_1>0$ then in view of the non trapping condition~$\max V_0-\omega^2\geq \epsilon_{trap}\tilde{\lambda}$ we obtain that for any~$\omega_{high}>0$ sufficiently large the following hold
\begin{equation}
	\begin{aligned}
		&	y^\prime+2f^\prime+h\geq b\Delta\\
		&	h(V-\omega^2)-\frac{1}{2}h^{\prime\prime}+y^\prime(\omega^2-V)-yV^\prime +fV^\prime-\frac{1}{2}f^{\prime\prime\prime}\geq b\Delta\tilde{\lambda}. 	\end{aligned}
\end{equation}

	We conclude Lemma~\ref{lem: f current in trapped frequencies}.
\end{proof}

\begin{proof}[\textbf{Proof of Proposition~\ref{prop: energy estimate for the trapped frequency regime}}]
	
We integrate the energy identity of Lemma~\ref{lem: subsec: currents, lem 1} associated to the current
	\begin{equation}
	Q^f+Q^y+Q^h
	\end{equation}
	see Definition~\ref{def: currents}, and use the construction of the~$f,y$ multipliers of Lemma~\ref{lem: f current in trapped frequencies} to obtain the energy inequality
	\begin{equation}\label{eq: proof prop: energy estimate for the trapped frequency regime, eq 1}
	\begin{aligned}
	&   \int_{\mathbb{R}}\Delta\left(|u^{\prime}|^2+|u|^2+ \left((r-r_{\textit{trap}})^2(\lambda^{(a\omega)}_{m\ell}+a^{2}\omega^{2}+\omega^{2})\right) |u|^{2}\right) dr^\star  \\
	&   \quad\leq Q^f(\infty)-Q^f(-\infty) -B\int_{\mathbb{R}}\left( 2f\Re(u^{\prime}\bar{H})+f^{\prime}\Re(\bar{H}u)\right)dr^\star +Q^y(\infty)-Q^y(-\infty) +B\int_{\mathbb{R}}\Big( 2y\Re(u^{\prime}H) \Big)dr^\star \\
	&   \quad\leq B \left(\omega-\frac{am\Xi}{\bar{r}_+^2+a^2}\right)^2|u|^2(\infty)+B\left(\omega-\frac{am\Xi}{r_+^2+a^2}\right)^2|u|^2(-\infty)\\
	&	\qquad\qquad +B\int_{\mathbb{R}}\left( \left|2f\Re(u^{\prime}\bar{H})\right|+\left|f^{\prime}\Re(\bar{H}u)\right|+\left|2y\Re(u^{\prime}H\right|+\left|2y\Re(u^{\prime}H) \right|+|h\Re (u\bar{H})|\right)dr^\star.
	\end{aligned}
	\end{equation}

	Finally, since the high~$\omega\sim\lambda$ frequency regime is non-superradiant, see Section~\ref{sec: frequencies}, then, in view of Proposition~\ref{prop: subsec: energy identity, prop 1}, we integrate the energy identity of Lemma~\ref{lem: subsec: sec: frequency localized multiplier estimates, subsec 2, lem 1} associated with the current 
	\begin{equation}
		-E_0 Q^{K^+}-E_0Q^{\bar{K}^+}
	\end{equation}
	 where~$E_0>0$ is a sufficiently large constant, and then we sum in equation~\eqref{eq: proof prop: energy estimate for the trapped frequency regime, eq 1} to conclude the proof of Proposition~\ref{prop: energy estimate for the trapped frequency regime}. 
\end{proof}

\subsection{Multipliers for the \texorpdfstring{$\omega$}{g}~dominated frequency regime $F_{\sharp}$}\label{subsec: time dominated frequencies}

In view of the definition of the frequency regimes, see Definition~\ref{def: subsec: sec: frequencies, subsec 3, def 1}, this frequency regime is manifestly non-superradiant.

The main Proposition of this Section is the following 

\begin{proposition}\label{prop: energy estimate for the time dominated frequency regime}

		Let~$l>0$~$(a,M)\in \mathcal{B}_l$,~$\mu^2_{KG}\geq 0$. Let~$\alpha>0$. For any 
	\begin{equation}
	\lambda_{\textit{low}}^{-1}>0,\qquad \omega_{\textit{high}}>0
	\end{equation}
	both sufficiently large, then for~$(\omega,m,\ell)\in F_\sharp(\omega_{high},\lambda_{low},\alpha)$ there exists a sufficiently regular function~$y(r^\star)$ that satisfies the uniform bound 
	\begin{equation}
	|y|+|y^\prime|\leq B(\omega_{\textit{high}},\lambda_{\textit{low}})
	\end{equation}
	such that for all smooth solutions~$u$ of Carter's radial ode~\eqref{eq: ode from carter's separation} satisfying the outgoing boundary conditions~\eqref{eq: lem: sec carters separation, subsec boundary behaviour of u, boundary beh. of u, eq 3}, we have 
	\begin{equation}
	\begin{aligned}
	&   \int_{\mathbb{R}} \Delta\Big(|u^{\prime}|^{2}+(\lambda^{(a\omega)}_{m\ell}+\omega^{2}+1)|u|^{2}  \Big)dr^\star \\
	&   \leq B\Big(\int_{\mathbb{R}}\left( \left|\left(\omega-\frac{am\Xi}{\bar{r}_+^2+a^2}\right)\Im (\bar{H}u)\right|+\left|\left(\omega-\frac{am\Xi}{r_+^2+a^2}\right)\Im (\bar{H}u)\right|\right)dr^\star+   \int_{\mathbb{R}}\left|  2y\Re(u^{\prime}H) \right|dr^\star\Big).
	\end{aligned}
	\end{equation}
\end{proposition}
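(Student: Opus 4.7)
The plan is to follow closely the strategy used in Proposition 16.4 for the bounded non-stationary frequency regime, since $\mathcal{F}_\sharp$ shares with it the two crucial features of being non-superradiant and non-trapped. The key preliminary step is to establish that in $\mathcal{F}_\sharp$ the potential $V$ is quantitatively dominated by $\omega^2$. From the formula for $V_0$ in~\eqref{eq: the potentials V0,VSl,Vmu} and the fact that $V_{SL}$ and $V_{\mu_{\textit{KG}}}$ are frequency-independent and uniformly bounded, one has $|V| \leq C(\tilde{\lambda} + |am\omega| + (am)^2 + 1)$. The defining inequality of $\mathcal{F}_\sharp$ gives $\tilde{\lambda} \leq \lambda_{\textit{low}}(\omega^2 + a^2m^2)$, and combining with the universal bound $\Xi m^2 \leq \tilde{\lambda}$ from Lemma~\ref{lem: inequality for lambda} yields $(am)^2 \leq B\lambda_{\textit{low}}\omega^2$ for $\lambda_{\textit{low}}$ small. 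Hence $|V| \leq B\sqrt{\lambda_{\textit{low}}}\,\omega^2 + B \ll \omega^2$ for $\lambda_{\textit{low}}^{-1}$, $\omega_{\textit{high}}$ both sufficiently large.

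Second, I would record the quantitative non-superradiance of $\mathcal{F}_\sharp$: since by definition $am\omega \notin [0,\frac{a^2m^2\Xi}{r_+^2+a^2}+|a|\alpha|\tilde\lambda|)$ and $|\omega| \geq \omega_{\textit{high}}$, a short calculation (splitting into the cases $am\omega \leq 0$ and $am\omega$ exceeding the upper endpoint) shows $(\omega - \tfrac{am\Xi}{r_+^2+a^2})^2 \geq b\omega^2$ and $(\omega - \tfrac{am\Xi}{\bar r_+^2+a^2})^2 \geq b\omega^2$. Here the exclusion $\tilde{\lambda} < \lambda_{\textit{low}}(\omega^2 + a^2m^2)$ is again used to convert the $|a|\alpha|\tilde\lambda|$ offset into a quantity controlled by $\omega^2$.

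Third, exactly as in Proposition~\ref{prop: energy estimate in the bounded non stationary frequency regime}, introduce the multiplier
\begin{equation*}
y = -\exp\Bigl(-C\int^{r^\star}_{-\infty}\chi_{r^\star_{+\infty}}\Bigr),
\end{equation*}
with $\chi_{r^\star_{+\infty}}$ a smooth cutoff equal to $1$ on a large central region and $0$ near $\bar r_+$, and $C>0$ to be chosen large. Integrating the current identity of Lemma~\ref{lem: subsec: currents, lem 1} associated to $Q^y[u]$ produces $\int y'(|u'|^2 + \omega^2|u|^2)$ on the left-hand side. The potential term $\int (yV)'|u|^2$ is treated by integration by parts followed by Young's inequality, as in~\eqref{eq: prop: energy estimate in the bounded non stationary frequency regime, eq 3}, giving an error of the form $\epsilon\int y'|u'|^2 + \epsilon^{-1}\int y^2 V^2/y' \cdot |u|^2$; the first term is absorbed by taking $\epsilon$ small, while the second is absorbed into $\int y'\omega^2|u|^2$ by taking $C$ large, using precisely the bound $|V|/\omega^2 \ll 1$ established in the first step.

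Finally, to dispose of the boundary terms at $r^\star = \pm\infty$ (which appear with indefinite sign in the $Q^y$ estimate but are both multiplied by factors proportional to $(\omega-\omega_\pm m)^2|u|^2(\pm\infty)$), add an energy identity for $-E_0 Q^{K^+}[u] - E_0 Q^{\bar K^+}[u]$ with $E_0$ sufficiently large, invoking Proposition~\ref{prop: subsec: energy identity, prop 1} and the non-superradiance bounds of the second step. The main obstacle is essentially bookkeeping: the smallness of $\lambda_{\textit{low}}$, the largeness of $\omega_{\textit{high}}$, $C$ and $E_0$, and the smallness of $\epsilon$ must be chosen in a compatible order so that each error term is genuinely absorbed by the main terms; the underlying ODE manipulations are entirely routine given the absence of trapping and superradiance.
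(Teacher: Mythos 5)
Your plan has a genuine gap, and the difficulty is precisely the feature that distinguishes $\mathcal{F}_\sharp$ from $\mathcal{F}_\flat$: here $|\omega|\to\infty$, so any absorption argument whose error constant scales with a positive power of $\omega$ must fail.

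First, a smaller issue in the preliminary step: Lemma~\ref{lem: inequality for lambda} does not give $\Xi m^2\leq\tilde\lambda$; equation~\eqref{eq: lem: inequality for lambda, eq 3} gives $\lambda^{(a\omega)}_{m\ell}+\Xi(a\omega)^2\geq\Xi m^2$, i.e.~$\tilde\lambda+(\Xi-1)a^2\omega^2\geq\Xi m^2$. Combined with $\tilde\lambda<\lambda_{\textit{low}}(\omega^2+a^2m^2)$, this yields $m^2\leq B(\lambda_{\textit{low}}+a^4/l^2)\,\omega^2$, hence $(am)^2\lesssim\omega^2$ with a constant depending on $a,l$ but \emph{not} vanishing as $\lambda_{\textit{low}}\to 0$. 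Consequently $|V|\lesssim\omega^2$ but it is \emph{not} true in general that $|V|/\omega^2\ll 1$; for $m\neq 0$ the cross term $-2m\omega a\Xi(\Delta-(r^2+a^2))/(r^2+a^2)^2$ contributes a fixed (non-small) fraction of $\omega^2$.

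Second, and this is the fatal step, even if $|V|\leq\delta\omega^2$ with $\delta$ small did hold, the absorption you propose cannot work. With the exponential multiplier of Proposition~\ref{prop: energy estimate in the bounded non stationary frequency regime} one has $y'=C\chi|y|$, so
\begin{equation*}
\frac{1}{\epsilon}\int\frac{y^2V^2}{y'}|u|^2\Big/\int y'\omega^2|u|^2\;\sim\;\frac{V^2}{\epsilon C^2\chi^2\omega^2},
\end{equation*}
and with $|V|\geq b\omega^2$ (or even $|V|=\delta\omega^2$) this ratio grows like $\omega^2$. The constants $\epsilon$ and $C$ are fixed once and for all, uniformly in $(\omega,m,\ell)$; they cannot be chosen to beat $\omega^2$. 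In the bounded regime this device works only because $|V|$ and $|\omega|$ are both bounded there, which is exactly what fails in $\mathcal{F}_\sharp$.

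The paper's proof is structurally different and avoids any $V^2/\omega^2$-type absorption. Lemma~\ref{lem: time dominated regime, lem 1} proves only the one-sided bound $\omega^2-V\geq b\omega^2$ (via the key observation $\tfrac{\Delta a^2}{(r^2+a^2)^2}<1$, which tames the $a^2\omega^2$ piece hidden in $\tilde\lambda$), and Lemma~\ref{lem: y current on time dominated frequencies} chooses $y$ to vanish at $r_{\Delta,\textit{frac}}$, the critical point of $\Delta/(r^2+a^2)^2$. With that choice, the dominant $O(\omega^2)$-sized contribution to $-yV'$, namely $-y\tfrac{d}{dr}\bigl(\tfrac{\Delta}{(r^2+a^2)^2}\bigr)(\tilde\lambda-2m\omega a\Xi)$, is automatically nonnegative by a sign argument; what remains to be handled by Young's inequality is only the subdominant piece coming from $\tfrac{d}{dr}(\omega-\tfrac{am\Xi}{r^2+a^2})^2$ plus frequency-independent terms. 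If you want a proof along your lines, you would have to replace the exponential multiplier by one whose sign structure is adapted to $V'$ in exactly this way; a direct estimate $\int y'V|u|^2+\int yV'|u|^2$ without integration by parts (keeping $\omega^2-V\geq b\omega^2$ as the coercivity input and bounding $|V'|\lesssim\Delta\omega^2$) can be made to work, but not with the Prop.~\ref{prop: energy estimate in the bounded non stationary frequency regime} scheme of integrating by parts and generating a $V^2/(y'\omega^2)$ error.
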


We need the following Lemma 

\begin{lemma}\label{lem: time dominated regime, lem 1}
Let~$l>0$ and~$(a,M)\in\mathcal{B}_l$ and~$\mu^2_{KG}\geq 0$. Let~$\alpha>0$. For any
\begin{equation}
	\lambda_{\textit{low}}^{-1}>0,\qquad \omega_{\textit{high}}>0
\end{equation}
both sufficiently large, then for~$(\omega,m,\ell)\in\mathcal{F}_\sharp$ we have  
	\begin{equation}
	\omega^{2}-V(r)\geq b \omega^{2},\qquad r\in [r_+,\bar{r}_+]. 
	\end{equation}
\end{lemma}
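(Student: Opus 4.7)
The plan is to show that, in the $\omega$-dominated regime, the entire potential $V$ is dominated by $\omega^2$ pointwise in $r$. Since $V=V_0+V_{SL}+V_{\mu_{KG}}$ with $V_{SL}$ and $V_{\mu_{KG}}$ frequency-independent and bounded by a constant $C(a,M,l,\mu_{KG})$, it is enough to prove $\omega^2-V_0\geq \tfrac34\omega^2$ and then absorb $V_{SL}+V_{\mu_{KG}}$ by taking $\omega_{\textit{high}}>0$ sufficiently large. Using the second form of $V_0$ given in~\eqref{eq: the potentials V0,VSl,Vmu}, one writes
\begin{equation*}
\omega^2-V_0=\Bigl(\omega-\tfrac{am\Xi}{r^2+a^2}\Bigr)^2-\tfrac{\Delta}{(r^2+a^2)^2}\bigl(\tilde\lambda-2m\omega a\Xi\bigr).
\end{equation*}
Since $(\omega,m,\ell)\in\mathcal{F}_\sharp$ we have $am\omega\notin\bigl[0,\tfrac{a^2m^2\Xi}{r_+^2+a^2}+|a|\alpha\tilde\lambda\bigr)$, so I split into the two cases $am\omega<0$ (Case A) and $am\omega\geq\tfrac{a^2m^2\Xi}{r_+^2+a^2}+|a|\alpha\tilde\lambda$ (Case B), and combine with the $\omega$-dominance $\tilde\lambda<\lambda_{\textit{low}}(\omega^2+a^2m^2)$.

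In Case A all cross terms have the good sign. Expanding the square and using $(r^2+a^2)-\Delta>0$ in $[r_+,\bar r_+]$,
\begin{equation*}
\omega^2-V_0\geq \omega^2+\tfrac{a^2m^2\Xi^2}{(r^2+a^2)^2}+2|am\omega|\Xi\,\tfrac{(r^2+a^2)-\Delta}{(r^2+a^2)^2}-\tfrac{\Delta}{(r^2+a^2)^2}\tilde\lambda.
\end{equation*}
The last term is bounded by $C(a,M,l)\lambda_{\textit{low}}(\omega^2+a^2m^2)$, which for $\lambda_{\textit{low}}$ sufficiently small is absorbed by $\tfrac12\omega^2+\tfrac12\tfrac{a^2m^2\Xi^2}{(r^2+a^2)^2}$ (the uniform bound $r\leq\bar r_+$ makes this absorption uniform in $r$). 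This gives $\omega^2-V_0\geq\tfrac12\omega^2$ in Case A.

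In Case B I first use Lemma~\ref{lem: inequality for lambda} (applicable because $am\omega\geq 0$) to get $\tilde\lambda\geq\Xi^2m^2$; combined with $\tilde\lambda\leq\lambda_{\textit{low}}(\omega^2+a^2m^2)$ this yields $m^2(\Xi^2-\lambda_{\textit{low}}a^2)\leq\lambda_{\textit{low}}\omega^2$, hence $a^2m^2\leq C\lambda_{\textit{low}}\omega^2$ for $\lambda_{\textit{low}}$ small. Then the cross term satisfies $\bigl|2\omega\tfrac{am\Xi}{r^2+a^2}\bigr|\leq C|\omega||am|\leq C\sqrt{\lambda_{\textit{low}}}\,\omega^2$, and similarly $\tfrac{\Delta}{(r^2+a^2)^2}|\tilde\lambda-2m\omega a\Xi|\leq C\lambda_{\textit{low}}\omega^2+C\sqrt{\lambda_{\textit{low}}}\,\omega^2$. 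Taking $\lambda_{\textit{low}}$ sufficiently small gives $\omega^2-V_0\geq\tfrac34\omega^2$ in Case B as well.

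The main obstacle in writing this cleanly is just the bookkeeping of constants: one must first choose $\lambda_{\textit{low}}=\lambda_{\textit{low}}(a,M,l)$ small enough for the absorption steps in both cases, and only then choose $\omega_{\textit{high}}=\omega_{\textit{high}}(a,M,l,\mu_{KG})$ large enough to absorb the bounded frequency-independent contribution $V_{SL}+V_{\mu_{KG}}$. No new geometric input is needed beyond Lemma~\ref{lem: inequality for lambda} and the elementary positivity of $(r^2+a^2)-\Delta$ on $[r_+,\bar r_+]$.
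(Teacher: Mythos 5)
Your proof is correct. It rests on the same two inputs as the paper's argument — the $\omega$-dominance condition $\tilde\lambda<\lambda_{\textit{low}}(\omega^2+a^2m^2)$ and Lemma~\ref{lem: inequality for lambda} — but organizes the algebra differently. The paper works from the expression~\eqref{eq: lem: time dominated regime, lem 1, eq 1} for the full potential, isolates the single term $\frac{\Delta a^2}{(r^2+a^2)^2}\omega^2$ proportional to $\omega^2$, and observes directly that its coefficient is strictly below $1$ (since $\frac{\Delta a^2}{(r^2+a^2)^2}=\frac{a^2}{r^2+a^2}(1-\frac{2Mr}{r^2+a^2}-\frac{r^2}{l^2})<1$), so that $\omega^2-\frac{\Delta a^2}{(r^2+a^2)^2}\omega^2\geq b\omega^2$ survives; the remaining pieces are then absorbed in one step using the sign-free eigenvalue bound~\eqref{eq: lem: inequality for lambda, eq 3} together with $\omega$-dominance, with no case split on the sign of $am\omega$. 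You instead complete the square in the form $\omega^2-V_0=(\omega-\frac{am\Xi}{r^2+a^2})^2-\frac{\Delta}{(r^2+a^2)^2}(\tilde\lambda-2am\omega\Xi)$ and split $\mathcal{F}_\sharp$ into the two components already built into its definition, exploiting the favorable sign when $am\omega<0$ and the sign-restricted inequality~\eqref{eq: lem: inequality for lambda, eq 1} to deduce $a^2m^2\lesssim\lambda_{\textit{low}}\omega^2$ when $am\omega>0$. Both are valid; the paper's argument is a bit shorter precisely because it uses the unconditional inequality~\eqref{eq: lem: inequality for lambda, eq 3} in place of the case split, while yours makes more explicit use of the gap in $am\omega$-values that the regime $\mathcal{F}_\sharp$ carves out.
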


\begin{proof}
	The potential $V$, of \eqref{eq: the potential V} can be written as 
	\begin{equation}\label{eq: lem: time dominated regime, lem 1, eq 1}
	\begin{aligned}
	V  &=   \Delta\left( \frac{r^{3}\frac{d}{dr}\Delta -2r^{2}\Delta+a^{2}r\frac{d}{dr}\Delta}{(r^{2}+a^{2})^{4}} +\frac{\Delta a^{2}}{(r^{2}+a^{2})^{4}}+\frac{\lambda^{(a\omega)}_{m\ell}+a^2\omega^2}{(r^{2}+a^{2})^{2}} +\mu_{\textit{KG}}^2\cdot\frac{r^2+a^{2} }{(r^{2}+a^{2})^2}\right) \\
	&   \quad -\frac{\Xi^{2}a^{2}m^{2}+2m\omega a\Xi (\Delta -(r^{2}+a^{2}))}{(r^{2}+a^{2})^{2}}.
	\end{aligned}
	\end{equation}

	We consider the term
	\begin{equation}
	\frac{\Delta a^2}{(r^2+a^2)^2} \omega^2
	\end{equation}
	of \eqref{eq: lem: time dominated regime, lem 1, eq 1} and we note
	\begin{equation}
	\frac{\Delta a^2}{(r^2+a^2)^2}=\frac{a^2}{r^2+a^2}-\frac{2Mr}{(r^2+a^2)^2}a^2-\frac{a^2r^2}{l^2(r^2+a^2)}=\frac{a^2}{r^2+a^2}\left(1-\frac{2Mr}{r^2+a^2}-\frac{r^2}{l^2}\right)<1.
	\end{equation}
	Therefore, we obtain 
	\begin{equation}\label{eq: lem: time dominated regime, lem 1, eq 2}
	\omega^2-\frac{\Delta a^2}{(r^2+a^2)^2}\omega^2\geq b\omega^2.
	\end{equation}
	By utilizing~\eqref{eq: lem: time dominated regime, lem 1, eq 2} that~$\omega^2+(am)^2\geq \lambda_{low}^{-1}|\tilde{\lambda}|$, and~\eqref{eq: lem: inequality for lambda, eq 3} from Lemma~\ref{lem: inequality for lambda}, we conclude that for~$\lambda_{\textit{low}}>0$ sufficiently small and~$\omega_{\textit{high}}>0$ sufficiently large we obtain 
	\begin{equation}
	\omega^2-V\geq b\omega^2.
	\end{equation}
	We conclude the result. 
\end{proof}

We employ a $Q^{y}$ current, see Definition \ref{def: currents}. 

\begin{lemma}\label{lem: y current on time dominated frequencies}
	Let~$l>0$,~$(a,M)\in \mathcal{B}_l$ and~$\mu_{KG}\geq 0$. Let~$\alpha>0$. For any
	\begin{equation}
		\lambda^{-1}_{\textit{low}}>0,\qquad \omega_{\textit{high}}>0
	\end{equation}
	both sufficiently large, then for~$(\omega,m,\ell)\in\mathcal{F}_\sharp(\lambda_{low},\omega_{high},\alpha)$ we obtain the following.
	
	There exists a smooth bounded function~$y$ that satisfies
	\begin{equation}
	y^\prime\geq b\Delta,\qquad y^{\prime}(\omega^{2}-V)-yV^{\prime}\geq b\Delta\omega^2,\qquad r\in[r_+,\bar{r}_+]. 
	\end{equation}
\end{lemma}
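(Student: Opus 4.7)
I would prove Lemma 16.12 by exhibiting an explicit bounded smooth monotone multiplier $y$ and verifying both pointwise inequalities. A natural choice is to take $y = y(r)$ depending on $r$ only, with $dy/dr$ bounded below by a fixed positive constant on $[r_+,\bar r_+]$, for instance $y(r) = r - (r_++\bar r_+)/2$. Passing to the tortoise derivative via $\partial_{r^\star} = (\Delta/(r^2+a^2))\partial_r$ gives $y' = (\Delta/(r^2+a^2))(dy/dr)$, so automatically $y' \geq b\,\Delta$ on $[r_+,\bar r_+]$, which secures the first required inequality.

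For the second inequality, the essential input is the preceding Lemma 16.11, which in $\mathcal F_\sharp$ gives $\omega^2 - V \geq b\omega^2$ pointwise on $[r_+,\bar r_+]$ once $\omega_{\textit{high}}$ is large and $\lambda_{\textit{low}}$ small. Combined with $y'\geq b\Delta$, this yields the favorable contribution $y'(\omega^2-V) \geq b'\,\Delta\,\omega^2$. It remains to absorb $-yV'$. Writing $V = V_0 + V_{\textit{SL}} + V_{\mu_{KG}}$, differentiating and using $V' = (\Delta/(r^2+a^2))\,dV/dr$, a direct calculation produces the pointwise bound
\[
|V'| \;\leq\; C\,\Delta\,\bigl[\tilde\lambda + m^2 + |am\omega| + 1 + \mu_{KG}^2\bigr].
\]

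To make $|V'|/(\Delta\omega^2)$ controllably small, I would exploit the frequency-regime constraint $\tilde\lambda \leq \lambda_{\textit{low}}(\omega^2 + a^2 m^2)$ together with the spheroidal eigenvalue inequalities of Lemma 6.5, splitting $\mathcal F_\sharp$ according to which branch of the non-superradiance condition holds. In the branch $am\omega \geq a^2 m^2\Xi/(r_+^2+a^2)$, inequality (6.13) gives $m^2 \leq \tilde\lambda/\Xi^2$, hence $m^2 \leq C\lambda_{\textit{low}}\omega^2$; combined with the constraint on $\tilde\lambda$, this yields $|V'| \leq \tilde C\,\Delta\omega^2$ with $\tilde C \to 0$ as $\lambda_{\textit{low}} \to 0$. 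In the complementary branch $am\omega \leq 0$, one must use (6.14) together with a Young-type splitting of $2|am\omega|\Xi \leq \epsilon m^2 + C_\epsilon a^2\omega^2$ to extract a bound $m^2 \leq C(a,M,l,\lambda_{\textit{low}})\,\omega^2$, whose constant can be compensated by taking $\lambda_{\textit{low}}$ sufficiently small. Substituting then gives $|yV'| \leq \|y\|_\infty\,\tilde C\,\Delta\omega^2$, and choosing $\lambda_{\textit{low}}$ and $1/\omega_{\textit{high}}$ small enough so that $\|y\|_\infty\,\tilde C \leq b'/2$ yields the claimed inequality $y'(\omega^2-V) - yV' \geq (b'/2)\,\Delta\,\omega^2$.

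The main obstacle is precisely the non-superradiant branch $am\omega \leq 0$: unlike the Kerr case, the direct eigenvalue bound (6.17) here contains the extra term $\Xi(a\omega)^2$ that is not \emph{a priori} small compared to $\omega^2$, and so does not by itself force $m^2/\omega^2$ to be small. One must therefore avoid using (6.17) in isolation and instead extract smallness jointly from (6.14), a Young splitting, and the $\mathcal F_\sharp$-constraint on $\tilde\lambda$. This interplay between the two branches of non-superradiance and the eigenvalue inequalities of Lemma 6.5 is precisely what fixes the required hypotheses $\omega_{\textit{high}} \gg 1$ and $\lambda_{\textit{low}} \ll 1$ in the Lemma.
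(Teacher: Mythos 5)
Your proposal correctly identifies the difficulty in the branch $am\omega < 0$, but the Young-splitting argument you propose to resolve it does not actually produce a small constant, and without smallness the direct absolute-value estimate on $|V'|$ fails. Concretely: from~$\tilde{\lambda} - 2am\omega\Xi \geq \Xi^2 m^2$~(Lemma~\ref{lem: inequality for lambda}) together with $\tilde\lambda \leq \lambda_{\textit{low}}(\omega^2 + a^2 m^2)$ and the splitting $2|am\omega|\Xi \leq \epsilon m^2 + \epsilon^{-1}\Xi^2 a^2\omega^2$, you arrive at
\[
\left(\Xi^2 - \epsilon - \lambda_{\textit{low}}a^2\right)m^2 \leq \left(\lambda_{\textit{low}} + \epsilon^{-1}\Xi^2 a^2\right)\omega^2,
\]
whose best constant over $\epsilon$ is bounded \emph{below} by a quantity of order $a^2/\Xi^2$ uniformly in~$\lambda_{\textit{low}}$. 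Thus $m^2/\omega^2$ and $|am\omega|/\omega^2$ are of \emph{fixed} order in this branch, and the term $\frac{d}{dr}\!\bigl(\frac{\Delta}{(r^2+a^2)^2}\bigr)\,(\tilde\lambda - 2am\omega\Xi)$ in $\frac{dV_0}{dr}$ is genuinely of order $\omega^2$, not $o(1)\omega^2$. Since the favorable term $c_1 b\,\omega^2$ and $\|y\|_\infty$ both scale linearly with the slope $c_1$ of your linear $y$, taking $c_1$ large cannot tip the balance either; the ratio is fixed, and the required sign of $y'(\omega^2-V) - yV'$ is not guaranteed.

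The paper avoids this entirely by a sign trick rather than an absolute-value estimate: the multiplier $y$ is chosen to \emph{vanish at} $r_{\Delta,\textit{frac}}$, the unique maximum of $\frac{\Delta}{(r^2+a^2)^2}$, with $dy/dr \geq c_1 > 0$. Then $-y\,\frac{d}{dr}\bigl(\frac{\Delta}{(r^2+a^2)^2}\bigr) \geq 0$ pointwise (both factors change sign at $r_{\Delta,\textit{frac}}$), and since Lemma~\ref{lem: inequality for lambda} gives $\tilde\lambda - 2am\omega\Xi \geq 0$ in $\mathcal{F}_\sharp$, the entire problematic contribution $-y\,\frac{d}{dr}\bigl(\frac{\Delta}{(r^2+a^2)^2}\bigr)(\tilde\lambda - 2am\omega\Xi)$ is simply \emph{nonnegative and discarded}. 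What remains of $-y\,dV/dr$ is frequency-independent (from $V_{\textit{SL}}$, $V_{\mu_{KG}}$) or of lower order in $\omega^2$ (from the $\bigl(\omega - \frac{am\Xi}{r^2+a^2}\bigr)^2$ piece), and these are absorbed by Lemma~\ref{lem: time dominated regime, lem 1} after taking $\omega_{\textit{high}}$ large and applying a Young inequality. Your choice $y = r - (r_++\bar r_+)/2$ does not vanish at $r_{\Delta,\textit{frac}}$ in general (for instance when $l \gg M$, the midpoint is $\sim l/2$ while $r_{\Delta,\textit{frac}} \sim 3M$), so the sign structure is lost. Replacing your $y$ by one vanishing at $r_{\Delta,\textit{frac}}$, and using the sign of $y\,\frac{d}{dr}\bigl(\frac{\Delta}{(r^2+a^2)^2}\bigr)$ rather than a pointwise bound on $|V'|$, closes the gap.
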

\begin{proof}
	
	Let~$c_1>0$ be an arbitrary strictly positive real number. We choose a bounded smooth function~$y$ such that 
	\begin{equation}\label{eq: proof lem: y current on time dominated frequencies, eq 1}
		\begin{aligned}
			y(r)	&	=(r-r_{\Delta,\textit{frac}})\cdot\frac{dy}{dr} (r_{\Delta,\textit{frac}})+ \mathcal{O}(r-r_{\Delta,\textit{frac}})^2,\quad \text{in a small neightborhood of }r_{\Delta,\textit{frac}}\\
			\frac{dy}{dr}	&	> c_1>0,\quad r\in [r_+,\bar{r}_+],		
		\end{aligned}
	\end{equation}
	where~$r_{\Delta,\textit{frac}}$ is the unique critical point of~$\frac{\Delta}{(r^2+a^2)^2}$. Now, by utilizing the multiplier $y$ constructed in \eqref{eq: proof lem: y current on time dominated frequencies, eq 1}, we obtain 
	\begin{equation}\label{eq: proof lem: y current on time dominated frequencies, eq 2}
	\begin{aligned}
	y^\prime (\omega^2-V)-yV^\prime	&=\frac{\Delta}{r^2+a^2}\left(\frac{d y}{dr}(\omega^2-V)-y\frac{d V}{dr}\right)\\
	& \geq \frac{\Delta}{r^2+a^2}\left( \frac{d y}{dr}(b\omega^2)-y\frac{dV}{dr} \right)\\
	& \geq \frac{\Delta}{r^2+a^2}\left(c_1 b \omega^2-y\frac{dV}{dr}\right),
	\end{aligned}
	\end{equation}
	where we used Lemma~\ref{lem: time dominated regime, lem 1}. Finally, note that for the last term of equation~\eqref{eq: proof lem: y current on time dominated frequencies, eq 2}, the following holds 
	\begin{equation}
	\begin{aligned}
	-y\frac{d V}{dr} &=-y\frac{d}{dr}\left(V_{\textit{SL}} +\frac{\Delta}{(r^2+a^2)^2}(\lambda^{(a\omega)}_{m\ell}+a^2\omega^2-2m\omega a \Xi)-\left(\omega-\frac{am\Xi}{r^2+a^2}\right)^2+V_{\mu}\right)\\
	&=-y\frac{d V_{\textit{SL}}}{dr} -y\frac{d}{dr}\left(\frac{\Delta}{(r^2+a^2)^2}\right)(\lambda^{(a\omega)}_{m\ell}+a^2\omega^2-2m\omega a \Xi)-y\frac{d}{dr}\left(\omega-\frac{am\Xi}{r^2+a^2}\right)^2 -y\frac{d V_{\mu_{KG}}}{dr} \\
	&  \geq -y\frac{d V_{\textit{SL}}}{dr}-y\frac{d}{dr}\left(\omega-\frac{am\Xi}{r^2+a^2}\right)^2,
	\end{aligned}
	\end{equation}
	where in the last inequality we used that $y$ changes sign at the maximum of $\frac{\Delta}{(r^2+a^2)^2}$. For $V_{\textit{SL}},V_{\mu}$ see Section \ref{sec: carter separation}. Therefore, note that for~$\lambda_{\textit{low}}^{-1},\omega_{\textit{high}}>0$ sufficiently large we obtain 
	\begin{equation}
	c_1b\omega^2-y\frac{dV}{dr}\geq c_1b\omega^2-y\frac{d}{dr}\left(\omega-\frac{am\Xi}{r^2+a^2}\right)^2\geq b \omega^2,
	\end{equation}
	after appropriate Young's inequalities. 

	 We conclude the proof of the Lemma. 
\end{proof}

\begin{proof}[\textbf{Proof of Proposition \ref{prop: energy estimate for the time dominated frequency regime}}]
	
We integrate the energy identity of Lemma~\ref{lem: subsec: currents, lem 1}, associated to~$Q^y$ current, to get 
	\begin{equation}\label{eq: proof prop: energy estimate for the time dominated frequency regime, eq 1}
	\begin{aligned}
	&   \int_{\mathbb{R}}dr^\star\left(y^\prime |u^\prime|^2+\left(y^\prime(\omega^2-V)-yV^\prime\right)|u|^2\right)\\
	&   \quad =Q^y(\infty)-Q^y(-\infty)+\int_{\mathbb{R}} 2y\Re (u^\prime\bar{H})dr^\star.
	\end{aligned}
	\end{equation}
	Now, we use \eqref{eq: proof prop: energy estimate for the time dominated frequency regime, eq 1}, in conjunction with the construction of the $y$ multiplier, see Lemma \ref{lem: y current on time dominated frequencies}, to obtain 
	\begin{equation}\label{eq: proof prop: energy estimate for the time dominated frequency regime, eq 2}
	\int_{\mathbb{R}}\Delta\Big( |u^{\prime}|^{2}+(\lambda^{(a\omega)}_{m\ell}+\omega^{2}+1)|u|^{2}   \Big)dr^\star\leq B
	\int_{\mathbb{R}}\Big( 2y\Re(u^{\prime}H) \Big)dr^\star +Q^{y}(\infty)-Q^{y}(-\infty).
	\end{equation}
	Finally, in inequality \eqref{eq: proof prop: energy estimate for the time dominated frequency regime, eq 2}, in view of Proposition~\ref{prop: subsec: energy identity, prop 1}, we integrate the energy identity of Lemma~\ref{lem: subsec: sec: frequency localized multiplier estimates, subsec 2, lem 1} associated with the current 
	\begin{equation}
		-E_0 Q^{K^+}-E_0 Q^{\bar{K}^+}
	\end{equation}
 where $E_0$ is a sufficiently large constant, and we sum it in~\eqref{eq: proof prop: energy estimate for the time dominated frequency regime, eq 2} to conclude the desired result. 
\end{proof}

\subsection{The choices of \texorpdfstring{$\omega_{\textit{high}},\omega_{\textit{low}},\lambda_{\textit{low}},\alpha,E$}{l}}\label{subsec: choice of omega1, lambda2}

We will now fix the constants
\begin{equation}
E>0,\qquad \alpha^{-1}>0,\qquad 
\omega_{\textit{high}}>0,\qquad \omega_{\textit{low}}^{-1}>0 \qquad \lambda_{\textit{low}}^{-1}>0
\end{equation}
so that all of the Propositions~\ref{prop: energy estimate in the de Sitter freqs, l larger that omega},~\ref{prop: energy estimate in superradiant frequencies}, \ref{prop: energy estimate for the bounded stationary frequencies}, \ref{prop: energy estimate for the bounded stationary frequencies, 1}, \ref{prop: subsubsec: bounded stationary freqs, large a}, \ref{prop: energy estimate in the bounded non stationary frequency regime}, \ref{prop: energy estimate in the angular dominated frequency regime}, \ref{prop: energy estimate for the trapped frequency regime}, \ref{prop: energy estimate for the time dominated frequency regime} hold.

Specifically: First we fix~$E$ so that Propositions~\ref{prop: energy estimate for the trapped frequency regime},~\ref{prop: energy estimate in the de Sitter freqs, l larger that omega} hold. Then, we fix~$\omega_{high}$ so that Propositions~\ref{prop: energy estimate in the de Sitter freqs, l larger that omega},~\ref{prop: energy estimate in superradiant frequencies},~\ref{prop: energy estimate in the angular dominated frequency regime},~\ref{prop: energy estimate for the trapped frequency regime},~\ref{prop: energy estimate for the time dominated frequency regime} hold, and note that the remaining Propositions hold for any~$\omega_{high}>0$. We fix~$\omega_{low}$ so that Propositions~\ref{prop: energy estimate for the bounded stationary frequencies},~\ref{prop: energy estimate for the bounded stationary frequencies, 1},~\ref{prop: subsubsec: bounded stationary freqs, large a},~\ref{prop: energy estimate in the bounded non stationary frequency regime} hold and note that the remaining Propositions hold for any~$\omega_{low}>0$. Finally, we fix~$\lambda_{low},\alpha$ from Propositions~\ref{prop: energy estimate in superradiant frequencies}~\ref{prop: energy estimate in the angular dominated frequency regime},~\ref{prop: energy estimate for the trapped frequency regime}~\ref{prop: energy estimate for the time dominated frequency regime}, and note that the remaining  Propositions hold for any~$\lambda_{low},\alpha>0$.

\subsection{Proof of Theorem~\ref{thm: sec: proofs of the main theorems}}\label{subsec: sec: proof of main prop, subsec 1}

Now, it is immediate to prove the following 
\begin{proof}[\textbf{Proof of Theorem~\ref{thm: sec: proofs of the main theorems}}]

	We use the results of Propositions \ref{prop: energy estimate in the de Sitter freqs, l larger that omega}, \ref{prop: energy estimate in superradiant frequencies}, \ref{prop: energy estimate for the bounded stationary frequencies}, \ref{prop: energy estimate for the bounded stationary frequencies, 1}, \ref{prop: subsubsec: bounded stationary freqs, large a}, \ref{prop: energy estimate in the bounded non stationary frequency regime}, \ref{prop: energy estimate in the angular dominated frequency regime}, \ref{prop: energy estimate for the trapped frequency regime}, \ref{prop: energy estimate for the time dominated frequency regime} and choose any~$\mathcal{C}>0$ such that 
	\begin{equation}
		\mathcal{C}\geq \lambda_{low}^{-1}\omega^2_{high},
	\end{equation}
	and obtain 
	\begin{equation}
		\begin{aligned}
			& 1_{\left((\omega,m,\ell)\not{\in}\{\mathcal{F}_\flat\cap \{m=0\}\}\cup \mathcal{F}_{dS}\cup \mathcal{F}_\natural \right)} \int_{-\infty}^{\infty}  \Delta\left( |u^\prime|^2+\left(1+\omega^2+\tilde{\lambda}\right)|u|^2\right)dr^\star\\
			&	\qquad\qquad\qquad + 1_{(\mathcal{F}_\flat \cap \{m=0\})}\int_{\mathbb{R}}\Delta \left( |\Psi^\prime|^2 + (\omega^2+\tilde{\lambda})|\Psi|^2 \right)dr^\star +\int_{r^\star_{-\infty}}^{r^\star_\infty}  1_{|m|>0}\left(|u|^2 +\mu^2_{\textit{KG}} |u|^2\right)dr^\star \\
			&	\qquad\qquad\qquad 	+ 1_{\mathcal{F}_{\textit{dS}}\cup \mathcal{F}_\natural} \int_\mathbb{R}\Delta \left( |u^\prime|^2+|u|^2+\left(1-\frac{r_{\textit{trap}}}{r}\right)^2\left(\tilde{\lambda}+\omega^2\right)|u|^2\right)dr^\star \\
			&	\qquad \leq  B  1_{|m|>0} \int_{\mathbb{R}}\left(\left|\Re (u\bar{H})\right|+\left| \Re (u\bar{H})\right|+\left|\Re (u^\prime \bar{H})\right|\right)dr^\star \\
			&	\qquad\qquad +B \cdot  1_{\mathcal{F}_\flat \cap \{m=0\}}\int_{\mathbb{R}}\left( \big|\omega\Im (\bar{\Psi}H)\big|+\big|\omega\Im(\bar{\Psi}H)\big|+ |H|^2 + |\Psi^\prime H|^2\right)dr^\star\\
			&	\qquad\qquad +B\cdot  1_{ \mathcal{F}^\sharp\cup \mathcal{F}_\flat  \cup\mathcal{F}_{\lessflat}\cup \mathcal{F}_\sharp}\int_{\mathbb{R}}\left(\left|\omega-\frac{am\Xi}{r_+^2+a^2}\right||\Im (\bar{u}H)|+\left|\omega-\frac{am\Xi}{r_+^2+a^2}\right||\Im(\bar{u}H)|\right)dr^\star\\
			&	\qquad\qquad +E \int_{\mathbb{R}} \left(\left(\omega-\frac{am\Xi}{r_+^2+a^2}\right)\Im (\bar{u}H)+\left(\omega-\frac{am\Xi}{r_+^2+a^2}\right)\Im(\bar{u}H)\right)dr^\star\\
			&	\qquad\qquad +B 1_{\mathcal{F}_{\mathcal{SF},\mathcal{C}}}|\omega-\omega_+ m||\omega-\bar{\omega}_+ m| |u|^2(-\infty),
		\end{aligned}
	\end{equation}
	where for the set~$\mathcal{F}_{\mathcal{SF},\mathcal{C}}$ see~\eqref{eq: subsec: sec: carter separation, subsec 2, eq 1}. We conclude the proof.
\end{proof}

\subsection{Trapping parameters for fixed azimuthal frequency~\texorpdfstring{$m$}{m}}

The following lemma is used in the fixed azimuthal frequency Proposition~\ref{prop: sec: continuity argument, prop 2}. Note that in the following lemma we study trapping for a fixed azimuthal number~$m$.

We emphasize that the parameters~$\omega_{high},\lambda_{low}$ that will be displayed in the following lemma are different from the choices of Section~\ref{subsec: choice of omega1, lambda2}.

\begin{lemma}\label{lem: sec: continuity argument, lem 1}
	Let~$l>0$,~$(a,M)\in\mathcal{B}_l$ and~$\mu^2_{\textit{KG}}\geq 0$. Let the azimuthal frequency
	\begin{equation}
		|m|\geq 0
	\end{equation}
	be fixed.

	Then, we obtain that for any
	\begin{equation}
		\lambda_{\textit{low}}>0,\qquad \epsilon_{\textit{trap}}>0
	\end{equation}
	both sufficiently small there exists an
	\begin{equation}
		\omega_{\textit{high}}(m,\epsilon_{\textit{trap}},\lambda_{\textit{low}})>0
	\end{equation}
	sufficiently large such that the following holds
	\begin{equation}
		r_{\Delta,\textit{frac}}-\epsilon_{\textit{trap}}\leq r_{\textit{trap}}(\omega,m,\ell) \leq r_{\Delta,\textit{frac}}+\epsilon_{\textit{trap}},
	\end{equation} 
	where~$r_{\Delta,\textit{frac}}$ is the value of the unique maximum of 
	\begin{equation}
		\frac{\Delta}{(r^2+a^2)^2}.
	\end{equation}
\end{lemma}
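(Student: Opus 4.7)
My plan is to exploit the fact that for fixed azimuthal frequency $m$, in every subregime where $r_{\mathit{trap}}\neq 0$, the trapping parameter coincides with a critical point of the potential $V=V_0+V_{\mathit{SL}}+V_{\mu_{KG}}$ inside $(r_+,\bar r_+)$, cf.\ the constructions in the proofs of Propositions~\ref{prop: energy estimate in the de Sitter freqs, l larger that omega} and~\ref{prop: energy estimate for the trapped frequency regime}. Since $V_{\mathit{SL}}$ and $V_{\mu_{KG}}$ are frequency-independent while the dominant part $V_0$ grows with $\tilde\lambda$, the problem reduces to analysing the critical points of $V_0$ in the limit $\omega_{\mathit{high}}\to\infty$ with $m$ fixed. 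Using the computation already carried out in the proof of Lemma~\ref{lem: subsec: sec: trapping, subsec 1, lem 1}, I would write
\begin{equation*}
\frac{1}{\tilde\lambda}\frac{dV_0}{dr}=\frac{d}{dr}\Bigl(\frac{\Delta}{(r^2+a^2)^2}\Bigr)\Bigl(1-\frac{2m\omega a\Xi}{\tilde\lambda}\Bigr)-\frac{4am\Xi\,r\bigl(\omega(r^2+a^2)-am\Xi\bigr)}{(r^2+a^2)^3\tilde\lambda}.
\end{equation*}

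The next step is to show that for fixed $m$ the two error terms go to $0$ uniformly on $[r_+,\bar r_+]$ as $\omega_{\mathit{high}}\to\infty$, in each subregime where $r_{\mathit{trap}}\neq 0$. In the high de~Sitter case~$\mathcal{F}_{\mathit{dS}}$, the constraint $|\omega|\leq |am|\Xi/(\bar r_+^2+a^2)$ makes $|\omega|$ bounded by a constant depending only on $m$, while $\tilde\lambda\geq\omega_{\mathit{high}}$, so both $|m\omega|/\tilde\lambda$ and $(m\omega)^2/\tilde\lambda$ are bounded by $C(m)/\omega_{\mathit{high}}$. In case~$(1)$ of Proposition~\ref{prop: energy estimate for the trapped frequency regime} ($am\omega<0$, $\mathcal F_\natural$), the inequality $\tilde\lambda\geq\lambda_{\mathit{low}}(\omega^2+a^2m^2)$ yields $|m\omega|/\tilde\lambda\leq |m|/(\lambda_{\mathit{low}}\,\omega_{\mathit{high}})$, which again vanishes as $\omega_{\mathit{high}}\to\infty$ with $m,\lambda_{\mathit{low}}$ fixed. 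Finally, in case~$(2)$ the combined restrictions $am\omega\geq\alpha|a|\tilde\lambda$ and $\tilde\lambda\geq\lambda_{\mathit{low}}\omega^2$ force $|\omega|\leq |m|/(\alpha\lambda_{\mathit{low}})$, so for $\omega_{\mathit{high}}$ exceeding this threshold the case is vacuous for our fixed $m$. Hence on the remaining trapped subregimes we obtain $\sup_{r\in[r_+,\bar r_+]}\bigl|\tilde\lambda^{-1}V_0'-\frac{d}{dr}\Delta/(r^2+a^2)^2\bigr|<\delta$ with $\delta=\delta(m,\lambda_{\mathit{low}},\omega_{\mathit{high}})\to 0$; the same holds for $\tilde\lambda^{-1}V'$ since $|V_{\mathit{SL}}'|+|V_{\mu_{KG}}'|\lesssim 1$ and $\tilde\lambda\to\infty$.

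To conclude, I would invoke Lemma~\ref{lem: sec: general properties of Delta, lem 3}, which says that $\frac{d}{dr}\Delta/(r^2+a^2)^2$ has a single non-degenerate zero at $r_{\Delta,\mathit{frac}}\in(r_+,\bar r_+)$, with $\frac{d^2}{dr^2}\Delta/(r^2+a^2)^2\big|_{r_{\Delta,\mathit{frac}}}<-c(a,M,l)<0$. By a standard continuity/implicit function argument, any zero of the uniformly close perturbation $\tilde\lambda^{-1}V'$ in $[r_+,\bar r_+]$ must lie within $\epsilon_{\mathit{trap}}$ of $r_{\Delta,\mathit{frac}}$, provided $\delta$ (and hence $1/\omega_{\mathit{high}}$) is small enough. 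Applied to $r_{\mathit{trap}}=r_{V,\max}$ (resp.\ $r_{V_0,\max}$), this gives the desired bound.

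The main obstacle is bookkeeping: one must verify, subregime by subregime of Propositions~\ref{prop: energy estimate in the de Sitter freqs, l larger that omega} and~\ref{prop: energy estimate for the trapped frequency regime}, that whenever the construction yields $r_{\mathit{trap}}\neq 0$ it is indeed a critical point of $V$ in $(r_+,\bar r_+)$ (rather than a boundary or otherwise auxiliary value), and that the vacuous case~$(2)$ is correctly excluded by choosing $\omega_{\mathit{high}}>|m|/(\alpha\lambda_{\mathit{low}})$. Once this case analysis is done, the asymptotic estimate on $\tilde\lambda^{-1}V'$ is straightforward and produces the bound claimed.
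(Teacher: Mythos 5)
Your proof is correct and follows essentially the same approach as the paper: write $\tfrac{dV_0}{dr}=\tilde\lambda\bigl(\tfrac{d}{dr}\tfrac{\Delta}{(r^2+a^2)^2}+\text{(small perturbation)}\bigr)$ and observe that for fixed $m$ the perturbation vanishes uniformly as $\omega_{\textit{high}}\to\infty$, so the critical point of $V$ converges to the non-degenerate zero $r_{\Delta,\textit{frac}}$ of $\tfrac{d}{dr}\tfrac{\Delta}{(r^2+a^2)^2}$ from Lemma~\ref{lem: sec: general properties of Delta, lem 3}. Your treatment is in fact a bit more explicit than the paper's, notably the correct observation that the subcase $am\omega\geq\tfrac{a^2m^2\Xi}{r_+^2+a^2}+\alpha|a|\tilde\lambda$ of $\mathcal{F}_\natural$ becomes vacuous once $\omega_{\textit{high}}>|m|/(\alpha\lambda_{\textit{low}})$ — this is needed since in that subcase $|m\omega|/\tilde\lambda$ is bounded away from zero, so it cannot be absorbed into the uniform-convergence argument.
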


\begin{proof}
	Recall that trapping can only occur in the frequency regimes 
	\begin{equation}
		\begin{aligned}
			\mathcal{F}_{\textit{dS}}	&	= \{(\omega,m,\ell)\: : \: \tilde{\lambda}\geq\left(\frac{|a|\Xi}{r_+^2+a^2}+\alpha\right)^{-1}\omega_{\textit{high}}\}\cap\{a m\omega\in\Big(0,\frac{a^2m^2\Xi}{\bar{r}_+^2+a^2}\Big)\} 	,\\
			\mathcal{F}_\natural &=  \{(\omega,m,\ell)\: : \:|\omega|\geq \omega_{\textit{high}},\:\lambda_{\textit{low}}\tilde{\lambda}\leq \omega^2+(am)^2\leq \lambda_{\textit{low}}^{-1}\tilde{\lambda}\}\cap\{am\omega\slashed{\in}\Big[0,\frac{a^2m^2\Xi}{r_+^2+a^2}+|a|\alpha\tilde{\lambda}\Big)\},\\
		\end{aligned}
	\end{equation}
	where the above frequency regimes where first defined in Section~\ref{sec: frequencies}.

	Moreover, recall that~$r_{\textit{trap}}$ is the maximum (when it is not~$0$) of the potential~$V$ in the frequency regimes~$\mathcal{F}_{\textit{dS}},~\mathcal{F}_\natural$ respectively. The derivative of the potential is 
	\begin{equation}
		\frac{dV}{dr}=\frac{dV_{\textit{SL}}}{dr}+\frac{dV_{\mu_{\textit{KG}}}}{dr}+\frac{dV_0}{dr}
	\end{equation}
	with 
	\begin{equation}
		\begin{aligned}
			\frac{dV_0}{dr}	&	= \frac{d}{dr}\left(\frac{\Delta}{(r^2+a^2)^2}\right)\left(\tilde{\lambda}-2m\omega a\Xi\right)-\frac{4r am\Xi}{(r^2+a^2)^2}\left(\omega-\frac{am\Xi}{r^2+a^2}\right)\\
			&	=\tilde{\lambda}\left(\frac{d}{dr}\left(\frac{\Delta}{(r^2+a^2)^2}\right)\left(1-\frac{2m\omega a\Xi}{\tilde{\lambda}}\right)-\frac{1}{\tilde{\lambda}}\frac{4r am\Xi}{(r^2+a^2)^2}\left(\omega-\frac{am\Xi}{r^2+a^2}\right)\right)
		\end{aligned}
	\end{equation}
	and the derivative terms~$\frac{dV_{\textit{SL}}}{dr}+\frac{dV_{\mu_{\textit{KG}}}}{dr}$ do not depend on the frequencies~$(\omega,m,\ell)$. 
	
	Therefore, for fixed azimuthal frequency~$m$ and for any sufficiently small 
	\begin{equation}
		\epsilon_{\textit{trap}}>0,
	\end{equation}
	we note that by taking
	\begin{equation}
		\omega_{\textit{high}}(m,\epsilon_{\textit{trap}},\lambda_{\textit{low}})>0
	\end{equation}
	sufficiently large then the equation
	\begin{equation}
		\frac{d}{dr}\left(\frac{\Delta}{(r^2+a^2)^2}\right)\left(1-\frac{2m\omega a\Xi}{\tilde{\lambda}}\right)-\frac{1}{\tilde{\lambda}}\frac{4r am\Xi}{(r^2+a^2)^2}\left(\omega-\frac{am\Xi}{r^2+a^2}\right)=0
	\end{equation}
	attains a unique solution in the interval~$(r_{\Delta,frac}-\epsilon_{trap},r_{\Delta,frac}+\epsilon_{trap})$. Therefore, the function~$\frac{dV_0}{dr}$ attains a unique zero in the desired interval. 
\end{proof}

\appendix

\section{Connectedness of the black hole parameter space~$\mathcal{B}_l$}\label{sec: appendix, sec 1}

We restate here the lemma we prove in the present Section

\begin{customLemma}{2.1}
	Let~$l>0$. Then, the set~$\mathcal{B}_l$, see Definition~\ref{def: subextremality, and roots of Delta}, is connected. 
\end{customLemma}

\begin{proof}
	We fix~$l>0$. The necessary and sufficient condition for the quartic polynomial~$\Delta$ to attain four distinct real roots can be inferred from~\cite{polynomials} to be the following two
	\begin{equation}\label{eq: proof lem: subsec: delta polynomial, lem 1, eq 1}
		\begin{aligned}
			&-16\left(\frac{a^6}{l^6}\right) 
			-8\left(\frac{a^4}{l^4}\right)\left(1-\frac{a^2}{l^2}\right)^2+36\left(\frac{M^2a^2}{l^4}\right)\left(1-\frac{a^2}{l^2}\right)\\
			&	\qquad\qquad\qquad-27\frac{M^4}{l^4}-\frac{a^2}{l^2}\left(1-\frac{a^2}{l^2}\right)^4+\frac{M^2}{l^2}\left(1-\frac{a^2}{l^2}\right)^3>0,
		\end{aligned}
	\end{equation}
	\begin{equation}\label{eq: proof lem: subsec: delta polynomial, lem 1, eq 2}
		|a|\leq l,~Μ>0.
	\end{equation}

	We rewrite~\eqref{eq: proof lem: subsec: delta polynomial, lem 1, eq 1} as follows 
	\begin{equation}\label{eq: proof lem: subsec: delta polynomial, lem 1, eq 3}
		\begin{aligned}
			P\left(\frac{a^2}{l^2},\frac{M^2}{l^2}\right) \dot{=}&-27\left(\frac{M^2}{l^2}\right)^2 +\frac{M^2}{l^2}\left(36\frac{a^2}{l^2}\left(1-\frac{a^2}{l^2}\right)+\left(1-\frac{a^2}{l^2}\right)^3\right)  -16\left(\frac{a^6}{l^6}\right) 
			-8\left(\frac{a^4}{l^4}\right)\left(1-\frac{a^2}{l^2}\right)^2 -\frac{a^2}{l^2}\left(1-\frac{a^2}{l^2}\right)^4>0,
		\end{aligned}
	\end{equation}
	
	We write
	\begin{equation}\label{eq: proof lem: subsec: delta polynomial, lem 1, eq 5}
		\begin{aligned}
			\mathcal{B}_l &	=	\{(a,M)\in \mathbb{R}\times  \mathbb{R}_{>0}:~0\leq \frac{a^2}{l^2}<1\}\cap \{(a,M)\in  \mathbb{R}\times\mathbb{R}_{>0}:~P(\frac{M^2}{l^2},\frac{a^2}{l^2})>0\}\\
			&	= \{(a,M)\in  \mathbb{R}\times \mathbb{R}_{>0}:~P(\frac{M^2}{l^2},\frac{a^2}{l^2})>0\},
		\end{aligned}
	\end{equation}
	where the last equality in~\eqref{eq: proof lem: subsec: delta polynomial, lem 1, eq 5} is proved as follows. Suppose that~$\frac{a^2}{l^2}\geq 1$. It is immediate that for any~$\frac{M^2}{l^2}>0$ we have~$P(\frac{a^2}{l^2},\frac{M^2}{l^2})<0$, since the second term of~$P(\frac{a^2}{l^2},\frac{M^2}{l^2})$ is strictly negative. This concludes the last equality of~\eqref{eq: proof lem: subsec: delta polynomial, lem 1, eq 5}.

	Since~$P\left(\frac{(-a)^2}{l^2},\frac{M^2}{l^2}\right)=P\left(\frac{a^2}{l^2},\frac{M^2}{l^2}\right)$ we conclude that the set~$\mathcal{B}_l$ is symmetric with respect to the line~$\{a=0\}$. Therefore, in order to prove that~$\mathcal{B}_l$ is connected it suffices to prove that the set
	\begin{equation}\label{eq: proof lem: subsec: delta polynomial, lem 1, eq 6}
		\begin{aligned}
			\widetilde{\mathcal{B}_l}  		&	= \{(a,M)\in  \mathbb{R}_{\geq 0}\times \mathbb{R}_{>0}:~P(\frac{a^2}{l^2},\frac{M^2}{l^2})>0\}.
		\end{aligned}
	\end{equation}
	is connected. The reader is refered to the following Figure~\ref{fig: subextremal}
	\begin{figure}[htbp]
		\centering
		\includegraphics[scale=1.5]{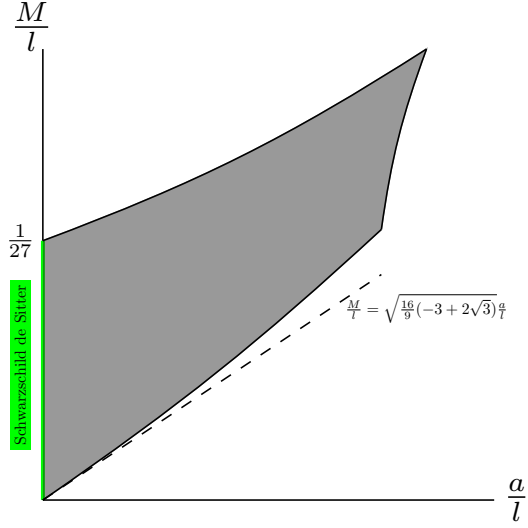}
		\caption{The shaded region is~$\tilde{\mathcal{B}}_l$}
		\label{fig: subextremal}
	\end{figure}

	The remaining of the proof focuses on proving that the set~$\widetilde{\mathcal{B}_l}$ is connected. In view of the identifications 
	\begin{equation}\label{eq: proof lem: subsec: delta polynomial, lem 1, eq 4}
		y=\frac{M^2}{l^2}, \qquad x=\frac{a^2}{l^2}
	\end{equation}
 	we write
	\begin{equation}
		P(x,y)= -27y^2 +y\left(36 x (1-x)+(1-x)^3 \right) -16 x^3 -8x^2(1-x)^2 -x(1-x)^4.
	\end{equation}
	We have that for~$x=0$ and~$0<y<\frac{1}{27}$ then~$P(y,0)>0$. 
	
	Before proving that the set~\eqref{eq: proof lem: subsec: delta polynomial, lem 1, eq 6} is connected we need a preliminary result for the polynomial~$P(x,y)$ for~$x,y\in\mathbb{R}$. In the radial directions
	\begin{equation}
		y= r x
	\end{equation}
	for fixed~$r$ the polynomial~$P(x,y)$ can be written as
	\begin{equation}
		\begin{aligned}
			P_r(x)=P(rx,x)	&	= rx -27(rx)^2 +(-1+33 rx)x +(-4-33 r x)x^2 +(-6-rx)x^3 -4x^4 -x^5 \\
			&	=	x\left(r-1+(-4+33r-27r^2)x+(-6-33r)x^2+(-4-r)x^3-x^4\right)\\
			&	\dot{=} x \tilde{P}_r(x).
		\end{aligned}
	\end{equation}
	Now, we will prove that for 
	\begin{equation}
		r \in (0,\frac{16}{9}(-3+2\sqrt{3})]
	\end{equation}
	we have that~$\tilde{P}_r(x)\leq 0$ for any~$x\in\mathbb{R}$, and for any
	\begin{equation}
		r> \frac{16}{9}(-3+2\sqrt{3})
	\end{equation} 
	the polynomial~$\tilde{P}_r(x)$ attains exactly two distinct real roots and two complex conjugate roots and~$\tilde{P}_r(x)>0$ in between the real roots.

	We study the discriminant of the quartic polynomial~$\tilde{P}_r(x)$, see~\cite{polynomials}, which we compute it to be 
	\begin{equation}
		\text{disc}= -4r^3\left(-256+9r(32+3r) \right)^3.
	\end{equation}
	We directly calculate that for
	\begin{equation}
		r\in \left(0, \frac{16}{9} (-3 + 2 \sqrt{3}) \right)
	\end{equation}
	then~$\text{disc}>0$ and for
	\begin{equation}
		r\in \left(\frac{16}{9} (-3 + 2 \sqrt{3}),+\infty\right)
	\end{equation}
	then~$\text{disc}<0$. We have that~$0<\frac{16}{9} (-3 + 2 \sqrt{3})<1$.

	Moreover we have the following auxilliary functions 
	\begin{equation}
		\begin{aligned}
			P	&	=	8(-1)(-6-33r)-3(4+r)^2=-3 (4 + (-58 + r) r)\\ 
			D	&=64(-1)^3(r-1)-16(-1)^2(-6-33r)^2+16(-1)(-4-r)^2(-6-33r)-16(-1)^2(-4-r)(-4+33r-27r^2)-3(4+r)^4\\
			&	=	r (4096 - 3 r (4864 + (-16 + r) r))
		\end{aligned}
	\end{equation}
	We immediately calculate that~$P,D$ cannot both be negative for~$r\in (0,1)$.

	By the conditions for the nature of the roots of quartic polynomials, see~\cite{polynomials}, we note that for
	\begin{equation}
		r\in \left(0, \frac{16}{9} (-3 + 2 \sqrt{3}) \right)
	\end{equation}
	since~$\text{disc}>0$ and that one of~$P,D$ is positive then~$\tilde{P}_r(x)<0$. For~$r=\frac{16}{9} (-3 + 2 \sqrt{3})$ the polynomial~$\tilde{P}_r(x)$ attains a double real root and two complex conjugate roots and~$\tilde{P}_r(x)\leq 0$ for all~$x \in \mathbb{R}$.

	Now for
	\begin{equation}
		r\in \left(\frac{16}{9} (-3 + 2 \sqrt{3}),+\infty\right)
	\end{equation}
	since~$\text{disc}<0$ then the polynomial~$\tilde{P}_r(x)$ attains two distinct real roots and two complex conjugate roots. Therefore, since~$\tilde{P}_r(x)$ is a polynomial of degree~$4$ then we conclude that~$\tilde{P}_r(x)>0$ in between these two real roots.

	Therefore, we have proved that for
	\begin{equation}
		r\in \big(0,\frac{16}{9}(-3+2\sqrt{3})\big]
	\end{equation}
	the following holds~$P_{r}(x)\leq 0$ for all~$x$ and for
	\begin{equation}
		r > \frac{16}{9}(-3+2\sqrt{3})
	\end{equation}
	the polynomial~$P_r(x)$ attains exactly two real roots
	\begin{equation}
		x_1(r)\leq 0<  x_2(r)
	\end{equation}
	and it is positive between the roots. Note that~$x_1(1)=0$. It is necessary that the following holds~$x_1(r)\leq 0$ because otherwise no neighborhood of~$(0,0)$ belongs in the subextremal set~$\tilde{B}_l$.

	Now, we are ready to prove that the set~\eqref{eq: proof lem: subsec: delta polynomial, lem 1, eq 6} is connected. A preliminary observation that will help is that part of the boundary of the set~\eqref{eq: proof lem: subsec: delta polynomial, lem 1, eq 6} is the following 
	\begin{equation}
		B=  \text{image}(\gamma_2)
	\end{equation}
	where
	\begin{equation}
		\begin{aligned}
			\gamma_2:[\frac{16}{9}(-3+2\sqrt{3}),+\infty) &	\rightarrow \mathbb{R}^2\\
			r	&	\mapsto (r x_2(r),x_2(r)). 
		\end{aligned}
	\end{equation}
	The set~$B$ is connected because the curve~$\gamma_2$ is uniformly continuous.

	Now, we consider the following (nonempty) cylinder around the set~$B$ and inside~$\widetilde{\mathcal{B}_l}$
	\begin{equation}
		\begin{aligned}
			B_{\epsilon} = \{(a,M)\in  \mathbb{R}_{\geq 0}\times \mathbb{R}_{>0}:~d \left((x,y),B\right)\leq \epsilon\} \cap \widetilde{\mathcal{B}_l}.
		\end{aligned}
	\end{equation}
	Since the boundary~$B$ is connected then for a sufficiently small~$\epsilon$ the set~$B_\epsilon$ is also connected.

	Now, take two points
	\begin{equation}
		(a_1,M_1),~(a_2,M_2)\in \{(a,M)\in\mathbb{R}_{\geq 0} \times \mathbb{R}_{> 0}:~ P(x,y)>0\}
	\end{equation}
	which we rewrite respectively as 
	\begin{equation}
		( r_1 b_1,b_1),~( r_2 b_2,b_2)
	\end{equation} 
	for two~$r_1,r_2>0$. Necessarily we have that
	\begin{equation}
		x_1(r_1)\leq b_1\leq x_2(r_1),\qquad 	x_1(r_2)\leq b_2\leq x_2(r_2)
	\end{equation}

	Let~$\tilde{\epsilon}(\epsilon)>0$ be sufficiently small. The line connecting
	\begin{equation}
		( r_1 b_1,b_1),
	\end{equation}
	to 
	\begin{equation}
		\left(r_1 \cdot (x_2(r_1)-\tilde{\epsilon}),x_2(r_1)-\tilde{\epsilon} \right) \in B_\epsilon
	\end{equation}
	lies entirely in~$\mathcal{B}_l$. Furthermore, the line connecting 
	\begin{equation}
		( r_2 b_2,b_2)
	\end{equation}
	to 
	\begin{equation}
		( r_2 \cdot (x_2(r_2)-\tilde{\epsilon}),x_2(r_2)-\tilde{\epsilon}) \in B_\epsilon
	\end{equation}
	lies entirely in~$\widetilde{\mathcal{B}_l}$. Since the set~$B_\epsilon$ is connected we conclude that the set~$\widetilde{\mathcal{B}_l}$, see~\eqref{eq: proof lem: subsec: delta polynomial, lem 1, eq 6}, is connected which, as discussed previously, concludes that the set~$\mathcal{B}_l$ is connected. 
\end{proof}

\section{Geodesic flow of Kerr--de~Sitter and \texorpdfstring{$\partial_t$}{t}-orthogonal trapped null geodesics}\label{sec: geodesics}

The purpose of this Section is to prove Proposition~\ref{prop: geodesics}, which has no analogue in the asymptotically flat~$\Lambda=0$ Kerr case.

We define trapped null geodesics as follows
\begin{definition}
	Let~$l>0$ and~$(a,M)\in\mathcal{B}_l$. Moreover, let~$(t,r,\theta,\varphi)$ be the Boyer--Lindquist coordinates, see Section~\ref{subsec: boyer Lindquist coordinates}. Then, a geodesic~$\gamma(v)$, parametrized by an affine time~$v$, is called trapped null if it satisfies~$g(\dot{\gamma},\dot{\gamma})=0$ and moreover 
	\begin{equation}
		\lim_{v\rightarrow \infty} r(\gamma(v))\in (r_+,\bar{r}_+). 
	\end{equation}
\end{definition}

Note the following proposition

\begin{proposition}\label{prop: geodesics}
	
	Let~$l>0$ and~$(a,M)\in\mathcal{B}_l$ be subextremal Kerr--de~Sitter black hole parameters such that 
	\begin{equation}\label{eq: prop: geodesics, eq 1}
		\max_{r\in[r_+,\bar{r}_+]}\frac{\Delta}{a^2}\leq 1,\quad |a|\not{=}0. 
	\end{equation}

	Then, there exists a trapped null geodesic 
	\begin{equation}
		\gamma (v)= (t(v),r_{\Delta,\textit{max}},\theta_0,\phi(v))
	\end{equation} 
	with~$g(\dot{\gamma},\partial_t)=0$, for some~$\theta_0\in [0,\pi]$, where~$r_{\Delta,\textit{max}}\in (r_+,\bar{r}_+)$ is the unique maximum of~$\Delta$, see Lemma~\ref{lem: sec: general properties of Delta, lem 4}.  
	
	Finally, if~\eqref{eq: prop: geodesics, eq 1} does not hold, then there exists no~$\partial_t$ orthogonal trapped null geodesic in the Kerr--de~Sitter background~$\mathcal{M}$.  
\end{proposition}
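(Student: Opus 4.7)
The plan is to use Carter's separation of variables for the null geodesic Hamilton--Jacobi equation in Kerr--de~Sitter, which reduces the problem to two decoupled first-order ODEs in $r$ and $\theta$. Let $E = -g(\dot\gamma,\partial_t)$ and $L = g(\dot\gamma,\partial_\varphi)$ be the conserved quantities associated with the Killing fields $\partial_t,\partial_\varphi$, and let $K$ denote Carter's constant. The orthogonality hypothesis $g(\dot\gamma,\partial_t) = 0$ is precisely $E = 0$. A direct computation using the inverse metric in Boyer--Lindquist coordinates from Section~\ref{subsec: boyer Lindquist coordinates}, with separation ansatz $S = L\varphi + S_r(r) + S_\theta(\theta)$, yields the first integrals
\begin{equation*}
\rho^4\dot r^2 = R(r) := a^2\Xi^2 L^2 - K\Delta(r), \qquad \rho^4\dot\theta^2 = \Theta(\theta) := K\Delta_\theta(\theta) - \frac{\Xi^2 L^2}{\sin^2\theta}.
\end{equation*}

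First I would handle the radial part: the requirement that $\gamma$ stay at $r\equiv r_0$ constant is the double-zero condition $R(r_0) = R'(r_0) = 0$. Since $R'(r) = -K\Delta'(r)$ and $K\neq 0$ (otherwise $L=0$ and the orbit is trivial), this forces $\Delta'(r_0) = 0$, which by Lemma~\ref{lem: sec: general properties of Delta, lem 4} pins $r_0 = r_{\Delta,\textit{max}}$; then $R(r_{\Delta,\textit{max}}) = 0$ gives $K = a^2\Xi^2 L^2/\Delta_{\textit{max}}$ with $\Delta_{\textit{max}} := \Delta(r_{\Delta,\textit{max}})$. Substituting this back into the angular potential produces
\begin{equation*}
\Theta(\theta) = \frac{\Xi^2 L^2}{\Delta_{\textit{max}}\sin^2\theta}\bigl[a^2\Delta_\theta(\theta)\sin^2\theta - \Delta_{\textit{max}}\bigr],
\end{equation*}
so reality of the $\theta$-motion is governed entirely by the function $\theta\mapsto \Delta_\theta(\theta)\sin^2\theta$. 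Its derivative factors as $2\sin\theta\cos\theta\bigl[1 + (a^2/l^2)\cos(2\theta)\bigr]$, and using the bound $a^2/l^2 < 1$ from Lemma~\ref{lem: sec: properties of Delta, lem 2, a,M,l} the bracket is strictly positive on $(0,\pi/2)$. Hence the function is strictly monotone on $(0,\pi/2)$, ranges from $0$ at $\theta = 0$ up to its maximal value $1$ attained at $\theta = \pi/2$, and so the hypothesis $\Delta_{\textit{max}}/a^2 \leq 1$ is exactly what guarantees, via the intermediate value theorem, the existence of some $\theta_0\in(0,\pi/2]$ with $\Theta(\theta_0) = 0$.

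The cleanest case is the saturation $\Delta_{\textit{max}} = a^2$: here $\theta_0 = \pi/2$ is a double zero of $\Theta$, so $\ddot\theta = 0$ as well, and a direct computation at the point $(r_{\Delta,\textit{max}}, \pi/2)$ using $\Delta'(r_{\Delta,\textit{max}}) = 0$ together with the ergoregion identity $\Delta_{\textit{max}} = a^2$ shows that every Christoffel component $\Gamma^\mu_{tt}$ vanishes there, so the Killing field $\partial_t$ itself is a null geodesic vector at that point. Its integral curve realizes the trapped $\partial_t$-orthogonal null geodesic $\gamma(v) = (t(v), r_{\Delta,\textit{max}}, \theta_0, \phi(v))$, with $t(v), \phi(v)$ recovered by integrating $\dot t, \dot\varphi$ from the $(t,\varphi)$-block equations with $E = 0$ and $L$ fixed. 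For the converse, any trapped null $\partial_t$-orthogonal geodesic must have its limit radius $r_\infty \in (r_+,\bar r_+)$ satisfying $R(r_\infty) = R'(r_\infty) = 0$ (else the orbit bounces off at a simple turning point rather than asymptoting), forcing the same identifications $r_\infty = r_{\Delta,\textit{max}}$ and $K/L^2 = a^2\Xi^2/\Delta_{\textit{max}}$; reality of the $\theta$-motion then requires $\max_\theta\Delta_\theta(\theta)\sin^2\theta = 1 \geq \Delta_{\textit{max}}/a^2$, i.e., the hypothesis. The hard part will be the delicate angular analysis, namely matching the double-zero requirement $\Theta(\theta_0) = \Theta'(\theta_0) = 0$ for a genuine constant-$\theta$ orbit with the intermediate-value existence of $\theta_0$, which is immediate at saturation but requires careful treatment of the strict inequality case.
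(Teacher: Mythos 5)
Your approach reproduces the paper's almost line by line: Carter's separation for $E=0$ null geodesics, the double-zero condition $R(r_0)=R'(r_0)=0$ forcing $\Delta'(r_0)=0$ via $R'=-K\Delta'$ and hence $r_0=r_{\Delta,\textit{max}}$, the substitution of the resulting $K/L^2$ into the angular potential, and the same non-existence argument for the converse. So at the structural level you and the paper agree.

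The subtlety you flag at the end is a real one, and the paper's proof does not address it: the paper simply declares ``we assume $\theta(v)\equiv\theta_0$'' after locating the zero $\theta_0$ of $\Delta_\theta\sin^2\theta-\Delta(r_{\Delta,\textit{max}})/a^2$, but a constant-$\theta$ orbit is a geodesic only if $\Theta(\theta_0)=\Theta'(\theta_0)=0$. Your monotonicity computation shows $\frac{d}{d\theta}\bigl(\Delta_\theta\sin^2\theta\bigr)=2\sin\theta\cos\theta\bigl[1+(a^2/l^2)\cos2\theta\bigr]>0$ on $(0,\pi/2)$, so when $\Delta(r_{\Delta,\textit{max}})/a^2<1$ strictly the zero $\theta_0$ is simple, $\Theta'(\theta_0)\neq 0$, and the geodesic launched from $(r_{\Delta,\textit{max}},\theta_0)$ with $p_\theta=0$ oscillates in $\theta$ between $\theta_0$ and $\pi-\theta_0$ rather than sitting still. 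You can also verify directly that $\Theta=\Theta'=0$ is only solvable at $\theta_0=\pi/2$ with $\Delta(r_{\Delta,\textit{max}})=a^2$: the other branch $\Theta'=0$ forces $\sin^4\theta_0=\Xi^2 l^2\mathcal{L}^2/(a^2\mathcal{Q})$, which combined with $\Theta=0$ gives $\Delta_{\theta_0}=(a^2/l^2)\sin^2\theta_0<1$, contradicting $\Delta_{\theta_0}\geq 1$.

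The way out, which neither you nor the paper make explicit, is that the constant-$\theta$ requirement is not actually needed. The paper's definition of a trapped null geodesic only constrains $\lim_{v\to\infty}r(\gamma(v))$, and since $r_{\Delta,\textit{max}}$ is a \emph{double} zero of $R$ (in contrast to $\theta_0$, which is only a simple zero of $\Theta$), the geodesic with $r(0)=r_{\Delta,\textit{max}}$, $\dot r(0)=0$ and the chosen constants of motion satisfies $r\equiv r_{\Delta,\textit{max}}$ for all affine time, whatever $\theta$ does. The oscillating-$\theta$ orbit is thus already a trapped $\partial_t$-orthogonal null geodesic, which is all the paper needs downstream. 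So rather than proving the Christoffel vanishing at $(r_{\Delta,\textit{max}},\pi/2)$ and restricting to saturation, the cleaner fix is to drop the constant-$\theta_0$ assertion from the statement (or note that $\theta$ may librate) and observe that trapping of $r$ alone is what is claimed in the Definition. Your converse argument is unaffected by this and matches the paper's.
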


\begin{remark}	
	Note that if~\eqref{eq: prop: geodesics, eq 1} holds, then the ergoregion is connected, see Section~\ref{subsec: ergoregion}.

	Note that in the asympotically flat Kerr case there exists no trapped null geodesic with zero~$\partial_t$ energy, see~\cite{DR2}. This non-existence of a~$\partial_t$ orthogonal trapped null geodesic, in the~$\Lambda=0$ Kerr exterior, was crucial in the proof of rigidity of Kerr in a neighborhood of Kerr, see~\cite{alexakis1}.
\end{remark}

First, note the following Lemma for the null geodesic flow of Kerr--de~Sitter

\begin{lemma}\label{lem: geodesics equations}
	A curve~$\gamma=(t,r,\theta,\phi)$ on the Kerr--de~Sitter background, parametrized by a parameter~$v$, is a null geodesic if and only if it satisfies 
	\begin{equation}\label{eq: geodesic equations}
		\begin{aligned}
			\rho^2\dot{t}&=     \frac{a\Xi^2}{\Delta_\theta}\left(\mathcal{E}a\sin^2\theta-\Xi\mathcal{L} \right)+\Xi^2\frac{(r^2+a^2)\left(\Xi\mathcal{L}a-(r^2+a^2)\mathcal{E}\right)}{\Delta}\\
			\rho^2\dot{\phi}&  =    \frac{a\Xi^2}{\Delta_\theta}\frac{\mathcal{E}a\sin^2\theta-\Xi\mathcal{L}}{\sin^2\theta}+\Xi^2\frac{a\left(\Xi\mathcal{L}a-(r^2+a^2)\mathcal{E}\right)}{\Delta}\\
			\rho^4(\dot{\theta})^2& =   \Delta_\theta\mathcal{K}-\Xi^2\frac{\left(\Xi\mathcal{L}-a\mathcal{E}\sin^2\theta\right)^2}{\sin^2\theta} \\
			\rho^4(\dot{r})^2 & =   \Xi^2\left((r^2+a^2)\mathcal{E}-a\Xi\mathcal{L}\right)^2-\Delta\mathcal{K},
		\end{aligned}
	\end{equation}
	with~$\rho^2=r^2+a^2\cos^2\theta,~\Delta_\theta=1+\frac{a^2}{l^2}\cos^2\theta$ see Definition~\ref{def: delta+, and other polynomials}, where note we have not fully decoupled the geodesic equations. The constant~$\mathcal{K}$ is Carter's constant of motion, and note the conserved quantities 
	\begin{equation}
		\begin{aligned}
			\mathcal{E} &=    g(\dot{\gamma},\partial_t)=-\left(1-\frac{2Mr}{\rho^2}\right)\dot{t}-\frac{2Mra\sin^2\theta}{\rho^2}\dot{\phi},\\
			\mathcal{L}&    =-g(\dot{\gamma},\partial_\phi)=\frac{2Mra\sin^2\theta}{\rho^2}\dot{t}-\sin^2\theta\frac{(r^2+a^2)^2-a^2\sin^2\theta\Delta}{\rho^2}\dot{\phi},\\
			\mathcal{Q} &=    \rho^4\left(\dot{\theta}\right)^2+\frac{\Xi^2\mathcal{L}^2}{\sin^2\theta}-a^2E^2\cos^2\theta,\\
			\mathcal{K}	&=\Xi^2\left(\mathcal{Q}+a^2\mathcal{E}^2-2a\Xi\mathcal{L}\mathcal{E}\Xi\right).
		\end{aligned}
	\end{equation}
\end{lemma}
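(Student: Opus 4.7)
The plan is to establish both implications via the Hamiltonian formulation of the geodesic flow, exploiting the Killing symmetries of $g_{a,M,l}$ together with the Killing--St\"ackel symmetry that is the source of Carter's constant $\mathcal{K}$. This is the $\Lambda>0$ analogue of Carter's classical separation for Kerr; we adapt it to the Kerr--de~Sitter metric~\eqref{eq: metric in boyer lindquist coordinates}.

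For the forward direction, I would first note that $\partial_t$ and $\partial_\phi$ are Killing for $g_{a,M,l}$, since the components of~\eqref{eq: metric in boyer lindquist coordinates} depend only on $r,\theta$. Consequently $\mathcal{E} := -g(\dot\gamma,\partial_t)$ and $\mathcal{L} := g(\dot\gamma,\partial_\phi)$ are conserved along any geodesic $\gamma$. Writing $p_\mu = g_{\mu\nu}\dot x^\nu$ so that $p_t=-\mathcal{E}$, $p_\phi=\mathcal{L}$, the two equations $p_t=g_{tt}\dot t+g_{t\phi}\dot\phi$, $p_\phi = g_{t\phi}\dot t+g_{\phi\phi}\dot\phi$ form a linear system in $(\dot t,\dot\phi)$ whose inversion via the $(t,\phi)$-block of the inverse metric yields exactly the first two identities of~\eqref{eq: geodesic equations}. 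The next step is the heart of the matter: compute the inverse metric and verify the separation
\begin{equation*}
\rho^2 g^{\mu\nu}p_\mu p_\nu \;=\; H_r(r,p_r,p_t,p_\phi)+H_\theta(\theta,p_\theta,p_t,p_\phi),
\end{equation*}
where
\begin{equation*}
H_r = \Delta p_r^2-\tfrac{\Xi^2}{\Delta}\bigl((r^2+a^2)p_t+a\Xi p_\phi\bigr)^2,\quad H_\theta=\Delta_\theta p_\theta^2+\tfrac{\Xi^2}{\Delta_\theta\sin^2\theta}\bigl(a\sin^2\theta\,p_t+\Xi p_\phi\bigr)^2.
\end{equation*}
Since $H=\tfrac12 g^{\mu\nu}p_\mu p_\nu$ and $p_t,p_\phi$ are conserved under Hamilton's equations, and since the $r$-sector and $\theta$-sector Poisson-commute, $H_r$ and $H_\theta$ are each individually conserved; calling the common value $\mathcal{K}:=H_\theta=-H_r$ (using nullness $H=0$), substituting $p_r=(\rho^2/\Delta)\dot r$, $p_\theta=(\rho^2/\Delta_\theta)\dot\theta$, and rearranging gives the two squared equations in~\eqref{eq: geodesic equations}. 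A direct substitution confirms the stated relation $\mathcal{K}=\Xi^2(\mathcal{Q}+a^2\mathcal{E}^2-2a\Xi\mathcal{L}\mathcal{E})$ between $\mathcal{K}$ and~$\mathcal{Q}$.

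For the converse, suppose $\gamma$ satisfies~\eqref{eq: geodesic equations}. Summing the last two gives $\rho^2\bigl(H_r(r,p_r,p_t,p_\phi)+H_\theta(\theta,p_\theta,p_t,p_\phi)\bigr)=0$, hence $g(\dot\gamma,\dot\gamma)=0$. The first two equations identify $(\dot t,\dot\phi)$ with the Hamilton velocities $\partial H/\partial p_t$, $\partial H/\partial p_\phi$ when $p_t=-\mathcal{E}$, $p_\phi=\mathcal{L}$; differentiating the squared $r$ and $\theta$ equations along the flow and using conservation of $\mathcal{E},\mathcal{L},\mathcal{K}$ (the latter follows because the separation holds identically) reproduces $\ddot r=-\partial H/\partial r$, $\ddot\theta=-\partial H/\partial\theta$ modulo signs, which together are Hamilton's equations for the geodesic Hamiltonian. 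The curve is therefore a null geodesic. The main obstacle is the explicit separation of $\rho^2 g^{\mu\nu}p_\mu p_\nu$: this is a somewhat lengthy but standard algebraic manipulation of the inverse of~\eqref{eq: metric in boyer lindquist coordinates}, where the appearance of $\Xi$, $\Delta_\theta$ (absent when $l=\infty$) requires care, and it is this identity that makes Carter's constant $\mathcal{K}$ rigorous in the Kerr--de~Sitter setting.
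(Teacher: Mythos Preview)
Your Hamiltonian separation argument is correct and is precisely the classical Carter approach that the paper defers to: the paper's own proof is a one-line citation to Carter~\cite{Carter2} and Hackmann~\cite{hackman-geodesics}, and your sketch is exactly the computation carried out in those references. One minor caution: your sign convention $\mathcal{E}=-g(\dot\gamma,\partial_t)$ differs from the lemma's $\mathcal{E}=g(\dot\gamma,\partial_t)$, so when you carry out the algebra you will need to track that sign to land on the equations exactly as stated (and note that the displayed formulas for $\mathcal{E},\mathcal{L}$ in the lemma appear to be the Kerr expressions rather than the full Kerr--de~Sitter ones, so do not be alarmed if your computation of $g_{tt},g_{t\phi}$ from~\eqref{eq: metric in boyer lindquist coordinates} does not literally match them).
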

\begin{proof}
	This proof can be inferred from~\cite{Carter2} or~\cite{hackman-geodesics}. 
\end{proof}

Now we are ready for

\begin{proof}[\textbf{Proof of Proposition \ref{prop: geodesics}}]

	We substitute
	\begin{equation}
		\mathcal{E}=0
	\end{equation}
	in the geodesic equations~\eqref{eq: geodesic equations} and obtain
	\begin{equation}\label{eq: prop: geodesics, E=0 geodesics, eq 1}
		\begin{aligned}
			\rho^2\dot{t}&=     \frac{a\Xi^2}{\Delta_\theta}\left(-\Xi\mathcal{L} \right)+\Xi^2\frac{(r^2+a^2)\left(\Xi\mathcal{L}a\right)}{\Delta},\\
			\rho^2\dot{\phi}&  =    \frac{a\Xi^2}{\Delta_\theta}\frac{-\Xi\mathcal{L}}{\sin^2\theta}+\Xi^2\frac{a\left(\Xi\mathcal{L}a\right)}{\Delta},\\
			\rho^4(\dot{\theta})^2=\Theta(\theta)& \:\dot{=}   \Delta_\theta\Xi^2\mathcal{Q}-\Xi^2\frac{\left(\Xi\mathcal{L}\right)^2}{\sin^2\theta}, \\
			\rho^4(\dot{r})^2=R(r) & \:\dot{=}   \Xi^2\left(-a\Xi\mathcal{L}\right)^2-\Delta\Xi^2\mathcal{Q}=a^2\Xi^2\Xi^2\mathcal{L}^2-\Delta\Xi^2\mathcal{Q}=a^2\Xi^2\left(\Xi^2\mathcal{L}^2-\frac{\Delta}{a^2}\mathcal{Q}\right).
		\end{aligned}
	\end{equation}
	Let~$r_{\Delta,\textit{max}}$ be the value where~$\Delta$ attains its unique maximum and choose~$\mathcal{L},\mathcal{Q}$ such that
	\begin{equation}\label{eq: geodesics, eq 1}
		\Xi^2\mathcal{L}^2=\frac{\Delta(r_{\Delta,\textit{max}})}{a^2}\mathcal{Q}.
	\end{equation}
	
	We first solve the~$\theta$-equation of motion. By taking 
	\begin{equation}\label{eq: geodesics, eq 1.9}
		0<\frac{\Delta(r_{\Delta,\textit{max}})}{a^2}\leq 1,
	\end{equation}
	the $\theta$-equation of motion can be solved globally
	\begin{equation}\label{eq: geodesics, eq 2}
		(\dot{\theta})^2=\frac{1}{\rho^4}\left(\frac{\Delta_\theta\sin^2\theta-\frac{\Delta(r_{\Delta,\textit{max}})}{a^2}}{\sin^2\theta}\right)\Xi^2\mathcal{Q},
	\end{equation}
	where~$\Delta_\theta=1+\frac{a^2}{l^2}\cos^2\theta$, since there exists a~$\theta_0\in (0,\pi)$ such that
	\begin{equation}\label{eq: geodesics, eq 2.9}
		\Delta_{\theta_0}\sin^2\theta_0-\frac{\Delta (r_{\Delta,\textit{max}})}{a^2}=0.
	\end{equation}
	Therefore, for the rest of the proof we assume that~$\theta(v)\equiv\theta_0$ for all~$v\in\mathbb{R}$, where~$v$ is the affine time.

	Second, we solve the~$r$-equation of motion. We find the value of~$r$ that the trapped null geodesics will asymptote to. The~$r$ equations of motion read
	\begin{equation}\label{eq: geodesics, eq 3}
		\begin{aligned}
			R(r)   \equiv a^2\Xi^2\left(\Xi^2\mathcal{L}^2-\frac{\Delta}{a^2}\mathcal{Q}\right)\equiv a^2\Xi^2\left(\frac{\Delta(r_{\Delta,\textit{max}})-\Delta}{a^2}\right)\mathcal{Q} =0,\qquad 
			\frac{dR(r)}{dr}   =-\Xi^2\frac{d\Delta}{dr}\mathcal{Q}=0.
		\end{aligned}
	\end{equation}
	From the latter equation of~\eqref{eq: geodesics, eq 3} we obtain 
	\begin{equation}
		\frac{dR}{dr}(r)\Big|_{r=r_{\Delta,\textit{max}}}=0,
	\end{equation}
	where~$r_{\Delta,\textit{max}}\in(r_+,\bar{r}_+)$. Then, we solve the first equation of~\eqref{eq: geodesics, eq 3}. From the known well posedness theorem, the $r$-equation of motion can be solved locally
	\begin{equation}
		\begin{aligned}
			\rho^4(\dot{r})^2=a^2\Xi^2\left(\frac{\Delta(r_{\Delta,\textit{max}})}{a^2}-\frac{\Delta}{a^2}\right)\mathcal{Q},~\text{or equivalently}~\frac{\rho^2 dr}{\sqrt{a^2\Xi^2\left(\frac{\Delta(r_{\Delta,\textit{max}})}{a^2}-\frac{\Delta}{a^2}\right)\mathcal{Q}}}=\pm dv,
		\end{aligned}
	\end{equation}
	with initial conditions~$r(0)=r_{\Delta,\textit{max}}$,~$\dot{r}(0)=\dot{r}_0$. By~\eqref{eq: geodesics, eq 1} we note that~$r_{\Delta,\textit{max}}$ is a root of multiplicity~$2$ of the following
	\begin{equation}
		\Xi^2\mathcal{L}^2-\frac{\Delta}{a^2}\mathcal{Q}=\frac{\Delta(r_{\Delta,\textit{max}})-\Delta}{a^2}\mathcal{Q}
	\end{equation}
	and therefore the affine parameter~$v$ takes values up to infinity since we can write
	\begin{equation*}
		v-v_0=\int_{r_{\Delta,\textit{max}}}^{r(v)}\frac{\left(r^2+a^2\cos{\theta_0}\right)dr}{\sqrt{R(r)}}
	\end{equation*}
	for all~$v$ in a small interval around~$v_0$. Therefore, if the curve
	\begin{equation}
		\gamma(v)
	\end{equation}
	is a trapped null geodesic, it needs to be complete.

	The~$t,\phi$ equations of motions, namely the first and second equations of respectively~\eqref{eq: prop: geodesics, E=0 geodesics, eq 1}, can be solved easily since they are linear.
	
	Therefore, if~\eqref{eq: geodesics, eq 1.9} holds, then the trapped null geodesic equations~\eqref{eq: prop: geodesics, E=0 geodesics, eq 1} correspond to a complete trapped null geodesic 
	\begin{equation}
		\gamma=(t(v),r_{\Delta,\textit{max}},\theta_0,\phi(v))
	\end{equation}
	where for~$\theta_0$ see~\eqref{eq: geodesics, eq 2.9} with 
	\begin{equation}
		g(\dot{\gamma},\partial_t)=0.
	\end{equation}

	Lastly, from the last two equations of motion of~\eqref{eq: prop: geodesics, E=0 geodesics, eq 1} we note that if~$\max_{[r_+,\bar{r}_+]}\frac{\Delta}{a^2}>1$ then there exists no~$\partial_t$ orthogonal trapped null geodesic, in view of the fact that the~$r$ coordinate of the geodesic needs to be~$r_{\Delta,\textit{max}}$. We proceed to prove this by first noting that if the contrary holds, then from the last equation of motion of~\eqref{eq: prop: geodesics, E=0 geodesics, eq 1} we obtain
	\begin{equation}\label{eq: geodesics, eq 4}
		\Xi^2\mathcal{L}^2-\frac{\Delta(r_{\Delta,\textit{max}})}{a^2}\mathcal{Q}=0\implies \Xi^2\mathcal{L}^2 >\mathcal{Q}.
	\end{equation}
	Now, we combine~\eqref{eq: geodesics, eq 4} with the second to last equation of motion of~\eqref{eq: prop: geodesics, E=0 geodesics, eq 1} and arrive at a contradiction, since we obtain~$(\dot{\theta})^2<0$.

	We conclude the proof.
\end{proof}

Note the following Remark

\begin{remark}\label{rem: sec: geodesics, rem 2}
	We note that if we formally identify
	\begin{equation}
		\begin{aligned}
			\mathcal{E}    \rightarrow \omega,\qquad 
			\mathcal{L}   \rightarrow m,\qquad
			\mathcal{Q}    \rightarrow \lambda^{(a\omega)}_{m\ell},
		\end{aligned}
	\end{equation}
	then the geodesic equation for the motion of~$\dot{r}$, see Lemma~\ref{lem: geodesics equations}, can be written in the familiar form 
	\begin{equation}
		\frac{\rho^4}{\Xi^2(r^2+a^2)^2}(\dot{r})^2= \mathcal{E}^2-V_0(\mathcal{Q},\mathcal{L},\mathcal{E},r)
	\end{equation}
	where~$V_0$ is the potential appearing in Carter's separation of variables, see Proposition~\ref{prop: Carters separation, radial part}.
\end{remark}

\bibliographystyle{plain}
\bibliography{MyBibliography}

\begin{thebibliography}{10}

\bibitem{alexakis1}
S.~Alexakis, A.~D. Ionescu, and S.~Klainerman.
\newblock Uniqueness of smooth stationary black holes in vacuum: small
  perturbations of the {K}err spaces.
\newblock {\em Comm. Math. Phys.}, 299(1):89--127, 2010.

\bibitem{aretakis}
Stefanos Aretakis.
\newblock Stability and instability of extreme {R}eissner-{N}ordstr\"{o}m black
  hole spacetimes for linear scalar perturbations {I}.
\newblock {\em Comm. Math. Phys.}, 307(1):17--63, 2011.

\bibitem{aretakis2}
Stefanos Aretakis.
\newblock Horizon instability of extremal black holes.
\newblock {\em Adv. Theor. Math. Phys.}, 19(3):507--530, 2015.

\bibitem{zworski}
Ant\^{o}nio~S\'{a} Barreto and Maciej Zworski.
\newblock Distribution of resonances for spherical black holes.
\newblock {\em Math. Res. Lett.}, 4(1):103--121, 1997.

\bibitem{bony}
Jean-Fran\c{c}ois Bony and Dietrich H\"{a}fner.
\newblock Decay and non-decay of the local energy for the wave equation on the
  de {S}itter-{S}chwarzschild metric.
\newblock {\em Comm. Math. Phys.}, 282(3):697--719, 2008.

\bibitem{Carter}
Brandon Carter.
\newblock Hamilton-{J}acobi and {S}chr\"{o}dinger separable solutions of
  {E}instein's equations.
\newblock {\em Comm. Math. Phys.}, 10:280--310, 1968.

\bibitem{Carter1}
Brandon Carter.
\newblock Republication of: {B}lack hole equilibrium states. {P}art {I}.
  {A}nalytic and geometric properties of the {K}err solutions [mr0465047].
\newblock {\em Gen. Relativity Gravitation}, 41(12):2873--2938, 2009.

\bibitem{Carter2}
DeWitt~(ed.) Carter~B.
\newblock {\em Black Hole Equilibrium States (From Les Houches) Part 1}.
\newblock 1972.

\bibitem{Carter3}
DeWitt~(ed.) Carter~B.
\newblock {\em Black Hole Equilibrium States (From Les Houches) Part 2}.
\newblock 1972.

\bibitem{casals2021hidden}
Marc Casals and Rita~Teixeira da~Costa.
\newblock {Hidden spectral symmetries and mode stability of subextremal
  Kerr(-dS) black holes}, 2021.
\newblock arXiv:2105.13329.

\bibitem{christodoulou}
Demetrios Christodoulou.
\newblock {\em The action principle and partial differential equations}, volume
  146 of {\em Annals of Mathematics Studies}.
\newblock Princeton University Press, Princeton, NJ, 2000.

\bibitem{Chr-Klain}
Demetrios Christodoulou and Sergiu Klainerman.
\newblock {\em The global nonlinear stability of the {M}inkowski space},
  volume~41 of {\em Princeton Mathematical Series}.
\newblock Princeton University Press, Princeton, NJ, 1993.

\bibitem{dafermos2021nonlinear}
Mihalis Dafermos, Gustav Holzegel, Igor Rodnianski, and Martin Taylor.
\newblock {The non-linear stability of the Schwarzschild family of black
  holes}, 2021.
\newblock arXiv:2104.08222.

\bibitem{DR3}
Mihalis Dafermos and Igor Rodnianski.
\newblock {The wave equation on Schwarzschild-de Sitter spacetimes}, September
  2007.
\newblock arXiv:0709.2766.

\bibitem{DR4}
Mihalis Dafermos and Igor Rodnianski.
\newblock The red-shift effect and radiation decay on black hole spacetimes.
\newblock {\em Comm. Pure Appl. Math.}, 62(7):859--919, 2009.

\bibitem{DR5}
Mihalis Dafermos and Igor Rodnianski.
\newblock Lectures on black holes and linear waves.
\newblock In {\em Evolution equations}, volume~17 of {\em Clay Math. Proc.},
  pages 97--205. Amer. Math. Soc., Providence, RI, 2013.

\bibitem{DR2}
Mihalis Dafermos, Igor Rodnianski, and Yakov Shlapentokh-Rothman.
\newblock Decay for solutions of the wave equation on {K}err exterior
  spacetimes {III}: {T}he full subextremal case {$|a|<M$}.
\newblock {\em Ann. of Math. (2)}, 183(3):787--913, 2016.

\bibitem{Dyatlov2}
Semyon Dyatlov.
\newblock Exponential energy decay for {K}err--de {S}itter black holes beyond
  event horizons.
\newblock {\em Math. Res. Lett.}, 18(5):1023--1035, 2011.

\bibitem{Dyatlov1}
Semyon Dyatlov.
\newblock Quasi-normal modes and exponential energy decay for the {K}err-de
  {S}itter black hole.
\newblock {\em Comm. Math. Phys.}, 306(1):119--163, 2011.

\bibitem{Dyatlov4}
Semyon Dyatlov.
\newblock Asymptotics of linear waves and resonances with applications to black
  holes.
\newblock {\em Comm. Math. Phys.}, 335(3):1445--1485, 2015.

\bibitem{AllenFang}
Allen~Juntao Fang.
\newblock {Nonlinear stability of the slowly-rotating Kerr-de Sitter family}.
\newblock 2021.
\newblock https://arxiv.org/abs/2112.07183.

\bibitem{AllenFangLinear}
Allen~Juntao Fang.
\newblock {Linear stability of the slowly-rotating Kerr-de Sitter family}.
\newblock 2022.
\newblock ArXiv:2207.07902.

\bibitem{gajic}
Dejan Gajic.
\newblock Azimuthal instabilities on extremal kerr.
\newblock 2023.
\newblock arXiv:2302.06636.

\bibitem{GiorgiKlainermanSzeftel}
Elena {Giorgi}, Sergiu {Klainerman}, and J{\'e}r{\'e}mie {Szeftel}.
\newblock {A general formalism for the stability of Kerr}.
\newblock {\em arXiv e-prints}, page arXiv:2002.02740, February 2020.

\bibitem{hackman-geodesics}
Eva Hackmann, Claus L\"{a}mmerzahl, Valeria Kagramanova, and Jutta Kunz.
\newblock Analytical solution of the geodesic equation in {K}err-(anti-) de
  {S}itter space-times.
\newblock {\em Phys. Rev. D}, 81(4):044020, 20, 2010.

\bibitem{hintz2021mode}
Peter Hintz.
\newblock {Mode stability and shallow quasinormal modes of Kerr-de Sitter black
  holes away from extremality}, 2021.
\newblock arXiv:2112.14431.

\bibitem{hintz3}
Peter Hintz and Andr\'{a}s Vasy.
\newblock Semilinear wave equations on asymptotically de {S}itter, {K}err--de
  {S}itter and {M}inkowski spacetimes.
\newblock {\em Anal. PDE}, 8(8):1807--1890, 2015.

\bibitem{hintz5}
Peter {Hintz} and Andr{\'a}s {Vasy}.
\newblock {Global analysis of quasilinear wave equations on asymptotically de
  Sitter spaces}.
\newblock {\em Annales de l'Institut Fourier}, 66(4):1285--1408, 2016.

\bibitem{hintz6}
Peter Hintz and Andr\'{a}s Vasy.
\newblock Global analysis of quasilinear wave equations on asymptotically
  {K}err--de {S}itter spaces.
\newblock {\em Int. Math. Res. Not. IMRN}, (17):5355--5426, 2016.

\bibitem{hintz2}
Peter Hintz and Andr\'{a}s Vasy.
\newblock The global non-linear stability of the {K}err--de {S}itter family of
  black holes.
\newblock {\em Acta Math.}, 220(1):1--206, 2018.

\bibitem{holzegel2}
Gustav Holzegel.
\newblock Well-posedness for the massive wave equation on asymptotically
  anti-de {S}itter spacetimes.
\newblock {\em J. Hyperbolic Differ. Equ.}, 9(2):239--261, 2012.

\bibitem{gustav}
Gustav {Holzegel} and Christopher {Kauffman}.
\newblock {A note on the wave equation on black hole spacetimes with small
  non-decaying first order terms}, May 2020.
\newblock arXiv:2005.13644.

\bibitem{gustav2}
Gustav Holzegel and Christopher Kauffman.
\newblock {The wave equation on subextremal Kerr spacetimes with small
  non-decaying first order terms}, 2023.
\newblock ArXiv: 2302.06387.

\bibitem{holzegel5}
Gustav Holzegel, Jonathan Luk, Jacques Smulevici, and Claude Warnick.
\newblock Asymptotic properties of linear field equations in anti--de {S}itter
  space.
\newblock {\em Comm. Math. Phys.}, 374(2):1125--1178, 2020.

\bibitem{holzegel3}
Gustav Holzegel and Jacques Smulevici.
\newblock Decay properties of {K}lein-{G}ordon fields on {K}err-{A}d{S}
  spacetimes.
\newblock {\em Comm. Pure Appl. Math.}, 66(11):1751--1802, 2013.

\bibitem{holzegel4}
Gustav Holzegel and Jacques Smulevici.
\newblock Quasimodes and a lower bound on the uniform energy decay rate for
  {K}err-{A}d{S} spacetimes.
\newblock {\em Anal. PDE}, 7(5):1057--1090, 2014.

\bibitem{holzegel1}
Gustav~H. Holzegel and Claude~M. Warnick.
\newblock Boundedness and growth for the massive wave equation on
  asymptotically anti-de {S}itter black holes.
\newblock {\em J. Funct. Anal.}, 266(4):2436--2485, 2014.

\bibitem{lindbladrodnianski}
Hans Lindblad and Igor Rodnianski.
\newblock The global stability of {M}inkowski space-time in harmonic gauge.
\newblock {\em Ann. of Math. (2)}, 171(3):1401--1477, 2010.

\bibitem{lindbladquasilinear}
Hans Lindblad and Mihai Tohaneanu.
\newblock Global existence for quasilinear wave equations close to
  {S}chwarzschild.
\newblock {\em Comm. Partial Differential Equations}, 43(6):893--944, 2018.

\bibitem{LindbladTohaneanuKerr}
Hans Lindblad and Mihai Tohaneanu.
\newblock A local energy estimate for wave equations on metrics asymptotically
  close to {K}err.
\newblock {\em Ann. Henri Poincar\'{e}}, 21(11):3659--3726, 2020.

\bibitem{mavrogiannisthesis}
G.~Mavrogiannis.
\newblock {Decay for quasilinear wave equations on cosmological black hole
  backgrounds}, 2022.
\newblock PhD thesis, Cambridge University.

\bibitem{mavrogiannis3}
Georgios Mavrogiannis.
\newblock {Quasilinear wave equations on Kerr--de~Sitter}.
\newblock preprint.

\bibitem{mavrogiannis2}
Georgios Mavrogiannis.
\newblock {Relatively non degenerate estimates on subextremal Kerr de Sitter}.
\newblock preprint.

\bibitem{mavrogiannis}
Georgios {Mavrogiannis}.
\newblock {Morawetz Estimates Without Relative Degeneration and Exponential
  Decay on Schwarzschild-de Sitter Spacetimes}.
\newblock {\em Annales Henri Poincar\'{e};}, 24(9):3113--3152, September 2023.

\bibitem{mavrogiannis1}
Georgios Mavrogiannis.
\newblock {Quasilinear wave equations on Schwarzschild–de Sitter}.
\newblock {\em Communications in Partial Differential Equations}, 0(0):1--50,
  2024.

\bibitem{melr-barr-vasy}
Richard Melrose, Ant\^{o}nio~S\'{a} Barreto, and Andr\'{a}s Vasy.
\newblock Asymptotics of solutions of the wave equation on de
  {S}itter-{S}chwarzschild space.
\newblock {\em Comm. Partial Differential Equations}, 39(3):512--529, 2014.

\bibitem{OlverAsymptotics}
Frank W.~J. Olver.
\newblock {\em Asymptotics and special functions}.
\newblock AKP Classics. A K Peters, Ltd., Wellesley, MA, 1997.
\newblock Reprint of the 1974 original [Academic Press, New York; MR0435697 (55
  \#8655)].

\bibitem{petersen2021wave}
Oliver~Lindblad Petersen and Andr{\'a}s Vasy.
\newblock {Wave equations in the Kerr-de Sitter spacetime: the full subextremal
  range}.
\newblock arXiv:2112.01355, 2021.

\bibitem{polynomials}
E.~L. Rees.
\newblock Graphical {D}iscussion of the {R}oots of a {Q}uartic {E}quation.
\newblock {\em Amer. Math. Monthly}, 29(2):51--55, 1922.

\bibitem{barreto}
Ant\^{o}nio S\'{a}~Barreto and Maciej Zworski.
\newblock Distribution of resonances for spherical black holes.
\newblock {\em Math. Res. Lett.}, 4(1):103--121, 1997.

\bibitem{sbierski}
Jan Sbierski.
\newblock Characterisation of the energy of {G}aussian beams on {L}orentzian
  manifolds: with applications to black hole spacetimes.
\newblock {\em Anal. PDE}, 8(6):1379--1420, 2015.

\bibitem{Volker}
Volker Schlue.
\newblock Global results for linear waves on expanding {K}err and
  {S}chwarzschild de {S}itter cosmologies.
\newblock {\em Comm. Math. Phys.}, 334(2):977--1023, 2015.

\bibitem{Shlapentokh_Rothman_2014}
Yakov Shlapentokh-Rothman.
\newblock Exponentially growing finite energy solutions for the
  {K}lein-{G}ordon equation on sub-extremal {K}err spacetimes.
\newblock {\em Comm. Math. Phys.}, 329(3):859--891, 2014.

\bibitem{Shlapentokh_Rothman_2014_mode_stability}
Yakov Shlapentokh-Rothman.
\newblock Quantitative mode stability for the wave equation on the {K}err
  spacetime.
\newblock {\em Ann. Henri Poincar\'{e}}, 16(1):289--345, 2015.

\bibitem{ralston}
J.~M. Stewart.
\newblock Solutions of the wave equation on a {S}chwarzschild space-time with
  localized energy.
\newblock {\em Proc. Roy. Soc. London Ser. A}, 424(1866):239--244, 1989.

\bibitem{tatarutohaneanuKerr}
Daniel Tataru and Mihai Tohaneanu.
\newblock A local energy estimate on {K}err black hole backgrounds.
\newblock {\em Int. Math. Res. Not. IMRN}, (2):248--292, 2011.

\bibitem{rita2}
Rita Teixeira~da Costa.
\newblock Mode stability for the {T}eukolsky equation on extremal and
  subextremal {K}err spacetimes.
\newblock {\em Comm. Math. Phys.}, 378(1):705--781, 2020.

\bibitem{vasy1}
Andr\'{a}s Vasy.
\newblock The wave equation on asymptotically de {S}itter-like spaces.
\newblock {\em Adv. Math.}, 223(1):49--97, 2010.

\bibitem{vasy6}
Andr\'{a}s Vasy.
\newblock The wave equation on asymptotically anti de {S}itter spaces.
\newblock {\em Anal. PDE}, 5(1):81--144, 2012.

\bibitem{vasy2}
Andr\'{a}s Vasy.
\newblock Microlocal analysis of asymptotically hyperbolic and {K}err-de
  {S}itter spaces (with an appendix by {S}emyon {D}yatlov).
\newblock {\em Invent. Math.}, 194(2):381--513, 2013.

\bibitem{warnick.massicAdS}
C.~M. Warnick.
\newblock The massive wave equation in asymptotically {A}d{S} spacetimes.
\newblock {\em Comm. Math. Phys.}, 321(1):85--111, 2013.

\bibitem{whiting}
Bernard~F. Whiting.
\newblock Mode stability of the {K}err black hole.
\newblock {\em J. Math. Phys.}, 30(6):1301--1305, 1989.

\bibitem{yoshida}
Shijun Yoshida, Nami Uchikata, and Toshifumi Futamase.
\newblock Quasinormal modes of {K}err-de {S}itter black holes.
\newblock {\em Phys. Rev. D}, 81(4):044005, 14, 2010.

\end{thebibliography}

\end{document}